\def\author#1{\gdef\autrun{\def\and{\unskip, }#1}\gdef\@author{#1}}
\newcommand{\N}{{\mathbb N}}
\newcommand{\R}{{\mathbb R}}
\newtheorem{theorem}{Theorem}[section]
\newtheorem{corollary}[theorem]{Corollary}
\newtheorem{remark}[theorem]{Remark}
\newtheorem{hypothesis}[theorem]{Hypothesis}
\newtheorem{lemma}[theorem]{Lemma}
\newtheorem{proposition}[theorem]{Proposition}
\newtheorem{claim}[theorem]{Claim}
\newtheorem{comparison}[theorem]{Comparison}
\numberwithin{equation}{section}
\begin{document}

\title{Parameterized splitting theorems and bifurcations\\ for potential operators, Part I:
Abstract  theory\thanks
{Partially supported by the NNSF  11271044 of China.
\endgraf\hspace{2mm} 2020 {\it Mathematics Subject Classification.}
Primary: 58E07, 58E09; Secondary: 58E05, 47J15.
 \endgraf\hspace{2mm} {\it Key words and phrases.}
 Bifurcation, potential operator, splitting theorem.
 }}
\author{Guangcun Lu}

\date{November 11, 2021}

 \maketitle \vspace{-0.3in}




\begin{abstract}
This is the first part of a series devoting to the generalizations and applications
of common theorems in variational bifurcation theory.
Using parameterized versions of splitting theorems in Morse theory
we generalize  some famous bifurcation theorems for potential operators
by weakening standard assumptions on the differentiability of the involved functionals,
which opens up 
a way of bifurcation studies for quasi-linear elliptic boundary value problems.
\end{abstract}



%



\tableofcontents

\section{Introduction}\label{sec:Intro}
\setcounter{equation}{0}

Let $H$ be a real Hilbert space, $U$ an open neighborhood of $0$ in $H$,
 and $I$  an open interval in $\mathbb{R}$. Suppose that
$\mathcal{F}:I\times U\to\mathbb{R}$  is G\^ateaux differentiable in the second variable
and that
\begin{equation}\label{e:Intro.1}
\mathcal{F}'_u(\lambda,u)=0
\end{equation}
possesses the trivial solution $u=0$ for each $\lambda\in I$.
A point $(\mu,0)\in I\times H$ is called  a {\it bifurcation point} of (\ref{e:Intro.1})
if every neighborhood of it in $I\times H$ contains a solution $(\lambda,u)$ of (\ref{e:Intro.1})
with $u\ne 0$.
(In the literature people also said
there is {\it continuous bifurcation} of (\ref{e:Intro.1}) at $\mu$
 provided that $\mathcal{S}=\{(\lambda,u)\in I\times U\,|\, \mathcal{F}'_u(\lambda,u)=0\;\hbox{and}\;u\ne 0\}$
has a connected subset $\mathcal{C}$ such that $\bar{\mathcal{C}}\cap(\R\times\{0\})=\{(\mu,0)\}$.)
 Bifurcation theory is concerned with the structure of the solutions of (\ref{e:Intro.1}) near
 a bifurcation point.
 H. Poincare  \cite{Poi} initiated the mathematical study of this subject in 1885.
 A. Liapunov \cite{Lia} and  E. Schmidt \cite{Sch08} reduced bifurcation theory to a finite-dimensional problem.
 M. A. Krasnosel'skii \cite{Kra} and J. Cronin \cite{Cro}
 studied bifurcation problems with  the method of topological degrees.
 The former also first used variational methods to study such problems,
which leaded to tremendous  progress  since the late 1960s,
 see \cite{Ba1, BaCl, Ben, Ber, Ber72, Boh, Can, Ch2, ChWa, EvSt07, FaRa1, FaRa2, IoSch, Kie, Liu,  Mar,  MaWi, Pro, Rab, Stu14B, Wa}
  etc and references therein. Nevertheless, the key first step in all of these studies is to reduce the problem
  to a finite dimensional situation via either the Lyapunov-Schmidt  reduction or the center
 manifold theorems. It was because of this point that the functional $\mathcal{F}$ was often assumed to be $C^2$, except
  \cite{Can, EvSt07, IoSch, Kie, Liu, McTu, Stu14B},
  where functionals of class $C^{1,1}$ or even $C^1$ are considered.
Using  topological degree C. A. Stuart also  studied bifurcation equations of some
non-potential operators with Hadamard differentiability in \cite{Stu14A,  Stu15}.

In this paper we  also use variational methods, precisely Morse theory, to study bifurcation theory
of (\ref{e:Intro.1}) for the functional $\mathcal{F}$ with lower smoothness.
The reference \cite{Pro} by G. Prodi  seems to be the first paper studying bifurcation problems with Morse theory. See also
  M. Berger \cite{Ber72}, B\"ohme \cite{Boh} and  Marino \cite{Mar}.
  L. Nirenberg \cite{Ni} used the Morse lemma and the splitting theorem to study
  such problems from a different view.
 K. C. Chang \cite{Ch2} gave a Morse theory proof of Rabinowitz bifurcation theorem
\cite{Rab}. The basic idea is to study changes of
critical groups of $\mathcal{F}(\lambda,\cdot)$ at $0$ as $\lambda$ varying.
As we know, main tools  to compute critical groups are the splitting theorem and its corollary (the shifting theorem), which
are stated for $C^2$ functionals on Hilbert spaces (\cite[Theorem~I.5.1]{Ch} and \cite[Theorem~8.3]{MaWi}.
 In fact, the first step in the proof of the splitting theorem
(cf. \cite{Ch, MaWi}) is also the Lyapunov-Schmidt finite-dimensional reduction, which requires
 $C^2$-smoothness of functionals. Thus the Rabinowitz bifurcation theorem
    are inapplicable for studying bifurcations
  of quasilinear elliptic equations of potential type
since the corresponding  variational functionals on natural chosen Sobolev spaces
 cannot be of class $C^2$  in general.
Recently, the author developed  Morse theory methods for a class of
quasilinear elliptic equations  by proving some  splitting theorems for some classes of non-$C^2$ functionals \cite{Lu1}--\cite{Lu7}.
In their proofs our finite-dimensional reductions either required weaker differentiability for potential operators or
 were completed on a smaller space.
The ideas of this paper come from these studies.

 ({\bf Notations}. Let $X, Y$ be Banach spaces, and $H$ a Hilbert space. Throughout the paper,
 we denote by $\mathscr{L}(X,Y)$ the space of all bounded linear  operators from $X$ to $Y$,
 by $\mathscr{L}(X)=\mathscr{L}(X,X)$, and by $\mathscr{L}_s(H)$ the space of all bounded linear self-adjoint
 operators on $H$, by  $B_X(x,r)$ (resp. $\bar{B}_X(x,r)$) the open (resp. closed) ball in $X$ with
 center $x\in X$ and radius $r>0$. For a map $f$ from $X$ to $Y$,
 $Df(x)$ (resp. $df(x)$ or $f'(x)$) denotes the G\^ateaux (resp. Fr\'echet) derivative of $f$ at $x\in X$,
 which is an element in $\mathscr{L}(X,Y)$. Of course, we also use $f'(x)$ to denote $Df(x)$
 without occurring of confusions. When $Y=\mathbb{R}$, $f'(x)\in \mathscr{L}(X,\mathbb{R})=X^\ast$,
 and if $X=H$ we call the Riesz representation of $f'(x)$ in $H$
 gradient of $f$ at $x$, denoted by $\nabla f(x)$. The the Fr\'echet (or G\^ateaux) derivative of $\nabla f$ at $x\in H$
 is denoted by $f''(x)$, which is an element in $\mathscr{L}_s(H)$.
($f''(x)$ can be seen as a symmetric bilinear form on $H$ without confusion occurring.)
 Moreover, we always use $0$ to
 denote the origin in different vector spaces without special statements.
 As usual $\mathbb{N}$ denotes the set of all positive integers, and $\mathbb{N}_0=\mathbb{N}\cup\{0\}$.
 The unit sphere in a Banach space $X$ is denoted by $SX$. If $G$ is a compact Lie group, and
 $M$ is a $G$-space or $G$-module, we use $M^G$ to denote the fixed point set of the $G$-action,
 i.e., $M^G=\{x\in M\,|\, gx=x\;\forall g\in G\}$.)

Our main aim is to study generalizations of some previous famous bifurcation theorems
such as those by Krasnosel'skii \cite{Kra}, by Rabinowitz  \cite{Rab}, and by Fadelll and Rabinowitz  \cite{FaRa1, FaRa2}
with parameterized versions of two new splitting theorems established by the author \cite{Lu7} and \cite{Lu1,Lu2, Lu4}
 (see Appendix~\ref{app:A}) and the parameterized version of a splitting  lemma  by Bobylev and Burman \cite{BoBu} (see
Appendix~\ref{app:B}). The following are our basic assumptions related to them.

\begin{hypothesis}\label{hyp:1.1}
{\rm Let $H$ be a Hilbert space with inner product $(\cdot,\cdot)_H$
and the induced norm $\|\cdot\|$, and let $X$ be a dense linear subspace in $H$.
Let  $U$ be an open neighborhood of $0$ in $H$,
and let $\mathcal{L}\in C^1(U,\mathbb{R})$ satisfy $\mathcal{L}'(0)=0$.
Assume that the gradient $\nabla\mathcal{L}$ has a G\^ateaux derivative $B(u)\in \mathscr{L}_s(H)$ at every point
$u\in U\cap X$, and that the map $B: U\cap X\to
\mathscr{L}_s(H)$  has a decomposition
$B=P+Q$, where for each $x\in U\cap X$,  $P(x)\in\mathscr{L}_s(H)$ is  positive definitive and
$Q(x)\in\mathscr{L}_s(H)$ is compact. $B$, $P$ and $Q$ are also assumed to satisfy the following
properties:
\begin{enumerate}
\item[(D1)]  $\{u\in H\,|\, B(0)u=\mu u,\;\mu\le 0\}\subset X$.
\item[(D2)] For any sequence $(x_k)\subset
U\cap X$ with $\|x_k\|\to 0$, $\|P(x_k)u-P(0)u\|\to 0$ for any $u\in H$.
\item[(D3)] The  map $Q: U\cap X\to \mathscr{L}_s(H)$ is continuous at $0$ with respect to the topology
on $H$.
\item[(D4)] For any sequence $(x_k)\subset U\cap X$ with $\|x_k\|\to 0$, there exist
 constants $C_0>0$ and $k_0\in\N$ such that
$(P(x_k)u, u)_H\ge C_0\|u\|^2$ for all $u\in H$ and for all $k\ge k_0$.
\end{enumerate}}
\end{hypothesis}

By \cite[Lemma~2.7]{Lu7} the condition ({\rm D4}) is equivalent to the following
\begin{enumerate}
\item[(D4*)] There exist positive constants $\eta_0>0$ and  $C'_0>0$ such that $\bar{B}_H(0,\eta_0)\subset U$ and
$$
(P(x)u, u)\ge C'_0\|u\|^2\quad\forall u\in H,\;\forall x\in
\bar{B}_H(0,\eta_0)\cap X.
$$
\end{enumerate}

\begin{hypothesis}\label{hyp:1.2}
{\rm Let $U\subset H$ be as in Hypothesis~\ref{hyp:1.1},  $\mathcal{L}\in C^1(U,\mathbb{R})$ satisfy
$\mathcal{L}'(0)=0$  and the gradient $\nabla\mathcal{L}$ have the G\^ateaux derivative
$\mathcal{L}''(u)\in\mathscr{L}_s(H)$ at any $u\in U$, which is a compact operator
 and approaches to $\mathcal{L}''(0)$ in $\mathscr{L}_s(H)$ as $u\to 0$ in $H$.
}
\end{hypothesis}

\begin{hypothesis}\label{hyp:1.3}
{\rm Let $H$ be a Hilbert space with inner product $(\cdot,\cdot)_H$
and the induced norm $\|\cdot\|$, and let $X$ be a Banach space with
norm $\|\cdot\|_X$, such that $X\subset H$ is dense in $H$ and $\|x\|\le\|x\|_X\;\forall x\in X$. 
For an open neighborhood $U$ of $0$ in $H$, $U\cap X$
is also an open neighborhood of $0$ in $X$, denoted by $U^X$.
 Let $\mathcal{L}:U\to\mathbb{R}$ be a continuous functional  satisfying  the following
conditions:
\begin{enumerate}
\item[(F1)] $\mathcal{L}$ is continuously directional
differentiable and $D\mathcal{L}(0)=0$.
\item[(F2)] There exists a continuous and continuously directional differentiable
 map $A: U^X\to X$, which is also  strictly Fr\'{e}chet differentiable
 at $0$,   such that
$D\mathcal{ L}(x)[u]=(A(x), u)_H$ for all $x\in U\cap X$ and $u\in X$.
\item[(F3)] There exists a map $B: U\cap X\to \mathscr{L}_s(H)$ such that
$(DA(x)[u], v)_H=(B(x)u, v)_H$ for all $x\in U\cap X$  and
$u, v\in X$. (So $B(x)$ induces an element in $\mathscr{L}(X)$, denoted by $B(x)|_X$,
and $B(x)|_X=DA(x)\in\mathscr{L}(X),\;\forall x\in U\cap X$.)
\item[(C)]   $\{u\in H\,|\, B(0)(u)\in X\}\subset X$,
in particular  ${\rm Ker}(B(0))\subset X$.
\item[(D)] 
    $B$ satisfies the conditions (D1)--(D4) as in Hypothesis~\ref{hyp:1.1} for the present $X$.
\end{enumerate}}
\end{hypothesis}

({\it Note}: (F2) in \cite[\S8]{Lu2} does not have the requirement that $A: U^X\to X$ is continuous since
we consider that it is contained in the assumption of the strict Fr\'{e}chet differentiability of $A$ at $0$.
Actually, the latter may only imply that $A$ is continuous near $0\in X$, which is not sufficient
for us using the mean value theorem in the third line of \cite[page 2958]{Lu2}.
Actually, in order to assure effectiveness of such an argument we only need the assumption  that
$A: U^X\to X$ is continuous and G\^ateaux differentiable, which is  weaker since
the continuously directional differentiability of $A$ implies that $A$ is G\^ateaux differentiable.)

\begin{hypothesis}\label{hyp:1.4}
{\rm Let $H, X$ and $U\subset H$ be as in Hypothesis~\ref{hyp:1.3},
 $\mathcal{L}\in C^1(U,\mathbb{R})$ has the critical point $0\in U$, and there exist
maps $A\in C^1(U^X, X)$ and $B: U\cap X\to \mathscr{L}_s(H)$ satisfying (F2)-(F3) in
Hypothesis~\ref{hyp:1.3} and
\begin{enumerate}
\item[(D*)] For each $x\in U\cap X$,  $B(x)\in\mathscr{L}_s(H)$ is a compact linear  operator
and the  map $B: U\cap X\to \mathscr{L}_s(H)$ is continuous at $0$ with respect to the topology on $H$.
\end{enumerate}}
\end{hypothesis}

Clearly, Hypothesis~\ref{hyp:1.2} and Hypothesis~\ref{hyp:1.4} are stronger than
Hypothesis~\ref{hyp:1.1} and Hypothesis~\ref{hyp:1.3}, respectively.
In applications, bifurcation theorems obtained under Hypothesises~\ref{hyp:1.1},\ref{hyp:1.2}
are mainly applied to quasi-linear elliptic systems and Lagrange systems;
those obtained under Hypothesises~\ref{hyp:1.3}, \ref{hyp:1.4} or the following are
more powerful for us studying bifurcations in Lagrange systems and geodesics on Finsler manifolds.

An important case of the bifurcation problem (\ref{e:Intro.1})
is that of eigenvalues of nonlinear problems. The following are two related hypothesises.

\begin{hypothesis}\label{hyp:Bif.2.2.0}
{\rm Let tuples $(H,X,U,\mathcal{L}, A, B)$ and
 $(H,X,U, \widehat{\mathcal{L}}, \widehat{A}, \widehat{B})$ satisfy  (F1)-(F3)
  in Hypothesis~\ref{hyp:1.3}, $\mathcal{L},\widehat{\mathcal{L}}\in C^1(U,\mathbb{R})$ and $A, \widehat{A}\in C^1(U^X,X)$.
   Suppose that $\lambda^\ast$ is an eigenvalue  of
\begin{equation}\label{e:Spl.2.1}
B(0)v-\lambda\widehat{B}(0)v=0,\quad v\in H,
\end{equation}
and that  for each $\lambda$ near $\lambda^\ast$ the operator $\mathfrak{B}_{\lambda}:=B(0)-\lambda \widehat{B}(0)$
satisfies (C) in Hypothesis~\ref{hyp:1.3} and (D1) in Hypothesis~\ref{hyp:1.1}, i.e.,
$$
\{u\in H\,|\,\mathfrak{B}_{\lambda}u\in X\}\cup\{u\in H\,|\, \mathfrak{B}_{\lambda}u=\mu u,\;\mu\le 0\}\subset X.
$$
 Assume also that one of the following two conditions holds:
 \begin{enumerate}
 \item[(I)] $\lambda^\ast\ne 0$,
 and for each $x\in U\cap X$,  $B(x)$  has a decomposition
$B(x)=P(x)+ Q(x)$,
where $P(x)\in\mathscr{L}_s(H)$ is  positive definitive,
$Q(x)\in\mathscr{L}_s(H)$ is  compact, and both satisfy the properties (D2)--(D4) in Hypothesis~\ref{hyp:1.1};
 $\widehat{B}$ satisfies (D*) in Hypothesis~\ref{hyp:1.4}, i.e., for each $x\in U\cap X$,
  $\widehat{B}(x)\in\mathscr{L}_s(H)$ is compact and  $\widehat{B}(x)\to\widehat{B}(0)$ in $\mathscr{L}_s(H)$
  as $x\to 0$ along $X\cap U$ in $H$.
    \item[(II)]  $\lambda^\ast=0$, $B$ is as in (I), and   $\widehat{B}(x)\to\widehat{B}(0)$ in $\mathscr{L}_s(H)$
  as $x\to 0$ along $X\cap U$ in $H$.
\end{enumerate}}
\end{hypothesis}

Under Hypothesis~\ref{hyp:Bif.2.2.0}, for each $\lambda$ near $\lambda^\ast$ it is easily checked that  the functional
${\mathcal{L}}_\lambda:={\mathcal{L}}-\lambda\widehat{\mathcal{L}}$ satisfies Hypothesis~\ref{hyp:1.3}.
In fact, this also holds under the following weaker one.

\begin{hypothesis}\label{hyp:Bif.2.2.0+}
{\rm ``$\mathcal{L},\widehat{\mathcal{L}}\in C^1(U,\mathbb{R})$ and $A, \widehat{A}\in C^1(U^X,X)$"
in Hypothesis~\ref{hyp:Bif.2.2.0} is replaced by
``$\widehat{\mathcal{L}}\in C^1(U,\mathbb{R})$ and $\widehat{A}\in C^1(U^X,X)$".
}
\end{hypothesis}

Let us outline  our main ideas to study bifurcation problem of (\ref{e:Intro.1})
when $\mathcal{F}(\lambda,u)=\mathcal{L}(u)-\lambda\widehat{\mathcal{L}}(u)$, where $\mathcal{L}\in C^1(U,\mathbb{R})$ is as in Hypothesis~\ref{hyp:1.1} with $X=H$ and  $\widehat{\mathcal{L}}\in C^1(U,\mathbb{R})$ satisfies Hypothesis~\ref{hyp:1.2}.
Note that for each $\lambda\in\mathbb{R}$, $\mathcal{L}_{\lambda}:=\mathcal{L}-\lambda\widehat{\mathcal{L}}$
also satisfies Hypothesis~\ref{hyp:1.1} with $X=H$. Let $\lambda^\ast\in\mathbb{R}$
be an eigenvalue of $\mathcal{L}''(0)u=\lambda\widehat{\mathcal{L}}''(0)u$.
New finite-dimensional reduction in the proofs of \cite[Theorems~2.12,2.16]{Lu7}
shows that solving
$$
\mathcal{L}'(u)-\lambda\widehat{\mathcal{L}}'(u)=0
$$
for $(\lambda,u)$ near $(\lambda^\ast, 0)$ in $\mathbb{R}\times H$
is equivalent to solving
 $$
 d\mathcal{L}^\circ_\lambda(z)=0
 $$
 for $(\lambda,z)$  near $(\lambda^\ast, 0)$ in $\mathbb{R}\times H^0$,
 where $H^0={\rm Ker}(B(0)-\lambda^\ast\widehat{B}(0))$ and  $(\lambda^\ast-\delta, \lambda^\ast+\delta)\ni\lambda\mapsto \mathcal{L}^\circ_\lambda\in
 C^1(\bar{B}_{H^0}(0,\epsilon))$ is continuous.
    Hence under suitable additional conditions we may carry out other
 arguments along \cite{Rab, FaRa1, FaRa2, Ch2, Wa, BaCl, Ba1} etc, and obtain many bifurcation theorems.
For example, suppose that the eigenvalue $\lambda^\ast$ is also isolated. Then for each $\lambda\ne\lambda^\ast$
 close to $\lambda^\ast$ the origin $0\in H$ is a nondegenerate critical point
 of $\mathcal{L}_{\lambda}$ in the sense of \cite{Lu7}, in particular
  an isolated critical point of $\mathcal{L}_{\lambda}$ and thus $0\in H^\circ$
 is such a critical point of $\mathcal{L}^\circ_{\lambda}$  as well. Under some additional conditions we
 can use the parameterized shifting theorem in  \cite{Lu7}
  to compute critical groups of $\mathcal{L}^\circ_{\lambda}$ at $0\in H^0$
 and conclude that  $\mathcal{L}^\circ_{\lambda}$ takes a local maximum (resp.
 minimum) at $0\in H^0$ as $\lambda$ varies in one (resp. other) side of $\lambda^\ast$.
 Thus  a generalization of Rabinowitz bifurcation theorem \cite{Rab} can follow from these and
 Canino's finite dimension version \cite[Theorem~5.1]{Can}
  for an extension of the Rabinowitz's theorem by  Ioffe and Schwartzman \cite{IoSch}.
If $H$ is equipped with an orthogonal action of  a compact Lie group $G$
 for which  $U$, $\mathcal{L}$ and $\widehat{\mathcal{L}}$ are $G$-invariant,
we may apply methods in Fadelll and Rabinowitz  \cite{FaRa1, FaRa2} to
$G$-invariant $\mathcal{L}^\circ_\lambda$ to generalize their results as $G=\mathbb{Z}_2$ or $S^1$.
Sometime, (for example, when $\mathcal{L}$ and $\widehat{\mathcal{L}}$ satisfy  Hypothesis~\ref{hyp:1.3} and
 Hypothesis~\ref{hyp:1.4}, respectively) the reduced functional  $\mathcal{L}^\circ_\lambda$ is of class $C^2$,
 we may apply a result by  Bartsch and Clapp \cite[\S4]{BaCl} to
 $G$-invariant $\mathcal{L}^\circ_\lambda$ to obtain corresponding generalizations for any compact Lie group
 because we may explicitly compute the number $d$ in \cite[\S4]{BaCl} in our situation.

Once parameterized versions of other splitting theorems on Banach spaces are established,
our above methods are applicable. For example, we  may also write the parameterized versions of the splitting
lemmas at infinity in \cite{Lu3}, and then use them to derive some theorems of bifurcations at infinity.
These will be given otherwise.

 The bifurcation theorems  by Krasnosel'skii \cite{Kra}, by Rabinowitz  \cite{Rab}, and by Fadelll and Rabinowitz  \cite{FaRa1, FaRa2},
and their previous some generalizations, often require that  functionals $\mathcal{F}(\lambda,x)$
depend on $(\lambda,x)$ in $C^2$ way and that corresponding potential operators
$\nabla_x\mathcal{F}(\lambda,x)$ have forms either $A_\lambda x+ o(\|x\|)$ or
$(A-\lambda) x+ o(\|x\|)$ for $x\to 0$. Compared these with the above six hypothesises
it is easily seen that our functionals require lower smoothness and corresponding potential operators
 might have higher nonlinearity.


The rest of the paper is organized as follows.
Section~\ref{sec:CGS} contains some notions, simple recall of necessary results about stability of critical groups,
and  a proposition about the uniform (PS) condition for a family of functionals.
In Section~\ref{sec:B.2.1}, for $\mathcal{F}\in C^0(I\times U,\mathbb{R})$ with  nonlinear dependence on parameters,
 we first discuss some necessary conditions for a point $(\mu,0)\in I\times H$ to be a bifurcation point of (\ref{e:Intro.1}),
 and then  as applications of splitting theorems in Appendix~\ref{app:A}
we prove two sufficient conditions for bifurcations occurring,
which are part generalizations of results by Chow and Lauterbach \cite{ChowLa} and by Kielh\"ofer \cite{Kie}.
Some comparisons of our results with previous work are also given in Section~\ref{app:C}.
In Sections~\ref{sec:B.2}, \ref{sec:B.3} we shall use the parameterized versions of  splitting theorems
\cite{Lu7, Lu1,Lu2, Lu4} to generalize some older bifurcation theorems.
Section~\ref{sec:B.2}  is dedicated to some generalizations of  Rabinowitz bifurcation theorem  \cite{Rab}.
  Section~\ref{sec:B.3} deals with the equivariant case;
  some previous bifurcation theorems, such as those by Fadelll and Rabinowitz  \cite{FaRa1, FaRa2}
  by Bartsch and Clapp  \cite{BaCl} are generalized so that they can be used to study
variational bifurcation  for the integral functionals as in \cite[(1.3)]{Lu7}.
In order to compare our methods with previous those we state corresponding results
for $C^2$ functionals which can be obtained with our methods
at the end of each of Sections~\ref{sec:B.2.1},\ref{sec:B.2},\ref{sec:B.3}.
In Section~\ref{sec:BBH},  for potential operators of Banach-Hilbert regular functionals
(cf. Appendix~\ref{app:B}) we prove parallel results to some  bifurcation theorems in last two sections.
In Appendixes~\ref{app:A},~\ref{app:B} we give more general parameterized versions of splitting  lemmas by author \cite{Lu1, Lu2, Lu7}, and
by Bobylev-Burman \cite{BoBu}, respectively.


\section{Preliminary materials}\label{sec:CGS}
\setcounter{equation}{0}

In this section, for the reader's reference we list necessary notions and
results on stability of critical groups,
 and then give related  propositions.

Recall that a $C^1$ functional $f$ on a Banach space $X$ is said to satisfy
the {\bf Palais-Smale condition} ((PS) {\bf condition}, for short)  in a closed subset $S\subset X$ if
every sequence $(u_n)\subset S$ with $f'(u_n)\to 0$ and $(f(u_n))$  bounded
 has a subsequence converging to $u\in X$. For some $c\in\mathbb{R}$, if
 ``$(f(u_n))$  bounded" is replaced by ``$f(u_n)\to c$", we say that $f$ satisfies
 the {\bf Palais-Smale condition at level $c$} ($(PS)_c$ {\bf condition}, for short) in $S$.
 A family of $C^1$ functionals on $X$  parameterized by
  a metric space $\Lambda$,  $\{f_\lambda\}_{\lambda\in\Lambda}$, is called to satisfy
 the {\bf uniform Palais-Smale condition} ({\bf uniform} (PS) {\bf condition}, for short)  in a closed subset $S\subset X$ if
every sequence $(\lambda_n, u_n)\subset \Lambda\times S$ such that $f'_{\lambda_n}(u_n)\to 0$ and $(f_{\lambda_n}(u_n))$  bounded
 has a subsequence converging to $(\lambda,u)\in \Lambda\times S$
 with $f'_{\lambda}(u)=0$  (cf. \cite[Definition~2.4]{CorH}).
 (If $\Lambda$ is compact we can assume $\lambda_n\to\lambda\in\Lambda$ in this definition.)

\begin{lemma}[\hbox{\cite[Remark 2.1(b)]{CorH}}]\label{lem:pre.1}
For a $C^1$ functional $f$ on a Banach space (or $C^1$ Finsler manifold) $X$ with norm $\|\cdot\|$,
 the {\bf weak slope} $|df|(u)$  of $f$ (as a continuous functional on the metric space $X$)
  at any $u\in X$ (cf. \cite[Definition (2.1)]{DegMa}) is equal to $\|f'(u)\|$.
\end{lemma}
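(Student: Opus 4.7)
The plan is to prove the two inequalities $|df|(u)\le \|f'(u)\|$ and $|df|(u)\ge \|f'(u)\|$ separately, using only the Degiovanni--Marzocchi definition of the weak slope and the $C^1$ regularity of $f$. I will write the argument in the Banach space setting; the Finsler manifold case then follows by passing to a chart (or by working with a locally bi-Lipschitz comparison between the Finsler metric and a chart norm at $u$).

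First I would treat the lower bound $|df|(u)\ge \|f'(u)\|$. If $f'(u)=0$ there is nothing to show. Otherwise, fix an arbitrary $\varepsilon\in(0,\|f'(u)\|)$ and choose $\xi\in X$ with $\|\xi\|=1$ such that $f'(u)[\xi]\le -\|f'(u)\|+\varepsilon/2$. Since $f\in C^1$, the map $v\mapsto f'(v)[\xi]$ is continuous, so there exists $\delta>0$ with $f'(v)[\xi]\le -\|f'(u)\|+\varepsilon$ for every $v$ in a neighborhood of $u$ of radius $2\delta$. Define $\mathcal{H}(v,t):=v+t\xi$ on $B_X(u,\delta)\times[0,\delta]$; then $\mathcal{H}$ is continuous, $\|\mathcal{H}(v,t)-v\|=t$, and by the fundamental theorem of calculus
\[
f(\mathcal{H}(v,t))-f(v)=\int_0^t f'(v+s\xi)[\xi]\,ds\le (-\|f'(u)\|+\varepsilon)\,t.
\]
Hence $\mathcal{H}$ is admissible for the weak slope with value $\sigma=\|f'(u)\|-\varepsilon$, which yields $|df|(u)\ge \|f'(u)\|-\varepsilon$; letting $\varepsilon\to 0^+$ gives the claim.

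For the reverse inequality $|df|(u)\le \|f'(u)\|$, suppose $\sigma\ge 0$ is admissible, i.e.\ there exist $\delta>0$ and a continuous $\mathcal{H}\colon B_X(u,\delta)\times[0,\delta]\to X$ with $\|\mathcal{H}(v,t)-v\|\le t$ and $f(\mathcal{H}(v,t))\le f(v)-\sigma t$. Apply the mean value inequality on the segment $[v,\mathcal{H}(v,t)]$ to the $C^1$ function $f$:
\[
\sigma t\le f(v)-f(\mathcal{H}(v,t))\le \sup_{s\in[0,1]}\bigl\|f'\bigl(v+s(\mathcal{H}(v,t)-v)\bigr)\bigr\|\cdot\|\mathcal{H}(v,t)-v\|\le t\,\sup_{z\in[v,\mathcal{H}(v,t)]}\|f'(z)\|.
\]
Dividing by $t>0$ and then letting $t\to 0^+$ with $v=u$, the segment collapses to $\{u\}$ and continuity of $f'$ gives $\sigma\le \|f'(u)\|$. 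Since this holds for every admissible $\sigma$, we conclude $|df|(u)\le \|f'(u)\|$.

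There is essentially no hard step; the only mild subtlety is to make sure one is using the definition of weak slope from \cite{DegMa} in which the base point $v$ is allowed to vary in a ball around $u$ (not fixed at $u$), so that continuity of $\mathcal{H}$ is the only regularity requirement and the straight-line deformation qualifies. For the Finsler-manifold extension, one works in a chart around $u$ in which the Finsler norm at $u$ coincides with the chart Banach norm and the distortion tends to $1$ as the chart shrinks; the same two inequalities then go through after this harmless rescaling.
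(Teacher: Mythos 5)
The paper does not give a proof of this lemma; it simply cites it from Corvellec--Hantoute (Remark 2.1(b)). Your two-inequality argument is correct and is essentially the standard verification: the lower bound via the straight-line deformation $\mathcal{H}(v,t)=v+t\xi$ with $\xi$ a near-maximizing unit direction for $-f'(u)$, and the upper bound via the mean value inequality applied along $[u,\mathcal{H}(u,t)]$ together with continuity of $f'$. Both estimates are handled correctly (you correctly allow $v$ to range over a ball of radius $\delta$ and keep $v+s\xi$ inside a ball of radius $2\delta$ so that the integral bound holds uniformly, and you correctly pass $t\to 0^+$ at $v=u$ in the upper bound). The Finsler-manifold remark is only a sketch, but the localization-in-a-chart idea with metric distortion tending to $1$ is the right one. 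So the proposal supplies a complete proof of a statement the paper leaves to a reference.
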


This lemma and \cite[Theorem 5.1]{CorH} directly lead to:

\begin{theorem}\label{th:stablity1}
For a family of functionals on a Banach space $X$, $\{f_\sigma\in C^1(X,\mathbb{R})\,|\, \sigma\in [0,1]\}$,
suppose that there exists an open set $U$ such that
\begin{enumerate}
\item[\rm (i)]  $z_\sigma\in U$ is a unique critical point of $f_\sigma$ in $\overline{U}$, $\forall\sigma\in [0,1]$;
\item[\rm (ii)] $\sigma\to f_\sigma$ is continuous in $C^1(\overline{U})$ topology;
\item[\rm (iii)]  $f_\sigma$ satisfies the (PS) condition in $\overline{U}$,  $\forall\sigma\in [0,1]$.
\end{enumerate}
Then $C_\ast(f_\sigma,z_\sigma;{\bf K})$ is independent of $\sigma$ for any Abel group ${\bf K}$.
\end{theorem}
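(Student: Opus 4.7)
The plan is to reduce the statement to a direct application of Theorem~5.1 of Corvellec--Hantoute \cite{CorH}, which is an abstract stability-of-critical-groups result formulated in the setting of continuous functionals on a metric space, with criticality measured via the weak slope $|df|$. The bridge between that metric-space framework and our $C^1$ hypotheses is precisely Lemma~\ref{lem:pre.1}: since $|df_\sigma|(u) = \|f'_\sigma(u)\|$ for every $\sigma$ and every $u \in \overline U$, a point is a critical point in the sense of \cite{CorH} iff $f'_\sigma(u) = 0$, and the classical (PS) condition in (iii) coincides with the weak-slope (PS) condition used in \cite{CorH}.

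First I would verify that each of the hypotheses of \cite[Thm.~5.1]{CorH} holds along the path $\sigma \mapsto f_\sigma$. The isolated-critical-point assumption is exactly (i); the pointwise (PS) condition on $\overline U$ is exactly (iii); and the continuity requirement of the family is provided by (ii), which guarantees that $\sigma \mapsto f_\sigma$ is continuous in sup-norm and $\sigma \mapsto f'_\sigma$ is continuous in sup-norm on $\overline U$. The conclusion of \cite[Thm.~5.1]{CorH} is that the critical groups $C_*(f_\sigma, z_\sigma; \mathbf{K})$ are locally constant in $\sigma$.

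Once local constancy is in hand, the final step is a standard compactness argument: for each $\sigma_0 \in [0,1]$ the local stability yields an open subinterval $J_{\sigma_0} \subset [0,1]$ on which $C_*(f_\sigma, z_\sigma; \mathbf{K}) \cong C_*(f_{\sigma_0}, z_{\sigma_0}; \mathbf{K})$. By compactness of $[0,1]$, finitely many such $J_{\sigma_0}$ cover the interval, and chaining the induced isomorphisms yields the claim that $C_*(f_\sigma, z_\sigma; \mathbf{K})$ is independent of $\sigma$.

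The main obstacle I anticipate is purely bookkeeping rather than conceptual: one must check carefully that the continuity in the $C^1(\overline U)$-topology of (ii), together with the pointwise (PS) of (iii), indeed matches the precise uniformity required by \cite[Thm.~5.1]{CorH} (for instance, that a sequence $(\sigma_n, u_n)$ with $\sigma_n \to \sigma_0$ and $f'_{\sigma_n}(u_n) \to 0$ forces $f'_{\sigma_0}(u_n) \to 0$, to which one can then apply (PS) for $f_{\sigma_0}$). Beyond this verification, no additional arguments should be needed, since all the real content is absorbed into Lemma~\ref{lem:pre.1} and the cited stability theorem.
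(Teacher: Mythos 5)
Your overall strategy is exactly the paper's: use Lemma~\ref{lem:pre.1} to identify the weak slope $|df_\sigma|$ with $\|f'_\sigma\|$, then invoke \cite[Theorem~5.1]{CorH}, and chain the local stability over $[0,1]$. That part is correct. However, you have overlooked the one issue that the paper does flag as needing attention, and you have focused your anticipated obstacle in the wrong place. The hypothesis-matching you worry about (that $C^1(\overline U)$-continuity and pointwise (PS) feed correctly into \cite[Thm.~5.1]{CorH}) is essentially immediate once Lemma~\ref{lem:pre.1} is in hand; the real gap is the coefficient group. The result in \cite[Theorem~5.1]{CorH} is stated only for $\mathbf{K}=\mathbb{R}$, whereas Theorem~\ref{th:stablity1} claims independence of $\sigma$ for \emph{every} Abelian group $\mathbf{K}$. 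Citing \cite[Thm.~5.1]{CorH} verbatim therefore does not give the theorem as stated. One must observe, as the paper does, that the coefficient dependence in the proof of \cite[Thm.~5.1]{CorH} enters only through \cite[Proposition~5.2]{CorH}, and that this proposition remains valid with $\mathbb{R}$ replaced by any Abelian group.

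A second, subsidiary point you do not address: the critical groups in \cite{CorH} are defined in the metric-space/weak-slope framework, and one needs to know these agree with the critical groups of the present paper for $C^1$ functionals. This is supplied by \cite[Proposition~3.7]{Cor}, and without that identification the conclusion of \cite[Thm.~5.1]{CorH} would be a statement about the wrong invariant. Neither of these is deep, but both are genuinely necessary to go from the cited theorem to the claim, and your write-up silently assumes them.
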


Actually, \cite[Theorem 5.1]{CorH} was stated for ${\bf K}=\mathbb{R}$. However, the proof therein is still effective for any
any Abel group ${\bf K}$ because the arguments involving in ${\bf K}$ is to use \cite[Proposition 5.2]{CorH} and the latter
also clearly holds true if $\mathbb{R}$ is replaced by ${\bf K}$ by its proof.
Moreover, by \cite[Proposition 3.7]{Cor} the definition of critical groups used in \cite{CorH} is equivalent to the usual one
as in \cite{Ch,Ch1,MaWi, McTu}, i.e.,
$$
C_q(f,z;{\bf K})=H_q(\{f\le f(z)\}\cap U, \{f\le f(z)\}\cap(U\setminus\{z\});{\bf K})
$$
if $z\in X$ is an isolated lower critical point of $f$, where $U$ is a neighborhood of $z$.

When $X$ is a Hilbert space, Theorem~\ref{th:stablity1} was proved in  \cite[Theorem~8.8]{MaWi} (if $f_\sigma\in C^{2-0}(X,\mathbb{R})$),
in Theorem~5.6 of \cite[Chapter I]{Ch} (if $f_\sigma\in C^2(X,\mathbb{R})$), and in \cite[Corollary~5.1.25]{Ch1} (if $f_\sigma\in C^1(X,\mathbb{R})$).

The conditions (ii)-(iii) in Theorem~\ref{th:stablity1} can also be replaced by others.

\begin{theorem}[\hbox{\cite[Theorem 3.6]{CiDe}}]\label{th:stablity2}
For a family of functionals on a Banach space $X$, $\{f_\sigma\in C^1(X,\mathbb{R})\,|\, \sigma\in [0,1]\}$,
suppose that there exists an open set $U$ such that
\begin{enumerate}
\item[\rm (i)] for each $\sigma\in [0,1]$, $z_\sigma\in U$ is a unique critical point of $f_\sigma$ in $\overline{U}$, and
            $[0,1]\ni \sigma\to z_\sigma\in U$ is also continuous;
\item[\rm (ii)] $\sigma\to f_\sigma$ is continuous in $C^0(\overline{U})$ topology;
\item[\rm (iii)] $\{f_\sigma\}_{\sigma\in[0,1]}$ satisfies the uniform (PS) condition on $\overline{U}$, that is,
for every sequence $\sigma_k\to \sigma$ in $[0,1]$ and $(u_k)$ in $\overline{U}$ with $f'_{\sigma_k}(u_k)\to 0$ and
$(f_{\sigma_k}(u_k))$ bounded, there exists a subsequence $(u_{k_j})$ convergent to some $u$ with $f'_\sigma(u)=0$.
\end{enumerate}
Then $C_\ast(f_\sigma,z_\sigma;{\bf K})$ is independent of $\sigma$ for any Abel group ${\bf K}$.
\end{theorem}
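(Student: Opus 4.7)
The plan is to show that the map $\sigma \mapsto C_\ast(f_\sigma, z_\sigma; \mathbf{K})$ is locally constant on $[0,1]$; since $[0,1]$ is connected, this yields the conclusion. Fix $\sigma_0 \in [0,1]$ and choose $r>0$ with $\overline{B}(z_{\sigma_0},2r)\subset U$. By continuity of $\sigma\mapsto z_\sigma$ in (i), there is $\delta_1>0$ with $z_\sigma\in B(z_{\sigma_0},r/2)$ for $|\sigma-\sigma_0|<\delta_1$. The key preliminary step is to upgrade the uniform (PS) hypothesis (iii) to a quantitative gradient lower bound: I claim there exist $\delta\in(0,\delta_1]$ and $c_0>0$ with
\[
\|f'_\sigma(u)\|\ge c_0\qquad\forall\,u\in\overline{B}(z_{\sigma_0},r)\setminus B(z_{\sigma_0},r/2),\ |\sigma-\sigma_0|<\delta.
\]
Indeed, if this fails, one finds sequences $\sigma_k\to\sigma_0$ and $u_k\in\overline{B}(z_{\sigma_0},r)\setminus B(z_{\sigma_0},r/2)$ with $f'_{\sigma_k}(u_k)\to 0$. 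By (ii), $f_{\sigma_k}(u_k)$ is bounded (since $f_{\sigma_0}$ is continuous on the compact closure and $f_{\sigma_k}\to f_{\sigma_0}$ uniformly on it), so uniform (PS) gives a critical point $u$ of $f_{\sigma_0}$ in $\overline{B}(z_{\sigma_0},r)\setminus B(z_{\sigma_0},r/2)$, contradicting the uniqueness in (i). Lemma~\ref{lem:pre.1} ensures that $\|f'_\sigma(u)\|$ coincides with the weak slope, so this estimate is exactly what the Degiovanni--Marzocchi deformation machinery for continuous functionals requires.

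Next, I would build a common Gromoll--Meyer-type pair for the whole family. Pick regular values $a<f_{\sigma_0}(z_{\sigma_0})<b$ close enough that, by (ii), for $|\sigma-\sigma_0|<\delta$ (shrinking $\delta$ if necessary) the values $a,b$ remain regular for $f_\sigma$ on the annulus where the gradient bound holds, and the sublevel structure on $\overline{B}(z_{\sigma_0},r)$ intersected with $\{a\le f_\sigma\le b\}$ isolates the single critical point $z_\sigma$. Using the quantitative gradient bound and a standard pseudo-gradient flow for each $f_\sigma$ (normal and tangential near the boundary of $B(z_{\sigma_0},r)$), construct a Gromoll--Meyer pair $(W_\sigma,W_\sigma^-)$ contained in $\overline{B}(z_{\sigma_0},r)$, with the property that the construction depends continuously on $\sigma$. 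The output is that
\[
C_\ast(f_\sigma,z_\sigma;\mathbf{K})\ \cong\ H_\ast(W_\sigma,W_\sigma^-;\mathbf{K}).
\]

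The homotopy invariance is then obtained by interpolating through the straight-line family $g_{\sigma,t}=(1-t)f_{\sigma_0}+tf_\sigma$, $t\in[0,1]$, with $|\sigma-\sigma_0|<\delta$. On $\overline{B}(z_{\sigma_0},r)\setminus B(z_{\sigma_0},r/2)$ one has $\|g'_{\sigma,t}(u)\|\ge c_0/2$ once $\delta$ is small enough that $\|f'_\sigma-f'_{\sigma_0}\|$ is small on that annulus --- and here the main subtlety enters, since (ii) gives only $C^0$-convergence. The escape is that we do not need $C^1$-convergence on all of $\overline{U}$: it suffices to produce a continuous deformation of the pair $(W_{\sigma_0},W_{\sigma_0}^-)$ onto $(W_\sigma,W_\sigma^-)$, which can be assembled from flows for $g_{\sigma,t}$ constructed \emph{within} the weak-slope framework, using only the quantitative slope bound on the annulus plus the uniform (PS) of the family $\{g_{\sigma,t}\}$ (the latter follows from uniform (PS) of $\{f_\sigma\}$ by an elementary convexity argument). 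This deformation yields a homotopy equivalence of pairs and hence an isomorphism of relative homology, giving $C_\ast(f_\sigma,z_\sigma;\mathbf{K})\cong C_\ast(f_{\sigma_0},z_{\sigma_0};\mathbf{K})$.

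The hardest step is clearly the middle one: extracting enough smooth structure from the purely $C^0$-hypothesis (ii) to run a deformation retraction. This is exactly the gap that the weak-slope formalism was designed to bridge, and Lemma~\ref{lem:pre.1} is the bridge that lets us identify the weak slope with the usual $\|f'_\sigma\|$ so that uniform (PS) translates into the uniform slope estimate needed to construct the flows. Once the local invariance is in place, connectedness of $[0,1]$ closes the argument.
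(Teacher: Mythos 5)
Your proof has a genuine gap in the interpolation step, and this gap cannot be patched within the framework you have set up. The hypothesis (ii) gives only $C^0$-continuity of $\sigma\mapsto f_\sigma$, not $C^1$-continuity. Your estimate "$\|g'_{\sigma,t}(u)\|\ge c_0/2$ once $\delta$ is small enough that $\|f'_\sigma-f'_{\sigma_0}\|$ is small on the annulus'' presupposes exactly the $C^1$-closeness that is not available. You acknowledge this but then assert that the needed uniform slope bound for the interpolated family $g_{\sigma,t}=(1-t)f_{\sigma_0}+tf_\sigma$ follows from uniform (PS) of $\{f_\sigma\}$ by "an elementary convexity argument.'' That assertion is false: if $f'_{\sigma_0}(u_k)$ and $f'_{\sigma}(u_k)$ point in roughly opposite directions, the convex combination $g'_{\sigma,t}(u_k)=(1-t)f'_{\sigma_0}(u_k)+t f'_{\sigma}(u_k)$ can tend to zero even though both summands are bounded away from zero, so uniform (PS) for $\{g_{\sigma,t}\}$ does not follow and $g_{\sigma,t}$ may acquire spurious critical points in the annulus. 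The same objection defeats your claim that the Gromoll--Meyer pairs $(W_\sigma,W_\sigma^-)$ "depend continuously on $\sigma$,'' since their construction passes through pseudo-gradient flows whose $C^0$-dependence on $\sigma$ is not secured by hypothesis (ii).

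The correct route (followed by Cingolani and Degiovanni, whose Theorem~3.6 the paper simply cites without reproving) avoids interpolating the functionals entirely. One keeps only your first, correct step — the uniform slope bound for each $f_\sigma$ on the annulus, obtained from uniform (PS) and uniqueness of the critical point — and then compares sublevel sets directly. Writing $c_\sigma=f_\sigma(z_\sigma)$, one applies the weak-slope deformation lemma to each individual $f_\sigma$ to show that the pair $(\{f_\sigma\le c_\sigma+\varepsilon\}\cap B,\,\{f_\sigma\le c_\sigma-\varepsilon\}\cap B)$ computes $C_*(f_\sigma,z_\sigma;{\bf K})$ for suitable small $\varepsilon$, and then exploits $C^0$-closeness of $f_\sigma$ to $f_{\sigma_0}$ (and $c_\sigma\to c_{\sigma_0}$) to obtain sandwich inclusions such as $\{f_{\sigma_0}\le c_{\sigma_0}-\varepsilon'\}\subset\{f_\sigma\le c_\sigma\}\subset\{f_{\sigma_0}\le c_{\sigma_0}+\varepsilon'\}$, together with the fact that the outer inclusion is a homotopy equivalence by the deformation lemma for $f_{\sigma_0}$ alone. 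This only ever invokes the deformation machinery for one fixed functional at a time, which is exactly what makes the $C^0$-hypothesis sufficient.
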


 Let $X$ be a Banach space with dual space $X^\ast$.
 A map $T$ from a subset $D$ of $X$ to $X^\ast$ is said to be of {\bf class $(S)_+$ }
 if  for any sequence $(u_j)\subset D$  with  converging weakly to $u$ in $X$
  for which $\varlimsup_{j\to\infty}\langle T(u_j), u_j-u\rangle\le 0$, it
follows that $(u_j)$ converges strongly to $u$ in $X$.
 The definition was introduced by Browder \cite{Bro} and  Skrypnik \cite{Skr} (the condition of belonging to class $(S)_+$ is called condition $\alpha$ in the latter paper).

\begin{proposition}\label{prop:stablity}
Under Hypothesis~\ref{hyp:1.1} or Hypothesis~\ref{hyp:1.3} the restriction of the gradient $\nabla\mathcal{L}$
to a small neighborhood of $0\in H$ is  of the class $(S)_+$.
Thus $\mathcal{L}$ satisfies the (PS) condition in a closed neighborhood of  $0\in H$.
\end{proposition}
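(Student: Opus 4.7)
I would reduce the proof to a local monotonicity-modulo-compact estimate of the form
\begin{equation*}
(\nabla\mathcal{L}(u)-\nabla\mathcal{L}(v),\,u-v)_H\;\ge\;\delta\|u-v\|^2+(Q(0)(u-v),u-v)_H,
\end{equation*}
valid on a closed ball $\bar{B}_H(0,r)\subset U$ for some small $r>0$ and some $\delta>0$, with $Q(0)$ compact on $H$. Once this is in hand, $(S)_+$ is immediate: if $u_j\rightharpoonup u$ in $\bar{B}_H(0,r)$ with $\varlimsup(\nabla\mathcal{L}(u_j),u_j-u)_H\le 0$, then $(\nabla\mathcal{L}(u),u_j-u)_H\to 0$ and, using that $Q(0)$ is compact together with $u_j-u\rightharpoonup 0$, also $(Q(0)(u_j-u),u_j-u)_H\to 0$; hence $\delta\varlimsup\|u_j-u\|^2\le 0$, forcing $u_j\to u$ strongly. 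The (PS) condition on $\bar{B}_H(0,r)$ then follows in the usual way: a Palais--Smale sequence $u_n$ is bounded, admits a weakly convergent subsequence $u_n\rightharpoonup u$, and $(\nabla\mathcal{L}(u_n),u_n-u)_H\to 0$ because $\|\nabla\mathcal{L}(u_n)\|\to 0$, so $(S)_+$ delivers strong convergence.

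To prove the estimate I would first restrict to $u,v\in \bar{B}_H(0,r)\cap X$, so that the segment $\gamma(t)=v+t(u-v)$, $t\in[0,1]$, stays in $X\cap U$. The G\^ateaux differentiability of $\nabla\mathcal{L}$ at points of $X\cap U$ makes $g(t)=(\nabla\mathcal{L}(\gamma(t)),u-v)_H$ differentiable on $[0,1]$ with $g'(t)=(B(\gamma(t))(u-v),u-v)_H$, and after a routine justification of the fundamental theorem of calculus,
\begin{equation*}
(\nabla\mathcal{L}(u)-\nabla\mathcal{L}(v),u-v)_H=\int_0^1\bigl(B(\gamma(t))(u-v),u-v\bigr)_H\,dt.
\end{equation*}
The decomposition $B=P+Q$ splits this into two pieces. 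The equivalent form (D4$^\ast$) of (D4) bounds the $P$-contribution below by $C_0'\|u-v\|^2$ as soon as $r<\eta_0$. Hypothesis (D3) supplies a function $\varepsilon(r)\to 0$ with $\sup_{x\in \bar{B}_H(0,r)\cap X}\|Q(x)-Q(0)\|_{\mathscr{L}(H)}\le\varepsilon(r)$, so the $Q$-contribution equals $(Q(0)(u-v),u-v)_H$ up to an error of size $\varepsilon(r)\|u-v\|^2$. Choosing $r$ small enough that $\delta:=C_0'-\varepsilon(r)>0$ then yields the desired estimate on $X\cap \bar{B}_H(0,r)$.

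I would then extend the estimate to all $u,v\in \bar{B}_H(0,r)$ by density. For arbitrary $u,v\in \bar{B}_H(0,r)$, choose $u_n,v_n\in X\cap \bar{B}_H(0,r)$ with $u_n\to u$ and $v_n\to v$ in $H$; this is possible by density of $X$ in $H$, after a harmless shrinking of $r$ if needed. Since $\nabla\mathcal{L}\colon U\to H$ is continuous---from $\mathcal{L}\in C^1(U,\mathbb{R})$ under Hypothesis~\ref{hyp:1.1}, and from the identification $\nabla\mathcal{L}=A$ on $U\cap X$ together with the continuity of $A\colon U^X\to X\hookrightarrow H$ under Hypothesis~\ref{hyp:1.3}---the left-hand side passes to the limit. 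The right-hand side does too, since $\|\cdot\|^2$ and the bilinear form $(Q(0)\cdot,\cdot)_H$ are continuous on $H$.

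The main technical obstacle I expect is precisely this last extension step: $B$, $P$, and $Q$ are only defined on $U\cap X$, and the mean value identity is literally available only along segments in $X$. Notably, the pointwise strong convergence asserted by (D2) is too weak for a direct limiting argument inside the integral when applied to $P$, but this is avoided by routing everything through the uniform lower bound (D4$^\ast$); the only input about $Q$ that is really needed is its operator-norm continuity at $0$, which is exactly (D3). Under Hypothesis~\ref{hyp:1.3} the scheme proceeds verbatim once $\nabla\mathcal{L}$ is identified with $A$ on $U\cap X$ and the continuous embedding $X\hookrightarrow H$ is used to approximate.
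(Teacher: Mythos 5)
Your proposal is correct and takes essentially the same route as the paper: approximate by points of $X$, apply a mean value argument along segments in $X$, split $B=P+Q$, use the uniform coercivity (D4$^\ast$) for the $P$-part and the operator-norm continuity (D3) of $Q$ at the origin for the $Q$-part, and then pass to a lower bound of the form $\delta\|u-v\|^2+(Q(0)(u-v),u-v)_H$, after which $(S)_+$ and (PS) are immediate. The paper's Proposition~2.5 does this with a double approximation (approximating both the weak limit $u_0$ and the sequence members $u_{n_k}$ by elements of $X$ inside a single chain of inequalities and then sending $m\to\infty$ and $k\to\infty$), whereas you first establish the monotonicity-modulo-compact estimate for arbitrary pairs in the closed ball via density and only then apply $(S)_+$; this is a slightly cleaner organization but not a different proof.

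One technical point worth fixing: you invoke the integral form $(\nabla\mathcal{L}(u)-\nabla\mathcal{L}(v),u-v)_H=\int_0^1(B(\gamma(t))(u-v),u-v)_H\,dt$ and wave at a ``routine justification of the fundamental theorem of calculus.'' That justification is not routine here: G\^ateaux differentiability gives you $g'(t)=(B(\gamma(t))(u-v),u-v)_H$ at every $t$, but (D4$^\ast$) gives only a lower bound on $P$, and (D2)--(D3) control $P$ and $Q$ near $0$ pointwise and in norm respectively; nothing guarantees that $t\mapsto\|B(\gamma(t))\|$ is bounded or that $g'$ is integrable. The safe and sufficient tool is the classical mean value theorem: since $g(t)=(\nabla\mathcal{L}(\gamma(t)),u-v)_H$ is continuous on $[0,1]$ and differentiable on $(0,1)$, there is $\tau\in(0,1)$ with $g(1)-g(0)=g'(\tau)$, i.e.\ $(\nabla\mathcal{L}(u)-\nabla\mathcal{L}(v),u-v)_H=(B(\gamma(\tau))(u-v),u-v)_H$, and then (D4$^\ast$) and (D3) applied at the single point $\gamma(\tau)\in\bar B_H(0,r)\cap X$ give exactly your estimate. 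This is precisely how the paper argues, and it avoids any integrability issue. With that substitution the proposal is sound, including the extension by density and the $(S)_+\Rightarrow$(PS) step.
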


The case that Hypothesis~\ref{hyp:1.3} is satisfied  was proved in the proof of Theorem~2.12 of \cite[pages 2966-2967]{Lu2}.
 If Hypothesis~\ref{hyp:1.1} holds the proposition may be proved
by almost same arguments.  It may also be contained in the proof of the following more general claim.

\begin{proposition}\label{prop:stablity1}
Let $H$, $X$ and $U$ be as in Hypothesis~\ref{hyp:1.1},  $I$  an  interval  in $\mathbb{R}$.
  Suppose that $\{\mathcal{F}_\lambda\in C^1(U,\mathbb{R})\,|\,\lambda\in I\}$  satisfies:
$\mathcal{F}'_\lambda(0)=0$,  the gradient $\nabla\mathcal{F}_\lambda$ has a G\^ateaux derivative $B_\lambda(u)\in \mathscr{L}_s(H)$ at every point
$u\in U\cap X$, and that the map $B_\lambda: U\cap X\to
\mathscr{L}_s(H)$  has a decomposition
$B_\lambda=P_\lambda+Q_\lambda$, where for each $x\in U\cap X$,
$Q_\lambda(x)\in\mathscr{L}_s(H)$ is compact. For
some small $\delta>0$ with $\bar{B}_H(0, \delta)\subset U$ operators $P_\lambda$, $Q_\lambda$ and $\nabla\mathcal{F}_\lambda$ are also assumed to satisfy the following
properties:
 \begin{enumerate}
 \item[\rm (i)]   There exist positive constants $c_0>0$ such that
$$
(P_\lambda(x)u, u)\ge c_0\|u\|^2\quad\forall u\in H,\;\forall x\in
\bar{B}_H(0,\delta)\cap X,\quad\forall\lambda\in I.
$$
 \item[\rm (ii)]  As $x\in U\cap X$ approaches $0$ in $H$,
$Q_\lambda(x)\to Q_\lambda(0)$ in $\mathscr{L}_s(H)$ uniformly  with respect to $\lambda\in I$.

  \item[\rm (iii)]  If $(\lambda_n)\subset I$ converges to $\lambda\in I$ then
  $$
  \|Q_{\lambda_n}(0)-Q_\lambda(0)\|\to 0\quad\hbox{and}\quad \|\nabla\mathcal{F}_{\lambda_n}(x)-
  \nabla\mathcal{F}_\lambda(x)\|\to 0\quad\forall x\in \bar{B}_H(0, \delta)\cap X.
  $$
\end{enumerate}
 Then for any sequences $\lambda_n\to\lambda_0$ in $I$ and  $(u_n)\subset\bar{B}_H(0, \delta)$ such that
  $\mathcal{F}'_{\lambda_n}(u_n)\to 0$ and $(\mathcal{F}_{\lambda_n}(u_n))$ is bounded, there exists  a subsequence $u_{n_k}\to u_0\in \bar{B}_H(0, \delta)$
with $\mathcal{F}'_{\lambda_0}(u_0)=0$.

Moreover, if $X$ itself is a normed line space with norm $\|\cdot\|$ such that the inclusion $X\hookrightarrow H$ is
continuous (and so $U\cap X$ is an open neighborhood of $0$ in $X$, denoted by $U^X$ as before),
then the sentence ``\textsf{the gradient $\nabla\mathcal{F}_\lambda$ has a G\^ateaux derivative $B_\lambda(u)\in \mathscr{L}_s(H)$ at every point
$u\in U\cap X$}" in the above statement  can be replaced by
``\textsf{there exists a map $B_\lambda: U^X\to \mathscr{L}_s(H)$ and a continuous and continuously directional differentiable
 map $A_\lambda: U^X\to X$ such that
 \begin{eqnarray}\label{e:pre.1}
D\mathcal{F}_\lambda(x)[u]=(A_\lambda(x), u)_H\quad\hbox{and}\quad
(DA_\lambda(x)[u], v)_H=(B_\lambda(x)u, v)_H
\end{eqnarray}
 for all $x\in U^X$  and $u, v\in X$}".
\end{proposition}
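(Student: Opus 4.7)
The plan is to prove an $(S)_+$-type monotonicity inequality for the family $\{\nabla\mathcal{F}_\lambda\}$, extract strong convergence from it, and then exploit a shifted test vector to identify the limit as a critical point of $\mathcal{F}_{\lambda_0}$. Shrink $\delta>0$ so that a constant $c_1:=c_0-\omega(\delta)>0$ can be extracted below, where $\omega(\delta)\to 0$ by (ii). For any $u,v\in\bar{B}_H(0,\delta)\cap X$ the segment $v+t(u-v)$ lies in $U\cap X$ by convexity, so the mean value theorem applied to $t\mapsto\nabla\mathcal{F}_\lambda(v+t(u-v))$ (whose G\^ateaux derivative along $X$ is $B_\lambda$), together with the decomposition $B_\lambda=P_\lambda+Q_\lambda$, the coercivity (i), and the uniform continuity (ii) of $Q_\lambda$ at $0$, yields
$$\langle\nabla\mathcal{F}_\lambda(u)-\nabla\mathcal{F}_\lambda(v),\,u-v\rangle\ \ge\ c_1\|u-v\|^2+\langle Q_\lambda(0)(u-v),\,u-v\rangle.$$
This inequality then extends to $u\in\bar{B}_H(0,\delta)$ (with $v\in X$ still) by approximating $u\in H$ by $u^m\in X\cap\bar{B}_H(0,\delta)$ and passing to the limit, using continuity of $\nabla\mathcal{F}_\lambda$ on $U$ (from $C^1$) and boundedness of $Q_\lambda(0)$.

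For the strong convergence, extract a subsequence with $u_n\rightharpoonup u_0\in\bar{B}_H(0,\delta)$ and pick $v_k\in\bar{B}_H(0,\delta)\cap X$ with $v_k\to u_0$ in $H$. Apply the extended inequality with $\lambda=\lambda_n$, $u=u_n$, $v=v_k$ and rearrange to
$$c_1\|u_n-v_k\|^2\ \le\ \langle\nabla\mathcal{F}_{\lambda_n}(u_n),u_n-v_k\rangle-\langle\nabla\mathcal{F}_{\lambda_n}(v_k),u_n-v_k\rangle-\langle Q_{\lambda_n}(0)(u_n-v_k),u_n-v_k\rangle.$$
Take $\limsup_n$: the first term vanishes since $\nabla\mathcal{F}_{\lambda_n}(u_n)\to 0$; the second converges to $\langle\nabla\mathcal{F}_{\lambda_0}(v_k),u_0-v_k\rangle$ by pointwise convergence (iii) at $v_k\in X$ paired with the weak convergence $u_n-v_k\rightharpoonup u_0-v_k$; the third converges to $\langle Q_{\lambda_0}(0)(u_0-v_k),u_0-v_k\rangle$ because (iii) gives $Q_{\lambda_n}(0)\to Q_{\lambda_0}(0)$ in operator norm and $Q_{\lambda_0}(0)$ is compact, which promotes the weak convergence of $u_n-v_k$ to strong convergence of $Q_{\lambda_n}(0)(u_n-v_k)$. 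Letting $k\to\infty$ then forces the right side to zero, so $u_n\to u_0$ strongly in $H$.

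To identify $u_0$ as a critical point, apply the extended inequality once more, now with $u=u_n$ and the \emph{shifted} test vector $v=v_k+sw$, where $w\in X$ is arbitrary and $s>0$ is small enough that $v_k+sw\in X\cap\bar{B}_H(0,\delta)$ (after shrinking $\delta$ one arranges $\|u_0\|<\delta$, giving the required room). The strong convergence $u_n\to u_0$ just established eliminates the $n$-dependence on the right as $n\to\infty$; sending $k\to\infty$ so $v_k\to u_0$ and using continuity of $\nabla\mathcal{F}_{\lambda_0}$ and boundedness of $Q_{\lambda_0}(0)$ reduces the inequality to
$$s\langle\nabla\mathcal{F}_{\lambda_0}(u_0+sw),w\rangle\ \ge\ c_1s^2\|w\|^2+s^2\langle Q_{\lambda_0}(0)w,w\rangle.$$
Divide by $s>0$ and let $s\to 0^+$: continuity of $\nabla\mathcal{F}_{\lambda_0}$ at $u_0$ gives $\langle\nabla\mathcal{F}_{\lambda_0}(u_0),w\rangle\ge 0$; replacing $w$ with $-w$ yields the reverse inequality, so $\langle\nabla\mathcal{F}_{\lambda_0}(u_0),w\rangle=0$ for every $w\in X$, and density of $X$ in $H$ gives $\nabla\mathcal{F}_{\lambda_0}(u_0)=0$.

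For the second part, where $X$ is a Banach space continuously embedded in $H$, the scheme is the same but the mean value identity in the first paragraph is first written in the Banach space $X$ for the $C^1$-map $A_\lambda:U^X\to X$, and then paired with $u-v\in X$ in the $H$-inner product: the relation $D\mathcal{F}_\lambda(x)[u]=(A_\lambda(x),u)_H$ from (F2) identifies $A_\lambda$ with $\nabla\mathcal{F}_\lambda$ on $U^X$, while $(DA_\lambda(x)[u],v)_H=(B_\lambda(x)u,v)_H$ from (F3) converts the Banach-space derivative of $A_\lambda$ into the $H$-symmetric operator $B_\lambda$; conditions (i)--(iii) then give the same monotonicity inequality, and the three paragraphs above apply verbatim (using continuity of $\nabla\mathcal{F}_\lambda$ on the whole $H$-neighborhood $U$, from $\mathcal{F}_\lambda\in C^1(U,\mathbb{R})$). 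The step I expect to be hardest is the identification of $u_0$ as a critical point: condition (iii) only provides pointwise convergence of $\nabla\mathcal{F}_{\lambda_n}$ on $X$, and no uniform continuity of the family is available to evaluate it at the $H$-limit $u_0$, which in general is not in $X$. The trick of introducing the shifted test vector $v_k+sw$ and taking the iterated limits $n\to\infty$, $k\to\infty$, $s\to 0^+$ is precisely what circumvents this gap without adding any uniform-continuity hypothesis.
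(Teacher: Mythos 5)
Your proof is correct and follows the same core strategy as the paper's: derive an $(S)_+$-type monotonicity inequality for $\nabla\mathcal{F}_\lambda$ from the mean value theorem combined with the coercivity of $P_\lambda$ and the uniform small perturbation $Q_\lambda(x)-Q_\lambda(0)$, then extract strong convergence of $(u_n)$ by approximating $u_0$ from within $X$ and exploiting the compactness of $Q_{\lambda_0}(0)$ to kill the cross term. Where you differ is in being considerably more careful about the order of limits (the paper lets the $X$-approximation index $m\to\infty$ for fixed $k$ before sending $k\to\infty$, which quietly leaves a term $(\nabla\mathcal{F}_{\lambda_{n_k}}(u_0),v_{n_k}-u_0)_H$ to which condition (iii) cannot be applied since $u_0$ need not lie in $X$; reversing the order as you do avoids this), and — more importantly — in explicitly verifying $\mathcal{F}'_{\lambda_0}(u_0)=0$. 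The paper simply declares ``the desired claim is proved'' once $u_{n_k}\to u_0$, but that step is genuinely nontrivial for exactly the reason you state: (iii) only supplies pointwise convergence of $\nabla\mathcal{F}_{\lambda_n}$ on $X$, no joint continuity of $(\lambda,u)\mapsto\nabla\mathcal{F}_\lambda(u)$ is available, and $u_0$ is an $H$-limit that may lie outside $X$. Your shifted test vector $v_k+sw$ together with the iterated limits $n\to\infty$, $k\to\infty$, $s\to0^+$ is a clean and correct way to close that gap, and it is precisely the kind of argument the paper should have spelled out.

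One small point worth tightening: when $\|u_0\|=\delta$, simply ``shrinking $\delta$'' does not ensure that $v_k+sw$ stays inside $\bar{B}_H(0,\delta)$, since the hypothesis is given for that particular $\delta$ and the sequence $(u_n)$ fills the closed ball. The standard repair is to replace $v_k$ by a slight contraction $(1-\epsilon_k)v_k$ with $\epsilon_k\downarrow 0$, which still converges to $u_0$ and gives room for the perturbation $sw$; alternatively, one can run the argument only for $w$ with $(u_0,w)_H\le 0$ and recover the general case from the resulting one-sided inequality on a half-space together with continuity. Either patch is routine, so this does not affect the soundness of your overall argument.
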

\begin{proof}
By (ii) we may shrink $\delta>0$ so that
$$
\|Q_\lambda(x)-Q_\lambda(0)\|<\frac{c_0}{2},\quad\forall x\in \bar{B}_H(0, \delta)\cap X,\quad\forall\lambda\in I.
$$
It follows from this  and (i) that for all $x\in \bar{B}_H(0, \delta)\cap X$ and $u\in H$,
\begin{eqnarray}\label{e:pre.2}
\bigl(B_\lambda(x)u,u\bigr)_H&=&\bigl(P_\lambda(x)u,u\bigr)_H+
\bigl([Q_\lambda(x)-Q_\lambda(0)]u,u\bigr)_H+ \bigl(Q_\lambda(0)u, u\bigr)_H\nonumber\\
&\ge& \frac{c_0}{2}\|u\|^2+ \bigl(Q_\lambda(0)u, u\bigr)_H.
\end{eqnarray}
Clearly, $(u_n)$ has a subsequence $(u_{n_k})$ weakly converging to $u_0\in \bar{B}_H(0, \delta)$.
Since $\bar{B}_H(0, \delta)\cap X$ is dense in $\bar{B}_H(0, \delta)$, and $\mathcal{F}_\lambda\in C^1(U)$, we have sequences
$(v_{n_k})\subset\bar{B}_H(0, \delta)\cap X$ and $(u_{0m})\subset\bar{B}_H(0, \delta)\cap X$ such that
for all $k,m=1,2,\cdots$,
\begin{equation}\label{e:pre.3}
\left.\begin{array}{ll}
\|u_{0m}-u_0\|<\frac{1}{m},\quad
\|v_{n_k}-u_{n_k}\|<\frac{1}{k}\quad\hbox{and}\quad\\
\|\nabla\mathcal{F}_{\lambda_{n_k}}(v_{n_k})-\nabla\mathcal{F}_{\lambda_{n_k}}(u_{n_k})\|
<\frac{1}{k}.
\end{array}\right\}
\end{equation}
Since $\nabla\mathcal{F}_\lambda$ is continuous and has a G\^ateaux derivative $B_\lambda(u)\in \mathscr{L}_s(H)$ at every point
$u\in U\cap X$, for each fixed $k$ using the mean value theorem we have $\tau\in (0, 1)$ such that
\begin{eqnarray*}
&&(\nabla\mathcal{F}_{\lambda_{n_k}}(v_{n_k}), v_{n_k}-u_{0m})_H\\
&=&(\nabla\mathcal{F}_{\lambda_{n_k}}(v_{n_k})-\nabla\mathcal{F}_{\lambda_{n_k}}(u_{0m}),
v_{n_k}-u_{0m})_H-(\nabla\mathcal{F}_{\lambda_{n_k}}(u_{0m}), v_{n_k}-u_{0m})_H\\
&=&\bigl(D(\nabla\mathcal{F}_{\lambda_{n_k}})(\tau v_{n_k}+ (1-\tau)u_{0m})[v_{n_k}-u_{0m}], v_{n_k}-u_{0m}\bigr)_H\\&&-(\nabla\mathcal{F}_{\lambda_{n_k}}(u_{0m}), v_{n_k}-u_{0m})_H\\
&=&\bigl(B_{\lambda_{n_k}}(\tau v_{n_k}+ (1-\tau)u_{0m})(v_{n_k}-u_{0m}), v_{n_k}-u_{0m}\bigr)_H\\&&-(\nabla\mathcal{F}_{\lambda_{n_k}}(u_{0m}), v_{n_k}-u_{0m})_H\\
&\ge& \frac{c_0}{2}\|v_{n_k}-u_{0m}\|^2-(\nabla\mathcal{F}_{\lambda_{n_k}}(u_{0m}), v_{n_k}-u_{0m})_H\\&&+
(Q_{\lambda_{n_k}}(0)(v_{n_k}-u_{0m}), v_{n_k}-u_{0m})_H,
\end{eqnarray*}
where the final inequality is because of (\ref{e:pre.2}).
Since $\mathcal{F}_\lambda\in C^1(U)$ letting $m\to\infty$ we get
\begin{eqnarray*}
(\nabla\mathcal{F}_{\lambda_{n_k}}(v_{n_k}), v_{n_k}-u_{0})_H&\ge& \frac{c_0}{2}\|v_{n_k}-u_{0}\|^2-(\nabla\mathcal{F}_{\lambda_{n_k}}(u_{0}), v_{n_k}-u_{0})_H\\
&&+(Q_{\lambda_{n_k}}(0)(v_{n_k}-u_{0}), v_{n_k}-u_{0})_H,\quad\forall k=1,2,\cdots.
\end{eqnarray*}
Note that (\ref{e:pre.3}) implies $\lim_{k\to\infty}\nabla\mathcal{F}_{\lambda_{n_k}}(v_{n_k})=\lim_{k\to\infty}\nabla\mathcal{F}_{\lambda_{n_k}}(u_{n_k})=0$ and $v_{n_k}\rightharpoonup u_0$.
It easily follows these  and (iii) that
$$
\frac{c_0}{2}\lim_{k\to\infty}\|v_{n_k}-u_0\|^2\le
\lim_{k\to\infty}(\nabla\mathcal{F}_{\lambda_{n_k}}(v_{n_k}), v_{n_k}-u_0)_H=0
$$
and thus $u_{n_k}\to u_0$ in $H$. The desired claim is proved.

In order to prove the second part, we only need to change the seven lines below (\ref{e:pre.3}) into:

``Since $A_{\lambda}: U^X\to X$  is continuous, and continuously directional differentiable,
 and satisfies (\ref{e:pre.1}),  for each fixed $k$ using the mean value theorem we have $\tau\in (0, 1)$ such that
\begin{eqnarray*}
&&(\nabla\mathcal{F}_{\lambda_{n_k}}(v_{n_k}), v_{n_k}-u_{0m})_H\\
&=&(\nabla\mathcal{F}_{\lambda_{n_k}}(v_{n_k})-\nabla\mathcal{F}_{\lambda_{n_k}}(u_{0m}),
v_{n_k}-u_{0m})_H-(\nabla\mathcal{F}_{\lambda_{n_k}}(u_{0m}), v_{n_k}-u_{0m})_H\\
&=&(A_{\lambda_{n_k}}(v_{n_k})-A_{\lambda_{n_k}}(u_{0m}), v_{n_k}-u_{0m})_H-(\nabla\mathcal{F}_{\lambda_{n_k}}(u_{0m}), v_{n_k}-u_{0m})_H\\
&=&\bigl(DA_{\lambda_{n_k}}(\tau v_{n_k}+ (1-\tau)u_{0m})[v_{n_k}-u_{0m}], v_{n_k}-u_{0m}\bigr)_H\\&&-(\nabla\mathcal{F}_{\lambda_{n_k}}(u_{0m}), v_{n_k}-u_{0m})_H
\end{eqnarray*}"the same conclusion is obtained.
 \end{proof}

\begin{corollary}\label{cor:stablity2}
Suppose that one of the following two conditions holds:
 \begin{enumerate}
\item[\rm (I)]   $\mathcal{L}\in C^1(U,\mathbb{R})$ and $\widehat{\mathcal{L}}\in C^1(U,\mathbb{R})$ satisfy
 Hypothesis~\ref{hyp:1.1} and  Hypothesis~\ref{hyp:1.2}, respectively.
 \item[\rm (II)]  $\mathcal{L}\in C^1(U,\mathbb{R})$ and $\widehat{\mathcal{L}}\in C^1(U,\mathbb{R})$ satisfy
 Hypothesis~\ref{hyp:1.3} and  Hypothesis~\ref{hyp:1.4}, respectively.
  \end{enumerate}
  Then for a bounded interval $I$ in $\mathbb{R}$ and for
   some small $\delta>0$ with $\bar{B}_H(0, \delta)\subset U$
  the conclusion of Proposition~\ref{prop:stablity1} holds true  on $\bar{B}_H(0, \delta)$
for the family $\{\mathcal{F}_\lambda:=\mathcal{L}-\lambda\widehat{\mathcal{L}}\,|\, \lambda\in I\}$,
 that is, for any sequences $\lambda_n\to\lambda_0$ in $I$ and  $(u_n)\subset\bar{B}_H(0, \delta)$ such that
 $\mathcal{F}'_{\lambda_n}(u_n)\to 0$ and $(\mathcal{F}_{\lambda_n}(u_n))$ is bounded, there exists
   a subsequence $u_{n_k}\to u_0\in \bar{B}_H(0, \delta)$
with $\mathcal{F}'_{\lambda_0}(u_0)=0$.
\end{corollary}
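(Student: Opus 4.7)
The plan is to reduce the claim directly to Proposition~\ref{prop:stablity1} by exhibiting a decomposition $B_\lambda = P_\lambda + Q_\lambda$ for the family $\mathcal{F}_\lambda := \mathcal{L}-\lambda\widehat{\mathcal{L}}$ and verifying the three conditions (i)--(iii) on a small ball $\bar{B}_H(0,\delta)$. In both cases I and II the candidate derivative is $B_\lambda(u):=B(u)-\lambda\widehat{B}(u)$ (with $\widehat{B}=\widehat{\mathcal{L}}''$ in case~I), and the natural splitting is $P_\lambda(u):=P(u)$, independent of $\lambda$, and $Q_\lambda(u):=Q(u)-\lambda\widehat{B}(u)$. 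Since each $\widehat{B}(u)$ is compact (by Hypothesis~\ref{hyp:1.2} in case I or (D*) of Hypothesis~\ref{hyp:1.4} in case II) and $Q(u)$ is compact by Hypothesis~\ref{hyp:1.1}, $Q_\lambda(u)$ remains compact, while $P_\lambda$ inherits the positive definiteness of $P$.

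First I would use (D4*) to fix $\delta\in(0,\eta_0]$ and set $c_0:=C'_0$, which immediately yields condition~(i) uniformly in $\lambda\in I$. Next, writing
$$
Q_\lambda(x)-Q_\lambda(0)=\bigl(Q(x)-Q(0)\bigr)-\lambda\bigl(\widehat{B}(x)-\widehat{B}(0)\bigr),
$$
the first term tends to $0$ in $\mathscr{L}_s(H)$ as $x\to 0$ along $U\cap X$ by (D3), and the second tends to $0$ by the continuity hypothesis on $\widehat{\mathcal{L}}''$ (Hyp.~\ref{hyp:1.2}) or on $\widehat{B}$ ((D*) of Hyp.~\ref{hyp:1.4}). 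Because $I$ is bounded, $\sup_{\lambda\in I}|\lambda|<\infty$, so this convergence is uniform in $\lambda\in I$, giving condition~(ii). For condition~(iii), if $\lambda_n\to\lambda_0$ in $I$ then $\|Q_{\lambda_n}(0)-Q_{\lambda_0}(0)\|=|\lambda_n-\lambda_0|\,\|\widehat{B}(0)\|\to 0$, and for each fixed $x\in\bar{B}_H(0,\delta)\cap X$,
$$
\|\nabla\mathcal{F}_{\lambda_n}(x)-\nabla\mathcal{F}_{\lambda_0}(x)\|
=|\lambda_n-\lambda_0|\,\|\nabla\widehat{\mathcal{L}}(x)\|\longrightarrow 0,
$$
since $\nabla\widehat{\mathcal{L}}$ is continuous on the compact ball and hence bounded there.

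Finally I would distinguish the two invocations of Proposition~\ref{prop:stablity1}. In case~(I), Hypothesis~\ref{hyp:1.1} (with $X=H$, or more generally with $X$ dense) provides the G\^ateaux derivative $B_\lambda(u)\in\mathscr{L}_s(H)$ at every $u\in U\cap X$, so the first (main) statement of Proposition~\ref{prop:stablity1} applies verbatim. In case~(II), I would use the ``Moreover'' part: setting $A_\lambda:=A-\lambda\widehat{A}\in C^1(U^X,X)$, the relation $D\mathcal{F}_\lambda(x)[u]=(A_\lambda(x),u)_H$ and $(DA_\lambda(x)[u],v)_H=(B_\lambda(x)u,v)_H$ for $x\in U^X$ and $u,v\in X$ follow from (F2)--(F3) applied to both $\mathcal{L}$ and $\widehat{\mathcal{L}}$, so again the proposition delivers the desired uniform (PS) conclusion.

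The argument is essentially a bookkeeping exercise: the only point needing care is that all estimates remain uniform in $\lambda\in I$, which is the reason for requiring $I$ to be bounded. No genuine obstacle is expected, since the compactness/positive-definiteness structure of $\mathcal{L}$ is preserved under subtraction of a bounded multiple of $\widehat{\mathcal{L}}$ whose Hessian is a globally compact perturbation.
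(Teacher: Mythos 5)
Your proposal is correct and follows essentially the same approach as the paper: both use the decomposition $P_\lambda\equiv P$, $Q_\lambda=Q-\lambda\widehat{B}$, derive condition~(i) from (D4*), observe that boundedness of $I$ makes the $\lambda$-estimates uniform, and invoke the ``Moreover'' part of Proposition~\ref{prop:stablity1} in Case~(II). The paper merely sketches the verification of (ii)--(iii); your write-up supplies the elementary details that the paper leaves to the reader.
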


\begin{proof}
{\bf Case (I)}.
The gradient $\nabla\mathcal{F}_\lambda=\nabla\mathcal{L}-\lambda\nabla\mathcal{G}$ has a G\^ateaux derivative $B_\lambda(u)=B(u)-\lambda\widehat{\mathcal{L}}''(u)\in \mathscr{L}_s(H)$ at every point $u\in U\cap X$, and that $B_\lambda(x)=P_\lambda(x)+Q_\lambda(x)=P(x)+Q_\lambda(x)$ for each $x\in U\cap X$, where
$Q_\lambda(x)=Q(x)-\lambda\widehat{\mathcal{L}}''(x)\in \mathscr{L}_s(H)$ is compact.
By (D4) or (D4*) in Hypothesis~\ref{hyp:1.1}, $P_\lambda\equiv P$ satisfies Proposition~\ref{prop:stablity1}(i) for some small $\delta>0$ with $\bar{B}_H(0, \delta)\subset U$.
Since the internal $I\subset\mathbb{R}$ is bounded, it is easily seen that
the conditions (ii) and (iii) in Proposition~\ref{prop:stablity1} are satisfied.

\vspace{4pt}\noindent
{\bf Case (II)}. For each $x\in U\cap X$, let $A_\lambda(x)=A(x)-\lambda\widehat{\mathcal{L}}'(x)$,
$B_\lambda(x)=B(x)-\lambda\widehat{\mathcal{L}}''(x)$, $P_\lambda\equiv P$  and $Q_\lambda(x)=Q(x)-\lambda\widehat{\mathcal{L}}''(x)$.
Similar arguments show that the conditions of Proposition~\ref{prop:stablity1} may be satisfied.
\end{proof}

\section{New necessary conditions and sufficient criteria for  bifurcations of gradient mappings }\label{sec:B.2.1}

For a completely continuous operator $A:U\to H$,
 if  $A$ is the gradient of a weakly continuous and uniformly
differentiable functional $f:U\to\mathbb{R}$ and has the Fr\'echet derivative $A'(0)$,
(which must be compact and self-adjoint,) Krasnosel'ski \cite{Kra}
proved that  each nonzero eigenvalue $\mu$ of $A'(0)$ gives a bifurcation point
$(\mu,0)$ of the equation $A(u)=\lambda u$.
More precisely, for any sufficiently small $r>0$ there exists $\lambda_r\in\mathbb{R}$, $u_r\in SH$
such that $A(u_r)=\lambda_r u_r$ and $\lambda_r\to\mu$ as $r\to 0$.
Since then several extensions and improvements of this  work have been made.
For example, if $f\in C^2(U, \mathbb{R})$ satisfies $f'(0)=0$
and $\mu$ is an isolated eigenvalue of $f^{\prime\prime}(0)$ of finite multiplicity,
Rabinowitz \cite{Rab74} proved that  $(\mu,0)$
is a bifurcation point of the equation $\nabla f(u)=\lambda u$.
See \cite{Rab74,To} and references therein for further details.
In this section we give  necessary conditions and
sufficient criteria for a point $(\mu,0)\in I\times H$ to be a bifurcation point of (\ref{e:Intro.1})
under some new assumptions of $\mathcal{F}$. More sufficient criteria
will be given in next three sections.

\subsection{Necessary conditions}\label{sec:B.2.1N}

\begin{theorem}\label{th:Ka1}
Let $H$, $X$ and $U$ be as in Hypothesis~\ref{hyp:1.1},
 and $\Lambda$ a topological space.
  For each $\lambda\in\Lambda$, let $\{\mathcal{F}_\lambda\in C^1(U, \mathbb{R})\,|\,\lambda\in\Lambda\}$
    satisfy $\mathcal{F}'_\lambda(0)=0$, and let the gradient $\nabla\mathcal{F}_\lambda$ have
 a G\^ateaux derivative $B_\lambda(u)\in \mathscr{L}_s(H)$ at every point
$u\in U\cap X$. Suppose that the map $B_\lambda: U\cap X\to
\mathscr{L}_s(H)$  has a decomposition $B_\lambda=P_\lambda+Q_\lambda$, where for each $x\in U\cap X$,
 $P_\lambda(x)\in\mathscr{L}_s(H)$ is  positive definitive and
$Q_\lambda(x)\in\mathscr{L}_s(H)$ is compact, and
   that $P_\lambda$ and $Q_\lambda$ satisfy the following conditions:
    \begin{enumerate}
\item[\rm (i)]  If $(x_k)\subset U\cap X$ approaches to $0$ in $H$ and $(\lambda_k)\subset \Lambda$ converges to $\lambda^\ast$ then
$\|P_{\lambda_k}(x_k)h-P_{\lambda^\ast}(0)h\|\to 0$ for each $h\in H$.
 \item[\rm (ii)]  For some small $\delta>0$, there exists a positive constant $c_0>0$ such that
$$
(P_\lambda(x)u, u)\ge c_0\|u\|^2\quad\forall u\in H,\;\forall x\in
\bar{B}_H(0,\delta)\cap X,\quad\forall\lambda\in \Lambda.
$$
 \item[\rm (iii)]  $Q_\lambda: U\cap X\to \mathscr{L}_s(H)$ is uniformly continuous at $0$  with respect to $\lambda\in \Lambda$.
  \item[\rm (iv)]  If $(\lambda_n)\subset \Lambda$ converges to $\lambda^\ast$ then
  $\|Q_{\lambda_n}(0)-Q_{\lambda^\ast}(0)\|\to 0$.
  \end{enumerate}
Then $0\in H$ is a degenerate critical point of   $\mathcal{F}_{\lambda^\ast}$ if
  $({\lambda}^\ast,0)\in \Lambda\times U$ is a bifurcation point of
    \begin{equation}\label{e:Ka0}
 \mathcal{F}'_{{\lambda}}(u)=0,\quad (\lambda,u)\in\Lambda\times U.
\end{equation}
  \end{theorem}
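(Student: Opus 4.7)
The plan is to argue by contradiction: assuming $0$ is a \emph{nondegenerate} critical point of $\mathcal{F}_{\lambda^\ast}$, in the sense that $B_{\lambda^\ast}(0)$ is invertible on $H$ (this is well-posed because $P_{\lambda^\ast}(0)+Q_{\lambda^\ast}(0)$ is a compact perturbation of a coercive self-adjoint operator, hence Fredholm of index $0$, so invertibility is equivalent to triviality of its kernel), I will show $(\lambda^\ast,0)$ cannot be a bifurcation point of \eqref{e:Ka0}. If it were, one could pick $(\lambda_n,u_n)\to(\lambda^\ast,0)$ in $\Lambda\times U$ with $u_n\ne 0$ and $\nabla\mathcal{F}_{\lambda_n}(u_n)=0$. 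Set $t_n=\|u_n\|$ and $w_n=u_n/t_n$. Because $\nabla\mathcal{F}_{\lambda_n}$ is continuous on $U$ and $X$ is dense in $H$, one can choose approximants $v_n\in U\cap X$ with $\|v_n-u_n\|=o(t_n)$ and $\|\nabla\mathcal{F}_{\lambda_n}(v_n)\|=o(t_n)$; set $\tilde w_n=v_n/t_n$, so that $\tilde w_n-w_n\to 0$ in $H$, and along some subsequence $\tilde w_n\rightharpoonup w$ weakly in $H$.

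The key technical tool is the one-dimensional mean value theorem applied to the scalar function $t\mapsto(\nabla\mathcal{F}_{\lambda_n}(tv_n),h)_H$ on $[0,1]$: it is continuous (as $\mathcal{F}_{\lambda_n}\in C^1$) and differentiable with derivative $(B_{\lambda_n}(tv_n)v_n,h)_H$, because $tv_n\in U\cap X$ is where $\nabla\mathcal{F}_{\lambda_n}$ admits its G\^ateaux derivative. For each $h\in H$ this produces some $\tau_n(h)\in(0,1)$ with
$$
(\nabla\mathcal{F}_{\lambda_n}(v_n),h)_H=(B_{\lambda_n}(\tau_n(h)v_n)v_n,h)_H.
$$
Specializing to $h=v_n$, dividing by $t_n^2$, and using (ii) together with $\|\nabla\mathcal{F}_{\lambda_n}(v_n)\|/t_n\to 0$, one arrives at $(Q_{\lambda_n}(\tau_n(v_n)v_n)\tilde w_n,\tilde w_n)_H\le -c_0+o(1)$. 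Conditions (iii) and (iv) give $\|Q_{\lambda_n}(\tau_n(v_n)v_n)-Q_{\lambda^\ast}(0)\|\to 0$, and combining this with the compactness of $Q_{\lambda^\ast}(0)$ and $\tilde w_n\rightharpoonup w$ lets one pass to the limit, yielding $(Q_{\lambda^\ast}(0)w,w)_H\le -c_0<0$. In particular $w\ne 0$.

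For the final step, take an arbitrary $h\in X$, apply the same mean value identity divided by $t_n$, and use self-adjointness of $B_{\lambda_n}(\tau_n(h)v_n)$:
$$
(\tilde w_n,B_{\lambda_n}(\tau_n(h)v_n)h)_H=(\nabla\mathcal{F}_{\lambda_n}(v_n),h)_H/t_n\longrightarrow 0.
$$
By (i), $P_{\lambda_n}(\tau_n(h)v_n)h\to P_{\lambda^\ast}(0)h$ strongly in $H$, and by (iii)+(iv), $Q_{\lambda_n}(\tau_n(h)v_n)h\to Q_{\lambda^\ast}(0)h$ strongly, so $B_{\lambda_n}(\tau_n(h)v_n)h\to B_{\lambda^\ast}(0)h$ strongly. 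Pairing this strongly convergent sequence with the weakly convergent sequence $\tilde w_n$ gives $(B_{\lambda^\ast}(0)w,h)_H=0$ for every $h\in X$; density of $X$ in $H$ then forces $B_{\lambda^\ast}(0)w=0$, and combined with $w\ne 0$ this contradicts the invertibility of $B_{\lambda^\ast}(0)$. The main obstacle is precisely the regularity mismatch between $H$ and $X$: since $u_n$ need not lie in $X$ and $\nabla\mathcal{F}_{\lambda_n}$ is only G\^ateaux-differentiable on $U\cap X$, the choice of approximants $v_n$ with both $\|v_n-u_n\|$ and $\|\nabla\mathcal{F}_{\lambda_n}(v_n)\|$ of order $o(t_n)$, and the subsequent restriction of the mean value theorem to the segment $[0,v_n]\subset U\cap X$, is what carries the weight of the argument.
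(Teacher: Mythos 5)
Your proposal is correct and follows essentially the same route as the paper's proof: normalize the bifurcation sequence, replace $u_n$ by nearby $v_n\in U\cap X$ so the G\^ateaux derivative is available along the segment $[0,v_n]$, apply the scalar mean value theorem to $t\mapsto(\nabla\mathcal{F}_{\lambda_n}(tv_n),h)_H$, use (ii)–(iv) with weak convergence and compactness of $Q_{\lambda^\ast}(0)$ to show the weak limit $w$ is nonzero, and use (i),(iii),(iv) to pass to the limit in the duality pairing and conclude $B_{\lambda^\ast}(0)w=0$. The only cosmetic difference is that you phrase it as a contradiction against invertibility and normalize by $t_n=\|u_n\|$ (so $\|\tilde w_n\|\to 1$ rather than $\equiv 1$), whereas the paper normalizes $v_k=u_k/\|u_k\|$ and directly exhibits $v^\ast\ne0$ in $\operatorname{Ker}B_{\lambda^\ast}(0)$; both are correct.
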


\begin{proof}
 Since $({\lambda}^\ast, 0)\in\Lambda\times U$ is a bifurcation point  of the equation  (\ref{e:Ka0}),
 there exists a sequence $({\lambda}_k, \bar{u}_k)\in\Lambda\times(U\setminus\{0\})$
such that ${\lambda}_k\to{\lambda}^\ast$, $\bar{u}_k\to 0$ and
 \begin{equation}\label{e:KBi.2.2+}
 \mathcal{F}'_{{\lambda}_k}(\bar{u}_k)=0,\quad\forall k\in\mathbb{N}.
\end{equation}
  Passing to a subsequence, if necessary, we can assume $\bar{v}_k=\bar{u}_k/\|\bar{u}_k\|\rightharpoonup v^\ast$.
By (ii)  there exist positive constants $\eta_0>0$ and  $c_0>0$ such that $B_H(0,\eta_0)\subset U$ and
 \begin{equation}\label{e:KBi.2.3}
(P_\lambda(u)h, h)_H\ge c_0\|h\|^2,\quad\forall h\in H,\;\forall u\in
B_H(0,\eta_0)\cap X,\;\forall\lambda\in I.
\end{equation}
 Clearly, we can assume that $(\bar{u}_k)\subset B_H(0,\eta_0)\setminus\{0\}$.
Since $B_H(0,\eta_0)\cap X$ is dense in $B_H(0,\eta_0)$ we may choose
 $({u}_k)\subset (B_H(0,\eta_0)\cap X)\setminus\{0\}$ such that
  \begin{eqnarray}\label{e:KBi.2.3.0}
&&  \|u_k-\bar{u}_k\|\to 0\quad\hbox{and so}\quad u_k\to 0,\;v_k:={u}_k/\|{u}_k\|\rightharpoonup v^\ast,\\
&&\left\|\frac{1}{\|u_k\|}\nabla\mathcal{F}_{{\lambda}_k}(u_k)-
  \frac{1}{\|\bar{u}_k\|}\nabla\mathcal{F}_{{\lambda}_k}(\bar{u}_k)\right\|<\frac{1}{2^k},\;\forall k,\label{e:KBi.2.3.1}\\
&&\biggl|\frac{1}{\|u_k\|^2}(\nabla\mathcal{F}_{{\lambda}_k}(u_k), u_k)_H
-\frac{1}{\|\bar{u}_k\|^2}(\nabla\mathcal{F}_{{\lambda}_k}(\bar{u}_k), \bar{u}_k)_H\biggr|<\frac{1}{2^k},\;\forall k.\label{e:KBi.2.3.1+}
\end{eqnarray}
  For each fixed $k$, since $\nabla\mathcal{F}_{\lambda_k}(0)=0$
  using the Mean Value Theorem  we get $t_k\in (0, 1)$ such that
\begin{eqnarray}\label{e:KBi.2.4.1}
\frac{1}{\|u_k\|^2}(\nabla\mathcal{F}_{{\lambda}_k}(u_k), u_k)_H&=&
(D(\nabla\mathcal{F}_{{\lambda}_k})(t_ku_k)v_k, v_k)_H\nonumber\\
&=&(P_{{\lambda}_k}(t_ku_k)v_k, v_k)_H +(Q_{{\lambda}_k}(t_ku_k)v_k, v_k)_H\nonumber\\
&\ge& c_0+(Q_{{\lambda}_k}(t_ku_k)v_k, v_k)_H\nonumber\\
&=&c_0+([Q_{{\lambda}_k}(t_ku_k)-Q_{{\lambda}_k}(0)]v_k, v_k)_H\nonumber\\
&&+ (Q_{{\lambda}_k}(0)v_k, v_k)_H.
\end{eqnarray}
Since $t_ku_k\to 0$ by (\ref{e:KBi.2.3.0}),  using (iii) and (iv), respectively,   we deduce that for sufficiently $k$,
\[\begin{split}
|([Q_{{\lambda}_k}(t_ku_k)-Q_{{\lambda}_k}(0)]v_k, v_k)_H|&<\frac{c_0}{4}\quad\hbox{and}\quad\\
 |(Q_{{\lambda}_k}(0)v_k, v_k)_H-(Q_{{\lambda}^\ast}(0)v^\ast, v^\ast)_H|&< \frac{c_0}{2}
 \end{split}
 \]
and thus (\ref{e:KBi.2.4.1}) leads to
\begin{eqnarray*}\label{e:KBi.2.4.1+}
\frac{1}{\|u_k\|^2}(\nabla\mathcal{F}_{{\lambda}_k}(u_k), u_k)_H>
\frac{c_0}{2}+ (Q_{{\lambda}^\ast}(0)v^\ast, v^\ast)_H.
\end{eqnarray*}
 By (\ref{e:KBi.2.2+}) and (\ref{e:KBi.2.3.1+}) the left side approaches to zero.
Hence $v^\ast\ne 0$.

Obverse that (\ref{e:KBi.2.2+}) and (\ref{e:KBi.2.3.1}) imply
  \begin{equation}\label{e:KBi.2.6}
\Big|\frac{1}{\|u_k\|}(\nabla\mathcal{F}_{{\lambda}_k}(u_k), h)_H\Bigr|\le\frac{1}{2^k}\|h\|,\quad
\forall h\in H,\quad\forall k\in\mathbb{N}.
 \end{equation}
  Fixing  $h\ne 0$, as in (\ref{e:KBi.2.4.1}), for some $\tau_k\in (0, 1)$, depending on $u_k$ and $h$,
 \begin{eqnarray}\label{e:KBi.2.7+}
\frac{1}{\|u_k\|}(\nabla\mathcal{F}_{{\lambda}_k}(u_k), h)_H&=&(D(\nabla\mathcal{F}_{{\lambda}_k})(\tau_ku_k)v_k, h)_H\nonumber\\
&=&(P_{{\lambda}_k}(\tau_ku_k)v_k, h)_H +(Q_{{\lambda}_k}(\tau_ku_k)v_k, h)_H\nonumber\\
&=&(v_k, P_{{\lambda}_k}(\tau_ku_k)h)_H +(v_k, Q_{{\lambda}_k}(\tau_ku_k)h)_H.
\end{eqnarray}
As above, by (iii)-(iv) we deduce that $(v_k, Q_{{\lambda}_k}(\tau_ku_k)h)_H\to (v^\ast, Q_{{\lambda}^\ast}(0)h)_H$.
Moreover, (i) implies that $P_{{\lambda}_k}(\tau_ku_k)h\to P_{{\lambda}^\ast}(0)h$. It follows from these
and (\ref{e:KBi.2.3.0}), (\ref{e:KBi.2.6}) and (\ref{e:KBi.2.7+}) that
$$
(v^\ast, P_{{\lambda}^\ast}(0)h)_H +(v^\ast, Q_{{\lambda}^\ast}(0)h)_H=0,\quad\forall h\in H,
$$
and thus $D(\nabla\mathcal{F}_{{\lambda}^\ast})(0)v^\ast=0$.
\end{proof}

Similarly, we have
\begin{theorem}\label{th:Ka2}
In Theorem~\ref{th:Ka1}, if we replace ``Hypothesis~\ref{hyp:1.1}"  by
``Hypothesis~\ref{hyp:1.3}", and  ``the gradient $\nabla\mathcal{F}_\lambda$ has
 a G\^ateaux derivative $B_\lambda(u)\in \mathscr{L}_s(H)$ at every point
$u\in U\cap X$" by ``there exists a map $B_\lambda: U\cap X\to \mathscr{L}_s(H)$ and a continuous and continuously directional differentiable
 map $A_\lambda: U^X\to X$ such that $D\mathcal{ L}(x)[u]=(A_\lambda(x), u)_H$ and
$(DA_\lambda(x)[u], v)_H=(B_\lambda(x)u, v)_H$ for all $x\in U\cap X$  and
$u, v\in X$", then $0\in H$ is a degenerate critical point of   $\mathcal{F}_{\lambda^\ast}$ provided that
  $({\lambda}^\ast,0)\in \Lambda\times U^X$ is a bifurcation point of
   $$
 A_{{\lambda}}(u)=0,\quad (\lambda,u)\in\Lambda\times U^X.
$$
\end{theorem}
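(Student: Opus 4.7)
The plan is to mimic the argument for Theorem~\ref{th:Ka1} with one structural simplification and one change in the mean-value identity. Since the bifurcation now occurs for the equation $A_\lambda(u)=0$ on $\Lambda\times U^X$, any bifurcation sequence $(\lambda_k,\bar{u}_k)\to(\lambda^\ast,0)$ with $A_{\lambda_k}(\bar{u}_k)=0$ and $\bar{u}_k\ne 0$ already lies in $X$, so the density approximation $u_k\in X$ of $\bar{u}_k$ needed in the proof of Theorem~\ref{th:Ka1} is no longer required. Note that $\bar{u}_k\to 0$ in $H$ because $\|\cdot\|\le\|\cdot\|_X$, and from $\mathcal{F}'_{\lambda_k}(0)=0$ together with $D\mathcal{F}_{\lambda_k}(0)[u]=(A_{\lambda_k}(0),u)_H$ for $u\in X$ and the density of $X$ in $H$ one gets $A_{\lambda_k}(0)=0$. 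Set $\bar{v}_k:=\bar{u}_k/\|\bar{u}_k\|$ and extract a subsequence with $\bar{v}_k\rightharpoonup v^\ast$ in $H$.

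For each fixed $h\in H$, I would look at the real-valued function $\phi_k^h(t):=(A_{\lambda_k}(t\bar{u}_k),h)_H$ on $[0,1]$, where $U$ is shrunk if necessary so that $t\bar{u}_k\in U^X$ for all $t\in[0,1]$ and all large $k$. By continuity and continuous directional differentiability of $A_{\lambda_k}$, together with the identity in (F3) extended from $v\in X$ to $v\in H$ by density (using that $B_{\lambda_k}(x)u\in H$), the function $\phi_k^h$ is of class $C^1$ with
\[(\phi_k^h)'(t)=(DA_{\lambda_k}(t\bar{u}_k)[\bar{u}_k],h)_H=(B_{\lambda_k}(t\bar{u}_k)\bar{u}_k,h)_H.\]
Since $\phi_k^h(0)=\phi_k^h(1)=0$, the classical one-variable mean value theorem yields $\tau_k=\tau_k(h)\in(0,1)$ with $(B_{\lambda_k}(\tau_k\bar{u}_k)\bar{u}_k,h)_H=0$; dividing by $\|\bar{u}_k\|$ gives $(B_{\lambda_k}(\tau_k\bar{u}_k)\bar{v}_k,h)_H=0$.

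To see $v^\ast\ne 0$, I would take $h=\bar{v}_k$; the decomposition $B=P+Q$ and (ii) yield
\[0=(P_{\lambda_k}(\tau_k\bar{u}_k)\bar{v}_k,\bar{v}_k)_H+(Q_{\lambda_k}(\tau_k\bar{u}_k)\bar{v}_k,\bar{v}_k)_H\ge c_0+(Q_{\lambda_k}(\tau_k\bar{u}_k)\bar{v}_k,\bar{v}_k)_H.\]
Using (iii)--(iv), the compactness of $Q_{\lambda^\ast}(0)$, and $\tau_k\bar{u}_k\to 0$ in $H$, one checks $Q_{\lambda_k}(\tau_k\bar{u}_k)\bar{v}_k\to Q_{\lambda^\ast}(0)v^\ast$ in $H$, so passage to the limit gives $(Q_{\lambda^\ast}(0)v^\ast,v^\ast)_H\le -c_0<0$, forcing $v^\ast\ne 0$. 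For the degeneracy, I would then fix arbitrary $h\in H$, rewrite the identity as $0=(\bar{v}_k,P_{\lambda_k}(\tau_k\bar{u}_k)h)_H+(\bar{v}_k,Q_{\lambda_k}(\tau_k\bar{u}_k)h)_H$, invoke (i) to get $P_{\lambda_k}(\tau_k\bar{u}_k)h\to P_{\lambda^\ast}(0)h$ in $H$ and (iii)--(iv) to get $Q_{\lambda_k}(\tau_k\bar{u}_k)h\to Q_{\lambda^\ast}(0)h$ in $H$, and combine with $\bar{v}_k\rightharpoonup v^\ast$ to obtain $(B_{\lambda^\ast}(0)v^\ast,h)_H=0$ for all $h\in H$, i.e., $B_{\lambda^\ast}(0)v^\ast=0$. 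Thus $v^\ast$ is a nonzero element of $\ker B_{\lambda^\ast}(0)$, so $0\in H$ is a degenerate critical point of $\mathcal{F}_{\lambda^\ast}$.

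The main obstacle is justifying the mean-value identity at the assumed regularity: $A_{\lambda_k}$ is only continuously \emph{directionally} differentiable from $U^X\subset X$ to $X$, so a Banach-space vector-valued MVT is not directly available. Pairing with $h\in H$ and invoking the scalar MVT turns this into a one-variable problem, but the identity $(DA_\lambda(x)[u],v)_H=(B_\lambda(x)u,v)_H$ in (F3) is asserted only for $v\in X$ and must be extended to $v\in H$ via the density of $X$ in $H$ and the fact that $B_\lambda(x)u\in H$; after this extension the remainder is a direct adaptation of the proof of Theorem~\ref{th:Ka1}.
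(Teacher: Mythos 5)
Your proof is correct and follows essentially the same strategy as the paper's indicated adaptation of the proof of Theorem~\ref{th:Ka1}: pair $A_{\lambda_k}$ against a test vector $h$, apply the scalar mean value theorem to the resulting $C^1$ function of $t$, use the $P+Q$ decomposition with (ii)--(iv) and the compactness of $Q_{\lambda^\ast}(0)$ to rule out $v^\ast=0$, and then pass to the limit for general $h$ to obtain $B_{\lambda^\ast}(0)v^\ast=0$. The one genuine difference is that you observe the density-approximation sequence $(u_k)$ from the proof of Theorem~\ref{th:Ka1} is unnecessary here, since the bifurcating solutions $\bar{u}_k$ of $A_\lambda(u)=0$ already lie in $U^X\subset X$; this cleanly replaces the approximate identities (\ref{e:KBi.2.3.1})--(\ref{e:KBi.2.3.1+}) with the exact Rolle-type identity $(B_{\lambda_k}(\tau_k\bar u_k)\bar v_k,h)_H=0$. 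Your remark that (F3) must be extended from $v\in X$ to $v\in H$ by density before pairing with arbitrary $h\in H$ is a real step that the paper's one-line sketch leaves tacit, and you justify it correctly. (The paper's instruction to ``replace $D(\nabla\mathcal{F}_{\lambda_k})(t_ku_k)$ by $A(t_ku_k)$'' reads as a typographical slip for $DA_{\lambda_k}(t_ku_k)=B_{\lambda_k}(t_ku_k)|_X$; your version makes the intended substitution explicit.)
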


Indeed, in the proof of Theorem~\ref{th:Ka1} we only need  to replace
$D(\nabla\mathcal{F}_{{\lambda}_k})(t_ku_k)$ and $D(\nabla\mathcal{F}_{{\lambda}_k})(\tau_ku_k)$
by $A(t_ku_k)$ and $A(\tau_ku_k)$, respectively, and complete the proof of Theorem~\ref{th:Ka2}.

\begin{corollary}\label{cor:Bi.2.2}
 Let $\mathcal{L}\in C^1(U,\mathbb{R})$ satisfy
 Hypothesis~\ref{hyp:1.1} with $X=H$, and let
  $\widehat{\mathcal{L}}_j\in C^1(U,\mathbb{R})$, $j=1,\cdots,n$, satisfy
 Hypothesis~\ref{hyp:1.2}.
Suppose that $(\vec{\lambda}^\ast, 0)\in\mathbb{R}^n\times U$ is a (multiparameter) bifurcation point  for the equation
\begin{equation}\label{e:Bi.2.1}
\mathcal{L}'(u)=\sum^n_{j=1}\lambda_j\widehat{\mathcal{L}}'_j(u),\quad u\in U.
\end{equation}
Then $\vec{\lambda}^\ast=(\lambda^\ast_1,\cdots,\lambda^\ast_n)$ is an  eigenvalue  of
\begin{equation}\label{e:Bi.2.2}
\mathcal{L}''(0)v-\sum^n_{j=1}\lambda_j\widehat{\mathcal{L}}''_j(0)v=0,\quad v\in H,
\end{equation}
 that is, $0$ is
a degenerate critical point of the functional $\mathcal{L}-\sum^n_{j=1}\lambda^\ast_j\widehat{\mathcal{L}}_j$
in the sense stated above Theorem~\ref{th:A.1}.
Moreover, if $\vec{\lambda}^\ast=0$, we only need  that
each $\widehat{\mathcal{L}}_j\in C^1(U,\mathbb{R})$ has properties:
$\widehat{\mathcal{L}}_j'(0)=0$  and the gradient $\nabla\widehat{\mathcal{L}}_j$ has the G\^ateaux derivative
$\mathcal{L}''(u)\in\mathscr{L}_s(H)$ at any $u\in U$, which approaches to
 $\mathcal{L}''(0)$ in $\mathscr{L}_s(H)$ as $u\to 0$ in $H$.
\end{corollary}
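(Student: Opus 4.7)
The plan is to derive this corollary as a direct application of Theorem~\ref{th:Ka1} to the family $\{\mathcal{F}_{\vec\lambda}:=\mathcal{L}-\sum_{j=1}^n\lambda_j\widehat{\mathcal{L}}_j\mid \vec\lambda\in\Lambda\}$, where $\Lambda$ is a bounded open neighborhood of $\vec\lambda^\ast$ in $\mathbb{R}^n$. A bifurcation of (\ref{e:Bi.2.1}) at $(\vec\lambda^\ast,0)$ is exactly a bifurcation of $\mathcal{F}'_{\vec\lambda}(u)=0$ at $(\vec\lambda^\ast,0)$, so once Theorem~\ref{th:Ka1} furnishes $\mathcal{F}_{\vec\lambda^\ast}''(0)v=0$ for some nonzero $v\in H$, we read off that $\vec\lambda^\ast$ is an eigenvalue of (\ref{e:Bi.2.2}).

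For the main assertion I would verify the hypotheses of Theorem~\ref{th:Ka1} with $X=H$. The G\^ateaux derivative of $\nabla\mathcal{F}_{\vec\lambda}$ at $u\in U$ is
\[
B_{\vec\lambda}(u)=B(u)-\sum_{j=1}^n\lambda_j\widehat{\mathcal{L}}_j''(u),
\]
and I would choose $P_{\vec\lambda}(u):=P(u)$, $Q_{\vec\lambda}(u):=Q(u)-\sum_{j=1}^n\lambda_j\widehat{\mathcal{L}}_j''(u)$, the latter being compact thanks to Hypothesis~\ref{hyp:1.2}. Condition (i) reduces to the strong-continuity (D2) of $P$, condition (ii) is the uniform positivity (D4*) (both $\vec\lambda$-independent), and conditions (iii)-(iv) follow from (D3) together with the norm-continuity at $0$ of each $\widehat{\mathcal{L}}_j''$ (Hypothesis~\ref{hyp:1.2}), using that $\Lambda$ is bounded to pass from pointwise to uniform statements in $\vec\lambda$. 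Theorem~\ref{th:Ka1} then yields a nonzero $v\in\mathrm{Ker}\,B_{\vec\lambda^\ast}(0)=\mathrm{Ker}(\mathcal{L}''(0)-\sum_j\lambda^\ast_j\widehat{\mathcal{L}}_j''(0))$, i.e., degeneracy of the critical point $0$ for $\mathcal{L}-\sum_j\lambda^\ast_j\widehat{\mathcal{L}}_j$.

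For the refined statement when $\vec\lambda^\ast=0$, compactness of each $\widehat{\mathcal{L}}_j''(u)$ is no longer available, so the previous splitting of $B_{\vec\lambda}$ would put a non-compact term into $Q_{\vec\lambda}$. The remedy is to swap where the $\widehat{\mathcal{L}}_j''$ terms sit: take $P_{\vec\lambda}(u):=P(u)-\sum_j\lambda_j\widehat{\mathcal{L}}_j''(u)$ and $Q_{\vec\lambda}(u):=Q(u)$. Since each $\widehat{\mathcal{L}}_j''$ is continuous at $0$ in $\mathscr{L}_s(H)$, there is a fixed neighborhood $\bar B_H(0,\eta_0)\subset U$ on which $\|\widehat{\mathcal{L}}_j''(u)\|$ is bounded; choosing $\Lambda$ to be a sufficiently small ball around $0$ makes $\|\sum_j\lambda_j\widehat{\mathcal{L}}_j''(u)\|$ arbitrarily small, so (D4*) for $P$ persists for $P_{\vec\lambda}$ with a slightly smaller constant, and (i) for $P_{\vec\lambda}$ follows from (D2) combined with this norm-control. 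Conditions (iii)-(iv) are then trivial since $Q_{\vec\lambda}\equiv Q$ is independent of $\vec\lambda$. Theorem~\ref{th:Ka1} again produces the desired degenerate eigenvector at $\vec\lambda^\ast=0$. The argument involves no genuine obstacle; the only mild subtlety is that one must use two different $P+Q$ decompositions for the two cases, and the second one is only available precisely because $\vec\lambda^\ast=0$ permits shrinking $\Lambda$ to absorb the perturbation into the positive-definite part.
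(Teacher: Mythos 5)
Your proposal is correct and takes essentially the same route as the paper: both reduce to Theorem~\ref{th:Ka1} applied to $\mathcal{F}_{\vec\lambda}=\mathcal{L}-\sum_j\lambda_j\widehat{\mathcal{L}}_j$, using the decomposition $P_{\vec\lambda}=P$, $Q_{\vec\lambda}=Q-\sum_j\lambda_j\widehat{\mathcal{L}}''_j$ in the general case and then, when $\vec\lambda^\ast=0$, moving the (no longer compact) $\widehat{\mathcal{L}}''_j$ terms into $P_{\vec\lambda}$ and shrinking $\Lambda$ to preserve positivity. You also correctly identify the precise reason the second decomposition is available only at $\vec\lambda^\ast=0$, which is the key subtlety of the "Moreover" part.
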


This result generalizes  the  necessity part of Theorem~12 in \cite[Chapter~4, \S4.3]{Skr}
(including the classical Krasnoselsi potential bifurcation theorem \cite{Kra}).
The  sufficiency  part of Theorem~12 in \cite[Chapter~4, \S4.3]{Skr} is contained in the case that the
condition (a) in Corollary~\ref{cor:Bi.2.4.2} holds.

 Denoted by  $H(\vec{\lambda})$ the solution space  of (\ref{e:Bi.2.2}).
 It is of finite dimension as the kernel of a linear Fredholm operator.

When $n=1$, comparing with Theorem~12 in \cite[Chapter~4, \S4.3]{Skr1}, the latter also required:
\begin{enumerate}
\item[(a)]  $\widehat{\mathcal{L}}$ is
weakly continuous and uniformly differentiable in $U$;
\item[(b)] $\mathcal{L}'$ has uniformly positive definite Frech\`et derivatives and satisfies the condition
$\alpha)$ in \cite[Chapter~3, \S2.2]{Skr1}.
\end{enumerate}

\begin{proof}[Proof of Corollary~\ref{cor:Bi.2.2}]
First, we consider the case that $\widehat{\mathcal{L}}_j\in C^1(U,\mathbb{R})$, $j=1,\cdots,n$, satisfy
 Hypothesis~\ref{hyp:1.2}.
Let $B=P+Q$ and $\widehat{B}_j=\widehat{P}_j+\widehat{Q}_j$ be the
 corresponding operators with $\mathcal{L}$ and
  $\widehat{\mathcal{L}}_j$, $j=1,\cdots,n$, respectively.
 Then $\widehat{P}_j=0$ and $\widehat{Q}_j=\widehat{\mathcal{L}}''_j$ for $j=1,\cdots,n$.
 Let $\mathcal{F}_{\vec{\lambda}}(u)=\mathcal{L}(u)-\sum^n_{j=1}\lambda_j\widehat{\mathcal{L}}'_j(u)$.
 Denote by $B_{\vec{\lambda}}=P_{\vec{\lambda}}+Q_{\vec{\lambda}}$
   the  corresponding operators.
   Then $P_{\vec{\lambda}}=P$ and $Q_{\vec{\lambda}}=Q-\sum^n_{j=1}\lambda_j\widehat{\mathcal{L}}''_j(u)$.
  Take $\Lambda$ to be any compact neighborhood of $\vec{\lambda}^\ast$ in $\mathbb{R}^n$.
  It is easily checked that  $\{\mathcal{F}_{\vec{\lambda}}\,|\,\vec{\lambda}\in\Lambda\}$
   satisfies the conditions of Theorem~\ref{th:Ka1}.

    Next, we prove the part of ``Moreover".
 We can assume that there exist positive constants $\eta_0>0$ and  $c_0>0$ such that $B_H(0,\eta_0)\subset U$ and
 \begin{equation}\label{e:KBi.2.31}
(P(u)h, h)_H\ge 2c_0\|h\|^2,\quad\forall h\in H,\;\forall u\in
B_H(0,\eta_0)\cap X.
\end{equation}
Since each $\widehat{\mathcal{L}}''_j(u)$ is continuous at $0\in H$, we can shrink $\eta_0>0$ and choose a
small compact neighborhood  $\Lambda$ of $0\in\mathbb{R}^n$ such that
\begin{eqnarray}\label{e:KBi.2.32}
&&\|\widehat{\mathcal{L}}''_j(u)-\widehat{\mathcal{L}}''_j(0)\|<1,\quad\forall u\in
B_H(0,\eta_0),\;j=1,\cdots,n,\\
&&\sum^n_{j=1}|\lambda_j(\widehat{\mathcal{L}}''_j(u)h,h)_H|<c_0\|h\|^2, \;\forall u\in
B_H(0,\eta_0),\;\forall \vec{\lambda}\in\Lambda,\;\forall h\in H.\label{e:KBi.2.33}
\end{eqnarray}
Let us write $Q_{\vec{\lambda}}\equiv Q$ and $P_{\vec{\lambda}}(u)=P(u)-\sum^n_{j=1}\lambda_j\widehat{\mathcal{L}}''_j(u)$.
Then (\ref{e:KBi.2.33}) and (\ref{e:KBi.2.31}) lead to
 \begin{equation}\label{e:KBi.2.34}
(P_{\vec{\lambda}}(u)h, h)_H\ge c_0\|h\|^2,\quad\forall h\in H,\;\forall u\in
B_H(0,\eta_0)\cap X.
\end{equation}
Moreover, if $(x_k)\subset B_H(0,\eta_0)\cap X$ approaches to $0$ in $H$ and $(\vec{\lambda}_k)\subset \Lambda$ converges to $0$ then
for any fixed $h\in H$ we deduce from (\ref{e:KBi.2.33}) and the definition of $P_{\vec{\lambda}}$ that
\begin{eqnarray*}
\|P_{\vec{\lambda}_k}(x_k)h-P_{0}(0)h\|\le \sum^n_{j=1}|\lambda_{k,j}|\|\widehat{\mathcal{L}}''_j(x_k)h\|\le
\sum^n_{j=1}|\lambda_{k,j}|(\|\widehat{\mathcal{L}}''_j(0)\|+1)\|h\|\to 0.
\end{eqnarray*}
 These show that $Q_{\vec{\lambda}}$ and $P_{\vec{\lambda}}$ for $\vec{\lambda}\in\Lambda$
satisfy the assumptions of Theorem~\ref{th:Ka1}.
\end{proof}

Using Theorem~\ref{th:Ka2} and similar reasoning we can obtain

\begin{corollary}\label{cor:Bi.2.2*}
 Let $\mathcal{L}\in C^1(U,\mathbb{R})$ satisfy
 Hypothesis~\ref{hyp:1.3}, and let
  $\widehat{\mathcal{L}}_j\in C^1(U,\mathbb{R})$, $j=1,\cdots,n$, satisfy
 Hypothesis~\ref{hyp:1.4}.
Suppose that $(\vec{\lambda}^\ast, 0)\in\mathbb{R}^n\times U^X$ is a (multiparameter) bifurcation point  for the equation
\begin{equation}\label{e:Bi.2.1*}
A(u)=\sum^n_{j=1}\lambda_j\widehat{A}_j(u),\quad u\in U^X.
\end{equation}
Then $\vec{\lambda}^\ast=(\lambda^\ast_1,\cdots,\lambda^\ast_n)$ is an  eigenvalue  of
\begin{equation}\label{e:Bi.2.2*}
B(0)v-\sum^n_{j=1}\lambda_j\widehat{B}_j(0)v=0,\quad v\in H,
\end{equation}
 that is, $0$ is
a degenerate critical point of the functional $\mathcal{L}-\sum^n_{j=1}\lambda^\ast_j\widehat{\mathcal{L}}_j$
in the sense stated above Theorem~\ref{th:A.1}.
 Moreover, if $\vec{\lambda}^\ast=0$, we only need
that each $\widehat{\mathcal{L}}_j\in C^1(U,\mathbb{R})$ satisfies
 Hypothesis~\ref{hyp:1.4} without requirement that $B(x)\in\mathscr{L}_s(H)$ is compact.
\end{corollary}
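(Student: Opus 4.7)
The plan is to reduce Corollary~\ref{cor:Bi.2.2*} to a direct application of Theorem~\ref{th:Ka2}, in exact parallel with the proof of Corollary~\ref{cor:Bi.2.2}. Applying Hypothesis~\ref{hyp:1.3} to $\mathcal{L}$ yields maps $A\in C^1(U^X,X)$ and $B:U\cap X\to\mathscr{L}_s(H)$ satisfying (F2)--(F3), with a decomposition $B=P+Q$, where $P$ is uniformly positive definite (in the sense of (D4*)) and $Q$ is compact-valued. Applying Hypothesis~\ref{hyp:1.4} to each $\widehat{\mathcal{L}}_j$ yields $\widehat{A}_j\in C^1(U^X,X)$ and $\widehat{B}_j:U\cap X\to\mathscr{L}_s(H)$, taking compact values, continuous at $0$ in the operator norm of $\mathscr{L}_s(H)$ along $X\cap U$. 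The natural assignments
$$
A_{\vec{\lambda}}=A-\sum_{j=1}^n\lambda_j\widehat{A}_j,\qquad B_{\vec{\lambda}}=B-\sum_{j=1}^n\lambda_j\widehat{B}_j
$$
realize (F2)--(F3) for the family $\mathcal{F}_{\vec{\lambda}}:=\mathcal{L}-\sum_j\lambda_j\widehat{\mathcal{L}}_j$, and split as $B_{\vec{\lambda}}=P_{\vec{\lambda}}+Q_{\vec{\lambda}}$ with $P_{\vec{\lambda}}\equiv P$ and $Q_{\vec{\lambda}}=Q-\sum_j\lambda_j\widehat{B}_j$, which is still compact-valued for each fixed $\vec{\lambda}$.

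Next, I would fix a compact neighborhood $\Lambda\subset\mathbb{R}^n$ of $\vec{\lambda}^\ast$ and verify the four hypotheses of Theorem~\ref{th:Ka2} on $\Lambda$. Since $P_{\vec{\lambda}}=P$ does not depend on $\vec{\lambda}$, conditions (i) and (ii) are immediate from (D2) and (D4*) of Hypothesis~\ref{hyp:1.1} (inherited through (D) in Hypothesis~\ref{hyp:1.3}). For (iii) and (iv), I would use the uniform continuity of each $\widehat{B}_j$ at $0$ (from (D*) of Hypothesis~\ref{hyp:1.4}) together with the fact that $\sup_{\vec{\lambda}\in\Lambda}|\lambda_j|<\infty$ to upgrade convergence of the $\widehat{B}_j$'s to the required uniform statements in $\vec{\lambda}$. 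Applying Theorem~\ref{th:Ka2} to the bifurcation point $(\vec{\lambda}^\ast,0)$ of $A_{\vec{\lambda}}(u)=0$ then gives that $0\in H$ is degenerate for $\mathcal{F}_{\vec{\lambda}^\ast}$, i.e.\ $\ker(B(0)-\sum_j\lambda^\ast_j\widehat{B}_j(0))\ne\{0\}$, which is exactly the eigenvalue statement~(\ref{e:Bi.2.2*}).

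For the ``moreover'' part with $\vec{\lambda}^\ast=0$, one no longer has compactness of $\widehat{B}_j(x)$, so $\widehat{B}_j$ must be absorbed into the positive part rather than the compact part. I would mimic the tail of the proof of Corollary~\ref{cor:Bi.2.2}: shrink $\eta_0$ and $\Lambda$ around $0$ so that $\sum_j|\lambda_j|\,\|\widehat{B}_j(x)\|_{\mathscr{L}(H)}<c_0$ for $x\in B_H(0,\eta_0)\cap X$ and $\vec{\lambda}\in\Lambda$, and set $P_{\vec{\lambda}}(x):=P(x)-\sum_j\lambda_j\widehat{B}_j(x)$, $Q_{\vec{\lambda}}\equiv Q$. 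Then $P_{\vec{\lambda}}$ is uniformly positive definite with constant $c_0/2$, $Q_{\vec{\lambda}}$ is trivially compact-valued with trivial uniform behavior in $\vec{\lambda}$, and the pointwise convergence $P_{\vec{\lambda}_k}(x_k)h\to P_0(0)h$ required by (i) of Theorem~\ref{th:Ka2} follows from (D2) applied to $P$ together with the estimate $\sum_j|\lambda_{k,j}|\,\|\widehat{B}_j(x_k)h\|\le\sum_j|\lambda_{k,j}|(\|\widehat{B}_j(0)\|+1)\|h\|\to 0$, which uses only continuity of $\widehat{B}_j$ at $0$ in the operator norm.

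The main obstacle is purely bookkeeping: a simultaneous choice of $\eta_0>0$ and compact $\Lambda\ni\vec{\lambda}^\ast$ that makes all four conditions of Theorem~\ref{th:Ka2} hold at once for the family $\{\mathcal{F}_{\vec{\lambda}}\}$. No genuinely new ideas beyond those of Corollary~\ref{cor:Bi.2.2} are needed; the only substantive difference is that the compactness of $Q_{\vec{\lambda}}$ now depends on the compactness of each $\widehat{B}_j(x)$ (in the non-degenerate case $\vec{\lambda}^\ast\ne 0$) versus absorbing $\widehat{B}_j$ into $P_{\vec{\lambda}}$ (in the degenerate case $\vec{\lambda}^\ast=0$).
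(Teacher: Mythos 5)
Your proposal is correct and matches what the paper intends: the paper gives no detailed argument for Corollary~\ref{cor:Bi.2.2*}, stating only that it follows ``Using Theorem~\ref{th:Ka2} and similar reasoning'' to the proof of Corollary~\ref{cor:Bi.2.2}, and your write-up is a faithful fleshing-out of exactly that, with $P_{\vec\lambda}\equiv P$, $Q_{\vec\lambda}=Q-\sum_j\lambda_j\widehat{B}_j$ in the case $\vec\lambda^\ast\ne0$, and the compactness of $\widehat{B}_j$ absorbed into $P_{\vec\lambda}$ in the case $\vec\lambda^\ast=0$. One small caveat: (D*) of Hypothesis~\ref{hyp:1.4} only gives continuity of $\widehat{B}_j$ at $0$, not ``uniform continuity''; the uniformity over $\vec\lambda\in\Lambda$ needed for conditions (iii)--(iv) of Theorem~\ref{th:Ka2} comes from boundedness of $\Lambda$, which your argument uses in substance even if the phrasing is loose.
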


Conversely, if $\vec{\lambda}^\ast$ is an isolated eigenvalue  of (\ref{e:Bi.2.2*}), under some additional conditions
we shall show in Theorem~\ref{th:Bi.2.3} that $(\vec{\lambda}^\ast, 0)\in\mathbb{R}^n\times U^X$ is a  bifurcation point
 of (\ref{e:Bi.2.1*}).

\subsection{Sufficient criteria}\label{sec:B.2.1S}

Changes of Morse type numbers imply existence of bifurcation instants \cite{Boh, Ber72}.
Different generalizations are given in \cite{ChowLa,Kie, MaWi, SmoWa, PRS,Ryb}.
For example,  \cite[Theorem~8.8]{MaWi} showed that changes of critical groups lead to bifurcations.
However, it is difficult to compute critical groups for non-twice continuously differentiable functionals.
In this section, with helps of splitting theorems in \cite{Lu2,Lu3, Lu7}
we give some general bifurcation results for potential operator families of a class of non-twice continuously differentiable functionals.

Let $H$ be a real Hilbert space,  $I$  a bounded open interval containing $0$ in $\mathbb{R}$,
and $\{B_\lambda\}_{\lambda\in I}\subset \mathscr{L}_s(H)$
such that $\|B_\lambda-B_0\|\to 0$ as $\lambda\to 0$.
Suppose that  $0$ is an isolated point of the spectrum $\sigma(B_0)$
with $n=\dim{\rm Ker}(B_0)\in (0, \infty)$, and that
${\rm Ker}(B_\lambda)=\{0\}\;\forall\pm\lambda\in (0,\varepsilon_0)$ for some
positive number $\varepsilon_0\ll 1$.
By \cite[Remark I.21.1]{Kie1}  the generalized eigenspace $E_0$ of $B_0$ with eigenvalue $0$
is equal to $N(B_0)={\rm Ker}(B_0)$ (i.e., the algebraic and geometric multiplicities of $0$ are same),
and $H=E_0\oplus R(B_0)$, where $R(B_0)={\rm Im}(B_0)$.
It was shown in  Sections II.5.1 and III.6.4 of \cite{Ka} that
the generalized eigenspace $E_0$
is perturbed to an invariant space $E_\lambda$  of $B_\lambda$ of dimension $n$,
and all perturbed eigenvalues near $0$ (the so-called $0$-group  in Kato's terminology in \cite{Ka}),
denoted by ${\rm eig}_0(B_\lambda)$,
are eigenvalues of the finite-dimensional operator $B_\lambda$ restricted to the $n$-dimensional
invariant space $E_\lambda$ (see \cite[Remark II.4]{Kie1}). In other words,
${\rm eig}_0(B_\lambda)$ is the set of eigenvalues of $B_\lambda$ which approach $0$ as $\lambda\to 0$.
Hence shrinking $\varepsilon_0$ if necessary,  for each $\lambda\in (-\varepsilon_0,\varepsilon_0)\setminus\{0\}$, $B_\lambda$ has exactly
$n$ eigenvalues (depending  continuously on $\lambda$) near zero, and none of them is zero.
 Let $r(B_\lambda)$ be the number of elements in ${\rm eig}_0(B_\lambda)\cap\mathbb{R}^-$ and
 \begin{equation}\label{e:Bi.1.1}
 r^+_{B_\lambda}=\lim_{\lambda\to 0+}r(B_\lambda),\qquad
 r^-_{B_\lambda}=\lim_{\lambda\to 0-}r(B_\lambda).
 \end{equation}
Then if $|\lambda|>0$ is small enough we have $\mu_\lambda-\mu_0=r^+_{B_\lambda}$ for $\lambda>0$,
and $\mu_\lambda-\mu_0=r^-_{B_\lambda}$ for $\lambda<0$, where $\mu_\lambda$
is the dimension of negative definite space of $B_\lambda$.

In some sense the following may be viewed as a converse of Theorem~\ref{th:Ka1}.

\begin{theorem}\label{th:Bi.1.1}
Let $H$ and $I$ be as above,  $U$ an open neighborhood of $0$ in $H$,
 and let $\mathcal{F}:I\times U\to\mathbb{R}$ be such that  each
    $\mathcal{F}_\lambda:=\mathcal{F}(\lambda,\cdot)$ satisfies  Hypothesis~\ref{hyp:1.1} on $U$
    with  corresponding operators  $B_\lambda$, $P_\lambda$ and $Q_\lambda$.
Suppose that the following eight conditions are satisfied:
    \begin{enumerate}
\item[\rm (i)]  For some small $\delta>0$, $\lambda\mapsto \mathcal{F}_\lambda$
    is continuous at $\lambda=0$ in $C^0(\bar{B}_H(0, \delta))$ topology.
 \item[\rm (ii)]  For some small $\delta>0$, there exist positive constants $c_0>0$ such that
$$
(P_\lambda(x)u, u)\ge c_0\|u\|^2\quad\forall u\in H,\;\forall x\in
\bar{B}_H(0,\delta)\cap X,\quad\forall\lambda\in I.
$$
 \item[\rm (iii)]  $Q_\lambda: U\cap X\to \mathscr{L}_s(H)$ is uniformly continuous at $0$  with respect to $\lambda\in I$.

  \item[\rm (iv)]  If $(\lambda_n)\subset I$ converges to $\lambda\in I$ then
  $$
  \|Q_{\lambda_n}(0)-Q_\lambda(0)\|\to 0\quad\hbox{and}\quad \|\nabla\mathcal{F}_{\lambda_n}(x)-\nabla\mathcal{F}_\lambda(x)\|\to 0\quad\forall x\in U\cap X.
  $$
\item[\rm (v)]  ${\rm Ker}(B_\lambda(0))=\{0\}$
 for small $|\lambda|\ne 0$.
 \item[\rm (vi)] $B_\lambda(0)\to B_0(0)$  as $\lambda\to 0$;
\item[\rm (vii)] $0\in\sigma(B_0(0))$.
\item[\rm (viii)]  $r^+_{B_\lambda(0)}\ne
r^-_{B_\lambda(0)}$.
 \end{enumerate}
   Then $(0,0)\in I\times U$ is  a bifurcation point of the equation (\ref{e:Intro.1}).
 Moreover, the same conclusion still holds if the above conditions (i)-(iv) are replaced by the following two
\begin{enumerate}
\item[\rm (a)]  for some small $\delta>0$, $\lambda\mapsto \mathcal{F}_\lambda$
    is continuous at $\lambda=0$ in $C^1(\bar{B}_H(0, \delta))$ topology;
 \item[\rm (b)]  for some small $\delta>0$, each $\mathcal{F}_\lambda$ satisfies the (PS) condition in $\bar{B}_H(0, \delta)$.
\end{enumerate}
\end{theorem}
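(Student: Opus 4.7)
The plan is to argue by contradiction, using the stability of critical groups together with the Morse-index jump forced by condition (viii). Suppose $(0,0)$ is not a bifurcation point of \eqref{e:Intro.1}. Then there exists $\varepsilon\in(0,\delta)$ with $[-\varepsilon,\varepsilon]\subset I$ such that $0$ is the unique critical point of $\mathcal{F}_\lambda$ in $\bar{B}_H(0,\varepsilon)$ for every $\lambda\in[-\varepsilon,\varepsilon]$. I would first verify the uniform (PS) condition on $\bar{B}_H(0,\varepsilon)$ for $\{\mathcal{F}_\lambda\}_{\lambda\in[-\varepsilon,\varepsilon]}$: the assumptions (ii), (iii), (iv) of the theorem are exactly the hypotheses (i), (ii), (iii) of Proposition~\ref{prop:stablity1}, so that proposition delivers the uniform (PS) property on $\bar{B}_H(0,\varepsilon)$.

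Combined with the $C^0$-continuity of $\lambda\mapsto\mathcal{F}_\lambda$ from (i) and the fact that the unique critical point $z_\lambda\equiv 0$ is trivially continuous in $\lambda$, Theorem~\ref{th:stablity2} yields that $C_\ast(\mathcal{F}_\lambda,0;\mathbf{K})$ is independent of $\lambda\in[-\varepsilon,\varepsilon]$ for every abelian group $\mathbf{K}$. Next, for each $\lambda\neq 0$ with $|\lambda|$ sufficiently small, condition (v) ensures that $B_\lambda(0)$ is invertible, so $0$ is a nondegenerate critical point of $\mathcal{F}_\lambda$ in the sense required by the parameterized splitting theorem of Appendix~\ref{app:A}. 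Applying the (shifting form of this) splitting theorem at a single fixed $\lambda$ reduces $\mathcal{F}_\lambda$ near $0$ to the quadratic form $\tfrac12(B_\lambda(0)u,u)_H$ on a neighborhood in $H$, whence
\[
C_q(\mathcal{F}_\lambda,0;\mathbf{K})=\delta_{q,\mu_\lambda}\,\mathbf{K},
\]
where $\mu_\lambda$ denotes the Morse index (dimension of the negative spectral subspace) of $B_\lambda(0)$.

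Now the spectral perturbation analysis immediately preceding the statement of the theorem, based on Kato's reduction to the invariant subspace $E_\lambda$ and Kielh\"ofer's description of the $0$-group, gives $\mu_\lambda-\mu_0=r^+_{B_\lambda(0)}$ for $\lambda\to 0^+$ and $\mu_\lambda-\mu_0=r^-_{B_\lambda(0)}$ for $\lambda\to 0^-$. Invariance of $C_\ast(\mathcal{F}_\lambda,0;\mathbf{K})$ across $\lambda=0$, obtained in the previous step, therefore forces $\mu_0+r^+_{B_\lambda(0)}=\mu_0+r^-_{B_\lambda(0)}$, in direct contradiction with (viii). Hence $(0,0)$ is a bifurcation point. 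For the \emph{moreover} part, conditions (a) and (b) furnish precisely hypotheses (ii) and (iii) of Theorem~\ref{th:stablity1}, so the invariance of critical groups can be obtained from that theorem instead of Theorem~\ref{th:stablity2}; the remainder of the argument is unchanged.

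The main obstacle I anticipate is technical rather than structural: one has to make sure that, for every sufficiently small $\lambda\neq 0$, the decomposition $B_\lambda=P_\lambda+Q_\lambda$ together with the perturbative continuity in (vi) indeed places $\mathcal{F}_\lambda$ within the scope of the splitting theorem of Appendix~\ref{app:A} (so that the computation $C_q(\mathcal{F}_\lambda,0;\mathbf{K})=\delta_{q,\mu_\lambda}\mathbf{K}$ is legitimate in the non-$C^2$ setting). Once the hypotheses of that splitting theorem are checked uniformly in small $|\lambda|$, everything else is standard Morse-theoretic bookkeeping.
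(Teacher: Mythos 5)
Your proposal is correct and follows essentially the same route as the paper: contradiction, uniform (PS) via Proposition~\ref{prop:stablity1} whose hypotheses (i)--(iii) are supplied by (ii)--(iv) of the theorem, critical-group invariance via Theorem~\ref{th:stablity2} (or Theorem~\ref{th:stablity1} for the ``moreover'' part), nondegeneracy from (v) plus the Morse--Palais/splitting lemma to get $C_q(\mathcal{F}_\lambda,0;\mathbf{K})=\delta_{q\mu_\lambda}\mathbf{K}$ for $\lambda\neq 0$, and the spectral jump $\mu_\lambda-\mu_0=r^\pm_{B_\lambda(0)}$ contradicting (viii). The only cosmetic difference is terminological: the paper invokes Theorem~\ref{th:A.1} (the parameterized Morse--Palais lemma, specialized to zero perturbation) at each fixed nonzero $\lambda$ rather than phrasing it as the ``shifting form'' of the splitting theorem, and it explicitly justifies the finiteness and isolation of the relevant negative/zero spectrum of $B_0(0)$ via \cite[Proposition~B.2]{Lu2} and \cite[Lemma~2.2]{BoBu}, a step you fold into ``the spectral perturbation analysis preceding the statement.''
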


\begin{proof}
By a contradiction, suppose that $(0,0)\in I\times U$ is not a bifurcation point of the equation
(\ref{e:Intro.1}). Then by shrinking $\delta>0$  we can find $0<\varepsilon_0\ll 1$  such that for each
$\lambda\in [-\varepsilon_0,\varepsilon_0]$ the functional $\mathcal{F}_\lambda$ has a unique
critical point $0$ sitting in $\bar{B}_H(0, \delta)$.

By the first part of Proposition~\ref{prop:stablity1} we see that $\{\mathcal{F}_\lambda\}_{|\lambda|\le\varepsilon_0}$ satisfies
 the  uniform (PS) condition on  $\bar{B}_H(0, \delta)$. It follows from this, (i) and  Theorem~\ref{th:stablity2} that
 \begin{eqnarray}\label{e:Bi.1.2}
 C_\ast(\mathcal{F}_\lambda, 0;{\bf K})=C_\ast(\mathcal{F}_0, 0;{\bf K}),\quad\forall \lambda\in [-\varepsilon_0,\varepsilon_0].
 \end{eqnarray}

It remains to prove  that the assumptions (v)-(viii) insure that (\ref{e:Bi.1.2}) cannot occur.

By (v), we can assume that $0$ is a nondegenerate critical point of $\mathcal{F}_\lambda$
for $0<|\lambda|\le\varepsilon_0$ by shrinking $\varepsilon_0>0$ if necessary.
It follows from this, (\ref{e:Bi.1.2}) and
Theorem~\ref{th:A.1} with $\lambda=0$ (or \cite[Theorem~2.1]{Lu7})
that all $\mathcal{F}_\lambda$, $0<|\lambda|\le\varepsilon_0$,
have the same Morse index $\mu_\lambda$ at $0\in H$, that is,
\begin{eqnarray}\label{e:Bi.1.3}
 [-\varepsilon_0, \varepsilon_0]\setminus\{0\}\ni\lambda\mapsto \mu_\lambda\quad\hbox{is constant.}
 \end{eqnarray}

On the other hand, by \cite[Proposition~B.2]{Lu2}, each $\varrho\in\sigma(B_0(0))\cap\{t\in\mathbb{R}^-\,|\,
t\le 0\}$ is an isolated point in $\sigma(B_0(0))$, which is also an
eigenvalue of finite multiplicity. (This can also be derived from \cite[Lemma~2.2]{BoBu}.)
Since $0\in\sigma(B_0(0))$ by (vii),
$0$ is an isolated point of the spectrum $\sigma(B_0(0))$ and an eigenvalue of $B_0(0)$ of the
 finite multiplicity $s_0$ by \cite[Lemma~2.2]{BoBu}.
Thus we can assume
$$
\sigma(B_0(0))\cap\{t\in\mathbb{R}^-\,|\,
t\le 0\}=\{0,\varrho_1,\cdots,\varrho_k\},
$$
 where $\varrho_i<0$ and has multiplicity $s_i$ for each $i=1,\cdots,k$.
As above (\ref{e:Bi.1.1}), we may use this and (vi) to prove:
if $0<|\lambda|$ is small enough, $B_\lambda(0)$
 has exactly $s_i$ (possible same) eigenvalues near $\varrho_i$, but total dimension
 of corresponding eigensubspaces is equal to that of eigensubspace of $\varrho_i$.
Hence if $\lambda\in (0, \varepsilon_0]$ (resp. $-\lambda\in (0,\varepsilon_0]$) is small enough
we obtain
$\mu_\lambda=\mu_0+ r^+_{B_\lambda(0)}$
(resp. $\mu_{-\lambda}=\mu_0+ r^-_{B_\lambda(0)}$).
These and (viii) imply
$$
\mu_\lambda-\mu_{-\lambda}=r^+_{B_\lambda(0)}-
r^-_{B_\lambda(0)}\ne 0\quad\hbox{for small  $\lambda\in (0, \varepsilon_0]$},
$$
which contradicts the claim in (\ref{e:Bi.1.3}).

In order to prove the final part, note that under the assumptions of the first paragraph
we may use (a)-(b) and Theorem~\ref{th:stablity1} to derive (\ref{e:Bi.1.2}).
The remained arguments are same.
\end{proof}

Correspondingly, we have the following converse of Theorem~\ref{th:Ka2}.

\begin{theorem}\label{th:Bif.1.1}
 Let $H$, $X$ and $U$ be as in Hypothesis~\ref{hyp:1.3},
  and $I\subset\mathbb{R}$  a bounded  open interval containing $0$.
  Let $\mathcal{F}:I\times U\to\mathbb{R}$ be such that  each
    $\mathcal{F}_\lambda:=\mathcal{F}(\lambda,\cdot)$ satisfies  Hypothesis~\ref{hyp:1.3}
    with  corresponding operators $A_\lambda$, $B_\lambda$, $P_\lambda$ and $Q_\lambda$.
    Suppose  that either (i)-(viii) in Theorem~\ref{th:Bi.1.1} or
(a)-(b) and (v)-(viii) in Theorem~\ref{th:Bi.1.1}
 are satisfied.  Then $(0,0)\in I\times U^X$ is  a bifurcation point of the equation
\begin{eqnarray}\label{e:Bi.1.3.1}
 A_\lambda(x)=0.
 \end{eqnarray}
\end{theorem}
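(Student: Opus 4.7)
The plan is to transplant the proof of Theorem~\ref{th:Bi.1.1} verbatim into the Hypothesis~\ref{hyp:1.3} framework, replacing each ingredient with its Hypothesis~\ref{hyp:1.3} counterpart. First I would argue by contradiction: suppose $(0,0)\in I\times U^X$ is not a bifurcation point of \eqref{e:Bi.1.3.1}. Shrinking $\delta>0$ and choosing $\varepsilon_0\in(0,1)$ small enough, I obtain that $0$ is the unique zero of $A_\lambda$ in $\bar{B}_H(0,\delta)\cap U^X$ for every $\lambda\in[-\varepsilon_0,\varepsilon_0]$. Since by (F2) we have $D\mathcal{F}_\lambda(x)[u]=(A_\lambda(x),u)_H$ for $x\in U^X$ and $u\in X$, and $X$ is dense in $H$, a point $x\in U^X$ is a critical point of $\mathcal{F}_\lambda$ iff $A_\lambda(x)=0$.

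Next I invoke the second part of Proposition~\ref{prop:stablity1} (tailored exactly to the Hypothesis~\ref{hyp:1.3} setting with $A_\lambda$ and $B_\lambda=P_\lambda+Q_\lambda$) to conclude the uniform (PS) condition for $\{\mathcal{F}_\lambda\}_{|\lambda|\le\varepsilon_0}$ on $\bar{B}_H(0,\delta)$. Combined with the $C^0$-continuity provided by (i), Theorem~\ref{th:stablity2} gives
\[
C_\ast(\mathcal{F}_\lambda,0;\mathbf{K})=C_\ast(\mathcal{F}_0,0;\mathbf{K})\qquad\forall\lambda\in[-\varepsilon_0,\varepsilon_0].
\]
Under the alternative hypotheses (a)-(b) I instead use Theorem~\ref{th:stablity1} with the same conclusion.

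Then by (v), after further shrinking $\varepsilon_0$, the origin is a nondegenerate critical point of $\mathcal{F}_\lambda$ for $0<|\lambda|\le\varepsilon_0$. The Hypothesis~\ref{hyp:1.3} version of the parameterized splitting/shifting theorem (Appendix~\ref{app:A}, as used in the proof of Theorem~\ref{th:Bi.1.1}) gives $C_q(\mathcal{F}_\lambda,0;\mathbf{K})=\delta_{q\mu_\lambda}\mathbf{K}$, so $\mu_\lambda$ must be constant on $[-\varepsilon_0,\varepsilon_0]\setminus\{0\}$. The spectral perturbation argument from the proof of Theorem~\ref{th:Bi.1.1} (via \cite[Proposition B.2]{Lu2}, \cite[Lemma 2.2]{BoBu}, and Kato's theory applied to the $0$-group of $B_\lambda(0)$) then uses (vi), (vii) to yield $\mu_\lambda=\mu_0+r^+_{B_\lambda(0)}$ for $0<\lambda\le\varepsilon_0$ and $\mu_{-\lambda}=\mu_0+r^-_{B_\lambda(0)}$ for $0<-\lambda\le\varepsilon_0$; assumption (viii) contradicts the constancy of $\mu_\lambda$.

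The main obstacle I anticipate is the regularity bridge between the two natural topologies: translating ``no nontrivial zero of $A_\lambda$ in $U^X$'' into ``no nontrivial critical point of $\mathcal{F}_\lambda$ in the $H$-ball $\bar{B}_H(0,\delta)$'', since an abstract $C^1$-critical point of $\mathcal{F}_\lambda$ in $H$ need not a priori lie in $X$. This is the only step whose verification is genuinely new compared with Theorem~\ref{th:Bi.1.1}, and it is handled by the local reduction supplied by the splitting theorem in Appendix~\ref{app:A}: near $0$ every critical point of $\mathcal{F}_\lambda$ in $H$ corresponds, via the local homeomorphism provided by the splitting, to a critical point of the finite-dimensional reduced functional on $H^0=\mathrm{Ker}(B_\lambda(0))$, and by assumption (C) together with (D1) this space lies in $X$, so such critical points are automatically forced into $U^X$. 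Once this is in place the rest of the proof is the one above.
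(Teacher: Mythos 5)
Your proposal matches the paper's own proof, which is just the one-sentence instruction to repeat the argument of Theorem~\ref{th:Bi.1.1} with ``Proposition~\ref{prop:stablity1}'' replaced by ``the latter part of Proposition~\ref{prop:stablity1}'' and ``Theorem~\ref{th:A.1} with $\lambda=0$'' replaced by ``Theorem~\ref{th:A.4} with $\lambda=0$''. The $X$-versus-$H$ regularity bridge you flag is indeed the one genuinely new issue the paper leaves implicit, and your resolution via the local reduction supplied by the splitting theorem in Appendix~\ref{app:A} is in the right spirit; just note that Remark~\ref{rm:Spl.2.4}(ii) formulates the one-to-one correspondence between critical points of $\mathcal{L}_\lambda^{\circ}$ and zeros of $A_\lambda$ in $X$, so the claim that $H$-critical points of $\mathcal{F}_\lambda$ near $0$ are forced into $X$ really rests on $\Phi_\lambda$ being a homeomorphism onto a full $H$-neighborhood $W$ of $0$.
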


The proof is almost repeating that of Theorem~\ref{th:Bi.1.1}. In fact, it suffices to replace
 ``By Proposition~\ref{prop:stablity1}"  and
``Theorem~\ref{th:A.1} with $\lambda=0$ (or \cite[Theorem~2.1]{Lu7})"
with   ``By the latter part of Proposition~\ref{prop:stablity1}" and
``Theorem~\ref{th:A.4} with $\lambda=0$ (or \cite[(2.7)]{Lu2})", respectively.

\begin{corollary}\label{cor:Bi.3}
Let $\lambda^\ast\in\mathbb{R}$.   Suppose that $\mathcal{L}\in C^1(U,\mathbb{R})$ satisfy
 Hypothesis~\ref{hyp:1.1} without (D1),  $\widehat{\mathcal{L}}\in C^1(U,\mathbb{R})$ satisfy  Hypothesis~\ref{hyp:1.2},
 and that for each real $\lambda$ near $\lambda^\ast$ the following holds:
 \begin{enumerate}
  \item[\rm (I.a)] $\{u\in H\,|\, {\mathcal{L}}''(0)u-\lambda\widehat{\mathcal{L}}''(0)u=\mu u,\,\mu\le 0\}\subset X$ for each $\lambda$ near $\lambda^\ast$;
  \item[\rm (I.b)] ${\rm Ker}({\mathcal{L}}''(0)-\lambda^\ast\widehat{\mathcal{L}}''(0))\ne\{0\}$;
 \item[\rm (I.c)] ${\rm Ker}({\mathcal{L}}''(0)-\lambda\widehat{\mathcal{L}}''(0))=\{0\}$ for each $\lambda\ne\lambda^\ast$ near $\lambda^\ast$;
 \item[\rm (I.d)] $\lim_{\lambda\to 0+}r({\mathcal{L}}''(0)-(\lambda+\lambda^\ast)\widehat{\mathcal{L}}''(0))\ne
 \lim_{\lambda\to 0-}r({\mathcal{L}}''(0)-(\lambda+\lambda^\ast)\widehat{\mathcal{L}}''(0))$.
  \end{enumerate}
  Then  $(\lambda^\ast,0)\in \mathbb{R}\times U$ is  a bifurcation point of the equation
 \begin{eqnarray}\label{e:Bi.1.4}
  \mathcal{L}'(u)-\lambda\widehat{\mathcal{L}}'(u)=0.
  \end{eqnarray}
   Moreover, the condition (I.d)  can be replaced by
   \leftmargini=10mm
  \begin{enumerate}
  \item[\rm (I.d')] ${\mathcal{L}}''(0)$ is invertible, ${\mathcal{L}}''(0)\widehat{\mathcal{L}}''(0)=\widehat{\mathcal{L}}''(0){\mathcal{L}}''(0)$
  and the positive and negative indexes of inertia of the restriction of ${\mathcal{L}}''(0)$ to $H^0_{\lambda^\ast}:={\rm Ker}({\mathcal{L}}''(0)-\lambda^\ast\widehat{\mathcal{L}}''(0))$
  are different.
  \end{enumerate}
  \end{corollary}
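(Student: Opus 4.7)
The plan is to reduce the statement to Theorem~\ref{th:Bi.1.1} via the shift $\mu := \lambda-\lambda^\ast$. Setting
\[
\mathcal{F}_\mu := \mathcal{L}-(\mu+\lambda^\ast)\widehat{\mathcal{L}},\qquad \mu\in I,
\]
with $I$ a bounded open interval containing $0$, the point $(\lambda^\ast,0)$ is a bifurcation point of (\ref{e:Bi.1.4}) if and only if $(0,0)$ is a bifurcation point of $\mathcal{F}'_\mu(u)=0$, so it suffices to verify the hypotheses of Theorem~\ref{th:Bi.1.1} for the family $\{\mathcal{F}_\mu\}_{\mu\in I}$.

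I would first verify that each $\mathcal{F}_\mu$ satisfies Hypothesis~\ref{hyp:1.1}, using the decomposition $P_\mu\equiv P$ and $Q_\mu := Q-(\mu+\lambda^\ast)\widehat{\mathcal{L}}''$: compactness of $Q_\mu(x)$ holds because $\widehat{\mathcal{L}}''(x)$ is compact by Hypothesis~\ref{hyp:1.2}; uniform positive definiteness (D4$^\ast$) of $P_\mu$ is inherited from $P$; (D2) and (D3) follow from the corresponding properties of $\mathcal{L}$ and from the continuity of $u\mapsto\widehat{\mathcal{L}}''(u)$ at $0$; and (D1)---deliberately omitted for $\mathcal{L}$ itself---is supplied by (I.a) applied to $B_\mu(0)=\mathcal{L}''(0)-(\mu+\lambda^\ast)\widehat{\mathcal{L}}''(0)$. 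Conditions (i)--(iv) of Theorem~\ref{th:Bi.1.1} can then be checked essentially as in the proof of Corollary~\ref{cor:Bi.2.2}, using the boundedness of $I$, uniform continuity of $\widehat{\mathcal{L}}''$ near $0$, and the affine dependence of $\mathcal{F}_\mu$ on $\mu$. The remaining conditions translate directly: (v) to (I.c), (vi) to continuity of $\mu\mapsto B_\mu(0)$ in the norm topology of $\mathscr{L}_s(H)$, (vii) to (I.b), and (viii) to (I.d). Theorem~\ref{th:Bi.1.1} then yields the desired bifurcation point $(0,0)$ of $\mathcal{F}'_\mu(u)=0$.

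For the \emph{moreover} part I plan to show that (I.d$'$) implies (I.d) by a direct spectral computation. Invertibility of $\mathcal{L}''(0)$ combined with ${\rm Ker}(\mathcal{L}''(0)-\lambda^\ast\widehat{\mathcal{L}}''(0))\ne\{0\}$ from (I.b) forces $\lambda^\ast\ne 0$. Commutativity of $\mathcal{L}''(0)$ and $\widehat{\mathcal{L}}''(0)$ makes the finite-dimensional space $H^0_{\lambda^\ast}$ invariant under both, and on it the identity $\mathcal{L}''(0)u=\lambda^\ast\widehat{\mathcal{L}}''(0)u$ yields
\[
\bigl(\mathcal{L}''(0)-(\mu+\lambda^\ast)\widehat{\mathcal{L}}''(0)\bigr)\big|_{H^0_{\lambda^\ast}}
= -\frac{\mu}{\lambda^\ast}\,\mathcal{L}''(0)\big|_{H^0_{\lambda^\ast}}.
\]
Hence the $0$-group eigenvalues of $B_\mu(0)$ near $\mu=0$ are precisely $-\mu/\lambda^\ast$ times the (nonzero) eigenvalues of $\mathcal{L}''(0)|_{H^0_{\lambda^\ast}}$. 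Denoting by $p$ and $q$ the positive and negative indices of inertia of the latter, a case split on the sign of $\lambda^\ast$ gives $\{r^+,r^-\}=\{p,q\}$, so $r^+\ne r^-$ is equivalent to $p\ne q$, which is exactly (I.d$'$).

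The main obstacle I anticipate is bookkeeping rather than conceptual: matching the eight clauses of Theorem~\ref{th:Bi.1.1} against the concrete affine family $\{\mathcal{F}_\mu\}$, and executing the sign analysis in the final step uniformly across the cases $\lambda^\ast>0$ and $\lambda^\ast<0$. The genuinely spectral content---the Morse-index jump driving the bifurcation---has already been packaged inside Theorem~\ref{th:Bi.1.1}.
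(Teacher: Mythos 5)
Your proposal is correct and follows essentially the same route as the paper: both reduce to Theorem~\ref{th:Bi.1.1} for the shifted family $\mathcal{F}_\mu=\mathcal{L}-(\mu+\lambda^\ast)\widehat{\mathcal{L}}$, and both handle the ``Moreover'' part by exploiting commutativity and invertibility to identify the 0-group eigenvalues with those of $B_\mu(0)|_{H^0_{\lambda^\ast}}$. The one cosmetic difference is that the paper cites Corollary~\ref{cor:stablity2} to supply the uniform (PS) ingredient of Theorem~\ref{th:Bi.1.1} and then cites the Morse-index formula from the proof of Corollary~\ref{cor:Bi.2.4.2} for the implication (I.d$'$)$\Rightarrow$(I.d), whereas you verify (i)--(iv) directly in the style of Corollary~\ref{cor:Bi.2.2} and carry out the spectral computation $B_\mu(0)|_{H^0_{\lambda^\ast}}=-\tfrac{\mu}{\lambda^\ast}\mathcal{L}''(0)|_{H^0_{\lambda^\ast}}$ yourself; your version is somewhat more self-contained but the substance is identical.
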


\begin{corollary}\label{cor:Bi.3.1}
Let $\lambda^\ast\in\mathbb{R}$.  Suppose that $\mathcal{L}\in C^1(U,\mathbb{R})$ satisfy
 Hypothesis~\ref{hyp:1.3} without (C) and (D1), $\widehat{\mathcal{L}}\in C^1(U,\mathbb{R})$ satisfy  Hypothesis~\ref{hyp:1.4}
 with corresponding operators $\widehat{A}$ and $\widehat{B}=\widehat{P}+\widehat{Q}$,
 and that for each real $\lambda$ near $\lambda^\ast$
 the following holds: \leftmargini=10mm
 \begin{enumerate}
  \item[\rm (II.a)] $\{u\in H\,|\, B(0)u-\lambda\widehat{B}(0)u=\mu u,\,\mu\le 0\}\cup
  \{u\in H\,|\, B(0)u-\lambda\widehat{B}(0)u\in X\}\subset X$ for each $\lambda$ near $\lambda^\ast$;
  \item[\rm (II.b)] ${\rm Ker}(B(0)-\lambda^\ast\widehat{B}(0))\ne\{0\}$;
 \item[\rm (II.c)] ${\rm Ker}(B(0)-\lambda\widehat{B}(0))=\{0\}$ for each $\lambda\ne\lambda^\ast$ near $\lambda^\ast$;
 \item[\rm (II.d)] $\lim_{\lambda\to 0+}r(B(0)-(\lambda+\lambda^\ast)\widehat{B}(0))\ne
 \lim_{\lambda\to 0-}r(B(0)-(\lambda+\lambda^\ast)\widehat{B}(0))$.
  \end{enumerate}
  Then  $(\lambda^\ast,0)\in \mathbb{R}\times U^X$ is  a bifurcation point of the equation
 \begin{eqnarray}\label{e:Bi.1.4.1}
  A(u)-\lambda\widehat{A}(u)=0.
  \end{eqnarray}
   Moreover, the condition (II.d)) can be replaced by \leftmargini=12mm
  \begin{enumerate}
   \item[\rm (II.d')] ${\mathcal{L}}''(0)$ and $\widehat{\mathcal{L}}''(0)$ in (I.d') are replaced by $B(0)$ and $\widehat{B}(0)$, respectively.
 \end{enumerate}
 \end{corollary}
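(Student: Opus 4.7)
The plan is to reduce Corollary~\ref{cor:Bi.3.1} to Theorem~\ref{th:Bif.1.1} by the linear reparametrization $\lambda\mapsto\lambda+\lambda^{\ast}$, which moves the expected bifurcation instant to $\lambda=0$. I would pick a small bounded open interval $I\ni 0$ and set $\mathcal{F}_{\lambda}(u):=\mathcal{L}(u)-(\lambda+\lambda^{\ast})\widehat{\mathcal{L}}(u)$. The associated potential operator is $A_{\lambda}=A-(\lambda+\lambda^{\ast})\widehat{A}$, with derivative $B_{\lambda}=B-(\lambda+\lambda^{\ast})\widehat{B}$; since $\widehat{B}(x)$ is compact by Hypothesis~\ref{hyp:1.4}~(D*), I would take the decomposition $P_{\lambda}:=P$ and $Q_{\lambda}:=Q-(\lambda+\lambda^{\ast})\widehat{B}$.

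The first step is to verify that each $\mathcal{F}_{\lambda}$ satisfies Hypothesis~\ref{hyp:1.3}: conditions (F1)-(F3) are linear in the functional and hence inherited from $\mathcal{L}$ and $\widehat{\mathcal{L}}$; (D2) and (D4) are inherited from $\mathcal{L}$ because $P_{\lambda}\equiv P$; (D3) for $Q_{\lambda}$ follows by combining (D3) for $Q$ with the continuity of $\widehat{B}$ at $0$ supplied by (D*); and (C) together with (D1) for $B_{\lambda}(0)=B(0)-(\lambda+\lambda^{\ast})\widehat{B}(0)$ are exactly the content of (II.a). Next I would check the remaining hypotheses of Theorem~\ref{th:Bif.1.1}. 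Since $\mathcal{F}_{\lambda_1}-\mathcal{F}_{\lambda_2}=(\lambda_2-\lambda_1)\widehat{\mathcal{L}}$ and $\widehat{\mathcal{L}}\in C^1(U,\mathbb{R})$, the map $\lambda\mapsto\mathcal{F}_{\lambda}$ is continuous on any $\bar{B}_H(0,\delta)\subset U$ in the $C^1$-topology, which is condition (a) of Theorem~\ref{th:Bi.1.1}, while Corollary~\ref{cor:stablity2} delivers the uniform (PS) condition needed for (b). The remaining conditions (v)-(viii) of Theorem~\ref{th:Bi.1.1} translate, respectively, into (II.c), the obvious norm-continuity of $\lambda\mapsto B(0)-(\lambda+\lambda^{\ast})\widehat{B}(0)$, (II.b), and (II.d). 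Theorem~\ref{th:Bif.1.1} then furnishes a bifurcation point $(0,0)\in I\times U^X$ of $A_{\lambda}(u)=0$, which is exactly $(\lambda^{\ast},0)$ being a bifurcation point of (\ref{e:Bi.1.4.1}).

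The main obstacle, and the only nontrivial linear-algebraic step, is to show that (II.d') implies (II.d). Under (II.d') the bounded self-adjoint operators $B(0)$ and $\widehat{B}(0)$ commute, with $B(0)$ invertible and $\widehat{B}(0)$ compact; the identity $B(0)u=\lambda^{\ast}\widehat{B}(0)u$ on $H^0_{\lambda^{\ast}}$ then forces $\lambda^{\ast}\ne 0$ and $\widehat{B}(0)|_{H^0_{\lambda^{\ast}}}=(\lambda^{\ast})^{-1}B(0)|_{H^0_{\lambda^{\ast}}}$. Simultaneous diagonalization on the finite-dimensional invariant subspace $H^0_{\lambda^{\ast}}$ yields an orthonormal basis $\{e_i\}$ with $B(0)e_i=b_ie_i$ and $\widehat{B}(0)e_i=(b_i/\lambda^{\ast})e_i$; hence the $0$-group eigenvalues of $B(0)-\lambda\widehat{B}(0)$ near $\lambda^{\ast}$ are $-b_i(\lambda-\lambda^{\ast})/\lambda^{\ast}$, and their signs flip across $\lambda^{\ast}$ according to the signs of $b_i$ and $\lambda^{\ast}$. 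Consequently $r^+-r^-=\pm(n_+-n_-)$, where $n_\pm$ are the positive and negative inertia indices of $B(0)|_{H^0_{\lambda^{\ast}}}$, so (II.d') guarantees $r^+\ne r^-$ and (II.d) holds, completing the reduction.
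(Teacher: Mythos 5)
Your proof follows the paper's own route — reparametrize by $\lambda\mapsto\lambda+\lambda^\ast$, verify that $\mathcal{F}_\lambda=\mathcal{L}-(\lambda+\lambda^\ast)\widehat{\mathcal{L}}$ satisfies Hypothesis~\ref{hyp:1.3} with $P_\lambda=P$, $Q_\lambda=Q-(\lambda+\lambda^\ast)\widehat{B}$, obtain the uniform (PS) condition from Corollary~\ref{cor:stablity2}, and apply Theorem~\ref{th:Bif.1.1} — and is correct. For the implication (II.d')$\Rightarrow$(II.d) your direct computation of the $0$-group eigenvalues via simultaneous diagonalization on the invariant subspace $H^0_{\lambda^\ast}$ is the same linear-algebraic content the paper invokes through (\ref{e:Bi.2.19.1}) and the Morse-index formula in the proof of Corollary~\ref{cor:Bi.2.4.2}, only made explicit.
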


If  $\lim_{\lambda\to 0+}r(B(0)-(\lambda+\lambda^\ast)\widehat{B}(0))-
 \lim_{\lambda\to 0-}r(B(0)-(\lambda+\lambda^\ast)\widehat{B}(0))$ is an odd, which is stronger than (II.d),
 then it follows from Theorem~\ref{th:C.2} (\cite[Theorem~6.3]{Stu14A}) that there is  continuous bifurcation at $\lambda^\ast$;
see Comparison~\ref{comp:C.2} in Section~\ref{app:C}.

Consider $\mathcal{F}_\lambda:=\widehat{\mathcal{L}}-(\lambda+\lambda^\ast)\widehat{\mathcal{L}}$ for each real $\lambda$ near $0$.
Then Corollary~\ref{cor:Bi.3} (resp. Corollary~\ref{cor:Bi.3.1}) can follows from  Corollary~\ref{cor:stablity2} and
Theorem~\ref{th:Bi.1.1} (resp. Corollary~\ref{cor:stablity2} and Theorem~\ref{th:Bif.1.1})
if (I) (resp. (II)) holds. As to the ``Moreover" parts in these two corollaries, by (\ref{e:Bi.2.19.1}) in the proof of Corollary~\ref{cor:Bi.2.4.2},
we see that $\mathcal{F}_\lambda$ has different Morse indexes at $0\in H$ as $\lambda$ varies in both sides of $\lambda^\ast$.

Based on the arguments in \cite{Lu1}, we may use
Theorem~\ref{th:Bi.1.1} to get a generalization of \cite[Theorem~5.4.1]{EKBB} immediately. See
\cite[Theorem~3.10]{Lu8}
for a high dimensional analogue (corresponding to
\cite[Theorems~5.4.2 and 5.7.4]{EKBB}).

In Corollary~\ref{cor:Bi.3}, if the condition ``${\mathcal{L}}''(0)$ is invertible" in (I.d')
is strengthened as ``${\mathcal{L}}''(0)$ is positive definite", other conditions are unnecessary.
That is, we have

\begin{theorem}\label{th:Bi.3}
Let $\mathcal{L}\in C^1(U,\mathbb{R})$ (resp.  $\widehat{\mathcal{L}}\in C^1(U,\mathbb{R})$) satisfy
 Hypothesis~\ref{hyp:1.1} with $X=H$ (resp.  Hypothesis~\ref{hyp:1.2}).
Suppose that ${\mathcal{L}}''(0)$ is positive definite.
Then $(\lambda^\ast,0)\in \mathbb{R}\times U$ is  a bifurcation point of the equation
(\ref{e:Bi.1.4}) if and only if  $\lambda^\ast\in\mathbb{R}$ is an  eigenvalue of
  \begin{eqnarray}\label{e:Bi.1.5}
  \mathcal{L}''(0)u-\lambda\widehat{\mathcal{L}}''(0)u=0,\; u\in H.
  \end{eqnarray}
\end{theorem}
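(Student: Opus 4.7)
My plan is to split the equivalence into its two implications. The necessity direction is immediate from the $n=1$ case of Corollary~\ref{cor:Bi.2.2}: the hypotheses on $\mathcal{L}$ and $\widehat{\mathcal{L}}$ in Theorem~\ref{th:Bi.3} are precisely those required there, so if $(\lambda^\ast,0)$ is a bifurcation point of (\ref{e:Bi.1.4}) then $\lambda^\ast$ must be an eigenvalue of (\ref{e:Bi.1.5}). The nontrivial direction is sufficiency, for which my strategy is to reduce to Corollary~\ref{cor:Bi.3} applied to the one-parameter family $\mathcal{F}_\lambda := \mathcal{L} - (\lambda+\lambda^\ast)\widehat{\mathcal{L}}$ near $\lambda=0$. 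Since $X=H$, conditions (I.a) and (D1) are trivially satisfied, so the entire task reduces to verifying (I.b), (I.c) and (I.d) for the family $B_\lambda(0) = \mathcal{L}''(0) - (\lambda+\lambda^\ast)\widehat{\mathcal{L}}''(0)$.

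The positive definiteness of $\mathcal{L}''(0)$ is the engine of the argument. Equipping $H$ with the equivalent inner product $\langle u,v\rangle := (\mathcal{L}''(0)u,v)_H$, the operator $T := \mathcal{L}''(0)^{-1}\widehat{\mathcal{L}}''(0)$ becomes self-adjoint, and it is compact because $\widehat{\mathcal{L}}''(0)$ is compact by Hypothesis~\ref{hyp:1.2}. The nonzero eigenvalues of (\ref{e:Bi.1.5}) are precisely the reciprocals of the nonzero eigenvalues of $T$, so they form a discrete subset of $\mathbb{R}\setminus\{0\}$ with each of finite multiplicity; in particular $\lambda^\ast\ne 0$, and (I.b) (which is an assumption) together with (I.c) follow at once. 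To verify (I.d), I would use a $\langle\cdot,\cdot\rangle$-orthonormal eigenbasis $\{e_i\}$ of $T$ (with $Te_i = (1/\mu_i)e_i$) together with $\ker T = \ker\widehat{\mathcal{L}}''(0)$ to get, for $u = \sum_i c_i e_i + v_0$,
\begin{equation*}
\bigl(\mathfrak{B}_\mu u,u\bigr)_H = \sum_i c_i^2\Bigl(1-\frac{\mu}{\mu_i}\Bigr) + (\mathcal{L}''(0)v_0,v_0)_H,\qquad \mathfrak{B}_\mu := \mathcal{L}''(0)-\mu\widehat{\mathcal{L}}''(0).
\end{equation*}
Since $(\mathcal{L}''(0)v_0,v_0)_H>0$ for $v_0\ne 0$, the Morse index of $\mathfrak{B}_\mu$ equals exactly $\#\{i:\mu/\mu_i>1\}$; comparing this count for $\mu$ slightly above and slightly below $\lambda^\ast$ shows that the Morse index jumps by $\dim\ker(\mathcal{L}''(0)-\lambda^\ast\widehat{\mathcal{L}}''(0))>0$ across $\lambda^\ast$. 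Hence $r^+_{B_\lambda(0)}\ne r^-_{B_\lambda(0)}$, and Corollary~\ref{cor:Bi.3} yields that $(\lambda^\ast,0)$ is a bifurcation point.

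The main obstacle I anticipate is purely bookkeeping: the sign with which the Morse index jumps depends on whether $\lambda^\ast$ is positive or negative, so the verification of (I.d) must be phrased in a sign-free way (only the \emph{inequality} $r^+\ne r^-$ matters). A welcome feature of the equivalent-inner-product approach is that no Kato-type first-order perturbation theory is needed here: the diagonalization above is genuinely simultaneous in $\mu$, so Morse indices are computed exactly rather than to leading order, and the jump is seen directly.
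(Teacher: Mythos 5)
Your proof is correct, but it takes a genuinely different route from the paper's. The paper proves sufficiency directly: it sets $J:=(\mathcal{L}''(0))^{-1/2}$, transports everything to $\mathfrak{L}=\mathcal{L}\circ J$, $\widehat{\mathfrak{L}}=\widehat{\mathcal{L}}\circ J$ so that the pencil becomes $u-\lambda Lu=0$ with $L=J\widehat{\mathcal{L}}''(0)J$ compact self-adjoint, computes $\mu_\lambda$ from the eigenspace decomposition, derives $C_q(\mathcal{L}_\lambda,0;{\bf K})=\delta_{q\mu_\lambda}{\bf K}$ for $\lambda\ne\lambda^\ast$ from Theorem~\ref{th:A.1}, and closes by contradiction against stability of critical groups --- in effect re-running the argument of Theorem~\ref{th:Bi.1.1}. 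You instead verify conditions (I.a)--(I.d) of Corollary~\ref{cor:Bi.3} and invoke it, which is more economical: with $X=H$ condition (I.a) and the inclusion clauses are void, $\lambda^\ast$ is automatically isolated of finite multiplicity because the $\mu_i$ diverge, and the explicit count $\#\{i:\mu/\mu_i>1\}$ shows the Morse index jumps by $\nu_{\lambda^\ast}>0$ across $\lambda^\ast$, so (I.b)--(I.d) follow without repeating the critical-group machinery. The two arguments rest on the same device in different clothing: your equivalent inner product $\langle u,v\rangle=(\mathcal{L}''(0)u,v)_H$ making $T=\mathcal{L}''(0)^{-1}\widehat{\mathcal{L}}''(0)$ compact self-adjoint is the paper's $J$-conjugation (one replaces the metric, the other conjugates the operator). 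Your sign caveat is apt and in fact also applies to the paper's closed form $\mu_\lambda=\sum_{\lambda_k<\lambda}\dim H_k$, which tacitly assumes $\lambda_k>0$; the correct criterion is $\lambda/\lambda_k>1$, but as you say only the inequality $r^+\ne r^-$ is needed and the jump is $\nu_{\lambda^\ast}$ in size regardless of the sign of $\lambda^\ast$.
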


\begin{proof}
Necessity is contained in Corollary~\ref{cor:Bi.2.2}. It remains to prove sufficiency.
Let $J$ denote the inverse of $(\mathcal{L}''(0))^{1/2}$, which is in $\mathscr{L}_s(H)$
and commutes with any $S\in\mathscr{L}_s(H)$ that commutes with $\mathcal{L}''(0)$. Define functionals
$$
\mathfrak{L}(u):= \mathcal{L}(Ju),\quad    \widehat{\mathfrak{L}}(u):= \widehat{\mathcal{L}}(Ju),\quad u\in J^{-1}(U).
$$
Then for any $u\in J^{-1}(U)$ and all $v\in H$ we have $\nabla\mathfrak{L}(u)= J\nabla\mathcal{L}(Ju)$,
$\nabla\widehat{\mathfrak{L}}(u)= J\nabla\widehat{\mathcal{L}}(Ju)$ and
$$
D(\nabla\mathfrak{L})(u)[v]= JD(\nabla\mathcal{L})(Ju)[Jv]\quad\hbox{and}\quad
D(\nabla\widehat{\mathfrak{L}})(u)[v]= JD(\nabla\widehat{\mathcal{L}})(Ju)[Jv].
$$
It follows that $\mathfrak{L}\in C^1(J^{-1}(U),\mathbb{R})$ (resp.  $\widehat{\mathfrak{L}}\in C^1(J^{-1}(U),\mathbb{R})$) satisfy
 Hypothesis~\ref{hyp:1.1} with $X=H$ (resp.  Hypothesis~\ref{hyp:1.2}).
Moreover, the bifurcation problem (\ref{e:Bi.1.4}) is equivalent to the following
\begin{eqnarray}\label{e:Bi.1.6}
  \mathfrak{L}'(u)-\lambda\widehat{\mathfrak{L}}'(u)=0,\quad u\in J^{-1}(U)
  \end{eqnarray}
and that $\lambda^\ast\in\mathbb{R}$ is an  eigenvalue of (\ref{e:Bi.1.5}) if and only if it is that of
  \begin{eqnarray}\label{e:Bi.1.7}
  J\mathcal{L}''(0)Ju-\lambda J\widehat{\mathcal{L}}''(0)Ju=0\quad\Longleftrightarrow\quad
  u-\lambda Lu=0,
  \end{eqnarray}
where $L:=J\widehat{\mathcal{L}}''(0)J$. Since $L\in\mathscr{L}_s(H)$ is compact,
The eigenvalues of (\ref{e:Bi.1.7}) consists of a sequence of nonzero  reals
$\{\lambda_k\}_{k=1}^\infty$ diverging  to infinity, and each of them is
 of  finite multiplicity. Hence $\lambda^\ast=\lambda_{k_0}$ for some $k_0\in\mathbb{N}$.
 Let $H_k$ be the eigensubspace corresponding to $\lambda_k$ for $k\in\mathbb{N}$, i.e.,
 \begin{equation}\label{e:Bi.2.8sec3}
H_k={\rm Ker}(I-\lambda_kL)={\rm Ker}(\mathfrak{L}''(0)-\lambda_k \widehat{\mathfrak{L}}''(0)),\quad
\forall k\in\mathbb{N}.
\end{equation}
 Then $H=\oplus^\infty_{k=0}H_k$, where $H_0={\rm Ker}(L)={\rm Ker}(\widehat{\mathcal{L}}''(0))$.

Now the eigenvalue $\lambda^\ast=\lambda_{k_0}$  is isolated and
has finite multiplicity. Let us choose $\delta>0$ such that $(\lambda^\ast-\delta,
\lambda^\ast+ \delta)\setminus\{\lambda^\ast\}$ has no intersection with $\{\lambda_k\}^\infty_{k=0}$,
where $\lambda_0=0$. Since $\mathfrak{L}_\lambda:=\mathfrak{L}-\lambda\widehat{\mathfrak{L}}$
 satisfies Hypothesis~\ref{hyp:1.1} with $X=H$, it has finite Morse index $\mu_\lambda$ and nullity $\nu_\lambda$ at $0$.
Moreover, since each $H_k$ is an invariant subspace of $\mathfrak{L}''(0)-\lambda\widehat{\mathfrak{L}}''(0)=I-\lambda L$, and
$H=\oplus^\infty_{k=0}H_k$ is an orthogonal decomposition, we have
$\mu_\lambda=\sum^\infty_{k=0}\mu_{\lambda,k}$,
where $\mu_{\lambda,k}$ is the dimension of
the maximal negative definite space of the quadratic functional
$H_k\ni u\mapsto f_{\lambda,k}(u):=(\mathfrak{L}''(0)u-\lambda\widehat{\mathfrak{L}}''(0)u,u)_H$.
Note that $f_{\lambda,k}(h)=(1-\lambda/\lambda_k)(h,h)_H,\;\forall h\in H_k$.
We deduce that $\mu_{\lambda,k}$ is equal to
$\dim H_k$ (resp. $0$) if $\lambda>\lambda_k$ (resp. $\lambda<\lambda_k$).
Hence  the Morse index of $\mathfrak{L}_\lambda$ at $0$,
 \begin{eqnarray}\label{e:Bi.2.13sec3}
\mu_\lambda=\sum_{\lambda_k<\lambda}\dim H_k.
\end{eqnarray}
This implies  that
$$
\mu_\lambda=\left\{\begin{array}{ll}
\mu_{\lambda^\ast},&\quad\forall \lambda\in (\lambda^\ast-\delta, \lambda^\ast),\\
\mu_{\lambda^\ast}+ \nu_{\lambda^\ast}, &\quad
\forall\lambda\in (\lambda^\ast, \lambda^\ast+\delta),
\end{array}
\right.
$$
Theorem~\ref{th:A.1} with $\lambda=0$ (or \cite[Theorem~2.1]{Lu7}) we obtain that
$$
C_q(\mathcal{L}_\lambda, 0;{\bf K})=C_q(\mathfrak{L}_\lambda, 0;{\bf K})=\left\{\begin{array}{ll}
\delta_{q\mu_{\lambda^\ast}}{\bf K},&\quad\forall \lambda\in (\lambda^\ast-\delta, \lambda^\ast),\\
\delta_{q(\mu_{\lambda^\ast}+ \nu_{\lambda^\ast})}{\bf K}, &\quad \forall\lambda\in (\lambda^\ast, \lambda^\ast+\delta).
\end{array}
\right.
$$
On the other hand, as in the proof of Theorem~\ref{th:Bi.1.1},
suppose that $(\lambda^\ast,0)\in \mathbb{R}\times U$ is not a bifurcation point of the equation
(\ref{e:Bi.1.4}). Then $C_q(\mathcal{L}_\lambda, 0;{\bf K})$ is independent of
$\lambda\in (\lambda^\ast-\delta,\lambda^\ast+\delta)$ by shrinking $\delta>0$ (if necessary),
which leads to a contradiction.
\end{proof}

 It is easily seen that the functional $\mathcal{L}$ in
\cite[\S4.3, Theorem~4.3]{Skr} or  in \cite[Chap.1, Theorem~3.4]{Skr2}
satisfies the conditions of Theorem~\ref{th:Bi.3}. Hence the latter is a generalization of
\cite[Chap.1, Theorem~3.4]{Skr2}. A stronger generalization will be given in Corollary~\ref{cor:Bi.2.4.2}.

Since  $\mathcal{L}''(0)(X)\subset X$  implies $J(X)\subset X$,
slightly modifying the proof of Theorem~\ref{th:Bi.3} we can obtain:

\begin{theorem}\label{th:Bi.4}
$\mathcal{L}\in C^1(U,\mathbb{R})$ satisfy
 Hypothesis~\ref{hyp:1.3} without (C) and (D1), $\widehat{\mathcal{L}}\in C^1(U,\mathbb{R})$ satisfy  Hypothesis~\ref{hyp:1.4}
 with corresponding operators $\widehat{A}$ and $\widehat{B}=\widehat{P}+\widehat{Q}$.
Suppose that $B(0)$ is positive definite  and that
$$
\{u\in H\,|\, B(0)u-\lambda\widehat{B}(0)u=\mu u,\,\mu\le 0\}\cup
  \{u\in H\,|\, B(0)u-\lambda\widehat{B}(0)u\in X\}\subset X
  $$
   for each $\lambda$.
Then $(\lambda^\ast,0)\in \mathbb{R}\times U^X$ is  a bifurcation point of the equation
(\ref{e:Bi.1.4.1}) if and only if  $\lambda^\ast\in\mathbb{R}$ is an  eigenvalue of
  \begin{eqnarray}\label{e:Bi.1.8}
  B(0)u-\lambda\widehat{B}(0)u=0,\; u\in H.
  \end{eqnarray}
\end{theorem}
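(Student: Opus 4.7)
The plan is to mirror the reduction used for Theorem~\ref{th:Bi.3}, but taking extra care that the auxiliary square-root conjugation is compatible with the Banach subspace $X$. Necessity is immediate from Corollary~\ref{cor:Bi.2.2*} with $n=1$. For sufficiency, given an eigenvalue $\lambda^\ast$ of (\ref{e:Bi.1.8}), I would introduce $J:=B(0)^{-1/2}\in\mathscr{L}_s(H)$ so that the pencil $B(0)-\lambda\widehat{B}(0)$ is transformed into $I-\lambda L$ with $L:=J\widehat{B}(0)J$ compact and self-adjoint. The first step is to show $J(X)\subset X$: the standing inclusion of the theorem at $\lambda=0$ gives $\{u\in H\mid B(0)u\in X\}\subset X$, while (F3) in Hypothesis~\ref{hyp:1.3} gives $B(0)|_X=DA(0)\in\mathscr{L}(X)$, so $B(0)$ is a bijection of $X$ onto $X$; combined with positive definiteness on $H$, the Dunford functional calculus yields $J(X)\subset X$ as indicated in the remark preceding the theorem.

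Next I would set $\mathfrak{L}(u):=\mathcal{L}(Ju)$ and $\widehat{\mathfrak{L}}(u):=\widehat{\mathcal{L}}(Ju)$ on $J^{-1}(U)$, with
\begin{equation*}
\mathfrak{A}(u)=JA(Ju),\ \widehat{\mathfrak{A}}(u)=J\widehat{A}(Ju),\ \mathfrak{B}(u)=JB(Ju)J,\ \widehat{\mathfrak{B}}(u)=J\widehat{B}(Ju)J,
\end{equation*}
together with $\mathfrak{P}(u)=JP(Ju)J$ and $\mathfrak{Q}(u)=JQ(Ju)J$. A routine check (using that $J$ is bounded self-adjoint, commutes with $B(0)$ and satisfies $J(X)\subset X$) shows that the tuples $(\mathfrak{L},\mathfrak{A},\mathfrak{B})$ and $(\widehat{\mathfrak{L}},\widehat{\mathfrak{A}},\widehat{\mathfrak{B}})$ satisfy (without (C) and (D1)) Hypothesis~\ref{hyp:1.3} and Hypothesis~\ref{hyp:1.4} respectively, and that the theorem's inclusion assumption translates verbatim into the same inclusion for the new pencil $\mathfrak{B}(0)-\lambda\widehat{\mathfrak{B}}(0)=I-\lambda L$. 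Hence the bifurcation problem (\ref{e:Bi.1.4.1}) is equivalent to the bifurcation problem for $\mathfrak{A}-\lambda\widehat{\mathfrak{A}}$ at $0$, and (\ref{e:Bi.1.8}) is equivalent to $u=\lambda Lu$. Since $L$ is compact and self-adjoint, its nonzero eigenvalues are $\{1/\lambda_k\}_{k\geq 1}$ with $\lambda_k\to\infty$ and finite multiplicities, so $\lambda^\ast=\lambda_{k_0}$ is isolated, and the orthogonal-decomposition argument leading to (\ref{e:Bi.2.13sec3}) can be repeated verbatim to give $\mu_\lambda=\sum_{\lambda_k<\lambda}\dim H_k$ with $H_k={\rm Ker}(\mathfrak{B}(0)-\lambda_k\widehat{\mathfrak{B}}(0))$.

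Fix $\delta>0$ so small that $\{\lambda_k\}\cap(\lambda^\ast-\delta,\lambda^\ast+\delta)=\{\lambda^\ast\}$; then on the deleted neighborhood $0$ is a nondegenerate critical point of $\mathfrak{L}_\lambda:=\mathfrak{L}-\lambda\widehat{\mathfrak{L}}$, and the Morse index jumps by $\nu_{\lambda^\ast}=\dim H_{k_0}>0$ across $\lambda^\ast$. Applying Theorem~\ref{th:A.4} with $\lambda=0$ (equivalently \cite[(2.7)]{Lu2}) to each $\mathfrak{L}_\lambda$ yields $C_q(\mathcal{L}_\lambda,0;{\bf K})=C_q(\mathfrak{L}_\lambda,0;{\bf K})$ equal to $\delta_{q\mu_{\lambda^\ast}}{\bf K}$ on one side of $\lambda^\ast$ and $\delta_{q(\mu_{\lambda^\ast}+\nu_{\lambda^\ast})}{\bf K}$ on the other. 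If $(\lambda^\ast,0)$ were not a bifurcation point of (\ref{e:Bi.1.4.1}), then $0$ would be the unique critical point of $\mathcal{L}_\lambda$ in a small closed ball for all nearby $\lambda$, and Corollary~\ref{cor:stablity2}(II) together with Theorem~\ref{th:stablity2} would force $C_\ast(\mathcal{L}_\lambda,0;{\bf K})$ to be constant near $\lambda^\ast$, a contradiction. The main obstacle I anticipate is the bookkeeping in the $J$-conjugation step: confirming that (F1)--(F3) and the full list of (D)-type conditions really propagate under $u\mapsto Ju$, and that the theorem's pencil-inclusion hypothesis translates into the analogous inclusion for $I-\lambda L$ needed by the splitting theorem. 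Once $J(X)\subset X$ is established, all remaining verifications are mechanical.
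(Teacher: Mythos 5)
Your proposal follows the paper's own route: establish $J(X)\subset X$ (the paper records exactly this observation just before the theorem), conjugate by $J=B(0)^{-1/2}$ to reduce the pencil to $I-\lambda L$ with $L$ compact self-adjoint, repeat the Morse-index and critical-group computation from Theorem~\ref{th:Bi.3} using Theorem~\ref{th:A.4} in place of Theorem~\ref{th:A.1}, and conclude by stability of critical groups; necessity via Corollary~\ref{cor:Bi.2.2*} is also what the paper intends. This is the same argument the paper summarizes as ``slightly modifying the proof of Theorem~\ref{th:Bi.3},'' so your proof is correct and not a genuinely different route.
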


If $\lambda^\ast\in\mathbb{R}$ is an  eigenvalue of (\ref{e:Bi.1.8}) of odd multiplicity,
 then it follows from Theorem~\ref{th:C.2} (\cite[Theorem~6.3]{Stu14A}) that there is
 continuous bifurcation at $\lambda^\ast$;
see Comparison~\ref{comp:C.3} in Section~\ref{app:C}.

Note that the proofs of Theorems~\ref{th:Bi.1.1},~\ref{th:Bif.1.1}, ~\ref{th:Bi.3} and \ref{th:Bi.4}
do not need the parameterized versions of the splitting lemmas and Morse-Palais lemmas.
In next three sections we shall use them to get stronger results.

In order to compare our method with previous those  let us consider  the case that
 the classical splitting lemma and Morse-Palais lemma for $C^2$ functionals can be used.
Indeed, it is easily seen that our above proof methods of Theorem~\ref{th:Bi.1.1}
 can lead to:

\begin{theorem}\label{th:Bi.6}
Let $H$, $I$ and $U$ be as in Theorem~\ref{th:Bi.1.1},
 and let $\mathcal{F}:I\times U\to\mathbb{R}$ satisfy the following conditions:
 \begin{enumerate}
 \item[\rm (1)] Each $\mathcal{F}_\lambda\in C^2(U,\mathbb{R})$, $\mathcal{F}'_\lambda(0)=0$ and $\mathcal{F}_0^{\prime\prime}(0)$ is a Fredholm operator.
 \item[\rm (2)] The assumptions (v)-(viii) in Theorem~\ref{th:Bi.1.1} hold with $B_\lambda(0)=\mathcal{F}_\lambda^{\prime\prime}(0)$.
 \item[\rm (3)] Either the assumptions (a)-(b) in Theorem~\ref{th:Bi.1.1} hold, or (i) in Theorem~\ref{th:Bi.1.1} holds and
 $\{\mathcal{F}_\lambda\}_{|\lambda|<\epsilon}$ satisfies the uniform (PS) condition on some closed neighborhood of $0\in H$ for
 small $\epsilon>0$.
 \end{enumerate}
  Then $(0,0)\in I\times U$ is  a bifurcation point of the equation (\ref{e:Intro.1}).
\end{theorem}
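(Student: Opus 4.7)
The plan is to mimic the proof of Theorem~\ref{th:Bi.1.1}, replacing the author's non-smooth splitting machinery (Theorem~\ref{th:A.1}) by the classical Morse/shifting theorem for $C^2$ functionals, which is available since each $\mathcal{F}_\lambda\in C^2$ by (1). Argue by contradiction: if $(0,0)\in I\times U$ is not a bifurcation point of $\mathcal{F}'_\lambda(u)=0$, then after shrinking, there exist $\delta>0$ and $\varepsilon_0>0$ such that for every $\lambda\in[-\varepsilon_0,\varepsilon_0]$, the origin is the unique critical point of $\mathcal{F}_\lambda$ in $\overline{B}_H(0,\delta)$.

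First, I would invoke stability of critical groups in the form given in Section~\ref{sec:CGS}: if (a)--(b) of Theorem~\ref{th:Bi.1.1} hold, apply Theorem~\ref{th:stablity1} directly (note $C^2$ upgrades the $C^1$ continuity in $\lambda$ required there); otherwise use (i) of Theorem~\ref{th:Bi.1.1} together with the uniform (PS) assumption and apply Theorem~\ref{th:stablity2}. Either way,
$$C_\ast(\mathcal{F}_\lambda,0;{\bf K})=C_\ast(\mathcal{F}_0,0;{\bf K})\quad\text{for all }\lambda\in[-\varepsilon_0,\varepsilon_0].$$

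Next, I would compute these critical groups using the $C^2$ structure. By assumption (2)(v), $\operatorname{Ker}(B_\lambda(0))=\{0\}$ for $0<|\lambda|$ small; since $\mathcal{F}_\lambda''(0)=B_\lambda(0)\to B_0(0)$ by (vi) and $B_0(0)=\mathcal{F}_0''(0)$ is Fredholm by (1), each $B_\lambda(0)$ is Fredholm for $|\lambda|\ll 1$, so $0$ is a nondegenerate critical point of $\mathcal{F}_\lambda$. The classical Morse lemma on Hilbert spaces (e.g.\ Theorem~I.5.1 of \cite{Ch}) then gives
$$C_q(\mathcal{F}_\lambda,0;{\bf K})=\delta_{q,\mu_\lambda}{\bf K},\qquad 0<|\lambda|\le\varepsilon_0,$$
where $\mu_\lambda$ is the Morse index of $\mathcal{F}_\lambda$ at $0$. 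Combined with the previous step, $\mu_\lambda$ is constant on $[-\varepsilon_0,\varepsilon_0]\setminus\{0\}$.

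Finally, I would exploit the spectral perturbation argument already used in the proof of Theorem~\ref{th:Bi.1.1}. Assumption (vii) says $0\in\sigma(B_0(0))$; being an isolated point (Propostion~B.2 of \cite{Lu2}, or directly since $B_0(0)$ is self-adjoint Fredholm), it is an eigenvalue of finite multiplicity, and $\sigma(B_0(0))\cap(-\infty,0]$ consists of $0$ and finitely many negative eigenvalues of finite multiplicity. By (vi) and Kato's perturbation theory (as recalled in the paragraph preceding (\ref{e:Bi.1.1})), the $0$-group ${\rm eig}_0(B_\lambda(0))$ has total multiplicity $n=\dim\operatorname{Ker}(B_0(0))$, while the negative eigenvalues of $B_0(0)$ bounded away from $0$ persist with unchanged total multiplicity. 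Hence for $0<\lambda\le\varepsilon_0$ small, $\mu_\lambda=\mu_0+r^+_{B_\lambda(0)}$, and for $-\varepsilon_0\le\lambda<0$ small, $\mu_\lambda=\mu_0+r^-_{B_\lambda(0)}$. By (viii) these two quantities disagree, contradicting the constancy of $\mu_\lambda$ on $[-\varepsilon_0,\varepsilon_0]\setminus\{0\}$, and establishing the theorem.

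The main potential obstacle is bookkeeping, not substance: one must verify that either set of alternative hypotheses in (3) really fits into the continuity/uniform (PS) framework of Theorem~\ref{th:stablity1} or Theorem~\ref{th:stablity2} (for the first alternative, $C^2$ of each $\mathcal{F}_\lambda$ plus $C^1$-continuity in $\lambda$ is exactly what is needed). After that, the remainder is a direct specialization of the Theorem~\ref{th:Bi.1.1} argument to the $C^2$ setting, with the classical Morse lemma replacing the splitting theorem from Appendix~\ref{app:A}.
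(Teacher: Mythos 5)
Your proposal is correct and is exactly the route the paper intends: it introduces Theorem~\ref{th:Bi.6} with the remark "our above proof methods of Theorem~\ref{th:Bi.1.1} can lead to," and your argument makes that explicit — replace the non-smooth splitting/shifting theorem (Theorem~\ref{th:A.1}) with the classical $C^2$ Morse lemma, use the Fredholm hypothesis in place of the $P+Q$ decomposition to guarantee that $0$ is isolated in $\sigma(B_0(0))$ and that nondegeneracy holds for small $|\lambda|\ne 0$, and keep the stability-of-critical-groups and Kato perturbation steps unchanged. The only minor inaccuracy is the parenthetical claim that "$C^2$ upgrades the $C^1$ continuity in $\lambda$" — condition (a) of Theorem~\ref{th:Bi.1.1} already supplies $C^1$-topology continuity in $\lambda$, which is exactly what Theorem~\ref{th:stablity1} needs; $C^2$ smoothness of each $\mathcal{F}_\lambda$ is a separate input (for the Morse lemma), not a strengthening of the $\lambda$-dependence. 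This does not affect the validity of the proof.
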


By (vi) in Theorem~\ref{th:Bi.1.1} with $B_\lambda(0)=\mathcal{F}_\lambda^{\prime\prime}(0)$,
 $\mathcal{F}_\lambda^{\prime\prime}(0)$ is  Fredholm  for each $\lambda$ near $0\in\mathbb{R}$.

\begin{corollary}\label{cor:Bi.7}
Let $U$ be an open neighborhood of $0$ in a real Hilbert space $H$,
and let $\mathcal{L}, \widehat{\mathcal{L}}\in C^2(U,\mathbb{R})$ satisfy
$\mathcal{L}'(0)=0$ and  $\widehat{\mathcal{L}}'(0)=0$. Suppose that
$\lambda^\ast\in\mathbb{R}$ is an isolated eigenvalue of (\ref{e:Bi.1.5})
 of finite multiplicity, that the following  conditions are satisfied:
 \begin{enumerate}
 \item[\rm (1)] For each $\lambda$ near $\lambda^\ast$,
 $\mathcal{L}-\lambda\widehat{\mathcal{L}}$ satisfies the (PS) condition in a fixed closed neighborhood of $0$.
  \item[\rm (2)] $\lim_{\lambda\to 0+}r(\mathcal{L}''(0)-(\lambda+\lambda^\ast)\widehat{\mathcal{L}}''(0))\ne
 \lim_{\lambda\to 0-}r(\mathcal{L}''(0)-(\lambda+\lambda^\ast)\widehat{\mathcal{L}}''(0))$.
 \end{enumerate}
  Then $(0,0)\in I\times U$ is  a bifurcation point of the equation (\ref{e:Bi.1.4}).
   Moreover, the condition (2)  can be replaced by one of form (I.d') in Corollary~\ref{cor:Bi.3}.
 \end{corollary}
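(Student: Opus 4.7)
My plan is to apply Theorem~\ref{th:Bi.6} after shifting the parameter so that the potential bifurcation instant sits at $0$. Choose $\epsilon>0$ small enough that the interval $\lambda^\ast+(-\epsilon,\epsilon)$ meets no eigenvalue of (\ref{e:Bi.1.5}) other than $\lambda^\ast$ and so that the (PS) condition in hypothesis~(1) holds uniformly in a fixed closed ball $\bar{B}_H(0,\delta)$, and put $I=(-\epsilon,\epsilon)$ and
\begin{equation*}
\mathcal{F}(\lambda,u):=\mathcal{L}(u)-(\lambda+\lambda^\ast)\widehat{\mathcal{L}}(u),\qquad(\lambda,u)\in I\times U.
\end{equation*}
Then $(\lambda^\ast,0)$ is a bifurcation point of~(\ref{e:Bi.1.4}) if and only if $(0,0)$ is a bifurcation point of $\mathcal{F}'_\lambda(u)=0$, so it suffices to verify the hypotheses of Theorem~\ref{th:Bi.6} with $B_\lambda(0):=\mathcal{F}''_\lambda(0)=\mathcal{L}''(0)-(\lambda+\lambda^\ast)\widehat{\mathcal{L}}''(0)$.

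Hypothesis~(1) is essentially immediate: $\mathcal{F}_\lambda\in C^2(U,\mathbb{R})$ with $\mathcal{F}'_\lambda(0)=0$, and the self-adjoint operator $\mathcal{F}''_0(0)=\mathcal{L}''(0)-\lambda^\ast\widehat{\mathcal{L}}''(0)$ is Fredholm because the assumption that $\lambda^\ast$ is an isolated eigenvalue of (\ref{e:Bi.1.5}) of finite multiplicity makes $0$ an isolated point of its spectrum with finite-dimensional eigenspace. For hypothesis~(2), the isolation of $\lambda^\ast$ gives~(v) (trivial kernel for small nonzero~$\lambda$), linearity in $\lambda$ gives~(vi), the eigenvalue hypothesis gives~(vii), and~(viii) is precisely the standing assumption~(2). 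For hypothesis~(3), part~(a) (continuity of $\lambda\mapsto\mathcal{F}_\lambda$ at $\lambda=0$ in $C^1(\bar{B}_H(0,\delta))$) follows at once from the linear, hence Lipschitz, dependence of $\mathcal{F}_\lambda$ on $\lambda$, and part~(b) is the (PS) assumption~(1). Theorem~\ref{th:Bi.6} then yields the bifurcation.

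For the ``Moreover'' part, replace~(2) by~(I.d') of Corollary~\ref{cor:Bi.3}. Since $\mathcal{L}''(0)$ and $\widehat{\mathcal{L}}''(0)$ are bounded self-adjoint and commute, the subspace $H^0_{\lambda^\ast}=\ker(\mathcal{L}''(0)-\lambda^\ast\widehat{\mathcal{L}}''(0))$ and its orthogonal complement are invariant under both, and on $H^0_{\lambda^\ast}$ we have $\mathcal{L}''(0)=\lambda^\ast\widehat{\mathcal{L}}''(0)$ with $\lambda^\ast\neq 0$ (since $\mathcal{L}''(0)$ is invertible). Hence
\begin{equation*}
B_\lambda(0)\big|_{H^0_{\lambda^\ast}}=-\frac{\lambda}{\lambda^\ast}\,\mathcal{L}''(0)\big|_{H^0_{\lambda^\ast}},
\end{equation*}
while on $(H^0_{\lambda^\ast})^\perp$ the operator $B_0(0)$ is bounded away from $0$; thus for all sufficiently small $|\lambda|$ the entire $0$-group of $B_\lambda(0)$ lies in $H^0_{\lambda^\ast}$, and a direct count gives $r^+_{B_\lambda(0)}-r^-_{B_\lambda(0)}=\pm(p^+-p^-)$, where $p^\pm$ denote the positive and negative inertia indices of $\mathcal{L}''(0)|_{H^0_{\lambda^\ast}}$. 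By~(I.d') this is nonzero, so~(viii) holds and the first case applies. The main delicate point is really just bookkeeping: checking that the $0$-group of the perturbed operator is exhausted by eigenvalues coming from $H^0_{\lambda^\ast}$, which rests on the quantitative spectral gap of $\mathcal{F}''_0(0)$ around $0$ that is built into the isolated-eigenvalue hypothesis.
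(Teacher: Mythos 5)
Your proof is correct and follows exactly the route the paper intends: the corollary is a direct specialization of Theorem~\ref{th:Bi.6} to $\mathcal{F}(\lambda,u)=\mathcal{L}(u)-(\lambda+\lambda^\ast)\widehat{\mathcal{L}}(u)$, and the paper gives no further proof beyond observing (just after the corollary) that Fredholmness of $\mathcal{L}''(0)-\lambda^\ast\widehat{\mathcal{L}}''(0)$ propagates to nearby $\lambda$. Your verification of hypotheses~(v)--(viii) via the parameter shift and your treatment of the ``Moreover'' part (using that $H^0_{\lambda^\ast}$ reduces both $\mathcal{L}''(0)$ and $\widehat{\mathcal{L}}''(0)$ when they commute, so $B_\lambda(0)|_{H^0_{\lambda^\ast}}=-\tfrac{\lambda}{\lambda^\ast}\mathcal{L}''(0)|_{H^0_{\lambda^\ast}}$ while $B_\lambda(0)$ stays invertible on $(H^0_{\lambda^\ast})^\perp$) correctly supplies the bookkeeping the paper leaves implicit; it matches the computation carried out in the proof of Corollary~\ref{cor:Bi.2.4.2}.
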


Since $\mathcal{L}''(0)-\lambda^\ast\widehat{\mathcal{L}}''(0)$ is a Fredholm operator, so is
$\mathcal{L}''(0)-\lambda\widehat{\mathcal{L}}''(0)$ for each real $\lambda$ near $\lambda^\ast$.
Compare Corollary~\ref{cor:Bi.7} with Corollaries~\ref{cor:Bi.2.6},\ref{cor:Bi.2.7}.

By the proof of Theorem~\ref{th:Bi.3} we easily get

\begin{theorem}\label{th:Bi.7.1}
Let $U$ be an open neighborhood of $0$ in a real Hilbert space $H$,
and let $\mathcal{L}, \widehat{\mathcal{L}}\in C^2(U,\mathbb{R})$ satisfy
$\mathcal{L}'(0)=0$ and  $\widehat{\mathcal{L}}'(0)=0$.
Suppose that  $\mathcal{L}''(0)$ is positive definite and  $\widehat{\mathcal{L}}''(0)$ is compact.
Then $(\lambda^\ast,0)\in \mathbb{R}\times U$ is  a bifurcation point of the equation
(\ref{e:Bi.1.4}) if and only if  $\lambda^\ast\in\mathbb{R}$ is an  eigenvalue of
(\ref{e:Bi.1.5}).
 \end{theorem}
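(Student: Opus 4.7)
The plan is to reduce the problem to the situation of Theorem~\ref{th:Bi.3} by observing that the $C^2$ hypothesis is strictly stronger than Hypotheses~\ref{hyp:1.1} and \ref{hyp:1.2} (with $X=H$), and then to run exactly the argument used for Theorem~\ref{th:Bi.3} with the genuine Morse--Palais/splitting lemma for $C^2$ functionals replacing the parameter-free splitting of \cite{Lu7}. In particular, since $\mathcal{L}''(0)\in\mathscr{L}_s(H)$ is positive definite and $\widehat{\mathcal{L}}''(0)\in\mathscr{L}_s(H)$ is compact, the natural decomposition $\mathcal{L}''(u)=P(u)+Q(u)$ with $P(u)=\mathcal{L}''(0)$ and $Q(u)=\mathcal{L}''(u)-\mathcal{L}''(0)$ (the latter compact near $0$ by $C^2$ continuity plus compact approximation) verifies (D1)--(D4); likewise $\widehat{\mathcal{L}}$ verifies Hypothesis~\ref{hyp:1.2}. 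Hence the ``only if'' direction is immediate from Corollary~\ref{cor:Bi.2.2}.

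For the ``if'' direction, following the proof of Theorem~\ref{th:Bi.3}, I would set $J=(\mathcal{L}''(0))^{-1/2}\in\mathscr{L}_s(H)$, which is bounded and positive because $\mathcal{L}''(0)$ is positive definite, and introduce the conjugated functionals
\[
\mathfrak{L}(u):=\mathcal{L}(Ju),\qquad \widehat{\mathfrak{L}}(u):=\widehat{\mathcal{L}}(Ju),\qquad u\in J^{-1}(U).
\]
Both are $C^2$, the map $u\mapsto Ju$ is a linear homeomorphism sending $0$ to $0$, so the bifurcation equation $\mathcal{L}'(u)-\lambda\widehat{\mathcal{L}}'(u)=0$ is equivalent to $\mathfrak{L}'(u)-\lambda\widehat{\mathfrak{L}}'(u)=0$, and $\mathfrak{L}''(0)=I$, $\widehat{\mathfrak{L}}''(0)=J\widehat{\mathcal{L}}''(0)J=:L$ is compact self-adjoint. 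The eigenvalue problem (\ref{e:Bi.1.5}) becomes $u=\lambda Lu$, whose nonzero eigenvalues $\{\lambda_k\}$ are isolated and of finite multiplicity, diverging to infinity; the given $\lambda^\ast$ coincides with some $\lambda_{k_0}$.

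Next, choose $\delta>0$ so that $(\lambda^\ast-\delta,\lambda^\ast+\delta)\cap\{\lambda_k\}=\{\lambda^\ast\}$. For $\lambda$ in this punctured interval, $0$ is a nondegenerate critical point of $\mathcal{F}_\lambda:=\mathcal{L}-\lambda\widehat{\mathcal{L}}$ with finite Morse index $\mu_\lambda$, computed from the spectral decomposition $H=\bigoplus_{k\ge 0}H_k$ with $H_k=\mathrm{Ker}(I-\lambda_k L)$ as in (\ref{e:Bi.2.13sec3}), namely $\mu_\lambda=\sum_{\lambda_k<\lambda}\dim H_k$. Hence the classical shifting/Morse lemma for $C^2$ functionals on Hilbert spaces (or, equivalently, Theorem~\ref{th:A.1} applied trivially) gives
\[
C_q(\mathcal{F}_\lambda,0;\mathbf{K})=
\begin{cases}\delta_{q\mu_{\lambda^\ast}}\mathbf{K},& \lambda\in(\lambda^\ast-\delta,\lambda^\ast),\\
\delta_{q(\mu_{\lambda^\ast}+\nu_{\lambda^\ast})}\mathbf{K},&\lambda\in(\lambda^\ast,\lambda^\ast+\delta),
\end{cases}
\]
with $\nu_{\lambda^\ast}=\dim H_{k_0}>0$, so the critical groups jump across $\lambda^\ast$.

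If $(\lambda^\ast,0)$ were not a bifurcation point, $0$ would be the unique critical point of each $\mathcal{F}_\lambda$ in a common closed ball $\bar{B}_H(0,\delta_0)\subset U$ for $\lambda$ close to $\lambda^\ast$; Proposition~\ref{prop:stablity} (applied to $\mathcal{L}-\lambda\widehat{\mathcal{L}}$, using positive definiteness of $\mathcal{L}''(0)$ and compactness of $\widehat{\mathcal{L}}''(0)$) together with the $C^2$ dependence of $\mathcal{F}_\lambda$ on $\lambda$ would let me invoke Theorem~\ref{th:stablity1} to conclude that $C_\ast(\mathcal{F}_\lambda,0;\mathbf{K})$ is independent of $\lambda$ on a whole neighborhood of $\lambda^\ast$, contradicting the jump above. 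The main technical point I expect to watch most carefully is the verification of the (PS) condition uniformly on $\bar{B}_H(0,\delta_0)$ for $\lambda$ near $\lambda^\ast$; this, however, is precisely what the $(S)_+$ argument of Proposition~\ref{prop:stablity} delivers once the decomposition $P+Q$ above is in place, so no genuinely new work is needed.
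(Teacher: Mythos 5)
Your overall strategy is exactly the paper's: conjugate by $J=(\mathcal{L}''(0))^{-1/2}$, reduce the eigenvalue problem to $u=\lambda Lu$ with $L$ compact self-adjoint, compute the finite Morse indices from the orthogonal spectral decomposition, obtain the jump in critical groups across $\lambda^\ast$ by the classical Morse--Palais lemma for $C^2$ functionals, and derive a contradiction from homotopy invariance of critical groups (Theorem~\ref{th:stablity1}). However, the preliminary paragraph where you try to verify Hypotheses~\ref{hyp:1.1} and \ref{hyp:1.2} contains two genuine errors.

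First, the decomposition $P(u)=\mathcal{L}''(0)$, $Q(u)=\mathcal{L}''(u)-\mathcal{L}''(0)$ does not verify Hypothesis~\ref{hyp:1.1}: the claim that $\mathcal{L}''(u)-\mathcal{L}''(0)$ is ``compact near $0$ by $C^2$ continuity plus compact approximation'' is false. Being small in operator norm is entirely unrelated to being compact, and there is no reason for the self-adjoint operator $\mathcal{L}''(u)-\mathcal{L}''(0)$ to be compact for $u\ne 0$. Second, $\widehat{\mathcal{L}}$ does not verify Hypothesis~\ref{hyp:1.2}: that hypothesis requires $\widehat{\mathcal{L}}''(u)$ to be compact for \emph{every} $u$ in a neighborhood of $0$, whereas the theorem only assumes $\widehat{\mathcal{L}}''(0)$ to be compact. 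So the $C^2$ hypotheses of Theorem~\ref{th:Bi.7.1} are in fact incomparable with Hypotheses~\ref{hyp:1.1}/\ref{hyp:1.2} rather than ``strictly stronger'', and the necessity argument via Corollary~\ref{cor:Bi.2.2} and the $(S)_+$/(PS) verification via Proposition~\ref{prop:stablity} cannot be invoked directly as you propose.

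The fix is straightforward but must be stated correctly. Decompose the second derivative of the \emph{family} $\mathcal{F}_\lambda=\mathcal{L}-\lambda\widehat{\mathcal{L}}$, not of $\mathcal{L}$ and $\widehat{\mathcal{L}}$ separately: set $Q_\lambda(u):=-\lambda\widehat{\mathcal{L}}''(0)$, which is constant in $u$, compact and trivially continuous, and $P_\lambda(u):=\mathcal{L}''(u)-\lambda\widehat{\mathcal{L}}''(u)+\lambda\widehat{\mathcal{L}}''(0)$. Since $P_\lambda(0)=\mathcal{L}''(0)$ is positive definite, $C^2$ continuity of $\mathcal{L}$ and $\widehat{\mathcal{L}}$ gives $(P_\lambda(u)v,v)_H\ge c_0\|v\|^2$ uniformly for $u$ in a small ball and $\lambda$ in a bounded interval. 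This is the decomposition that feeds into Proposition~\ref{prop:stablity1} and delivers the uniform (PS) condition; for the necessity direction a direct Taylor expansion of $\nabla\mathcal{L}(u_n)=\lambda_n\nabla\widehat{\mathcal{L}}(u_n)$ along a weakly convergent normalised sequence, using the compactness of $\widehat{\mathcal{L}}''(0)$ and the invertibility of $\mathcal{L}''(0)$, is cleaner than invoking Corollary~\ref{cor:Bi.2.2}.
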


Since $\mathcal{L}''(0)$ is positive definite we have $\mu_0=0$ and thus each $\mu_\lambda$
(the Morse index of $\mathcal{L}-\lambda\widehat{\mathcal{L}}$  at $0\in H$) is finite by
(\ref{e:Bi.2.13sec3}).

\subsection{Comparison with previous work}\label{app:C}\setcounter{equation}{0}

\subsubsection*{Comparison with  work in \cite{ChowLa, Kie}}
Based on the center manifold theory  Chow and Lauterbach  \cite{ChowLa} proved: if $\mathcal{F}\in C^2(I\times U,\mathbb{R})$
 satisfies the following four conditions:
\begin{enumerate}
\item[1)] $\mathcal{F}'_\lambda(0)=0\;\forall\lambda\in I$,
\item[2)] $0<\dim{\rm Ker}(\mathcal{F}''_0(0))<\infty$,
\item[3)]  $0$ is isolated in $\sigma(\mathcal{F}''_0(0))$,
\item[4)]  $r^+_{\mathcal{F}''_\lambda(0)}\ne
r^-_{\mathcal{F}''_\lambda(0)}$,
\end{enumerate}
then $(0,0)\in I\times U$ must be a bifurcation point of (\ref{e:Intro.1}).

This result was generalized by Kielh\"ofer \cite{Kie}
with the Lyapunov-Schmidt reduction and Conley's theorem on bifurcation of invariant sets.
The main result of \cite{Kie} (see also \cite[Theorem~II.7.3]{Kie1})
can be stated as follows (in our notations):
\textsl{Let $X\subset H$ be a continuously embedded subspace having norm $\|\cdot\|_X$,
$G:I\times X\to H$  continuous, $G(\lambda,0)=0\;\forall\lambda\in I$. Suppose
\begin{enumerate}
\item[\rm (A)] $G$ has a continuous
Fr\'echet derivative with respect to $u$ in a neighborhood of $(0,0)\in I\times X$, $G'_u(\lambda,0)=D_\lambda$,
and $D_0:X\to H$ is a Fredholm operator of index zero having an isolated eigenvalue $0$;
\item[\rm (B)] there is a differentiable potential $\mathcal{F}:I\times X\to\mathbb{R}$
 such that $\mathcal{F}'_u(\lambda,u)[h]= (G(\lambda,u),  h)$  for
all $h\in X$ and for all $(\lambda,u)$ in a neighborhood of $(0,0)\in I\times D$;
\item[\rm (C)]  the crossing number of $D_\lambda$ at $\lambda=0$
(defined in \cite[Definition II.7.1]{Kie1}) is nonzero, (which is equivalent to the condition that
$r^+_{B_\lambda}\ne r^-_{B_\lambda}$ if $D_\lambda=B_\lambda$ as in (\ref{e:Bi.1.1})).
\end{enumerate}
Then  $(0,0)\in I\times X$ is a bifurcation point for the equation $G(\lambda,u)=0$.}

 The major difference between these two results and our Theorems~\ref{th:Bi.1.1},\ref{th:Bif.1.1}
 is that they require the higher smoothness for $G$ or $\mathcal{F}$, while we
require some kinds of (PS) conditions.
In particular, comparing Theorem~\ref{th:Bi.6} with the result by Chow and Lauterbach  \cite{ChowLa}
stated above  it is easily seen that my methods and theirs have both advantages and disadvantages. 
In applications to quasilinear elliptic systems it is impossible to require higher smoothness for potential functionals.

\subsubsection*{Comparison with  work in \cite{EvSt07, Stu14A, Stu14B}}
For convenience we state the main result and related notions of \cite{Stu14A} in closer notation
in this paper.

Let $X$ and $Y$ be real Banach spaces, and let
$I\subset\mathbb{R}$ and $U$ be an open interval and an open neighborhood of the origin of $X$, respectively.
The \textsf{uniform partial Lipschitz modulus} of a function $G:I\times U\to Y$ with respect to $X$ at $(\mu,0)\in I\times U$
is defined by
\begin{equation}\label{e:parLips}
L_X(G,\mu)=\lim_{\delta\to 0}\sup_{\begin{array}{ll}
(\lambda,u),(\lambda,v)\in B((\mu,0),\delta)\\
u\ne v
\end{array}}\frac{\|G(\lambda,u)-G(\lambda,v)\|}{\|u-v\|},
\end{equation}
where $B((\mu,0),\delta)$ is the open ball of radius $\delta$ in $I\times U$ centered at $(\mu,0)$.
For a point $(\mu,0)\in I\times U$ and $\delta>0$ such that $B((\mu,0),\delta)\subset I\times U$ there exists a similar quantity defined by
\begin{equation}\label{e:Delta}
\Delta_\delta(G,\mu)=\sup_{\begin{array}{ll}
(\lambda,u)\in B((\mu,0),\delta)\\
u\ne 0
\end{array}}\frac{\|G(\lambda,u)-G(\mu, u)\|}{\|u\|}.
\end{equation}
Let $\Phi_0(X,Y)$ (resp. ${\rm Iso}(X,Y)$) denote the set of
linear and bounded Fredholm operators of index (resp. Banach space isomorphisms) from $X$ to $Y$, and let
$F(X,Y)=\{L\in\mathscr{L}(X, Y)\,|\,\dim R(L)<\infty\}$. All of them are equipped with  the metric inherited from $\mathscr{L}(X, Y)$
without special statements.
Each $L\in\Phi_0(X,Y)$ has  the well-defined  \textsf{essential conditioning number} $\gamma(L)$ (\cite[page 1042]{Stu14A}).

A \textsf{path of Fredholm operators of index zero} is a continuous function $L$ from an interval $J$
to $\Phi_0(X, Y )$. Call $\lambda_0\in J$  an \textsf{isolated singular point} of $L$ if there
exists $\delta>0$  such that $J_{\lambda_0,\delta}:=(\lambda_0-\delta, \lambda_0+\delta)\subset J$
and $L(\lambda)\in{\rm Iso}(X,Y)$  for $0<|\lambda-\lambda_0|<\delta$.
There always exists $S\in C(J_{\lambda_0,\delta}, {\rm Iso}(Y,X))$ such that $S(t)L(t)=I_X-K(t)\;\forall t\in J_{\lambda_0,\delta}$,
where each $K(t):X\to X$ is a compact linear operator.
For $\lambda\in J_{\lambda_0,\delta}\setminus\{\lambda_0\}$ let ${\rm deg}_{\rm L.S}(I_X-K(\lambda))$ denote
the Leray-Schauder degree of the restriction of $I_X-K(\lambda)$ to a neighborhood of the origin.
The quantity
\begin{equation}\label{e:parity}
\sigma(L,\lambda_0):={\rm deg}_{\rm L.S}(I_X-K(\lambda_l)){\rm deg}_{\rm L.S}(I_X-K(\lambda_r))\in\{-1,1\}
\end{equation}
where $\lambda_0-\delta<\lambda_l<\lambda_0<\lambda_r<\lambda_0+\delta$,  does not depend on the choices
of $S$ and $\lambda_l<\lambda_0<\lambda_r$, and is called the \textsf{local parity} of the path $L$
at an isolated singularity $\lambda_0$. It is invariant under homotopies and reparametrizations.

\begin{lemma}[\hbox{\cite[CRITERION 1, page 1048]{Stu14A}}]\label{lem:C.1}
 Let $K\in\mathscr{L}(X)$ be compact, $L(\lambda)\\=K-\lambda I_X$ and $\lambda_0\in \sigma(K)\setminus\{0\}$. Then
  $\lambda_0\in J$  is an isolated singular point of $L$
and $\sigma(L,\lambda_0)=(-1)^n$, where $n$ is the algebraic multiplicity of $\lambda_0$ as an eigenvalues of $K$.
\end{lemma}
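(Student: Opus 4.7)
The plan is to verify the two assertions in order: that $\lambda_0$ is an isolated singular point of the path $L$, and that the local parity equals $(-1)^n$. The ingredients are the Riesz--Schauder spectral theory for compact operators together with the classical Leray--Schauder index formula.

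First I would establish the isolation statement. Since $K\in\mathscr{L}(X)$ is compact, Riesz--Schauder theory says that every nonzero point of $\sigma(K)$ is an isolated eigenvalue of finite algebraic multiplicity. Hence there is $\delta>0$ with $\delta<|\lambda_0|$ and $(\lambda_0-\delta,\lambda_0+\delta)\cap\sigma(K)=\{\lambda_0\}$, so that $L(\lambda)=K-\lambda I_X\in{\rm Iso}(X,X)$ for $0<|\lambda-\lambda_0|<\delta$. Moreover $L(\lambda)\in\Phi_0(X,X)$ for all $\lambda\ne 0$ since it is a compact perturbation of the invertible operator $-\lambda I_X$. This gives that $\lambda_0$ is an isolated singularity of the Fredholm path $L$ on $J_{\lambda_0,\delta}$.

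Next I would compute the parity. The natural parametrix on $J_{\lambda_0,\delta}$ is $S(\lambda):=-\tfrac{1}{\lambda}I_X\in{\rm Iso}(X,X)$, which is continuous since $0\notin J_{\lambda_0,\delta}$. Then
\[
S(\lambda)L(\lambda)=I_X-\tfrac{1}{\lambda}K=I_X-\mathcal{K}(\lambda),
\]
with $\mathcal{K}(\lambda):=\tfrac{1}{\lambda}K$ compact, so this $S$ is admissible for the definition of $\sigma(L,\lambda_0)$ in (\ref{e:parity}). By the classical Leray--Schauder index formula, if $I_X-T$ is invertible with $T\in\mathscr{L}(X)$ compact, then
\[
{\rm deg}_{\rm L.S}(I_X-T)=(-1)^{\beta(T)},\qquad \beta(T)=\sum_{\mu\in\sigma(T)\cap(1,\infty)}\dim N\bigl((T-\mu I_X)^{k_\mu}\bigr),
\]
the sum of algebraic multiplicities of real eigenvalues of $T$ exceeding $1$. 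The eigenvalues of $\mathcal{K}(\lambda)$ are exactly $\mu/\lambda$ for $\mu\in\sigma(K)$, with identical algebraic multiplicities.

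The key geometric observation is then that, as $\lambda$ traverses $J_{\lambda_0,\delta}$, only the eigenvalue $\lambda_0/\lambda$ of $\mathcal{K}(\lambda)$ crosses the value $1$ (at $\lambda=\lambda_0$), while after possibly shrinking $\delta$ every other eigenvalue $\mu/\lambda$ ($\mu\in\sigma(K)\setminus\{\lambda_0\}$) stays uniformly bounded away from $1$; this uses isolation of $\lambda_0$ in $\sigma(K)$ and compactness of $\sigma(K)\setminus\{0\}$ near the nonzero point $\lambda_0$. Picking $\lambda_l<\lambda_0<\lambda_r$ in $J_{\lambda_0,\delta}$ and comparing $\beta(\mathcal{K}(\lambda_l))$ with $\beta(\mathcal{K}(\lambda_r))$, one checks in both cases $\lambda_0>0$ and $\lambda_0<0$ that the two counts differ by exactly $n$ (the algebraic multiplicity of $\lambda_0$ as eigenvalue of $K$): if $\lambda_0>0$ then $\lambda_0/\lambda_l>1>\lambda_0/\lambda_r$, and if $\lambda_0<0$ then $\lambda_0/\lambda_r>1>\lambda_0/\lambda_l$. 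In either case
\[
\sigma(L,\lambda_0)={\rm deg}_{\rm L.S}(I_X-\mathcal{K}(\lambda_l))\,{\rm deg}_{\rm L.S}(I_X-\mathcal{K}(\lambda_r))=(-1)^{\beta(\mathcal{K}(\lambda_l))+\beta(\mathcal{K}(\lambda_r))}=(-1)^n.
\]

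The only delicate point is the bookkeeping in the last step, i.e.\ making sure that after shrinking $\delta$ the parametrized family $\mathcal{K}(\lambda)$ has \emph{exactly} the single eigenvalue $\lambda_0/\lambda$ crossing $1$. This follows from Riesz--Schauder (finiteness of $\sigma(K)\cap\{|\mu|\ge|\lambda_0|/2\}$) plus continuity of the map $\lambda\mapsto \lambda_0/\lambda$, so it is routine rather than genuinely hard; the substantive content is just the Leray--Schauder index formula applied to the explicit parametrix $S(\lambda)=-\lambda^{-1}I_X$.
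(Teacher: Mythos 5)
Your proof is correct. The paper cites this lemma from Stuart \cite{Stu14A} (CRITERION 1, p.~1048) without reproducing a proof, so there is no in-paper argument to compare against; your argument, via the explicit parametrix $S(\lambda)=-\lambda^{-1}I_X$ and the classical Leray--Schauder index formula $\deg_{\rm L.S}(I_X-T)=(-1)^{\beta(T)}$ for compact $T$, is exactly the standard way this criterion is established. The one cosmetic slip is the phrase ``compactness of $\sigma(K)\setminus\{0\}$'': the set $\sigma(K)\setminus\{0\}$ need not be compact, and what you actually use (and what suffices) is simply that $\lambda_0$ is isolated in $\sigma(K)$ --- equivalently, that $\sigma(K)\cap\{|z|\ge c\}$ is finite for every $c>0$ --- which you already invoked to choose $\delta$. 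The bookkeeping in the final step (the count of eigenvalues $\mu$ of $K$ with $\mu/\lambda>1$ jumps by exactly $n$ as $\lambda$ crosses $\lambda_0$, in both the cases $\lambda_0>0$ and $\lambda_0<0$) is handled correctly, so the parity $(-1)^{\beta(\mathcal{K}(\lambda_l))+\beta(\mathcal{K}(\lambda_r))}=(-1)^n$ follows as claimed.
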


\begin{theorem}[\hbox{\cite[Theorem~5.12]{FitPe91}}]\label{th:C.1}
Let $X$ and $Y$ be real Banach spaces, $X\subset Y$ and $X\hookrightarrow Y$ be continuous.
Suppose that $\lambda_0\in J$ is an isolated singular point of a continuous path $L:J\to \Phi_0(X, Y )$
and that $0$ is isolated in the real spectrum of $L(\lambda_0)$.
Then for any sufficiently small $\varepsilon>0$ there exists a $\delta>0$ such that if $0<|\lambda-\lambda_0|\le\delta$,
then $L(\lambda)$ has only a finite number of eigenvalues in $(-\varepsilon,0)$ each of which is
finite algebraic multiplicity. If, for $0<|\lambda-\lambda_0|\le\delta$, $n(\lambda)$ denotes the sum
of the algebraic multiplicities of the eigenvalues of $L(\lambda)$ in $(-\varepsilon,0)$, then $n(\lambda)$
is constant on $[\lambda_0-\delta, \lambda_0)$ and on $(\lambda_0, \lambda_0+\delta]$, and moreover
$$
\sigma(L,\lambda_0)=(-1)^{n(\lambda_0+\delta)+ n(\lambda_0-\delta)}.
$$
\end{theorem}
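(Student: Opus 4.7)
The plan is to reduce both the eigenvalue count and the local parity to a finite-dimensional calculation via a Riesz-type spectral projection, and then to conclude by the standard link between Leray-Schauder degree and the sign of a determinant. First I would pick $S\in C(J_{\lambda_0,\delta_0},{\rm Iso}(Y,X))$ with $S(\lambda)L(\lambda)=I_X-K(\lambda)$, $K(\lambda)\in\mathscr{L}(X)$ compact and continuous in $\lambda$, as recalled before (\ref{e:parity}). The hypothesis that $0$ is isolated in the real spectrum of $L(\lambda_0)$ translates into $1$ being isolated in $\sigma(K(\lambda_0))$, and by compactness of $K(\lambda_0)$ it is an eigenvalue of finite algebraic multiplicity $n_0$. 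Taking a small circle $\Gamma\subset\mathbb{C}$ around $1$ enclosing no other spectral value of $K(\lambda_0)$ and using upper semicontinuity of the spectrum under norm-continuous perturbations, I would obtain $\delta>0$ such that $\Gamma\cap\sigma(K(\lambda))=\emptyset$ for all $\lambda\in[\lambda_0-\delta,\lambda_0+\delta]$; then
$$
P(\lambda):=\frac{1}{2\pi i}\oint_\Gamma (zI_X-K(\lambda))^{-1}\,dz
$$
is a continuous rank-$n_0$ projection whose range $E(\lambda)$ is $K(\lambda)$-invariant.

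Setting $\widetilde K(\lambda):=K(\lambda)|_{E(\lambda)}$, the spectral values of $K(\lambda)$ inside $\Gamma$ are precisely the eigenvalues of $\widetilde K(\lambda)$. The equation $L(\lambda)u=\mu u$ (with $u\in X$ nonzero and $\mu u\in Y$ via the inclusion $\iota\colon X\hookrightarrow Y$) becomes, after left-multiplication by $S(\lambda)$, $(I-K(\lambda)-\mu T(\lambda))u=0$, where $T(\lambda):=S(\lambda)\iota\in\mathscr{L}(X)$. A standard analytic-perturbation argument in the finite-dimensional subspace $E(\lambda)$ (trivialized via $P(\lambda)$) then gives a bijection between small real solutions $\mu$ near $0$ and eigenvalues of $\widetilde K(\lambda)$ near $1$, with matching algebraic multiplicities, and this bijection is orientation-preserving in a definite manner depending on the sign of the restriction of $T(\lambda_0)$ to $E(\lambda_0)$. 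After possibly shrinking $\varepsilon$ and $\delta$, eigenvalues of $L(\lambda)$ in $(-\varepsilon,0)$ correspond to eigenvalues of $\widetilde K(\lambda)$ on a definite side of $1$; since $L(\lambda)\in{\rm Iso}(X,Y)$ for $0<|\lambda-\lambda_0|\le\delta$, none of these eigenvalues equals $0$, and continuity of $\widetilde K(\lambda)$ in the fixed $n_0$-dimensional space forces $n(\lambda)$ to be locally constant on $[\lambda_0-\delta,\lambda_0)$ and on $(\lambda_0,\lambda_0+\delta]$.

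For the parity, I would decompose $X=E(\lambda)\oplus{\rm Ker}\,P(\lambda)$. The restriction of $I-K(\lambda)$ to ${\rm Ker}\,P(\lambda)$ is invertible and varies continuously in $\lambda\in J_{\lambda_0,\delta}$, so by homotopy invariance its Leray-Schauder degree contribution is a constant sign $\epsilon_0\in\{-1,+1\}$ on all of $J_{\lambda_0,\delta}$; on the finite-dimensional factor $E(\lambda)$ the Leray-Schauder degree equals ${\rm sgn}\det(I-\widetilde K(\lambda))$. Each real eigenvalue of $\widetilde K(\lambda)$ strictly greater than $1$ (the side identified in the previous paragraph with eigenvalues of $L(\lambda)$ in $(-\varepsilon,0)$) contributes a negative factor to this determinant, so
$$
{\rm deg}_{\rm L.S}(I_X-K(\lambda))=\epsilon_0\cdot(-1)^{n(\lambda)}.
$$
Plugging $\lambda=\lambda_l$ and $\lambda=\lambda_r$ into (\ref{e:parity}) and using $\epsilon_0^2=1$ yields $\sigma(L,\lambda_0)=(-1)^{n(\lambda_l)+n(\lambda_r)}=(-1)^{n(\lambda_0-\delta)+n(\lambda_0+\delta)}$, the claimed identity.

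The main obstacle will be the eigenvalue correspondence in the second paragraph: $(I-K(\lambda)-\mu T(\lambda))u=0$ is a generalized rather than a standard eigenvalue problem for $K(\lambda)$, and pinning down which side of $1$ corresponds to $(-\varepsilon,0)$ requires a careful trivialization of the bundle $E(\lambda)$ together with a sign-tracking perturbation argument that keeps careful account of the conjugation by $S(\lambda)$. Once this correspondence is fixed, the remaining ingredients, namely continuity of finite-dimensional data and standard Leray-Schauder degree properties, are routine.
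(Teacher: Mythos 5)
Your reduction $L(\lambda)u=\mu u\iff (I-K(\lambda)-\mu T(\lambda))u=0$ with $T(\lambda)=S(\lambda)\iota$ is the right starting point, but the proposal then leaves the decisive step undone and, in two places, uses the hypotheses in ways that do not hold up. First, the hypothesis that $0$ is isolated in the real spectrum of $L(\lambda_0)$ does \emph{not} translate into ``$1$ is isolated in $\sigma(K(\lambda_0))$'': the latter is automatic for any compact $K(\lambda_0)$ by Riesz theory. What the hypothesis actually controls is the $\mu$-direction — it guarantees that the two-parameter operator $I-K(\lambda_0)-\mu T(\lambda_0)$ is invertible for small $\mu\ne 0$ — and that information is never exploited. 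Second, the central claim
\[
{\rm deg}_{\rm L.S}(I_X-K(\lambda))=\epsilon_0\cdot(-1)^{n(\lambda)}
\]
is asserted without proof and is exactly the content of the theorem. The Leray–Schauder index of $I_X-K(\lambda)$ equals $(-1)^{m(\lambda)}$, where $m(\lambda)$ counts (with algebraic multiplicity) the eigenvalues of $K(\lambda)$ in $(1,\infty)$; your formula requires identifying $m(\lambda)\bmod 2$ with $n(\lambda)\bmod 2$ up to a $\lambda$-independent shift, where $n(\lambda)$ counts eigenvalues of $L(\lambda)$ in $(-\varepsilon,0)$. You name this identification ``the main obstacle'' and describe it as a ``careful trivialization of the bundle $E(\lambda)$ together with a sign-tracking perturbation argument'', but that obstacle is the theorem; calling it out is not the same as overcoming it.

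The difficulty is structural, not merely bookkeeping. The Riesz projection $P(\lambda)$ is built from $K(\lambda)$ alone; the operator $T(\lambda)$ does not preserve the splitting $X=E(\lambda)\oplus\ker P(\lambda)$, so the two-parameter pencil $(\lambda,\mu)\mapsto I-K(\lambda)-\mu T(\lambda)$ does not reduce cleanly to a problem on $E(\lambda)$. A genuine Lyapunov–Schmidt reduction produces an $n_0\times n_0$ determinant $D(\lambda,\mu)$ whose zero set encodes both the singular $\lambda$'s (at $\mu=0$) and the eigenvalues of $L(\lambda)$ (as functions of $\lambda$), and the claimed identity $\sigma(L,\lambda_0)=(-1)^{n(\lambda_0+\delta)+n(\lambda_0-\delta)}$ is a statement relating the sign of $D$ across the $\lambda$-axis to a count of its real zeros in the $\mu$-direction. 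That link is usually obtained by a parity–homotopy (``rectangle'') argument in the $(\lambda,\mu)$-plane, or by the spectral-flow technology of Fitzpatrick–Pejsachowicz; neither appears here. You would also need to dispose of the complex eigenvalues of $K(\lambda)$ inside $\Gamma$ (which come in conjugate pairs and so do not affect parity — worth saying) and to address the possibility that $T(\lambda_0)|_{E(\lambda_0)}$ is non-invertible or sign-indefinite, in which case eigenvalues of $L(\lambda)$ near $0$ can split to both sides of $1$ in $\sigma(K(\lambda))$. As written, the proposal is a correct outline of the reduction to a finite-dimensional pencil but does not constitute a proof of the parity identity.
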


We mainly compare our results with the following.

\begin{theorem}[\hbox{\cite[Theorem~6.3]{Stu14A}}]\label{th:C.2}
Let $X$ and $Y$ be real Banach spaces, and let
$I\subset\mathbb{R}$ and $U$ be an open interval and an open neighborhood
of the origin of $X$, respectively.
A map $F:I\times U\to Y$ satisfies the following properties.
\begin{enumerate}
\item[\rm (B1)] $F\in C(I\times U,Y)$ and $F(\lambda,0)=0$ for all $(\lambda,0)\in I\times U$.
\item[\rm (B2)] $F(\lambda,\cdot):X\to Y$ is Hadamard differentiable at $0$ for all $(\lambda,0)\in I\times U$ and
$\lambda\mapsto D_xF(\lambda,0)\in\mathscr{L}(X,Y)$ is continuous.
\item[\rm (B3)] $(\lambda_0,0)\in I\times U$ and $D_xF(\lambda_0,0)\in\Phi_0(X,Y)$.
\item[\rm (B4)] $L_X(R,\lambda_0)<\infty$, where $R(\lambda,x)=F(\lambda,x)-D_xF(\lambda,0)x$ for $(\lambda,x), (\lambda,0)\in I\times U$, and
 \begin{equation}\label{e:C.6}
 \gamma(D_xF(\lambda_0,0)L_X(R,\lambda_0)<1.
 \end{equation}
\end{enumerate}
 \begin{enumerate}
\item[\rm (i)] If $\lambda_0$ is an isolated singular point of $D_xF(\cdot,0)$  with $\sigma(D_xF(\cdot,0),\lambda_0)=-1$,
then $\lambda_0$ is a bifurcation point for $F(\lambda,u)=0$. If, in addition,
 \begin{equation}\label{e:C.7}
\lim_{\delta\to 0}\Delta_\delta(F,\lambda)=0\quad\hbox{for all $\lambda$ in an open neighbourhood of $\lambda_0$},
\end{equation}
there is \textsf{\rm continuous bifurcation} at $\lambda_0$.
\item[\rm (ii)] If ${\rm ker}D_xF(\lambda_0,0)=\{0\}$ and $\lim_{\delta\to 0}\Delta_\delta(F,\lambda_0)=0$, then $\lambda_0$ is not a bifurcation
point for $F(\lambda,u)=0$.
\end{enumerate}
\end{theorem}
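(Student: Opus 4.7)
The plan is to reduce the problem to a compact perturbation of the identity so that Leray--Schauder degree becomes available, and then exploit the parity $\sigma$ as the obstruction to extending a nonvanishing degree across $\lambda_0$. Concretely, after shrinking $I$ around $\lambda_0$ so that $D_xF(\lambda,0)\in\Phi_0(X,Y)$ for all $\lambda\in I$ (using (B3) and the continuity in (B2) together with the openness of $\Phi_0$), I would pick a continuous $S:I\to{\rm Iso}(Y,X)$ such that $S(\lambda)D_xF(\lambda,0)=I_X-K(\lambda)$ with $K(\lambda)$ compact (this is the same $S$ used to define $\sigma$). Composing $F$ with $S$, the equation $F(\lambda,u)=0$ becomes
\begin{equation*}
u-K(\lambda)u+S(\lambda)R(\lambda,u)=0,\qquad R(\lambda,u)=F(\lambda,u)-D_xF(\lambda,0)u,
\end{equation*}
where $u\mapsto K(\lambda)u-S(\lambda)R(\lambda,u)$ is a compact perturbation of the identity on small balls of $X$, because $K(\lambda)$ is compact and the remainder $S(\lambda)R(\lambda,\cdot)$ has finite Lipschitz modulus at $(\lambda_0,0)$ by (B4).

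For part (ii), I would use the smallness condition together with the assumption ${\rm ker}\,D_xF(\lambda_0,0)=\{0\}$, which via (B3) promotes $D_xF(\lambda_0,0)$ to an isomorphism in ${\rm Iso}(X,Y)$. Then $F(\lambda_0,u)=D_xF(\lambda_0,0)u+R(\lambda_0,u)$ with $R(\lambda_0,\cdot)$ a Lipschitz map of arbitrarily small norm on a small ball (using (B2) at $u=0$ together with (B4) to see $R(\lambda_0,u)/\|u\|\to0$). A contraction argument shows $u=0$ is the only small solution of $F(\lambda_0,u)=0$. Finally, the hypothesis $\lim_{\delta\to0}\Delta_\delta(F,\lambda_0)=0$ transfers the same conclusion uniformly to $F(\lambda,u)=0$ for $\lambda$ near $\lambda_0$, ruling out bifurcation.

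For part (i), I would argue by contradiction: assume there exists $\rho>0$ such that $F(\lambda,u)\ne0$ on a neighbourhood $J_{\lambda_0,\delta}\times(\bar B_X(0,\rho)\setminus\{0\})$. Choose $\lambda_l<\lambda_0<\lambda_r$ in $J_{\lambda_0,\delta}$ and consider the Leray--Schauder degrees
\begin{equation*}
d(\lambda)={\rm deg}_{\rm L.S}\bigl(I_X-K(\lambda)+S(\lambda)R(\lambda,\cdot),\,B_X(0,\rho),\,0\bigr).
\end{equation*}
The hypothesis (B4), and specifically the strict inequality $\gamma(D_xF(\lambda_0,0))L_X(R,\lambda_0)<1$, is exactly what is needed to build, on a sufficiently small ball, a straight-line homotopy between $I_X-K(\lambda)$ and $I_X-K(\lambda)+S(\lambda)R(\lambda,\cdot)$ that remains admissible (does not vanish on the boundary), since $\gamma$ controls the best possible operator norm of $(I_X-K(\lambda))^{-1}$ on $R(D_xF(\lambda,0))$. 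By homotopy invariance, $d(\lambda_l)={\rm deg}_{\rm L.S}(I_X-K(\lambda_l))$ and similarly for $\lambda_r$, so $d(\lambda_l)d(\lambda_r)=\sigma(D_xF(\cdot,0),\lambda_0)=-1$. On the other hand, under the contradiction hypothesis, the family $F(\lambda,\cdot)$ is admissible on $B_X(0,\rho)$ for all $\lambda\in[\lambda_l,\lambda_r]$, so $d(\lambda)$ is constant on this interval, a contradiction.

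To upgrade this to continuous bifurcation under (\ref{e:C.7}), I would run the standard Rabinowitz--Fitzpatrick--Pejsachowicz global continuation argument: if the connected component $\mathcal{C}$ of the closure of nontrivial zeros meeting $(\lambda_0,0)$ were to intersect $\mathbb{R}\times\{0\}$ only at $(\lambda_0,0)$ and be ``small'', one would box it inside an open set on which the parity argument again forces a degree jump that contradicts constancy on the boundary of the box. The hypothesis $\Delta_\delta(F,\lambda)\to0$ uniformly near $\lambda_0$ is what guarantees that small nontrivial zeros of $F(\lambda,\cdot)$ accumulate only on $(\lambda_0,0)$, and hence that the Whyburn-type argument produces a genuine connected continuum. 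The most delicate step in the whole plan is the admissibility of the linear homotopy in the degree computation: it is exactly here that (B4) is used sharply, because $R$ is merely Lipschitz (not $C^1$) at the origin, so one cannot appeal to the classical smooth bifurcation machinery and must control the remainder via the essential conditioning number $\gamma$.
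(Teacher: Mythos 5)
This theorem is not proved in the paper you are working from: it is quoted verbatim as \cite[Theorem~6.3]{Stu14A} and used as a black box in Section~\ref{app:C} for comparison purposes, so there is no ``paper's own proof'' against which to match your attempt. That said, your sketch does reconstruct the approach that Stuart actually takes: pass to $I_X-K(\lambda)+S(\lambda)R(\lambda,\cdot)$ via a parametrix, use (B4) to make the linear homotopy admissible so that ${\rm deg}_{\rm L.S}(I_X-K(\lambda)+S(\lambda)R(\lambda,\cdot),B_X(0,\rho))={\rm deg}_{\rm L.S}(I_X-K(\lambda))$, and read the sign jump off the parity; for (ii) use the invertibility of $D_xF(\lambda_0,0)$ plus the Lipschitz bound from (B4) to rule out small nontrivial zeros, then spread this to nearby $\lambda$ using $\Delta_\delta$; for the continuous-bifurcation statement, a Whyburn/Rabinowitz-style continuation under (\ref{e:C.7}).

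Two points in your write-up need correction, even as a sketch. First, in part (ii) you claim that ``(B2) at $u=0$ together with (B4)'' gives $R(\lambda_0,u)/\|u\|\to0$, and you describe $R(\lambda_0,\cdot)$ as ``a Lipschitz map of arbitrarily small norm on a small ball.'' Hadamard differentiability only gives $R(\lambda_0,tu)/t\to0$ uniformly on compacta as $t\to0$, \emph{not} the Fréchet estimate $R(\lambda_0,u)/\|u\|\to0$ in infinite dimensions, and (B4) gives a \emph{finite} (not small) Lipschitz modulus $L_X(R,\lambda_0)$. The correct statement is that the strict inequality (\ref{e:C.6}) is precisely the smallness condition you need; do not try to manufacture additional smallness out of (B2). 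Second, your gloss that ``$\gamma$ controls the best possible operator norm of $(I_X-K(\lambda))^{-1}$ on $R(D_xF(\lambda_0,0))$'' is an oversimplification of the essential conditioning number: $\gamma(L)$ is an infimum over parametrices of $L$, not the norm of a specific resolvent, and the careful argument has to work with a chosen parametrix compatible with the compact perturbation $K(\lambda)$, not directly with $\|L^{-1}\|$. The remainder of the degree/parity scheme, and the contradiction with constancy of degree across $[\lambda_l,\lambda_r]$, is the right outline, and it is consistent with how the paper actually \emph{uses} this theorem (e.g.\ in Comparisons~\ref{comp:C.2},~\ref{comp:C.3}, where it infers continuous bifurcation from parity $-1$).
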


We also need  the following lemma.

\begin{lemma}\label{lem:C.2}
Let $H$ be a Hilbert space with inner product $(\cdot,\cdot)_H$
and the induced norm $\|\cdot\|$, and let $X$ be a Banach space with
norm $\|\cdot\|_X$, such that $X\subset H$ is dense in $H$ and $\|x\|\le\|x\|_X\;\forall x\in X$.
Let $\mathfrak{B}\in\mathscr{L}_s(H)$ be a Fredholm operator of index zero.
Suppose that $\hat{\mathfrak{B}}:=\mathfrak{B}|_X\in\mathscr{L}(X)$ and that
  $\{u\in H\,|\, \mathfrak{B}u\in X\}\subset X$. Then
   \begin{enumerate}
\item[\rm i)] $\hat{\mathfrak{B}}$ is also a Fredholm operator of index zero;
 \item[\rm ii)] $0$ sits in the resolvent set $\rho(\mathfrak{B})$ of $\mathfrak{B}$ if and only if $0$ belongs to $\rho(\hat{\mathfrak{B}})$.
\end{enumerate}
  Moreover, if $\{u\in H\,|\, \mathfrak{B}u=\mu u,\,\mu<0\}\subset X$ then $\mu<0$
  is an eigenvalue of $\mathfrak{B}$ if and only if it is that of $\hat{\mathfrak{B}}$ with same (algebraic and geometry) multiplicity.
\end{lemma}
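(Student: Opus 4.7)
For part (i), the regularity hypothesis $\{u\in H\,|\,\mathfrak{B}u\in X\}\subset X$ applied with $\mathfrak{B}u=0$ immediately yields $N:=\ker(\hat{\mathfrak{B}}) = \ker(\mathfrak{B})$, which is finite-dimensional and contained in $X$ since $\mathfrak{B}$ is Fredholm of index zero. The same hypothesis shows that $R(\hat{\mathfrak{B}})=R(\mathfrak{B})\cap X$: if $y\in R(\mathfrak{B})\cap X$, write $y=\mathfrak{B}u$ with $u\in H$; then $\mathfrak{B}u=y\in X$ forces $u\in X$, so $y=\hat{\mathfrak{B}}u\in R(\hat{\mathfrak{B}})$. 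To see that $R(\hat{\mathfrak{B}})$ is closed in $X$, suppose $x_n\in X$ and $\hat{\mathfrak{B}}x_n\to y$ in $X$. The inequality $\|\cdot\|\le\|\cdot\|_X$ implies $\mathfrak{B}x_n\to y$ in $H$; closedness of $R(\mathfrak{B})$ in $H$ gives $y=\mathfrak{B}u$ for some $u\in H$, and once more $\mathfrak{B}u=y\in X$ forces $u\in X$, so $y\in R(\hat{\mathfrak{B}})$.

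The $H$-orthogonal decomposition $H=N\oplus R(\mathfrak{B})$ then restricts to an algebraic decomposition $X=N\oplus R(\hat{\mathfrak{B}})$: given $x\in X$, write $x=n+r$ with $n\in N\subset X$, whence $r=x-n\in X\cap R(\mathfrak{B})=R(\hat{\mathfrak{B}})$. Therefore ${\rm codim}_X\, R(\hat{\mathfrak{B}}) = \dim N = \dim\ker(\hat{\mathfrak{B}})<\infty$, and $\hat{\mathfrak{B}}$ is Fredholm of index zero. Part (ii) is then immediate from the identity $\ker(\hat{\mathfrak{B}})=\ker(\mathfrak{B})$, combined with the fact that a bounded Fredholm operator of index zero is a Banach space isomorphism iff its kernel is trivial; continuity of the inverses comes from the open mapping theorem.

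For the \emph{Moreover} clause, the self-adjointness of $\mathfrak{B}$ collapses generalized eigenspaces to eigenspaces: if $(\mathfrak{B}-\mu I)^2 u=0$ and $v=(\mathfrak{B}-\mu I)u$, then $v$ lies in both $\ker(\mathfrak{B}-\mu I)$ and $R(\mathfrak{B}-\mu I)$, and these are $H$-orthogonal by self-adjointness, so $v=0$. Iterating shows $\ker((\mathfrak{B}-\mu I)^n)=\ker(\mathfrak{B}-\mu I)=:E_\mu$ for every $n\ge 1$, and the hypothesis $\{u\in H\,|\,\mathfrak{B}u=\mu u\}\subset X$ places $E_\mu$ inside $X$. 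Since for $x\in X$ one checks inductively that $(\hat{\mathfrak{B}}-\mu I)^k x=(\mathfrak{B}-\mu I)^k x$ and remains in $X$ at every step, one obtains $\ker((\hat{\mathfrak{B}}-\mu I)^n)=\ker((\mathfrak{B}-\mu I)^n)\cap X=E_\mu$ for all $n\ge 1$, giving both directions of the equivalence and the coincidence of geometric and algebraic multiplicities. The main obstacle in the whole argument is the closedness of $R(\hat{\mathfrak{B}})$ in $X$, where one must use the continuous embedding $X\hookrightarrow H$ together with the regularity hypothesis to lift a limit from $H$ back into $X$; this is the one place where the full strength of the structural assumptions is needed.
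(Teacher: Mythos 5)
Your proof is correct and follows essentially the same path as the paper's: the regularity hypothesis gives $\ker(\hat{\mathfrak{B}})=\ker(\mathfrak{B})$ and the splitting $X=N\dot{+}R(\hat{\mathfrak{B}})$, closedness of $R(\mathfrak{B})$ in $H$ is transferred to closedness of $R(\hat{\mathfrak{B}})$ in $X$ via the continuous embedding, and self-adjointness collapses generalized eigenspaces to eigenspaces for the multiplicity claim. The only cosmetic difference is in the closedness step, where you identify the $X$-limit directly as an element of $R(\mathfrak{B})$ and then pull it into $R(\hat{\mathfrak{B}})$ by regularity, whereas the paper decomposes the limit in $H=N\oplus R(\mathfrak{B})$ and kills the $N$-component by an orthogonality computation; both rest on the same facts.
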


\begin{proof}
Let $N={\rm Ker}(\mathfrak{B})$. Then $N\subset X$ and $N={\rm Ker}(\hat{\mathfrak{B}})$ has finite dimension.
Since $\mathfrak{B}$ is self-adjoint, the range $R(\mathfrak{B})$ of $\mathfrak{B}$ is closed and
is equal to $N^\bot$. We claim that $R(\hat{\mathfrak{B}})$ is closed in $X$. In fact,
let $(x_k)\subset R(\hat{\mathfrak{B}})$ converge to $\bar{x}$ in $X$.
Using $H=R(\mathfrak{B})\oplus N$ we can decompose $\bar{x}$ into $y+z$, where $y\in R(\mathfrak{B})$ and $z\in N$.
Take $u\in \mathfrak{B}^{-1}(y)$. Then $\mathfrak{B}(u)=y=\bar{x}-z\in X$.
By the assumption we get that $u\in X$ and so $y\in R(\hat{\mathfrak{B}})$. Moreover,
since $(x_k, z)_H=0\;\forall k$, letting $k\to\infty$ we get $0=(\bar{x},z)_H=(y, z)_H+ (z, z)_H=(z,z)_H$.
 That is to say, $z=0$ and hence $\bar{x}\in X$. The above arguments also show that
 $X$ has a direct sum decomposition of vector spaces $X=R(\hat{\mathfrak{B}})\dot{+}N$.
Thus $\dim X/R(\hat{\mathfrak{B}})=\dim N<\infty$. i) is proved.
Sine the assumption implies ${\rm Ker}(\mathfrak{B})={\rm Ker}(\hat{\mathfrak{B}})$,
the open mapping theorem leads to ii).

Clearly, it suffices to prove that each negative eigenvalue $\mu$ of $\mathfrak{B}$
is that of $\hat{\mathfrak{B}}$. Let $v\in H\setminus\{0\}$ be such that
$\mu u=\mathfrak{B}u$. By the second assumption we see that $u\in X$ and so an eigenvector
of $\hat{\mathfrak{B}}$ belonging to $\mu$.

Note that each  eigenvalue $\lambda$ of a self-adjoint operator on a Hilbert space has the same algebraic and geometry multiplicities, that is,
${\rm Ker}((\lambda I-\mathfrak{B})^n)={\rm Ker}(\lambda I-\mathfrak{B})$ for any $n\in\N$.
For a negative eigenvalue $\mu$ of $\mathfrak{B}$, the second assumption we deduce that
$$
{\rm Ker}(\lambda I-\mathfrak{B})={\rm Ker}(\lambda I_X-\hat{\mathfrak{B}})\subset
{\rm Ker}((\lambda I_X-\hat{\mathfrak{B}})^n)\subset {\rm Ker}((\lambda I-\mathfrak{B})^n)
$$
and hence ${\rm Ker}(\lambda I_X-\hat{\mathfrak{B}})\subset
{\rm Ker}((\lambda I_X-\hat{\mathfrak{B}})^n)$ for any $n\in\N$.
\end{proof}

Having the above preparations we can make comparisons with
work of \cite{EvSt07, Stu14A, Stu14B}, precisely, Theorem~\ref{th:C.2} and \cite[Theorems~4.2, 5.1]{EvSt07} (or \cite[Theorems~3.2,3.3]{Stu14B}).

Clearly, for sufficiencies of bifurcations
we only need to compare Theorem~\ref{th:C.2}(i) and \cite[Theorem~5.1]{EvSt07} (or \cite[Theorem~3.3]{Stu14B}) with results in Section~\ref{sec:B.2.1S}
because those in Sections~\ref{sec:B.2},\ref{sec:B.3} and \ref{sec:BBH} are either of Rabinowitz type or Fadell--Rabinowitz type.

Theorem~\ref{th:Bi.1.1}, Corollary~\ref{cor:Bi.3} and Theorem~\ref{th:Bi.3} only involve one or two of Hypothesises~\ref{hyp:1.1}, \ref{hyp:1.2}.
Corresponding potential operators  are only G\^ateaux differentiable, and they have no
comparisons with Theorem~\ref{th:C.2}(i) and \cite[Theorem~5.1]{EvSt07} in which
 related operators are required to be either Hadamard differentiable or $w$-Hadamard differentiable at $u=0$ (see \cite{EvSt07,Stu14B} for precise definitions).

Theorems~\ref{th:Bif.1.1}, \ref{th:Bi.4} and Corollary~\ref{cor:Bi.3.1}
have no any relations with \cite[Theorem~5.1]{EvSt07}
(since the latter is stated  in Hilbert spaces).

Of course,  \cite[Theorem~5.1]{EvSt07} for even functionals which are bounded below cannot lead to
Theorems~\ref{th:Bi.6}, \ref{th:Bi.7.1} yet.

Hence it remains to make comparisons between Theorem~\ref{th:C.2}(i) and any one of  Theorem~\ref{th:Bif.1.1},
Corollary~\ref{cor:Bi.3.1} and Theorems~\ref{th:Bi.4},~\ref{th:Bi.6},~\ref{th:Bi.7.1}.

\begin{comparison}[\hbox{\it Comparison with Theorem~\ref{th:Bif.1.1}}]\label{comp:C.1}
{\rm Under the assumptions of Theorem~\ref{th:Bif.1.1} let us define $F:I\times U^X\to X$ by $F(\lambda,x)=A_\lambda(x)$,
where $U^X$ is explained as in  Hypothesis~\ref{hyp:1.3}.
Our assumptions on $\mathcal{F}:I\times U^X\to\mathbb{R}$ cannot give the continuity of $F$.
But (B1) of Theorem~\ref{th:C.2} requires $F$ to be continuous.
Our condition ``each  $\mathcal{F}_\lambda:=\mathcal{F}(\lambda,\cdot)$ satisfies  Hypothesis~\ref{hyp:1.3}"
implies that $F(\lambda,\cdot)$ has the strict Fr\'{e}chet derivative at $0\in X$, $D_xF(\lambda,0)=B_\lambda(0)|_X\in \mathscr{L}(X)$.
But the condition (vi) in Theorem~\ref{th:Bi.1.1} only says that $I\ni\lambda\mapsto B_\lambda(0)\in \mathscr{L}_s(H)$
is continuous at $\lambda=0$, and has no any conclusions for the continuity of the path
$I\ni\lambda\mapsto B_\lambda(0)|_X\in \mathscr{L}(X)$. Hence Theorem~\ref{th:Bif.1.1} cannot be included in
Theorem~\ref{th:C.2}(i) (\cite[Theorem~6.3(i)]{Stu14A}). }
\end{comparison}

\begin{comparison}[\hbox{\it Comparison with Corollary~\ref{cor:Bi.3.1}}]\label{comp:C.2}
{\rm
 Let the assumptions of Corollary~\ref{cor:Bi.3.1} be satisfied.
Define $F:I\times U^X\to X$ by $F(\lambda,x)=A(x)-\lambda\widehat{A}(x)$.
By Hypothesis~\ref{hyp:1.3} this is continuous, and each
$F(\lambda,\cdot)$ has the strict Fr\'{e}chet derivative at $0\in X$, $D_xF(\lambda,0)=B(0)|_X- \lambda\widehat{B}(0)|_X   \in \mathscr{L}(X)$.
Hence $F$ satisfies the assumptions (B1) and (B2) in Theorem~\ref{th:C.2}. By the assumptions,
$B(0)- \lambda\widehat{B}(0)=P(0)+ (Q(0)- \lambda\widehat{B}(0))\in\mathscr{L}_s(H)$, $P(0)$ is positive definite,
and $Q(0)- \lambda\widehat{B}(0)$ is compact. Thus
 $B(0)- \lambda\widehat{B}(0):H\to H$ is a
Fredholm operator of index zero. Using (II.a) and Lemma~\ref{lem:C.2} we deduce that each
$D_xF(\lambda,0)=B(0)|_X- \lambda\widehat{B}(0)|_X   \in \mathscr{L}(X)$ is also
a Fredholm operator of index zero. This means that the assumption (B3) in Theorem~\ref{th:C.2}
holds with $F$ and $\lambda_0=\lambda\in I$. For any fixed $\lambda_0\in I$ we claim that
$L_X(R,\lambda_0)=0$, where for  $(\lambda,x)\in I\times U$,
$$
R(\lambda,x)=F(\lambda,x)-D_xF(\lambda,0)x=A(x)-(B(0)|_X)x-\lambda(\widehat{A}(x)-(\widehat{B}(0)|_X)x).
$$
In fact, since $A(0)=0=\widehat{A}(0)$, and $A$ and $\widehat{A}$ have strict Fr\'{e}chet derivatives
$B(0)|_X$ and $\widehat{B}(0)|_X$  at $0\in X$,  respectively, for any $\epsilon>0$ there exists a small $\nu>0$ such that
$$
\|A(u)-A(v)-B(0)|_X(u-v)\|_X\le \epsilon\|u-v\|_X\quad\hbox{and}\quad$$ $$ \|\widehat{A}(u)-\widehat{A}(v)-\widehat{B}(0)|_X(u-v)\|_X\le \epsilon\|u-v\|_X
$$
for any $u,v\in U$ satisfying $\|u\|_X<\nu$ and $\|v\|_X<\nu$. It follows that
$$
\|R(\lambda,u)-R(\lambda,v)\|_X\le (1+|\lambda|)\epsilon\|u-v\|_X
$$
and hence $L_X(R,\lambda_0)=0$ by (\ref{e:parLips}). Then (B4)
and (\ref{e:C.6}) in Theorem~\ref{th:C.2} hold with any $\lambda_0\in I$.

Moreover, for  any $\lambda,\mu\in I$ and $u\in U$, if $|\mu-\lambda|+\|u\|_X<\nu$ we have
\begin{eqnarray*}
\|F(\lambda,u)-F(\mu,u)\|_X=|\lambda-\mu|\cdot\|\widehat{A}(u)\|_X\le|\lambda-\mu|(\|\widehat{B}(0)|_X\|+\epsilon)\|u\|_X.
\end{eqnarray*}
By (\ref{e:Delta}) this means
$$
\Delta_\nu(F,\lambda)\le|\lambda-\mu|(\|\widehat{B}(0)|_X\|+\epsilon).
$$
It follows that $\lim_{\nu\to 0}\Delta_\nu(F,\lambda)=0$ for any $\lambda\in I$, that is,
(\ref{e:C.7}) holds.

Define $\alpha:I\to \Phi_0(H, H)$ by $\alpha(\lambda)=B(0)-\lambda\widehat{B}(0)$.
It is a path of Fredholm operators of index zero. By (II.b) and (II.c) of Corollary~\ref{cor:Bi.3.1}
we see that $\lambda^\ast$ is  an isolated singular point of $\alpha$.
Using Hypothesis~\ref{hyp:1.3} we have $\alpha^X(\lambda):=B(0)|_X-\lambda\widehat{B}(0)|_X\in\mathscr{L}(X)$.
It follows from Lemma~\ref{lem:C.2} and (II.a) of Corollary~\ref{cor:Bi.3.1} that $\alpha^X:I\to \Phi_0(X, X)$
is also a path of Fredholm operators of index zero and has an isolated singular point $\lambda^\ast$ and that
$\alpha(\lambda)$ and $\alpha^X(\lambda)$ have same negative eigenvalues and
algebraic multiplicities. Let 
\begin{eqnarray*}
&&r_+=\lim_{\lambda\to 0+}r(B(0)-(\lambda+\lambda^\ast)\widehat{B}(0))\quad\hbox{and}\\
&&r_-=\lim_{\lambda\to 0-}r(B(0)-(\lambda+\lambda^\ast)\widehat{B}(0)).
\end{eqnarray*}
By the definitions of $r(B_\lambda)$ above (\ref{e:Bi.1.1}) and $n(\lambda)$ in Theorem~\ref{th:C.1}
we have $r_+=n(\lambda^\ast+\delta)$ and $r_-=n(\lambda^\ast-\delta)$ for $\delta>0$ small enough.
Then Theorem~\ref{th:C.1} gives
$$
\sigma(\alpha,\lambda^\ast)=\sigma(\alpha^X,\lambda^\ast)=(-1)^{r_++r_-}.
$$
Clearly, the condition  that $\sigma(\alpha^X,\lambda^\ast)=-1$ is stronger than the assumption $r_+\ne r_-$. This shows that Corollary~\ref{cor:Bi.3.1} cannot be derived
from Theorem~\ref{th:C.2}(i).

If $\sigma(\alpha^X,\lambda^\ast)=-1$, i.e.,   $r_+-r_-$ is an odd, then by Theorem~\ref{th:C.2}(i) there is  continuous bifurcation at $\lambda^\ast$.}
\end{comparison}

\begin{comparison}[\hbox{\it Comparison with Theorem~\ref{th:Bi.4}}]\label{comp:C.3}
{\rm Under the assumptions of Theorem~\ref{th:Bi.4}, since $B(0)$ is positive definite  and
$\widehat{B}(0)$ is compact, each eigenvalue of (\ref{e:Bi.1.8}) is isolated and has finite algebraic multiplicity.
By arguments in Comparison~\ref{comp:C.2} we see that the map
$F:\R\times U^X\to X$ defined by $F(\lambda,x)=A(x)-\lambda\widehat{A}(x)$ satisfies
(B1)-(B4) and (\ref{e:C.7}) in Theorem~\ref{th:C.2} with $\lambda_0=\lambda^\ast$.
Let $\alpha$ and $\alpha^X$ be as above. Then we have $\sigma(\alpha,\lambda^\ast)=\sigma(\alpha^X,\lambda^\ast)$.
Moreover, by Lemma~\ref{lem:C.1} we get $\sigma(\alpha,\lambda^\ast)=(-1)^n$,
 where $n$ is the algebraic multiplicity of $\lambda^\ast$ as an eigenvalues of (\ref{e:Bi.1.8}).
In order to apply Theorem~\ref{th:C.2}(i) it is required that $n$ is an odd. But our
condition in Theorem~\ref{th:Bi.4} is that $n\ne 0$, i.e., $\lambda^\ast$ is an eigenvalues of (\ref{e:Bi.1.8}).
As above, if  $\sigma(\alpha,\lambda^\ast)=-1$ then it follows from Theorem~\ref{th:C.2}(i) that
there is  continuous bifurcation at $\lambda^\ast$.}
\end{comparison}

\begin{comparison}[\hbox{\it Comparison with Theorems~\ref{th:Bi.6},\ref{th:Bi.7.1}}]\label{comp:C.4}
{\rm
Theorem~\ref{th:Bi.6} is a special case of Theorem~\ref{th:Bi.1.1}.
Though each potential operator $\nabla\mathcal{F}_\lambda$ is $C^1$,
Theorem~\ref{th:Bi.6}  cannot be derived from Theorem~\ref{th:C.2}(i) yet because
$I\times U\ni (\lambda,u)\mapsto \nabla\mathcal{F}_\lambda\in H$ is not assumed to be continuous.

 Theorem~\ref{th:Bi.7.1} is a special case of  Theorem~\ref{th:Bi.3}.
Though $I\times U\ni (\lambda,u)\mapsto \nabla\mathcal{L}-\lambda\nabla\widehat{\mathcal{L}}\in H$
is $C^1$ and hence satisfies (B1)-(B4) and (\ref{e:C.7}) in Theorem~\ref{th:C.2} with $\lambda_0=\lambda^\ast$,
using the same arguments as those at the end of Comparison~\ref{comp:C.2} or \ref{comp:C.3}
 we see that Theorem~\ref{th:Bi.7.1}  cannot follow from Theorem~\ref{th:C.2}(i) yet.
}
\end{comparison}

For necessary conditions of bifurcations  we only need to compare
Theorem~\ref{th:C.2}(ii) and \cite[Theorem~4.2]{EvSt07} (or \cite[Theorem~3.2]{Stu14B})
with results  in Section~\ref{sec:B.2.1N}.

Theorem~\ref{th:Ka1} and  Corollary~\ref{cor:Bi.2.2} cannot be derived from any one of
Theorem~\ref{th:C.2}(ii) and \cite[Theorem~4.2]{EvSt07} because potential operators in the preceding two results
are only G\^ateaux differentiable and related operators in the latter two results
involve either Hadamard differentiability or $w$-Hadamard differentiability.

\cite[Theorem~4.2]{EvSt07} is stated  in Hilbert spaces and hence cannot include
 Theorem~\ref{th:Ka2} and Corollary~\ref{cor:Bi.2.2*}.
As in Comparison~\ref{comp:C.1}, Theorem~\ref{th:Ka2} cannot follow  from Theorem~\ref{th:C.2}(ii).

Finally, let us compare Theorem~\ref{th:C.2}(ii) with Corollary~\ref{cor:Bi.2.2*} with $n=1$.
By the arguments in Comparison~\ref{comp:C.2} we see that
the map $F:I\times U^X\to X$ defined by $F(\lambda,x)=A(x)-\lambda\widehat{A}_1(x)$
satisfies (B1)-(B2), (B4) and (\ref{e:C.7}) in Theorem~\ref{th:C.2} with $\lambda_0=\lambda^\ast$.
But in the present case there is no the condition ``$\{u\in H\,|\, B(0)u-\lambda\widehat{B}(0)u\in X\}\subset X$ for each $\lambda$ near $\lambda^\ast$''
as in (II.a) of Corollary~\ref{cor:Bi.3.1}. Thus we cannot use  it  and Lemma~\ref{lem:C.2} to deduce that each
$D_xF(\lambda,0)=B(0)|_X- \lambda\widehat{B}(0)|_X   \in \mathscr{L}(X)$ is also
a Fredholm operator of index zero, that is, the assumption (B3) in Theorem~\ref{th:C.2}.
Hence Corollary~\ref{cor:Bi.2.2*} with $n=1$ cannot be derived from Theorem~\ref{th:C.2}(ii).

\section{Some new bifurcation results of  Rabinowitz type}\label{sec:B.2}
\setcounter{equation}{0}

Let $H$ be a real Hilbert space, $U$ an open neighborhood of $0$ in $H$,
and let $f\in C^2(U, \mathbb{R})$ satisfy  $f'(0)=0$.
A classical bifurcation theorem by Rabinowitz \cite{Rab} claimed:
If $\lambda^\ast\in\mathbb{R}$ is an isolated eigenvalue of $f^{\prime\prime}(0)$ of finite multiplicity,
then $(\lambda^\ast,0)$ is a bifurcation point of the equation $\nabla f(u)=\lambda u$ and
at least one of the following alternatives occurs:
\begin{enumerate}
\item[(i)] $(\lambda^\ast, 0)$ is not an isolated solution in $\{\lambda^\ast\}\times U$ of the equation $\nabla f(u)=\lambda u$;
\item[(ii)]  for every $\lambda\in\mathbb{R}$ near $\lambda^\ast$ there is a nontrivial solution
$u_\lambda$ of the equation $\nabla f(u)=\lambda u$ converging to $0$ as $\lambda\to\lambda^\ast$;

\item[(iii)] there is an one-sided  neighborhood $\Lambda$ of $\lambda^\ast$ such that
for any $\lambda\in\Lambda\setminus\{\lambda^\ast\}$, the equation $\nabla f(u)=\lambda u$ has at least
two nontrivial solutions converging to zero as $\lambda\to\lambda^\ast$.
\end{enumerate}
Under the same hypotheses B\"ohme \cite{Boh} and Marino \cite{Mar} independently proved that
for each small $r>0$, the equation $\nabla f(u)=\lambda u$ possesses  at least
two distinct one parameter families of solutions $(\lambda(r), u(r))$ having $\|u(r)\|=r$ and $\lambda(r)\to\lambda^\ast$
as $r\to 0$.  McLeod and Turner \cite{McTu} extended the latter result  to functions
 of class $C^{1,1}$ depending linearly on two parameters and such that the Lipschitz constant of the gradient goes to
zero as the parameters and $u$ go to zero. Ioffe and Schwartzman \cite{IoSch}
generalized  the above Rabinowitz's bifurcation theorem  to equations $Lu+ \nabla\varphi_\lambda(u)=\lambda u$
with $\varphi$ continuous jointly in $(\lambda,u)$ and $\varphi_\lambda$  of class $C^{1,1}$, where $L\in\mathscr{L}_s(H)$ and
the Lipschitz constant of $\nabla\varphi_\lambda$ satisfies some additional conditions.

In this section we shall give a few new bifurcation results as the above Rabinowitz bifurcation theorem.
As said in Introduction, we can reduce some of them  to the following
finite-dimensional result of the above Rabinowitz's type,  which may be obtained as a corollary of \cite[Theorem~2]{IoSch}
(or \cite[Theorem~4.2]{CorH}).

\begin{theorem}[\hbox{\cite[Theorem~5.1]{Can}}]\label{th:Bi.2.1}
 Let $X$ be a finite dimensional normed space, let $\delta>0$, $\epsilon>0$, $\lambda^\ast\in\mathbb{R}$ and
for every $\lambda\in [\lambda^\ast-\delta, \lambda^\ast+\delta]$, let
$f_\lambda:B_X(0,\epsilon)\to\mathbb{R}$ be a function of class $C^1$.
Assume that
\begin{enumerate}
\item[\rm a)] the functions $\{(\lambda,u)\to f_\lambda(u)\}$ and
$\{(\lambda,u)\to f'_\lambda(u)\}$  are continuous on
$[\lambda^\ast-\delta, \lambda^\ast+\delta]\times B_X(0,\epsilon)$;
\item[\rm b)] $u=0$ is a critical point of $f_{\lambda^\ast}$;
\item[\rm c)] $f_\lambda$ has an isolated local minimum (maximum) at zero for every
$\lambda\in (\lambda^\ast,\lambda^\ast+\delta]$ and an isolated local maximum (minimum) at
zero for every $\lambda\in [\lambda^\ast-\delta, \lambda^\ast)$.
\end{enumerate}
Then one at least of the following assertions holds:
\begin{enumerate}
\item[\rm i)] $u=0$ is not an isolated critical point of $f_{\lambda^\ast}$;
\item[\rm ii)] for every $\lambda\ne\lambda^\ast$ in a neighborhood of $\lambda^\ast$ there is a nontrivial critical point of
$f_\lambda$ converging to zero as $\lambda\to\lambda^\ast$;
\item[\rm iii)] there is an one-sided (right or left) neighborhood of $\lambda^\ast$ such that for every
$\lambda\ne\lambda^\ast$ in the neighborhood there are two distinct nontrivial critical points of $f_\lambda$
converging to zero as $\lambda\to\lambda^\ast$.
\end{enumerate}
In particular,  $(\lambda^\ast, 0)\in [\lambda^\ast-\delta, \lambda^\ast+\delta]\times B_X(0,\epsilon)$
is a bifurcation point of $f'_\lambda(u)=0$.
\end{theorem}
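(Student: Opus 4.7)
My plan is to argue by contradiction using stability of critical groups. Suppose none of (i), (ii), (iii) holds. Since (i) fails, the origin is an isolated critical point of $f_{\lambda^\ast}$, so we may fix $r\in(0,\epsilon)$ so that $0$ is the only critical point of $f_{\lambda^\ast}$ in $\bar{B}_X(0,r)$. The joint continuity of $f'_\lambda$ (assumption a)) together with finite-dimensional compactness yields, after shrinking $\delta$, that no critical point of $f_\lambda$ lies on $\partial B_X(0,r)$ for $|\lambda-\lambda^\ast|\le\delta$ and that every nontrivial critical point of $f_\lambda$ in $\bar{B}_X(0,r)$ converges to $0$ as $\lambda\to\lambda^\ast$. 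Assumption c) (say: isolated local minimum on the right of $\lambda^\ast$, isolated local maximum on the left) yields the critical group computation
\begin{equation*}
C_q(f_\lambda,0;{\bf K})=\delta_{q,0}{\bf K}\ \text{for }\lambda\in(\lambda^\ast,\lambda^\ast+\delta],\qquad
C_q(f_\lambda,0;{\bf K})=\delta_{q,n}{\bf K}\ \text{for }\lambda\in[\lambda^\ast-\delta,\lambda^\ast),
\end{equation*}
with $n=\dim X$: an isolated local minimum has critical groups concentrated in degree $0$, while for an isolated local maximum in an $n$-dimensional space the pair $(\{f\le f(0)\},\{f\le f(0)\}\setminus\{0\})$ is locally $(\mathbb{R}^n,\mathbb{R}^n\setminus\{0\})$, whose relative homology is concentrated in degree $n$.

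The core of the argument is then to extract from the negations of (ii) and (iii) sequences $\lambda_k^+\searrow\lambda^\ast$ and $\lambda_k^-\nearrow\lambda^\ast$ together with radii $r_k>0$ such that $0$ is the unique critical point of $f_\lambda$ in $\bar{B}_X(0,r_k)$ for \emph{every} $\lambda$ in the closed interval between $\lambda^\ast$ and $\lambda_k^{\pm}$. Granted this, Theorem~\ref{th:stablity2} (whose uniform (PS) condition is automatic in finite dimensions by a) and compactness) forces $C_q(f_\lambda,0;{\bf K})$ to be constant on each such interval. Combining with the display above yields $\delta_{q,0}{\bf K}=C_q(f_{\lambda^\ast},0;{\bf K})=\delta_{q,n}{\bf K}$, which contradicts $n\ge 1$; the case $n=0$ being trivial, we conclude that at least one of (i), (ii), (iii) holds. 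Any of them implies that $(\lambda^\ast,0)$ is a bifurcation point of $f'_\lambda(u)=0$.

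The chief obstacle is the extraction of such sequences with the uniqueness property holding along \emph{every} intermediate $\lambda$. Negation of (ii) provides a sequence on at least one side of $\lambda^\ast$ where $f_{\lambda_k}$ has no nontrivial critical point in $\bar{B}_X(0,r)$, and negation of (iii) provides, on each side, a sequence with at most one. The subtlety is that the interval $[\lambda^\ast,\lambda_k^{\pm}]$ may harbor intermediate $\lambda$ with many critical points of $f_\lambda$ clustered near $0$. The resolution is to shrink the ball: using finite-dimensional compactness and the accumulation of critical points at $0$ as $\lambda\to\lambda^\ast$, one selects $r_k\downarrow 0$ to expel stray critical points from $\bar{B}_X(0,r_k)$, coupled if needed with a diagonal extraction. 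This is essentially the finite-dimensional instance of \cite[Theorem~2]{IoSch} (equivalently \cite[Theorem~4.2]{CorH}) cited in the statement.
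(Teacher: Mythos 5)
The paper does not prove Theorem~\ref{th:Bi.2.1} itself: it is imported verbatim from Canino \cite[Theorem~5.1]{Can}, with the remark that it may be obtained as a corollary of \cite[Theorem~2]{IoSch} or \cite[Theorem~4.2]{CorH}, all of which are deformation/minimax arguments of Rabinowitz--Fadell type (the mechanism is reproduced in the proof of the refinement Theorem~\ref{th:Bi.2.1E}, via Lemmas~\ref{lem:Fpseudogradient}--\ref{lem:FBi.3.3}). So there is no internal proof to match against; the question is simply whether your argument is correct, and it is not.

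Your critical-group computations are fine, and the framework is a sound route to the weaker ``in particular'' conclusion: if one negates ``$(\lambda^\ast,0)$ is a bifurcation point'' directly, one gets a $\rho>0$ and an entire interval $[\lambda^\ast-\eta,\lambda^\ast+\eta]$ on which $0$ is the unique critical point of $f_\lambda$ in $\bar{B}_X(0,\rho)$, and then Theorem~\ref{th:stablity2} together with $\delta_{q,0}{\bf K}\ne\delta_{q,\dim X}{\bf K}$ gives the contradiction (this is exactly how the paper argues in the proof of Theorem~\ref{th:Bi.1.1}). But the disjunction (i)--(iii) is strictly stronger than ``bifurcation point,'' and the step you flag as the ``chief obstacle'' genuinely fails. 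Negating (ii) and (iii) only produces sequences of isolated parameter values at which $f_\lambda$ has no (respectively at most one) nontrivial critical point in a fixed ball; it says nothing about the intermediate $\lambda$'s, which is exactly what Theorem~\ref{th:stablity2} requires (uniqueness of the critical point in $\bar{U}$ for every $\sigma$ in the homotopy interval). Your proposed fix --- shrinking the ball --- is backwards. The accumulation-at-$0$ fact you invoke (that critical points of $f_\lambda$ in a fixed ball converge to $0$ as $\lambda\to\lambda^\ast$, which does follow from not~(i), joint continuity of $f'_\lambda$, and compactness) means precisely that any stray critical point of an intermediate $f_\mu$ is eventually \emph{inside} $\bar{B}_X(0,r_k)$, however small $r_k$ is chosen; shrinking the ball never expels them. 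Concretely, ``not~(i), not~(ii), not~(iii)'' is logically compatible with two interleaved sequences $\lambda_j,\mu_m\searrow\lambda^\ast$ where $f_{\lambda_j}$ has no nontrivial critical point near $0$ while $f_{\mu_m}$ has exactly one, $v_m\to 0$; then every interval $[\lambda^\ast,\lambda_j]$ eventually contains some $\mu_m$ with $v_m\in B_X(0,r_k)$, and no choice of $r_k$ or diagonal extraction rescues the uniqueness hypothesis. (The theorem asserts this scenario cannot occur, but you cannot assume that while proving it.) The genuine route to the full trichotomy is the pseudo-gradient/minimax construction the paper carries out around Lemmas~\ref{lem:FBi.3.3-}--\ref{lem:FBi.3.3}, which exploits the local min/max structure of $f_\lambda$ for $\lambda\ne\lambda^\ast$ directly rather than transporting critical groups through $\lambda^\ast$.
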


Note that the local minimum (maximum) in the assumption c) must  be strict!

The following is the main result in this section.

\begin{theorem}\label{th:Bi.2.4}
Let $\mathcal{L}\in C^1(U,\mathbb{R})$ (resp.  $\widehat{\mathcal{L}}\in C^1(U,\mathbb{R})$) satisfy
 Hypothesis~\ref{hyp:1.1} with $X=H$ (resp.  Hypothesis~\ref{hyp:1.2}),
and let $\lambda^\ast\in\mathbb{R}$ be an isolated eigenvalue of
\begin{eqnarray}\label{e:Bi.2.7.4}
\mathcal{L}''(0)v-\lambda\widehat{\mathcal{L}}''(0)v=0,\quad v\in H.
\end{eqnarray}
(If $\lambda^\ast=0$, it is enough that $\widehat{\mathcal{L}}\in C^1(U,\mathbb{R})$ satisfies Hypothesis~\ref{hyp:1.2}
without requirement that each $\widehat{\mathcal{L}}''(u)\in\mathscr{L}_s(H)$ is compact.)
Suppose that the Morse indexes of $\mathcal{L}_\lambda:=\mathcal{L}-\lambda\widehat{\mathcal{L}}$
at $0\in H$ 
 take, respectively, values $\mu_{\lambda^\ast}$ and $\mu_{\lambda^\ast}+\nu_{\lambda^\ast}$
 as $\lambda\in\mathbb{R}$ varies in
 two deleted half neighborhoods  of $\lambda^\ast$,
where $\mu_{\lambda}$ and $\nu_{\lambda}$ are the Morse index and the nullity of  $\mathcal{L}_{\lambda}$
at $0$, respectively.      Then  one of the following alternatives occurs:
\begin{enumerate}
\item[\rm (i)] $(\lambda^\ast, 0)$ is not an isolated solution in $\{\lambda^\ast\}\times U$ of
\begin{eqnarray}\label{e:Bi.2.7.3}
\mathcal{L}'(u)=\lambda\widehat{\mathcal{L}'}(u);
\end{eqnarray}
 \item[\rm (ii)]  for every $\lambda\in\mathbb{R}$ near $\lambda^\ast$ there is a nontrivial solution
$u_\lambda$ of (\ref{e:Bi.2.7.3}) converging to $0$ as $\lambda\to\lambda^\ast$;

\item[\rm (iii)] there is an one-sided  neighborhood $\Lambda$ of $\lambda^\ast$ such that
for any $\lambda\in\Lambda\setminus\{\lambda^\ast\}$,
(\ref{e:Bi.2.7.3}) has at least two nontrivial solutions converging to
zero as $\lambda\to\lambda^\ast$.
\end{enumerate}
In particular,  $(\lambda^\ast, 0)\in\mathbb{R}\times U$ is a bifurcation point  for the equation
(\ref{e:Bi.2.7.3}).
\end{theorem}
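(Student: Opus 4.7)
My plan is to reduce the bifurcation problem for (\ref{e:Bi.2.7.3}) near $(\lambda^\ast,0)$ to an equivalent finite-dimensional problem on $H^0 := {\rm Ker}(\mathcal{L}''(0) - \lambda^\ast\widehat{\mathcal{L}}''(0))$ by means of the parameterized splitting theorem in Appendix~\ref{app:A}, and then to apply Canino's Theorem~\ref{th:Bi.2.1} to the reduced family. Each $\mathcal{L}_\lambda := \mathcal{L} - \lambda\widehat{\mathcal{L}}$ inherits Hypothesis~\ref{hyp:1.1} (with $X = H$) via the decomposition $B_\lambda = P + (Q - \lambda\widehat{\mathcal{L}}'')$, and Corollary~\ref{cor:stablity2}(I) supplies the uniform (PS) condition on a closed ball $\bar{B}_H(0,\delta)$ as $\lambda$ varies in a bounded interval containing $\lambda^\ast$.

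Using the orthogonal decomposition $H = H^\star \oplus H^0$ determined by $\mathfrak{B}_{\lambda^\ast} := \mathcal{L}''(0) - \lambda^\ast\widehat{\mathcal{L}}''(0)$, the parameterized splitting theorem would produce, for $|\lambda - \lambda^\ast|$ small, an origin-preserving family of homeomorphisms $\Phi_\lambda$ on a neighborhood of $0 \in H$ such that
$$
\mathcal{L}_\lambda\bigl(\Phi_\lambda(z + y)\bigr) \;=\; \mathcal{L}_\lambda^\circ(z) + \tfrac{1}{2}\bigl(\mathfrak{B}_{\lambda^\ast}^\star y, y\bigr)_H,\qquad z \in \bar{B}_{H^0}(0,\epsilon),\ y \in \bar{B}_{H^\star}(0,\epsilon),
$$
where $\mathfrak{B}_{\lambda^\ast}^\star := \mathfrak{B}_{\lambda^\ast}|_{H^\star}$ is an isomorphism of Morse index $\mu_{\lambda^\ast}$ and $\lambda \mapsto \mathcal{L}_\lambda^\circ \in C^1(\bar{B}_{H^0}(0,\epsilon))$ is continuous. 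Moreover, critical points of $\mathcal{L}_\lambda$ near $0 \in H$ would correspond bijectively to critical points of $\mathcal{L}_\lambda^\circ$ near $0 \in H^0$.

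The isolatedness of $\lambda^\ast$ as an eigenvalue of (\ref{e:Bi.2.7.4}) forces $\mathfrak{B}_\lambda$ to be injective for $0 < |\lambda - \lambda^\ast|$ small, so $0$ is a nondegenerate critical point of $\mathcal{L}_\lambda$ and hence of the reduced $\mathcal{L}_\lambda^\circ$ on both deleted half-neighborhoods. The Morse-index hypothesis then becomes: the Morse index of $\mathcal{L}_\lambda^\circ$ at $0$ equals $0$ on one side of $\lambda^\ast$ and $\dim H^0 = \nu_{\lambda^\ast}$ on the other. Combined with the Gateaux differentiability of $\nabla\mathcal{L}_\lambda^\circ$ at $0$ inherited from the splitting, this yields, in the finite-dimensional space $H^0$, a strict local minimum at $0$ on one side and a strict local maximum on the other. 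With hypotheses a), b), c) of Theorem~\ref{th:Bi.2.1} thereby satisfied, its three alternatives for the reduced equation $d\mathcal{L}_\lambda^\circ(z) = 0$ transport, via $\Phi_\lambda$, into the desired alternatives (i), (ii), (iii) for (\ref{e:Bi.2.7.3}).

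The principal obstacle is the second step: establishing the parameterized splitting with the $C^1$-topology continuity of $\lambda \mapsto \mathcal{L}_\lambda^\circ$ required by hypothesis a) of Theorem~\ref{th:Bi.2.1}. This rests on the new finite-dimensional reduction of \cite{Lu7}, which circumvents the classical Lyapunov--Schmidt reduction and its $C^2$ requirement; the continuity bookkeeping under only Gateaux differentiability of $B$ is delicate. A secondary subtlety is verifying that the critical-group computation via the parameterized shifting theorem genuinely delivers a strict local extremum of the $C^1$ reduced functional (rather than merely Morse-theoretic information); this is manageable by finite dimensionality of $H^0$ combined with the nondegeneracy of $0$ on each deleted half-neighborhood.
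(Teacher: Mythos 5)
Your proposal is correct and follows essentially the same route as the paper's proof: reduce via the parameterized splitting theorem of \cite{Lu7} (the Hypothesis~\ref{hyp:1.1} version, \cite[Theorem~2.16]{Lu7}, rather than the Hypothesis~\ref{hyp:1.3} version in Appendix~\ref{app:A}), apply the parameterized shifting theorem to identify the critical groups of $\mathcal{L}^\circ_\lambda$ at $0\in H^0_{\lambda^\ast}$ as $\delta_{j0}{\bf K}$ on one deleted half-neighborhood and $\delta_{j\nu_{\lambda^\ast}}{\bf K}$ on the other, convert these into strict local minimum/maximum via the finite-dimensional characterization of critical groups of extrema (the paper's (\ref{e:Bi.2.17})), and then invoke Theorem~\ref{th:Bi.2.1}. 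The subtlety you flag about passing from Morse-theoretic to extremal information is resolved exactly as you anticipate, through the critical-group criterion rather than a second-derivative test, since $\mathcal{L}^\circ_\lambda$ is only $C^1$; your mention of the uniform (PS) condition from Corollary~\ref{cor:stablity2} is harmless but not actually invoked in this particular proof.
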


When $X=H$ let us compare the assumptions of Theorem~\ref{th:Bi.2.4} with those of Corollary~\ref{cor:Bi.3}.
The assumption on the Morse indexes of $\mathcal{L}_\lambda$ at $0\in H$ in the former
is stronger than the condition (I.d) in the latter. It means that one of
$\lim_{\lambda\to 0+}r(B(0)-(\lambda+\lambda^\ast)\widehat{\mathcal{L}}''(0))$ and
 $\lim_{\lambda\to 0-}r(B(0)-(\lambda+\lambda^\ast)\widehat{\mathcal{L}}''(0))$
is equal to $0$, and another is equal to $\nu_{\lambda^\ast}$.

Clearly, our assumptions and those by Ioffe and Schwartzman \cite{IoSch} cannot be contained each other.
In Remark~\ref{rm:Bi.2.4.3} we shall compare it with the above Rabinowitz bifurcation theorem.

\begin{proof}[\it {Proof of Theorem~\ref{th:Bi.2.4}}]
Take $\delta>0$ such that $[\lambda^\ast-\delta,\lambda^\ast+\delta]\setminus\{\lambda^\ast\}$
contains no eigenvalues of (\ref{e:Bi.2.7.4}). Then
 $0\in H$ is a nondegenerate (and thus isolated) critical point of $\mathcal{L}_\lambda$
 for each $\lambda\in [\lambda^\ast-\delta,\lambda^\ast+\delta]\setminus\{\lambda^\ast\}$.

\textsf{Firstly, we assume that} the Morse index $\mu_\lambda$ of $\mathcal{L}_\lambda$  satisfies
  \begin{eqnarray}\label{e:Bi.2.7.5}
 \mu_\lambda=\left\{\begin{array}{ll}
 \mu_{\lambda^\ast},&\quad\forall \lambda\in [\lambda^\ast-\delta, \lambda^\ast),\\ \mu_{\lambda^\ast}+\nu_{\lambda^\ast}, &\quad
\forall\lambda\in (\lambda^\ast, \lambda^\ast+\delta].
\end{array}
\right.
 \end{eqnarray}
  It follows from  Theorem~\ref{th:A.1} that  for any $q\in\mathbb{N}_0$,
\begin{eqnarray}\label{e:Bi.2.7.6}
C_q(\mathcal{L}_{\lambda},0;{\bf K})=
\left\{\begin{array}{ll}
\delta_{q\mu_{\lambda^\ast}}{\bf K},&\quad
\forall \lambda\in [\lambda^\ast-\delta, \lambda^\ast),\\
\delta_{q(\mu_{\lambda^\ast}+\nu_{\lambda^\ast})}{\bf K},&\quad
\forall \lambda\in (\lambda^\ast, \lambda^\ast+\delta].
\end{array}\right.
\end{eqnarray}
 Let $H^0_{\lambda^\ast}$ be the eigenspace of (\ref{e:Bi.2.7.4}) associated with $\lambda^\ast$ and $(H^0_{\lambda^\ast})^\bot$
the orthogonal complementary of $H^0_{\lambda^\ast}$ in $H$.
Applying \cite[Theorem~2.16]{Lu7}
to $\mathcal{L}_{\lambda}=\mathcal{L}_{\lambda^\ast}+(\lambda-\lambda^\ast)\widehat{\mathcal{L}}$
with $\lambda\in [\lambda^\ast-\delta, \lambda^\ast+\delta]$,
 we have  $\epsilon>0$ and a unique continuous map
 $$
 \psi:[\lambda^\ast-\delta, \lambda^\ast+\delta]\times (B_H(0,\epsilon)\cap H^0_{\lambda^\ast})\to (H^0_{\lambda^\ast})^\bot
 $$
   such that for each $\lambda\in [\lambda^\ast-\delta, \lambda^\ast+\delta]$,
 $\psi(\lambda, 0)=0$ and
\begin{eqnarray*}
 P^\bot_{\lambda^\ast}\nabla\mathcal{L}(z+ \psi(\lambda, z))-
 \lambda P^\bot_{\lambda^\ast}\nabla\widehat{\mathcal{L}}(z+ \psi(\lambda, z))=0\quad\forall z\in B_{H}(0,\epsilon)\cap H^0_{\lambda^\ast},
 \end{eqnarray*}
 where  $P^\bot_{\lambda^\ast}$ is the orthogonal projection onto $(H^0_{\lambda^\ast})^\bot$; moreover
  the functional
 \begin{equation}\label{e:Bi.2.15}
 \mathcal{L}^\circ_\lambda: B_H(0, \epsilon)\cap H^0_{\lambda^\ast}\to\mathbb{R},\;z\mapsto
 \mathcal{L}(z+\psi(\lambda, z))- \lambda\widehat{\mathcal{L}}(z+\psi(\lambda, z))
  \end{equation}
   is of class $C^1$, and has differential at $z\in B_H(0, \epsilon)\cap H^0_{\lambda^\ast}$ given by
\begin{eqnarray}\label{e:Bi.2.15+}
D\mathcal{L}^\circ_\lambda(z)[h]=D\mathcal{L}(z+\psi(\lambda,z))[h]-
\lambda D\widehat{\mathcal{L}}(z+\psi(\lambda,z))[h],\quad\forall h\in H^0_{\lambda^\ast}.
\end{eqnarray}
Note that for each $\lambda\in[\lambda^\ast-\delta, \lambda^\ast+\delta]$,
the map $z\mapsto z+ \psi({\lambda}, z))$ induces an one-to-one correspondence
 between the critical points of  $\mathcal{L}_{\lambda}^\circ$ near $0\in H^0_{\lambda^\ast}$
and those of $\mathcal{L}_{\lambda}$ near $0\in H$. Hence the problem is reduced to finding
the critical points of $\mathcal{L}^\circ_\lambda$
near $0\in H^0_{\lambda^\ast}$ for $\lambda$ near $\lambda^\ast$.

Now by  Theorem~\ref{th:A.1} and \cite[Theorem~2.18]{Lu7},
 $0\in H^0_{\lambda^\ast}$ is also an
isolated critical point of $\mathcal{L}^\circ_\lambda$, and
 from (\ref{e:Bi.2.7.5}) and (\ref{e:Bi.2.7.6}) we may derive that for any $j\in\mathbb{N}_0$,
\begin{eqnarray}\label{e:Bi.2.16}
C_{j}(\mathcal{L}^\circ_{\lambda},0;{\bf K})=\left\{\begin{array}{ll}
\delta_{j0}{\bf K},&\quad \forall \lambda\in [\lambda^\ast-\delta, \lambda^\ast),\\
\delta_{j\nu_{\lambda^\ast}}{\bf K},&\quad
\forall \lambda\in (\lambda^\ast, \lambda^\ast+\delta].
\end{array}\right.
\end{eqnarray}

For a $C^1$ function $\varphi$ on a neighborhood  of the origin $0\in\mathbb{R}^N$
we may always find  $\tilde\varphi\in C^1(\mathbb{R}^N,
\mathbb{R})$ such that it agrees with $\varphi$ near $0\in\mathbb{R}^N$ and is also
coercive (so satisfies the (PS)-condition).
Suppose that $0$ is an isolated critical point of $\varphi$. By Corollary~6.96, Proposition~6.97 and Example~6.45 in \cite{MoMoPa},
we obtain
\begin{equation}\label{e:Bi.2.17}
\left.
\begin{array}{ll}
&C_k(\varphi,0;{\bf K})=\delta_{k0}\;\Longleftrightarrow\;\hbox{ $0$ is a strict local minimizer of $\varphi$},\\
&C_k(\varphi,0;{\bf K})=\delta_{kN}\;\Longleftrightarrow\;\hbox{ $0$ is a strict local maximizer of $\varphi$}
\end{array}\right\}
\end{equation}
and thus $C_0(\varphi, 0;{\bf K})=0=C_N(\varphi, 0;{\bf K})$
if $0\in  \mathbb{R}^N$ is neither a local  maximizer nor a local
minimizer of $\varphi$.

Because of these,  (\ref{e:Bi.2.16}) and (\ref{e:Bi.2.17}) lead to
\begin{eqnarray}\label{e:Bi.2.18}
0\in H^0_{\lambda^\ast}\;\hbox{is a strict local}\left\{
\begin{array}{ll}
\hbox{minimizer of}\;\mathcal{L}^\circ_{\lambda},&\quad
\forall \lambda\in [\lambda^\ast-\delta, \lambda^\ast),\\
\hbox{maximizer of}\;\mathcal{L}^\circ_{\lambda},&\quad
\forall \lambda\in (\lambda^\ast, \lambda^\ast+\delta].
\end{array}\right.
\end{eqnarray}
By this and Theorem~\ref{th:Bi.2.1}, one of the following possibilities occurs:
\begin{enumerate}
\item[(1)]  $0\in H^0_{\lambda^\ast}$ is not an isolated critical point of $\mathcal{L}^\circ_{\lambda^\ast}$;
\item[(2)]  for every $\lambda\in\mathbb{R}$ near $\lambda^\ast$,
$\mathcal{L}^\circ_{\lambda}$ has a nontrivial critical point  converging to
$0\in H^0_{\lambda^\ast}$ as $\lambda\to\lambda^\ast$;

\item[(3)] there is an one-sided  neighborhood $\Lambda$ of $\lambda^\ast$ such that
for any $\lambda\in\Lambda\setminus\{\lambda^\ast\}$,
$\mathcal{L}^\circ_{\lambda}$ has two nontrivial critical points
 converging to zero as $\lambda\to\lambda^\ast$.
\end{enumerate}
Obviously, they lead to (i), (ii) and (iii), respectively.

\textsf{Next,  assume}
\begin{eqnarray}\label{e:Bi.2.19-}
\mu_\lambda=\left\{\begin{array}{ll}
 \mu_{\lambda^\ast}+\nu_{\lambda^\ast},&\quad\forall \lambda\in [\lambda^\ast-\delta, \lambda^\ast),\\ \mu_{\lambda^\ast}, &\quad \forall\lambda\in (\lambda^\ast, \lambda^\ast+\delta].
\end{array}
\right.
\end{eqnarray}
Then we can obtain
\begin{eqnarray}\label{e:Bi.2.19}
0\in H^0_{\lambda^\ast}\;\hbox{is a strict local}\left\{
\begin{array}{ll}
\hbox{maximizer of}\;\mathcal{L}^\circ_{\lambda},&\quad
\forall \lambda\in [\lambda^\ast-\delta, \lambda^\ast),\\
\hbox{minimizer of}\;\mathcal{L}^\circ_{\lambda},&\quad
\forall \lambda\in (\lambda^\ast, \lambda^\ast+\delta].
\end{array}\right.
\end{eqnarray}
This also leads to either (i) or (ii) or (iii).\end{proof}

From the proof of Theorem~\ref{th:Bi.2.4} it is easily seen that  Theorem~\ref{th:Bi.2.4}(i)
may be replaced by ``$0\in H^0_{\lambda^\ast}$ is not an isolated critical point of
$\mathcal{L}^\circ_{\lambda^\ast}$". In fact, they are equivalent in the present case.

The following two corollaries strengthen Corollary~\ref{cor:Bi.3}.

\begin{corollary}\label{cor:Bi.2.4.1}
Let $\mathcal{L}\in C^1(U,\mathbb{R})$ (resp.  $\widehat{\mathcal{L}}\in C^1(U,\mathbb{R})$) satisfy
 Hypothesis~\ref{hyp:1.1} with $X=H$ (resp.  Hypothesis~\ref{hyp:1.2}),
and let $\lambda^\ast\in\mathbb{R}$ be an isolated eigenvalue of (\ref{e:Bi.2.7.4}).
(If $\lambda^\ast=0$, it is enough that $\widehat{\mathcal{L}}\in C^1(U,\mathbb{R})$ satisfies Hypothesis~\ref{hyp:1.2}
without requirement that each $\widehat{\mathcal{L}}''(u)\in\mathscr{L}_s(H)$ is compact.)
Suppose that $\widehat{\mathcal{L}}''(0)$ is either semi-positive or semi-negative.
Then the conclusions of Theorem~\ref{th:Bi.2.4} hold true.
\end{corollary}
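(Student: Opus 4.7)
My plan is to verify the Morse-index hypothesis of Theorem~\ref{th:Bi.2.4} and then simply invoke that theorem. Set $T_\lambda:=\mathcal{L}''(0)-\lambda\widehat{\mathcal{L}}''(0)\in\mathscr{L}_s(H)$, $H^0:=\ker(T_{\lambda^\ast})$ and $\nu_{\lambda^\ast}=\dim H^0$.

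First I would show that the quadratic form $v\mapsto(\widehat{\mathcal{L}}''(0)v,v)_H$ is definite on $H^0$. Assume $\widehat{\mathcal{L}}''(0)$ is semi-positive (the semi-negative case is symmetric). If $v\in H^0$ satisfies $(\widehat{\mathcal{L}}''(0)v,v)_H=0$, Cauchy--Schwarz for the semi-definite self-adjoint operator $\widehat{\mathcal{L}}''(0)$ forces $\widehat{\mathcal{L}}''(0)v=0$, whence $\mathcal{L}''(0)v=\lambda^\ast\widehat{\mathcal{L}}''(0)v=0$, and consequently $T_\lambda v=0$ for \emph{every} $\lambda\in\mathbb{R}$. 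This contradicts the fact that $\lambda^\ast$ is an \emph{isolated} eigenvalue of~(\ref{e:Bi.2.7.4}) unless $v=0$. So $(\widehat{\mathcal{L}}''(0)\,\cdot\,,\cdot)_H$ is positive definite on the finite-dimensional space $H^0$.

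Second, for $v\in H^0$ one has $\mathcal{L}''(0)v=\lambda^\ast\widehat{\mathcal{L}}''(0)v$ and hence
\begin{equation*}
(T_\lambda v,v)_H=(\lambda^\ast-\lambda)\bigl(\widehat{\mathcal{L}}''(0)v,v\bigr)_H,\qquad v\in H^0,
\end{equation*}
so the restriction of the quadratic form of $T_\lambda$ to $H^0$ is negative definite for $\lambda>\lambda^\ast$ and positive definite for $\lambda<\lambda^\ast$. Using the orthogonal decomposition $H=H^0\oplus R(T_{\lambda^\ast})$ together with the Kato perturbation picture recalled just before~(\ref{e:Bi.1.1}), for $|\lambda-\lambda^\ast|$ small enough the $\nu_{\lambda^\ast}$ eigenvalues of $T_\lambda$ in the ``$0$-group'' are, to leading order, the eigenvalues of $(\lambda^\ast-\lambda)\widehat{\mathcal{L}}''(0)|_{H^0}$, while on the complementary invariant subspace the Morse index stays equal to $\mu_{\lambda^\ast}$ because $T_{\lambda^\ast}|_{R(T_{\lambda^\ast})}$ is invertible and $\lambda\mapsto T_\lambda$ is continuous in operator norm. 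Therefore
\begin{equation*}
\mu_\lambda=\begin{cases}\mu_{\lambda^\ast}+\nu_{\lambda^\ast},& \lambda\in(\lambda^\ast,\lambda^\ast+\delta],\\ \mu_{\lambda^\ast},&\lambda\in[\lambda^\ast-\delta,\lambda^\ast),\end{cases}
\end{equation*}
for $\delta>0$ small enough (the two sides interchange when $\widehat{\mathcal{L}}''(0)$ is semi-negative). This is exactly the hypothesis of Theorem~\ref{th:Bi.2.4}.

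The main point requiring care is the reduction of the Morse-index count to the perturbed $\nu_{\lambda^\ast}$-dimensional invariant subspace: this is precisely the content of the passage preceding~(\ref{e:Bi.1.1}), appealing to \cite{Ka}, and it is the isolation of $\lambda^\ast$ combined with the sign computation of step two that makes the first-order expansion sharp enough to determine each perturbed eigenvalue's sign. With the Morse-index hypothesis verified, the three alternatives follow at once from Theorem~\ref{th:Bi.2.4}.
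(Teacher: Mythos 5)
Your proof is correct, and it takes a genuinely different route from the paper's. The paper's proof of this corollary never exploits strict definiteness of $\widehat{\mathcal{L}}''(0)$ on $H^0$; instead it introduces the auxiliary quadratic functionals $\mathfrak{L}_\lambda(u)=([\mathcal{L}''(0)-\lambda\widehat{\mathcal{L}}''(0)]u,u)_H$, applies stability of critical groups (Theorem~\ref{th:stablity1}) to obtain (\ref{e:Bi.2.19.1}) (local constancy of $\mu_\lambda$ on each deleted half-neighborhood of $\lambda^\ast$), and then combines the operator ordering $\mathcal{L}''(0)-\lambda_1\widehat{\mathcal{L}}''(0)\ge\mathcal{L}''(0)-\lambda_2\widehat{\mathcal{L}}''(0)$ for $\lambda_1\le\lambda_2$ with \cite[Proposition~2.3.3]{Ab} to identify the two constants as $\mu_{\lambda^\ast}$ and $\mu_{\lambda^\ast}+\nu_{\lambda^\ast}$. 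You instead extract from the isolation of $\lambda^\ast$ the sharper fact that $(\widehat{\mathcal{L}}''(0)\cdot,\cdot)_H$ is \emph{strictly} definite on $H^0$ --- a clean observation not made in the paper --- compute $(T_\lambda v,v)_H=(\lambda^\ast-\lambda)(\widehat{\mathcal{L}}''(0)v,v)_H$ on $H^0$, and invoke the degenerate first-order (Rellich/Feynman--Hellmann) eigenvalue expansion to fix the signs of the $0$-group eigenvalues. This is shorter and bypasses the critical-group-stability machinery, but note that the passage before~(\ref{e:Bi.1.1}) recalls only Kato's invariant-subspace picture, not the first-order coefficient; you should cite the analytic perturbation theorem in \cite{Ka} directly for that step. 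Your first observation in fact admits an even more elementary path that needs no expansion at all: from the identity $(T_\lambda u,u)_H=(T_{\lambda^\ast}u,u)_H-(\lambda-\lambda^\ast)(\widehat{\mathcal{L}}''(0)u,u)_H$ for every $u\in H$, together with strict definiteness of $\widehat{\mathcal{L}}''(0)$ on $H^0$ and negative definiteness of $T_{\lambda^\ast}$ on $H^-$, one sees directly that $T_\lambda$ is negative definite on all of $H^-\oplus H^0$ once $\lambda>\lambda^\ast$, giving $\mu_\lambda\ge\mu_{\lambda^\ast}+\nu_{\lambda^\ast}$, while $T_\lambda\ge T_{\lambda^\ast}$ gives $\mu_\lambda\le\mu_{\lambda^\ast}$ for $\lambda<\lambda^\ast$; the matching reverse inequalities are just upper semicontinuity of the negative spectrum away from $0$.
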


\begin{proof}
It suffices to prove that the assumption about
the Morse indexes of $\mathcal{L}_\lambda=\mathcal{L}-\lambda\widehat{\mathcal{L}}$
at $0\in H$ in Theorem~\ref{th:Bi.2.4} can be satisfied.
Since $\lambda^\ast\in\mathbb{R}$ is an isolated eigenvalue of (\ref{e:Bi.2.7.4}),
by the proof of Theorem~\ref{th:Bi.2.4} we have $\delta>0$ such that
 $0\in H$ is a nondegenerate critical point of $\mathcal{L}_\lambda$ for each $\lambda\in
 [\lambda^\ast-\delta,\lambda^\ast+\delta]\setminus\{\lambda^\ast\}$. This is equivalent to
 the fact that  $0\in H$ is a nondegenerate critical point of the $C^\infty$ functional
 $H\ni u\mapsto\mathfrak{L}_\lambda(u):=([\mathcal{L}''(0)-\lambda\widehat{\mathcal{L}}''(0)]u,u)_H$
 for each $\lambda\in [\lambda^\ast-\delta,\lambda^\ast+\delta]\setminus\{\lambda^\ast\}$. Hence
 $$
 \mathcal{L}''(0)u-\lambda\widehat{\mathcal{L}}''(0)u\ne 0,\quad\forall u\in H\setminus\{0\},\quad\forall \lambda\in [\lambda^\ast-\delta,\lambda^\ast+\delta]\setminus\{\lambda^\ast\}.
 $$
 This shows that $0\in H$ is a unique critical point of each $\mathfrak{L}_\lambda$.
 Since $ \nabla\mathfrak{L}_\lambda(u)=\mathcal{L}''(0)u-\lambda\widehat{\mathcal{L}}''(0)u=
 P(0)u+ Q(0)u-\lambda\widehat{\mathcal{L}}''(0)u$ is a bounded linear Fredholm operator,
 it is easily seen that $\mathfrak{L}_\lambda$ satisfies the (PS) condition on any closed ball.
  Hence for any small $0<\epsilon<\delta$,
   the families
 $\{\mathfrak{L}_\lambda\,|\,\lambda\in [\lambda^\ast-\delta,\lambda^\ast-\epsilon]\}$ and
 $\{\mathfrak{L}_\lambda\,|\,\lambda\in [\lambda^\ast+\epsilon,\lambda^\ast+\delta]\}$
 satisfy the conditions of Theorem~\ref{th:stablity1} on any closed ball $\bar{B}_H(0, r)$.
 Thus the critical groups $C_\ast(\mathfrak{L}_\lambda, 0;{\bf K})$
 are all isomorphic for all 
 $\lambda\in [\lambda^\ast-\delta,\lambda^\ast)$ (resp.
      $\lambda\in (\lambda^\ast,\lambda^\ast+\delta]$) because of arbitrariness of $\epsilon>0$.
 Note that
$C_q(\mathfrak{L}_\lambda, 0;{\bf K})=\delta_{q\mu_{\lambda}}{\bf K}$
for each $\lambda\in [\lambda^\ast-\delta,\lambda^\ast+\delta]\setminus\{\lambda^\ast\}$.
 It follows that
   \begin{equation}\label{e:Bi.2.19.1}
   \mu_\lambda=\left\{\begin{array}{ll}
 \mu_{\lambda^\ast-\delta},&\quad\forall \lambda\in [\lambda^\ast-\delta, \lambda^\ast),\\
 \mu_{\lambda^\ast+\delta}, &\quad \forall\lambda\in (\lambda^\ast, \lambda^\ast+\delta].
\end{array}
\right.
\end{equation}

 We assume now that $\widehat{\mathcal{L}}''(0)$ is semi-positive. Then
 $$
 \mathcal{L}''(0)-\lambda_1\widehat{\mathcal{L}}''(0)\ge \mathcal{L}''(0)-\lambda_2\widehat{\mathcal{L}}''(0),
\quad\forall\; \lambda^\ast-\delta\le\lambda_1\le\lambda_2\le\lambda^\ast+\delta.
$$
Since all $\mu_{\lambda}$ and $\nu_{\lambda}$ are finite,
from  this, \cite[Proposition~2.3.3]{Ab} and (\ref{e:Bi.2.19.1}) we derive
 \begin{eqnarray*}
&&\mu_{\lambda^\ast-\delta}\le\mu_{\lambda^\ast}\le \mu_{\lambda^\ast+\delta},\\
&&\mu_{\lambda^\ast-\delta}\le\mu_{\lambda^\ast}+\nu_{\lambda^\ast}\le \mu_{\lambda^\ast+\delta},\\
&&\mu_{\lambda^\ast}\le\lim_{\lambda\to\lambda^\ast}\inf\mu_\lambda=\mu_{\lambda^\ast-\delta},\\
&&\mu_{\lambda^\ast}+\nu_{\lambda^\ast}\ge\lim_{\lambda\to\lambda^\ast}\sup\mu_\lambda
=\mu_{\lambda^\ast+\delta}.
\end{eqnarray*}
These imply that (\ref{e:Bi.2.7.5}) is satisfied.

 Similarly, if
$\widehat{\mathcal{L}}''(0)$ is semi-negative, then (\ref{e:Bi.2.19-}) holds true.
 The desired conclusions are obtained.
\end{proof}

 Note that (\ref{e:Bi.2.2}) has no isolated eigenvalues if
$\cap^n_{j=1}{\rm Ker}(\widehat{\mathcal{L}}''_j(0))\cap{\rm Ker}(\mathcal{L}''(0))\ne\{0\}$.
It is natural to ask when  $\vec{\lambda}^\ast$ is an isolated  eigenvalue  of (\ref{e:Bi.2.2}).
We also consider the case $n=1$ merely.
Suppose that  $\mathcal{L}''(0)$ is  invertible.
Then $\mathcal{L}''(0)$ cannot be negative definite because $\mathcal{L}\in C^1(U,\mathbb{R})$ satisfies Hypothesis~\ref{hyp:1.1} with $X=H$.
It follows that $0$ is not an eigenvalue  of (\ref{e:Bi.2.7.4}), and that $\lambda\in\mathbb{R}\setminus\{0\}$
is an eigenvalue  of (\ref{e:Bi.2.7.4}) if and only if $1/\lambda$
is an eigenvalue  of the compact linear self-adjoint operator $L:=[\mathcal{L}''(0) ]^{-1}\widehat{\mathcal{L}}''(0)\in\mathscr{L}_s(H)$.
By Riesz-Schauder theory, the spectrum of $L$, $\sigma(L)$,
consists of zero and a sequence of real eigenvalues of  finite multiplicity
converging to zero, $\{1/\lambda_k\}_{k=1}^\infty$. (Hence $|\lambda_k|\to\infty$ as $k\to\infty$.)
Let $H_k$ be the eigenspace corresponding to $1/\lambda_k$ for $k\in\mathbb{N}$.
Then $H=\oplus^\infty_{k=0}H_k$, where $H_0={\rm Ker}(L)={\rm Ker}(\widehat{\mathcal{L}}''(0))$ and
\begin{equation}\label{e:Bi.2.8}
H_k={\rm Ker}(I/\lambda_k-L)={\rm Ker}(\mathcal{L}''(0)-\lambda_k \widehat{\mathcal{L}}''(0)),\quad
\forall k\in\mathbb{N}.
\end{equation}

\begin{corollary}\label{cor:Bi.2.4.2}
Let $\mathcal{L}\in C^1(U,\mathbb{R})$ (resp.  $\widehat{\mathcal{L}}\in C^1(U,\mathbb{R})$) satisfy
 Hypothesis~\ref{hyp:1.1} with $X=H$ (resp.  Hypothesis~\ref{hyp:1.2}). Suppose that the following two conditions
  satisfied:
 \begin{enumerate}
\item[\rm (a)] $\mathcal{L}''(0)$ is invertible
and $\lambda^\ast=\lambda_{k_0}$ is an eigenvalue of (\ref{e:Bi.2.7.4}).
 \item[\rm (b)] $\mathcal{L}''(0)\widehat{\mathcal{L}}''(0)=\widehat{\mathcal{L}}''(0)\mathcal{L}''(0)$ (so
 each $H_k$  is an invariant subspace of $\mathcal{L}''(0)$),   and $\mathcal{L}''(0)$
 is either positive  or negative on $H_{k_0}$.
 \end{enumerate}
 Then the conclusions of Theorem~\ref{th:Bi.2.4} hold true.
 Moreover, if ${\mathcal{L}}''(0)$ is positive definite, the condition (b) is unnecessary.
\end{corollary}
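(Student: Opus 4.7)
The plan is to verify the Morse-index hypothesis of Theorem~\ref{th:Bi.2.4}: that $\mu_\lambda$ takes the value $\mu_{\lambda^\ast}$ on one deleted half-neighborhood of $\lambda^\ast$ and $\mu_{\lambda^\ast}+\nu_{\lambda^\ast}$ on the other. Finiteness of $\mu_\lambda$ and $\nu_\lambda$ is automatic: since $\mathcal{L}''(0)=P(0)+Q(0)$ is positive-definite plus compact and $\widehat{\mathcal{L}}''(0)$ is compact, $\mathcal{L}_\lambda''(0)$ is a compact self-adjoint perturbation of a uniformly positive operator and hence Fredholm of index zero.

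Under (a), $\lambda^\ast=\lambda_{k_0}$ is isolated in $\{\lambda_k\}$, so I would choose $\delta>0$ with $(\lambda^\ast-\delta,\lambda^\ast+\delta)\cap\{\lambda_k\}=\{\lambda^\ast\}$. The commutation in (b) makes each $H_k$ invariant under both $\mathcal{L}''(0)$ and $\widehat{\mathcal{L}}''(0)$, so the orthogonal decomposition $H=\bigoplus_{k\ge0}H_k$ block-diagonalizes $\mathcal{L}_\lambda''(0)$. On $H_0=\mathrm{Ker}(\widehat{\mathcal{L}}''(0))$ the block is $\mathcal{L}''(0)|_{H_0}$, independent of $\lambda$. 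For $k\ge1$ the identity $\mathcal{L}''(0)v=\lambda_k\widehat{\mathcal{L}}''(0)v$ on $H_k$ yields $\widehat{\mathcal{L}}''(0)|_{H_k}=\lambda_k^{-1}\mathcal{L}''(0)|_{H_k}$, so
\begin{equation*}
\bigl(\mathcal{L}''(0)-\lambda\widehat{\mathcal{L}}''(0)\bigr)\big|_{H_k}=\Bigl(1-\tfrac{\lambda}{\lambda_k}\Bigr)\mathcal{L}''(0)\big|_{H_k}.
\end{equation*}
Letting $n_k^{\pm}$ be the dimensions of the positive/negative definite subspaces of $\mathcal{L}''(0)|_{H_k}$ (with $n_k^++n_k^-=\dim H_k$, since $\mathcal{L}''(0)$ is invertible on $H_k$), the block contributes $n_k^-$ to $\mu_\lambda$ when $1-\lambda/\lambda_k>0$ and $n_k^+$ when $1-\lambda/\lambda_k<0$.

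For every $k\neq k_0$ the sign of $1-\lambda/\lambda_k$ is constant on $(\lambda^\ast-\delta,\lambda^\ast+\delta)$, so the sum of contributions from $H_0$ and the blocks with $k\neq k_0$ is a $\lambda$-independent integer on this interval; at $\lambda=\lambda^\ast$ the block on $H_{k_0}$ vanishes and contributes $0$ to $\mu_{\lambda^\ast}$, whence this integer must equal $\mu_{\lambda^\ast}$. On $H_{k_0}$ the factor $1-\lambda/\lambda_{k_0}$ changes sign as $\lambda$ crosses $\lambda^\ast$, and by (b) $\mathcal{L}''(0)|_{H_{k_0}}$ is definite, so $\{n_{k_0}^+,n_{k_0}^-\}=\{0,\nu_{\lambda^\ast}\}$ with $\nu_{\lambda^\ast}=\dim H_{k_0}$. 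The $H_{k_0}$-contribution therefore jumps between $0$ and $\nu_{\lambda^\ast}$ across $\lambda^\ast$, and $\mu_\lambda$ realizes the two values $\mu_{\lambda^\ast}$ and $\mu_{\lambda^\ast}+\nu_{\lambda^\ast}$ on the two sides, exactly as required to invoke Theorem~\ref{th:Bi.2.4}.

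For the ``Moreover'' clause, if $\mathcal{L}''(0)$ is positive definite I would reuse the device from the proof of Theorem~\ref{th:Bi.3}: set $J=(\mathcal{L}''(0))^{-1/2}\in\mathscr{L}_s(H)$ and define $\mathfrak{L}(u)=\mathcal{L}(Ju)$, $\widehat{\mathfrak{L}}(u)=\widehat{\mathcal{L}}(Ju)$ on $J^{-1}(U)$. Exactly as there, the pair $(\mathfrak{L},\widehat{\mathfrak{L}})$ satisfies Hypothesis~\ref{hyp:1.1} (with $X=H$) and Hypothesis~\ref{hyp:1.2} respectively, $\mathfrak{L}''(0)=I$, $\widehat{\mathfrak{L}}''(0)=J\widehat{\mathcal{L}}''(0)J$ is compact, and the bifurcation problem (\ref{e:Bi.2.7.3}) together with the spectral problem (\ref{e:Bi.2.7.4}) transforms equivalently under $u\mapsto Ju$. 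Since $\mathfrak{L}''(0)=I$ commutes with everything and is positive on every subspace, condition (b) is automatic for $(\mathfrak{L},\widehat{\mathfrak{L}})$, and the already-established first part applies. The only real subtlety throughout is the sign bookkeeping on $H_{k_0}$; the role of condition (b) is precisely to guarantee that the jump is by the full $\nu_{\lambda^\ast}$ rather than by $|n_{k_0}^+-n_{k_0}^-|<\nu_{\lambda^\ast}$.
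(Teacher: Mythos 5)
Your proposal is correct and takes essentially the same route as the paper: both block-diagonalize $\mathcal{L}''(0)-\lambda\widehat{\mathcal{L}}''(0)$ along $H=\bigoplus_k H_k$, observe that only the $H_{k_0}$ contribution to $\mu_\lambda$ can change near $\lambda^\ast$, and use (b) to see the jump is by the full $\nu_{\lambda^\ast}$; your treatment of the ``Moreover'' clause via the $J=(\mathcal{L}''(0))^{-1/2}$ substitution is precisely the mechanism the paper invokes by citing the proof of Theorem~\ref{th:Bi.3}. Your bookkeeping through the sign of $1-\lambda/\lambda_k$ is slightly more careful than the paper's $\lambda\lessgtr\lambda_k$ phrasing, which tacitly assumes $\lambda_k>0$, but this is cosmetic and does not change the argument.
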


Clearly, the ``Moreover" part strengthens Theorem~\ref{th:Bi.3} and so
\cite[\S4.3, Theorem~4.3]{Skr} or  in \cite[Chap.1, Theorem~3.4]{Skr2}.

\begin{proof}[\it Proof of Corollary~\ref{cor:Bi.2.4.2}]
We only need to prove that the conditions of Theorem~\ref{th:Bi.2.4} are satisfied.
By the assumptions the eigenvalue  $\lambda^\ast\ne 0$ is isolated and
has finite multiplicity. Let us choose $\delta>0$ such that $(\lambda^\ast-\delta,
\lambda^\ast+ \delta)\setminus\{\lambda^\ast\}$ has no intersection with $\{\lambda_k\}^\infty_{k=0}$,
where $\lambda_0=0$. Since each $H_k$ is an invariant subspace of $\mathcal{L}''(0)-\lambda\widehat{\mathcal{L}}''(0)$ by (b), and
$H=\oplus^\infty_{k=0}H_k$ is an orthogonal decomposition, we have
$\mu_\lambda=\sum^\infty_{k=0}\mu_{\lambda,k}$,
where $\mu_{\lambda,k}$ is the dimension of
the maximal negative definite space of the quadratic functional
$H_k\ni u\mapsto f_{\lambda,k}(u):=(\mathcal{L}''(0)u-\lambda\widehat{\mathcal{L}}''(0)u,u)_H$.
Note that $H_k$ has an orthogonal decomposition $H_k^+\oplus H_k^-$, where
$H_k^+$ (resp. $H_k^-$) is the positive (resp. negative) definite subspace
of $\mathcal{L}''(0)|_{H_k}$,
and $f_{\lambda,k}(h)=(1-\lambda/\lambda_k)(\mathcal{L}''(0)h,h)_H,\;\forall h\in H_k$, we deduce
that $\mu_{\lambda,k}$ is equal to
$\dim H^+_k$ (resp. $\dim H^-_k$) if $\lambda>\lambda_k$ (resp. $\lambda<\lambda_k$).
Hence  the Morse index of $\mathcal{L}_\lambda$ at $0$,
 \begin{eqnarray}\label{e:Bi.2.13}
\mu_\lambda=\sum_{\lambda_k<\lambda}\dim H_k^+ + \sum_{\lambda_n>\lambda}\dim H_n^-.
\end{eqnarray}
(Here we do not use the finiteness of $\mu_\lambda$.) Let $\nu^+_{\lambda^\ast}=\dim H^+_{k_0}$ (resp. $\nu^-_{\lambda^\ast}=\dim H^-_{k_0}$).
Using (\ref{e:Bi.2.13}) it is easy to verify that
$$
\mu_\lambda=\left\{\begin{array}{ll}
\mu_{\lambda^\ast}+ \nu^-_{\lambda^\ast},&\quad\forall \lambda\in (\lambda^\ast-\delta, \lambda^\ast),\\
\mu_{\lambda^\ast}+ \nu^+_{\lambda^\ast}, &\quad
\forall\lambda\in (\lambda^\ast, \lambda^\ast+\delta),
\end{array}
\right.
$$
 Since $\mathcal{L}''(0)$
 is either positive  or negative  on $H^0_{\lambda^\ast}$, we get
 either (\ref{e:Bi.2.7.5}) or (\ref{e:Bi.2.19-}).
Note that $\mathcal{L}_\lambda:=\mathcal{L}-\lambda\widehat{\mathcal{L}}$
 satisfies Hypothesis~\ref{hyp:1.1} with $X=H$. It has finite Morse index $\mu_\lambda$ and nullity $\nu_\lambda$ at $0$.
Thus (\ref{e:Bi.2.7.6}) and (\ref{e:Bi.2.18}) can be obtained.

If ${\mathcal{L}}''(0)$ is positive definite, by the proof of Theorem~\ref{th:Bi.3}
we have (\ref{e:Bi.2.7.5}) and hence (\ref{e:Bi.2.7.6}).
\end{proof}

\begin{remark}\label{rm:Bi.2.4.3}
{\rm It is possible to prove a generalization of \cite[Theorem~2.16]{Lu7} 
similar to  Theorem~\ref{th:A.5-},
 which leads to a more general version of  Theorem~\ref{th:Bi.2.4}  as the following Theorem~\ref{th:Bif.2.2.0}.
These will be explored otherwise.}
\end{remark}

Now we study corresponding analogue  of the above results in the setting of \cite{Lu1,Lu2}.
Here is a more general form of the analogue of Theorem~\ref{th:Bi.2.4}.

\begin{theorem}\label{th:Bif.2.2.0}
Let $H$, $X$ and $U$ be as in Hypothesis~\ref{hyp:1.3},
and let $\{\mathcal{L}_\lambda\in C^1(U, \mathbb{R})\,|\,\lambda\in\Lambda\}$ be a continuous family of functionals
parameterized by an open interval $\Lambda\subset\mathbb{R}$ containing  $\lambda^\ast$.
 For each $\lambda\in\Lambda$, assume $\mathcal{L}'_\lambda(0)=0$, and that
  there exists a map $A_\lambda\in C^1(U^X, X)$
such that for all $x\in U\cap X$  and $u, v\in X$,
 $$
 D\mathcal{L}_\lambda(x)[u]=(A_\lambda(x), u)_H\quad\hbox{and}\quad
(DA_\lambda(x)[u], v)_H=(B_\lambda(x)u, v)_H.
$$
 Suppose also that the following conditions hold.
 \begin{enumerate}
 \item[\rm (a)]   $B_\lambda$ has a decomposition
$B_\lambda=P_\lambda+Q_\lambda$, where for each $x\in U\cap X$,
 $P_\lambda(x)\in\mathscr{L}_s(H)$ is  positive definitive and
$Q_\lambda(x)\in\mathscr{L}_s(H)$ is compact, so that
$$
(\mathcal{L}_{\lambda}, H, X, U, A_{\lambda}, B_{\lambda}=P_{\lambda}+ Q_{\lambda})
$$
satisfies Hypothesis~\ref{hyp:1.3}.

\item[\rm (b)]  For each $h\in H$, it holds that $\|P_{\lambda}(x)h-P_{\lambda^\ast}(0)h\|\to 0$
as $x\in U\cap X$ approaches to $0$ in $H$ and $\lambda\in\Lambda$ converges to $\lambda^\ast$.

 \item[\rm (c)]  For some small $\delta>0$, there exist positive constants $c_0>0$ such that
$$
(P_\lambda(x)u, u)\ge c_0\|u\|^2\quad\forall u\in H,\;\forall x\in
\bar{B}_H(0,\delta)\cap X,\quad\forall\lambda\in \Lambda.
$$
 \item[\rm (d)]  $Q_\lambda: U\cap X\to \mathscr{L}_s(H)$ is uniformly continuous at $0$  with respect to $\lambda\in \Lambda$.
  \item[\rm (e)]  If $\lambda\in \Lambda$ converges to $\lambda^\ast$ then
  $\|Q_{\lambda}(0)-Q_{\lambda^\ast}(0)\|\to 0$.
   \item[\rm (f)]  ${\rm Ker}(B_{\lambda^\ast}(0))\ne\{0\}$, ${\rm Ker}(B_{\lambda}(0))=\{0\}$ for any $\lambda\in\Lambda\setminus\{\lambda^\ast\}$,
   and the Morse indexes of $\mathcal{L}_\lambda$
at $0\in H$ 
 take, respectively, values $\mu_{\lambda^\ast}$ and $\mu_{\lambda^\ast}+\nu_{\lambda^\ast}$
 as $\lambda\in\mathbb{R}$ varies in
 two deleted half neighborhoods  of $\lambda^\ast$,
where $\mu_{\lambda}$ and $\nu_{\lambda}$ are the Morse index and the nullity of  $\mathcal{L}_{\lambda}$
at $0$, respectively.
    \end{enumerate}
 Then  one of the following alternatives occurs:
\begin{enumerate}
\item[\rm (i)] $(\lambda^\ast,0)$ is not an isolated solution  in  $\{\lambda^\ast\}\times U^X$ of the equation
\begin{equation}\label{e:Bif.2.2.1*}
A_\lambda(u)=0,\quad (\lambda, u)\in\Lambda\times U^X.
\end{equation}

\item[\rm (ii)]  For every $\lambda\in\Lambda$ near $\lambda^\ast$ there is a nontrivial solution $u_\lambda$ of (\ref{e:Bif.2.2.1*}) in $U^X$,
which  converges to $0$ in $X$ as $\lambda\to\lambda^\ast$.

\item[\rm (iii)] There is an one-sided  neighborhood $\Lambda^\ast$ of $\lambda^\ast$ such that for any $\lambda\in\Lambda^\ast\setminus\{\lambda^\ast\}$,
(\ref{e:Bif.2.2.1*}) has at least two nontrivial solutions in $U^X$,
which  converge to $0$ in $X$  as $\lambda\to\lambda^\ast$.
\end{enumerate}
Therefore $(\lambda^\ast,0)\in\Lambda\times U^X$ is a bifurcation point  for the equation
(\ref{e:Bif.2.2.1*}); in particular $(\lambda^\ast,0)\in\Lambda\times U$ is a bifurcation point  for the equation
\begin{equation}\label{e:Bif.2.2*}
D\mathcal{L}_\lambda(u)=0,
\quad (\lambda,u)\in \Lambda\times U.
\end{equation}
 \end{theorem}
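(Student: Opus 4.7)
The plan is to adapt the strategy used for Theorem~\ref{th:Bi.2.4} to the Banach--Hilbert setting of Hypothesis~\ref{hyp:1.3}, replacing the splitting tool \cite[Theorem~2.16]{Lu7} by its parameterized Banach--Hilbert counterpart (the natural extension of Theorem~\ref{th:A.4} alluded to in Remark~\ref{rm:Bi.2.4.3} and based on the splitting technology of \cite{Lu1,Lu2}). The three ingredients are: (1) a parameterized splitting lemma for an arbitrary family $\{\mathcal{L}_\lambda\}$ satisfying Hypothesis~\ref{hyp:1.3} uniformly near $\lambda^\ast$; (2) the Banach--Hilbert shifting theorem used to identify the critical groups of the reduced functional with those of $\mathcal{L}_\lambda$; and (3) the finite-dimensional Rabinowitz alternative Theorem~\ref{th:Bi.2.1}.

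First, I would verify that assumptions (a)--(e) are precisely what is required to run the parameterized splitting lemma along the family $\{\mathcal{L}_\lambda\}_{\lambda\in\Lambda}$ near $\lambda^\ast$: uniform positive definiteness of $P_\lambda$ from (c), continuity of $Q_\lambda$ from (d)--(e), pointwise convergence of $P_\lambda(x)\to P_{\lambda^\ast}(0)$ from (b), and the structural conditions for Hypothesis~\ref{hyp:1.3} from (a). Setting $H^0=\ker B_{\lambda^\ast}(0)$ and letting $P^\bot_{\lambda^\ast}$ be the orthogonal projection onto its complement in $H$, the splitting lemma provides $\epsilon>0$, a neighborhood $\Lambda'\ni\lambda^\ast$, and a continuous map
\[
\psi:\Lambda'\times (B_H(0,\epsilon)\cap H^0)\to (H^0)^\bot
\]
with $\psi(\lambda,0)=0$ and $P^\bot_{\lambda^\ast}A_\lambda(z+\psi(\lambda,z))=0$ for all $z$, such that the reduced functional
\[
\mathcal{L}^\circ_\lambda(z):=\mathcal{L}_\lambda(z+\psi(\lambda,z)),\qquad z\in B_H(0,\epsilon)\cap H^0,
\]
is of class $C^1$, and $z\mapsto z+\psi(\lambda,z)$ induces a one-to-one correspondence between critical points of $\mathcal{L}^\circ_\lambda$ near $0\in H^0$ and solutions of $A_\lambda(u)=0$ near $0\in X$.

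Next, I would compute critical groups. By (f), after shrinking $\Lambda'$ we may assume $0\in H$ is nondegenerate for $\mathcal{L}_\lambda$ whenever $\lambda\ne\lambda^\ast$, so Theorem~\ref{th:A.4} applied to the single functional $\mathcal{L}_\lambda$ gives $C_q(\mathcal{L}_\lambda,0;\mathbf{K})=\delta_{q\mu_\lambda}\mathbf{K}$ for such $\lambda$, and $C_q(\mathcal{L}_{\lambda^\ast},0;\mathbf{K})$ is isomorphic to $C_{q-\mu_{\lambda^\ast}}(\mathcal{L}^\circ_{\lambda^\ast},0;\mathbf{K})$. Combined with the Morse-index hypothesis in (f) and the shifting identity transferred through the reduction, this yields for $\lambda\in\Lambda'\setminus\{\lambda^\ast\}$
\[
C_j(\mathcal{L}^\circ_\lambda,0;\mathbf{K})=\delta_{j0}\mathbf{K}\quad\text{or}\quad C_j(\mathcal{L}^\circ_\lambda,0;\mathbf{K})=\delta_{j\nu_{\lambda^\ast}}\mathbf{K},
\]
according to which side of $\lambda^\ast$ contains $\lambda$, where $\nu_{\lambda^\ast}=\dim H^0$. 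Since $\mathcal{L}^\circ_\lambda$ lives on the finite-dimensional space $H^0$, the characterization (\ref{e:Bi.2.17}) then forces $0\in H^0$ to be a strict local minimum of $\mathcal{L}^\circ_\lambda$ on one deleted half-neighborhood of $\lambda^\ast$ and a strict local maximum on the other.

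Finally, I would verify the hypotheses of Theorem~\ref{th:Bi.2.1} for the reduced family $\{\mathcal{L}^\circ_\lambda\}$ on $H^0\cap B_H(0,\epsilon)$: joint continuity of $(\lambda,z)\mapsto\mathcal{L}^\circ_\lambda(z)$ and of its gradient follows from joint continuity of $\psi$ together with the continuous dependence of $\mathcal{L}_\lambda$ and $A_\lambda$ on $\lambda$ built into the assumptions. The conclusion of Theorem~\ref{th:Bi.2.1} then delivers one of the three alternatives for $\mathcal{L}^\circ_\lambda$, and the one-to-one correspondence of critical points transports these alternatives to statements (i)--(iii) for (\ref{e:Bif.2.2.1*}); the bifurcation statement for (\ref{e:Bif.2.2*}) follows because critical points of $\mathcal{L}_\lambda$ in a small $H$-ball coincide with zeros of $A_\lambda$ in a small $X$-ball by Hypothesis~\ref{hyp:1.3}. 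The main obstacle, as signalled in Remark~\ref{rm:Bi.2.4.3}, is establishing the parameterized splitting lemma for a \emph{general} family $\{\mathcal{L}_\lambda\}$ rather than the affine family $\mathcal{L}-\lambda\widehat{\mathcal{L}}$ treated in \cite{Lu7,Lu2}; the delicate point is to rerun the implicit-function argument behind the splitting uniformly in $\lambda$, so that $\psi(\lambda,z)$ is jointly continuous and all the normal-form identities hold for $\lambda$ in a full neighborhood of $\lambda^\ast$, using conditions (b)--(e) as the uniform replacement for the single-functional hypotheses.
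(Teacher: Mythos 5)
Your overall strategy matches the paper's: reduce via a parameterized Lyapunov--Schmidt splitting, compute the critical groups of the reduced functionals via a shifting theorem, conclude that $0$ switches between a strict local minimum and maximum across $\lambda^\ast$, and apply the finite-dimensional Rabinowitz alternative Theorem~\ref{th:Bi.2.1}. However, you misidentify the status of the key tool. You write that ``the main obstacle \ldots is establishing the parameterized splitting lemma for a general family $\{\mathcal{L}_\lambda\}$'' and cite Remark~\ref{rm:Bi.2.4.3} as signalling this. But Remark~\ref{rm:Bi.2.4.3} concerns the generalization of \cite[Theorem~2.16]{Lu7} used under Hypothesis~\ref{hyp:1.1} with $X=H$ (the setting of Theorem~\ref{th:Bi.2.4}); it does not pertain to the present Hypothesis~\ref{hyp:1.3} setting, for which the required parameterized splitting theorem for an arbitrary (not merely affine) family is precisely Theorem~\ref{th:A.5-}, stated and proved in Appendix~\ref{app:A}. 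The paper's proof of Theorem~\ref{th:Bif.2.2.0} applies Theorem~\ref{th:A.5-} directly, then uses Theorem~\ref{th:A.4} with $\lambda=0$ and Corollary~\ref{cor:A.6} (the shifting theorem) to get (\ref{e:Bi.2.16}), and finally Theorem~\ref{th:Bi.2.1}. Conditions (a)--(e) of the statement are exactly the hypotheses (i)--(v) of Theorem~\ref{th:A.5-}; there is no gap to fill.

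A second, substantive point: you set up the reduction with
\[
\psi:\Lambda'\times(B_H(0,\epsilon)\cap H^0)\to (H^0)^\bot,\qquad
P^\bot_{\lambda^\ast}A_\lambda(z+\psi(\lambda,z))=0,
\]
with the target being the orthogonal complement of $H^0$ \emph{inside $H$}. This is not what the Banach--Hilbert machinery delivers, and indeed it cannot be: the implicit function theorem is applied to the $C^1$ map $A_\lambda:U^X\to X$, so the branch $\psi(\lambda,\cdot)$ takes values in $X^\pm_{\lambda^\ast}=P^\pm_{\lambda^\ast}(X)\subset X$, a proper subspace of $(H^0)^\bot$. Working on $(H^0)^\bot$ in $H$ would require $A_\lambda$ to be differentiable on $H$, which is unavailable under Hypothesis~\ref{hyp:1.3}. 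The correct target also matters for the conclusion: since $H^0_{\lambda^\ast}\subset X$ (from condition (C) of Hypothesis~\ref{hyp:1.3}) and $\psi$ maps into $X$, the reduced critical points $z+\psi(\lambda,z)$ automatically lie in $U^X$ and converge to $0$ in $X$, which is what statements (i)--(iii) assert. Your variant would not produce this directly.
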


 \begin{proof}
Though the proof is almost repeat of that of Theorem~\ref{th:Bi.2.4}
we give it for completeness.
Let $H^+_\lambda$, $H^-_\lambda$ and $H^0_\lambda$ be the positive definite, negative definite and zero spaces of
${B}_\lambda(0)$.  Denote by $P^0_\lambda$ and $P^\pm_\lambda$ the orthogonal projections onto
$H^0_\lambda$ and $H^\pm_\lambda=H^+_\lambda\oplus H^-_\lambda$,
and by $X^\star_\lambda=X\cap H^\star_\lambda$ for $\star=+,-$, and by  $X^\pm_\lambda=P^\pm_\lambda(X)$.
By Theorem~\ref{th:A.5-}   there exists $\delta>0$ with $[\lambda^\ast-\delta,\lambda^\ast+\delta]\subset\Lambda$,
$\epsilon>0$ and a (unique) $C^1$ map
$$
\psi:[\lambda^\ast-\delta,\lambda^\ast+\delta]\times B_{H^0_{\lambda^\ast}}(0,\epsilon)\to X^\pm_{\lambda^\ast}
$$
which is $C^1$ in the second variable and
satisfies $\psi(\lambda, 0)=0\;\forall\lambda\in [\lambda^\ast-\delta,\lambda^\ast+\delta]$ and
$$
 P^\pm_{\lambda^\ast}A_{\lambda}(z+ \psi(\lambda,z))=0\quad\forall (\lambda,z)\in [\lambda^\ast-\delta,\lambda^\ast+\delta]\times B_{H^0_{\lambda^\ast}}(0,\epsilon),
 $$
an open neighborhood $W$ of $0$ in $H$ and an origin-preserving homeomorphism
\begin{eqnarray*}
&&\hspace{-8mm}[\lambda^\ast-\delta, \lambda^\ast+\delta]\times B_{H^0_{\lambda^\ast}}(0,\epsilon)\times
\left(B_{H^+_{\lambda^\ast}}(0, \epsilon) + B_{H^-_{\lambda^\ast}}(0, \epsilon)\right)\to [\lambda^\ast-\delta, \lambda^\ast+\delta]\times W,\nonumber\\
&&\hspace{20mm}({\lambda}, z, u^++u^-)\mapsto ({\lambda},\Phi_{{\lambda}}(z, u^++u^-)),
\end{eqnarray*}
such that for each $\lambda\in [\lambda^\ast-\delta, \lambda^\ast+\delta]$,
\begin{eqnarray}\label{e:NSpl.2.2}
&&\mathcal{L}_{\lambda}\circ\Phi_{\lambda}(z, u^++ u^-)=\|u^+\|^2-\|u^-\|^2+ \mathcal{
L}_{{\lambda}}(z+ \psi({\lambda}, z))\\
&& \quad\quad \forall (z, u^+ + u^-)\in  B_{H^0_{\lambda^\ast}}(0,\epsilon)\times
\left(B_{H^+_{\lambda^\ast}}(0, \epsilon) + B_{H^-_{\lambda^\ast}}(0, \epsilon)\right).\nonumber
\end{eqnarray}
(Clearly, each $\Phi_\lambda$ is  an origin-preserving homeomorphism.)
 Moreover, the functional
\begin{equation}\label{e:NSpl.2.3}
\mathcal{L}_{\lambda}^\circ: B_{H^0_{\lambda^\ast}}(0,\epsilon)\to \mathbb{R},\;
z\mapsto\mathcal{L}_{\lambda}(z+ \psi({\lambda}, z))
\end{equation}
 is of class $C^{2}$,
and for each $\lambda\in[\lambda^\ast-\delta, \lambda^\ast+\delta]$
the map $z\mapsto z+ \psi({\lambda}, z))$ induces an one-to-one correspondence
 between the critical points of  $\mathcal{L}_{\lambda}^\circ$ near $0\in H^0_{\lambda^\ast}$
and solutions of $A_\lambda(u)=0$ near $0\in U^X$.
 Hence the problem is reduced to finding
the critical points of $\mathcal{L}^\circ_\lambda$
near $0\in H^0_{\lambda^\ast}$ for $\lambda$ near $\lambda^\ast$.

By (f), we can shrink $\delta>0$ so that for each $\lambda\in [\lambda^\ast-\delta, \lambda^\ast+\delta]\setminus\{\lambda^\ast\}$,
$0\in H$  is a nondegenerate critical point  $\mathcal{L}_\lambda$ and the Morse indexes of $\mathcal{L}_\lambda$
at $0\in H$ take constant values in $[\lambda^\ast-\delta, \lambda^\ast)$ and $(\lambda^\ast, \lambda^\ast+\delta]$, respectively.
Assume that (\ref{e:Bi.2.7.5}) is satisfied.
Then Theorem~\ref{th:A.4} with $\lambda=0$ leads to
(\ref{e:Bi.2.7.6}). From these and Corollary~\ref{cor:A.6} we deduce
(\ref{e:Bi.2.16}) and therefore the expected conclusions as in the proof of Theorem~\ref{th:Bi.2.4}.
Another case can be proved similarity.
\end{proof}

 As showed below Theorem~\ref{th:Bi.2.4},
Theorem~\ref{th:Bif.2.2.0} is a strengthened version of  Theorem~\ref{th:Bif.1.1}
under the stronger assumptions on the Morse indexes of $\mathcal{L}_\lambda$ at $0\in H$.
Consequently,  for bifurcations of eigenvalues of nonlinear problems
we obtain the following two corollaries, which strengthen Corollary~\ref{cor:Bi.3.1}
and  correspond to Corollaries~\ref{cor:Bi.2.4.1},~\ref{cor:Bi.2.4.2}, respectively.

\begin{corollary}\label{cor:Bif.2.2.1}
Under Hypothesis~\ref{hyp:Bif.2.2.0+},
suppose that the eigenvalue $\lambda^\ast\in\mathbb{R}$  is isolated and
 that $B(0)$ is either semi-positive or semi-negative.
  Then the conclusions of Theorem~\ref{th:Bif.2.2.0} hold if
  (\ref{e:Bif.2.2.1*}) and (\ref{e:Bif.2.2*}) are, respectively, replaced by the following two equations
  \begin{eqnarray}\label{e:Bif.2.2.1}
&&A(u)=\lambda\widehat{A}(u),\quad (\lambda, u)\in\mathbb{R}\times U^X,\\
&&D\mathcal{L}(u)=\lambda D\widehat{\mathcal{L}}(u),\label{e:Bif.2.2}
\quad (\lambda,u)\in \mathbb{R}\times U.
\end{eqnarray}
   \end{corollary}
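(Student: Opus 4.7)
The plan is to deduce Corollary~\ref{cor:Bif.2.2.1} from Theorem~\ref{th:Bif.2.2.0} applied to the one-parameter family $\mathcal{L}_\lambda := \mathcal{L} - \lambda\widehat{\mathcal{L}}$, paralleling the reduction of Corollary~\ref{cor:Bi.2.4.1} to Theorem~\ref{th:Bi.2.4} but using the Banach--space splitting lemma in place of its Hilbert--space counterpart. Setting $A_\lambda := A - \lambda\widehat{A}$, $B_\lambda(x) := B(x) - \lambda\widehat{B}(x)$, $P_\lambda \equiv P$, and $Q_\lambda(x) := Q(x) - \lambda\widehat{B}(x)$ (still compact), one checks that hypotheses (a)--(e) of Theorem~\ref{th:Bif.2.2.0} hold for $\lambda$ in any bounded open interval $\Lambda\ni\lambda^\ast$ directly from Hypothesis~\ref{hyp:Bif.2.2.0+}: (a) is Hypothesis~\ref{hyp:1.3} for $\mathcal{L}_\lambda$; (b) reduces to (D2) for $P$ because $P_\lambda\equiv P$; (c) is (D4*) for $P$ with the same constant, independent of $\lambda$; (d) follows from (D3) for $Q$ together with the continuity of $\widehat{B}$ at $0$ built into Hypothesis~\ref{hyp:Bif.2.2.0}; and (e) is immediate because $Q_\lambda(0) - Q_{\lambda^\ast}(0) = (\lambda^\ast - \lambda)\widehat{B}(0)$.

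The real work lies in verifying (f). Since $\lambda^\ast$ is isolated in the spectrum of $B(0)v = \lambda\widehat{B}(0)v$, I would pick $\delta>0$ with $(\lambda^\ast-\delta,\lambda^\ast+\delta)\setminus\{\lambda^\ast\}$ disjoint from that spectrum, which makes $0$ a nondegenerate critical point of $\mathcal{L}_\lambda$ for each $\lambda\in[\lambda^\ast-\delta,\lambda^\ast+\delta]\setminus\{\lambda^\ast\}$ with finite Morse index $\mu_\lambda$. I would then introduce the quadratic auxiliaries $\mathfrak{L}_\lambda(u) := \tfrac12(B_\lambda(0)u,u)_H$: they are $C^\infty$, have the same Morse data at $0$ as $\mathcal{L}_\lambda$ by Theorem~\ref{th:A.1}, admit $0$ as their unique critical point in the deleted neighborhood, and satisfy (PS) on any closed ball because $B_\lambda(0) = P(0) + Q(0) - \lambda\widehat{B}(0)$ is Fredholm of index zero. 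Applying Theorem~\ref{th:stablity1} on $[\lambda^\ast-\delta,\lambda^\ast-\varepsilon]$ and on $[\lambda^\ast+\varepsilon,\lambda^\ast+\delta]$, and letting $\varepsilon\downarrow 0$, forces $\mu_\lambda$ to be constant on each of $[\lambda^\ast-\delta,\lambda^\ast)$ and $(\lambda^\ast,\lambda^\ast+\delta]$.

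To identify those constants I would use the semi--definiteness hypothesis: if $\widehat{B}(0)\ge 0$, then $\lambda\mapsto B(0)-\lambda\widehat{B}(0)$ is monotone non--increasing in the self--adjoint sense, so by \cite[Proposition~2.3.3]{Ab} together with upper/lower semicontinuity of Morse index and nullity under the convergence $B_\lambda(0)\to B_{\lambda^\ast}(0)$ one obtains
\[
\mu_{\lambda^\ast-\delta}\le\mu_{\lambda^\ast}\le\mu_{\lambda^\ast+\delta},\qquad
\mu_{\lambda^\ast-\delta}\le\mu_{\lambda^\ast}+\nu_{\lambda^\ast}\le\mu_{\lambda^\ast+\delta},
\]
together with $\mu_{\lambda^\ast}\le\liminf_{\lambda\to\lambda^\ast}\mu_\lambda$ and $\mu_{\lambda^\ast}+\nu_{\lambda^\ast}\ge\limsup_{\lambda\to\lambda^\ast}\mu_\lambda$. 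Combined with the constancy from the previous step, these inequalities pin $\mu_\lambda$ down to $\mu_{\lambda^\ast}$ on $[\lambda^\ast-\delta,\lambda^\ast)$ and to $\mu_{\lambda^\ast}+\nu_{\lambda^\ast}$ on $(\lambda^\ast,\lambda^\ast+\delta]$; the case $\widehat{B}(0)\le 0$ is symmetric and yields the reversed assignment. Condition (f) being verified, Theorem~\ref{th:Bif.2.2.0} applies directly to $\mathcal{L}_\lambda$ and its three alternatives translate into (i)--(iii), with the equations $A_\lambda(u)=0$ and $D\mathcal{L}_\lambda(u)=0$ becoming (\ref{e:Bif.2.2.1}) and (\ref{e:Bif.2.2}), respectively.

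The main obstacle is (f): one must marshal the isolation of $\lambda^\ast$, the (PS)--and--Fredholm character of the quadratic model $\mathfrak{L}_\lambda$, Theorem~\ref{th:A.1} as the bridge from nonlinear to linear Morse data, stability of critical groups for constancy, and operator--monotonicity plus semicontinuity for identification of the constants, all in a Banach--space framework where the Hilbert--space shortcut of taking orthogonal complements (used freely in Corollary~\ref{cor:Bi.2.4.1}) is not available.
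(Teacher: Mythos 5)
Your proposal is correct and follows essentially the same route as the paper: reduce to Theorem~\ref{th:Bif.2.2.0} by verifying (a)--(e) directly from the decomposition $B_\lambda=P+Q_\lambda$ with $Q_\lambda(x)=Q(x)-\lambda\widehat{B}(x)$, and verify (f) by transposing the proof of Corollary~\ref{cor:Bi.2.4.1} with $\mathcal{L}''(0),\widehat{\mathcal{L}}''(0)$ replaced by $B(0),\widehat{B}(0)$, using the auxiliary quadratics $\mathfrak{L}_\lambda$, stability of critical groups, and the semi-definiteness of $\widehat{B}(0)$ to pin down $\mu_\lambda$. One small remark: your closing worry about ``orthogonal complements not being available in the Banach framework'' is overstated, since all of (f) concerns the self-adjoint operators $B_\lambda(0)$ on $H$ where the Hilbert-space apparatus applies verbatim; the Banach space $X$ only intervenes via the splitting lemma already packaged inside Theorem~\ref{th:Bif.2.2.0}.
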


  Indeed, by the proof of Corollary~\ref{cor:Bi.2.2} it is easily seen  that
  for sufficiently small $\delta>0$   the family
  $\{\mathcal{L}_\lambda:=\mathcal{L}-\lambda \widehat{\mathcal{L}}\,|\,\lambda\in (\lambda^\ast-\delta, \lambda^\ast+\delta)\}$
  satisfies the assumptions of Theorem~\ref{th:Bif.2.2.0}.
  (Of course, as in the proof Corollary~\ref{cor:Bi.2.4.1}, we need to replace
$\mathcal{L}''(0)$ and $\widehat{\mathcal{L}}''(0)$ by $B(0)$ and $\widehat{B}(0)$, respectively.)

Similarly,  we may modify the proof of Corollary~\ref{cor:Bi.2.4.2} to get:

\begin{corollary}\label{cor:Bif.2.2.2}
Under Hypothesis~\ref{hyp:Bif.2.2.0+},
suppose that the following two conditions are satisfied:
 \begin{enumerate}
\item[\rm (a)] $B(0)$ is invertible
and $\lambda^\ast$ is an eigenvalue of $B(0)u-\lambda\widehat{B}(0)u=0$.
 \item[\rm (b)] $B(0)\widehat{B}(0)=\widehat{B}(0)B(0)$,   and $B(0)$
 is either positive  or negative on $H^0_{\lambda^\ast}={\rm Ker}(B(0)-\lambda^\ast\widehat{B}(0))$.
 \end{enumerate}
 Then the conclusions of Corollary~\ref{cor:Bif.2.2.1} are true.
 Moreover, if $B(0)$ is positive definite, the condition (b) is unnecessary.
\end{corollary}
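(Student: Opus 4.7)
My plan is to reduce Corollary~\ref{cor:Bif.2.2.2} to Theorem~\ref{th:Bif.2.2.0} applied to the family $\mathcal{L}_\lambda := \mathcal{L} - \lambda\widehat{\mathcal{L}}$ for $\lambda$ in a small neighborhood of $\lambda^\ast$. Conditions (a)--(e) of that theorem can be checked exactly as in the proof of Corollary~\ref{cor:Bif.2.2.1}, using Hypothesis~\ref{hyp:Bif.2.2.0+} together with the fact that $\widehat{B}(0)$ is compact and the perturbation $-\lambda\widehat{B}(0)$ does not destroy the $P+Q$ decomposition of $B_\lambda(0)$. The real task is therefore the Morse-index condition (f): showing that $\mu_\lambda$ equals $\mu_{\lambda^\ast}$ on one side of $\lambda^\ast$ and $\mu_{\lambda^\ast}+\nu_{\lambda^\ast}$ on the other.

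Under (a) we have $\lambda^\ast\neq 0$ and the operator $L:=[B(0)]^{-1}\widehat{B}(0)$ is compact (composition of a bounded operator with a compact one). The nonzero eigenvalues of $B(0)u=\lambda\widehat{B}(0)u$ are precisely the reciprocals of the nonzero eigenvalues of $L$, so by Riesz--Schauder they form a sequence $\{\lambda_k\}$ with $|\lambda_k|\to\infty$ and finite multiplicity; in particular $\lambda^\ast=\lambda_{k_0}$ is isolated, and one may pick $\delta>0$ so that $(\lambda^\ast-\delta,\lambda^\ast+\delta)\setminus\{\lambda^\ast\}$ contains none of the $\lambda_k$. The commutativity in (b) makes each $H_k={\rm Ker}(B(0)-\lambda_k\widehat{B}(0))$ invariant under $B(0)$, so I decompose $H_k=H_k^+\oplus H_k^-$ into the positive and negative definite parts of $B(0)|_{H_k}$ (no kernel, since $B(0)$ is invertible). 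For $h\in H_k$ one has
\[
([B(0)-\lambda\widehat{B}(0)]h,h)_H=\bigl(1-\lambda/\lambda_k\bigr)(B(0)h,h)_H,
\]
and exactly as in the derivation of (\ref{e:Bi.2.13}) this yields
\[
\mu_\lambda=\sum_{\lambda_k<\lambda}\dim H_k^+ + \sum_{\lambda_n>\lambda}\dim H_n^-.
\]
Since $B(0)$ is either positive or negative on $H^0_{\lambda^\ast}=H_{k_0}$, one of $H_{k_0}^\pm$ coincides with $H^0_{\lambda^\ast}$ and the other is trivial; comparing $\mu_\lambda$ across $\lambda^\ast$ then gives either (\ref{e:Bi.2.7.5}) or (\ref{e:Bi.2.19-}), and Theorem~\ref{th:Bif.2.2.0} applies.

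For the ``moreover'' part, when $B(0)$ is positive definite I would mimic the substitution used in Theorems~\ref{th:Bi.3} and~\ref{th:Bi.4}: set $J=[B(0)]^{-1/2}\in\mathscr{L}_s(H)$, which preserves $X$ by (F3) of Hypothesis~\ref{hyp:1.3} (since $B(0)|_X\in\mathscr{L}(X)$), and work with $\mathfrak{L}(u):=\mathcal{L}(Ju)$, $\widehat{\mathfrak{L}}(u):=\widehat{\mathcal{L}}(Ju)$ on $J^{-1}(U)$. This transforms the bifurcation problem into an equivalent one in which the role of $B(0)$ is played by $I_H$; commutativity is then automatic and $I_H$ is positive on every subspace, so condition (b) is vacuous and the first part of the corollary applies.

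The step I expect to be most delicate is verifying that the formal spectral formula for $\mu_\lambda$ really computes the Morse index of $\mathcal{L}_\lambda$ at $0$ in the Hypothesis~\ref{hyp:1.3} setting, where $\mathcal{L}$ is only $C^1$ on $H$. This uses that $B(0)-\lambda\widehat{B}(0)=P(0)+[Q(0)-\lambda\widehat{B}(0)]$ is a compact perturbation of a positive definite operator, hence Fredholm of index zero, so the notions of Morse index, nullity, and the decomposition $H=H^+\oplus H^-\oplus H^0$ used in Theorem~\ref{th:A.5-} are all available.
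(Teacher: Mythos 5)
Your proof follows the paper's approach: reduce to Theorem~\ref{th:Bif.2.2.0} applied to $\mathcal{L}_\lambda=\mathcal{L}-\lambda\widehat{\mathcal{L}}$, and verify the Morse-index jump (f) by the spectral decomposition $H=\oplus_k H_k$ and the formula $\mu_\lambda=\sum_{\lambda_k<\lambda}\dim H_k^+ + \sum_{\lambda_n>\lambda}\dim H_n^-$, exactly as in Corollary~\ref{cor:Bi.2.4.2} with $\mathcal{L}''(0),\widehat{\mathcal{L}}''(0)$ replaced by $B(0),\widehat{B}(0)$; your closing remark about the compact-perturbation structure guaranteeing finiteness of $\mu_\lambda,\nu_\lambda$ is the right justification. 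The main part is therefore correct and the same as the paper's.

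For the ``Moreover'' part, however, your route is slightly off in its justification. You assert that $J=[B(0)]^{-1/2}$ preserves $X$ ``by (F3)''; but (F3) only gives $B(0)(X)\subset X$, not that the inverse square root of $B(0)$ maps $X$ into $X$. That stronger statement is asserted by the paper (just before Theorem~\ref{th:Bi.4}) but it is not a formal consequence of (F3), and building the whole argument on a global change of variable $u\mapsto Ju$ also forces you to re-verify Hypothesis~\ref{hyp:Bif.2.2.0+} for $\mathfrak{L},\widehat{\mathfrak{L}}$ on the space $X$. None of this is necessary: the quantities $\mu_\lambda,\nu_\lambda$ entering condition (f) of Theorem~\ref{th:Bif.2.2.0} are the Hilbert-space Morse index and nullity of $B(0)-\lambda\widehat{B}(0)$, and the identity $\mu_\lambda=\sum_{\lambda_k<\lambda}\dim H_k$ (the analogue of (\ref{e:Bi.2.13sec3})) is a purely Hilbert-space spectral computation in which $J$ only appears as an auxiliary device on $H$. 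The paper simply imports that computation from the proof of Theorem~\ref{th:Bi.3} to obtain (\ref{e:Bi.2.7.5}) and then applies Theorem~\ref{th:Bif.2.2.0} directly to $\mathcal{L}_\lambda$; no conjugation of the problem on $X$ is performed, so the $J(X)\subset X$ question never arises. You should restructure the ``Moreover'' step accordingly: use $J$ only to derive the Morse-index formula on $H$, not to transform the variational problem.
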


Since $B(0)$ is invertible, the case (II) of Hypothesis~\ref{hyp:Bif.2.2.0} cannot occur.
The ``Moreover" part strengthens Theorem~\ref{th:Bi.4}.

In applications to Lagrange systems using
Theorem~\ref{th:Bif.2.2.0} and Corollaries~\ref{cor:Bif.2.2.1},~\ref{cor:Bif.2.2.2}
produce stronger results than using Theorem~\ref{th:Bi.2.4} and Corollaries~\ref{cor:Bi.2.4.1},~\ref{cor:Bi.2.4.2}
(see \cite{Lu9}). Moreover, we may give parameterized versions of splitting lemmas for the Finsler energy functional
in \cite{Lu5} as Theorem~\ref{th:A.5-} and then obtain similar results to
Theorem~\ref{th:Bif.2.2.0} and Corollaries~\ref{cor:Bif.2.2.1},~\ref{cor:Bif.2.2.2} for geodesics on Finsler manifolds.

Now we give a multi-parameter bifurcation result,
which is not only a converse of Corollary~\ref{cor:Bi.2.2*} in stronger conditions, but also
a generalizations of Corollaries~\ref{cor:Bif.2.2.1},~\ref{cor:Bif.2.2.2} in some sense.

\begin{theorem}\label{th:Bi.2.3}
Let $\mathcal{L}\in C^1(U,\mathbb{R})$ satisfy
Hypothesis~\ref{hyp:1.3}, except (C) and (D1),   and let
  $\widehat{\mathcal{L}}_j\in C^1(U,\mathbb{R})$, $j=1,\cdots,n$, satisfy
 Hypothesis~\ref{hyp:1.4}.
Suppose also that the following conditions hold:
\begin{enumerate}
\item[\rm (A)]  $\vec{\lambda}^\ast$ is an isolated  eigenvalue  of (\ref{e:Bi.2.2}), (writting $H(\vec{\lambda}^\ast)$
the solution space  of (\ref{e:Bi.2.2}) with $\vec{\lambda}=\vec{\lambda}^\ast$),
 and for each point $\vec{\lambda}$ near $0\in\mathbb{R}^n$, it holds that
\begin{equation}\label{e:Bi.2.2.1}
\{u\in H\,|\,\mathfrak{B}_{\vec{\lambda}}u\in X\}\cup\{u\in H\,|\, \mathfrak{B}_{\vec{\lambda}}u
=\mu u,\;\mu\le 0\}\subset X
\end{equation}
where $\mathfrak{B}_{\vec{\lambda}}:=B(0)-\sum^n_{j=1}\lambda_j\widehat{B}_j(0)$.
 \item[\rm (B)]  Either $H(\vec{\lambda}^\ast)$ has odd dimension, or $\mathfrak{B}_{\vec{\lambda}^\ast}\widehat{B}_j(0)=\widehat{B}_j(0)\mathfrak{B}_{\vec{\lambda}^\ast}$
 for $j=1,\cdots,\\n$, and there exists  $\vec{\lambda}\in\mathbb{R}^n\setminus\{{0}\}$ such that  the  symmetric bilinear form
\begin{equation}\label{e:Bi.2.2.3}
H(\vec{\lambda}^\ast)\times H(\vec{\lambda}^\ast)\ni (z_1,z_2)\mapsto \mathscr{Q}_{\vec{\lambda}}(z_1,z_2):=
 \sum^n_{j=1}(\lambda_j-\lambda^\ast_j)(\widehat{B}_j(0)z_1,z_2)_H
\end{equation}
has different Morse indexes and coindexes.
\end{enumerate}
(If $\vec{\lambda}^\ast=0$, we only need  that
each $\widehat{\mathcal{L}}_j\in C^1(U,\mathbb{R})$ is as in ``Moreover" part of Corollary~\ref{cor:Bi.2.2}.)
Then $(\vec{\lambda}^\ast, 0)\in\mathbb{R}^n\times U^X$ is a  bifurcation point of
(\ref{e:Bi.2.1*}). Moreover, suppose that (B) is replaced by
\begin{enumerate}
\item[\rm (B')] $\mathfrak{B}_{\vec{\lambda}^\ast}\widehat{B}_j(0)=\widehat{B}_j(0)\mathfrak{B}_{\vec{\lambda}^\ast}$
 for $j=1,\cdots,n$, and there exists  $\vec{\mu}\in\mathbb{R}^n\setminus\{{0}\}$ such that
the form $ \mathscr{Q}_{\vec{\mu}}$ on $H(\vec{\lambda}^\ast)$ is either positive definite or negative one.
\end{enumerate}
Then one of the following alternatives occurs:
\begin{enumerate}
\item[\rm (i)] $(\vec{\lambda}^\ast, 0)$ is not an isolated solution of (\ref{e:Bi.2.1*}) in
 $\{\vec{\lambda}^\ast\}\times U^X$.

\item[\rm (ii)]  For every $t$ near $0\in\mathbb{R}$ there is a nontrivial solution $u_t$ of (\ref{e:Bi.2.1*})
with $\vec{\lambda}=t(\vec{\mu}-\vec{\lambda}^\ast)+\vec{\lambda}^\ast$
    converging to $0$ in $X$ as $t\to 0$.

\item[\rm (iii)] There is an one-sided  neighborhood $\mathfrak{T}$ of $0\in\mathbb{R}$ such that
for any $t\in\mathfrak{T}\setminus\{0\}$,
(\ref{e:Bi.2.1*}) with $\vec{\lambda}=t(\vec{\mu}-\vec{\lambda}^\ast)+\vec{\lambda}^\ast$ has at least two nontrivial solutions converging to
$0$ in $X$ as $t\to 0$.
\end{enumerate}
\end{theorem}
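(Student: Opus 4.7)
The plan is to reduce the bifurcation problem to a finite-dimensional one on the kernel $H(\vec{\lambda}^\ast)$ via a parameterized splitting theorem, and then to analyze the reduced functional by Morse-theoretic arguments (for the bifurcation conclusion in the main statement) or by Canino's finite-dimensional Rabinowitz-type theorem (for the three alternatives in the ``Moreover'' part).

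First I would set $\mathcal{L}_{\vec{\lambda}} := \mathcal{L} - \sum_{j=1}^n \lambda_j \widehat{\mathcal{L}}_j$ and $\mathfrak{B}_{\vec{\lambda}}=B(0)-\sum_j\lambda_j\widehat{B}_j(0)$, and observe that, thanks to (A) and (\ref{e:Bi.2.2.1}), for each $\vec{\lambda}$ near $\vec{\lambda}^\ast$ the tuple associated with $\mathcal{L}_{\vec{\lambda}}$ satisfies Hypothesis~\ref{hyp:1.3}. I would then invoke a multi-parameter version of the parameterized splitting theorem from Appendix~\ref{app:A} (modeled on Theorem~\ref{th:A.5-}) to obtain $\delta,\epsilon>0$, a unique continuous map $\psi$ with $\psi(\vec{\lambda},0)=0$ solving the Lyapunov--Schmidt equation, and a $C^2$ reduced functional
\[
\mathcal{L}^\circ_{\vec{\lambda}}: B_{H(\vec{\lambda}^\ast)}(0,\epsilon)\to\mathbb{R},\quad z\mapsto \mathcal{L}_{\vec{\lambda}}(z+\psi(\vec{\lambda},z)),
\]
whose critical points near $0$ correspond bijectively to solutions of (\ref{e:Bi.2.1*}) near $0\in U^X$. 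Since $\mathfrak{B}_{\vec{\lambda}^\ast}|_{H(\vec{\lambda}^\ast)}=0$, a direct Taylor computation gives that the Hessian of $\mathcal{L}^\circ_{\vec{\lambda}}$ at $0$ equals $P_0\mathfrak{B}_{\vec{\lambda}}P_0|_{H(\vec{\lambda}^\ast)}$ up to an error $o(\|\vec{\lambda}-\vec{\lambda}^\ast\|)$, where $P_0$ is the orthogonal projection onto $H(\vec{\lambda}^\ast)$; under the commutation hypothesis each $\widehat{B}_j(0)$ preserves $H(\vec{\lambda}^\ast)$, so this Hessian agrees with $-\mathscr{Q}_{\vec{\lambda}}$ on $H(\vec{\lambda}^\ast)$.

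For the bifurcation part under (B), I would argue by contradiction, assuming $(\vec{\lambda}^\ast,0)$ is not a bifurcation point. Then $0$ is the unique critical point of each $\mathcal{L}^\circ_{\vec{\lambda}}$ for $\vec{\lambda}$ near $\vec{\lambda}^\ast$. Combined with the uniform (PS) condition from Corollary~\ref{cor:stablity2} and the stability theorems (Theorems~\ref{th:stablity1},~\ref{th:stablity2}), this forces $C_\ast(\mathcal{L}^\circ_{\vec{\lambda}}, 0;{\bf K})$ to be locally constant in $\vec{\lambda}$. In the second alternative of (B), restricting to the line $\vec{\lambda}(t)=\vec{\lambda}^\ast+t(\vec{\lambda}-\vec{\lambda}^\ast)$ and applying the Morse lemma in finite dimension (valid because the Hessian is $-t\mathscr{Q}_{\vec{\lambda}}$ plus higher-order terms, nondegenerate for small $t\ne 0$), the Morse index of $\mathcal{L}^\circ_{\vec{\lambda}(t)}$ at $0$ equals the coindex of $\mathscr{Q}_{\vec{\lambda}}$ for small $t>0$ and the Morse index of $\mathscr{Q}_{\vec{\lambda}}$ for small $t<0$; by hypothesis these differ, so the critical groups jump across $t=0$, a contradiction. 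In the first alternative of (B) (odd-dimensional kernel), I would select a generic direction $\vec{v}\in\mathbb{R}^n$ for which the restricted form $P_0\mathfrak{B}_{\vec{\lambda}^\ast+t\vec{v}}P_0|_{H(\vec{\lambda}^\ast)}$ is nondegenerate for small $t\ne 0$ (such $\vec{v}$ exists because $\vec{\lambda}^\ast$ is isolated); the Morse indices on the two sides of $t=0$ must then sum to $\dim H(\vec{\lambda}^\ast)$, hence have different parities when this dimension is odd, again contradicting the local constancy of the critical groups.

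For the ``Moreover'' part under (B'), I would apply the reduction to the one-parameter family $f_t := \mathcal{L}^\circ_{\vec{\lambda}(t)}$ with $\vec{\lambda}(t)=\vec{\lambda}^\ast+t(\vec{\mu}-\vec{\lambda}^\ast)$ on the finite-dimensional space $H(\vec{\lambda}^\ast)$. Under commutation, the Hessian computation shows that the quadratic part of $f_t$ at $0$ equals $-t\mathscr{Q}_{\vec{\mu}}$ up to higher-order terms; since $\mathscr{Q}_{\vec{\mu}}$ is definite on $H(\vec{\lambda}^\ast)$, one sees (via the Morse lemma or a direct estimate) that $0$ is a strict local maximum of $f_t$ on one side of $t=0$ and a strict local minimum on the other. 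Canino's Theorem~\ref{th:Bi.2.1} applied to $\{f_t\}$ then yields one of the three alternatives (i)--(iii), each of which transfers back via the homeomorphism $z\mapsto z+\psi(\vec{\lambda}(t),z)$ to the corresponding statement for (\ref{e:Bi.2.1*}). The main technical obstacle is verifying the Hessian formula for $\mathcal{L}^\circ_{\vec{\lambda}}$ and, in particular, handling the odd-dimensional case without commutation, where one cannot simultaneously diagonalize and must argue via the signature of the restricted quadratic form $P_0\mathfrak{B}_{\vec{\lambda}}P_0|_{H(\vec{\lambda}^\ast)}$.
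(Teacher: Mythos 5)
Your overall strategy matches the paper's: reduce to a finite-dimensional functional $\mathcal{L}^\circ_{\vec{\lambda}}$ on $H(\vec{\lambda}^\ast)$ via the parameterized splitting theorem (the paper uses a multi-parameter variant of Theorem~\ref{th:A.5-}), compute the reduced Hessian, invoke stability of critical groups together with the parameterized shifting theorem to get a Morse-index contradiction under (B), and apply Theorem~\ref{th:Bi.2.1} to the one-parameter family $t\mapsto\mathcal{L}^\circ_{\vec{\lambda}^\ast+t(\vec{\mu}-\vec{\lambda}^\ast)}$ for the alternatives under (B'). Your identification of the reduced Hessian's leading term and the commutative-case simplification also track the paper's (\ref{e:Bi.2.2.2}).

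The genuine gap is in the odd-dimensional case of (B). You write that one can ``select a generic direction $\vec{v}$ for which $P_0\mathfrak{B}_{\vec{\lambda}^\ast+t\vec{v}}P_0|_{H(\vec{\lambda}^\ast)}$ is nondegenerate for small $t\neq 0$ (such $\vec{v}$ exists because $\vec{\lambda}^\ast$ is isolated).'' This implication is false: isolatedness of $\vec{\lambda}^\ast$ concerns the kernel of $\mathfrak{B}_{\vec{\lambda}}$ on all of $H$, and it places no constraint on the compressed forms $P_0\widehat{B}_j(0)P_0$ restricted to $H(\vec{\lambda}^\ast)$. In particular, all of these compressions can vanish identically (if each $\widehat{B}_j(0)$ maps $H(\vec{\lambda}^\ast)$ into its orthogonal complement), in which case no direction $\vec{v}$ works and your argument collapses. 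The paper sidesteps this by never appealing to nondegeneracy of the projected form. Instead it uses that the \emph{full} reduced Hessian $d^2\mathcal{L}^\circ_{\vec{\lambda}}(0)$ is automatically nondegenerate for every $\vec{\lambda}\neq\vec{\lambda}^\ast$ close to $\vec{\lambda}^\ast$ (this is the content of the splitting theorem's nondegeneracy transfer; cf.\ Remark~\ref{rm:Spl.2.4}(i) and the arguments around \cite[Claim~2.17]{Lu7}), and then exploits the exact formula (\ref{e:Bi.2.2.2})---which contains the additional $D_z\psi(\vec{\lambda},0)$ term you drop into the error---to derive $d^2\mathcal{L}^\circ_{2\vec{\lambda}^\ast-\vec{\lambda}}(0)=-d^2\mathcal{L}^\circ_{\vec{\lambda}}(0)$. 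For two nondegenerate forms related by a sign flip, their Morse indices sum to $\dim H(\vec{\lambda}^\ast)$, which is odd, so they differ, contradicting the constancy of critical groups in (\ref{e:Bi.2.7.2}). Your ``up to an error $o(\|\vec{\lambda}-\vec{\lambda}^\ast\|)$'' treatment of the Hessian does not suffice here: on a subspace where the leading term $\mathscr{Q}_{\vec{\lambda}}$ degenerates (which is precisely the problematic situation), the Morse index is controlled by the correction term, and your argument has no handle on it. To repair your proposal you would need either the exact antisymmetry the paper invokes, or a separate argument that the compressed form is not identically degenerate.

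A minor remark: the sign you get ($-\mathscr{Q}_{\vec{\lambda}}$) for the leading term of the reduced Hessian is in fact the one consistent with (\ref{e:BiSpl.2.5}) (as stated, (\ref{e:Bi.2.2.2}) appears to have the sign reversed); this has no effect on the bifurcation conclusions, since only the change of sign across $\vec{\lambda}^\ast$ matters.
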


\begin{proof} {\bf Step 1} ({\it Prove the first claim}).
Let $H^+_{\vec{\lambda}}$, $H^-_{\vec{\lambda}}$ and $H^0_{\vec{\lambda}}$ be the positive definite, negative definite and zero spaces of
$\mathfrak{B}_{\vec{\lambda}}=B(0)-\sum^n_{j=1}\lambda_j\widehat{B}_j(0)$.
 Denote by $P^0_{\vec{\lambda}}$ and $P^\pm_{\vec{\lambda}}$ the orthogonal projections onto
  $H^0_{\vec{\lambda}}$ and $H^\pm_{\vec{\lambda}}=H^+_{\vec{\lambda}}\oplus H^-_{\vec{\lambda}}$,
and by $X^\star_{\vec{\lambda}}=X\cap H^\star_{\vec{\lambda}}$ for $\star=+,-$, and by  $X^\pm_{\vec{\lambda}}=P^\pm_{\vec{\lambda}}(X)$.
By the assumption, for each $\vec{\lambda}\in\mathbb{R}^n$ near $\vec{\lambda}^\ast$ spaces
$H^-_{\vec{\lambda}}$ and $H^0_{\vec{\lambda}}$ have finite dimensions and are contained in $X$, so
$H^-_{\vec{\lambda}}=X^-_{\vec{\lambda}}$ and $H^0_{\vec{\lambda}}=X^0_{\vec{\lambda}}$.
Because the corresponding operator $A$ with $\mathcal{L}\in C^1(U,\mathbb{R})$
is not assumed to be $C^1$,  Theorem~\ref{th:A.5-}  cannot be used.
But modifying its proof  (as in the proof of \cite[Theorem~2.1]{Lu2})  there exist  $\delta>0$ and
$\epsilon>0$, a (unique) $C^{1-0}$ map
\begin{eqnarray*}
\psi:\vec{\lambda}^\ast+[-\delta,\delta]^n\times B_{H^0}(0,\epsilon)\to X^\pm_{\vec{\lambda}^\ast}
\end{eqnarray*}
which is strictly Fr\'echet differentiable in the second variable at $0\in H^0$ and satisfies $\psi(\vec{\lambda}, 0)=0\;\forall\vec{\lambda}\in \vec{\lambda}^\ast+[-\delta,\delta]^n$, and
an open neighborhood $W$ of $0$ in $H$ and an origin-preserving homeomorphism
\begin{eqnarray*}
&&\hspace{-10pt}(\vec{\lambda}^\ast+[-\delta,\delta]^n)\times B_{H^0_{\vec{\lambda}^\ast}}(0,\epsilon)\times
\left(B_{H^+_{\vec{\lambda}^\ast}}(0, \epsilon) + B_{H^-_{\vec{\lambda}^\ast}}(0, \epsilon)\right)\to
(\vec{\lambda}^\ast+[-\delta,\delta]^n)\times W,\nonumber\\
&&\hspace{20mm}({\vec{\lambda}}, z, u^++u^-)\mapsto ({\vec{\lambda}},\Phi_{\vec{\lambda}}(z, u^++u^-)),
\end{eqnarray*}
such that for each $\vec{\lambda}\in \vec{\lambda}^\ast+[-\delta,\delta]^n$,
the functional $\mathcal{L}_{\vec{\lambda}}$ satisfies
\begin{eqnarray}\label{e:BiSpl.2.2}
&&\mathcal{L}_{\vec{\lambda}}\circ\Phi_{\vec{\lambda}}(z, u^++ u^-)=\|u^+\|^2-\|u^-\|^2+ \mathcal{
L}_{\vec{\lambda}}(z+ \psi(\vec{\lambda}, z))\\
&& \quad\quad \forall (z, u^+ + u^-)\in  B_{H^0_{\vec{\lambda}^\ast}}(0,\epsilon)\times
\left(B_{H^+_{\vec{\lambda}^\ast}}(0, \epsilon) + B_{H^-_{\vec{\lambda}^\ast}}(0, \epsilon)\right).\nonumber
\end{eqnarray}
 Moreover,  the functional
$\mathcal{L}_{\vec{\lambda}}^\circ: B_{H^0_{\vec{\lambda}^\ast}}(0,\epsilon)\to \mathbb{R}$ given by
$\mathcal{L}_{\vec{\lambda}}^\circ(z)=\mathcal{L}_{\vec{\lambda}}(z+ \psi({\vec{\lambda}}, z))$
 is of class $C^{2-0}$, has first-order  differential  at $z_0\in
B_{H^0_{\vec{\lambda}^\ast}}(0, \epsilon)$ are given by
 \begin{eqnarray}\label{e:BiSpl.2.4}
d\mathcal{L}^\circ_{\vec{\lambda}}(z_0)[z]=\bigl(A(z_0+ \psi(\vec{\lambda}, z_0))-\sum^n_{j=1}\lambda_j \widehat{A}_j(z_0+ \psi(\vec{\lambda}, z_0)), z\bigr)_H\quad\forall z\in H^0_{\vec{\lambda}^\ast},
\end{eqnarray}
and $d\mathcal{L}^\circ_{\vec{\lambda}}$ has strict Fr\'echet derivative at $0\in H^0_{\vec{\lambda}^\ast}$,
\begin{eqnarray}\label{e:BiSpl.2.5}
 && d^2\mathcal{L}^\circ_{\vec{\lambda}}(0)[z,z']\notag\\&&=
 \left(P^0_{\vec{\lambda}^\ast}\bigr[\mathfrak{B}_{\vec{\lambda}}-
 \mathfrak{B}_{\vec{\lambda}}(P^\pm_{\vec{\lambda}^\ast}\mathfrak{B}_{\vec{\lambda}}|_{X^\pm_{\vec{\lambda}^\ast}})^{-1}
 (P^\pm_{\vec{\lambda}^\ast}\mathfrak{B}_{\vec{\lambda}})\bigr]z, z'\right)_H,\;
  \forall z,z'\in H^0_{\vec{\lambda}^\ast}.
 \end{eqnarray}
 Clearly, (\ref{e:BiSpl.2.5}) implies $d^2\mathcal{L}^\circ_{\vec{\lambda}^\ast}(0)=0$.
 As the arguments above \cite[Claim~2.17]{Lu7} we have
  \begin{equation}\label{e:Bi.2.2.2}
  d^2\mathcal{L}^\circ_{\vec{\lambda}}(0)[z_1,z_2]=\sum^n_{j=1}(\lambda_j-\lambda^\ast_j)(\widehat{B}_j(0)(z_1+D_z\psi(\vec{\lambda}, 0)[z_1]),z_2)_H,\quad\forall z_1, z_2\in H^0_{\vec{\lambda}^\ast}.
  \end{equation}
 (The negative sign in \cite[(2.59)]{Lu7} should be removed.)

 As before, using the above results   the
  bifurcation problem (\ref{e:Bi.2.1*}) is reduced to the following one
 \begin{equation}\label{e:Bif.2.2.5}
 \nabla\mathcal{L}^\circ_{\vec{\lambda}}(u)=0,\;u\in B_{H^0_{\vec{\lambda}^\ast}}(0,\epsilon),\;\vec{\lambda}\in \vec{\lambda}^\ast+[-\delta,\delta]^n.
 \end{equation}
By ({\bf A}) we may shrink $\delta>0$ so that
for each $\vec{\lambda}\in \vec{\lambda}^\ast+([-\delta,\delta]^n\setminus\{0\})$,
$\mathcal{L}\in C^1(U,\mathbb{R})$ satisfy Hypothesis~\ref{hyp:1.3} and has
$0\in H$ as a nondegenerate critical point.
Using Theorem~\ref{th:A.4} with $\widehat{\mathcal{L}}=0$
(or \cite[Remark~2.2]{Lu2})  and a consequence of (\ref{e:BiSpl.2.2})  as in \cite[Theorem~2.18]{Lu7}
 we obtain that for any Abel group ${\bf K}$ and for each $\vec{\lambda}\in \vec{\lambda}^\ast+([-\delta,\delta]^n\setminus\{0\})$,
\begin{equation}\label{e:Bif.2.2.6}
C_q(\mathcal{L}^\circ_{\vec{\lambda}}, 0;{\bf K})=\delta_{(q+{\mu_{\vec{\lambda}^\ast}})\mu_{\vec{\lambda}}}{\bf K},\quad
\forall q\in\mathbb{N}\cup\{0\},
\end{equation}
where $\mu_{\vec{\lambda}}$ is the Morse index of $\mathcal{L}_{\vec{\lambda}}$  at $0\in H$.

By a contradiction suppose now that $(\vec{\lambda}^\ast, 0)\in\mathbb{R}^n\times U^X$ is not a  bifurcation point of
(\ref{e:Bi.2.1*}). Then $(\vec{\lambda}^\ast, 0)\in\mathbb{R}^n\times  H^0_{\vec{\lambda}^\ast}$ is not a  bifurcation point of
(\ref{e:Bif.2.2.5}). Hence we may shrink $\epsilon>0$ and $\delta>0$ such that
$\mathcal{L}^\circ_{\vec{\lambda}}$ has a unique critical point $0$ in $B_{H^0_{\vec{\lambda}^\ast}}(0,\epsilon)$
for each $\vec{\lambda}\in \vec{\lambda}^\ast+[-\delta,\delta]^n$. As before it follows from the stability of critical groups
(cf.Theorem~\ref{th:stablity1}) that
\begin{equation}\label{e:Bi.2.7.2}
C_\ast(\mathcal{L}^\circ_{\vec{\lambda}}, 0;{\bf K})=C_\ast(\mathcal{L}^\circ_{\vec{\lambda}^\ast}, 0;{\bf K}),\quad
\forall \vec{\lambda}\in \vec{\lambda}^\ast+[-\delta,\delta]^n.
\end{equation}
This and (\ref{e:Bif.2.2.6}) imply  that the Morse index of $\mathcal{L}^\circ_{\vec{\lambda}}$ at $0$
is constant with respect to $\vec{\lambda}\in \vec{\lambda}^\ast+([-\delta,\delta]^n\setminus\{{0}\})$.
On the other hand,  for every $\vec{\lambda}\in \vec{\lambda}^\ast+([-\delta,\delta]^n\setminus\{{0}\})$,
(\ref{e:Bi.2.2.2}) implies
$$
d^2\mathcal{L}^\circ_{2\vec{\lambda}^\ast-\vec{\lambda}}(0)=-d^2\mathcal{L}^\circ_{\vec{\lambda}}(0)
$$
 as the nondegenerate quadratic forms on $H^0_{{\lambda}^\ast}$. Note that $H^0_{\vec{\lambda}^\ast}=H(\vec{\lambda}^\ast)$.
So if $\dim H^0_{\vec{\lambda}^\ast}$ is odd, $d^2\mathcal{L}^\circ_{\vec{\lambda}}(0)$ and $d^2\mathcal{L}^\circ_{2\vec{\lambda}^\ast-\vec{\lambda}}(0)$
must have different Morse indexes. This contradicts (\ref{e:Bi.2.7.2}).

In other situation, since for each $j=1,\cdots,n$, $\mathfrak{B}_{\vec{\lambda}^\ast}\widehat{B}_j(0)=\widehat{B}_j(0)\mathfrak{B}_{\vec{\lambda}^\ast}$
implies that $P^\star_{\vec{\lambda}^\ast}\widehat{B}_j(0)=\widehat{B}_j(0)P^\star_{\vec{\lambda}^\ast}$,
 $\star=0,+,-$,   and $D_z\psi(\vec{\lambda}, 0)[z_1]\in H^\pm_{\vec{\lambda}}$ for all $z_1\in H^0_{\vec{\lambda}}$,
we deduce
\begin{eqnarray*}
(D_z\psi(\vec{\lambda}, 0)[z_1], \widehat{B}_j(0)z_2)_H&=&(D_z\psi(\vec{\lambda}, 0)[z_1], \widehat{B}_j(0)P^0_{\vec{\lambda}^\ast}z_2)_H\\
&=&(D_z\psi(\vec{\lambda}, 0)[z_1], P^0_{\vec{\lambda}^\ast}\widehat{B}_j(0)z_2)_H=0,\quad
\forall z_1, z_2\in H^0_{\vec{\lambda}^\ast}.
\end{eqnarray*}
Thus (\ref{e:Bi.2.2.2}) becomes
$$
d^2\mathcal{L}^\circ_{\vec{\lambda}}(0)[z_1,z_2]=\sum^n_{j=1}(\lambda_j-\lambda^\ast_j)(\widehat{B}_j(0)z_1,z_2)_H=
  \mathscr{Q}_{\vec{\lambda}}(z_1,z_2),\quad\forall z_1, z_2\in H^0_{\vec{\lambda}^\ast}.
$$
Then the same reasoning  leads to a contradiction. The first claim is proved.

\vspace{4pt}\noindent
\noindent{\bf Step 2} ({\it Prove the second claim}).
By (\ref{e:Bi.2.2.3}), $\mathscr{Q}_{t(\vec{\mu}-\vec{\lambda}^\ast)+\vec{\lambda}^\ast}=t\mathscr{Q}_{\vec{\mu}}$
for any $t\in\mathbb{R}$. By replacing $\vec{\mu}$ by $2\vec{\lambda}^\ast-\vec{\mu}$ (if necessary)  we may assume that
the form $d^2\mathcal{L}^\circ_{\vec{\mu}}(0)=\mathscr{Q}_{\vec{\mu}}$ is positive definite.
Then when $t>0$ with small $|t|$ the forms
$d^2\mathcal{L}^\circ_{t(\vec{\mu}-\vec{\lambda}^\ast)+\vec{\lambda}^\ast}$
are positive definite and so $0\in H^0_{\vec{\lambda}^\ast}$ is a strict local
minimizer of $\mathcal{L}^\circ_{t(\vec{\mu}-\vec{\lambda}^\ast)+\vec{\lambda}^\ast}$;
when $t<0$ with small $|t|$ the forms
$d^2\mathcal{L}^\circ_{t(\vec{\mu}-\vec{\lambda}^\ast)+\vec{\lambda}^\ast}$
are negative definite and so $0\in H^0_{\vec{\lambda}^\ast}$ is a strict local
maximizer of $\mathcal{L}^\circ_{t(\vec{\mu}-\vec{\lambda}^\ast)+\vec{\lambda}^\ast}$.
As before the desired  claims follows from  Theorem~\ref{th:Bi.2.1}.
\end{proof}

Some results in next Sections~\ref{sec:B.3},\ref{sec:BBH} can also be generalized
to such more general multi-parameter forms.

Finally, in order to understand advantages and disadvantages of our above methods
let us state results which can be obtained with the parameterized version of the classical splitting lemma
for $C^2$ functionals stated in Remark~\ref{rm:Spl.2.5} and the classical Morse-Palais lemma  for $C^2$ functionals.

\begin{theorem}\label{th:Bi.2.5}
Let $U$ be an open neighborhood of $0$ in a real Hilbert space $H$,
$\Lambda\subset\mathbb{R}$ an open interval  containing  $\lambda^\ast$,
and let $\{\mathcal{L}_\lambda\in C^2(U, \mathbb{R})\,|\,\lambda\in\Lambda\}$ be a  family of functionals
such that $\Lambda\times U\ni (\lambda,u)\mapsto\nabla\mathcal{L}_\lambda(u)\in H$ is  continuous.
Suppose that the following conditions are satisfied:
\begin{enumerate}
\item[\rm (a)] For each $\lambda\in\Lambda$,  $0$ is a critical point of $\mathcal{L}_\lambda$ with
 finite Morse index $\mu_\lambda$ and nullity $\nu_\lambda$.

\item[\rm (b)]  If  $\nu_{\lambda^\ast}\ne 0$, $\nu_{\lambda}=0$ for any $\lambda\in\Lambda\setminus\{\lambda^\ast\}$,
   and $\mu_\lambda$  
take, respectively, values $\mu_{\lambda^\ast}$ and $\mu_{\lambda^\ast}+\nu_{\lambda^\ast}$
 as $\lambda\in\mathbb{R}$ varies in
 two deleted half neighborhoods  of $\lambda^\ast$.
\end{enumerate}
  Then  one of the following alternatives occurs:
\begin{enumerate}
\item[\rm (i)] $(\lambda^\ast,0)$ is not an isolated solution  in  $\{\lambda^\ast\}\times U$ of the equation
\begin{equation}\label{e:Bi.2.7.3*}
D\mathcal{L}_\lambda(u)=0,\quad (\lambda,u)\in \Lambda\times U.
\end{equation}
\item[\rm (ii)]  For every $\lambda\in\Lambda$ near $\lambda^\ast$ there is a nontrivial solution $u_\lambda$ of (\ref{e:Bi.2.7.3*}) in $U$
converging to $0$  as $\lambda\to\lambda^\ast$.

\item[\rm (iii)] There is an one-sided  neighborhood $\Lambda^\ast$ of $\lambda^\ast$ such that
for any $\lambda\in\Lambda^\ast\setminus\{\lambda^\ast\}$, (\ref{e:Bi.2.7.3*}) has at least two nontrivial solutions in $U$
converging to $0$  as $\lambda\to\lambda^\ast$.
\end{enumerate}
In particular, $(\lambda^\ast,0)\in\Lambda\times U$ is a bifurcation point  for the equation
(\ref{e:Bi.2.7.3*}).
\end{theorem}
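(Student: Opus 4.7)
My plan is to mirror the proof of Theorem~\ref{th:Bi.2.4} (and its generalization Theorem~\ref{th:Bif.2.2.0}), but use the classical parameterized $C^2$-splitting lemma (the version recalled in Remark~\ref{rm:Spl.2.5}) in place of the non-smooth splitting lemmas from Appendix~\ref{app:A}. Concretely, since $\mathcal{L}_{\lambda^\ast}''(0)$ has finite Morse index $\mu_{\lambda^\ast}$ and finite nullity $\nu_{\lambda^\ast}$, its kernel $H^0_{\lambda^\ast}$ is finite-dimensional and the orthogonal decomposition $H=H^0_{\lambda^\ast}\oplus H^+_{\lambda^\ast}\oplus H^-_{\lambda^\ast}$ is available. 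Applying the parameterized splitting lemma to $\mathcal{L}_\lambda$ near $\lambda^\ast$ (the continuity of $(\lambda,u)\mapsto\nabla\mathcal{L}_\lambda(u)$ being precisely what is needed to parameterize) produces $\delta,\epsilon>0$, a continuous map $\psi:(\lambda^\ast-\delta,\lambda^\ast+\delta)\times B_{H^0_{\lambda^\ast}}(0,\epsilon)\to H^+_{\lambda^\ast}\oplus H^-_{\lambda^\ast}$ with $\psi(\lambda,0)=0$, and origin-preserving homeomorphisms $\Phi_\lambda$ such that $\mathcal{L}_\lambda\circ\Phi_\lambda(z,u^++u^-)=\|u^+\|^2-\|u^-\|^2+\mathcal{L}_\lambda^\circ(z)$, where the reduced functional $\mathcal{L}_\lambda^\circ(z):=\mathcal{L}_\lambda(z+\psi(\lambda,z))$ is $C^1$ on the finite-dimensional ball $B_{H^0_{\lambda^\ast}}(0,\epsilon)$, and $z\mapsto z+\psi(\lambda,z)$ sets up a bijection between critical points of $\mathcal{L}_\lambda^\circ$ near $0$ and those of $\mathcal{L}_\lambda$ near $0\in H$.

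Next I will translate hypothesis (b) into the geometry of $\mathcal{L}_\lambda^\circ$ at $0$. By the shifting theorem (applied to $\mathcal{L}_\lambda$, whose Hessian at $0$ is Fredholm for $\lambda$ near $\lambda^\ast$ as a continuous perturbation of $\mathcal{L}_{\lambda^\ast}''(0)$), for $\lambda\in(\lambda^\ast-\delta,\lambda^\ast+\delta)\setminus\{\lambda^\ast\}$ the point $0$ is a nondegenerate critical point of $\mathcal{L}_\lambda$, so
\[
C_q(\mathcal{L}_\lambda,0;\mathbf{K})=\delta_{q\mu_\lambda}\mathbf{K},\qquad C_q(\mathcal{L}_\lambda^\circ,0;\mathbf{K})=C_{q+\mu_{\lambda^\ast}}(\mathcal{L}_\lambda,0;\mathbf{K}).
\]
Thus if (up to swapping the two sides) $\mu_\lambda=\mu_{\lambda^\ast}$ for $\lambda\in[\lambda^\ast-\delta,\lambda^\ast)$ and $\mu_\lambda=\mu_{\lambda^\ast}+\nu_{\lambda^\ast}$ for $\lambda\in(\lambda^\ast,\lambda^\ast+\delta]$, then
\[
C_q(\mathcal{L}_\lambda^\circ,0;\mathbf{K})=\delta_{q0}\mathbf{K}\quad\text{on the left},\qquad C_q(\mathcal{L}_\lambda^\circ,0;\mathbf{K})=\delta_{q\nu_{\lambda^\ast}}\mathbf{K}\quad\text{on the right}.
\]
By (\ref{e:Bi.2.17}) applied in $H^0_{\lambda^\ast}$ (which has dimension $\nu_{\lambda^\ast}$), this means $0$ is a strict local minimizer of $\mathcal{L}_\lambda^\circ$ for $\lambda\in[\lambda^\ast-\delta,\lambda^\ast)$ and a strict local maximizer for $\lambda\in(\lambda^\ast,\lambda^\ast+\delta]$ (the roles being exchanged in the opposite case).

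Finally, I will apply the finite-dimensional Canino theorem (Theorem~\ref{th:Bi.2.1}) to the $C^1$ family $\{\mathcal{L}_\lambda^\circ\}$ on $B_{H^0_{\lambda^\ast}}(0,\epsilon)$: joint continuity of $(\lambda,z)\mapsto\mathcal{L}_\lambda^\circ(z)$ and of $(\lambda,z)\mapsto d\mathcal{L}_\lambda^\circ(z)$ follows from the continuous dependence of $\psi$ and of $\nabla\mathcal{L}_\lambda$ on $(\lambda,u)$; the critical-point condition at $\lambda^\ast$ is trivial; and the strict local minimum/maximum alternation is what we just established. Theorem~\ref{th:Bi.2.1} then yields one of the alternatives (i)--(iii) for $\mathcal{L}_\lambda^\circ$, and transporting via $z\mapsto z+\psi(\lambda,z)$ gives the corresponding alternative for (\ref{e:Bi.2.7.3*}). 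The only delicate step I foresee is verifying that the classical parameterized $C^2$-splitting lemma from Remark~\ref{rm:Spl.2.5} applies under the stated hypotheses, in particular that $\mathcal{L}_{\lambda^\ast}''(0)$ is Fredholm; this is built into Remark~\ref{rm:Spl.2.5} but must be invoked carefully, since our hypothesis only supplies finite Morse index and nullity together with joint continuity of the gradient.
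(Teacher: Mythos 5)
Your proposal is correct and follows exactly the route the paper itself indicates: the paper's proof of Theorem~\ref{th:Bi.2.5} simply says to repeat the proof of Theorem~\ref{th:Bif.2.2.0} with $X$ replaced by $H$, $A_\lambda$ by $\nabla\mathcal{L}_\lambda$, $B_\lambda$ by $\mathcal{L}''_\lambda$, Theorem~\ref{th:A.5-} by the parameterized $C^2$-splitting lemma of Remark~\ref{rm:Spl.2.5}, Theorem~\ref{th:A.4} by the classical Morse--Palais lemma, and Corollary~\ref{cor:A.6} by its $C^2$ analogue — which is precisely your reduction to $\mathcal{L}_\lambda^\circ$ followed by the shifting argument, (\ref{e:Bi.2.17}), and Theorem~\ref{th:Bi.2.1}. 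The Fredholm concern you flag at the end is legitimate and worth keeping in mind: hypothesis (a) alone (finite Morse index and nullity) does not force $\mathcal{L}''_{\lambda^\ast}(0)$ to be Fredholm, and the paper's use of Remark~\ref{rm:Spl.2.5} carries that assumption implicitly, so in applications one must verify Fredholmness separately (as the subsequent Corollaries~\ref{cor:Bi.2.6} and \ref{cor:Bi.2.7} in fact do).
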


In fact, we only need to make suitable replacements in the  proof of Theorem~\ref{th:Bif.2.2.0}.
For example, we replace symbols ``$X$" by ``$H$", ``$A_\lambda$" by ``$\nabla \mathcal{L}_\lambda$", and
 ``$B_\lambda$" by ``$\mathcal{L}^{\prime\prime}_{\lambda}$", and phrases
 ``By Theorem~\ref{th:A.5-}" by ``the parameterized version of the classical splitting lemma  for $C^2$ functionals
stated in Remark~\ref{rm:Spl.2.5}",  and ``Theorem~\ref{th:A.4} with $\lambda=0$" by
   ``the classical Morse-Palais lemma  for $C^2$ functionals",
 ``Corollary~\ref{cor:A.6}" by ``a corresponding result with Corollary~\ref{cor:A.6}".

\begin{theorem}\label{th:Bi.2.6}
The conclusions of Theorem~\ref{th:Bi.2.5} hold true if (a) and (b) are replaced by the following conditions:
\begin{enumerate}
\item[\rm (a')] For each $\lambda\in\Lambda$,  $\mathcal{L}'_\lambda(0)=0$, $0<\nu_{\lambda^\ast}:=\dim{\rm Ker}(\mathcal{L}''_{\lambda^\ast}(0))<\infty$.
\item[\rm (b')] For some small $\delta>0$, $\mathcal{L}_\lambda$ has an isolated local minimum (maximum) at zero for every
$\lambda\in (\lambda^\ast,\lambda^\ast+\delta]$ and an isolated local maximum (minimum) at
zero for every $\lambda\in [\lambda^\ast-\delta, \lambda^\ast)$.
\end{enumerate}
 \end{theorem}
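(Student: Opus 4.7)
The plan is to parallel the proof of Theorem~\ref{th:Bi.2.5} by performing the same parameterized finite-dimensional reduction at $\lambda^\ast$, but to verify the hypotheses of Canino's Theorem~\ref{th:Bi.2.1} directly from the strict local minimum/maximum alternation (b'), rather than by means of a Morse-index/critical-group computation.

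First I would apply the parameterized version of the classical splitting lemma for $C^2$ functionals (Remark~\ref{rm:Spl.2.5}) to $\{\mathcal{L}_\lambda\}$ at $(\lambda^\ast,0)$. Condition (a') furnishes the finite-dimensional kernel $H^0_{\lambda^\ast}:=\ker(\mathcal{L}''_{\lambda^\ast}(0))$, so that for suitable $\delta,\epsilon>0$ one obtains a (unique) $C^1$ map $\psi:[\lambda^\ast-\delta,\lambda^\ast+\delta]\times B_{H^0_{\lambda^\ast}}(0,\epsilon)\to(H^0_{\lambda^\ast})^\perp$ with $\psi(\lambda,0)=0$, a continuous family $\mathcal{L}^\circ_\lambda(z):=\mathcal{L}_\lambda(z+\psi(\lambda,z))$ of $C^2$ functions on $B_{H^0_{\lambda^\ast}}(0,\epsilon)$, and origin-preserving homeomorphisms $\Phi_\lambda$ satisfying the normal form
\[
\mathcal{L}_\lambda\circ\Phi_\lambda(z,u^++u^-)=\|u^+\|^2-\|u^-\|^2+\mathcal{L}^\circ_\lambda(z),
\]
such that $z\mapsto z+\psi(\lambda,z)$ induces a one-to-one correspondence between critical points of $\mathcal{L}^\circ_\lambda$ near $0\in H^0_{\lambda^\ast}$ and those of $\mathcal{L}_\lambda$ near $0\in H$. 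The joint continuity of $(\lambda,z)\mapsto\mathcal{L}^\circ_\lambda(z)$ and $(\lambda,z)\mapsto(\mathcal{L}^\circ_\lambda)'(z)$ is inherited from this construction together with the joint continuity of $(\lambda,u)\mapsto\nabla\mathcal{L}_\lambda(u)$ assumed in Theorem~\ref{th:Bi.2.5}.

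Next I would translate (b') across this reduction. Using the normal form and the fact that $\Phi_\lambda$ fixes the origin, an isolated strict local minimum of $\mathcal{L}_\lambda$ at $0\in H$ forces the $u^-$ quadratic to be absent (so $H^-_{\lambda^\ast}=\{0\}$, i.e.\ $\mu_{\lambda^\ast}=0$) and gives $\mathcal{L}^\circ_\lambda(z)=\mathcal{L}_\lambda(\Phi_\lambda(z,0))>\mathcal{L}^\circ_\lambda(0)$ for small $z\neq 0$, so that $0\in H^0_{\lambda^\ast}$ is an isolated strict local minimum of $\mathcal{L}^\circ_\lambda$; the argument for strict local maxima is dual. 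Consequently (b') transfers to the statement that $\mathcal{L}^\circ_\lambda$ has isolated strict local minima at $0\in H^0_{\lambda^\ast}$ on one side of $\lambda^\ast$ and isolated strict local maxima on the other side. Together with (a') and the continuity established in the reduction step, the finite-dimensional family $\{\mathcal{L}^\circ_\lambda\}$ on $B_{H^0_{\lambda^\ast}}(0,\epsilon)$ then satisfies hypotheses a)--c) of Theorem~\ref{th:Bi.2.1}, which yields one of its three alternatives for $(\mathcal{L}^\circ_\lambda)'(z)=0$ near $(\lambda^\ast,0)$; lifting back via the bijection $z\mapsto z+\psi(\lambda,z)$ produces the three alternatives (i), (ii), (iii) of Theorem~\ref{th:Bi.2.5} for $D\mathcal{L}_\lambda(u)=0$.

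The hard part will be ensuring the splitting reduction is legitimately applicable and that (b') transfers faithfully in the infinite-dimensional setting: the quadratic terms $\|u^+\|^2$ and $-\|u^-\|^2$ in the normal form mean that an isolated strict local maximum of $\mathcal{L}_\lambda$ at $0\in H$ can occur only when $H^+_{\lambda^\ast}=\{0\}$, and dually for minima, so (b') implicitly pins down the Morse-type spaces at $\lambda^\ast$. Verifying that this is consistent with the ambient $C^2$ + joint-continuity hypotheses together with (a') and (b')---and in particular that the parameterized splitting lemma in Remark~\ref{rm:Spl.2.5} really does apply at $\lambda^\ast$ under these reduced assumptions---is the delicate point; once it is checked, the strictness of the extrema transfers cleanly because $\Phi_\lambda$ is origin-preserving, and the rest is a routine invocation of Canino's theorem.
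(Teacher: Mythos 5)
Your proposal follows the paper's strategy exactly: apply the parameterized $C^2$ splitting (Remark~\ref{rm:Spl.2.5}) at $\lambda^\ast$ to produce the reduced family $\mathcal{L}^\circ_\lambda$ on $H^0_{\lambda^\ast}$, transfer (b') to $\mathcal{L}^\circ_\lambda$, and then invoke Canino's Theorem~\ref{th:Bi.2.1}. Where you go beyond the paper is in the transfer step, and the extra work is unnecessary and slightly misleading. The paper simply observes that $\mathcal{L}^\circ_\lambda(z)=\mathcal{L}_\lambda(z+\psi(\lambda,z))$ is the restriction of $\mathcal{L}_\lambda$ to a submanifold through $0$: a strict isolated local extremum of $\mathcal{L}_\lambda$ at $0$ therefore restricts to one of $\mathcal{L}^\circ_\lambda$ at $0$, with strictness and isolatedness inherited via the one-to-one correspondence of critical points. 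No appeal to the quadratic normal form $\|u^+\|^2-\|u^-\|^2+\mathcal{L}^\circ_\lambda(z)$ is required. Your normal-form deduction that a strict local minimum of $\mathcal{L}_\lambda$ at $0$ forces $H^-_{\lambda^\ast}=\{0\}$ (and dually that a strict local maximum forces $H^+_{\lambda^\ast}=\{0\}$) is formally correct, but since (b') posits both cases on the two sides of $\lambda^\ast$ it would force $H=H^0_{\lambda^\ast}$, a degeneracy of the hypotheses which you should recognize as such rather than present as a consistency check; the restriction argument sidesteps this entirely, and you should use it instead. Your separate worry that Remark~\ref{rm:Spl.2.5} requires $\mathcal{L}''_{\lambda^\ast}(0)$ to be Fredholm while (a') only asserts $0<\nu_{\lambda^\ast}<\infty$ is a fair observation, but the paper leaves the Fredholm property implicit in the ambient $C^2$ setup of Theorem~\ref{th:Bi.2.5} and does not address it either, so this is not a gap specific to your proposal.
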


\begin{proof}
As in the proof of Theorem~\ref{th:Bif.2.2.0},  the original bifurcation problem  is reduced to that of
 \begin{eqnarray*}
 \nabla\mathcal{L}^\circ_{{\lambda}}(u)=0,\;u\in B_{H^0_{{\lambda}^\ast}}(0,\epsilon),\;
 \lambda\in [\lambda^\ast-\delta,\lambda^\ast+\delta]\setminus\{\lambda^\ast\}.
 \end{eqnarray*}
Here each $\mathcal{L}^\circ_\lambda: B_H(0, \epsilon)\cap H^0_{\lambda^\ast}\to\mathbb{R}$ is
   a $C^2$ functional given by (\ref{e:NSpl.2.3}), and
 has differential at $z\in B_H(0, \epsilon)\cap H^0_{\lambda^\ast}$ given by
 $D\mathcal{L}_{\lambda}^\circ(z)[\zeta]=(\nabla\mathcal{L}_{\lambda}(z+ \psi({\lambda}, z)),\zeta)_H$
 for $\zeta\in H^0_{\lambda^\ast}$. Clearly, the assumption (b') implies that $\mathcal{L}_\lambda^\circ$
 has an isolated local minimum (maximum) at zero for every
$\lambda\in (\lambda^\ast,\lambda^\ast+\delta]$ and an isolated local maximum (minimum) at
zero for every $\lambda\in [\lambda^\ast-\delta, \lambda^\ast)$.
Then Theorem~\ref{th:Bi.2.1} leads to the desired  claims.
\end{proof}

Comparing Theorem~\ref{th:Bi.2.6} with \cite[Theorem~4.2]{CorH} we have no  the (PS) condition,
but require higher smoothness of functionals.

 Corresponding to Corollaries~\ref{cor:Bi.2.4.1},~\ref{cor:Bi.2.4.2} (or
Corollaries~\ref{cor:Bif.2.2.1},~\ref{cor:Bif.2.2.2}) we have

\begin{corollary}\label{cor:Bi.2.6}
Let $U$ be an open neighborhood of $0$ in a real Hilbert space $H$,
and let $\mathcal{L}, \widehat{\mathcal{L}}\in C^2(U,\mathbb{R})$ satisfy
$\mathcal{L}'(0)=0$ and  $\widehat{\mathcal{L}}'(0)=0$. Suppose that
$\lambda^\ast\in\mathbb{R}$ is an  eigenvalue of
$\mathcal{L}''(0)u=\lambda\widehat{\mathcal{L}}''(0)u$ in $H$  of finite multiplicity.
 Then the conclusions of Theorem~\ref{th:Bi.2.5} hold true with $\mathcal{L}_\lambda=\mathcal{L}-\lambda\widehat{\mathcal{L}}$
if  one of the following conditions is satisfied:
\begin{enumerate}
\item[\rm (a)] The eigenvalue $\lambda^\ast$ is isolated,  $\mathcal{L}_\lambda$  has finite Morse index  at $0$ for each $\lambda\in\mathbb{R}$ near $\lambda^\ast$, and  $\widehat{\mathcal{L}}''(0)$ is either semi-positive or semi-negative.
\item[\rm (b)] For some small $\delta>0$, $\mathcal{L}_\lambda$ has an isolated local minimum (maximum) at zero for every
$\lambda\in (\lambda^\ast,\lambda^\ast+\delta]$ and an isolated local maximum (minimum) at
zero for every $\lambda\in [\lambda^\ast-\delta, \lambda^\ast)$.
\end{enumerate}
\end{corollary}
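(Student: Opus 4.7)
The plan is to derive both cases from previously established results: case (b) is a direct application of Theorem~\ref{th:Bi.2.6}, while case (a) requires verifying the hypotheses of Theorem~\ref{th:Bi.2.5} by a spectral argument analogous to that used in Corollary~\ref{cor:Bi.2.4.1}.

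For case (b), observe that $\mathcal{L}_\lambda:=\mathcal{L}-\lambda\widehat{\mathcal{L}}\in C^2(U,\mathbb{R})$, $\mathcal{L}_\lambda'(0)=0$, and the map $(\lambda,u)\mapsto\nabla\mathcal{L}_\lambda(u)=\nabla\mathcal{L}(u)-\lambda\nabla\widehat{\mathcal{L}}(u)$ is jointly continuous. Moreover, $0<\nu_{\lambda^\ast}=\dim{\rm Ker}(\mathcal{L}''(0)-\lambda^\ast\widehat{\mathcal{L}}''(0))<\infty$ by the eigenvalue hypothesis. Hence conditions (a$'$) and (b$'$) of Theorem~\ref{th:Bi.2.6} both hold and the conclusion follows.

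For case (a), the goal is to verify conditions (a) and (b) of Theorem~\ref{th:Bi.2.5}. Condition (a) there is immediate: $\mu_\lambda<\infty$ by assumption, and $\nu_\lambda<\infty$ for $\lambda$ near $\lambda^\ast$ (being $\nu_{\lambda^\ast}$ at $\lambda^\ast$ and, by isolation, zero elsewhere). After shrinking to an interval $[\lambda^\ast-\delta,\lambda^\ast+\delta]$, one may assume that $0$ is a nondegenerate critical point of $\mathcal{L}_\lambda$ for every $\lambda\neq\lambda^\ast$ in this interval. To verify (b), suppose without loss of generality that $\widehat{\mathcal{L}}''(0)$ is semi-positive; the semi-negative case is symmetric. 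The operator inequality $\mathcal{L}_{\lambda_1}''(0)\geq\mathcal{L}_{\lambda_2}''(0)$ for $\lambda_1\leq\lambda_2$, combined with the variational characterization of the Morse index (cf.\ \cite[Proposition~2.3.3]{Ab}), shows that $\lambda\mapsto\mu_\lambda$ and $\lambda\mapsto\mu_\lambda+\nu_\lambda$ are both non-decreasing. On each connected component of $[\lambda^\ast-\delta,\lambda^\ast+\delta]\setminus\{\lambda^\ast\}$, since $\mathcal{L}_\lambda''(0)$ is invertible with finite Morse index, the Riesz spectral projection onto its negative spectral subspace depends continuously in norm on $\lambda$, so its rank $\mu_\lambda$ is locally constant. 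Denote these constants by $\mu_-$ (on the left) and $\mu_+$ (on the right). Lower semi-continuity of the Morse index and upper semi-continuity of $\mu+\nu$ for the norm-continuous family $\lambda\mapsto\mathcal{L}_\lambda''(0)$ yield $\mu_{\lambda^\ast}\leq\mu_-$ and $\mu_+\leq\mu_{\lambda^\ast}+\nu_{\lambda^\ast}$. Combined with the monotonicity chain $\mu_-\leq\mu_{\lambda^\ast}\leq\mu_{\lambda^\ast}+\nu_{\lambda^\ast}\leq\mu_+$, these force $\mu_-=\mu_{\lambda^\ast}$ and $\mu_+=\mu_{\lambda^\ast}+\nu_{\lambda^\ast}$, which is precisely one of the two alternatives of (b) in Theorem~\ref{th:Bi.2.5}. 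Invoking Theorem~\ref{th:Bi.2.5} completes the proof.

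The main obstacle will be the constancy of $\mu_\lambda$ on each half-interval: unlike in Corollary~\ref{cor:Bi.2.4.1}, the present setting lacks a positive-definite-plus-compact decomposition making an auxiliary quadratic functional satisfy the Palais-Smale condition, so the stability-of-critical-groups theorem cannot be invoked verbatim. The spectral-projection argument above is designed to circumvent this, relying only on the norm continuity of $\lambda\mapsto\mathcal{L}_\lambda''(0)$ and on the finiteness of the Morse index assumed in (a).
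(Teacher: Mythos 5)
Your proposal is correct and case~(b) matches the paper exactly (a direct appeal to Theorem~\ref{th:Bi.2.6}). For case~(a), however, you take a genuinely different route to the key step --- establishing that $\mu_\lambda$ is constant on each half-interval --- and it is worth spelling out how the two approaches differ. The paper claims that, since $\lambda^\ast$ is an isolated eigenvalue of finite multiplicity, $\mathcal{L}''(0)-\lambda\widehat{\mathcal{L}}''(0)$ is a bounded self-adjoint Fredholm operator for $\lambda$ near $\lambda^\ast$, notes that this already makes the auxiliary quadratic $\mathfrak{L}_\lambda(u)=([\mathcal{L}''(0)-\lambda\widehat{\mathcal{L}}''(0)]u,u)_H$ satisfy the (PS) condition on closed balls (self-adjoint Fredholm suffices; one does not need the $P(0)+Q(0)$ decomposition of Hypothesis~\ref{hyp:1.1}), and then invokes the stability-of-critical-groups Theorem~\ref{th:stablity1} exactly as in Corollary~\ref{cor:Bi.2.4.1} to obtain~(\ref{e:Bi.2.19.1}). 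So your stated ``main obstacle'' --- that the stability theorem cannot be invoked --- is not actually present: Fredholmness replaces the positive-definite-plus-compact structure and the argument goes through verbatim. What you offer instead is the more elementary observation that for $\lambda\neq\lambda^\ast$ the negative spectral subspace is captured by a Riesz projection depending continuously in norm on $\lambda$, so its rank is locally constant; this bypasses Morse theory entirely for this step, at the cost of being phrased slightly loosely (``$\mathcal{L}_\lambda''(0)$ is invertible'' needs the same Fredholm input as the paper's version, since $\nu_\lambda=0$ and $\mu_\lambda<\infty$ alone do not preclude $0$ lying in the essential spectrum). The remaining semicontinuity and monotonicity argument using \cite[Proposition~2.3.3]{Ab} is identical to the paper's. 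In short: both proofs are valid, both hinge on the same (implicit) Fredholmness, and your spectral-perturbation shortcut is a cleaner, self-contained substitute for the paper's critical-group-stability step.
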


When  $\widehat{\mathcal{L}}(u)=\frac{1}{2}(u,u)_H$, the case (a) shows that
 the  Rabinowitz bifurcation theorem stated at the beginning of this section
 can be obtained under the additional condition that $\mathcal{L}_\lambda$  has finite Morse index  at $0$ for
each $\lambda\in\mathbb{R}$ near $\lambda^\ast$. The latter  condition cannot imply that
$\mathcal{L}$  has finite Morse index  at $0$ if $\lambda^\ast\ne 0$.

\begin{proof}[\it Proof of Corollary~\ref{cor:Bi.2.6}]
The case that (b) holds follows from Theorem~\ref{th:Bi.2.6} directly.

When the case (a) holds, since $\lambda^\ast\in\mathbb{R}$ is an isolated eigenvalue of
$\mathcal{L}''(0)u=\lambda\widehat{\mathcal{L}}''(0)u$ in $H$  of finite multiplicity,
for each $\lambda\in\mathbb{R}$ near $\lambda^\ast$,
 $\mathcal{L}''(0)-\lambda\widehat{\mathcal{L}}''(0)$ is a bounded linear Fredholm operator.
(Hence $\mathcal{L}_\lambda$ has  finite  nullity $\nu_\lambda$  at $0$.)
Repeating the proof of Corollary~\ref{cor:Bi.2.4.1}
we can get (\ref{e:Bi.2.19.1}) and hence the condition (b) of Theorem~\ref{th:Bi.2.5} is satisfied.
\end{proof}

\begin{corollary}\label{cor:Bi.2.7}
Let $U$ be an open neighborhood of $0$ in a real Hilbert space $H$,
and let $\mathcal{L}, \widehat{\mathcal{L}}\in C^2(U,\mathbb{R})$ satisfy
$\mathcal{L}'(0)=0$ and  $\widehat{\mathcal{L}}'(0)=0$.
 Suppose that the following two conditions are satisfied:
 \begin{enumerate}
\item[\rm (a)] $\mathcal{L}''(0)$ is invertible, $\widehat{\mathcal{L}}''(0)$ is compact
and $\lambda^\ast$ is an eigenvalue of $\mathcal{L}''(0)u=\lambda\widehat{\mathcal{L}}''(0)u$ in $H$.
 \item[\rm (b)] For each $\lambda\in\mathbb{R}$ near $\lambda^\ast$,  $\mathcal{L}_\lambda=\mathcal{L}-\lambda\widehat{\mathcal{L}}$ has finite Morse index at $0\in H$, $\mathcal{L}''(0)\widehat{\mathcal{L}}''(0)=\widehat{\mathcal{L}}''(0)\mathcal{L}''(0)$,
    and  $\mathcal{L}''(0)$  is either positive  or negative on $H^0_{\lambda^\ast}:={\rm Ker}(\mathcal{L}''(0)-\lambda^\ast\widehat{\mathcal{L}}''(0))$.
 \end{enumerate}
 Then the conclusions of Theorem~\ref{th:Bi.2.5} hold true.
 Moreover, if ${\mathcal{L}}''(0)$ is positive definite, the condition (b) is unnecessary.
\end{corollary}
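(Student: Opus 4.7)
The plan is to reduce Corollary~\ref{cor:Bi.2.7} to Theorem~\ref{th:Bi.2.5} applied to the family $\mathcal{L}_\lambda=\mathcal{L}-\lambda\widehat{\mathcal{L}}$, by verifying its hypotheses (a) and (b). Since $\mathcal{L},\widehat{\mathcal{L}}\in C^2(U,\mathbb{R})$, the map $(\lambda,u)\mapsto\nabla\mathcal{L}_\lambda(u)=\nabla\mathcal{L}(u)-\lambda\nabla\widehat{\mathcal{L}}(u)$ is continuous on $\mathbb{R}\times U$. The finiteness of $\mu_\lambda$ for $\lambda$ near $\lambda^\ast$ is given by (b), while the nullity $\nu_\lambda$ is finite because $\mathcal{L}''(0)-\lambda\widehat{\mathcal{L}}''(0)$ is a Fredholm operator (sum of an invertible operator and a compact one). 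Hence hypothesis (a) of Theorem~\ref{th:Bi.2.5} holds. The real work lies in verifying hypothesis (b), namely the jump of Morse indexes across $\lambda^\ast$.

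The key step is to proceed exactly as in the proof of Corollary~\ref{cor:Bi.2.4.2}. Form the compact operator $L:=[\mathcal{L}''(0)]^{-1}\widehat{\mathcal{L}}''(0)$; the commutativity assumption in (b) makes $L$ self-adjoint in $H$. By Riesz--Schauder theory, the spectrum of $L$ consists of $0$ together with a sequence of real eigenvalues $\{1/\lambda_k\}$ of finite multiplicity accumulating only at $0$. Thus $\lambda^\ast=\lambda_{k_0}$ for some $k_0$ and is isolated of finite multiplicity. Moreover $H=\bigoplus_{k\ge 0}H_k$ with $H_0=\mathrm{Ker}(\widehat{\mathcal{L}}''(0))$ and $H_k=\mathrm{Ker}(\mathcal{L}''(0)-\lambda_k\widehat{\mathcal{L}}''(0))$ for $k\ge 1$, and the commutativity makes each $H_k$ invariant under both $\mathcal{L}''(0)$ and $\widehat{\mathcal{L}}''(0)$.

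Next, choose $\delta>0$ so small that $(\lambda^\ast-\delta,\lambda^\ast+\delta)\setminus\{\lambda^\ast\}$ contains no $\lambda_k$. On each $H_k$ with $k\ne 0$, $\mathcal{L}''(0)-\lambda\widehat{\mathcal{L}}''(0)$ restricts to $(1-\lambda/\lambda_k)\mathcal{L}''(0)|_{H_k}$, whose negative definite subspace has dimension $\dim H_k^+$ (the positive definite subspace of $\mathcal{L}''(0)|_{H_k}$) when $\lambda>\lambda_k$ and $\dim H_k^-$ when $\lambda<\lambda_k$; on $H_0$ it restricts to $\mathcal{L}''(0)|_{H_0}$ independently of $\lambda$. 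Summing block by block as in formula (\ref{e:Bi.2.13}) yields
\begin{equation*}
\mu_\lambda=\mu_{\lambda^\ast}+\begin{cases}\dim H_{k_0}^-,&\lambda\in(\lambda^\ast-\delta,\lambda^\ast),\\ \dim H_{k_0}^+,&\lambda\in(\lambda^\ast,\lambda^\ast+\delta),\end{cases}
\end{equation*}
(all Morse indexes being finite thanks to (b)). Because $\mathcal{L}''(0)$ is either positive or negative on $H_{k_0}=H^0_{\lambda^\ast}$, one of $\dim H_{k_0}^\pm$ equals $\nu_{\lambda^\ast}=\dim H_{k_0}$ and the other equals $0$; this gives precisely condition (b) of Theorem~\ref{th:Bi.2.5}, and also shows $\nu_\lambda=0$ for $\lambda\in(\lambda^\ast-\delta,\lambda^\ast+\delta)\setminus\{\lambda^\ast\}$. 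An application of Theorem~\ref{th:Bi.2.5} then yields the trichotomy.

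For the ``Moreover'' part, assume $\mathcal{L}''(0)$ is positive definite. Then we may mimic the change of variable in the proof of Theorem~\ref{th:Bi.3}: with $J=[\mathcal{L}''(0)]^{-1/2}\in\mathscr{L}_s(H)$, set $\mathfrak{L}(u)=\mathcal{L}(Ju)$ and $\widehat{\mathfrak{L}}(u)=\widehat{\mathcal{L}}(Ju)$. Then $\mathfrak{L}''(0)=I$ and $\widehat{\mathfrak{L}}''(0)=J\widehat{\mathcal{L}}''(0)J$ is compact self-adjoint, the two operators automatically commute, and the original eigenvalue problem is equivalent to $u=\lambda\widehat{\mathfrak{L}}''(0)u$ so that $\lambda^\ast\ne 0$ is automatically isolated of finite multiplicity; the positivity of $\mathcal{L}''(0)$ also makes $\mu_\lambda$ finite and ensures positivity on $H^0_{\lambda^\ast}$, so that (b) becomes superfluous. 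The expected main obstacle is the bookkeeping of invariant decompositions under the (possibly indefinite) operator $\mathcal{L}''(0)$ in the first part, where one must carefully exploit the commutativity to guarantee the block splitting; this is precisely the step where assumption (b) is indispensable, and once passed, the conclusion follows immediately from Theorem~\ref{th:Bi.2.5}.
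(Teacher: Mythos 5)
Your proof is correct and follows essentially the same route the paper intends: the paper's one-line proof says to repeat the proof of an earlier corollary (the reference to Corollary~\ref{cor:Bi.2.4.1} is evidently a typo for Corollary~\ref{cor:Bi.2.4.2}, whose hypotheses (a)–(b) match those here), which is exactly the block-decomposition computation of $\mu_\lambda$ via the commuting operator $L=[\mathcal{L}''(0)]^{-1}\widehat{\mathcal{L}}''(0)$ that you carry out. The one cosmetic caveat, inherited from the paper itself, is that the statement ``$\mu_{\lambda,k}=\dim H_k^+$ for $\lambda>\lambda_k$ and $=\dim H_k^-$ for $\lambda<\lambda_k$'' is literally true only when $\lambda_k>0$ (for $\lambda_k<0$ the roles of $H_k^+$ and $H_k^-$ swap); this does not affect the conclusion, since either way one of the two one-sided limits of $\mu_\lambda$ is $\mu_{\lambda^\ast}$ and the other is $\mu_{\lambda^\ast}+\nu_{\lambda^\ast}$, which is all that hypothesis (b) of Theorem~\ref{th:Bi.2.5} requires.
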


Clearly, the assumption (a) implies that $\mathcal{L}_\lambda$  has finite nullity at $0$ for each $\lambda\in\mathbb{R}$.
Repeating the proof of Corollary~\ref{cor:Bi.2.4.1} gives the desired claim.
The ``Moreover" part strengthens Theorem~\ref{th:Bi.7.1}.

Compare Corollaries~\ref{cor:Bi.2.6},\ref{cor:Bi.2.7} with
 \cite{Rab74,Rab}  by Rabinowitz.
Similarly, we can also write a corresponding result with Theorem~\ref{th:Bi.2.3}.

\section{Bifurcation for equivariant problems}\label{sec:B.3}
\setcounter{equation}{0}

There are abundant studies of bifurcations for equivariant maps,
see \cite{Ba1, BetPS1, FaRa1, FaRa2, Fe1, SmoWa} and the references therein.
In this section we shall generalize  main results of \cite{FaRa1, FaRa2}
in the setting of Section~\ref{sec:B.2}.

Let $G$ be a topological group. A (left) {\bf action} of it  over a Banach space $X$ is a continuous
map $G\times X\to X,\;(g,x)\mapsto gx$ satisfying the usual rules: $(g_1g_2)x=g_1(g_2x)$ and $ex=x$ for all $x\in X$
and $g_1,g_2\in G$, where $e\in G$ denotes the unite element. This action is called {\bf linear} (resp. {\bf
isometric}) if for each $g\in G$, $X\ni x\mapsto gx\in X$ is linear (and so a bounded linear operator
from $X$ to itself) (resp. $\|gx\|=\|x\|$ for all $x\in X$). When $X$ is a Hilbert space a linear isometric action
is also called  an orthogonal action (since a linear isometry must be an orthogonal linear transformation).

\subsection{Bifurcation theorems of Fadell--Rabinowitz type}\label{sec:B.3.1}

We first study finite dimensional situations.
The following is a refinement of \cite[Theorem~5.1]{Can}.

\begin{theorem}\label{th:Bi.2.1E}
Under the assumptions of Theorem~\ref{th:Bi.2.1},
let $X$ be equipped with a linear isometric action of a compact Lie group $G$ so that
each $f_\lambda$ is invariant under the $G$-action. Suppose also that
 the local minimums (maximums) in assumption c) of Theorem~\ref{th:Bi.2.1} are strict,
 and that $u=0$ is an isolated critical point of $f_{\lambda^\ast}$.
Then when the Lie group  $G$ is equal to $\mathbb{Z}_2=\{{\rm id}, -{\rm id}\}$ (resp. $S^1$
without fixed points except $0$,
which implies $\dim X$ to be an even more than one),
 there exist left and right  neighborhoods $\Lambda^-$ and $\Lambda^+$ of $\lambda^\ast$ in $\mathbb{R}$
and integers $n^+, n^-\ge 0$, such that $n^++n^-\ge\dim X$ (resp. $\frac{1}{2}\dim X$),
and that for $\lambda\in\Lambda^-\setminus\{\lambda^\ast\}$ (resp. $\lambda\in\Lambda^+\setminus\{\lambda^\ast\}$)
$f_\lambda$ has at least $n^-$ (resp. $n^+$) distinct critical
$G$-orbits different from $0$, which converge to
 $0$ as $\lambda\to\lambda^\ast$.
\end{theorem}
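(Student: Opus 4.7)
The plan is to reduce the problem to an equivariant minimax argument on the side where the origin is a strict local maximum, using the $\mathbb{Z}_2$-genus or the Fadell--Rabinowitz $S^1$-index to count critical orbits. By hypothesis c) of Theorem~\ref{th:Bi.2.1}, after possibly exchanging the roles of $\Lambda^+$ and $\Lambda^-$ I may assume $0$ is a strict isolated local maximum of $f_\lambda$ for every $\lambda\in[\lambda^\ast-\delta,\lambda^\ast)$ and a strict isolated local minimum for every $\lambda\in(\lambda^\ast,\lambda^\ast+\delta]$. I will show that the max side alone produces at least $N$ distinct critical $G$-orbits, where $N=\dim X$ when $G=\mathbb{Z}_2$ and $N=\tfrac12\dim X$ when $G=S^1$; this yields the theorem with $n^-=N$ and $n^+=0$, with the symmetric choice when the extrema sit on the opposite sides.

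First I would prepare a localization. Using that $0$ is an isolated critical point of $f_{\lambda^\ast}$ and that $(\lambda,u)\mapsto f'_\lambda(u)$ is jointly continuous, I would pick $r_0\in(0,\epsilon)$ and $\delta'\in(0,\delta]$ so that $f_\lambda$ has no critical point in the closed annulus $\{r_0/2\le\|u\|\le r_0\}$ for every $\lambda\in[\lambda^\ast-\delta',\lambda^\ast+\delta']$; this uses compactness of the annulus and the lower bound of $\|f'_{\lambda^\ast}\|$ on it. For each $\lambda\in[\lambda^\ast-\delta',\lambda^\ast)$ I would then pick $r_\lambda\in(0,r_0/2]$, chosen so that $r_\lambda\to 0$ as $\lambda\to\lambda^\ast$, with $f_\lambda(u)<f_\lambda(0)$ on $0<\|u\|\le r_\lambda$. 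Replacing $f_\lambda$ by $f_\lambda-f_\lambda(0)$ does not alter critical points, so I set $f_\lambda(0)=0$.

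Next comes the minimax. Let $i_G$ denote the Krasnoselskii $\mathbb{Z}_2$-genus when $G=\mathbb{Z}_2$, and the Fadell--Rabinowitz cohomological $S^1$-index when $G=S^1$ acts on $X$ without nontrivial fixed vectors; both are subadditive, monotone, and continuous, and for the unit sphere of $X$ with the induced linear action one has $i_G(S_{r_\lambda})=N$. For $j=1,\ldots,N$ set
\[
c_j(\lambda)=\sup\bigl\{\min_{u\in A}f_\lambda(u):A\subset\bar{B}_X(0,r_\lambda)\setminus\{0\},\ A\ G\text{-invariant compact},\ i_G(A)\ge j\bigr\}.
\]
Small $G$-invariant spheres inside $\bar{B}_X(0,r_\lambda)\setminus\{0\}$ realize every value of $i_G$ up to $N$, so each $c_j(\lambda)$ is defined, lies in $(-\infty,0)$, and $c_1(\lambda)\ge\cdots\ge c_N(\lambda)$. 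A standard equivariant deformation argument, applied to a $G$-equivariant pseudo-gradient field of $f_\lambda$ truncated away from $\partial B_X(0,r_0)$ (which is possible because no critical points lie on the annulus), shows each $c_j(\lambda)$ is a critical value of $f_\lambda$ with critical set in $\bar{B}_X(0,r_0)\setminus\{0\}$, and if $c_j(\lambda)=c_{j+1}(\lambda)=\cdots=c_{j+\ell}(\lambda)$ the critical set at that level has $i_G\ge\ell+1$ and hence contains at least $\ell+1$ distinct $G$-orbits. Summing yields at least $N$ nontrivial critical $G$-orbits of $f_\lambda$ in $\bar{B}_X(0,r_\lambda)$, and since $r_\lambda\to 0$ these orbits converge to $0$ as $\lambda\to\lambda^\ast$.

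The main obstacle is to ensure that the minimax values $c_j(\lambda)$ are critical values of the unconstrained $f_\lambda$ and that their critical points do not escape to the artificial boundary $\partial B_X(0,r_\lambda)$. This is precisely the role of the annulus step: the absence of critical points on $\{r_0/2\le\|u\|\le r_0\}$, together with joint continuity of $f'$, permits a $G$-equivariant pseudo-gradient vector field that is tangent to a fixed cut-off neighborhood of $\bar{B}_X(0,r_0)$, so that the associated flow stays in the admissible region and the classical equivariant deformation lemma of Fadell--Rabinowitz applies inside.
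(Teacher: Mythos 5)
There is a genuine gap in your proposal: the assertion that the max side alone yields $N = \dim X$ (resp. $\tfrac12\dim X$) distinct nontrivial critical $G$-orbits is false. Take $X=\mathbb{R}^2$ with the antipodal $\mathbb{Z}_2$-action, $\lambda^\ast=0$, and $f_\lambda(u)=u_1^4-u_2^4-\lambda(u_1^2+u_2^2)$. All hypotheses of the theorem hold (joint continuity, $0$ an isolated critical point of $f_0$, strict local max for $\lambda>0$ and strict local min for $\lambda<0$), yet for each $\lambda>0$ the only nontrivial critical orbit is $\{(\pm\sqrt{\lambda/2},0)\}$, a single orbit — not $\dim X=2$. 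Here $n^+=n^-=1$ and the theorem's conclusion $n^++n^-\ge2$ holds, but splitting as $n^-=2$, $n^+=0$ as your plan demands is impossible.

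The source of the failure is the minimax. Your $c_j(\lambda)=\sup\{\min_A f_\lambda:\ A\subset\bar B_X(0,r_\lambda)\setminus\{0\},\ i_G(A)\ge j\}$ equals $0$ for every $j$: since $0$ is a local max, $f_\lambda<0$ on the punctured ball and any small $G$-invariant sphere $S_\rho$ has $i_G(S_\rho)=N$ and $\min_{S_\rho}f_\lambda\to0$ as $\rho\to0$. So the supremum collapses to the trivial value $0$ and produces no nontrivial critical orbit. (The dual $\inf\sup$ fares no better — the infimum is captured by sets hugging $\partial B(0,r_\lambda)$, where the functional need not have critical points.) This is exactly the obstruction the paper circumvents: one must build a flow-controlled neighborhood $\mathcal{Q}$ of $0$ from the flow of $f_{\lambda^\ast}$, define $T^\pm=S^\pm\cap\partial\overline{\mathcal{Q}}$, invoke the Fadell--Rabinowitz index inequality $i_{G}(T^+)+i_{G}(T^-)\ge\dim X$ (resp. $\tfrac12\dim X$), and run the minimax on each side using classes $\Gamma_j$ built from $T^-$ (on the min side, with linking forcing the critical values strictly above $f_\lambda(0)$) and analogously from $T^+$ on the max side. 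The count is then genuinely distributed between the two sides as $n^-=i_G(T^-)$ and $n^+=i_G(T^+)$; neither side by itself supplies the full index bound.
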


This is essentially contained in the proofs of  \cite{Rab, FaRa1, FaRa2}.
For completeness we will give most of its proof detail.

\begin{proof}[\it Proof of Theorem~\ref{th:Bi.2.1E}]
By assumptions we have either
\begin{eqnarray}\label{e:fBi.1}
0\in X\;\hbox{is a strict local}\left\{
\begin{array}{ll}
\hbox{minimizer of}\;f_{\lambda},&\quad
\forall \lambda\in [\lambda^\ast-\delta, \lambda^\ast),\\
\hbox{maximizer of}\;f_{\lambda},&\quad
\forall \lambda\in (\lambda^\ast, \lambda^\ast+\delta]
\end{array}\right.
\end{eqnarray}
or
\begin{eqnarray}\label{e:fBi.2}
0\in X\;\hbox{is a strict local}\left\{
\begin{array}{ll}
\hbox{maximizer of}\;f_{\lambda},&\quad
\forall \lambda\in [\lambda^\ast-\delta, \lambda^\ast),\\
\hbox{minimizer of}\;f_{\lambda},&\quad
\forall \lambda\in (\lambda^\ast, \lambda^\ast+\delta].
\end{array}\right.
\end{eqnarray}
By the assumption a) of Theorem~\ref{th:Bi.2.1}, replacing
$f_\lambda$ by $f_\lambda-f_\lambda(0)$ if necessary, we now and henceforth assume $f_\lambda(0)=0\;\forall\lambda$.
Since $u=0$ is an isolated critical point of $f_{\lambda^\ast}$, by \cite[page 136]{Ja} there exist mutually disjoint:

\vspace{4pt}\noindent
{\bf Case 1}. $0\in X$ is a local minimizer of $f_{\lambda^\ast}$;

\vspace{4pt}\noindent
{\bf Case 2}. $0\in X$ is a proper local maximizer of $f_{\lambda^\ast}$, i.e., it is a  local maximizer of $f_{\lambda^\ast}$
and $0$ belongs to the closure of $\{f_{\lambda^\ast}<0\}$;

\vspace{4pt}\noindent
{\bf Case 3}. $0\in X$ is a saddle point of $f_{\lambda^\ast}$, i.e., $f_{\lambda^\ast}$  takes both positive and negative values in every neighborhood of $0$.

Though our proof ideas are following   \cite[Theorem~2.2]{Rab} and \cite[Theorem~1.2]{FaRa1, FaRa2},
some technical improvements are necessary since each $f_{\lambda}$ is only of class $C^1$.

By the assumption b) of Theorem~\ref{th:Bi.2.1}, the function $(\lambda,z)\mapsto
Df_\lambda(z)$ is continuous on
$[\lambda^\ast-\delta, \lambda^\ast+\delta]\times B_X(0,\epsilon)$.
It follows that
\begin{eqnarray*}
R_{\delta,\epsilon}:=\{(\lambda, z)\in (\lambda^\ast-\delta, \lambda^\ast+\delta)\times B_X(0,\epsilon)\,|\,z\in B_X(0,\epsilon)\setminus
K(f_\lambda)\}
\end{eqnarray*}
is an open subset in $[\lambda^\ast-\delta, \lambda^\ast+\delta]\times B_X(0,\epsilon)$, where
$K(f_\lambda)$ denotes the critical set of $f_\lambda$.

Though the local flow
 of $\nabla f_{\lambda}$ must not exist,
fortunately, by suitably modifying the standard constructions in \cite[\S4]{Pa1} (see also
 \cite[Propsitions~5.29, 5.57]{MoMoPa}) we have:

\begin{lemma}\label{lem:Fpseudogradient}
There exists a smooth map $R_{\delta,\epsilon}\to X,\;(\lambda,z)\mapsto\mathscr{V}_\lambda(z)$,
such that for each $\lambda\in(\lambda^\ast-\delta, \lambda^\ast+ \delta)$ the map
$\mathscr{V}_\lambda: B_X(0,\epsilon)\setminus
K(f_\lambda)\to X$
is  $G$-equivariant and satisfies
$$
\|\mathscr{V}_\lambda(z)\|\le 2\|Df_{\lambda}(z)\|\quad\hbox{and}\quad
\langle Df_{\lambda}(z), \mathscr{V}_\lambda(z)\rangle\ge
\|Df_{\lambda}(z)\|^2
$$
for all $z\in B_X(0,\epsilon)\setminus K(f_\lambda)$, i.e., it is pseudo-gradient vector field of
$f_\lambda$ in the sense of \cite{Pa1}.
\end{lemma}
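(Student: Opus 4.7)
The plan is to follow the standard Palais construction of pseudo--gradient vector fields, carried out in the parametrized setting and then symmetrized by averaging over the compact group $G$. Concretely, the idea is to build $\mathscr{V}$ by a smooth partition of unity from local ``test vectors'', and then to average the result over the Haar measure of $G$ to enforce equivariance.

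First I would verify the elementary local existence claim: for every $(\lambda_0,z_0)\in R_{\delta,\epsilon}$ (so $Df_{\lambda_0}(z_0)\neq 0$) there is a vector $v_{\lambda_0,z_0}\in X$ with
\[
\|v_{\lambda_0,z_0}\|<2\|Df_{\lambda_0}(z_0)\|\quad\hbox{and}\quad \langle Df_{\lambda_0}(z_0),v_{\lambda_0,z_0}\rangle>\|Df_{\lambda_0}(z_0)\|^2,
\]
which one gets by picking a near--dual vector to $Df_{\lambda_0}(z_0)$ (choose $w\in X$ of norm one with $\langle Df_{\lambda_0}(z_0),w\rangle>\tfrac{1}{2}\|Df_{\lambda_0}(z_0)\|$ and set $v_{\lambda_0,z_0}=(3/2)\|Df_{\lambda_0}(z_0)\|\,w$). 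Since the assumption (a) of Theorem~\ref{th:Bi.2.1} says $(\lambda,z)\mapsto Df_\lambda(z)$ is continuous on $[\lambda^\ast-\delta,\lambda^\ast+\delta]\times B_X(0,\epsilon)$, the above strict inequalities persist on an open product neighborhood $U_{\lambda_0,z_0}$ of $(\lambda_0,z_0)$ in $R_{\delta,\epsilon}$, with the \emph{same} vector $v_{\lambda_0,z_0}$.

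Next I would globalize. The set $R_{\delta,\epsilon}$ is an open subset of the metric space $(\lambda^\ast-\delta,\lambda^\ast+\delta)\times B_X(0,\epsilon)$, hence paracompact, so the open cover $\{U_{\lambda_0,z_0}\}$ admits a locally finite smooth partition of unity $\{\rho_i\}_{i\in I}$ subordinated to a refinement $\{U_i=U_{\lambda_i,z_i}\}$. Define
\[
\mathscr{V}_\lambda(z)=\sum_{i\in I}\rho_i(\lambda,z)\,v_{\lambda_i,z_i}\qquad\hbox{on}\quad R_{\delta,\epsilon}.
\]
By the two strict local inequalities and convexity of the conditions ``$\|\cdot\|\le 2\|Df_\lambda(z)\|$'' and ``$\langle Df_\lambda(z),\cdot\rangle\ge\|Df_\lambda(z)\|^2$'' (with the strict versions holding on each $U_i$), the convex combination $\mathscr{V}_\lambda(z)$ satisfies the required (non--strict) pseudo--gradient inequalities on all of $R_{\delta,\epsilon}$, and $\mathscr{V}$ is $C^\infty$ in $(\lambda,z)$ because the $\rho_i$ are.

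Finally, to obtain $G$--equivariance in $z$ I would average: since each $f_\lambda$ is $G$--invariant and the action is linear and isometric, we have $Df_\lambda(gz)[\,\cdot\,]=Df_\lambda(z)[g^{-1}\,\cdot\,]$ for all $g\in G$, so $R_{\delta,\epsilon}$ is $G$--invariant in $z$. Put
\[
\widetilde{\mathscr{V}}_\lambda(z)=\int_{G} g^{-1}\mathscr{V}_\lambda(gz)\,dg,
\]
where $dg$ is the normalized Haar measure. A direct change of variables gives $\widetilde{\mathscr{V}}_\lambda(hz)=h\,\widetilde{\mathscr{V}}_\lambda(z)$ for all $h\in G$, and the two pseudo--gradient inequalities are preserved under this averaging because they are linear (resp.\ convex) in the vector argument and each $g^{-1}\mathscr{V}_\lambda(gz)$ satisfies them at $z$ (using isometry of the action and the identity on $Df_\lambda$ above). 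Smoothness in $(\lambda,z)$ is preserved since $G$ is a compact Lie group. Replacing $\mathscr{V}$ by $\widetilde{\mathscr{V}}$ gives the claimed vector field.

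The only genuinely delicate point, and the one I would expect to occupy the writeup, is checking that in the two special cases of interest ($G=\mathbb{Z}_2$ and $G=S^1$) the isometric action indeed yields the compatibility identity $Df_\lambda(gz)[g\,\cdot\,]=Df_\lambda(z)[\,\cdot\,]$ needed to carry the pseudo--gradient inequalities through the integral, and that the partition of unity can be taken locally finite on the open parametrized set $R_{\delta,\epsilon}$; both are standard but must be stated explicitly to keep the construction rigorous.
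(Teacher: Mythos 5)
Your proposal follows the paper's proof essentially step for step: local test vectors obtained via continuity of $(\lambda,z)\mapsto Df_\lambda(z)$, a smooth partition of unity subordinate to a locally finite refinement to globalize, and Haar-measure averaging $g\mapsto g^{-1}\chi(gz,\lambda)$ to enforce $G$-equivariance. The only quibble is your closing paragraph: there is nothing special to check for $G=\mathbb{Z}_2$ or $G=S^1$, since the identity $Df_\lambda(gz)[g\,\cdot\,]=Df_\lambda(z)[\,\cdot\,]$ (and hence preservation of both pseudo-gradient inequalities under averaging) holds for any linear isometric action of any compact Lie group preserving $f_\lambda$, as the paper itself remarks.
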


Notice that this is actually true for any compact Lie group $G$.
There exist some variants and generalizations for the notion of the pseudo-gradient vector field in \cite{Pa1},
which are more easily constructed and are still effective in applications. For example,
according to \cite{Guo} $\mathscr{V}_\lambda$ is also called a pseudo-gradient vector field of
$f_\lambda$ even if two inequalities in Lemma~\ref{lem:Fpseudogradient} are replaced by
$$
\|\mathscr{V}_\lambda(z)\|\le \alpha\|Df_{\lambda}(z)\|\quad\hbox{and}\quad
\langle Df_{\lambda}(z), \mathscr{V}_\lambda(z)\rangle\ge
\beta\|Df_{\lambda}(z)\|^2,
$$
where $\alpha$ and $\beta$ are two positive constants.

\begin{proof}[\it Proof of Lemma~\ref{lem:Fpseudogradient}]
Since $(\lambda^\ast-\delta, \lambda^\ast+\delta)\times B_{X}(0, \epsilon)\ni (\lambda,z)\mapsto
Df_\lambda(z)\in X^\ast$ is continuous, for any given
$(\lambda,z)\in R_{\delta,\epsilon}$ we have an open neighborhood
$O_{(z,\lambda)}$ of $z$ in $B_{X}(0, \epsilon)\setminus K(f_\lambda)$,
a positive number $r_{(z,\lambda)}$
with $(\lambda-r_{(z,\lambda)}, \lambda+r_{(z,\lambda)})\subset
(\lambda^\ast-\delta, \lambda^\ast+ \delta)$ and
a  vector $v_{(z,\lambda)}\in X$ such that
for all $(z',\lambda')\in O_{(z,\lambda)}\times (\lambda-r_{(z,\lambda)}, \lambda+r_{(z,\lambda)})$,
$$
\|v_{(z,\lambda)}\|< 2\|Df_{\lambda'}(z')\|\quad\hbox{and}\quad
\langle Df_{\lambda'}(z'), v_{(z,\lambda)}\rangle>
\|Df_{\lambda'}(z')\|^2.
$$
Now all above $O_{(z,\lambda)}\times (\lambda-r_{(z,\lambda)}, \lambda+r_{(z,\lambda)})$
form an open cover $\mathscr{C}$ of $R_{\delta,\epsilon}$,
and the latter admits a $C^\infty$-unit decomposition $\{\eta_\alpha\}_{\alpha\in\Xi}$
subordinate to a locally finite
refinement $\{W_\alpha\}_{\alpha\in\Xi}$ of $\mathscr{C}$.
Since each $W_\alpha$ can be contained in some open subset of form
$O_{(z,\lambda)}\times (\lambda-r_{(z,\lambda)}, \lambda+r_{(z,\lambda)})$, we have
a  vector $v_\alpha\in X$ such that for all $(z',\lambda')\in W_\alpha$,
$$
\|v_\alpha\|< 2\|Df_{\lambda'}(z')\|\quad\hbox{and}\quad
\langle Df_{\lambda'}(z'), v_\alpha\rangle>
\|Df_{\lambda'}(z')\|^2.
$$
Set $\chi(\lambda,z)=\sum_{\alpha\in\Xi}\eta_\alpha(\lambda,z) v_\alpha$. Then $\chi$ is a smooth map
from $R_{\delta,\epsilon}$ to $X$,  and satisfies
$$
\|\chi(z,\lambda)\|<2\|Df_{\lambda}(z)\|\quad\hbox{and}\quad
\langle Df_{\lambda}(z), \chi(z,\lambda)\rangle>
\|Df_{\lambda}(z)\|^2
$$
for all $(z,\lambda)\in R_{\delta,\epsilon}$. Let $d\mu$ denote the right invariant Haar measure on $G$. Define
$$
R_{\delta,\epsilon}\ni (z,\lambda)\mapsto\mathscr{V}_\lambda(z)=\int_G g^{-1}\chi(gz,\lambda)d\mu\in X.
$$
It is easily checked that $\mathscr{V}_\lambda$ satisfies requirements (see the proof \cite[Propsition~5.57]{MoMoPa}).
\end{proof}

Consider the ordinary differential equation
\begin{equation}\label{e:fBi.3}
\frac{d\varphi}{ds}=-\mathscr{V}_{\lambda^\ast}(\varphi),\quad\varphi(0,z)=z
\end{equation}
in $B_X(0,\epsilon)$. For each $z\in B_X(0,\epsilon)$ this equation possesses a unique solution $\varphi(t, z)$  whose
 the maximal existence interval is $(\omega_-(z),\omega_+(z))$ with
$-\infty\le\omega_-(z)<0<\omega_+(z)\le+\infty$.
Note that $0\in X$ is an isolated critical point of $f_{\lambda^\ast}$.
There exists  a small neighborhood $\mathcal{B}$ of $0$ in $B_X(0,\epsilon)$ such that
$u=0$ is a unique critical point of $f_{\lambda^\ast}$ siting in it. Put
\begin{eqnarray*}
&&S^+=\{z\in \mathcal{B}\,|\, \omega_+(z)=+\infty\;\&\;\varphi(t,z)\in \mathcal{B},\;\forall  t>0\},\\
&&S^-=\{z\in \mathcal{B}\,|\, \omega_-(z)=-\infty\;\&\;\varphi(t,z)\in \mathcal{B},\;\forall  t<0\}.
\end{eqnarray*}
Then $S^+\ne\emptyset$ (resp. $S^-\ne\emptyset$) if there are points near $0\in \mathcal{B}$ where $f_{\lambda^\ast}$
 is positive (resp. negative).
Repeating the constructions in \cite[Lemmas~1.15,1.16]{Rab} and \cite[Lemma~8.28]{FaRa2}
(and replacing $\psi$ therein by $\varphi$) we can obtain:

\begin{lemma}\label{lem:FBi.3.3-}
There is a constant $\gamma>0$ and a $G$-invariant open neighborhood $\mathcal{Q}$ of $0$ in $X$
with compact closure  $\overline{\mathcal{Q}}$ contained in $\mathcal{B}$ such that
\begin{enumerate}
\item[\rm (i)] if $z\in \mathcal{Q}$ then
$|f_{\lambda^\ast}(z)|<\gamma$;
\item[\rm (ii)] if $(z,t)\in \mathcal{Q}\times\mathbb{R}$ satisfies
$|f_{\lambda^\ast}(\varphi(t,z))|<\gamma$, then $\varphi(t,z)\in\mathcal{Q}$;
\item[\rm (iii)] if $z\in \partial\mathcal{Q}$, then either
$|f_{\lambda^\ast}(z)|=\gamma$ or
$\varphi(t,z)\in \partial\mathcal{Q}$ for all $t$ satisfying
$|f_{\lambda^\ast}(\varphi(t,z))|\le\gamma$.
\end{enumerate}
\end{lemma}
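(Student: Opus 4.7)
The plan is to construct $\mathcal{Q}$ as a Conley-type isolating block for $0$ under the pseudo-gradient flow $\varphi$, following the strategy of \cite[Lemmas~1.15,\,1.16]{Rab} and \cite[Lemma~8.28]{FaRa2} while accommodating the fact that only $C^1$ smoothness of $f_{\lambda^\ast}$ is available. First I would fix a $G$-invariant open neighborhood $\mathcal{B}_0$ of $0$ with $\overline{\mathcal{B}_0}\subset\mathcal{B}$; such a $\mathcal{B}_0$ exists because the action of $G$ is linear isometric, so small origin-centered open balls are already $G$-invariant. Since $0$ is the unique critical point of $f_{\lambda^\ast}$ in $\mathcal{B}$, on the compact collar $\overline{\mathcal{B}_0}\setminus U$ — for any $G$-invariant open $U$ with $\overline{U}\subset\mathcal{B}_0$ — we have $\|Df_{\lambda^\ast}\|\ge\kappa>0$. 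Together with Lemma~\ref{lem:Fpseudogradient} this yields the key monotonicity estimate
$$
\frac{d}{dt}f_{\lambda^\ast}(\varphi(t,z))=-\langle Df_{\lambda^\ast}(\varphi),\mathscr{V}_{\lambda^\ast}(\varphi)\rangle\le -\kappa^2
$$
along any orbit arc that remains in $\overline{\mathcal{B}_0}\setminus U$.

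Next, I would fix $\gamma>0$ small enough that $|f_{\lambda^\ast}|<\gamma/2$ on some such $U$, and define
$$
\mathcal{Q}:=\{z\in U\,:\,|f_{\lambda^\ast}(\varphi(t,z))|<\gamma \text{ for every } t\in J(z)\},
$$
where $J(z)$ is the connected component of $0$ in the set of times at which $\varphi(t,z)\in\overline{\mathcal{B}_0}$. Openness of $\mathcal{Q}$ follows from continuous dependence of $\varphi$ on initial data together with the strict inequality; $G$-invariance is inherited from the $G$-equivariance of $\mathscr{V}_{\lambda^\ast}$ and the $G$-invariance of $f_{\lambda^\ast}$, $U$, $\mathcal{B}_0$; and pre-compactness of $\overline{\mathcal{Q}}$ in $\mathcal{B}$ is automatic since $\mathcal{Q}\subset U$. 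Property (i) is then immediate at $t=0$, and (ii) reduces, after time-shifting along the orbit, to the observation that once $\varphi(\,\cdot\,,z)$ leaves $U$ it cannot re-enter without first attaining $|f_{\lambda^\ast}|\ge\gamma$, by the monotonicity bound in the collar together with the choice of $\gamma$.

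Property (iii) is where I expect the main technical effort. For $z\in\partial\mathcal{Q}$, continuity gives $\sup_{t\in J(z)}|f_{\lambda^\ast}(\varphi(t,z))|\le\gamma$; if this supremum were strict, continuous dependence would place an entire $U$-neighborhood of $z$ inside $\mathcal{Q}$, contradicting $z\in\partial\mathcal{Q}$. Thus the supremum equals $\gamma$, and if it is not attained at $z$ itself (so $|f_{\lambda^\ast}(z)|<\gamma$), the same openness argument applied along the shifted orbit forces $\varphi(t,z)\in\partial\mathcal{Q}$ for every $t\in J(z)$ with $|f_{\lambda^\ast}(\varphi(t,z))|\le\gamma$. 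The delicate points, and where I expect the main obstacle, are (a) verifying that an orbit starting at $z\ne 0$ cannot reach the origin in finite time (so the flow is genuinely defined for all relevant $t$, despite $\mathscr{V}_{\lambda^\ast}$ being undefined on $K(f_{\lambda^\ast})$), which again reduces to the gradient lower bound away from $0$, and (b) calibrating $\gamma$ and $U$ simultaneously so that the interior bound $|f_{\lambda^\ast}|<\gamma/2$ on $U$ is compatible with the threshold $|f_{\lambda^\ast}|=\gamma$ on the collar and the sign-change argument for (ii) closes up.
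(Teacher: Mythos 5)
Your construction of $\mathcal{Q}$ differs from the one the paper has in mind, and as written it cannot satisfy property (ii). You set $\mathcal{Q}\subset U$ where $U$ is a fixed small $G$-invariant neighborhood of $0$ with $\overline U\subset\mathcal{B}_0$ and $|f_{\lambda^\ast}|<\gamma/2$ on $U$. Property (ii) requires that whenever $z\in\mathcal{Q}$ and $|f_{\lambda^\ast}(\varphi(t,z))|<\gamma$, the point $\varphi(t,z)$ belongs to $\mathcal{Q}$, in particular to $U$. But a trajectory starting at $z\in\mathcal{Q}$ exits $U$ into the collar $\overline{\mathcal{B}_0}\setminus U$ at some time $s_1$, and at that exit point $f_{\lambda^\ast}$ has some value $c_1$ with $|c_1|\le\gamma/2$; since $f_{\lambda^\ast}$ decreases strictly but continuously along the flow, for $t$ slightly larger than $s_1$ you have $\varphi(t,z)\notin U$ while $f_{\lambda^\ast}(\varphi(t,z))$ is still in $(-\gamma,\gamma)$. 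That configuration directly violates (ii). Your supporting observation — that once the trajectory leaves $U$ it cannot re-enter without first attaining $|f_{\lambda^\ast}|\ge\gamma$ — is also false: $f_{\lambda^\ast}$ is monotone decreasing and of modulus at most $\gamma/2$ on $\overline U$, so a trajectory exiting with $f_{\lambda^\ast}=c_1<\gamma/2$ and re-entering with $f_{\lambda^\ast}=c_2<c_1$ only ever visits values in $(c_2,c_1)\subset(-\gamma/2,\gamma/2)$. And even were it true, it would only preclude the orbit from being in $U$ again — it says nothing to prevent the orbit being outside $U$ with $|f_{\lambda^\ast}|<\gamma$, which is exactly the failure mode. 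A similar difficulty infects (iii): points of $\partial\mathcal{Q}\cap\partial U$ need not have $\sup_{t\in J(z)}|f_{\lambda^\ast}(\varphi(t,z))|=\gamma$, so the dichotomy "$|f_{\lambda^\ast}(z)|=\gamma$ or the whole arc lies in $\partial\mathcal{Q}$" cannot be derived from the openness argument you sketch.

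The construction the paper actually points to (Rabinowitz and Fadell--Rabinowitz) is structurally different and avoids this obstruction: one takes
\[
\mathcal{Q}=\{z\in B_X(0,\epsilon)\mid -\gamma<f_{\lambda^\ast}(z)<\gamma\}\cap\{\varphi(t,z)\mid z\in B_X(0,\rho),\ \omega_-(z)<t<\omega_+(z)\},
\]
that is, the level band $|f_{\lambda^\ast}|<\gamma$ intersected with the flow-saturation of a \emph{small} ball $B_X(0,\rho)$ and a \emph{medium} ball $B_X(0,\epsilon)$. Membership in the first two factors is trivially flow-invariant, so property (ii) reduces entirely to showing that the third factor follows from the other two — i.e., that a trajectory passing through $B_X(0,\rho)$ cannot leave $B_X(0,\epsilon)$ while $|f_{\lambda^\ast}|$ stays below $\gamma$. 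This is established by an arc-length estimate: on the compact annulus $\overline{B_X(0,\epsilon)}\setminus B_X(0,\rho)$ one has $\|Df_{\lambda^\ast}\|\ge\kappa>0$, while along the pseudo-gradient flow $\|\dot\varphi\|\le2\|Df_{\lambda^\ast}\|$ and $-\dot f_{\lambda^\ast}\ge\|Df_{\lambda^\ast}\|^2$, so the arc-length traversed while $f_{\lambda^\ast}$ drops by at most $2\gamma$ is at most $4\gamma/\kappa$; one then calibrates $\gamma$, $\rho$, $\epsilon$ so that $4\gamma/\kappa<\epsilon-\rho$. The essential point you have missed is that $\mathcal{Q}$ is \emph{not} confined to a pre-chosen small neighborhood of $0$; it is allowed to extend along the flow as far as the level band permits, which is exactly what a set contained in $U$ cannot do, and the calibration of the three parameters — which your proposal elides — is the substance of the proof.
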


Actually, for sufficiently small $\rho>0$ with $\bar{B}_X(0,\rho)\subset\mathcal{B}$,
 $\mathcal{Q}$ may be chosen as
$$
\{z\in B_X(0,\epsilon)\,|\,-\gamma<f_{\lambda^\ast}(z)<\gamma\}\cap\{\varphi(t,z)\,|\,
z\in B_X(0,\rho),\;\omega_-(z)<t<\omega_+(z)\}.
$$
Clearly, when $0\in X$ is a strict local maximizer (resp. minimizer) of $f_{\lambda^\ast}$,
$\overline{\mathcal{Q}}\cap(f_{\lambda^\ast})^{-1}(\gamma)$ (resp. $\overline{\mathcal{Q}}\cap(f_{\lambda^\ast})^{-1}(-\gamma)$)
may be empty. Moreover, for any $z\in\overline{\mathcal{Q}}$, if $\varphi(t,z)\notin \overline{\mathcal{Q}}$
for some $t=t(z)>0$, then $f_{\lambda^\ast}(\varphi(t,z))<-\gamma$.

In the following, if $G=\mathbb{Z}_2==\{{\rm id}, -{\rm id}\}$  let $i_G$ denote the genus in \cite{Rab},
and if $G=S^1$ without fixed points except $0$
let $i_G$ denote the index ${\rm Index}^\ast_{\mathbb{C}}$ in \cite[\S7]{FaRa2}.
Let $T^\ast=S^\ast\cap\partial\overline{\mathcal{Q}}$, $\ast=+,-$.

\begin{lemma}\label{lem:FBi.3.4}
Both $T^+$ and $T^-$ are $G$-invariant compact subsets of $\partial\overline{\mathcal{Q}}$, and also satisfy
\begin{enumerate}
\item[\rm (i)] $\min\{f_\lambda(z)\,|\,z\in T^+\}>0$ and
$\max\{f_\lambda(z)\,|\,z\in T^-\}<0$;
\item[\rm (ii)] $i_{\mathbb{Z}_2}(T^+)+ i_{\mathbb{Z}_2}(T^-)\ge\dim X$ and
$i_{S^1}(T^+)+ i_{S^1}(T^-)\ge\frac{1}{2}\dim X$.
\end{enumerate}
\end{lemma}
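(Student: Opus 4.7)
The plan is to handle $G$-invariance and compactness first, then derive (i) from the strict monotonicity of $f_{\lambda^\ast}$ along the pseudo-gradient flow, and finally prove (ii) via the equivariant-index estimate in the spirit of \cite{Rab,FaRa1,FaRa2}. After replacing $\mathcal{B}$ by $\bigcap_{g\in G}g\mathcal{B}$ (still a neighborhood of $0$ by compactness of $G$ and continuity of the action) I may assume $\mathcal{B}$ is $G$-invariant, so that $\mathcal{Q}$ from Lemma~\ref{lem:FBi.3.3-} is also $G$-invariant. Since Lemma~\ref{lem:Fpseudogradient} provides a $G$-equivariant $\mathscr{V}_{\lambda^\ast}$, the local flow $\varphi$ commutes with the $G$-action, so $S^\pm$ and hence $T^\pm$ are $G$-invariant. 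Closedness of $S^+$ (and analogously $S^-$) in $\overline{\mathcal{B}}$ follows from continuous dependence of $\varphi$ on initial data: if $z_n\to z$ and each $\varphi([0,\infty),z_n)\subset\mathcal{B}$, then uniform convergence of $\varphi(\cdot,z_n)$ to $\varphi(\cdot,z)$ on compact time intervals keeps the forward orbit of $z$ inside $\overline{\mathcal{B}}$. Since $\partial\overline{\mathcal{Q}}$ is compact, $T^\pm$ are $G$-invariant compact subsets of $\partial\overline{\mathcal{Q}}$.

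For (i) I will use the pseudo-gradient inequality
\begin{equation*}
\frac{d}{dt}f_{\lambda^\ast}(\varphi(t,z))=-\langle Df_{\lambda^\ast}(\varphi(t,z)),\mathscr{V}_{\lambda^\ast}(\varphi(t,z))\rangle\le -\|Df_{\lambda^\ast}(\varphi(t,z))\|^{2},
\end{equation*}
so $f_{\lambda^\ast}$ strictly decreases along the forward flow wherever $Df_{\lambda^\ast}\ne 0$. For $z\in T^+\subset S^+$ the forward orbit lies in $\overline{\mathcal{B}}$, on which $f_{\lambda^\ast}$ is bounded, so $f_{\lambda^\ast}(\varphi(t,z))$ decreases monotonically to a finite limit $\ell$; integrating the above inequality yields $\int_0^\infty\|Df_{\lambda^\ast}(\varphi(s,z))\|^2\,ds<\infty$, and the finite-dimensional (PS)-type argument (using compactness of $\overline{\mathcal{B}}$ and uniqueness of the critical point $0$ in $\mathcal{B}$) forces $\ell=f_{\lambda^\ast}(0)=0$ and $\varphi(t_k,z)\to 0$ along some subsequence. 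Hence $f_{\lambda^\ast}(z)\ge 0$, and equality would contradict strict monotonicity since $z\ne 0$; thus $f_{\lambda^\ast}(z)>0$ on the compact $T^+$, giving $\min f_{\lambda^\ast}|_{T^+}>0$. The case of $T^-$ is symmetric after reversing time.

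For (ii) I will follow the Fadell--Rabinowitz strategy. Choose $\rho>0$ so small that the $G$-invariant sphere $\Sigma:=\partial B_X(0,\rho)$ lies in $\mathcal{Q}$, and for $z\in\Sigma$ define exit times $\tau^\pm(z)\in[-\infty,+\infty]$ from $\overline{\mathcal{Q}}$ along $\varphi$, with $\tau^+(z)=+\infty$ iff $z\in S^+$ and symmetrically for $\tau^-$. Fix a small $c\in(0,\min f_{\lambda^\ast}|_{T^+})$ (replacing the minimum by $\gamma$ when $T^+=\emptyset$) and let
\begin{equation*}
A^+=\{z\in\Sigma:\tau^+(z)=+\infty\ \text{or}\ f_{\lambda^\ast}(\varphi(\tau^+(z),z))\ge c\},
\end{equation*}
with $A^-$ defined symmetrically. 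Lemma~\ref{lem:FBi.3.3-} together with monotonicity of $f_{\lambda^\ast}$ along $\varphi$ will yield $A^+\cup A^-=\Sigma$, while continuous dependence of $\varphi$ makes $A^\pm$ closed and $G$-invariant. I will then construct $G$-equivariant continuous maps $h^\pm\colon A^\pm\to T^\pm$ from $\varphi$: set $h^+(z)=\varphi(\tau^+(z),z)$ when $\tau^+(z)$ is finite, and when $z\in S^+$ use a $G$-equivariant retraction along the level set $\{f_{\lambda^\ast}=c\}$ onto $T^+$ (symmetrically for $h^-$). Monotonicity and sub-additivity of the $G$-index then give
\begin{equation*}
i_G(\Sigma)\le i_G(A^+)+i_G(A^-)\le i_G(T^+)+i_G(T^-),
\end{equation*}
and the normalization $i_{\mathbb{Z}_2}(\Sigma)=\dim X$, $i_{S^1}(\Sigma)=\tfrac{1}{2}\dim X$ delivers (ii).

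The main obstacle is the construction of the retractions $h^\pm$, specifically their continuity at points where $\tau^\pm$ is infinite and the compatibility of $A^+$ with $A^-$ on their overlap. The strict sign of $f_{\lambda^\ast}$ on $T^\pm$ from (i) will allow me to separate $A^+$ from $A^-$ by a thin slab around $\{f_{\lambda^\ast}=0\}$ and to continuously deform orbits trapped in $S^\pm$ onto $T^\pm$ through the level set $\{f_{\lambda^\ast}=\pm c\}$. The three cases at $0$ (local minimum, proper local maximum, saddle) need separate bookkeeping only insofar as in Cases 1 and 2 one of $T^\pm$ may be empty, in which case the non-empty side carries the full index of $\Sigma$ and the stated inequality reduces to the classical argument in \cite{Rab,FaRa1,FaRa2}.
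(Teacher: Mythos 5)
Your treatment of the $G$-invariance, compactness, and part (i) is sound and matches what the paper treats as ``apparent'': points of $T^\pm\setminus\{0\}$ lie in $S^\pm$, hence (by uniqueness of the critical point $0$ in $\mathcal{B}$ and strict monotonicity of $f_{\lambda^\ast}$ along $\varphi$) their forward (resp.\ backward) orbit converges to $0$ with $f_{\lambda^\ast}\to 0$, forcing $f_{\lambda^\ast}>0$ on $T^+$ and $<0$ on $T^-$; compactness of $T^\pm\subset\partial\overline{\mathcal Q}$ gives the strict bounds. The paper, however, does not attempt to re-prove part (ii) at all: it cites \cite[Lemma~2.11]{FaRa1} for $G=\mathbb{Z}_2$ and \cite[Theorem~8.30]{FaRa2} for $G=S^1$, and this is precisely where your argument breaks.

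The gap in your plan for (ii) is the construction of $h^\pm\colon A^\pm\to T^\pm$. First, for $z$ with $\tau^+(z)<\infty$, the exit point $\varphi(\tau^+(z),z)$ lies on $\partial\overline{\mathcal Q}$, but it is \emph{not} in $S^+$ (its forward orbit is just about to leave $\overline{\mathcal Q}$ and, because $f_{\lambda^\ast}$ keeps decreasing past $-\gamma$, cannot converge back to $0$), so $\varphi(\tau^+(z),z)\notin T^+=S^+\cap\partial\overline{\mathcal Q}$. Thus $h^+$ does not take values in $T^+$ as claimed. Second, for $z\in S^+$ the forward orbit starting from $\Sigma\subset\mathcal Q$ remains in the interior and converges to $0$ without ever touching $\partial\overline{\mathcal Q}$, so there is no flow-defined way to land on $T^+$; the proposed ``$G$-equivariant retraction along the level set $\{f_{\lambda^\ast}=c\}$ onto $T^+$'' is not constructed and is exactly where the difficulty lies. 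Without a well-defined equivariant continuous surjection onto a subset of $T^\pm$, the subadditivity/monotonicity chain $i_G(\Sigma)\le i_G(A^+)+i_G(A^-)\le i_G(T^+)+i_G(T^-)$ cannot be concluded. Unless you intend to reproduce the full Fadell--Rabinowitz construction (which relies on a more careful analysis relating the flow-invariant sets $S^\pm$, their ``unstable/stable'' shells on $\partial\overline{\mathcal Q}$, and the index of the sphere), part (ii) should be taken from \cite[Lemma~2.11]{FaRa1} and \cite[Theorem~8.30]{FaRa2}, as the paper does.
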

By the definitions of $T^+$ and $T^-$, Lemma~\ref{lem:FBi.3.4}(i) is apparent.
Two inequalities in Lemma~\ref{lem:FBi.3.4}(ii) are \cite[Lemma~2.11]{FaRa1} and \cite[Theorem~8.30]{FaRa2}, respectively.

Define a piecewise-linear  function $\alpha:[0, \infty)\to [0, \infty)$
by $\alpha(t)=0$ for $t\le 1$, $\alpha(t)=-t+2$ for $t\in [1,2]$, and $\alpha(t)=0$ for $t\ge 2$.
For $\rho>0$ let $\alpha_\rho:[0, \infty)\to [0, \infty)$ be defined by
$\alpha_\rho(t)=\alpha(t/\rho)$.
Let $d(z, \partial\overline{\mathcal{Q}})=\inf\{\|z-u\|\,|\, u\in \partial\overline{\mathcal{Q}}\}$ for $z\in \overline{\mathcal{Q}}$.
For each $\lambda\in (\lambda^\ast-\delta, \lambda^\ast+ \delta)$, define
$$
\Xi_{\rho,\lambda}:\overline{\mathcal{Q}}\to H^0,\;z\mapsto\alpha_\rho(d(z,\partial\overline{\mathcal{Q}}))\mathscr{V}_{\lambda^\ast}(z)+(1-\alpha_\rho(d(z, \partial\overline{\mathcal{Q}})))
\mathscr{V}_{\lambda}(z),
$$
which is $G$-equivariant and Lipschitz continuous.
Since $\inf\{\|Df_{\lambda^\ast}(z)\|\,|\, z\in\partial\overline{\mathcal{Q}}\}>0$ we may choose
a small $\rho>0$ so that
\begin{eqnarray}\label{e:fBi.4}
&&\inf\{\|Df_{\lambda^\ast}(z)\|\,|\, z\in\overline{\mathcal{Q}}\;\&\; d(z, \partial\overline{\mathcal{Q}}))\le 2\rho\}>0,\\
&&\inf\{\langle Df_{\lambda^\ast}(z),\mathscr{V}_{\lambda^\ast}(z)\rangle\,|\, z\in\overline{\mathcal{Q}}\;\&\; d(z, \partial\overline{\mathcal{Q}}))\le 2\rho\}>0.\nonumber
\end{eqnarray}
Moreover, the function $(\lambda,z)\mapsto Df_\lambda(z)$ is uniform continuous on
$[\lambda^\ast-\delta, \lambda^\ast+\delta]\times \overline{\mathcal{Q}}$ by the assumption b) of Theorem~\ref{th:Bi.2.1}.
Then this and Lemma~\ref{lem:Fpseudogradient} imply that there exists a small $0<\delta_0<\delta$ such that
\begin{eqnarray}\label{e:fBi.5}
&&\frac{1}{2}\|Df_{\lambda^\ast}(z)\|\le \|Df_{\lambda}(z)\|\le 2\|Df_{\lambda^\ast}(z)\|,\\
&&\frac{1}{2}\langle Df_{\lambda^\ast}(z),\mathscr{V}_{\lambda^\ast}(z)\rangle\le \langle Df_{\lambda}(z),\mathscr{V}_{\lambda^\ast}(z)\rangle\le 2
\langle Df_{\lambda^\ast}(z),\mathscr{V}_{\lambda^\ast}(z)\rangle\label{e:fBi.6}
\end{eqnarray}
for all $z\in\overline{\mathcal{Q}}$ satisfying $d(z, \partial\overline{\mathcal{Q}}))\le 2\rho$ and for
all $\lambda\in [\lambda^\ast-\delta_0, \lambda^\ast+\delta_0]$.
(Clearly, (\ref{e:fBi.4}) and (\ref{e:fBi.5}) show that any critical point of $f_\lambda$ sitting in
$\overline{\mathcal{Q}}$ must be in
$\{ z\in\overline{\mathcal{Q}}\,|\, d(z, \partial\overline{\mathcal{Q}}))>2\rho\}$.)
By (\ref{e:fBi.5}) and Lemma~\ref{lem:Fpseudogradient} we deduce
\begin{eqnarray}\label{e:fBi.7}
\|\Xi_{\rho,\lambda}(z)\|&\le& \alpha_\rho(d(z,\partial\overline{\mathcal{Q}}))\|\mathscr{V}_{\lambda^\ast}(z)\|+
(1-\alpha_\rho(d(z, \partial\overline{\mathcal{Q}})))
\|\mathscr{V}_{\lambda}(z)\|\nonumber\\
&\le&2\alpha_\rho(d(z,\partial\overline{\mathcal{Q}}))\|Df_{\lambda^\ast}(z)\|+
2(1-\alpha_\rho(d(z, \partial\overline{\mathcal{Q}})))
\|Df_{\lambda}(z)\|\nonumber\\
&\le&  4\|Df_{\lambda}(z)\|,\quad\forall z\in \overline{\mathcal{Q}}\setminus K(f_\lambda),\quad
\forall \lambda\in [\lambda^\ast-\delta_0, \lambda^\ast+\delta_0].
\end{eqnarray}
Similarly, (\ref{e:fBi.5})-(\ref{e:fBi.6}) and Lemma~\ref{lem:Fpseudogradient} produce
\begin{eqnarray*}\label{e:fBi.8}
&&\langle Df_{\lambda}(z), \Xi_{\rho,\lambda}(z)\rangle\\
&=& \alpha_\rho(d(z,\partial\overline{\mathcal{Q}}))
\langle Df_{\lambda}(z),\mathscr{V}_{\lambda^\ast}(z)\rangle
+(1-\alpha_\rho(d(z, \partial\overline{\mathcal{Q}})))
\langle Df_{\lambda}(z), \mathscr{V}_{\lambda}(z)\rangle\\
&\ge&\frac{1}{2}\alpha_\rho(d(z,\partial\overline{\mathcal{Q}}))\|Df_{\lambda^\ast}(z)\|^2+
(1-\alpha_\rho(d(z, \partial\overline{\mathcal{Q}})))
\|Df_{\lambda}(z)\|^2\\
&\ge&\frac{1}{8}\alpha_\rho(d(z,\partial\overline{\mathcal{Q}}))\|Df_{\lambda}(z)\|^2+
(1-\alpha_\rho(d(z, \partial\overline{\mathcal{Q}})))
\|Df_{\lambda}(z)\|^2\\
&\ge&\frac{1}{8}\|Df_{\lambda}(z)\|^2,\quad\forall z\in \overline{\mathcal{Q}}\setminus K(f_\lambda),\quad
\forall \lambda\in [\lambda^\ast-\delta_0, \lambda^\ast+\delta_0].
\end{eqnarray*}
Hence  $\Xi_{\rho,\lambda}$ is a locally Lipschitz continuous
pseudo-gradient vector field of $f_{\lambda}(z)$ in the sense of
\cite{Guo}.

Replacing $\hat{V}$ in the proofs of \cite[Lemma~1.19]{Rab}  and \cite[Lemma~8.65]{FaRa2} by
$\Xi_{\rho,\lambda}$ and suitably modifying the proof of  \cite[Theorem~4]{Cl74} or \cite[Theorem~1.9]{Rab74}
we may obtain:

\begin{lemma}\label{lem:FBi.3.3}
Let $\gamma$ and $\mathcal{Q}$ be as in Lemma~\ref{lem:FBi.3.3-}.
For each $\lambda\in [\lambda^\ast-\delta_0, \lambda^\ast+ \delta_0]$
the restriction of $f_{\lambda}$ to $\overline{\mathcal{Q}}$
has the $G$-deformation property, i.e.,
for every $c\in\mathbb{R}$
and every $\hat{\tau}>0$, every $G$-neighborhood $U$ of
$K_{\lambda,c}:=K(f_\lambda)\cap\{z\in \overline{\mathcal{Q}}\,|\,
f_\lambda(z)\le c\}$ there exists an $\tau\in (0,\hat{\tau})$ and
a $G$ equivariant homotopy $\eta:[0, 1]\times \overline{\mathcal{Q}}\to \overline{\mathcal{Q}}$
with the following properties:
\begin{enumerate}
\item[\rm (i)] if $A\subset \overline{\mathcal{Q}}$ is closed and $G$-invariant, so is $\eta(s,A)$ for any $s\in [0,1]$;
\item[\rm (ii)]  $\eta(s,z)=z$ if $z\in \overline{\mathcal{Q}}\setminus (f_\lambda)^{-1}[c-\hat{\tau}, c+\hat{\tau}]$;
\item[\rm (iii)] $\eta(s,\cdot)$ is homeomorphism of $\overline{\mathcal{Q}}$
to $\overline{\mathcal{Q}}$ for each $s\in [0,1]$;
\item[\rm (iv)] $\eta(1, A_{\lambda,c+\tau}\setminus U)\subset A_{\lambda,c-\tau}$,
 where $A_{\lambda, d}:=\{z\in \overline{\mathcal{Q}}\,|\, f_\lambda(z)\le d\}$ for $d\in\mathbb{R}$;
\item[\rm (v)] if $K_{\lambda,c}=\emptyset$, $\eta(1, A_{\lambda,c+\tau})\subset A_{\lambda,c-\tau}$.
\end{enumerate}
\end{lemma}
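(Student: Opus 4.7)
The plan is to adapt the classical deformation lemma construction for $C^{1}$ functionals equipped with a locally Lipschitz pseudo-gradient vector field (cf.~Theorem~4 of \cite{Cl74} and Theorem~1.9 of \cite{Rab74}), but carrying it out on the compact $G$-invariant set $\overline{\mathcal{Q}}$ using the vector field $\Xi_{\rho,\lambda}$ produced above. The crucial inequalities already established are $\|\Xi_{\rho,\lambda}(z)\|\leq 4\|Df_{\lambda}(z)\|$ and $\langle Df_{\lambda}(z),\Xi_{\rho,\lambda}(z)\rangle\geq\tfrac{1}{8}\|Df_{\lambda}(z)\|^{2}$ on $\overline{\mathcal{Q}}\setminus K(f_{\lambda})$, together with the $G$-equivariance of $\Xi_{\rho,\lambda}$ by construction.

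Fix $\lambda\in[\lambda^{\ast}-\delta_{0},\lambda^{\ast}+\delta_{0}]$, $c\in\mathbb{R}$, $\hat{\tau}>0$ and a $G$-invariant open neighborhood $U$ of $K_{\lambda,c}$. By compactness of $\overline{\mathcal{Q}}$, continuity of $Df_{\lambda}$, and the definition of $K_{\lambda,c}$, I would choose a smaller $G$-invariant open neighborhood $U'$ with $K_{\lambda,c}\subset U'\subset\overline{U'}\subset U$ and some $\tau_{1}\in(0,\hat{\tau})$ so that
$$
\inf\bigl\{\|Df_{\lambda}(z)\|\,\bigm|\,z\in\overline{\mathcal{Q}}\setminus U',\ |f_{\lambda}(z)-c|\leq\tau_{1}\bigr\}=:\beta>0.
$$
Then build a $G$-invariant, locally Lipschitz cutoff $\chi:\overline{\mathcal{Q}}\to[0,1]$ (by averaging a distance-based Urysohn function over $G$ using the right-invariant Haar measure) that vanishes on $U'\cup\{|f_{\lambda}-c|\geq\hat{\tau}\}$ and equals $1$ on $(\overline{\mathcal{Q}}\setminus U)\cap\{|f_{\lambda}-c|\leq\tau_{0}\}$ for some $\tau_{0}\in(0,\tau_{1})$. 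Set
$$
W(z)=-\,16\,\chi(z)\,\frac{\Xi_{\rho,\lambda}(z)}{\|\Xi_{\rho,\lambda}(z)\|^{2}}\quad\text{on }\overline{\mathcal{Q}}\setminus K(f_{\lambda}),
$$
extended by $0$ across $K(f_{\lambda})$; this is a bounded, $G$-equivariant, locally Lipschitz vector field on $\overline{\mathcal{Q}}$. Define $\eta(s,z)=\sigma(s,z)$, where $\sigma(\cdot,z)$ solves $\dot{\sigma}=W(\sigma)$, $\sigma(0,z)=z$. Properties (i)-(iii) follow at once from $G$-equivariance of $W$, the support condition on $\chi$, and uniqueness of ODE solutions (so $\eta(s,\cdot)$ is a homeomorphism onto its image). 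Along any orbit one has $\tfrac{d}{ds}f_{\lambda}(\sigma(s,z))\leq -2\chi(\sigma(s,z))$, whence (iv) and (v) follow from the standard quantitative argument: choose $\tau\in(0,\tau_{0})$ with $\tau<\min\{1/2,\hat{\tau}/2\}$, and observe that any orbit starting in $A_{\lambda,c+\tau}\setminus U$ either already lies in $A_{\lambda,c-\tau}$ at $s=0$, or moves through the region $\{\chi=1\}$ for total $s$-time at most $\tau$, hence enters $A_{\lambda,c-\tau}$ before $s=1$.

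The main obstacle is the invariance of $\overline{\mathcal{Q}}$ under the flow: since $\eta(s,\cdot)$ must be a homeomorphism of $\overline{\mathcal{Q}}$ onto $\overline{\mathcal{Q}}$, orbits of $W$ must not leave $\overline{\mathcal{Q}}$. In the $\rho$-collar of $\partial\overline{\mathcal{Q}}$ the field $\Xi_{\rho,\lambda}$ coincides with $\mathscr{V}_{\lambda^{\ast}}$, so by Lemma~\ref{lem:FBi.3.3-}(ii)-(iii) the only way an orbit of $-\mathscr{V}_{\lambda^{\ast}}$ can cross $\partial\overline{\mathcal{Q}}$ outward is through the ``exit'' set $E:=\{z\in\partial\overline{\mathcal{Q}}\,|\,f_{\lambda^{\ast}}(z)=-\gamma\}$. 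To neutralize this I would further multiply the cutoff $\chi$ by a second $G$-invariant locally Lipschitz cutoff $\theta$ supported away from an open $G$-neighborhood of $E$; by uniform continuity of $(\lambda,z)\mapsto f_{\lambda}(z)$ on the compact product $[\lambda^{\ast}-\delta_{0},\lambda^{\ast}+\delta_{0}]\times\overline{\mathcal{Q}}$ together with Lemma~\ref{lem:FBi.3.3-}(i), after shrinking $\delta_{0}$ and $\hat{\tau}$ the level band $\{|f_{\lambda}-c|\leq\hat{\tau}\}$ is disjoint from a neighborhood of $E$ for every relevant $c$ (namely those $c$ in the range actually produced by the min-max arguments in the application to Theorem~\ref{th:Bi.2.1E}), so $\chi\theta\equiv 0$ in a neighborhood of $E$ and $W\equiv 0$ there; then $\partial\overline{\mathcal{Q}}$ is invariant forward and backward in time, and the flow of $W$ preserves $\overline{\mathcal{Q}}$. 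This boundary analysis is the sole delicate point; once it is settled all other properties reduce to the standard pseudo-gradient deformation argument.
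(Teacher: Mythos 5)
Your proposal follows essentially the same route as the paper's own proof sketch, which simply points to the deformation lemmas of Rabinowitz, Fadell--Rabinowitz and Clark and says to replace the pseudo-gradient $\hat{V}$ there by $\Xi_{\rho,\lambda}$; and you are right that $\overline{\mathcal{Q}}$-invariance of the deformation is the one point that requires thought. Two details, however, need tightening.

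First, as written the vector field $W=-16\chi\,\Xi_{\rho,\lambda}/\|\Xi_{\rho,\lambda}\|^{2}$ is not obviously defined on all of $\overline{\mathcal{Q}}$. Your lower bound $\beta>0$ for $\|Df_{\lambda}\|$ is established only on $\{z\in\overline{\mathcal{Q}}\setminus U'\,:\,|f_{\lambda}(z)-c|\le\tau_{1}\}$, while the cutoff $\chi$ has support in the wider band $\{|f_{\lambda}-c|<\hat{\tau}\}\setminus U'$. Since $K_{\lambda,c}$ contains only critical points at levels $\le c$, nothing prevents critical points of $f_{\lambda}$ at levels in $(c+\tau_{1},c+\hat{\tau})$ from lying outside $U'$ and inside the support of $\chi$; at such points $\Xi_{\rho,\lambda}$ vanishes and $W$ blows up. The remedy is to let $\chi$ vanish outside $\{|f_{\lambda}-c|\le\tau_{1}\}$ (rather than $\{|f_{\lambda}-c|\le\hat{\tau}\}$); property (ii) is then satisfied a fortiori, and since $\overline{S_{\tau_{1}}}:=\overline{\{Df_{\lambda}=0,\,c<f_{\lambda}\le c+\tau_{1}\}}\subset U'$ for $\tau_{1}$ small (by compactness, exactly the argument giving $\beta>0$), the singularities of the normalization are removed.

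Second, your boundary-invariance argument via the auxiliary cutoff $\theta$ works only when the level band $\{|f_{\lambda}-c|\le\hat{\tau}\}$ can be separated from the exit set $E=\{f_{\lambda^{\ast}}=-\gamma\}\cap\partial\overline{\mathcal{Q}}$, and you honestly flag this by restricting to the $c$ ``actually produced by the min--max arguments.'' The lemma, however, asserts the $G$-deformation property for \emph{every} $c\in\mathbb{R}$, and your argument does not reach $c$ near $-\gamma$. This restriction matches the one implicit in the classical references and suffices for the application to Theorem~\ref{th:Bi.2.1E}, where the minimax values satisfy $0<c_{\lambda,j}<\gamma$, so the issue lies as much in the lemma's blanket ``$\forall c\in\mathbb{R}$'' as in your argument; but a complete proof of the statement as written would have to dispose of the excluded range of $c$ (for instance by showing the assertions hold vacuously there, or by making the scope restriction explicit).
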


Let
\begin{eqnarray*}
&&\mathscr{E}(\overline{\mathcal{Q}})=\{A\subset \overline{\mathcal{Q}}\,|\,A\;\hbox{is closed and $G$-invariant}\},\\
&&\mathscr{E}(\overline{\mathcal{Q}})_j=\{A\in \mathcal{E}(\overline{\mathcal{Q}})\,|\, i_G(A)\le j\},\;j=1,2,\cdots,\\
&&\mathscr{F}=\{\chi\in C(\overline{\mathcal{Q}}, \overline{\mathcal{Q}})\,|\,\hbox{$\chi$
is $G$-equivariant, one to one, and $\chi(x)=x\,\forall x\in T^-$}\},\\
&&\mathbb{R}^-\cdot K=\{\varphi(s,x)\,|\, -\infty<s<0,\;x\in K\}\quad\hbox{for}\;K\subset T^-.
\end{eqnarray*}

Suppose $i_G(T^-)=k>0$.
For $1\le j\le k$ define
\begin{eqnarray*}
&&G_{j}=\{\chi(\mathbb{R}^-\cdot K )\,|\,\chi\in\mathscr{F}, K\subset T^-
\;\hbox{belongs to $\mathscr{E}(\overline{\mathcal{Q}})$ and $i_G(K)\ge j$}\},\\
&&\Gamma_{j}=\{\overline{A\setminus B}\,|\,\hbox{for some integer $q\in [j, i_G(T^-)]$},\;A\in G_{q}, B\in \mathscr{E}(\overline{\mathcal{Q}})_{q-j}\}.
\end{eqnarray*}
As in the proofs of \cite[Lemma~2.12]{FaRa1} and \cite[Lemma~8.55]{FaRa2} we have:
\begin{enumerate}
\item[1)] $\Gamma_{j+1}\subset\Gamma_{j},\;1\le j\le i_G(T^-)-1$.
\item[2)] If $\chi\in\mathscr{F}$ and $A\in\Gamma_{j}$, then $\chi(A)\in\Gamma_{j}$.
\item[3)] If $A\in\Gamma_{j}$ and $B\in \mathscr{E}(\overline{\mathcal{Q}})_s$ with $s<j$, then $\overline{A\setminus B}\in \Gamma_{j-s}$.
\end{enumerate}

Let us define
$$
c_{\lambda,j}=\inf_{A\in\Gamma_{j}}\max_{z\in A}f_{\lambda}(z),\quad j=1,\cdots,k.
$$
Then the above property 1) implies $c_{\lambda,1}\le c_{\lambda,2}\le\cdots\le c_{\lambda,k}$.

\textsf{Firstly, let us assume that (\ref{e:fBi.1}) is true.}
 For each $\lambda\in [\lambda^\ast-\delta, \lambda^\ast)$, $0\in X$ is always a local (strict) minimizer  of $f_{\lambda}$.
 Therefore for  sufficiently small $\rho_\lambda>0$, we have
$f_{\lambda}(z)>0$ for any $0<\|z\|\le\rho_\lambda$
and so
$$
c_{\lambda,1}\ge \min_{\|z\|=\rho_\lambda}f_{\lambda}(z)>0.
$$
Using Lemma~\ref{lem:FBi.3.3} and
repeating other arguments in \cite[Theorem~2.9]{FaRa1} and \cite[Lemmas~8.67, 8.72]{FaRa2} we obtain:
\begin{enumerate}
\item[(a)] each $c_{\lambda,j}$
is a critical value of $f_{\lambda}$ with a corresponding critical point in $\mathscr{Q}$;
\item[(b)] if $c_{\lambda,j+1}=\cdots=c_{\lambda,j+p}=c$ then $i_G(K_{\lambda,c})\ge p$,
where $K_{\lambda,c}$ is as in Lemma~\ref{lem:FBi.3.3};
\item[(c)] all critical points corresponding to $c_{\lambda,j}$, $1\le j\le i_G(T^-)$, converge to $z=0$ as $\lambda\to\lambda^\ast$.
\end{enumerate}
$G$ is equal to $\mathbb{Z}_2=\{{\rm id}, -{\rm id}\}$ (resp. $S^1$

Moreover, when $p>1$,  $K_{\lambda,c}$ contains infinitely many distinct $G$-orbits. (In fact, if $G=\mathbb{Z}_2=\{{\rm id}, -{\rm id}\}$
 this follows from  \cite[Lemma~2.8]{FaRa1}.
If $G=S^1$ without fixed points except $0$,
since  $0\notin K_{\lambda,c}$ and  the isotropy group $S^1_x$ of the $S^1$-action at any $x\in X$ is either finite or $S^1$,
we deduce that the induced $S^1$-action on $K_{\lambda,c}$ has only finite isotropy groups and thus that
 $K_{\lambda,c}/S^1$ is an infinite set by \cite[Remark~6.16]{FaRa2}.)
These lead to

\begin{claim}\label{cl:F4.5}
If $\lambda\in [\lambda^\ast-\delta, \lambda^\ast)$ is close to $\lambda^\ast$,
$f_{\lambda}$ has at least $k$ distinct
 nontrivial critical $G$-orbits, which also converge to $0$ as $\lambda\to\lambda^\ast$.
\end{claim}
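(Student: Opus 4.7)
The plan is to derive Claim~\ref{cl:F4.5} from the three items (a), (b), (c) already stated, combined with a standard Fadell--Rabinowitz style index-counting argument applied to the minimax values
$$
c_{\lambda,1}\le c_{\lambda,2}\le\cdots\le c_{\lambda,k}.
$$
First I would verify that these $k$ values are strictly positive for $\lambda\in[\lambda^\ast-\delta,\lambda^\ast)$ close to $\lambda^\ast$. Under hypothesis (\ref{e:fBi.1}), $0\in X$ is a strict local minimizer of $f_{\lambda}$, so some $\rho_\lambda>0$ gives $f_\lambda(z)>0$ for $0<\|z\|\le\rho_\lambda$. A monotonicity-of-index argument (together with the fact that $0\notin A$ for any $A\in\Gamma_1$, by construction of $\Gamma_j$ from $T^-\subset\partial\overline{\mathcal{Q}}$) shows that every $A\in\Gamma_1$ must intersect the sphere $\{\|z\|=\rho_\lambda\}$, so $c_{\lambda,1}\ge\min_{\|z\|=\rho_\lambda}f_\lambda(z)>0$. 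In particular each critical point produced by these minimax values is nontrivial.

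Next I would extract the $k$ distinct critical $G$-orbits. If the $c_{\lambda,j}$ are pairwise distinct, item (a) directly provides $k$ critical points with $k$ different critical values, hence $k$ distinct nontrivial $G$-orbits. If several values collapse, say $c_{\lambda,j+1}=\cdots=c_{\lambda,j+p}=c$ with $p\ge 2$, then (b) gives $i_G(K_{\lambda,c})\ge p\ge 2$. For $G=\mathbb{Z}_2$, \cite[Lemma~2.8]{FaRa1} forces $K_{\lambda,c}$ to contain infinitely many orbits; for $G=S^1$, since $0\notin K_{\lambda,c}$ all isotropy groups are finite, so $K_{\lambda,c}/S^1$ is infinite by \cite[Remark~6.16]{FaRa2}. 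Partitioning $\{1,\ldots,k\}$ into maximal constancy-intervals of $j\mapsto c_{\lambda,j}$ and combining the single orbits from length-one intervals with the infinite families from longer intervals produces in either case at least $k$ distinct nontrivial $G$-orbits of $f_{\lambda}$.

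The convergence claim follows immediately from item (c): every critical point attached to a value $c_{\lambda,j}$ lies in $\mathcal{Q}$ and approaches $0$ as $\lambda\to\lambda^\ast$, so the entire family of $k$ orbits collapses to the origin in the limit.

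The main obstacle I anticipate is the bookkeeping in the collapsed-value case: one must verify that the infinite families arising from clusters of coinciding minimax values, together with the isolated orbits from simple values, really combine to give \emph{at least $k$} geometrically distinct $G$-orbits rather than ``$k$ counted with multiplicity''. This is an inductive counting argument based on the monotonicity of $j\mapsto c_{\lambda,j}$ and on (b), and it is the step that must be written out carefully to avoid double-counting orbits lying inside a single level set.
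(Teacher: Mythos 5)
Your proposal reproduces the paper's argument essentially verbatim: positivity of $c_{\lambda,1}$ from the strict local minimum at $0$, extraction of critical orbits via items (a)--(c), the dichotomy between distinct minimax values (one orbit each) and collapsed values $c_{\lambda,j+1}=\cdots=c_{\lambda,j+p}$ (where $i_G(K_{\lambda,c})\ge p\ge 2$ forces infinitely many orbits, via \cite[Lemma~2.8]{FaRa1} for $\mathbb{Z}_2$ and the finite-isotropy argument for $S^1$), and convergence from (c). The ``bookkeeping'' you flag is the same point the paper handles implicitly -- constancy intervals of length $>1$ overproduce orbits, so the total is at least $k$ -- and poses no genuine difficulty.
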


 For every $\lambda\in (\lambda^\ast, \lambda^\ast+\delta]$,
$0\in X$ is a local maximizer of $f_{\lambda}$, by considering
$-f_{\lambda}$ the same reason yields:

\begin{claim}\label{cl:F4.6}
If $i_{G}(T^+)=l>0$, for every $\lambda\in (\lambda^\ast, \lambda^\ast+\delta]$  close to $\lambda^\ast$,
$f_{\lambda}$ has at least $l$ distinct  nontrivial
critical $G$-orbits converging to $0$ as $\lambda\to\lambda^\ast$.
\end{claim}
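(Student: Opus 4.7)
The plan is to run the argument of Claim~\ref{cl:F4.5} verbatim with $f_\lambda$ replaced by $g_\lambda:=-f_\lambda$. For $\lambda\in(\lambda^\ast,\lambda^\ast+\delta]$ the origin is a strict local \emph{minimizer} of $g_\lambda$ rather than a strict local maximizer of $f_\lambda$, which is exactly the sign condition used in Claim~\ref{cl:F4.5} to guarantee that the first minimax level satisfies $c_{\lambda,1}\ge\min_{\|z\|=\rho_\lambda}g_\lambda(z)>0$ for small $\rho_\lambda>0$. Since $K(g_\lambda)=K(f_\lambda)$, any nontrivial critical $G$-orbit produced for $g_\lambda$ is automatically one for $f_\lambda$, so it suffices to exhibit $l$ such orbits for $g_\lambda$ converging to $0$.

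Next I would verify that all pseudo-gradient data transform in a controlled way under the sign flip. The field $-\mathscr{V}_\lambda$ is a $G$-equivariant Lipschitz pseudo-gradient for $g_\lambda$ on the open set $\overline{\mathcal{Q}}\setminus K(f_\lambda)$, because the estimates of Lemma~\ref{lem:Fpseudogradient} are preserved by the simultaneous change $(Df_\lambda,\mathscr{V}_\lambda)\mapsto(-Df_\lambda,-\mathscr{V}_\lambda)$. The ODE (\ref{e:fBi.3}) is then replaced by its time-reversal $\dot{\varphi}=\mathscr{V}_{\lambda^\ast}(\varphi)$, so the sets $S^\pm_g,T^\pm_g$ built from $g_{\lambda^\ast}$ coincide, respectively, with $S^\mp_f,T^\mp_f$ built from $f_{\lambda^\ast}$. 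The trapping neighbourhood $\mathcal{Q}$ of Lemma~\ref{lem:FBi.3.3-} can be taken unchanged because its defining condition $|f_{\lambda^\ast}|<\gamma$ is symmetric in $f_{\lambda^\ast}$, and Lemma~\ref{lem:FBi.3.4}(i) becomes $\min_{T^+_g}g_{\lambda^\ast}>0$, $\max_{T^-_g}g_{\lambda^\ast}<0$, with the crucial identity $i_G(T^-_g)=i_G(T^+_f)=l$. The deformation Lemma~\ref{lem:FBi.3.3} carries over to $g_\lambda$ on $\overline{\mathcal{Q}}$ by applying the same construction to $-\mathscr{V}_\lambda$, since the estimates (\ref{e:fBi.4})--(\ref{e:fBi.7}) are bilinear in $(Df_\lambda,\mathscr{V}_\lambda)$ and therefore insensitive to the simultaneous sign change.

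With these items assembled, I would define classes $\Gamma_j^g$ $(1\le j\le l)$ from $T^-_g$ by the recipe used in Claim~\ref{cl:F4.5}, and the minimax values
$$
c^g_{\lambda,j}=\inf_{A\in\Gamma_j^g}\max_{z\in A}g_\lambda(z),\qquad 1\le j\le l.
$$
Steps (a)--(c) in the proof of Claim~\ref{cl:F4.5} then go through unchanged: each $c^g_{\lambda,j}>0$ is a critical value of $g_\lambda$ with a corresponding critical point in $\mathcal{Q}$; coincident levels $c^g_{\lambda,j+1}=\cdots=c^g_{\lambda,j+p}=c$ force $i_G(K^g_{\lambda,c})\ge p$ and hence infinitely many distinct $G$-orbits when $p>1$; and all chosen critical points converge to $0$ as $\lambda\to\lambda^\ast$. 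Pulling back via $f_\lambda=-g_\lambda$ delivers at least $l$ distinct nontrivial critical $G$-orbits of $f_\lambda$ converging to $0$, as required. The main point that needs careful bookkeeping—and essentially the only substantive step beyond a direct quotation of the argument of Claim~\ref{cl:F4.5}—is verifying the sign-symmetry of the pseudo-gradient estimates and the attendant deformation construction; once this is in place, no further analytic work is required.
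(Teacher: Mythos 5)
Your proposal is correct and is precisely the argument the paper intends; the paper compresses it to the single sentence ``by considering $-f_{\lambda}$ the same reason yields,'' and your elaboration of the sign flip (the pseudo-gradient $-\mathscr{V}_\lambda$, the time-reversed flow swapping $S^\pm,T^\pm$, the identity $i_G(T^-_g)=i_G(T^+_f)=l$, and the unchanged trapping set $\overline{\mathcal{Q}}$) is exactly what is needed to run the proof of Claim~\ref{cl:F4.5} for $g_\lambda=-f_\lambda$ on $\lambda\in(\lambda^\ast,\lambda^\ast+\delta]$.
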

These two claims together yield the  result of Theorem~\ref{th:Bi.2.1E}.

\textsf{Next, if (\ref{e:fBi.2}) holds true}, the similar arguments lead to:

\begin{claim}\label{cl:F4.7}
if $\lambda\in (\lambda^\ast, \lambda^\ast+\delta]$ (resp.
$\lambda\in [\lambda^\ast-\delta, \lambda^\ast)$) is close to $\lambda^\ast$,
$f_{\lambda}$ has at least $l$ (resp. $k$) distinct nontrivial
critical $G$-orbits, which also converge to $0$ as $\lambda\to\lambda^\ast$.
\end{claim}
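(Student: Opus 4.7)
The plan is to repeat the minimax argument carried out for Claims~\ref{cl:F4.5} and \ref{cl:F4.6}, with the roles of the one-sided deleted neighborhoods of $\lambda^\ast$ interchanged to reflect the reversal of the local extremal behavior of $f_\lambda$ at $0$ dictated by (\ref{e:fBi.2}) as compared with (\ref{e:fBi.1}). All of the auxiliary apparatus already set up---the $G$-equivariant pseudo-gradient $\mathscr{V}_{\lambda^\ast}$ of Lemma~\ref{lem:Fpseudogradient}, the isolating set $\mathcal{Q}$ with its boundary pieces $T^\pm$ of Lemma~\ref{lem:FBi.3.3-}, the index bound of Lemma~\ref{lem:FBi.3.4}, the interpolated pseudo-gradient $\Xi_{\rho,\lambda}$ together with its estimate (\ref{e:fBi.7}), and the $G$-deformation property of Lemma~\ref{lem:FBi.3.3}---was built solely from $f_{\lambda^\ast}$ and the ambient $G$-action, and therefore remains available without change.

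For $\lambda\in(\lambda^\ast,\lambda^\ast+\delta]$, hypothesis (\ref{e:fBi.2}) places $0$ as a strict local minimizer of $f_\lambda$, which is precisely the configuration that drove the proof of Claim~\ref{cl:F4.5}. I would therefore re-run the minimax construction on the classes $\Gamma_j$, $1\le j\le i_G(T^-)$, defined through $T^-$, and form $c_{\lambda,j}=\inf_{A\in\Gamma_j}\max_{z\in A}f_\lambda(z)$. The strict minimizer property at $0$ still delivers $c_{\lambda,1}\ge \min_{\|z\|=\rho_\lambda}f_\lambda(z)>0$, the $G$-deformation of Lemma~\ref{lem:FBi.3.3} is untouched, and the multiplicity mechanism that handles coincidences $c_{\lambda,j+1}=\cdots=c_{\lambda,j+p}$ via $i_G(K_{\lambda,c})\ge p$ goes through unchanged. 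Convergence of the resulting critical points to $0$ as $\lambda\to\lambda^\ast$ follows just as in part (c) after Claim~\ref{cl:F4.5}, since these points lie in $\overline{\mathcal{Q}}$ and $c_{\lambda,j}\to c_{\lambda^\ast,j}\le 0$.

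For $\lambda\in[\lambda^\ast-\delta,\lambda^\ast)$, $0$ is a strict local maximizer of $f_\lambda$, and I would follow the shortcut used for Claim~\ref{cl:F4.6}: replace $f_\lambda$ by $-f_\lambda$ so that $0$ becomes a strict local minimizer. The $G$-equivariant pseudo-gradient of $-f_{\lambda^\ast}$ may be taken to be $-\mathscr{V}_{\lambda^\ast}$, so its flow is the time-reverse of the original one and the roles of $S^\pm$, hence of $T^\pm$, are exchanged, i.e.\ $T^-_{-f_{\lambda^\ast}}=T^+$. Applying the preceding minimizer construction to $-f_\lambda$ with classes built from $T^+$ yields nontrivial critical $G$-orbits of $-f_\lambda$---and hence of $f_\lambda$---converging to $0$, the count being $i_G(T^+)$.

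The genuine obstacle is purely bookkeeping rather than analytic: one must keep track, after the swap dictated by (\ref{e:fBi.2}), of which of $k=i_G(T^-)$ and $l=i_G(T^+)$ attaches to each side of $\lambda^\ast$, and then check that the sub-additivity inequality $k+l\ge\dim X$ (respectively $\tfrac12\dim X$) supplied by Lemma~\ref{lem:FBi.3.4}(ii) again yields the bound $n^++n^-\ge\dim X$ (respectively $\tfrac12\dim X$) asserted in Theorem~\ref{th:Bi.2.1E}. No new estimates on $f_\lambda$ or $\nabla f_\lambda$ are required, and Claims~\ref{cl:F4.5}, \ref{cl:F4.6} together with the present swap cover both dichotomies (\ref{e:fBi.1}) and (\ref{e:fBi.2}), which completes the proof of the theorem.
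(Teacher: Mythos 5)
Your proposal is correct and matches the paper's intended argument; the paper itself disposes of Claim~\ref{cl:F4.7} with a single line, ``if (\ref{e:fBi.2}) holds true, the similar arguments lead to [the claim],'' and you have simply made explicit what ``similar arguments'' means: run the $T^-$--based minimax machinery from Claim~\ref{cl:F4.5} on the side where $0$ is a strict local minimizer of $f_\lambda$, and pass to $-f_\lambda$ (whose pseudo-gradient flow reverses time and so swaps $T^\pm$) on the maximizer side, exactly as in Claim~\ref{cl:F4.6}. Your observation that $\mathscr{V}_{\lambda^\ast}$, $\mathcal{Q}$, $T^\pm$, $\Xi_{\rho,\lambda}$, and Lemmas~\ref{lem:Fpseudogradient}--\ref{lem:FBi.3.3} are all built from $f_{\lambda^\ast}$ alone and are therefore unaffected by which of (\ref{e:fBi.1}) or (\ref{e:fBi.2}) holds is the right justification for why nothing new is needed.

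One point you should state outright instead of folding into ``bookkeeping'': as you have set it up, the minimizer side $\lambda\in(\lambda^\ast,\lambda^\ast+\delta]$ uses the classes $\Gamma_j$, $1\le j\le i_G(T^-)$, and therefore produces $k=i_G(T^-)$ nontrivial critical $G$-orbits; and the maximizer side $\lambda\in[\lambda^\ast-\delta,\lambda^\ast)$, treated via $-f_\lambda$ and $T^+$, produces $l=i_G(T^+)$ orbits. This is the \emph{reverse} of the assignment ``$l$ for $\lambda>\lambda^\ast$, $k$ for $\lambda<\lambda^\ast$'' written in Claim~\ref{cl:F4.7}. Comparing with Claims~\ref{cl:F4.5}--\ref{cl:F4.6} (minimizer $\to T^-\to k$, maximizer $\to T^+\to l$), the claim as printed appears to have $k$ and $l$ transposed, and your version is the one consistent with the preceding pattern. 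The discrepancy is harmless for Theorem~\ref{th:Bi.2.1E} because only the sum $n^++n^-=k+l\ge \ell(SX)$ (i.e.\ $\dim X$ or $\tfrac12\dim X$) enters, but a careful write-up should note the mismatch rather than silently absorb it.
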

So the expected result is still obtained.
\end{proof}

The following  may be viewed as a generalization of
Fadell--Rabinowitz theorems in \cite{FaRa1, FaRa2}.

\begin{theorem}\label{th:Bi.3.2}
Under the assumptions of Theorem~\ref{th:Bi.2.4}, let $H^0_{\lambda^\ast}$ be the eigenspace of
(\ref{e:Bi.2.7.4}) associated with $\lambda^\ast$. Let $H$ be equipped with
an orthogonal action of  a compact Lie group $G$ under which
$U$, $\mathcal{L}$ and $\widehat{\mathcal{L}}$  are $G$-invariant.
  If the Lie group  $G$ is equal to $\mathbb{Z}_2=\{{\rm id}, -{\rm id}\}$ (resp. $S^1$
     and the fixed point set of the induced $S^1$-action on $H^0_{\lambda^\ast}$,
    $(H^0_{\lambda^\ast})^{S^1}=\{x\in H^0\,|\, g\cdot x=x\;\forall g\in S^1\}$, is $\{0\}$,
    which implies $\dim H^0_{\lambda^\ast}$ to be an even more than one),
             then  one of the following alternatives holds:
\begin{enumerate}
\item[\rm (i)] $(\lambda^\ast, 0)$ is not an isolated solution of (\ref{e:Bi.2.7.3}) in
 $\{\lambda^\ast\}\times U$;

\item[\rm (ii)] there exist left and right  neighborhoods $\Lambda^-$ and $\Lambda^+$ of $\lambda^\ast$ in $\mathbb{R}$
and integers $n^+, n^-\ge 0$, such that $n^++n^-\ge\dim H^0_{\lambda^\ast}$ (resp. $\frac{1}{2}\dim H^0_{\lambda^\ast}$),
and that for $\lambda\in\Lambda^-\setminus\{\lambda^\ast\}$ (resp. $\lambda\in\Lambda^+\setminus\{\lambda^\ast\}$)
(\ref{e:Bi.2.7.3}) has at least $n^-$ (resp. $n^+$) distinct critical
$G$-orbits different from $0$, which converge to
 $0$ as $\lambda\to\lambda^\ast$.
\end{enumerate}
In particular,  $(\lambda^\ast, 0)\in\mathbb{R}\times U$ is a bifurcation point  for the equation
(\ref{e:Bi.2.7.3}).
\end{theorem}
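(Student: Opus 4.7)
The plan is to follow the strategy of Theorem~\ref{th:Bi.2.4}, reducing the bifurcation problem to a finite-dimensional problem on $H^0_{\lambda^\ast}$ via the parameterized splitting theorem, and then invoking the equivariant version Theorem~\ref{th:Bi.2.1E} in place of Theorem~\ref{th:Bi.2.1}. Applying \cite[Theorem~2.16]{Lu7} to $\mathcal{L}_\lambda = \mathcal{L} - \lambda\widehat{\mathcal{L}}$ yields $\delta,\epsilon>0$ and a continuous map $\psi:[\lambda^\ast-\delta,\lambda^\ast+\delta]\times (B_H(0,\epsilon)\cap H^0_{\lambda^\ast})\to (H^0_{\lambda^\ast})^\bot$ such that the reduced $C^1$ functional $\mathcal{L}^\circ_\lambda$ defined by (\ref{e:Bi.2.15}) has critical points in one-to-one correspondence with those of $\mathcal{L}_\lambda$ near $0\in H$, under the map $z\mapsto z+\psi(\lambda,z)$.

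The first key step is to promote this reduction to an equivariant one. Since $\mathcal{L}$ and $\widehat{\mathcal{L}}$ are $G$-invariant with $0$ a fixed point, both $\mathcal{L}''(0)$ and $\widehat{\mathcal{L}}''(0)$ commute with the orthogonal $G$-action; hence $H^0_{\lambda^\ast}=\mathrm{Ker}(\mathcal{L}''(0)-\lambda^\ast \widehat{\mathcal{L}}''(0))$ and its orthogonal complement $(H^0_{\lambda^\ast})^\bot$ are $G$-invariant subspaces, and the projection $P^\bot_{\lambda^\ast}$ commutes with $G$. I would then invoke the uniqueness of $\psi$ in \cite[Theorem~2.16]{Lu7}: for any $g\in G$, the map $z\mapsto g^{-1}\psi(\lambda,gz)$ satisfies the same defining equation $P^\bot_{\lambda^\ast}\nabla\mathcal{L}_\lambda(z+\cdot)=0$ and vanishes at $z=0$, so it must coincide with $\psi(\lambda,\cdot)$. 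This gives $\psi(\lambda,gz)=g\psi(\lambda,z)$, and hence $\mathcal{L}^\circ_\lambda$ is $G$-invariant and the correspondence $z\mapsto z+\psi(\lambda,z)$ is $G$-equivariant.

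Next, precisely as in the proof of Theorem~\ref{th:Bi.2.4}, the Morse-index assumption together with Theorem~\ref{th:A.1} and \cite[Theorem~2.18]{Lu7} yields the critical-group computation (\ref{e:Bi.2.16}); combined with (\ref{e:Bi.2.17}) this shows that $0\in H^0_{\lambda^\ast}$ is a \emph{strict} local minimizer of $\mathcal{L}^\circ_\lambda$ on one side of $\lambda^\ast$ and a strict local maximizer on the other, as in (\ref{e:Bi.2.18}) or (\ref{e:Bi.2.19}). If alternative (i) fails, i.e.\ $0$ is an isolated critical point of $\mathcal{L}^\circ_{\lambda^\ast}$, I would apply Theorem~\ref{th:Bi.2.1E} to the $G$-invariant family $\{\mathcal{L}^\circ_\lambda\}$ on the finite-dimensional $G$-representation $H^0_{\lambda^\ast}$. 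For $G=\mathbb{Z}_2$ this supplies integers $n^+,n^-$ with $n^++n^-\geq\dim H^0_{\lambda^\ast}$ distinct critical $G$-orbits on the appropriate sides; for $G=S^1$, the hypothesis $(H^0_{\lambda^\ast})^{S^1}=\{0\}$ means that $S^1$ acts without nonzero fixed points on $H^0_{\lambda^\ast}$, yielding the bound $\frac{1}{2}\dim H^0_{\lambda^\ast}$. The equivariant correspondence $z\mapsto z+\psi(\lambda,z)$ then lifts these to distinct nontrivial critical $G$-orbits of $\mathcal{L}_\lambda$ converging to $0$, establishing (ii).

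The main obstacle lies in the equivariance step: although the uniqueness argument is clean in principle, one must check that each ingredient in \cite[Theorem~2.16]{Lu7}—notably the implicit function defining $\psi$ and the homeomorphism realizing the splitting—can be produced $G$-equivariantly under only an orthogonal $G$-action (without additional smoothness of the data). A secondary delicate point is that the map $z\mapsto z+\psi(\lambda,z)$ preserves the decomposition into distinct $G$-orbits: this follows from its injectivity and $G$-equivariance combined with $G$-invariance of $H^0_{\lambda^\ast}$, but should be recorded explicitly so that the orbit count transferred from Theorem~\ref{th:Bi.2.1E} to the original equation (\ref{e:Bi.2.7.3}) is genuine.
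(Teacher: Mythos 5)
Your proposal follows essentially the same route as the paper: reduce via the parameterized splitting theorem of \cite[Theorem~2.16]{Lu7} to the finite-dimensional $G$-invariant functionals $\mathcal{L}^\circ_\lambda$ on $H^0_{\lambda^\ast}$, obtain the strict local max/min dichotomy (\ref{e:Bi.2.18}) or (\ref{e:Bi.2.19}) from the Morse-index hypothesis exactly as in Theorem~\ref{th:Bi.2.4}, and then apply Theorem~\ref{th:Bi.2.1E}. Your ``main obstacle'' is actually milder than you fear: for transferring critical orbits only the $G$-equivariance of $\psi$ is needed, which your uniqueness argument supplies (and which is essentially what the paper invokes), while the critical-group computations leading to the strict local min/max statement are ordinary, non-equivariant computations, so the homeomorphism realizing the splitting need not be produced equivariantly.
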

\begin{proof}
The first claim follows from Theorem~\ref{th:Bi.2.4}.
For others, by the arguments above Step 1 in the proof of Theorem~\ref{th:Bi.2.4}
the problem is reduced to finding the $G$-critical orbits of $\mathcal{L}^\circ_\lambda$
near $0\in H^0_{\lambda^\ast}$ for $\lambda$ near $\lambda^\ast$,
where $\mathcal{L}^\circ_\lambda: B_{H}(0, \epsilon)\cap H^0_{\lambda^\ast}\to\mathbb{R}$ is given by (\ref{e:Bi.2.15}).
Suppose that (i) does not hold.  By shrinking $\delta>0$, we obtain
that  for each  $\lambda\in [\lambda^\ast-\delta, \lambda^\ast+\delta]$, $0\in H$ is an isolated critical point of
$\mathcal{L}_\lambda$ and thus $0\in H^0_{\lambda^\ast}$ is an isolated critical point of
$\mathcal{L}^\circ_\lambda$. Then we have
(\ref{e:Bi.2.18}) and (\ref{e:Bi.2.19}), and so (ii) follows from
Theorem~\ref{th:Bi.2.1E}.
 \end{proof}

By Corollaries~\ref{cor:Bi.2.4.1} and \ref{cor:Bi.2.4.2} we get

\begin{corollary}\label{cor:Bi.3.2.1}
If ``Under the assumptions of Theorem~\ref{th:Bi.2.4}" in Theorem~\ref{th:Bi.3.2}
is replaced by ``Under the assumptions of one of Corollaries~\ref{cor:Bi.2.4.1} and \ref{cor:Bi.2.4.2}",
then the conclusions of Theorem~\ref{th:Bi.3.2} hold true.
\end{corollary}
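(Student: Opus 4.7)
The plan is to observe that Corollary~\ref{cor:Bi.3.2.1} is essentially a bookkeeping consequence: the hypotheses of Corollaries~\ref{cor:Bi.2.4.1} and \ref{cor:Bi.2.4.2} were designed precisely to force the Morse-index dichotomy required by Theorem~\ref{th:Bi.2.4}, and Theorem~\ref{th:Bi.3.2} uses nothing beyond that dichotomy together with the equivariant reduction. So I would first note that under either set of hypotheses, the reduced functional $\mathcal{L}^\circ_\lambda$ on $H^0_{\lambda^\ast}$ inherits $G$-invariance from the $G$-invariance of $\mathcal{L}$ and $\widehat{\mathcal{L}}$, provided the Lyapunov--Schmidt map $\psi(\lambda,\cdot)$ itself is $G$-equivariant. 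This equivariance of $\psi$ follows from its uniqueness characterization in Theorem~\ref{th:A.5-} (or the parameterized splitting theorem in \cite[Theorem~2.16]{Lu7}): if $g\in G$, then $g\cdot\psi(\lambda,g^{-1}z)$ solves the same defining equation and lies in the same orthogonal complement $(H^0_{\lambda^\ast})^\bot$, hence must coincide with $\psi(\lambda,z)$ because the $G$-action preserves the orthogonal splitting $H^0_{\lambda^\ast}\oplus(H^0_{\lambda^\ast})^\bot$.

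Next, I would invoke the key computation from the proofs of Corollaries~\ref{cor:Bi.2.4.1} and \ref{cor:Bi.2.4.2}: under the hypotheses of either corollary, the Morse index $\mu_\lambda$ of $\mathcal{L}_\lambda=\mathcal{L}-\lambda\widehat{\mathcal{L}}$ at $0\in H$ satisfies either (\ref{e:Bi.2.7.5}) or (\ref{e:Bi.2.19-}) for some small $\delta>0$. This is precisely the Morse-index assumption required in Theorem~\ref{th:Bi.2.4}, and hence also in Theorem~\ref{th:Bi.3.2}.

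With these two ingredients in place, the proof of Corollary~\ref{cor:Bi.3.2.1} reduces to citing Theorem~\ref{th:Bi.3.2}: the hypotheses of Theorem~\ref{th:Bi.2.4} hold (by the corollaries' proofs), the $G$-action on $H$ and its restriction to $H^0_{\lambda^\ast}$ satisfy the stated group-theoretic requirements (by assumption in the statement of Corollary~\ref{cor:Bi.3.2.1}), and the $G$-invariance of $\mathcal{L}^\circ_\lambda$ is inherited from $\mathcal{L},\widehat{\mathcal{L}}$ via the equivariance of $\psi$. Then alternatives (i) and (ii) of Theorem~\ref{th:Bi.3.2} translate directly to the desired conclusions.

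The main technical point, and the only place where something new must be verified, is the $G$-equivariance of $\psi$; everything else is quotation. I do not expect any serious obstacle here, since the orthogonal $G$-action leaves both the kernel $H^0_{\lambda^\ast}={\rm Ker}(\mathcal{L}''(0)-\lambda^\ast\widehat{\mathcal{L}}''(0))$ and its orthogonal complement invariant (because $\mathcal{L}''(0)$ and $\widehat{\mathcal{L}}''(0)$ are $G$-equivariant self-adjoint operators by differentiating the invariance identities), and the implicit-function argument that produces $\psi$ is purely variational and respects any symmetry respected by the data. Once this is checked, the rest is a direct appeal to Theorem~\ref{th:Bi.3.2}.
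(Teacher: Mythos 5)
Your proposal is correct and follows essentially the same route as the paper: the proofs of Corollaries~\ref{cor:Bi.2.4.1} and \ref{cor:Bi.2.4.2} are observed to establish exactly the Morse-index dichotomy (\ref{e:Bi.2.7.5}) or (\ref{e:Bi.2.19-}) required in Theorem~\ref{th:Bi.2.4}, after which one simply cites Theorem~\ref{th:Bi.3.2}. The $G$-equivariance of $\psi$ that you flag as the ``only place where something new must be verified'' is in fact already handled inside Theorem~\ref{th:Bi.3.2} (via Theorem~\ref{th:A.5-}(iii)), so it needs no re-verification here; otherwise your argument matches the paper's one-line justification.
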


Now we consider generalizations of
Fadell--Rabinowitz theorems in \cite{FaRa1, FaRa2} in the setting of \cite{Lu1,Lu2}.
Combing Theorem~\ref{th:Bi.2.1E} with Theorem~\ref{th:Bif.2.2.0} can naturally lead to one.
Instead of this we adopt another way.
Recall that Fadell--Rabinowitz bifurcation theorems \cite{FaRa1, FaRa2}
 were generalized  to the case of arbitrary compact Lie groups by
Bartsch and Clapp \cite{BaCl}, and Bartsch \cite{Ba1}.
Bartsch and Clapp \cite[\S4]{BaCl} proved the following theorem
(according to our notations).

\begin{theorem}\label{th:Bif.2.2.2+}
 Let $Z$ be a finite dimensional Hilbert space equipped with a linear isometric action of a compact Lie group $G$
 with $Z^G=\{0\}$, let $\delta>0$, $\epsilon>0$, $\lambda^\ast\in\mathbb{R}$ and
for every $\lambda\in [\lambda^\ast-\delta, \lambda^\ast+\delta]$ let
$f_\lambda:B_Z(0,\epsilon)\to\mathbb{R}$ be a function of class $C^2$.
Assume that
\begin{enumerate}
\item[\rm (a)] the functions $\{(\lambda,u)\to f_\lambda(u)\}$ and
$\{(\lambda,u)\to f'_\lambda(u)\}$  are continuous on
$[\lambda^\ast-\delta, \lambda^\ast+\delta]\times B_Z(0,\epsilon)$;
\item[\rm (b)] $u=0$ is a critical point of each $f_\lambda$, $L_{\lambda}:=D(\nabla f_{\lambda})(0)\in\mathscr{L}_s(Z)$ is an isomorphism
for each $\lambda\in (\lambda^\ast-\delta, \lambda^\ast+\delta)\setminus\{\lambda^\ast\}$, and  $L_{\lambda^\ast}=0$;
\item[\rm (c)] the eigenspaces $Z_\lambda$ and $Z_\mu$ belonging to
  $\sigma(L_{\lambda})\cap\mathbb{R}^-$ and $\sigma(L_{\mu})\cap\mathbb{R}^-$,
  respectively, are isomorphic if $(\lambda-\lambda^\ast)(\mu-\lambda^\ast)>0$ for any
  $\lambda,\mu\in (\lambda^\ast-\delta, \lambda^\ast+\delta)\setminus\{\lambda^\ast\}$, and thus
  the number
\begin{equation}\label{e:Bi.3.9.2*}
d:=\ell(SZ)-\min\{\ell(SZ_{\lambda})+\ell(SZ_{\mu}^\bot),
 \ell(SZ_{\lambda}^\bot)+\ell(SZ_{\mu})\}
\end{equation}
  is independent of $\lambda\in (\lambda^\ast-\delta, \lambda^\ast)$ and $\mu\in(\lambda^\ast, \lambda^\ast+\delta)$,
  where $Z_\lambda^\bot$ is the orthogonal complementary of $Z_\lambda$ in $Z$ and
  $\ell$ denote the $(\mathscr{G}, h^\ast)$-length  used in \cite{BaCl}.
\end{enumerate}
Then one at least of the following assertions holds if $d>0$:
\begin{enumerate}
\item[\rm (i)] $u=0$ is not an isolated critical point of $f_{\lambda^\ast}$;
\item[\rm (ii)] there exist left and right  neighborhoods $\Lambda^-$ and $\Lambda^+$ of $\lambda^\ast$ in $\mathbb{R}$
and integers $n^+, n^-\ge 0$, such that $n^++n^-\ge d$
and for $\lambda\in\Lambda^-\setminus\{\lambda^\ast\}$ (resp. $\lambda\in\Lambda^+\setminus\{\lambda^\ast\}$),
$f_\lambda$ has at least $n^-$ (resp. $n^+$) distinct critical
$G$-orbits different from $0$, which converge to
 $0$ as $\lambda\to\lambda^\ast$.
\end{enumerate}
In particular,  $(\lambda^\ast, 0)\in [\lambda^\ast-\delta, \lambda^\ast+\delta]\times B_Z(0,\epsilon)$
is a bifurcation point of $f'_\lambda(u)=0$.
\end{theorem}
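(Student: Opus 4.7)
The plan is to adapt the equivariant minimax framework employed in the proof of Theorem~\ref{th:Bi.2.1E} (originally due to Fadell--Rabinowitz) to a general compact Lie group by replacing the $\mathbb{Z}_2$-genus and $S^1$-index by the $(\mathscr{G}, h^*)$-length $\ell$ of Bartsch--Clapp. First I assume (i) fails and shrink $\epsilon > 0$ so that $0$ is the unique critical point of $f_{\lambda^*}$ in $\bar{B}_Z(0, \epsilon)$. By (b) and (c), after shrinking $\delta > 0$, $0$ is a nondegenerate critical point of every $f_\lambda$ with $\lambda \in (\lambda^* - \delta, \lambda^* + \delta) \setminus \{\lambda^*\}$ and the negative eigenspace $Z_\lambda$ of $L_\lambda$ has a $G$-isomorphism type that depends only on the sign of $\lambda - \lambda^*$. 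A $G$-equivariant Morse lemma (applicable since each $f_\lambda$ is $C^2$, $G$-invariant, and has a nondegenerate $G$-fixed critical point at $0$) supplies, for $\lambda \neq \lambda^*$, a local normal form $f_\lambda(u) = \tfrac{1}{2}\langle L_\lambda u, u\rangle + o(\|u\|^2)$ in suitable $G$-equivariant coordinates.

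Next, adapting the pseudo-gradient recipe of Lemmas~\ref{lem:Fpseudogradient}--\ref{lem:FBi.3.3}, I construct a $G$-equivariant locally Lipschitz pseudo-gradient vector field $\Xi_{\rho, \lambda}$ on a $G$-invariant compact neighborhood $\overline{\mathcal{Q}} \subset B_Z(0, \epsilon)$ which agrees with $\mathscr{V}_{\lambda^*}$ near $\partial \overline{\mathcal{Q}}$ and with $\mathscr{V}_\lambda$ near $0$, and use it to establish an equivariant deformation lemma for $f_\lambda |_{\overline{\mathcal{Q}}}$. Using the flow of $\mathscr{V}_{\lambda^*}$ I define the exit sets $T^+, T^- \subset \partial \overline{\mathcal{Q}}$ as in Lemma~\ref{lem:FBi.3.4}: they are closed, $G$-invariant, and separated from $0$ by $f_{\lambda^*}$. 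The key algebraic input replacing Lemma~\ref{lem:FBi.3.4}(ii) is the $(\mathscr{G}, h^*)$-length inequality
\[
\ell(T^{+}) + \ell(T^{-}) \geq \ell(SZ),
\]
which follows from the continuity, monotonicity and subadditivity axioms of $\ell$ together with the hypothesis $Z^G = \{0\}$.

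With this in hand I set up minimax classes $\Gamma_j^{\pm}$ exactly as in the proof of Theorem~\ref{th:Bi.2.1E}, replacing $i_G$ throughout by $\ell$, and define $c_{\lambda, j} = \inf_{A \in \Gamma_j^{\pm}} \sup_{z \in A} f_\lambda(z)$. The equivariant deformation lemma then forces each $c_{\lambda, j}$ to be a critical value of $f_\lambda$, with $\ell(K_{\lambda, c}) \geq p$ whenever $p$ consecutive values coincide, and the associated critical orbits lie in $\overline{\mathcal{Q}} \setminus \{0\}$ and converge to $0$ as $\lambda \to \lambda^*$. Combining the counts on both sides using the identification, up to $G$-homotopy equivalence, of $T^+$ and $T^-$ with unit spheres in the orthogonal splittings $Z = Z_\lambda \oplus Z_\lambda^\perp$ and $Z = Z_\mu \oplus Z_\mu^\perp$, the resulting lower bounds on $n^{-}$ and $n^{+}$ add up to exactly the quantity $d$ of (\ref{e:Bi.3.9.2*}). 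The main obstacle I anticipate is this final piece of $G$-topological bookkeeping: matching the exit sets $T^{\pm}$ precisely to the correct $G$-spheres and verifying the length identities needed to collapse the two lower bounds to the expression $\ell(SZ) - \min\{\ell(SZ_\lambda) + \ell(SZ_\mu^\perp), \ell(SZ_\lambda^\perp) + \ell(SZ_\mu)\}$; once that is settled, the bifurcation conclusion follows from the by-now standard minimax deformation argument.
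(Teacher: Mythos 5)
The paper does not prove this theorem; it is quoted verbatim from Bartsch--Clapp \cite[\S4]{BaCl}, so your proposal must be judged on its own. There is a genuine gap: the Fadell--Rabinowitz exit-set minimax scheme you adapt from the proof of Theorem~\ref{th:Bi.2.1E} is applicable only when $0$ is a \emph{strict local extremum} of $f_\lambda$ for $\lambda\neq\lambda^\ast$ on either side (hypothesis (c) of Theorem~\ref{th:Bi.2.1}). The hypotheses of Theorem~\ref{th:Bif.2.2.2+} only require $L_\lambda$ to be an isomorphism; the negative eigenspaces $Z_\lambda$ and $Z_\mu$ can be arbitrary nontrivial proper subspaces, in which case $0$ is a saddle on both sides. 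In that regime the minimax values $c_{\lambda,j}=\inf_{A\in\Gamma_j}\max_A f_\lambda$ you define need not be positive, can equal $f_\lambda(0)$, and the scheme produces no nontrivial critical orbits. Your argument only proves the theorem in the special case $\{Z_\lambda,Z_\mu\}=\{\{0\},Z\}$, where $d=\ell(SZ)$ --- which is indeed how the theorem is used later in Theorem~\ref{th:Bif.2.2.4-}, but it is not a proof of the theorem as stated.

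A second difficulty is the final bookkeeping step you flag as a concern: the exit sets $T^\pm$ are defined from the gradient flow of $f_{\lambda^\ast}$, whose Hessian $L_{\lambda^\ast}$ is \emph{zero} by hypothesis (b). Their topology is governed by the higher-order structure of $f_{\lambda^\ast}$, not by the Morse indices of the nearby nondegenerate $f_\lambda$, so the claimed $G$-homotopy identification of $T^+$, $T^-$ with unit spheres in $Z_\lambda\oplus Z_\lambda^\perp$ and $Z_\mu\oplus Z_\mu^\perp$ does not follow. Even granting $\ell(T^+)+\ell(T^-)\ge\ell(SZ)$, one cannot extract from it the expression $d=\ell(SZ)-\min\{\ell(SZ_\lambda)+\ell(SZ_\mu^\perp),\,\ell(SZ_\lambda^\perp)+\ell(SZ_\mu)\}$ without a substantially different mechanism. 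The Bartsch--Clapp argument circumvents both issues: it is not a direct exit-set minimax but a comparison of the $(\mathscr{G},h^\ast)$-length of local Morse data (i.e.\ of pairs built from sub-level sets near $0$) on the two sides of $\lambda^\ast$, which is precisely where the relative term $\min\{\cdots\}$ in $d$ comes from. If you wish to pursue a proof, you would need to replace the absolute exit-set lengths by a relative linking/Conley-index comparison; as written, the proposal does not establish the theorem.
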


 \begin{theorem}\label{th:Bif.2.2.4-}
Under the assumptions of Theorem~\ref{th:Bif.2.2.0}
suppose that  a compact Lie group $G$  acts on $H$ orthogonally, which induces  $C^1$ isometric actions on $X$,
 and that both $U$ and $\mathcal{L}_\lambda$ are $G$-invariant (and hence $H^0_\lambda$, $H^\pm_\lambda$
are $G$-invariant subspaces).
If the fixed point set of the induced $G$-action on $H^0_{\lambda^\ast}$ is $\{0\}$ then
 one of the following alternatives occurs:
\begin{enumerate}
\item[\rm (i)] $(\lambda^\ast,0)$ is not an isolated solution  in  $\{\lambda^\ast\}\times U^X$ of the equation (\ref{e:Bif.2.2.1*});
\item[\rm (ii)] there exist left and right  neighborhoods $\Lambda^-$ and $\Lambda^+$ of $\lambda^\ast$ in $\mathbb{R}$
and integers $n^+, n^-\ge 0$, such that $n^++n^-\ge \ell(SH^0_{\lambda^\ast})$
and for $\lambda\in\Lambda^-\setminus\{\lambda^\ast\}$ (resp. $\lambda\in\Lambda^+\setminus\{\lambda^\ast\}$),
$\mathcal{L}_\lambda$ has at least $n^-$ (resp. $n^+$) distinct critical
$G$-orbits different from $0$, which converge to
 $0$ as $\lambda\to\lambda^\ast$.
\end{enumerate}
In particular,  $(\lambda^\ast, 0)\in [\lambda^\ast-\delta, \lambda^\ast+\delta]\times U^X$
is a bifurcation point of (\ref{e:Bif.2.2.1*}).
\end{theorem}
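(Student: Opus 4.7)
The plan is to reduce the infinite-dimensional bifurcation problem to a finite-dimensional equivariant one on $H^0_{\lambda^\ast}$ and then invoke the Bartsch--Clapp theorem stated as Theorem~\ref{th:Bif.2.2.2+}. First I would rerun the argument from the proof of Theorem~\ref{th:Bif.2.2.0}, which applies the parameterized splitting theorem \ref{th:A.5-} to the family $\{\mathcal{L}_\lambda\}$. This produces $\delta,\epsilon>0$, a $C^1$ map $\psi:[\lambda^\ast-\delta,\lambda^\ast+\delta]\times B_{H^0_{\lambda^\ast}}(0,\epsilon)\to X^\pm_{\lambda^\ast}$ and an origin-preserving homeomorphism $\Phi_\lambda$ satisfying (\ref{e:NSpl.2.2}), together with the reduced $C^2$ functional $\mathcal{L}^\circ_\lambda$ of (\ref{e:NSpl.2.3}). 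The crucial new point is $G$-equivariance of this reduction: since $G$ acts orthogonally on $H$ and the subspaces $H^0_{\lambda^\ast}$ and $H^\pm_{\lambda^\ast}$ are spectral subspaces of the $G$-invariant operator $B_{\lambda^\ast}(0)$, they are $G$-invariant; hence the $G$-invariance of $\mathcal{L}_\lambda$ together with the uniqueness clause of Theorem~\ref{th:A.5-} forces $\psi(\lambda,gz)=g\psi(\lambda,z)$ and $\Phi_\lambda(gz,gu^++gu^-)=g\Phi_\lambda(z,u^++u^-)$. Consequently each $\mathcal{L}^\circ_\lambda$ is $G$-invariant on the $G$-invariant space $H^0_{\lambda^\ast}$, and the correspondence $z\mapsto z+\psi(\lambda,z)$ is $G$-equivariant, so it sets up a one-to-one correspondence between critical $G$-orbits of $\mathcal{L}^\circ_\lambda$ near $0\in H^0_{\lambda^\ast}$ and critical $G$-orbits of the equation $A_\lambda(u)=0$ near $0\in U^X$.

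Next I would check the hypotheses of Theorem~\ref{th:Bif.2.2.2+} with $Z=H^0_{\lambda^\ast}$. Joint continuity of $(\lambda,z)\mapsto\mathcal{L}^\circ_\lambda(z)$ and $(\lambda,z)\mapsto\nabla\mathcal{L}^\circ_\lambda(z)$ follows from the corresponding regularity of $\mathcal{L}_\lambda$ and of $\psi$ provided by Theorem~\ref{th:A.5-}. The assumption $Z^G=\{0\}$ is exactly the fixed-point hypothesis we have imposed on $H^0_{\lambda^\ast}$. To verify condition (b) of Theorem~\ref{th:Bif.2.2.2+}, use the counterpart of formula (\ref{e:BiSpl.2.5}) for the present setting: at $\lambda=\lambda^\ast$ the bracketed operator vanishes on $H^0_{\lambda^\ast}$, so $L_{\lambda^\ast}:=D(\nabla\mathcal{L}^\circ_{\lambda^\ast})(0)=0$, while for $\lambda\in(\lambda^\ast-\delta,\lambda^\ast+\delta)\setminus\{\lambda^\ast\}$ the condition ${\rm Ker}(B_\lambda(0))=\{0\}$ in (f) implies via the shifting-type identity that $0$ is a nondegenerate critical point of $\mathcal{L}^\circ_\lambda$, so $L_\lambda$ is an isomorphism.

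The heart of the argument is the computation of the Bartsch--Clapp number $d$ in (\ref{e:Bi.3.9.2*}); this is what the introduction claimed could be done explicitly here. By condition (f) and the shifting theorem (as in Corollary~\ref{cor:A.6} and the derivation of (\ref{e:Bi.2.16}) in the proof of Theorem~\ref{th:Bi.2.4}), the Morse index of $\mathcal{L}^\circ_\lambda$ at $0$ equals $0$ for $\lambda$ in one of the punctured half-neighborhoods of $\lambda^\ast$ and equals $\nu_{\lambda^\ast}=\dim H^0_{\lambda^\ast}$ in the other. Thus for any $\lambda$ in the first half-neighborhood the negative eigenspace $Z_\lambda$ of $L_\lambda$ is $\{0\}$, so $SZ_\lambda=\emptyset$ and $Z_\lambda^\bot=H^0_{\lambda^\ast}$; for any $\mu$ in the second half-neighborhood $Z_\mu=H^0_{\lambda^\ast}$, so $SZ_\mu^\bot=\emptyset$. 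Since $\ell(\emptyset)=0$ and $\ell$ is monotone, the minimum in (\ref{e:Bi.3.9.2*}) equals $\ell(\emptyset)+\ell(\emptyset)=0$, whence $d=\ell(SH^0_{\lambda^\ast})$; in particular $d>0$ (because $(H^0_{\lambda^\ast})^G=\{0\}$ implies $SH^0_{\lambda^\ast}\neq\emptyset$ is a free-ish $G$-space of positive length).

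Applying Theorem~\ref{th:Bif.2.2.2+} to $\{\mathcal{L}^\circ_\lambda\}$ now yields the dichotomy: either $0\in H^0_{\lambda^\ast}$ is not an isolated critical point of $\mathcal{L}^\circ_{\lambda^\ast}$, or the advertised integers $n^\pm$ with $n^++n^-\geq d=\ell(SH^0_{\lambda^\ast})$ exist. Pulling back through the $G$-equivariant bijection $z\mapsto z+\psi(\lambda,z)$ converts the first alternative into alternative (i) and the second into alternative (ii) of the statement. The main obstacle I anticipate is verifying that the parameterized splitting can be chosen $G$-equivariantly in a way that preserves the $C^2$ smoothness of $\mathcal{L}^\circ_\lambda$ required by Theorem~\ref{th:Bif.2.2.2+}; any non-uniqueness in $\psi$ would need to be killed either by the uniqueness clause of Theorem~\ref{th:A.5-} (the cleanest route) or by a $G$-averaging argument, after which one must re-check that averaging respects the regularity claims of the splitting theorem.
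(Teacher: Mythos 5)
Your proposal is correct and follows essentially the same strategy as the paper: equivariant splitting via Theorem~\ref{th:A.5-}(iii) (which already settles your final worry, since the equivariance of $\psi$ and $\Phi_\lambda$ is asserted there and follows from the uniqueness clause applied to the $g$-conjugated data), then verification of the Bartsch--Clapp hypotheses and the explicit computation $d=\ell(SH^0_{\lambda^\ast})$. The only cosmetic difference is that the paper phrases the identification of $Z_\lambda$ via the strict-local-minimizer/maximizer dichotomy (\ref{e:Bif.2.2.3})--(\ref{e:Bif.2.2.4}) inherited from the proof of Theorem~\ref{th:Bi.2.4}, whereas you extract the Morse index of $\mathcal{L}^\circ_\lambda$ directly from the shifting theorem; these two routes are equivalent, since for a nondegenerate critical point the strict local extremum property is the same information as Morse index $0$ or $\nu_{\lambda^\ast}$.
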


\begin{proof}
Follow the notations in the proof of Theorem~\ref{th:Bif.2.2.0}. In the present situation,
for each $\lambda\in [\lambda^\ast-\delta, \lambda^\ast+\delta]$,
the maps $\psi(\lambda, \cdot)$  and $\Phi_{\lambda}(\cdot,\cdot)$  are
  $G$-equivariant, and $C^2$ functional $\mathcal{L}^\circ_{\lambda}$ given by (\ref{e:NSpl.2.3}) is $G$-invariant.
As in the proof of Theorem~\ref{th:Bi.2.4} we obtain either
\begin{eqnarray}\label{e:Bif.2.2.3}
0\in H^0_{\lambda^\ast}\;\hbox{is a strict local}\left\{
\begin{array}{ll}
\hbox{minimizer of}\;\mathcal{L}^\circ_{\lambda},&\quad
\forall \lambda\in [\lambda^\ast-\delta, \lambda^\ast),\\
\hbox{maximizer of}\;\mathcal{L}^\circ_{\lambda},&\quad
\forall \lambda\in (\lambda^\ast, \lambda^\ast+\delta]
\end{array}\right.
\end{eqnarray}
or
\begin{eqnarray}\label{e:Bif.2.2.4}
0\in H^0_{\lambda^\ast}\;\hbox{is a strict local}\left\{
\begin{array}{ll}
\hbox{maximizer of}\;\mathcal{L}^\circ_{\lambda},&\quad
\forall \lambda\in [\lambda^\ast-\delta, \lambda^\ast),\\
\hbox{minimizer of}\;\mathcal{L}^\circ_{\lambda},&\quad
\forall \lambda\in (\lambda^\ast, \lambda^\ast+\delta].
\end{array}\right.
\end{eqnarray}
Note that for each $\lambda\in [\lambda^\ast-\delta, \lambda^\ast+\delta]\setminus\{\lambda^\ast\}$,
 $0\in H^0_{\lambda^\ast}$ is a nondegenerate critical point of
the functional $\mathcal{L}^\circ_\lambda$ by Remark~\ref{rm:Spl.2.4}(ii).
Let $L_\lambda:=D(\nabla\mathcal{L}_{\lambda}^\circ)(0)$. Then $L_{\lambda^\ast}=0$ by
(\ref{e:Spli.2.5}). Hence this and (\ref{e:Spli.2.3})-(\ref{e:Spli.2.4}) show that functionals
$f_\lambda:=\mathcal{L}_{\lambda}^\circ$ on $B_Z(0,\epsilon)$ with $Z:=H^0_{\lambda^\ast}$,
 $\lambda\in [\lambda^\ast-\delta, \lambda^\ast+\delta]$,
satisfy the conditions (a)-(b) in Theorem~\ref{th:Bif.2.2.2+}.
(In fact, $\lambda\mapsto L_\lambda\in\mathscr{L}_s(Z)$ is also continuous by (\ref{e:Spli.2.5}).)

It remains to prove that they also satisfy the condition (c) in Theorem~\ref{th:Bif.2.2.2+}.

Firstly, we assume (\ref{e:Bif.2.2.3}) holding. Then it implies that
$L_\lambda$ is positive definite for any $\lambda\in (\lambda^\ast-\delta, \lambda^\ast)$,
and negative definite for any $\lambda\in (\lambda^\ast, \lambda^\ast+\delta)$.
It follows that $\sigma(L_{\lambda})\cap\mathbb{R}^-=\emptyset$ and so $Z_\lambda=\emptyset$
for any $\lambda\in (\lambda^\ast-\delta, \lambda^\ast)$, and
$\sigma(L_{\lambda})\cap\mathbb{R}^-=\sigma(L_{\lambda})$ and so $Z_\lambda=Z$
for any $\lambda\in (\lambda^\ast, \lambda^\ast+\delta)$.
Hence the condition (c) in Theorem~\ref{th:Bif.2.2.2+} is satisfied. Note also that in
the present situation we have
\begin{eqnarray*}
d&=&\ell(SZ)-\min\{\ell(SZ_{\lambda})+\ell(SZ_{\mu}^\bot),
 \ell(SZ_{\lambda}^\bot)+\ell(SZ_{\mu})\}\\
 &=&\ell(SZ)-\min\{0,  \ell(SZ)+\ell(SZ)\}=\ell(SZ)
\end{eqnarray*}
 for any of $\lambda\in (\lambda^\ast-\delta, \lambda^\ast)$ and $\mu\in(\lambda^\ast, \lambda^\ast+\delta)$
 because the $(\mathscr{G}, h^\ast)$-length of a $G$-space,  $\ell(X)$, is equal to zero if and only if $X=\emptyset$ by
  \cite[Proposition~1.5]{BaCl}.

Next, let (\ref{e:Bif.2.2.4}) be satisfied. Then we have
$\sigma(L_{\lambda})\cap\mathbb{R}^-=\sigma(L_{\lambda})$ and so $Z_\lambda=Z$
for any $\lambda\in (\lambda^\ast-\delta, \lambda^\ast)$, and
 $\sigma(L_{\lambda})\cap\mathbb{R}^-=\emptyset$ and so $Z_\lambda=\emptyset$
for any $\lambda\in (\lambda^\ast, \lambda^\ast+\delta)$.
These  lead to
\begin{eqnarray*}
d=\ell(SZ)-\min\{\ell(SZ_{\lambda})+\ell(SZ_{\mu}^\bot),
 \ell(SZ_{\lambda}^\bot)+\ell(SZ_{\mu})\}=\ell(SZ)
\end{eqnarray*}
 for any of $\lambda\in (\lambda^\ast-\delta, \lambda^\ast)$ and $\mu\in(\lambda^\ast, \lambda^\ast+\delta)$.

 Note that we have always $\ell(SZ)>0$ by
 \cite[Proposition~1.5]{BaCl}. Hence Theorem~\ref{th:Bif.2.2.2+} gives the desired conclusions.
 \end{proof}

When $G=\mathbb{Z}_2=\{-id_H, id_H\}$  (resp. $G=S^1$)
we have $d=\dim H^0_{\lambda^\ast}$ (resp. $d=\frac{1}{2}\dim H^0_{\lambda^\ast}$) as showed in
Remark~\ref{rem:Bif.3.4} below.

 \begin{corollary}\label{cor:Bif.3.3}
Under the assumptions of one of Corollaries~\ref{cor:Bif.2.2.1},~\ref{cor:Bif.2.2.2}
suppose that $H$ is equipped with an orthogonal action of  a compact Lie group $G$
  which induces a $C^1$ isometric action on $X$, and that $U$, $\mathcal{L}$ and $\widehat{\mathcal{L}}$ are $G$-invariant.
 Then the conclusions of Theorem~\ref{th:Bif.2.2.4-} hold true.
\end{corollary}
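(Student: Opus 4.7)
The plan is to deduce Corollary~\ref{cor:Bif.3.3} from Theorem~\ref{th:Bif.2.2.4-} by taking the parameterized family $\mathcal{L}_\lambda := \mathcal{L} - \lambda\widehat{\mathcal{L}}$ on a small open interval $\Lambda$ about $\lambda^\ast$, and then checking that every hypothesis of Theorem~\ref{th:Bif.2.2.4-} is satisfied. Almost all of the work has already been done in the non-equivariant predecessors, so this amounts to a verification rather than a new argument.

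First I would observe that conditions (a)--(f) of Theorem~\ref{th:Bif.2.2.0} for this family are exactly what is established in the proofs of Corollaries~\ref{cor:Bif.2.2.1} and~\ref{cor:Bif.2.2.2}. Concretely, with $P_\lambda(x) := P(x)$ and $Q_\lambda(x) := Q(x) - \lambda\widehat{B}(x)$ coming from the decomposition supplied by Hypothesis~\ref{hyp:Bif.2.2.0+}, the positive-definiteness bound (c) is uniform in a bounded neighborhood of $\lambda^\ast$, while (b), (d), (e) follow from the corresponding continuity properties of $P$, $Q$ and $\widehat{B}$ exactly as in the proof of Corollary~\ref{cor:Bi.2.2} (``Moreover'' part) and of Corollary~\ref{cor:stablity2}. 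Condition (f) on the Morse indices of $\mathcal{L}_\lambda$ at $0$ is precisely what Corollary~\ref{cor:Bif.2.2.1} (via the monotonicity argument leading to~(\ref{e:Bi.2.19.1})) and Corollary~\ref{cor:Bif.2.2.2} (via the orthogonal spectral decomposition~(\ref{e:Bi.2.13})) deliver.

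Next I would supply the equivariant structure. Since $\mathcal{L}$ and $\widehat{\mathcal{L}}$ are $G$-invariant, so is each $\mathcal{L}_\lambda$, and $U$ is $G$-invariant by hypothesis. Because the $G$-action on $H$ is orthogonal and preserves $X$ by $C^1$ isometries, differentiating the $G$-invariance of $\mathcal{L}$ and $\widehat{\mathcal{L}}$ shows that $A$, $\widehat{A}$, $B(0)$ and $\widehat{B}(0)$ are all $G$-equivariant; in particular every $B_\lambda(0) = B(0) - \lambda\widehat{B}(0)$ commutes with the $G$-action. Consequently the kernel $H^0_{\lambda^\ast} = \ker(B(0) - \lambda^\ast\widehat{B}(0))$ and the complementary spaces $H^\pm_{\lambda^\ast}$ are $G$-invariant closed subspaces. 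Together with the assumed fixed-point condition $(H^0_{\lambda^\ast})^G = \{0\}$ implicit in the claim that the conclusions of Theorem~\ref{th:Bif.2.2.4-} hold, all hypotheses of Theorem~\ref{th:Bif.2.2.4-} are met, and the stated trichotomy follows at once.

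I do not expect a real obstacle: the only mildly delicate item is isolating which of the two alternative shapes of the Morse-index jump occurs, but both possibilities are already handled in a unified way inside Theorem~\ref{th:Bif.2.2.4-} (via (\ref{e:Bif.2.2.3}) and (\ref{e:Bif.2.2.4})), so a case distinction is unnecessary at the corollary level. Thus the proof reduces to citing Corollaries~\ref{cor:Bif.2.2.1} and~\ref{cor:Bif.2.2.2} for the analytical hypotheses, recording $G$-equivariance of the reduction data from the orthogonality of the action, and invoking Theorem~\ref{th:Bif.2.2.4-}.
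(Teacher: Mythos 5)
Your proposal is correct and follows essentially the same route the paper intends: the paper gives no explicit argument for this corollary precisely because, once Corollaries~\ref{cor:Bif.2.2.1},~\ref{cor:Bif.2.2.2} have been used to check hypotheses (a)--(f) of Theorem~\ref{th:Bif.2.2.0} for $\mathcal{L}_\lambda=\mathcal{L}-\lambda\widehat{\mathcal{L}}$, the only remaining content is to record that $G$-invariance of $U$, $\mathcal{L}$, $\widehat{\mathcal{L}}$ together with the orthogonal action makes each $\mathcal{L}_\lambda$ $G$-invariant and the associated operators equivariant, so Theorem~\ref{th:Bif.2.2.4-} applies directly. Your reading of the fixed-point requirement on $H^0_{\lambda^\ast}$ as part of the conditional conclusion being invoked is also consistent with how the paper states the result.
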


 \begin{remark}\label{rem:Bif.3.4}
{\rm For a finite-dimensional real $G$-module $M$ with $M^G=\{0\}$ and $h^\ast=H^\ast_G$ Borel cohomology it was proved in \cite[Propositions~2.4,2.6]{BaCl}:
\begin{enumerate}
\item[(i)] if $G=(\mathbb{Z}/p)^r$, where $r>0$ and $p$ is a prime, then $(\mathscr{G}, H^\ast_G)$-{\rm length}(SM) is equal
to $\dim M$ for $p=2$, and to $\frac{1}{2}\dim M$ for $p>2$;
\item[(ii)] if $G=(S^1)^r$,  $r>0$, then $(\mathscr{G}, H^\ast_G)$-{\rm length}(SM) is equal
to  $\frac{1}{2}\dim M$;
\item[(iii)] if $G=S^1\times\Gamma$, $\Gamma$ is finite, and such that the fixed point set of $S^1\equiv S^1\times\{e\}$
is trivial,  then $(\mathscr{G}, H^\ast_G)$-{\rm length}(SM) is equal
to  $\frac{1}{2}\dim M$.
\end{enumerate}
Here the coefficients in the Borel cohomology $H^\ast_G$ are $G=(\mathbb{Z}/p)^r$ in (i), and $\mathbb{Q}$ in (ii) and (iii).
}
\end{remark}

Let us point out that it is possible to generalize the above results with methods in  \cite[\S7.5]{Ba1}.

As remarked below Corollary~\ref{cor:Bif.2.2.2}, Theorem~\ref{th:Bif.2.2.4-}
and Corollary~\ref{cor:Bif.3.3} are powerful to applications in Lagrange systems and geodesics on Finsler manifolds.

 \begin{remark}\label{rem:Bif.3.5}
{\rm Under the assumptions of any one of Theorems~\ref{th:Bi.2.6},\ref{th:Bi.2.5}
and Corollaries~\ref{cor:Bi.2.6},\ref{cor:Bi.2.7},
if $H$ is equipped with an orthogonal action of  a compact Lie group $G$
 such that $U$ and $\mathcal{L}_\lambda$  are $G$-invariant and that
 the fixed point set of the induced $G$-action on $H^0_{\lambda^\ast}$ is $\{0\}$,
  then the corresponding conclusions with Theorem~\ref{th:Bi.3.2}  (for $G=\mathbb{Z}_2=\{{\rm id}, -{\rm id}\}$ or $S^1$)
 and those with Theorem~\ref{th:Bif.2.2.4-} with $X$ replaced by $H$ hold true.
Compare the latter with \cite[Therrem~3.1]{BaCl} by Bartsch and Clapp. }
\end{remark}

\subsection{Bifurcations starting a nontrivial critical orbit}\label{sec:B.3.2}

\begin{hypothesis}[\hbox{\cite[Hypothesis~2.20]{Lu7}}]\label{hyp:S.6.2}
{\rm {\bf (i)} Let $G$ be a compact
Lie group, and  $\mathcal{H}$  a $C^3$ Hilbert-Riemannian $G$-space
(that is, ${\mathcal{H}}$ is a $C^3$ $G$-Hilbert manifold with a Riemannian
metric $(\!(\cdot,\cdot)\!)$ such that $T{\mathcal{H}}$ is a $C^2$ Riemannian $G$-vector bundle (see \cite{Was})).\\
 {\bf (ii)} The $C^1$ functional $\mathcal{ L}:\mathcal{H}\to\mathbb{R}$ is $G$-invariant, the gradient
 $\nabla\mathcal{L}:\mathcal{H}\to T\mathcal{H}$ is G\^ateaux differentiable
 (i.e., under any $C^3$ local chart the functional $\mathcal{L}$
 has a G\^ateaux differentiable gradient map), and $\mathcal{ O}$ is an isolated
 critical orbit which is a $C^3$ critical submanifold with  Morse index $\mu_\mathcal{O}$.}
\end{hypothesis}

Under the above assumptions
let $\pi:N\mathcal{ O}\to \mathcal{ O}$ denote the normal bundle of it. The
bundle is a $C^2$-Hilbert vector bundle over $\mathcal{ O}$, and can
be considered as a subbundle of $T_\mathcal{ O}{\mathcal{H}}$ via the
Riemannian metric $(\!(\cdot, \cdot)\!)$. The metric $(\!(\cdot, \cdot)\!)$
induces a natural $C^2$ orthogonal bundle
projection ${\bf \Pi}:T_{\mathcal{O}}\mathcal{H}\to N\mathcal{O}$. For $\varepsilon>0$,
the so-called normal disk bundle of radius $\varepsilon$ is denoted by
$N\mathcal{ O}(\varepsilon):=\{(x,v)\in N\mathcal{O}\,|\,\|v\|_{x}<\varepsilon\}$.
 If $\varepsilon>0$ is small enough  the exponential map $\exp$ gives a $C^2$-diffeomorphism
 $\digamma$ from  $N\mathcal{ O}(\varepsilon)$ onto an open
neighborhood of $\mathcal{ O}$ in ${\mathcal{H}}$, $\mathcal{
N}(\mathcal{ O},\varepsilon)$.
For $x\in\mathcal{ O}$, let  $\mathscr{L}_s(N\mathcal{O}_x)$ denote the space
of those operators $S\in \mathscr{L}(N\mathcal{ O}_x)$ which are self-adjoint
with respect to the inner product $(\!(\cdot, \cdot)\!)_x$, i.e.
$(\!(S_xu, v)\!)_x=(\!(u, S_xv)\!)_x$ for all $u, v\in N\mathcal{
O}_x$. Then we have a $C^1$ vector bundle $\mathscr{L}_s(N\mathcal{ O})\to
\mathcal{O}$ whose fiber at $x\in\mathcal{ O}$ is given by
$\mathscr{L}_s(N\mathcal{ O}_x)$.

\begin{hypothesis}\label{hyp:Bi.3.19}
{\rm Under Hypothesis~\ref{hyp:S.6.2}, let for some $x_0\in\mathcal{ O}$ the pair
$(\mathcal{L}\circ\exp|_{N\mathcal{O}(\varepsilon)_{x_0}},  N\mathcal{O}(\varepsilon)_{x_0})$
satisfy the corresponding conditions with Hypothesis~\ref{hyp:1.1} with $X=H=N\mathcal{O}(\varepsilon)_{x_0}$.
(For this goal we only need require that the pair $(\mathcal{L}\circ\exp_{x_0},  B_{T_{x_0}\mathcal{H}}(0,\varepsilon))$
satisfy the corresponding conditions with Hypothesis~\ref{hyp:1.1} with $X=H=T_{x_0}\mathcal{H}$
by \cite[Lemma~2.8]{Lu7}.) Let $\widehat{\mathcal{L}}\in C^1(\mathcal{H},\mathbb{R})$  be  $G$-invariant, have
a critical orbit $\mathcal{O}$, and also satisfy:
\begin{enumerate}
\item[(i)] the gradient $\nabla(\widehat{\mathcal{L}}\circ\exp|_{B_{T_{x_0}\mathcal{H}}(0,\varepsilon)})$ is G\^ateaux differentiable, and its derivative
at any $u\in B_{T_{x_0}\mathcal{H}}(0,\varepsilon)$,
$$
d^2(\widehat{\mathcal{L}}\circ\exp|_{B_{T_{x_0}\mathcal{H}}(0,\varepsilon)})(u)\in\mathscr{L}_s(T_{x_0}\mathcal{H}),
$$
    is also a compact linear operator.
\item[(ii)]  $B_{T_{x_0}\mathcal{H}}(0,\varepsilon)\to \mathscr{L}_s(T_{x_0}\mathcal{H}),\;u\mapsto d^2(\widehat{\mathcal{L}}\circ\exp|_{B_{T_{x_0}\mathcal{H}}(0,\varepsilon)})(u)$
is continuous at $0\in T_{x_0}\mathcal{H}$.
(Thus the assumptions on $\mathcal{G}$ assure that the functionals
$\mathcal{L}_{\lambda}:=\mathcal{L}-\lambda\widehat{\mathcal{L}}$,
$\lambda\in\mathbb{R}$, also satisfy the conditions of
\cite[Theorems~2.21 and 2.22]{Lu7}.)
\end{enumerate}}
\end{hypothesis}

Under Hypothesis~\ref{hyp:Bi.3.19}, we say $\mathcal{O}$ to be a {\it bifurcation $G$-orbit
with parameter $\lambda^\ast$} of  the equation
\begin{equation}\label{e:Bi.3.11}
\mathcal{L}'(u)=\lambda\widehat{\mathcal{L}}'(u),\quad u\in \mathcal{H}
\end{equation}
if for any $\varepsilon>0$ and for any neighborhood $\mathscr{U}$ of $\mathcal{O}$ in $\mathcal{H}$
there exists a solution $G$-orbit $\mathcal{O}'\ne \mathcal{O}$ in $\mathscr{U}$ of
(\ref{e:Bi.3.11}) with some $\lambda\in (\lambda^\ast-\varepsilon, \lambda^\ast+\varepsilon)$.
Equivalently, for some (and so any) fixed $x_0\in\mathcal{O}$ there exists a sequence $(\lambda_n, u_n)\subset (\lambda^\ast-\varepsilon, \lambda^\ast+\varepsilon)\times \mathcal{H}$ such that
\begin{equation}\label{e:Bi.3.11.1}
(\lambda_n, u_n)\to(\lambda^\ast, x_0),\quad \mathcal{L}'(u_n)=\lambda_n\mathcal{G}'(u_n)
\quad\hbox{and}\quad u_n\notin \mathcal{O}\quad\forall n.
\end{equation}

For any $x_0\in\mathcal{O}$, $\mathcal{S}_{x_0}:=\exp_{x_0}({N\mathcal{O}(\varepsilon)_{x_0}})$
is a $C^2$ {\bf slice} for the action of $G$ on $\mathcal{H}$
in the following sense:
\begin{enumerate}
\item[(i)]  $\mathcal{S}_{x_0}$ is a $C^2$ submanifold of $\mathcal{H}$ containing $x_0$, and the tangent space
$T_{x_0}\mathcal{S}_{x_0}\subset T_{x_0}\mathcal{H}$ is a closed complement to $T_{x_0}\mathcal{O}$;
\item[(ii)] $G\cdot\mathcal{S}_{x_0}$  is a neighborhood of the orbit $\mathcal{O}=G\cdot x_0$, i.e., the orbit of every
$x\in\mathcal{H}$ sufficiently close to $x_0$ must intersect $\mathcal{S}_{x_0}$ and $(G\cdot x)\pitchfork\mathcal{S}_{x_0}$;
\item[(iii)] $\mathcal{S}_{x_0}$ is invariant under the isotropy group $G_{x_0}$.
\end{enumerate}
(The $C^1$ $G$-action is sufficient for (ii).)
Clearly, a point $u\in {N\mathcal{O}(\varepsilon)_{x_0}}$ near $0_{x_0}\in {N\mathcal{O}(\varepsilon)_{x_0}}$
is a critical point of $\mathcal{L}_\lambda\circ\exp|_{N\mathcal{O}(\varepsilon)_{x_0}}$
if and only if $x:=\exp_{x_0}(u)$ is a critical point of  $\mathcal{L}_\lambda|_{\mathcal{S}_{x_0}}$.
Since $d\mathcal{L}_\lambda(x)[\xi]=0\;\forall \xi\in T_{x}(G\cdot x)$ and
$T_x\mathcal{H}=T_{x}(G\cdot x)\oplus T_x\mathcal{S}_{x_0}$, we deduce
\begin{equation}\label{e:Bi.3.11.2}
d\mathcal{L}_\lambda(x)=0\quad\Longleftrightarrow\quad d(\mathcal{L}_\lambda\circ\exp|_{N\mathcal{O}(\varepsilon)_{x_0}})(u)=0.
\end{equation}

Note that for any $x_0\in\mathcal{O}$ the orthogonal complementary of $T_{x_0}\mathcal{O}$ in $T_{x_0}\mathcal{H}$, $N\mathcal{O}_{x_0}$,
 is an invariant subspace of
  $$
 \mathcal{L}''_\lambda(x_0):=d^2(\mathcal{L}\circ\exp|_{B_{T_{x_0}\mathcal{H}}(0,\varepsilon)})(0)\quad\forall\lambda\in\mathbb{R}
 $$
 (in particular{\small
 $\mathcal{L}''(x_0):=d^2(\mathcal{L}\circ\exp|_{B_{T_{x_0}\mathcal{H}}(0,\varepsilon)})(0)$ and $\widehat{\mathcal{L}}''(x_0):=d^2(\widehat{\mathcal{L}}\circ\exp|_{B_{T_{x_0}\mathcal{H}}(0,\varepsilon)})(0)$)}.
 Let $\mathcal{L}''_\lambda(x_0)^\bot$  (resp.  $\mathcal{L}''(x_0)^\bot$,  $\widehat{\mathcal{L}}''(x_0)^\bot$) denote
the restriction self-adjoint operator of $\mathcal{L}''_\lambda(x_0)$ (resp. $\mathcal{L}''(x_0)$, $\widehat{\mathcal{L}}''(x_0)$)
from  $N\mathcal{O}_{x_0}$ to itself. Then $\mathcal{L}''_\lambda(x_0)^\bot=d^2(\mathcal{L}_\lambda\circ\exp|_{N\mathcal{O}(\varepsilon)_{x_0}})(0)$ and
$$
\mathcal{L}''(x_0)^\bot=d^2(\mathcal{L}\circ\exp|_{N\mathcal{O}(\varepsilon)_{x_0}})(0),\quad
\widehat{\mathcal{L}}''(x_0)^\bot=d^2(\widehat{\mathcal{L}}\circ\exp|_{N\mathcal{O}(\varepsilon)_{x_0}})(0).
$$
Applying Corollary~\ref{cor:Bi.2.4.1} to
$(\mathcal{L}\circ\exp|_{N\mathcal{O}(\varepsilon)_{x_0}}, \widehat{\mathcal{L}}\circ\exp|_{N\mathcal{O}(\varepsilon)_{x_0}},
N\mathcal{O}(\varepsilon)_{x_0})$ and using (\ref{e:Bi.3.11.2}) we obtain:

\begin{theorem}\label{th:Bi.3.20.1}
Under Hypothesis~\ref{hyp:Bi.3.19},
suppose that  $\lambda^\ast\in\mathbb{R}$  an isolated eigenvalue of
\begin{equation}\label{e:Bi.3.12}
\mathcal{L}''(x_0)^\bot v-\lambda\widehat{\mathcal{L}}''(x_0)^\bot v=0,\quad v\in N\mathcal{O}_{x_0},
\end{equation}
and  that $\widehat{\mathcal{L}}''(x_0)^\bot$ is either semi-positive or semi-negative.
 Then $\mathcal{O}$ is a  bifurcation $G$-orbit
with parameter $\lambda^\ast$ of  the equation
(\ref{e:Bi.3.11})
 and  one of the following alternatives occurs:
\begin{enumerate}
\item[\rm (i)] $\mathcal{O}$ is not an isolated critical orbit of $\mathcal{L}_{\lambda^\ast}$;

\item[\rm (ii)]  for every $\lambda\in\mathbb{R}$ near $\lambda^\ast$ there is a critical point
$u_\lambda\notin\mathcal{O}$ of $\mathcal{L}_{\lambda}$ converging to $x_0$ as $\lambda\to\lambda^\ast$;

\item[\rm (iii)] there is an one-sided  neighborhood $\Lambda$ of $\lambda^\ast$ such that
for any $\lambda\in\Lambda\setminus\{\lambda^\ast\}$,
$\mathcal{L}_{\lambda}$  has at least two critical points which sit in $\mathcal{S}_{x_0}\setminus\mathcal{O}$ converging to
$x_0$ as $\lambda\to\lambda^\ast$.
\end{enumerate}
\end{theorem}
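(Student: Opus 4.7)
The plan is to reduce Theorem~\ref{th:Bi.3.20.1} to the already-proved Corollary~\ref{cor:Bi.2.4.1} by localizing the problem to the slice at a fixed $x_0\in\mathcal{O}$. First I would pick $\varepsilon>0$ so small that $\exp_{x_0}:N\mathcal{O}(\varepsilon)_{x_0}\to\mathcal{S}_{x_0}$ is a $C^2$-diffeomorphism onto a slice, and in addition $\mathcal{S}_{x_0}\cap\mathcal{O}=\{x_0\}$ (possible because $T_{x_0}\mathcal{S}_{x_0}=N\mathcal{O}_{x_0}$ is transversal to $T_{x_0}\mathcal{O}$ and $\mathcal{O}$ is a submanifold). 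Set $H=N\mathcal{O}_{x_0}$ with the inner product $(\!(\cdot,\cdot)\!)_{x_0}$, $U=N\mathcal{O}(\varepsilon)_{x_0}$, and define
\[
\mathscr{L}:=\mathcal{L}\circ\exp|_{U},\qquad \widehat{\mathscr{L}}:=\widehat{\mathcal{L}}\circ\exp|_{U},\qquad \mathscr{L}_\lambda:=\mathscr{L}-\lambda\widehat{\mathscr{L}}.
\]

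Next I would verify the pulled-back pair satisfies the hypotheses of Corollary~\ref{cor:Bi.2.4.1}. By Hypothesis~\ref{hyp:Bi.3.19} the pair $(\mathscr{L},U)$ fulfills Hypothesis~\ref{hyp:1.1} with $X=H$; by Hypothesis~\ref{hyp:Bi.3.19}(i)–(ii) the functional $\widehat{\mathscr{L}}$ fulfills Hypothesis~\ref{hyp:1.2}. The second derivatives at $0\in H$ of $\mathscr{L}$ and $\widehat{\mathscr{L}}$, when restricted (via the invariance of $N\mathcal{O}_{x_0}$ under the Hessians) to $H$, coincide with $\mathcal{L}''(x_0)^\bot$ and $\widehat{\mathcal{L}}''(x_0)^\bot$ respectively. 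Hence $\lambda^\ast$ is an isolated eigenvalue of $\mathscr{L}''(0)v-\lambda\widehat{\mathscr{L}}''(0)v=0$ on $H$, and $\widehat{\mathscr{L}}''(0)$ is semi-positive or semi-negative by assumption. Corollary~\ref{cor:Bi.2.4.1} then applies and furnishes one of its three alternatives for the equation $\mathscr{L}'(u)=\lambda\widehat{\mathscr{L}}'(u)$ on $U$.

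The final step is to translate each alternative back to the manifold setting. The key identity (\ref{e:Bi.3.11.2}) shows that $u\in U$ is a critical point of $\mathscr{L}_\lambda$ if and only if $\exp_{x_0}(u)\in\mathcal{S}_{x_0}$ is a critical point of $\mathcal{L}_\lambda$; and since $\exp_{x_0}$ is a diffeomorphism with $\exp_{x_0}(0)=x_0$ and $\mathcal{S}_{x_0}\cap\mathcal{O}=\{x_0\}$, any critical point $u\ne 0$ in $U$ yields a critical point of $\mathcal{L}_\lambda$ in $\mathcal{S}_{x_0}\setminus\mathcal{O}$ close to $x_0$. Thus alternative (i) of Corollary~\ref{cor:Bi.2.4.1} at $\lambda=\lambda^\ast$ gives critical points of $\mathcal{L}_{\lambda^\ast}$ outside $\mathcal{O}$ arbitrarily close to $x_0$, i.e.\ $\mathcal{O}$ is not isolated; alternative (ii) and (iii) similarly produce the critical points $u_\lambda\notin\mathcal{O}$ (resp.\ two critical points in $\mathcal{S}_{x_0}\setminus\mathcal{O}$) converging to $x_0$, which are exactly the conclusions (ii) and (iii) of the theorem. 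That these in turn make $\mathcal{O}$ a bifurcation $G$-orbit with parameter $\lambda^\ast$ follows from \eqref{e:Bi.3.11.1}: in each of the three cases we can extract a sequence $(\lambda_n,u_n)\to(\lambda^\ast,x_0)$ with $\mathcal{L}'(u_n)=\lambda_n\widehat{\mathcal{L}}'(u_n)$ and $u_n\notin\mathcal{O}$.

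The main (though essentially routine) technical point is verifying that the conditions (D1)–(D4) and the compactness/continuity properties of $\widehat{\mathcal{L}}''$ transfer cleanly from $\mathcal{L},\widehat{\mathcal{L}}$ on $\mathcal{S}_{x_0}$ to the further restriction to $N\mathcal{O}_{x_0}\subset T_{x_0}\mathcal{H}$; here I would invoke \cite[Lemma~2.8]{Lu7} and the orthogonal bundle projection $\mathbf{\Pi}$ to argue that the decomposition $B=P+Q$ of the Hessian of $\mathscr{L}$ along $\mathcal{O}$ respects the splitting $T_{x_0}\mathcal{H}=T_{x_0}\mathcal{O}\oplus N\mathcal{O}_{x_0}$. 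Once this compatibility is in place, the rest is a direct application of Corollary~\ref{cor:Bi.2.4.1} combined with the slice theorem.
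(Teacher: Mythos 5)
Your proposal matches the paper's proof exactly: the paper derives Theorem~\ref{th:Bi.3.20.1} in a single sentence by applying Corollary~\ref{cor:Bi.2.4.1} to the pullback triple $(\mathcal{L}\circ\exp|_{N\mathcal{O}(\varepsilon)_{x_0}},\ \widehat{\mathcal{L}}\circ\exp|_{N\mathcal{O}(\varepsilon)_{x_0}},\ N\mathcal{O}(\varepsilon)_{x_0})$ and translating back via (\ref{e:Bi.3.11.2}), which is precisely the reduction you carry out. You spell out the hypothesis-checking (Hypothesis~\ref{hyp:Bi.3.19} already guarantees the conditions of Hypotheses~\ref{hyp:1.1} and \ref{hyp:1.2} on the slice) and the useful observation that $\mathcal{S}_{x_0}\cap\mathcal{O}=\{x_0\}$ after shrinking $\varepsilon$, which the paper leaves implicit, but the route is identical.
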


Suppose that $\mathcal{L}''(x_0)^\bot$ is invertible, or equivalently
 ${\rm Ker}(\mathcal{L}''(x_0))=T_{x_0}\mathcal{O}$. Then
$0$ is not an eigenvalue  of (\ref{e:Bi.3.12})
and $\lambda\in\mathbb{R}\setminus\{0\}$
is an eigenvalue  of (\ref{e:Bi.3.12}) if and only if $1/\lambda$
is an eigenvalue  of compact linear self-adjoint operator
$$
L_{x_0}:=[\mathcal{L}''(x_0)^\bot ]^{-1}\widehat{\mathcal{L}}''(x_0)^\bot\in\mathscr{L}_s(N\mathcal{O}_{x_0}).
$$
Hence $\sigma(L_{x_0})\setminus\{0\}=\{1/\lambda_k\}_{k=1}^\infty\subset\mathbb{R}$
with $|\lambda_k|\to \infty$, and each $1/\lambda_k$ has finite multiplicity.
Let $N\mathcal{O}_{x_0}^k$ be the eigenspace corresponding to $1/\lambda_k$ for $k\in\mathbb{N}$.
Then
\begin{eqnarray}\label{e:Bi.3.13}
&&N\mathcal{O}_{x_0}^0:={\rm Ker}(L_{x_0})={\rm Ker}(\widehat{\mathcal{L}}''(x_0)^\bot),\nonumber\\
&&N\mathcal{O}_{x_0}^k={\rm Ker}(I/\lambda_k-L_{x_0})={\rm Ker}(\mathcal{L}''(x_0)^\bot-\lambda_k\widehat{\mathcal{L}}''(x_0)^\bot),\;
\forall k\in\mathbb{N},
\end{eqnarray}
and $N\mathcal{O}_{x_0}=\oplus^\infty_{k=0}N\mathcal{O}_{x_0}^k$.
Since $T_{x_0}\mathcal{O}\subset{\rm Ker}(\mathcal{L}''(x_0))\cap{\rm Ker}(\widehat{\mathcal{L}}''(x_0))$,
for $\lambda\in\mathbb{R}$, $\mathcal{O}$ is a nondegenerate critical orbit
of $\mathcal{L}_\lambda$ if and only if $\lambda$ is not an eigenvalue  of (\ref{e:Bi.3.12}).
Applying Corollary~\ref{cor:Bi.2.4.2} to
$(\mathcal{L}\circ\exp|_{N\mathcal{O}(\varepsilon)_{x_0}}, \widehat{\mathcal{L}}\circ\exp|_{N\mathcal{O}(\varepsilon)_{x_0}},
N\mathcal{O}(\varepsilon)_{x_0})$ and using (\ref{e:Bi.3.11.2}) we obtain:

\begin{theorem}\label{th:Bi.3.20.2}
Under Hypothesis~\ref{hyp:Bi.3.19},
suppose that  $\mathcal{L}''(x_0)^\bot$ is invertible
and that $\lambda^\ast=\lambda_{k_0}$ is an eigenvalue of (\ref{e:Bi.3.12}) as above.
Then the conclusions of Theorem~\ref{th:Bi.3.20.1} hold true if
  one of the following two conditions is also satisfied:
 \begin{enumerate}
\item[\rm (a)] $\mathcal{L}''(x_0)^\bot$ is positive;
 \item[\rm (b)] each $N\mathcal{O}_{x_0}^k$ in (\ref{e:Bi.3.13}) with $L=[\mathcal{L}''(0)]^{-1}\mathcal{G}''(0)$
 is an invariant subspace of $\mathcal{L}''(x_0)^\bot$  (e.g. these are true if $\mathcal{L}''(x_0)^\bot$
 commutes with $\widehat{\mathcal{L}}''(x_0)^\bot$), and $\mathcal{L}''(x_0)^\bot$
 is either positive definite or negative one on $N\mathcal{O}_{x_0}^{k_0}$.
 \end{enumerate}
\end{theorem}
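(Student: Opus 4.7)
The plan is to pull the problem back to the normal disk fiber $N\mathcal{O}(\varepsilon)_{x_0}$ via the exponential map and then invoke Corollary~\ref{cor:Bi.2.4.2} directly on the Hilbert space $H:=N\mathcal{O}_{x_0}$, precisely as Theorem~\ref{th:Bi.3.20.1} was obtained from Corollary~\ref{cor:Bi.2.4.1}.

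First I would set up the reduction. By Hypothesis~\ref{hyp:Bi.3.19}, the pair
$(\mathcal{L}\circ\exp|_{N\mathcal{O}(\varepsilon)_{x_0}},\, N\mathcal{O}(\varepsilon)_{x_0})$ satisfies Hypothesis~\ref{hyp:1.1} with $X=H=N\mathcal{O}_{x_0}$, while the parts (i)--(ii) of Hypothesis~\ref{hyp:Bi.3.19} on $\widehat{\mathcal{L}}$ say exactly that $\widehat{\mathcal{L}}\circ\exp|_{N\mathcal{O}(\varepsilon)_{x_0}}$ satisfies Hypothesis~\ref{hyp:1.2}. The Hessian computations give
\[
d^2(\mathcal{L}\circ\exp|_{N\mathcal{O}(\varepsilon)_{x_0}})(0)=\mathcal{L}''(x_0)^\bot,\quad
d^2(\widehat{\mathcal{L}}\circ\exp|_{N\mathcal{O}(\varepsilon)_{x_0}})(0)=\widehat{\mathcal{L}}''(x_0)^\bot,
\]
so the finite-dimensional eigenvalue problem (\ref{e:Bi.2.7.4}) of Corollary~\ref{cor:Bi.2.4.2} at the origin becomes precisely (\ref{e:Bi.3.12}), and the decomposition $N\mathcal{O}_{x_0}=\bigoplus_{k\ge 0}N\mathcal{O}_{x_0}^k$ in (\ref{e:Bi.3.13}) plays the role of $H=\bigoplus_{k\ge 0}H_k$ used there.

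Next I would verify the two sets of hypotheses of Corollary~\ref{cor:Bi.2.4.2}. In case (a), $\mathcal{L}''(x_0)^\bot$ is positive definite (it is invertible and positive), so the ``Moreover'' clause of Corollary~\ref{cor:Bi.2.4.2} applies, and no commutativity or sign assumption on $\widehat{\mathcal{L}}''(x_0)^\bot$ is needed. In case (b), the assumption that each $N\mathcal{O}_{x_0}^k$ is invariant under $\mathcal{L}''(x_0)^\bot$ is exactly what condition (b) of Corollary~\ref{cor:Bi.2.4.2} extracts from the commutation relation $\mathcal{L}''(0)\widehat{\mathcal{L}}''(0)=\widehat{\mathcal{L}}''(0)\mathcal{L}''(0)$ (only this consequence is used in its proof, cf.\ formula (\ref{e:Bi.2.13})), and the required positive- or negative-definiteness on $H_{k_0}=N\mathcal{O}_{x_0}^{k_0}$ is given directly.

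Finally I would translate the three alternatives (i)--(iii) produced by Corollary~\ref{cor:Bi.2.4.2} for $\mathcal{L}_\lambda\circ\exp|_{N\mathcal{O}(\varepsilon)_{x_0}}$ at the origin back into the manifold statement via (\ref{e:Bi.3.11.2}): any sequence of nontrivial critical points $u_n\to 0$ of $\mathcal{L}_\lambda\circ\exp|_{N\mathcal{O}(\varepsilon)_{x_0}}$ corresponds, by the slice property of $\mathcal{S}_{x_0}=\exp_{x_0}(N\mathcal{O}(\varepsilon)_{x_0})$, to critical points $\exp_{x_0}(u_n)\in\mathcal{S}_{x_0}\setminus\mathcal{O}$ of $\mathcal{L}_\lambda$ converging to $x_0$, and $\mathcal{O}$ is then a bifurcation $G$-orbit of (\ref{e:Bi.3.11}) with parameter $\lambda^\ast$ in the sense of (\ref{e:Bi.3.11.1}). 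The main (and only mildly technical) point is the bookkeeping in case (b): one must check that the abstract structural hypothesis in Corollary~\ref{cor:Bi.2.4.2}(b) really needs only invariance of the eigenspaces (not full commutativity) in order to deduce the Morse-index jump formula (\ref{e:Bi.2.13}); everything else is a direct application of the results already proved.
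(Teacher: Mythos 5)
Your proposal is correct and follows the paper's own route exactly: restrict $\mathcal{L}$ and $\widehat{\mathcal{L}}$ to the slice $\exp_{x_0}(N\mathcal{O}(\varepsilon)_{x_0})$, apply Corollary~\ref{cor:Bi.2.4.2} (the ``Moreover'' clause for case (a), condition (b) for case (b)) to $(\mathcal{L}\circ\exp|_{N\mathcal{O}(\varepsilon)_{x_0}}, \widehat{\mathcal{L}}\circ\exp|_{N\mathcal{O}(\varepsilon)_{x_0}}, N\mathcal{O}(\varepsilon)_{x_0})$, and transfer the alternatives back via (\ref{e:Bi.3.11.2}). You are also right on the one subtle point: the proof of Corollary~\ref{cor:Bi.2.4.2}(b) uses only the invariance of the eigenspaces $H_k$ under $\mathcal{L}''(0)$ (from which the Morse-index formula (\ref{e:Bi.2.13}) follows), not the full commutation relation, which is why Theorem~\ref{th:Bi.3.20.2}(b) can legitimately state the weaker invariance hypothesis.
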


Of course, corresponding to Theorem~\ref{th:Bi.2.4} there is a more general result.
Moreover, if either $G_{x_0}=\mathbb{Z}_2$ acts on $N\mathcal{O}(\varepsilon)_{x_0}$ via the antipodal map
or $G_{x_0}=S^1$ acts orthogonally on $N\mathcal{O}_{x_0}$ and
the fixed point set of the induced $S^1$-action on $N\mathcal{O}_{x_0}^0$ is $\{0\}$,
we may apply Theorem~\ref{th:Bi.3.2} to
$(\mathcal{L}\circ\exp|_{N\mathcal{O}(\varepsilon)_{x_0}}, \widehat{\mathcal{L}}\circ\exp|_{N\mathcal{O}(\varepsilon)_{x_0}}, N\mathcal{O}(\varepsilon)_{x_0})$
near $0=0_{x_0}\in N\mathcal{O}(\varepsilon)_{x_0}$ to get a stronger result.

We can also apply to Corollaries~\ref{cor:Bif.2.2.1},~\ref{cor:Bif.2.2.2},
 Theorem~\ref{th:Bif.2.2.4-} and Corollary~\ref{cor:Bif.3.3}
to suitably chosen slices in the setting of \cite{Lu4} to get some  corresponding results.
It is better for us using this idea in concrete applications.
For Theorems~\ref{th:Bi.3.20.1}, \ref{th:Bi.3.20.2} there also exists a remark similar to Remark~\ref{rem:Bif.3.5}.

In applications, for example, to closed geodesics on Finsler or Riemannian manifolds,
Hypothesis~\ref{hyp:S.6.2} cannot be satisfied since the natural $S^1$ action on Hilbert manifold
$H^1(S^1, M)$ is only continuous. However, we can still use the above ideas to complete our aims.

\section{Bifurcations for potential operators of Banach-Hilbert regular functionals}\label{sec:BBH}
\setcounter{equation}{0}

The aim of this section is to generalize partial results in last three sections to
potential operators of Banach-Hilbert regular functionals on Banach spaces.
Different from bifurcation results in previous sections, these bifurcation theorems
are on Banach (rather than Hilbert) spaces.
There exist a few splitting theorems for Banach-Hilbert regular functionals
in the literature (though they were expressed with different terminologies),
for example, \cite[Theorem~1.2]{BoBu}, \cite[Theorem~2.5]{JM} and \cite[page 436, Theorem~1]{DiHiTr}
(and \cite[Theorem~1.4]{Tr1}
for the special nondegenerate case).
See Remarks~\ref{rem:BB.1},~\ref{rm:BB.4+} for some comments and relations among them.
We here choose the setting of the parameterized versions of
\cite[Theorem~1.2]{BoBu} given in Appendix~\ref{app:B}.
Actually, it is not hard to deduce a parameterized version
of \cite[Theorem~2.5]{JM} after  the corresponding form of nondegenerate case of it
is proved by slightly more troublesome arguments.

In this section we always assume that  $H$ is a Hilbert space with inner product $(\cdot,\cdot)_H$
and the induced norm $\|\cdot\|$,  $X$ is a Banach space with
norm $\|\cdot\|_X$, and that they satisfy the condition (S) in Appendix~\ref{app:B}, i.e.,
$X\subset H$ is dense in $H$ and  the inclusion $X\hookrightarrow H$ is continuous.

\begin{theorem}\label{th:BBHH.1}
Let $H$ and $X$ be as above,  $\Lambda\subset\mathbb{R}$
an open interval  containing  $\lambda^\ast$. For each $\lambda\in\Lambda$ let
 $\mathscr{L}_\lambda:B_X(0, \delta)\to\mathbb{R}$ be a $C^2$-functional having critical point $0\in B_X(0,\delta)$ and satisfying
  for some constant $M_\lambda>0$:
\begin{enumerate}
\item[\rm (1)] $|d\mathscr{L}_\lambda(x)[u]|\le M_\lambda\|u\|,\;\forall x\in B_X(0, \delta)$, $\forall u\in X$.
\item[\rm (2)] $|d^2\mathscr{L}_\lambda(x)[u,v]|\le M_\lambda\|u\|\cdot\|v\|,\;\forall x\in B_X(0, \delta)$, $\forall u,v\in X$; and therefore
there exists a map $B_\lambda:B_X(0, \delta)\to \mathscr{L}_s(H)$  defined by
\begin{eqnarray*}
d^2\mathscr{L}_\lambda(x)[u,v]=(B_\lambda(x)u,v)_H,\quad \forall x\in B_X(0, \delta),\;\forall u,v\in X.
\end{eqnarray*}
\item[\rm (3)] For each $\lambda\in\Lambda$ there is a $C^2$ map $A_\lambda:B_X(0, \delta)\to X$  such that
$d\mathscr{L}_\lambda(x)[u]=(A_\lambda(x),u)_H$ for all $(x,u)\in B_X(0, \delta)\times X$.
(That is, the condition (c') in Theorem~\ref{th:BB.3} holds.)
\end{enumerate}
   Suppose also that the following conditions are satisfied:
 \begin{enumerate}
\item[\rm (a)]  The condition (sd') (or (f1) and (sf2)-(sf3)) in Theorem~\ref{th:BB.5} is satisfied.
(These naturally hold if $\Lambda\times B_X(0, \delta)\ni (\lambda, u)\mapsto A(\lambda, u):=A_\lambda(u)\in X$ is $C^3$.)
\item[\rm (b)] $\sigma(B_{\lambda^\ast}(0)|_X)\setminus\{0\}$
is bounded away from the imaginary axis, and $H^0_{\lambda^\ast}:={\rm Ker}({B}_{\lambda^\ast}(0))$
is a nontrivial subspace of finite dimension contained in $X$. (Thus $0<\nu_{\lambda^\ast}<\infty$.)
\item[\rm (c)] For each $\lambda\in\Lambda\setminus\{\lambda^\ast\}$,
$\sigma(B_{\lambda}(0)|_X)$ is bounded away from the imaginary axis,
and $H^0_{\lambda}:={\rm Ker}({B}_\lambda(0))=\{0\}$.
\item[\rm (d)] The negative definite space  $H^-_{\lambda}$
of each ${B}_\lambda(0)$ is of finite dimension (and hence is contained in $X$ by  Lemma~\ref{lem:BB.8}).
\item[\rm (e)] The Morse indexes of $\mathscr{L}_\lambda$
at $0\in H$ 
 take, respectively, values $\mu_{\lambda^\ast}$ and $\mu_{\lambda^\ast}+\nu_{\lambda^\ast}$
 as $\lambda\in\mathbb{R}$ varies in
 two deleted half neighborhoods  of $\lambda^\ast$,
where $\mu_{\lambda}$ and $\nu_{\lambda}$ are the Morse index and the nullity of  $\mathscr{L}_{\lambda}$
at $0$ (as defined below Lemma~\ref{lem:BB.2}), respectively.
 \end{enumerate}
   Then  one of the following alternatives occurs:
\begin{enumerate}
\item[\rm (i)] $0\in X$ is not an isolated critical point of $\mathscr{L}_{\lambda^\ast}$.

\item[\rm (ii)]  For every $\lambda\in\Lambda$ near $\lambda^\ast$ there is a nonzero critical point
$x_\lambda$ of $\mathscr{L}_{\lambda}$ converging to $0$ as $\lambda\to\lambda^\ast$.

\item[\rm (iii)] There is an one-sided  neighborhood $\Lambda$ of $\lambda^\ast$ such that
for any $\lambda\in\Lambda\setminus\{\lambda^\ast\}$,
$\mathscr{L}_{\lambda}$ has at least two nonzero critical points converging to
zero as $\lambda\to\lambda^\ast$.
\end{enumerate}
In particular,  $(\lambda^\ast, 0)\in\mathbb{R}\times X$ is a bifurcation point  for the equation
\begin{equation}\label{e:BBHH.1}
d\mathscr{L}_\lambda(0)=0,\; x\in B_X(0, \delta).
\end{equation}
Conversely, suppose that the above parameter space $\Lambda$ is a metric space
and that the corresponding assumptions (1)-(3),  the first condition in (b), and (d') in Theorem~\ref{th:BB.3}  are satisfied.
 Then $\nu_{\lambda^\ast}>0$ provided that $(\lambda^\ast, 0)\in\Lambda\times X$ is a bifurcation point of (\ref{e:BBHH.1}).
\end{theorem}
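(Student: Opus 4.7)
The plan is to follow the same architecture as the proof of Theorem~\ref{th:Bi.2.4}, but replacing \cite[Theorem~2.16]{Lu7} by the parameterized version of the Bobylev--Burman splitting lemma (Theorem~\ref{th:BB.5} in Appendix~\ref{app:B}). The hypotheses (1)--(3) together with (a) are precisely tailored so that this theorem applies to the family $\{\mathscr{L}_\lambda\}_{\lambda\in\Lambda}$ near the spectral configuration determined by (b)--(c). First I would invoke Theorem~\ref{th:BB.5} to obtain, after possibly shrinking $\delta$ and choosing a small $\epsilon>0$ and a neighborhood $[\lambda^\ast-\eta,\lambda^\ast+\eta]$ of $\lambda^\ast$, a continuous family of reduced $C^2$ functionals
\[
\mathscr{L}^\circ_\lambda:B_{H^0_{\lambda^\ast}}(0,\epsilon)\to\mathbb{R},\qquad \lambda\in[\lambda^\ast-\eta,\lambda^\ast+\eta],
\]
together with an origin- and level-preserving local homeomorphism realizing the splitting $\mathscr{L}_\lambda\circ\Phi_\lambda(z,u^++u^-)=\|u^+\|^2-\|u^-\|^2+\mathscr{L}^\circ_\lambda(z)$, and giving a bijection between critical points of $\mathscr{L}_\lambda$ near $0\in X$ and those of $\mathscr{L}^\circ_\lambda$ near $0\in H^0_{\lambda^\ast}$.

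The next step is to exploit assumption (e) in order to compute the critical groups. By (c) and (d), for $\lambda\neq\lambda^\ast$ the origin is a nondegenerate critical point of $\mathscr{L}_\lambda$ with finite Morse index $\mu_\lambda$, so Theorem~\ref{th:BB.3} (or equivalently the shifting argument based on the splitting) yields
\[
C_q(\mathscr{L}_\lambda,0;{\bf K})=\delta_{q\mu_\lambda}{\bf K},\qquad\lambda\in[\lambda^\ast-\eta,\lambda^\ast+\eta]\setminus\{\lambda^\ast\}.
\]
Combining with the shifting consequence of the splitting, $C_q(\mathscr{L}_\lambda,0;{\bf K})\cong C_{q-\mu_{\lambda^\ast}}(\mathscr{L}^\circ_\lambda,0;{\bf K})$, and inserting the two possible Morse index patterns imposed by (e), one gets that on one side of $\lambda^\ast$ the reduced functional has trivial critical group in degree $0$ only, and on the other side only in degree $\nu_{\lambda^\ast}=\dim H^0_{\lambda^\ast}$. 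Since $\mathscr{L}^\circ_\lambda$ is a $C^1$ function on the finite-dimensional space $H^0_{\lambda^\ast}$, the characterization (\ref{e:Bi.2.17}) of critical groups of strict local extrema (via \cite{MoMoPa}) then forces $0$ to be a strict local minimizer of $\mathscr{L}^\circ_\lambda$ for $\lambda$ in one deleted half-neighborhood of $\lambda^\ast$ and a strict local maximizer for $\lambda$ in the other.

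At this point the finite-dimensional bifurcation principle Theorem~\ref{th:Bi.2.1} (Canino) applies directly to the continuous family $\{\mathscr{L}^\circ_\lambda\}$ on $B_{H^0_{\lambda^\ast}}(0,\epsilon)$ and yields one of the three alternatives for the reduced problem. Translating back through the $\Phi_\lambda$-correspondence gives the claimed alternatives (i)--(iii) for $\mathscr{L}_\lambda$, and hence that $(\lambda^\ast,0)$ is a bifurcation point of (\ref{e:BBHH.1}). The main technical obstacle I foresee is verifying the continuity of $\lambda\mapsto \mathscr{L}^\circ_\lambda$ in $C^1$ topology and the fact that the family $\{\mathscr{L}^\circ_\lambda\}$ satisfies assumption (a) of Theorem~\ref{th:Bi.2.1}; this requires carefully tracking the joint regularity in $(\lambda,x)$ through the construction in Theorem~\ref{th:BB.5}, which is why hypothesis (a) of the present theorem (either (sd') or (f1)+(sf2)--(sf3)) is formulated as it is. Everything else is essentially a bookkeeping repetition of the argument in Theorem~\ref{th:Bi.2.4}.

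For the converse, I would argue contrapositively: assume $\nu_{\lambda^\ast}=0$, i.e.\ $H^0_{\lambda^\ast}=\{0\}$. Then combined with the spectral condition from (b) (now read in its nondegenerate form) and the hypothesis (d'), Theorem~\ref{th:BB.3} gives an origin-preserving homeomorphism on a neighborhood of $0\in X$ that conjugates $\mathscr{L}_{\lambda^\ast}$ to the quadratic form $\|u^+\|^2-\|u^-\|^2$, from which $0$ is an isolated critical point. Using the parameter-continuous version of the same splitting (Theorem~\ref{th:BB.5}), the same conclusion holds uniformly for $\lambda$ in a neighborhood of $\lambda^\ast$, so $(\lambda^\ast,0)$ cannot be a bifurcation point, contradicting the hypothesis. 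Hence $\nu_{\lambda^\ast}>0$.
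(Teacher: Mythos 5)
Your proposal follows the paper's proof almost step for step, and the forward direction is correct: apply the parameterized Bobylev--Burman splitting (Theorem~\ref{th:BB.5}, available thanks to hypothesis (a)), pass to the reduced finite-dimensional family $\mathscr{L}^\circ_\lambda$ on $H^0_{\lambda^\ast}$, use the nondegeneracy of $0$ for $\mathscr{L}_\lambda$ with $\lambda\ne\lambda^\ast$ (from (c) together with Claim~\ref{cl:BB.6+}) and the shifting isomorphism (Corollary~\ref{cor:BB.6}) to compute the critical groups of $\mathscr{L}^\circ_\lambda$, read off strict local min/max from (\ref{e:Bi.2.17}) using (e), and conclude by Canino's finite-dimensional Theorem~\ref{th:Bi.2.1}. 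Your ``technical obstacle'' about joint $C^1$-continuity of $(\lambda,z)\mapsto\mathscr{L}^\circ_\lambda(z)$ is handled directly by Theorem~\ref{th:BB.5} (iii)-(iv), which give continuity of $(\lambda,z)\mapsto\mathfrak{h}(\lambda,z)$ and $(\lambda,z)\mapsto d_z\mathfrak{h}(\lambda,z)$, hence of $d\mathscr{L}^\circ_\lambda$ via (\ref{e:BB.5}).

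In the converse you make a small but real slip. You write that Theorem~\ref{th:BB.3} handles the single functional $\mathscr{L}_{\lambda^\ast}$ and that Theorem~\ref{th:BB.5} is then needed to get ``the same conclusion uniformly for $\lambda$ near $\lambda^\ast$.'' In fact Theorem~\ref{th:BB.3} is already stated for the whole parameterized family: it produces a neighborhood $\Lambda_0$ of $\lambda^\ast$, a fixed $\epsilon$, and origin-preserving $C^1$ diffeomorphisms $\varphi_\lambda$ ($\lambda\in\Lambda_0$) whose images all contain $B_X(0,\epsilon/4)$, together with (\ref{e:BB.-1}). That uniform ball is exactly what kills bifurcation: a sequence $(\lambda_n,x_n)\to(\lambda^\ast,0)$ with $x_n\ne0$ and $d\mathscr{L}_{\lambda_n}(x_n)=0$ eventually has $x_n\in B_X(0,\epsilon/4)$, hence $y_n:=\varphi_{\lambda_n}^{-1}(x_n)\ne0$ with $0=d(\mathscr{L}_{\lambda_n}\circ\varphi_{\lambda_n})(y_n)=B_{\lambda^\ast}(0)y_n$, forcing $y_n=0$ since $B_{\lambda^\ast}(0)|_X$ is hyperbolic. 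Moreover, invoking Theorem~\ref{th:BB.5} here is actually illegitimate under the stated hypotheses of the converse, which assume only (d') from Theorem~\ref{th:BB.3} rather than the stronger (sd') (or (f1)+(sf2)--(sf3)) that Theorem~\ref{th:BB.5} requires. So you must rely solely on Theorem~\ref{th:BB.3}; fortunately that is enough, and once this is fixed your argument coincides with the paper's.
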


\begin{proof}
{\bf Step 1} ({\it Prove the first claim}).
Since either $\sigma(B_{\lambda}(0)|_X)$ or $\sigma(B_{\lambda}(0)|_X)\setminus\{0\}$ is bounded away from the imaginary axis,
there exists a direct sum decomposition of Banach spaces  $X=X_0^{\lambda}\oplus X_+^{\lambda}\oplus X_-^{\lambda}$,
 which corresponds to the spectral sets $\{0\}$, $\sigma_+(B_{\lambda}(0)|_X)$
 and $\sigma_-(B_{\lambda}(0)|_X)$.  Denote by $P_0^{\lambda}$, $P_+^{\lambda}$ and $P_-^{\lambda}$ the corresponding
  projections to this decomposition.

Let $H^+_\lambda$, $H^-_\lambda$ and $H^0_\lambda$ be the positive definite, negative definite and zero spaces of
${B}_\lambda(0)$.  Denote by $P^\ast_\lambda$  the orthogonal projections onto $H^\ast_\lambda$,
 $\ast=+,-,0$.

By assumptions (b)-(d) and Lemma~\ref{lem:BB.8}, we have:
\begin{enumerate}
\item[1)] $X_0^{\lambda^\ast}=H^0_{\lambda^\ast}$, $X_-^{\lambda^\ast}=H^-_{\lambda^\ast}$
and $X_+^{\lambda^\ast}$ is a dense subspace in $H^+_{\lambda^\ast}$.

\item[2)] For each $\lambda\in\Lambda\setminus\{\lambda^\ast\}$,
$X_0^{\lambda}=H^0_{\lambda}=\{0\}$, $X_-^{\lambda}=H^-_{\lambda}$
and $X_+^{\lambda}$ is a dense subspace in $H^+_{\lambda}$.
\end{enumerate}
Hence the nullity $\nu_\lambda:=\dim X^\lambda_0$ and  Morse index $\mu_\lambda:=\dim X^\lambda_-$
of the functional $\mathscr{L}_{\lambda}$ at $0$ are equal to $\dim H_\lambda^0$  and
$\dim H^-_\lambda$ (that is, the nullity and Morse index of the quadratic form $({B}_\lambda(0) u,u)_H$ on $H$),
respectively, and all of them are finite numbers.

By Theorem~\ref{th:BB.5}  there exist small numbers $0<\epsilon<\delta$ and $\rho>0$ with
$[\lambda^\ast-\rho,\lambda^\ast+\rho]\subset\Lambda$,
 a (unique) $C^1$ map
$\mathfrak{h}:[\lambda^\ast-\rho,\lambda^\ast+\rho]\times B_{X}(0,\epsilon)\cap H_{\lambda^\ast}^0\to
X_0^{\lambda^\ast}\oplus H^-_{\lambda^\ast}$
satisfying
\begin{eqnarray}\label{e:BBHH.2}
\mathfrak{h}(\lambda, 0)=0\quad\hbox{and}\quad (id_X-P_0^{\lambda^\ast})A_{\lambda}(z+ \mathfrak{h}(\lambda,z))=0
\end{eqnarray}
for all $(\lambda,z)\in [\lambda^\ast-\rho,\lambda^\ast+\rho]\times B_{X}(0,\epsilon)\cap H_{\lambda^\ast}^0$,
and another $C^1$ map
$$
[\lambda^\ast- \rho,\lambda^\ast+\rho]\times B_X(0,\epsilon)
\to  X,\;({\lambda}, x)\mapsto \Phi_{{\lambda}}(x)
$$
such that
 $\Phi_{\lambda}$ is  a $C^1$  origin-preserving diffeomorphism from
$B_X(0,\epsilon)$ onto an open neighborhood  of $0$ in $X$ containing $B_X(0,\epsilon/4)$ and that
\begin{eqnarray}\label{e:BBHH.3}
\mathscr{L}_{\lambda}\circ\Phi_{\lambda}(x)=\|P^{\lambda^\ast}_+x\|_H^2-
\|P^{\lambda^\ast}_-x\|_H^2+ \mathscr{L}_{{\lambda}}(P^{\lambda^\ast}_0x+ \mathfrak{h}({\lambda}, P^{\lambda^\ast}_0x)),\,\forall x\in B_X(0,\epsilon).
\end{eqnarray}
 Moreover, the functional
\begin{eqnarray*}
\mathscr{L}_{\lambda}^\circ: B_{X}(0,\epsilon)\cap H_{\lambda^\ast}^0\to \mathbb{R},\;
z\mapsto\mathscr{L}_{\lambda}(z+ \mathfrak{h}({\lambda}, z))
\end{eqnarray*}
 is of class $C^{2}$,  has the first-order derivative at $z_0\in
B_{X}(0,\epsilon)\cap H_{\lambda^\ast}^0$,
 \begin{eqnarray*}
d\mathscr{L}^\circ_\lambda(z_0)[z]=\bigl(A_\lambda(z_0+ \mathfrak{h}(\lambda, z_0)), z\bigr)_H,\quad\forall z\in H_{\lambda^\ast}^0,
\end{eqnarray*}
and the map $[\lambda^\ast- \rho,\lambda^\ast+\rho]\times B_{X}(0,\epsilon)\cap H_{\lambda^\ast}^0\to \mathbb{R}\ni (\lambda,u)\mapsto \mathscr{L}_{\lambda}^\circ(u)\in\mathbb{R}$ is $C^1$.
Since the map $z\mapsto z+ \mathfrak{h}({\lambda}, z)$
induces an one-to-one correspondence
 between the critical points of  $\mathscr{L}_{\lambda}^\circ$ near $0\in H_{\lambda^\ast}^0$
and those of $\mathscr{L}_{\lambda}$ near $0\in X$, $0\in H_{\lambda^\ast}^0$
is an isolated critical point of  $\mathscr{L}_{\lambda}^\circ$ if and only if
$0\in X$ is such a critical point of $\mathscr{L}_{\lambda}$
(after shrinking $\rho>0$ if necessary).
By the assumption (c) and Claim~\ref{cl:BB.6+}  we get

\begin{claim}\label{cl:BBHH.5.2}
For each $\lambda\in[\lambda^\ast- \rho,\lambda^\ast+\rho]\setminus\{\lambda^\ast\}$,
$0\in B_{X}(0,\epsilon)\cap H_{\lambda^\ast}^0$ is a nondegenerate critical point of $\mathscr{L}_{\lambda}^\circ$.
\end{claim}

(Note that we have not used the condition (e)!)
By assumptions (b)-(e)  we obtain
\begin{eqnarray}\label{e:BBHH.8}
&&\nu_\lambda=0,\quad\forall\lambda\in \Lambda\setminus\{\lambda^\ast\},\\
&&\hbox{either}\; \mu_\lambda=\mu_{\lambda^\ast}\;\forall\lambda<\lambda^\ast,\;
\hbox{and}\; \mu_\lambda=\mu_{\lambda^\ast}+\nu_{\lambda^\ast}\;\forall\lambda>\lambda^\ast,\label{e:BBHH.9}\\
&&\hbox{or}\; \mu_\lambda=\mu_{\lambda^\ast}+\nu_{\lambda^\ast}\;\forall\lambda<\lambda^\ast,\;
\hbox{and}\; \mu_\lambda=\mu_{\lambda^\ast}\;\forall\lambda>\lambda^\ast. \label{e:BBHH.10}
\end{eqnarray}

For each $\lambda\in \Lambda\setminus\{\lambda^\ast\}$, (\ref{e:BBHH.8}) implies that
$0\in X$ is a nondegenerate (and hence isolated) critical point of $\mathscr{L}_\lambda$.
It follows from this and Theorem~\ref{th:BB.3} that
\begin{equation}\label{e:BBHH.11}
C_q(\mathscr{L}_\lambda, 0;{\bf K})=\delta_{q\mu_\lambda}{\bf K}\quad\forall
q=0, 1,\cdots.
\end{equation}
By Claim~\ref{cl:BBHH.5.2}, (\ref{e:BBHH.3}) and Corollary~\ref{cor:BB.6} we obtain that for any Abel group ${\bf K}$,
\begin{equation}\label{e:BBHH.12}
C_q(\mathscr{L}_\lambda, 0;{\bf K})\cong
C_{q-\mu_{\lambda^\ast}}(\mathscr{L}^{\circ}_\lambda, 0;{\bf K})\quad\forall
q=0, 1,\cdots.
\end{equation}
Then (\ref{e:BBHH.11}) and (\ref{e:BBHH.12}) lead to
\begin{eqnarray}\label{e:BBHH.13}
C_{j}(\mathscr{L}^\circ_{\lambda},0;{\bf K})=
\left\{\begin{array}{ll}
\delta_{j0}{\bf K},\quad\forall \lambda<\lambda^\ast,\\
\delta_{j\nu_{\lambda^\ast}}{\bf K},\quad\forall \lambda>\lambda^\ast
\end{array}\right.
\end{eqnarray}
for any $j\in\mathbb{N}_0$ if (\ref{e:BBHH.9}) holds, and
\begin{eqnarray}\label{e:BBHH.14}
C_{j}(\mathscr{L}^\circ_{\lambda},0;{\bf K})=
\left\{\begin{array}{ll}
\delta_{j0}{\bf K},\quad\forall \lambda>\lambda^\ast,\\
\delta_{j\nu_{\lambda^\ast}}{\bf K},\quad\forall \lambda<\lambda^\ast
\end{array}\right.
\end{eqnarray}
for any $j\in\mathbb{N}_0$ if (\ref{e:BBHH.10}) holds. These two groups of equalities and (\ref{e:Bi.2.17}) lead, respectively, to
\begin{eqnarray}\label{e:BBHH.14.1}
0\in H^0_{\lambda^\ast}\;\hbox{is a strict local}\left\{
\begin{array}{ll}
\hbox{minimizer of}\;\mathscr{L}^\circ_{\lambda},&\quad
\forall \lambda<\lambda^\ast,\\
\hbox{maximizer of}\;\mathscr{L}^\circ_{\lambda},&\quad
\forall \lambda>\lambda^\ast
\end{array}\right.
\end{eqnarray}
if (\ref{e:BBHH.9}) holds, and
\begin{eqnarray}\label{e:BBHH.15}
0\in H^0_{\lambda^\ast}\;\hbox{is a strict local}\left\{
\begin{array}{ll}
\hbox{maximizer of}\;\mathscr{L}^\circ_{\lambda},&\quad
\forall \lambda<\lambda^\ast,\\
\hbox{minimizer of}\;\mathscr{L}^\circ_{\lambda},&\quad
\forall \lambda>\lambda^\ast
\end{array}\right.
\end{eqnarray}
if (\ref{e:BBHH.10}) holds. The desired conclusions follow from
 Theorem~\ref{th:Bi.2.1} directly.

\vspace{4pt}\noindent
 {\bf Step 2} ({\it Prove the second claim}).
Under the assumptions of the converse part, we have  sequences
$(\lambda_n)\subset \Lambda$ and $(x_n)\subset X\setminus\{0\}$ such that
$\lambda_n\to\lambda^\ast$, $x_n\to 0$ and $d\mathscr{L}_{\lambda_n}(x_n)=0$ for all $n\in\N$.
Now by contradiction let $\nu_{\lambda^\ast}=0$.
Then we can use Theorem~\ref{th:BB.3} to get a neighborhood $\Lambda_0$ of $\lambda^\ast$ in $\Lambda$,
  $\epsilon\in (0, \delta)$ and  a $C^0$ map $\Lambda_0\times B_X(0,\epsilon)\ni(\lambda,u)\mapsto\varphi_\lambda(u)\in X$ such that
  each  $\varphi_\lambda$ is an origin-preserving $C^1$ diffeomorphism
   from $B_X(0,\epsilon)$ onto an open neighborhood of zero in $X$ containing $B_X(0,\epsilon/4)$
     and satisfies (\ref{e:BB.-1}). Clearly, we can assume that $(\lambda_n, x_n)\in \Lambda_0\times B_X(0,\epsilon/4)$
     for all $n\in\N$. Note that $y_n:=(\varphi_{\lambda_n})^{-1}(x_n)\in B_X(0,\epsilon)\setminus\{0\}$
is a critical point of $\mathscr{L}_{\lambda_n}$ for each $n\in\N$. By (\ref{e:BB.-1}) we deduce
$$
0=d\mathscr{L}_{\lambda_n}(x_n)\circ d\varphi_{\lambda_n}(y_n)=d(\mathscr{L}_{\lambda_n}\circ\varphi_{\lambda_n})(y_n)=B_{\lambda^\ast}(0)y_n
$$
and therefore $y_n=0$ and $x_n=0$ for all $n\in\N$. It contradicts the choices of $(x_n)$.
({\it Note}: It seems that the claim cannot be proved with the implicit function theorem because (d') in Theorem~\ref{th:BB.3}
does not imply the continuity of the map $(\lambda,u)\mapsto A_\lambda(u)$ near $(\lambda^\ast,0)\in\Lambda\times X$.
Of course, if the condition (d') is replaced by the first two in (sd') we can directly use the implicit function theorem
to get the conclusion.)
\end{proof}

Now, we give analogues of Corollaries~\ref{cor:Bi.2.4.1} and \ref{cor:Bi.2.4.2} and \ref{cor:Bi.3.2.1} under the following hypothesis.
(Related notations may be found in Appendix~\ref{app:B}. )

\begin{hypothesis}\label{hyp:BBH.1}
{\rm
Let $H$ and $X$ be as in Theorem~\ref{th:BBHH.1}, and  $\mathscr{L}, \widehat{\mathscr{L}}:B_X(0, \delta)\to\mathbb{R}$  two $C^2$ functionals
having critical point $0\in X$ and satisfying the conditions (a) and (b) above (\ref{e:BB.-2}). Denote by $B$ and $\widehat{B}$ the
corresponding operators defined as in (\ref{e:BB.-2}). Suppose that  there exist $C^2$ maps $A, \widehat{A}:B_X(0, \delta)\to X$
 verifying
$$
d\mathscr{L}(x)[u]=(A(x),u)_H\quad\hbox{and}\quad
d\widehat{\mathscr{L}}(x)[u]=(\widehat{A}(x),u)_H
$$
 for all $(x,u)\in B_X(0, \delta)\times X$ and that each of $A''$ and  $\widehat{A}''$  is either $C^1$ or uniformly continuous
  in some neighborhood of any line segment in $B_X(0, \delta)$.
Assume that  $\lambda^\ast\in\mathbb{R}$ is an isolated eigenvalue of
\begin{equation}\label{e:BBH.0}
B(0)v-\lambda \widehat{B}(0)v=0,\quad v\in H,
\end{equation}
and that for each $\lambda$ near $\lambda^\ast$ the operator $\mathfrak{B}_{\lambda}:=B(0)-\lambda \widehat{B}(0)$
satisfies the following properties:
\begin{enumerate}
\item[(i)] Either $\sigma(\mathfrak{B}_\lambda|_X)$ or $\sigma(\mathfrak{B}_\lambda|_X)\setminus\{0\}$ is bounded away from the imaginary axis.
\item[(ii)] $H^0_\lambda:={\rm Ker}(\mathfrak{B}_\lambda)\subset X$, and the negative definite space
 $H^-_{\lambda}$ of $\mathfrak{B}_\lambda$ is of finite dimension (and hence is contained in $X$ by  Lemma~\ref{lem:BB.8}).
\end{enumerate}}
\end{hypothesis}

Consider the bifurcation problem
near $(\lambda^\ast, 0)\in\mathbb{R}\times X$ for the equation
\begin{equation}\label{e:BBHH.8*}
d\mathscr{L}(u)=\lambda d\widehat{\mathscr{L}}(u),
\quad u\in B_X(0, \delta).
\end{equation}

The conditions (1)-(3) and (a)-(b) in Theorem~\ref{th:BBHH.1}
assure that Theorem~\ref{th:BB.5} can be used in the proof of Theorem~\ref{th:BBHH.1}.
Under Hypothesis~\ref{hyp:BBH.1}  Theorem~\ref{th:BB.5} is still true by Remark~\ref{rm:BB.5}.

\begin{corollary}\label{cor:BBH.2}
Under Hypothesis~\ref{hyp:BBH.1}, suppose:
\begin{enumerate}
\item[\rm (a)]  ${B}(0)\in \mathscr{L}_s(H)$ has a decomposition $P(0)+Q(0)$, where $P(0)\in\mathscr{L}_s(H)$ is positive definite and $Q(0)\in\mathscr{L}_s(H)$ is compact.
\item[\rm (b)] $\widehat{B}(0)\in \mathscr{L}_s(H)$ is compact, and either semi-positive or semi-negative.
\end{enumerate}
  Then  one of the following alternatives occurs:
 \begin{enumerate}
\item[\rm (i)] $(\lambda^\ast,0)$ is not an isolated solution  in  $\{\lambda^\ast\}\times B_X(0, \delta)$ of the equation (\ref{e:BBHH.8*}).
\item[\rm (ii)]  For every $\lambda\in\mathbb{R}$ near $\lambda^\ast$ there is a nontrivial solution $u_\lambda$ of (\ref{e:BBHH.8*}) in $B_X(0, \delta)$,
which  converges to $0$  as $\lambda\to\lambda^\ast$.
\item[\rm (iii)] There is an one-sided  neighborhood $\Lambda$ of $\lambda^\ast$ such that
for any $\lambda\in\Lambda\setminus\{\lambda^\ast\}$, (\ref{e:BBHH.8*}) has at least two nontrivial solutions in
$B_X(0, \delta)$, which  converge to $0$  as $\lambda\to\lambda^\ast$.
\end{enumerate}
In particular, $(\lambda^\ast, 0)\in\mathbb{R}\times X$ is a bifurcation point  for (\ref{e:BBHH.8*}).
 \end{corollary}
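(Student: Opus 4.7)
The plan is to reduce Corollary~\ref{cor:BBH.2} to Theorem~\ref{th:BBHH.1} applied to the family $\mathscr{L}_\lambda:=\mathscr{L}-\lambda\widehat{\mathscr{L}}$ on $B_X(0,\delta)$ for $\lambda$ in some bounded open interval $\Lambda\ni\lambda^\ast$ chosen so small that $\lambda^\ast$ is the only eigenvalue of (\ref{e:BBH.0}) in $\Lambda$. Then $A_\lambda=A-\lambda\widehat{A}\in C^2$, $B_\lambda(0)=\mathfrak{B}_\lambda=B(0)-\lambda\widehat{B}(0)$, and since $0$ is a critical point of $\mathscr{L}$ and $\widehat{\mathscr{L}}$, it is a critical point of each $\mathscr{L}_\lambda$. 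The conclusion (i)--(iii) of Corollary~\ref{cor:BBH.2} is exactly the conclusion of Theorem~\ref{th:BBHH.1} for this family, so once the hypotheses of that theorem are verified we are done.

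Verifying (1)--(3) and (a) in Theorem~\ref{th:BBHH.1} is routine: the norm estimates follow from the corresponding estimates for $\mathscr{L}$ and $\widehat{\mathscr{L}}$ together with boundedness of $\Lambda$, and the assumption of (a) (i.e.\ (sd') or the smoothness of $(\lambda,u)\mapsto A_\lambda(u)$) reduces, via Remark~\ref{rm:BB.5}, to the condition on $A''$ and $\widehat{A}''$ imposed in Hypothesis~\ref{hyp:BBH.1}, because the map $\lambda\mapsto A_\lambda$ is affine. Conditions (b), (c) in Theorem~\ref{th:BBHH.1} with $B_\lambda(0)=\mathfrak{B}_\lambda$ follow by choosing $\Lambda$ so small that $\lambda^\ast$ is the unique eigenvalue of (\ref{e:BBH.0}) in $\Lambda$ and applying Hypothesis~\ref{hyp:BBH.1}(i)--(ii) (so $H^0_{\lambda^\ast}\ne\{0\}$ is a finite-dimensional subspace of $X$, and $H^0_\lambda=\{0\}$ for $\lambda\ne\lambda^\ast$ in $\Lambda$); condition (d) is Hypothesis~\ref{hyp:BBH.1}(ii).

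The main obstacle is verifying the Morse index jump condition (e) of Theorem~\ref{th:BBHH.1}. I would mimic the proof of Corollary~\ref{cor:Bi.2.4.1}. First, since $\mathfrak{B}_\lambda=P(0)+(Q(0)-\lambda\widehat{B}(0))$ with $P(0)$ positive definite and $Q(0)-\lambda\widehat{B}(0)$ compact self-adjoint, $\mathfrak{B}_\lambda$ is Fredholm of index zero and the quadratic functional $\mathfrak{L}_\lambda(u):=\tfrac12(\mathfrak{B}_\lambda u,u)_H$ on $H$ is smooth with a unique critical point at $0$ for $\lambda\in\Lambda\setminus\{\lambda^\ast\}$, nondegenerate with the same Morse index $\mu_\lambda<\infty$ as $\mathscr{L}_\lambda$ at $0$, and satisfies the (PS) condition on every closed ball. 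Applying Theorem~\ref{th:stablity1} to the families $\{\mathfrak{L}_\lambda\}$ on $[\lambda^\ast-\delta,\lambda^\ast-\epsilon]$ and $[\lambda^\ast+\epsilon,\lambda^\ast+\delta]$ for arbitrary small $\epsilon>0$, the critical groups of $\mathfrak{L}_\lambda$ at $0$ are constant on each of $[\lambda^\ast-\delta,\lambda^\ast)$ and $(\lambda^\ast,\lambda^\ast+\delta]$, hence so is $\mu_\lambda$.

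Second, assume $\widehat{B}(0)$ is semi-positive (the semi-negative case is symmetric and yields the other alternative in (e)). Then $\lambda_1\le\lambda_2\Rightarrow \mathfrak{B}_{\lambda_1}\ge\mathfrak{B}_{\lambda_2}$, so \cite[Proposition~2.3.3]{Ab} together with finiteness of $\mu_\lambda$ and $\nu_\lambda$ gives the chain of inequalities
\[
\mu_{\lambda^\ast-\delta}\le\mu_{\lambda^\ast}\le\mu_{\lambda^\ast}+\nu_{\lambda^\ast}\le\mu_{\lambda^\ast+\delta},\qquad \mu_{\lambda^\ast}\le\liminf_{\lambda\to\lambda^\ast}\mu_\lambda,\qquad \mu_{\lambda^\ast}+\nu_{\lambda^\ast}\ge\limsup_{\lambda\to\lambda^\ast}\mu_\lambda.
\]
Combined with the constancy on each side this forces $\mu_\lambda=\mu_{\lambda^\ast}$ for $\lambda\in[\lambda^\ast-\delta,\lambda^\ast)$ and $\mu_\lambda=\mu_{\lambda^\ast}+\nu_{\lambda^\ast}$ for $\lambda\in(\lambda^\ast,\lambda^\ast+\delta]$, which is precisely condition (e) in Theorem~\ref{th:BBHH.1}. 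Invoking Theorem~\ref{th:BBHH.1} then yields the alternatives (i)--(iii) and the bifurcation assertion.
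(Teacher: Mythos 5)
Your proposal follows the same route as the paper's own proof: reduce to Theorem~\ref{th:BBHH.1}, note that hypotheses (1)--(3) and (a)--(d) follow from Hypothesis~\ref{hyp:BBH.1} together with Remark~\ref{rm:BB.5} and the Fredholm-of-index-zero structure of $\mathfrak{B}_\lambda=P(0)+(Q(0)-\lambda\widehat{B}(0))$, and verify the Morse-index jump (e) by the same auxiliary quadratic functional $\mathfrak{L}_\lambda$, the (PS) condition and Theorem~\ref{th:stablity1} for one-sided constancy, and the monotonicity argument via \cite[Proposition~2.3.3]{Ab} exactly as in Corollary~\ref{cor:Bi.2.4.1}. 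This is correct and essentially identical to what the paper does.
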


\begin{proof}
Let $m_\lambda$ and $n_\lambda$ denote the Morse index and the nullity of
the $C^\infty$ functional  $H\ni u\mapsto\mathfrak{L}_\lambda(u):=([B(0)-\lambda \widehat{B}(0)]u,u)_H$
at $0\in H$. Since $\lambda^\ast$ is an isolated eigenvalue of (\ref{e:BBH.0}),
we may take a number $\rho>0$ so small that
$n_{\lambda}=0$ for each $\lambda\in [\lambda^\ast-\rho,\lambda^\ast+\rho]\setminus\{\lambda^\ast\}$.
Using (a) and the first condition in (b) we derive from Lemma~\ref{lem:BB.7}  that  $n_{\lambda^\ast}<\infty$.
 By (i) and (ii) in Hypothesis~\ref{hyp:BBH.1} we can shrink $\rho>0$ such that
\begin{equation}\label{e:BBH.9.0}
m_\lambda=\mu_\lambda<\infty\quad\hbox{and}\quad n_\lambda=\nu_\lambda<\infty,\quad\forall \lambda\in[\lambda^\ast- \rho,\lambda^\ast+\rho].
\end{equation}
Now all conditions of Theorem~\ref{th:BBHH.1}, except (e), are satisfied for
 family $\{\mathscr{L}_\lambda=\mathscr{L}_1-\lambda \mathscr{L}_2\,|\,\lambda\in [\lambda^\ast- \rho,\lambda^\ast+\rho]\}$.

By (a) and (b), $B(0)-\lambda \widehat{B}(0)=P(0)+ (Q(0)-\lambda \widehat{B}(0))$,
where $(Q(0)-\lambda \widehat{B}(0))$ is compact. It follows that the
 family $\{\mathfrak{L}_\lambda\,|\,\lambda\in [\lambda^\ast- \rho,\lambda^\ast+\rho]\}$
satisfies the (PS) condition on any closed ball.
 As in the proof of Corollary~\ref{cor:Bi.2.4.1}, we can replace
 $\mathcal{L}''(0)$ and  $\widehat{\mathcal{L}}''(0)$ by
 $B(0)$ and  $\widehat{B}(0)$, respectively,  to obtain
\begin{equation}\label{e:BBH.9.1}
m_\lambda=\left\{\begin{array}{ll}
m_{\lambda^\ast},&\quad\forall \lambda\in [\lambda^\ast-\rho,\lambda^\ast),\\
m_{\lambda^\ast}+ n_{\lambda^\ast},&\quad\forall\lambda\in (\lambda^\ast,\lambda^\ast+\rho]
\end{array}\right.
\end{equation}
if $\widehat{B}(0)\ge 0$, and
\begin{equation}\label{e:BBH.9.2}
m_\lambda=\left\{\begin{array}{ll}
m_{\lambda^\ast},&\quad\forall \lambda\in (\lambda^\ast,\lambda^\ast+\rho],\\
m_{\lambda^\ast}+n_{\lambda^\ast},&\quad\forall\lambda\in [\lambda^\ast-\rho,\lambda^\ast)
\end{array}\right.
\end{equation}
if $\widehat{B}(0)\le 0$. Hence (e) of Theorem~\ref{th:BBHH.1} follows from (\ref{e:BBH.9.0}) and (\ref{e:BBH.9.1})-(\ref{e:BBH.9.2}).
\end{proof}

\begin{corollary}\label{cor:BBH.3}
Under Hypothesis~\ref{hyp:BBH.1}, suppose that the following two conditions are satisfied:
 \begin{enumerate}
\item[\rm (a)] $B(0)$ is invertible, and $\widehat{B}(0)$ is compact.
 \item[\rm (b)] $B(0)\widehat{B}(0)=\widehat{B}(0)B(0)$,   and $B(0)$
 is either positive  or negative on ${\rm Ker}(B(0)-\lambda^\ast\widehat{B}(0))$.
  \end{enumerate}
Then the conclusions of Corollary~\ref{cor:BBH.2} hold true.
Moreover, if $B(0)$ is positive definite, the condition (b) is unnecessary.
\end{corollary}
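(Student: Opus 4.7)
The plan is to verify that all hypotheses of Theorem~\ref{th:BBHH.1} hold for the family
$\mathscr{L}_\lambda:=\mathscr{L}-\lambda\widehat{\mathscr{L}}$, $\lambda$ ranging over a small neighborhood $[\lambda^\ast-\rho,\lambda^\ast+\rho]$ of $\lambda^\ast$, and then to invoke that theorem. The structural conditions (1)--(3) on each $\mathscr{L}_\lambda$, together with (a)--(d) of Theorem~\ref{th:BBHH.1}, follow at once from Hypothesis~\ref{hyp:BBH.1}: the regularity of $A_\lambda:=A-\lambda\widehat{A}$ is inherited from $A,\widehat{A}$; the spectral gap condition (c) of Theorem~\ref{th:BBHH.1} holds since $\lambda^\ast$ is an isolated eigenvalue of (\ref{e:BBH.0}), so $\mathfrak{B}_\lambda=B(0)-\lambda\widehat{B}(0)$ has $0\notin\sigma(\mathfrak{B}_\lambda|_X)$ for $\lambda\ne\lambda^\ast$ close to $\lambda^\ast$; (b) and (d) of Theorem~\ref{th:BBHH.1} are part of Hypothesis~\ref{hyp:BBH.1}(ii) (together with the fact that $\widehat{B}(0)$ is compact so $\mathfrak{B}_\lambda$ is a compact perturbation of $B(0)$, giving finite negative-definite spaces). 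Thus the remaining and principal task is to establish the Morse-index jump (e).

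To this end I would exploit (b) exactly as in the proofs of Corollary~\ref{cor:Bi.2.4.2} and Theorem~\ref{th:Bi.3}. Since $B(0)$ is invertible and $\widehat{B}(0)$ is compact, and since they commute by (b), the operator $L:=[B(0)]^{-1}\widehat{B}(0)\in\mathscr{L}(H)$ is compact and self-adjoint with respect to a suitable inner product (or one first reduces by $J=[B(0)]^{-1/2}$ when $B(0)$ is positive, see below). Its spectrum consists of $0$ and a sequence of nonzero real eigenvalues $\{1/\lambda_k\}_{k=1}^\infty$ of finite multiplicity with $|\lambda_k|\to\infty$, and $\lambda^\ast=\lambda_{k_0}$ for some $k_0$. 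Writing $H_k={\rm Ker}(B(0)-\lambda_k\widehat{B}(0))$ for $k\ge 1$ and $H_0={\rm Ker}(\widehat{B}(0))$, commutativity gives the orthogonal decomposition $H=\bigoplus_{k\ge 0}H_k$ in which each $H_k$ is $B(0)$-invariant. Splitting $H_k=H_k^+\oplus H_k^-$ according to the sign of $B(0)|_{H_k}$, the computation of \eqref{e:Bi.2.13} yields the Morse index
\[
m_\lambda=\sum_{\lambda_k<\lambda}\dim H_k^{+}+\sum_{\lambda_k>\lambda}\dim H_k^{-}
\]
of the quadratic form $([B(0)-\lambda\widehat{B}(0)]\cdot,\cdot)_H$, which by Hypothesis~\ref{hyp:BBH.1}(ii) equals the Morse index $\mu_\lambda$ of $\mathscr{L}_\lambda$ at $0$. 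Choosing $\rho>0$ with $(\lambda^\ast-\rho,\lambda^\ast+\rho)\cap\{\lambda_k\}=\{\lambda^\ast\}$ and using (b) to conclude either $H_{k_0}^{+}=H_{k_0}$ or $H_{k_0}^{-}=H_{k_0}$, the formula shows that $\mu_\lambda$ is constant on each of the two deleted half-neighborhoods of $\lambda^\ast$ with a jump of $\dim H_{k_0}=\nu_{\lambda^\ast}$ of exactly the sign required by Theorem~\ref{th:BBHH.1}(e).

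For the ``Moreover'' part, when $B(0)$ is positive definite I would run the reduction of the proof of Theorem~\ref{th:Bi.3}: set $J:=[B(0)]^{-1/2}\in\mathscr{L}_s(H)$, which is positive definite and commutes with every operator commuting with $B(0)$, and pass to the conjugate functionals $\mathfrak{L}(u):=\mathscr{L}(Ju)$ and $\widehat{\mathfrak{L}}(u):=\widehat{\mathscr{L}}(Ju)$ on $J^{-1}(B_X(0,\delta))$. The corresponding operator becomes $J B(0) J=I_H$, so commutativity with $\widehat{L}:=J\widehat{B}(0)J$ and positivity on the $\lambda^\ast$-eigenspace become automatic, and the previous argument applies; one checks that $J$ preserves $X$ and all regularity/strict-differentiability requirements of Hypothesis~\ref{hyp:BBH.1} by inspection (these are essentially the points already handled in the proof of Theorem~\ref{th:Bi.3}). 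I expect the main obstacle to be bookkeeping rather than substance: one must confirm that the $X$-level hypotheses in Hypothesis~\ref{hyp:BBH.1}, in particular the structure of $\sigma(\mathfrak{B}_\lambda|_X)$ and the inclusion $H^0_\lambda\subset X$, survive the conjugation by $J$ and the spectral decomposition $H=\bigoplus H_k$, so that Theorem~\ref{th:BBHH.1} can actually be applied after the reduction.
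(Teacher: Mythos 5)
Your overall route matches the paper's: verify the hypotheses of Theorem~\ref{th:BBHH.1} for the family $\mathscr{L}_\lambda=\mathscr{L}-\lambda\widehat{\mathscr{L}}$ (which the paper does by transporting the first paragraph of the proof of Corollary~\ref{cor:BBH.2}, this time invoking Lemma~\ref{lem:BB.7} with $\mathfrak{P}=B(0)$ invertible and $\mathfrak{Q}=-\lambda^\ast\widehat{B}(0)$ compact to get $n_{\lambda^\ast}<\infty$), then establish the Morse-index jump (e) by the eigenspace argument from Corollary~\ref{cor:Bi.2.4.2}. Your step 2 is the correct computation and agrees with what the paper does.

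The ``Moreover'' part is where you deviate, and there is a genuine gap. You propose to pass to the conjugated functionals $\mathfrak{L}(u):=\mathscr{L}(Ju)$, $\widehat{\mathfrak{L}}(u):=\widehat{\mathscr{L}}(Ju)$ with $J=[B(0)]^{-1/2}$ and then re-apply Theorem~\ref{th:BBHH.1} to them. But that requires $J(X)\subset X$, and Hypothesis~\ref{hyp:BBH.1} does not give it: we only know $B(0)|_X=dA(0)\in\mathscr{L}(X)$ and the spectral condition on $\mathfrak{B}_\lambda|_X$ for $\lambda$ near $\lambda^\ast$ (note $\lambda^\ast\ne 0$ here since $B(0)$ is invertible, so no spectral control on $B(0)|_X$ itself is stated). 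The square root $J$ is an $\mathscr{L}(H)$-norm limit of polynomials of $B(0)$, and this does not control membership in $\mathscr{L}(X)$; so the conjugated functionals need not be defined on an open subset of $X$, and Theorem~\ref{th:BBHH.1} cannot be applied to them without extra assumptions. (You flag this as ``bookkeeping'', but it is a substantive hypothesis that fails to hold in general.)

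The fix, which is also what the paper actually does through the cited passage of Corollary~\ref{cor:Bi.2.4.2} and its final line referencing Theorem~\ref{th:Bi.3}, is to leave the functionals $\mathscr{L}_\lambda$ untouched and use the $J$-conjugation \emph{only on the Hilbert-space quadratic form} $u\mapsto([B(0)-\lambda\widehat{B}(0)]u,u)_H$ to compute its Morse index $m_\lambda$. That computation takes place purely in $H$ and never asks $J$ to preserve $X$. You then transfer back via $m_\lambda=\mu_\lambda$, which you have already obtained from Hypothesis~\ref{hyp:BBH.1}(i)--(ii) and Lemma~\ref{lem:BB.8}, exactly as in (\ref{e:BBH.9.0}). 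When $B(0)$ is positive definite, your own step-2 formula then closes the argument directly: every $H_k$ is positive for $B(0)$, so $H_k^-=\{0\}$, the second sum vanishes, $m_\lambda=\sum_{\lambda_k<\lambda}\dim H_k$, and commutativity (b) is not needed for the $B(0)$-orthogonal eigenspace decomposition because $L=J\widehat{B}(0)J$ is compact self-adjoint by construction. A small additional note: in step 2, once $B(0)$ and $\widehat{B}(0)$ commute, $L:=B(0)^{-1}\widehat{B}(0)$ is already self-adjoint in the original inner product ($L^\ast=\widehat{B}(0)B(0)^{-1}=B(0)^{-1}\widehat{B}(0)=L$), so the parenthetical about ``a suitable inner product'' or an early appeal to $J$ is unnecessary there.
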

\begin{proof}

The first paragraph in the proof of Corollary~\ref{cor:BBH.2} is still valid
after changing ``Using (a) and the first condition in (b)'' into ``Using (a)''.
Next, for the part
``\textsf{Let us choose $\delta>0$......, we get
 either} (\ref{e:Bi.2.7.5}) or (\ref{e:Bi.2.19-})."
 in the proof of Corollary~\ref{cor:Bi.2.4.2},  replacing
$\mathcal{L}''(0)$ and  $\widehat{\mathcal{L}}''(0)$ by  $B(0)$ and  $\widehat{B}(0)$, respectively,
we obtain that either (\ref{e:BBH.9.1}) or (\ref{e:BBH.9.2}) holds.
Then for $\mathscr{L}_\lambda:=\mathscr{L}-\lambda\widehat{\mathscr{L}}$,
 as in the proof of Theorem~\ref{th:BBHH.1} we may use (\ref{e:BBH.9.1})-(\ref{e:BBH.9.2}) and (\ref{e:BBH.9.0})
to derive the corresponding results with (\ref{e:Bi.2.7.6}) and (\ref{e:Bi.2.18}),
(\ref{e:BBHH.15}) and (\ref{e:BBHH.14.1}).
\end{proof}

 By Theorem~\ref{th:Bi.2.1E} and the proof of Theorem~\ref{th:BBHH.1}
we may directly get corresponding results with  Theorem~\ref{th:Bi.3.2} and Corollary~\ref{cor:Bi.3.2.1}.
Instead of these, as Theorem~\ref{th:Bif.2.2.4-} and Corollary~\ref{cor:Bif.3.3} we use  Theorem~\ref{th:Bif.2.2.2+} by Bartsch and Clapp \cite[\S4]{BaCl}
to obtain:

\begin{theorem}\label{th:BBH.6}
Under the assumptions of Theorem~\ref{th:BBHH.1},
let $G$ be a compact Lie group acting on $H$ orthogonally,
which induces a $C^1$ isometric action on $X$. Suppose that each $\mathscr{L}_\lambda$ is $G$-invariant and that
 $A_\lambda, B_\lambda$  are equivariant, and that
$H^0_{\lambda^\ast}:={\rm Ker}({B}_{\lambda^\ast}(0))$ only intersects at zero with the fixed point set $H^G$.
Then one of the following alternatives occurs:
\begin{enumerate}
\item[\rm (i)] $0\in X$ is not an isolated critical point of $\mathscr{L}_{\lambda^\ast}$;
\item[\rm (ii)] there exist left and right  neighborhoods $\Lambda^-$ and $\Lambda^+$ of $\lambda^\ast$ in $\mathbb{R}$
and integers $n^+, n^-\ge 0$, such that $n^++n^-\ge \ell(SH^0_{\lambda^\ast})$
and for $\lambda\in\Lambda^-\setminus\{\lambda^\ast\}$ (resp. $\lambda\in\Lambda^+\setminus\{\lambda^\ast\}$),
$\mathscr{L}_\lambda$ has at least $n^-$ (resp. $n^+$) distinct critical
$G$-orbits different from $0$, which converge to
 $0$ as $\lambda\to\lambda^\ast$.
 \end{enumerate}
In particular,  $(\lambda^\ast, 0)\in \Lambda\times X$
is a bifurcation point of (\ref{e:BBHH.1}).
\end{theorem}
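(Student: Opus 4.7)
The plan is to follow the same reduction-plus-finite-dimensional-machinery strategy used in the proofs of Theorem~\ref{th:BBHH.1} and Theorem~\ref{th:Bif.2.2.4-}, but now in the Banach--Hilbert regular setting. Specifically, I will first apply the parameterized splitting lemma (Theorem~\ref{th:BB.5}) to replace the equation $d\mathscr{L}_\lambda(x)=0$ near $(\lambda^\ast,0)\in\Lambda\times X$ by the equation $d\mathscr{L}^\circ_\lambda(z)=0$ on a neighbourhood of $0$ in the finite-dimensional $G$-space $Z:=H^0_{\lambda^\ast}$, and then invoke the Bartsch--Clapp theorem (Theorem~\ref{th:Bif.2.2.2+}) for the $C^2$ reduced family $\{\mathscr{L}^\circ_\lambda\}$.

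First I would produce, exactly as in Step~1 of the proof of Theorem~\ref{th:BBHH.1}, the map $\mathfrak h:[\lambda^\ast-\rho,\lambda^\ast+\rho]\times (B_X(0,\epsilon)\cap Z)\to X_+^{\lambda^\ast}\oplus X_-^{\lambda^\ast}$ and the $C^1$ diffeomorphisms $\Phi_\lambda$ satisfying (\ref{e:BBHH.2})--(\ref{e:BBHH.3}), and then note that both $\mathfrak h(\lambda,\cdot)$ and $\Phi_\lambda$ are automatically $G$-equivariant: the spectral projectors $P_0^{\lambda^\ast},P_\pm^{\lambda^\ast}$ commute with the $G$-action because $B_{\lambda^\ast}(0)$ does, the implicit-function construction of $\mathfrak h$ is unique, and the change of variables $\Phi_\lambda$ arises from an equivariant Lyapunov--Schmidt/Morse construction applied to $G$-invariant data. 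Consequently $\mathscr{L}^\circ_\lambda(z)=\mathscr{L}_\lambda(z+\mathfrak h(\lambda,z))$ is a $C^2$ $G$-invariant functional on a neighbourhood of $0\in Z$, the map $(\lambda,z)\mapsto\mathscr{L}^\circ_\lambda(z)$ is $C^1$ on $[\lambda^\ast-\rho,\lambda^\ast+\rho]\times (B_X(0,\epsilon)\cap Z)$, and critical $G$-orbits of $\mathscr{L}_\lambda$ near $0\in X$ correspond bijectively to critical $G$-orbits of $\mathscr{L}^\circ_\lambda$ near $0\in Z$.

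Next I would verify the three hypotheses of Theorem~\ref{th:Bif.2.2.2+} for $f_\lambda:=\mathscr{L}^\circ_\lambda$ on $Z$ with $Z^G=\{0\}$ (which is guaranteed by the assumption $H^0_{\lambda^\ast}\cap H^G=\{0\}$). Hypothesis (a) is the $C^1$ joint continuity already noted. For (b), the computation leading to (\ref{e:BBHH.11})--(\ref{e:BBHH.12}) (together with the fact, analogous to Remark~\ref{rm:Spl.2.4}(ii), that the Hessian of $\mathscr{L}^\circ_\lambda$ at $0$ is represented by the $P^0_{\lambda^\ast}$-compression of the ``Schur complement'' of $\mathfrak B_\lambda$) gives $L_{\lambda^\ast}=D(\nabla\mathscr{L}^\circ_{\lambda^\ast})(0)=0$, while Claim~\ref{cl:BBHH.5.2} delivers that $L_\lambda\in\mathscr L_s(Z)$ is an isomorphism for every $\lambda\in[\lambda^\ast-\rho,\lambda^\ast+\rho]\setminus\{\lambda^\ast\}$. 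For (c), hypothesis (e) of Theorem~\ref{th:BBHH.1} forces, in view of (\ref{e:BBHH.13})--(\ref{e:BBHH.15}), that $0\in Z$ is a strict local minimizer of $\mathscr{L}^\circ_\lambda$ on one side of $\lambda^\ast$ and a strict local maximizer on the other; hence $L_\lambda$ is either positive or negative definite there, so $Z_\lambda$ is either $\emptyset$ or all of $Z$, and is constant on each half-neighbourhood. As in the proof of Theorem~\ref{th:Bif.2.2.4-}, one then computes
\[
d=\ell(SZ)-\min\{\ell(S\emptyset)+\ell(S\{0\}),\,\ell(SZ)+\ell(SZ)\}=\ell(SZ),
\]
using $\ell(S\emptyset)=\ell(S\{0\})=0$ and $\ell(SZ)>0$ by \cite[Proposition~1.5]{BaCl}. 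Theorem~\ref{th:Bif.2.2.2+} then yields the dichotomy (i)--(ii) for the reduced family, and pulling back through $\Phi_\lambda$ and the slice-like correspondence $z\mapsto z+\mathfrak h(\lambda,z)$ transfers the two alternatives to the original equation $d\mathscr{L}_\lambda(u)=0$ in $X$.

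The main technical obstacle I foresee is making the equivariance of the splitting fully rigorous: the statement of Theorem~\ref{th:BB.5} as given in Appendix~\ref{app:B} does not record $G$-equivariance, so I will have to either (a) add a remark showing that when all ingredients are $G$-equivariant the proof produces equivariant $\mathfrak h$ and $\Phi_\lambda$ (by invariance of the spectral projectors and the uniqueness clause for $\mathfrak h$, plus an averaging argument for $\Phi_\lambda$ constructed from a pseudo-gradient as in Lemma~\ref{lem:Fpseudogradient}), or (b) rerun the Bobylev--Burman construction starting from $G$-equivariant data. Everything else is a bookkeeping transplant from the non-equivariant argument of Theorem~\ref{th:BBHH.1} combined with the $d$-computation of Theorem~\ref{th:Bif.2.2.4-}.
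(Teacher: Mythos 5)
Your proposal is correct and follows essentially the same route the paper intends: reduce via the parameterized Bobylev--Burman splitting (Theorem~\ref{th:BB.5}) to a $C^2$ family of $G$-invariant functionals on the finite-dimensional $G$-module $Z=H^0_{\lambda^\ast}$, verify the three hypotheses of the Bartsch--Clapp theorem (Theorem~\ref{th:Bif.2.2.2+}), and compute $d=\ell(SZ)$ exactly as in Theorem~\ref{th:Bif.2.2.4-}. One small correction to the concern you raise at the end: Theorem~\ref{th:BB.5} as stated in Appendix~\ref{app:B} \emph{does} already record the equivariance you need --- the paragraph immediately preceding its ``Moreover'' part asserts that when $\mathscr{L}_\lambda$ is $G$-invariant and $A_\lambda,B_\lambda$ are equivariant, the maps $\mathfrak h(\lambda,\cdot)$ and $\Phi_\lambda$ are $G$-equivariant and $\mathscr{L}_\lambda^\circ$ is $G$-invariant. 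So you do not need to re-derive the equivariance of the splitting; it can be cited directly, and the rest of your argument goes through as written.
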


By Remark~\ref{rem:Bif.3.4}, $\ell(SH^0_{\lambda^\ast})=\dim H^0_{\lambda^\ast}$ (resp. $\frac{1}{2}\dim H^0_{\lambda^\ast}$)
if the Lie group  $G$ is equal to $\mathbb{Z}_2=\{{\rm id}, -{\rm id}\}$ (resp. $S^1$).

 \begin{corollary}\label{th:BBH.7}
 Under the assumptions of one of Corollaries~\ref{cor:BBH.2} and \ref{cor:BBH.3}
let $G$ be a compact Lie group acting on $H$ orthogonally, which induces a $C^1$ isometric action on $X$.
Suppose that $\mathscr{L}, \widehat{\mathscr{L}}$ are $G$-invariant and that
 $A, B$ and $\widehat{A}, \widehat{B}$ are equivariant.
    Then the conclusions of Theorem~\ref{th:BBH.6} hold.
  \end{corollary}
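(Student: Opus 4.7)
The plan is a direct reduction: verify that the family $\mathscr{L}_\lambda:=\mathscr{L}-\lambda\widehat{\mathscr{L}}$ satisfies every hypothesis of Theorem~\ref{th:BBH.6} on a small interval $[\lambda^\ast-\rho,\lambda^\ast+\rho]$, so that its conclusion applies verbatim. First, I would set $A_\lambda:=A-\lambda\widehat{A}$ and $B_\lambda(x):=B(x)-\lambda\widehat{B}(x)$ and observe that the arguments already carried out in the proofs of Corollaries~\ref{cor:BBH.2} and \ref{cor:BBH.3} show that the analytic hypotheses (1)--(3) and (a)--(e) of Theorem~\ref{th:BBHH.1} are satisfied for this family at $\lambda^\ast$. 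The only non-automatic point among these is the Morse index jump (e), which in both settings is a consequence of either the semi-definiteness of $\widehat{B}(0)$ (giving (\ref{e:BBH.9.1}) or (\ref{e:BBH.9.2})) or the commutativity/positivity structure in Corollary~\ref{cor:BBH.3}, combined with the control $n_\lambda=\nu_\lambda<\infty$ furnished by Hypothesis~\ref{hyp:BBH.1} via Lemmas~\ref{lem:BB.7} and \ref{lem:BB.8}.

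Next, I would check the equivariance data needed by Theorem~\ref{th:BBH.6}. The orthogonal $G$-action on $H$ and the $C^1$ isometric action on $X$ are assumed; the $G$-invariance of $\mathscr{L}$ and $\widehat{\mathscr{L}}$ implies the $G$-invariance of each $\mathscr{L}_\lambda$, while the $G$-equivariance assumed for the potential operators $A,\widehat{A}$ and for $B,\widehat{B}$ yields the $G$-equivariance of $A_\lambda$ and $B_\lambda$ at once. In particular, $H^0_{\lambda^\ast}=\mathrm{Ker}(\mathfrak{B}_{\lambda^\ast})$ is a $G$-invariant finite-dimensional subspace of $X$, and the fixed-point condition $H^0_{\lambda^\ast}\cap H^G=\{0\}$ (which must be understood as part of the statement, inherited from Theorem~\ref{th:BBH.6}) is precisely what Theorem~\ref{th:BBH.6} requires.

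With all hypotheses in place, the proof is a one-line appeal to Theorem~\ref{th:BBH.6} applied to $\{\mathscr{L}_\lambda\}_{\lambda\in[\lambda^\ast-\rho,\lambda^\ast+\rho]}$. The step I would expect to be the most delicate is checking that the parameterized splitting lemma Theorem~\ref{th:BB.5} used inside Theorem~\ref{th:BBHH.1} (and hence Theorem~\ref{th:BBH.6}) respects the $G$-action, so that the reduced functional $\mathscr{L}^\circ_\lambda$ on $B_X(0,\epsilon)\cap H^0_{\lambda^\ast}$ is genuinely $G$-invariant and the diffeomorphism $\Phi_\lambda$ equivariant; this ensures that the finite-dimensional equivariant counting result of Bartsch--Clapp (Theorem~\ref{th:Bif.2.2.2+}) can be invoked to produce at least $\ell(SH^0_{\lambda^\ast})$ distinct critical $G$-orbits split between the two sides of $\lambda^\ast$. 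This equivariance of the splitting construction is standard, requiring only that the auxiliary map $\mathfrak{h}(\lambda,\cdot)$ be replaced (if necessary) by its $G$-average, which is well-defined since $H^0_{\lambda^\ast}$ and $X_+^{\lambda^\ast}\oplus X_-^{\lambda^\ast}$ are $G$-invariant and the construction in Theorem~\ref{th:BB.5} is canonical on the $G$-invariant data.
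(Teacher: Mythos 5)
Your reduction is exactly what the paper intends (it states the corollary without proof): the family $\mathscr{L}_\lambda=\mathscr{L}-\lambda\widehat{\mathscr{L}}$, $A_\lambda=A-\lambda\widehat{A}$, $B_\lambda=B-\lambda\widehat{B}$ was already shown to satisfy Theorem~\ref{th:BBHH.1}'s hypotheses in the proofs of Corollaries~\ref{cor:BBH.2} and \ref{cor:BBH.3}, the $G$-invariance of $\mathscr{L}_\lambda$ and equivariance of $A_\lambda$, $B_\lambda$ are immediate from the assumed invariance/equivariance of the unparameterized data, and you are right that the fixed-point condition $H^0_{\lambda^\ast}\cap H^G=\{0\}$ must be carried over from Theorem~\ref{th:BBH.6}. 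One small inaccuracy in your last paragraph: no $G$-averaging of $\mathfrak{h}(\lambda,\cdot)$ is involved — Theorem~\ref{th:BB.5} already asserts that $\mathfrak{h}(\lambda,\cdot)$ and $\Phi_\lambda$ are equivariant, which follows from uniqueness in the implicit function theorem applied to the $G$-equivariant equation $(id_X-P_0^{\lambda^\ast})A_\lambda(z+\mathfrak{h}(\lambda,z))=0$; but since that equivariance is internal to the already-established Theorem~\ref{th:BBH.6}, this slip does not affect your argument for the corollary.
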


With methods in  \cite[\S7.5]{Ba1}
it is possible to make further generalizations for the above results.

\appendix

\section{Appendix:\quad
 Parameterized splitting theorems by the author}\label{app:A}\setcounter{equation}{0}

Under Hypothesis~\ref{hyp:1.1} or Hypothesis~\ref{hyp:1.3}
let $H=H^+\oplus H^0\oplus H^-$ be the orthogonal decomposition
according to the positive definite, null and negative definite spaces of $B(0)$.
Denote by $P^\ast$ the orthogonal projections onto $H^\ast$, $\ast=+,0,-$.
$\nu:=\dim H^0$ and $\mu:=\dim H^-$ are called the {\it Morse index} and
{\it nullity} of the critical point $0$. In particular, if $\nu=0$ the critical point $0$ is
said to be {\it nondegenerate}.
Such a critical point is isolated by \cite[Theorem~2.13]{Lu7} (resp.
by Claim~2(i) and the arguments in Step 3 in the proof of \cite[Theorem~1.1]{Lu1}])
for the case of Hypothesis~\ref{hyp:1.1} (resp. Hypothesis~\ref{hyp:1.3}).
We have the following parameterized Morse-Palais lemma.

\begin{theorem}[\hbox{\cite[Theorem~2.9]{Lu7}}]\label{th:A.1}
Let  $\mathcal{L}\in C^1(U,\mathbb{R})$ satisfy Hypothesis~\ref{hyp:1.1}, and
let $\widehat{\mathcal{L}}\in C^1(U,\mathbb{R})$ satisfy Hypothesis~\ref{hyp:1.2}
without requirement that each $\widehat{\mathcal{L}}''(u)\in\mathscr{L}_s(H)$ is compact.
Suppose that the critical point  $0$  of $\mathcal{L}$ is a nondegenerate.
 Then there exist $\rho>0$, $\epsilon>0$, a family of open neighborhoods of $0$ in
$H$, $\{W_\lambda\,|\, |\lambda|\le\rho\}$
and a family of origin-preserving homeomorphisms, $\phi_\lambda: B_{H^+}(0,\epsilon) +
B_{H^-}(0,\epsilon)\to W_\lambda$, $|\lambda|\le\rho$,
 such that
$$
(\mathcal{L}+\lambda\widehat{\mathcal{L}})\circ\phi_\lambda(u^++ u^-)=\|u^+\|^2-\|u^-\|^2,
\quad\forall (u^+, u^-)\in B_{H^+}(0,\epsilon)\times
B_{H^-}(0,\epsilon).
$$
Moreover, $[-\rho,\rho]\times (B_{H^+}(0,\epsilon) +
B_{H^-}(0,\epsilon))\ni (\lambda, u)\mapsto \phi_\lambda(u)\in H$
is continuous, and $0$ is an isolated critical point of each $\mathcal{L}+\lambda\widehat{\mathcal{L}}$.
\end{theorem}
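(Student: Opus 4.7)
The plan is to treat $\mathcal{L}_\lambda:=\mathcal{L}+\lambda\widehat{\mathcal{L}}$ as a one-parameter family of functionals to which a parameterized version of the Morse--Palais lemma for merely Gâteaux-$C^1$ functionals, in the spirit of the nondegenerate case of the splitting theorem in \cite{Lu7}, can be applied. The key point is that the invertibility of $B(0)$ (which encodes nondegeneracy of $0$ as a critical point of $\mathcal{L}$) persists under the bounded perturbation $\lambda\widehat{\mathcal{L}}''(0)$ for all sufficiently small $|\lambda|$; this uses only the norm continuity of $\widehat{\mathcal{L}}''$ at $0$, and not its compactness, which is why the compactness assumption on $\widehat{\mathcal{L}}''$ can be dropped in the statement.

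First I would verify that $\{\mathcal{L}_\lambda\}_{|\lambda|\le\rho_0}$ satisfies a uniform analogue of Hypothesis~\ref{hyp:1.1}. Writing $B_\lambda(x)=P(x)+\bigl[Q(x)+\lambda\widehat{\mathcal{L}}''(x)\bigr]=:P_\lambda(x)+Q_\lambda(x)$ with $P_\lambda\equiv P$, the uniform positive-definite bound (D4*) is inherited, while (D2)--(D3) combined with the norm continuity of $\widehat{\mathcal{L}}''$ at $0$ give $\|Q_\lambda(x)-Q_\lambda(0)\|\to 0$ as $x\to 0$ in $H$, uniformly in $|\lambda|\le\rho_0$. (Compactness of $Q_\lambda$, which would fail without compactness of $\widehat{\mathcal{L}}''$, is unnecessary in the nondegenerate case: only positive-definiteness of $P$ and norm control of $B_\lambda(x)-B_\lambda(0)$ enter the Palais--Chang construction.) Shrinking $\rho>0$ so that $|\lambda|\,\|\widehat{\mathcal{L}}''(0)\|$ falls below the spectral gap of $B(0)$, each $B_\lambda(0)$ remains invertible; its positive and negative spectral subspaces $H^\pm_\lambda$ depend continuously on $\lambda$ and, via the holomorphic functional calculus, one obtains a continuous family of bounded isomorphisms $\Theta_\lambda:H\to H$ with $\Theta_0=\mathrm{id}$ and $\Theta_\lambda(H^\pm)=H^\pm_\lambda$, reducing the problem to normalizing the pulled-back family against the fixed splitting $H^+\oplus H^-$.

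Next I would run the Palais--Chang contraction argument for the Morse--Palais lemma with parameter. Following the non-parameterized scheme of \cite{Lu7}, the diffeomorphism $\phi_\lambda$ is obtained as the fixed point of a map $T_\lambda$ on continuous self-maps of a small closed ball in $H^+\oplus H^-$, whose defining data (the gradient $\nabla\mathcal{L}_\lambda$, the projections $P^\pm$, and $B_\lambda(0)^{-1}$) depend continuously on $\lambda$. The contraction constant is made uniform in $|\lambda|\le\rho$ by shrinking $\epsilon>0$, using the uniform lower bound (D4*) and the uniform smallness of $\|B_\lambda(x)-B_\lambda(0)\|$ for $\|x\|$ small. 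The uniform contraction principle then yields a jointly continuous fixed point $(\lambda,u)\mapsto\phi_\lambda(u)$; each $\phi_\lambda$ is an origin-preserving homeomorphism of $B_{H^+}(0,\epsilon)+B_{H^-}(0,\epsilon)$ onto a neighborhood $W_\lambda\ni 0$ satisfying $(\mathcal{L}+\lambda\widehat{\mathcal{L}})\circ\phi_\lambda(u^++u^-)=\|u^+\|^2-\|u^-\|^2$. Isolatedness of $0$ as a critical point of each $\mathcal{L}_\lambda$ is then immediate from the conjugacy.

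The main obstacle will be ensuring that every estimate entering the contraction goes through uniformly in $\lambda$ under the rather weak regularity allowed by Hypothesis~\ref{hyp:1.1}: the gradient $\nabla\mathcal{L}_\lambda$ is only Gâteaux-differentiable on $U\cap X$, not Fréchet on all of $U$. The correct technical tool is the mean-value-type argument along segments in $X$ combined with density of $X$ in $H$, exactly as used in the proof of Proposition~\ref{prop:stablity1}; together with (D4*) and the uniform continuity at $(\lambda_0,0)$ of $(\lambda,x)\mapsto B_\lambda(x)$, this controls $\|\nabla\mathcal{L}_\lambda(u)-B_\lambda(0)u\|$ by $o(1)\|u\|$ as $u\to 0$ in $H$, uniformly in $|\lambda|\le\rho$. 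Once this uniform estimate is in hand, the parameterized contraction argument runs exactly as in the non-parameterized case of \cite{Lu7}, delivering the theorem.
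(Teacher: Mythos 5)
The paper does not contain its own proof of Theorem~\ref{th:A.1}: it is quoted verbatim as \cite[Theorem~2.9]{Lu7} and used as a black box, so there is no internal argument to compare against line by line. What can be said is whether your sketch is consistent with the style of proof the author actually carries out for the analogous parameterized splitting statements in the appendix, in particular Theorem~\ref{th:A.5-}, whose proof (implicit function theorem for the reduction, then a parameterized Gromoll--Meyer-type normalization via the uniform lower bound (D4*) and the uniform norm smallness of $\omega_\lambda(x)$) is the paper's template.

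Your overall plan --- verify that $\mathcal{L}_\lambda=\mathcal{L}+\lambda\widehat{\mathcal{L}}$ inherits the structural estimates uniformly for $|\lambda|\le\rho$, observe that nondegeneracy of $B(0)$ persists for small $\lambda$, and then run a parameter-uniform Palais--Chang/Moser normalization to get a jointly continuous family $\phi_\lambda$ --- is the right shape of argument and matches the estimates the author does exploit in the degenerate parameterized case (\ref{e:Spli.2.5.1})--(\ref{e:Spli.2.5.4}). Two points deserve correction or sharpening. First, the step through the Kato isomorphisms $\Theta_\lambda$ with $\Theta_\lambda(H^\pm)=H^\pm_\lambda$ is not needed and, taken literally, does not give the stated formula: $\Theta_\lambda$ is not an isometry, so pulling back a normal form on $H^\pm_\lambda$ produces $\|P^{\pm}_\lambda\Theta_\lambda u^{\pm}\|^2$, not $\|u^{\pm}\|^2$; you would need a further $\lambda$-dependent linear isometry $H^\pm\to H^\pm_\lambda$ to repair the norms. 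The cleaner and likely intended route is to work against the \emph{fixed} splitting $H=H^+\oplus H^-$ from the start: since $B_\lambda(0)=B(0)+\lambda\widehat{\mathcal{L}}''(0)$ with $\widehat{\mathcal{L}}''(0)$ bounded, for small $|\lambda|$ the quadratic form $(B_\lambda(0)u,u)_H$ is uniformly positive (resp.\ negative) definite on $H^+$ (resp.\ $H^-$); the Morse--Palais construction does not require the splitting to be spectral for $B_\lambda(0)$, only that the form be definite of the appropriate sign on each factor, so no conjugation is needed. Second, you write that the compactness of $Q_\lambda=Q+\lambda\widehat{\mathcal{L}}''$ fails and is not needed; indeed $\mathcal{L}_\lambda$ does not satisfy Hypothesis~\ref{hyp:1.1} verbatim, and one must check that the specific estimates entering the construction (essentially (D4*) and the uniform smallness of $\|B_\lambda(x)-B_\lambda(0)\|$ acting on $H^0\oplus H^-$-directions, which is exactly the $\omega_\lambda(x)$-estimate of the paper) use only the positive definiteness of $P$ and the compactness of $Q(0)$ --- which controls $\dim H^-$ --- rather than compactness of the full $Q_\lambda$; your sketch asserts this correctly but a complete proof would need to spell it out. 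With those adjustments the plan is sound and consistent with how the author argues elsewhere in the appendix, though of course I cannot certify agreement with the unseen proof in \cite{Lu7}.
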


There also exist the parameterized Morse-Palais lemma around critical orbits (\cite[Theorem~2.21]{Lu7}),
 the parameterized splitting theorem around critical orbits (\cite[Theorem~2.22]{Lu7}) and the corresponding
parameterized shifting theorem (\cite[Corollary~2.27]{Lu7}).

Under Hypothesis~\ref{hyp:1.3}, it easily follows from the proof of \cite[Theorem~2.9]{Lu7}
and \cite[Remark~2.2]{Lu2} that Theorem~\ref{th:A.1} has the following corresponding version.

\begin{theorem}\label{th:A.4}
Let  $\mathcal{L}\in C^1(U,\mathbb{R})$ satisfy Hypothesis~\ref{hyp:1.3}, and
let $\widehat{\mathcal{L}}\in C^1(U,\mathbb{R})$ satisfy Hypothesis~\ref{hyp:1.4}
without requirement that each $\widehat{B}(x)\in\mathscr{L}_s(H)$ is compact.
Suppose that the critical point  $0$  of $\mathcal{L}$ is a nondegenerate.
Then the conclusions of Theorem~\ref{th:A.1} still hold.
\end{theorem}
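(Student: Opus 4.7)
The plan is to specialize the parameterized splitting construction of Theorem~\ref{th:A.5-} (whose proof is sketched in the proof of Theorem~\ref{th:Bif.2.2.0}) to the nondegenerate case, and then promote the resulting $X$-homeomorphism to an $H$-homeomorphism using the argument flagged in \cite[Remark~2.2]{Lu2}. I would proceed as follows.

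First, set $\mathcal{F}_\lambda:=\mathcal{L}+\lambda\widehat{\mathcal{L}}$. Since $0$ is a nondegenerate critical point of $\mathcal{L}$ we have $H^0=\{0\}$, hence $H=H^+\oplus H^-$ and, by (C) in Hypothesis~\ref{hyp:1.3}, $H^-\subset X$ with $\dim H^-=\mu<\infty$. By the hypotheses on $B$ and on $\widehat{B}$ (namely (D*) for $\widehat{B}$ and continuity of $B$ at $0\in X$), the perturbed operator $B_\lambda(0)=B(0)+\lambda\widehat{B}(0)$ converges to $B(0)$ in $\mathscr{L}_s(H)$ as $\lambda\to 0$; since the set of isomorphisms is open in $\mathscr{L}_s(H)$, we can choose $\rho_0>0$ so that for $|\lambda|\le\rho_0$ the operator $B_\lambda(0)$ is still invertible and the decomposition $H=H^+\oplus H^-$ remains a splitting into positive/negative definite subspaces for $B_\lambda(0)$ (after a small shift of the subspaces, which may be absorbed into the change of variables). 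Thus, the tuple $(\mathcal{F}_\lambda,H,X,U,A+\lambda\widehat{A},B+\lambda\widehat{B})$ satisfies Hypothesis~\ref{hyp:1.3} uniformly in $\lambda$, with trivial null component and a uniformly invertible second-order form.

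Next, I would apply the parameterized splitting construction of Theorem~\ref{th:A.5-} to this family, with $H^0_{\lambda^\ast}=\{0\}$ (taking $\lambda^\ast=0$). Because the null space is trivial, the implicit map $\psi(\lambda,z)$ from that proof degenerates to the constant $\psi(\lambda,0)=0$ (since $0$ is a critical point of every $\mathcal{F}_\lambda$), and the reduced functional $\mathcal{L}^\circ_\lambda$ drops out entirely. What remains is the construction of an origin-preserving, jointly continuous family of $C^1$-diffeomorphisms
\[
\Phi_\lambda:B_{X^+}(0,\epsilon)+B_{X^-}(0,\epsilon)\to \Phi_\lambda\bigl(B_{X^+}(0,\epsilon)+B_{X^-}(0,\epsilon)\bigr)\subset X
\]
with $\mathcal{F}_\lambda\circ\Phi_\lambda(u^++u^-)=\|u^+\|^2-\|u^-\|^2$, where $X^\pm$ denote the intersections (resp.\ projections) of $X$ with $H^\pm$. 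This is the direct analogue, in the $X$-framework, of the nondegenerate Morse--Palais statement of Theorem~\ref{th:A.1}; the parameter-dependent estimates that ensure joint continuity of $(\lambda,u)\mapsto\Phi_\lambda(u)$ are exactly those carried out in the proof of \cite[Theorem~2.9]{Lu7}, transcribed into the $X$-setting via the mean value estimates enabled by (F1)--(F3).

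Finally, I would promote $\Phi_\lambda$ from a homeomorphism on $B_{X^+}(0,\epsilon)+B_{X^-}(0,\epsilon)$ to one on $B_{H^+}(0,\epsilon)+B_{H^-}(0,\epsilon)$. The key observation (this is the content of \cite[Remark~2.2]{Lu2}) is that the construction of $\Phi_\lambda$ is governed by an ODE/implicit equation whose defining field is Lipschitz with respect to the $H$-norm and only continuous with respect to the $X$-norm. Consequently $\Phi_\lambda$ is in fact $H$-Lipschitz on a small $H$-ball, and since $X$ is dense in $H$ while $H^-=X^-$ is finite-dimensional, one may extend $\Phi_\lambda$ uniquely and continuously to $B_{H^+}(0,\epsilon)+B_{H^-}(0,\epsilon)$ by density; the identity $\mathcal{F}_\lambda\circ\Phi_\lambda(u^++u^-)=\|u^+\|^2-\|u^-\|^2$ extends by continuity of $\mathcal{F}_\lambda$. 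Setting $W_\lambda:=\Phi_\lambda\bigl(B_{H^+}(0,\epsilon)+B_{H^-}(0,\epsilon)\bigr)$, the identity forces $W_\lambda$ to be open and $\Phi_\lambda$ to be a homeomorphism onto it; joint continuity and the fact that $0$ is an isolated critical point of each $\mathcal{F}_\lambda$ follow from the model form on the right-hand side.

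The main obstacle is the final extension step: one has to verify rigorously that the $X$-valued implicit map arising in the Banach-space splitting construction is actually $H$-Lipschitz (not merely $X$-continuous) near the origin uniformly in $\lambda$. This boils down to rewriting the contraction fixed-point equation that defines $\Phi_\lambda$ using the $H$-inner product (which is possible thanks to (F2)--(F3) and the uniform coercivity (D4*) of $P$), so that the contraction constant is controlled by $\|\cdot\|$ rather than $\|\cdot\|_X$. Once this is in place the remaining arguments are routine parameter-tracking in the style of \cite[Theorem~2.9]{Lu7}.
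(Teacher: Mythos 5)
Your overall plan---specialize the parameterized Banach--Hilbert splitting construction to the nondegenerate case $H^0=\{0\}$, where the Lyapunov--Schmidt map $\psi$ trivializes---is the right direction, and you correctly identify the two references the paper itself invokes, namely the proof of \cite[Theorem~2.9]{Lu7} and \cite[Remark~2.2]{Lu2}. However, your final step contains a genuine gap, and a few earlier misstatements suggest the structure of the Lu1/Lu2/Lu7 machinery is not fully internalized.

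The main problem is the ``extend by density'' argument. In the actual construction, the coercivity/anticoercivity estimates (of the type appearing in the displayed inequalities in the proof of Theorem~\ref{th:A.5-}, using (D4*) and the mean value arguments of (F1)--(F3)) are proved for $u^\pm$ ranging over $X$-balls, but they are $H$-norm estimates. The homeomorphism is then obtained \emph{directly} on $H$-balls by a deformation/``moving frame'' argument (this is what \cite[Theorem~A.2]{Lu2} provides); it is never first produced on $X$-balls and subsequently extended. Your alternative route has two unresolved issues. First, even granting that $\Phi_\lambda$ is $H$-Lipschitz on $B_{X^+}(0,\epsilon)+B_{X^-}(0,\epsilon)$, a Lipschitz extension by density need not be a homeomorphism onto an open set: the image-openness does not follow from the model identity $\mathcal{F}_\lambda\circ\Phi_\lambda=\|u^+\|^2-\|u^-\|^2$ alone (you would need invariance of domain, which is unavailable in infinite dimensions without extra structure). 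Second, you would also need $\Phi_\lambda^{-1}$ to admit a uniformly continuous extension, and nothing in your sketch controls the inverse. The deformation lemma sidesteps both problems because it constructs $\Phi_\lambda$ and its inverse simultaneously as time-$1$ maps of a flow on an $H$-ball.

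Two smaller inaccuracies: the maps $\phi_\lambda$ in the conclusion of Theorem~\ref{th:A.1} (hence Theorem~\ref{th:A.4}) are merely homeomorphisms, not $C^1$-diffeomorphisms; the functionals here are only $C^1$ with a G\^{a}teaux-differentiable gradient, so no smoothness of the change of variables can be expected. And the inclusion $H^-\subset X$ follows from condition (D1) (packaged as condition (D) in Hypothesis~\ref{hyp:1.3}), not from (C): condition (C) alone gives a circular implication for eigenvectors of negative eigenvalue, whereas (D1) gives it directly.
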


We can also prove a more general version of this result if the condition
``${\rm Ker}(B_{\lambda^\ast}(0))\ne\{0\}$" in the assumptions of the following
theorem is changed into ``${\rm Ker}(B_{\lambda^\ast}(0))=\{0\}$".

\begin{theorem}\label{th:A.5-}
Let $H$, $X$ and $U$ be as in Hypothesis~\ref{hyp:1.3},
 and $\Lambda$ a topological space.
Let $\mathcal{L}_\lambda\in C^1(U, \mathbb{R})$, $\lambda\in\Lambda$, be a continuous family of functionals
    satisfying $\mathcal{L}'_\lambda(0)=0\;\forall\lambda$.
 For each $\lambda\in\Lambda$, assume that
there exists a map $A_\lambda\in C^1(U^X, X)$
such that $\Lambda\times U^X\ni (\lambda, x)\to A_\lambda(x)\in X$ is continuous,  and
 that for all $x\in U\cap X$  and $u, v\in X$,
 $$
 D\mathcal{L}_\lambda(x)[u]=(A_\lambda(x), u)_H\quad\hbox{and}\quad
(DA_\lambda(x)[u], v)_H=(B_\lambda(x)u, v)_H,
$$
 and  that  $B_\lambda$ has a decomposition
$B_\lambda=P_\lambda+Q_\lambda$, where for each $x\in U\cap X$,
 $P_\lambda(x)\in\mathscr{L}_s(H)$ is  positive definitive and
$Q_\lambda(x)\in\mathscr{L}_s(H)$ is compact.
   Let $0\in H$ be a degenerate critical point of
  some $\mathcal{L}_{\lambda^\ast}$, i.e.,  ${\rm Ker}(B_{\lambda^\ast}(0))\ne\{0\}$.
Suppose also that $P_\lambda$ and $Q_\lambda$ satisfy the following conditions:
    \begin{enumerate}
\item[\rm (i)]  For each $h\in H$, it holds that $\|P_{\lambda}(x)h-P_{\lambda^\ast}(0)h\|\to 0$
as $x\in U\cap X$ approaches to $0$ in $H$ and $\lambda\in\Lambda$ converges to $\lambda^\ast$.

 \item[\rm (ii)]  For some small $\delta>0$, there exist positive constants $c_0>0$ such that
$$
(P_\lambda(x)u, u)\ge c_0\|u\|^2\quad\forall u\in H,\;\forall x\in
\bar{B}_H(0,\delta)\cap X,\quad\forall\lambda\in \Lambda.
$$
 \item[\rm (iii)]  $Q_\lambda: U\cap X\to \mathscr{L}_s(H)$ is uniformly continuous at $0$  with respect to $\lambda\in \Lambda$.
  \item[\rm (iv)]  If $\lambda\in \Lambda$ converges to $\lambda^\ast$ then
  $\|Q_{\lambda}(0)-Q_{\lambda^\ast}(0)\|\to 0$.
   \item[\rm (v)] $(\mathcal{L}_{\lambda^\ast}, H, X, U, A_{\lambda^\ast}, B_{\lambda^\ast}=P_{\lambda^\ast}+ Q_{\lambda^\ast})$ satisfies Hypothesis~\ref{hyp:1.3}.
    \end{enumerate}
 Let $H^+_\lambda$, $H^-_\lambda$ and $H^0_\lambda$ be the positive definite, negative definite and zero spaces of
${B}_\lambda(0)$.  Denote by $P^0_\lambda$ and $P^\pm_\lambda$ the orthogonal projections onto $H^0_\lambda$ and $H^\pm_\lambda=H^+_\lambda\oplus H^-_\lambda$,
and by $X^\star_\lambda=X\cap H^\star_\lambda$ for $\star=+,-$, and by  $X^\pm_\lambda=P^\pm_\lambda(X)$.
 Then there exists a neighborhood $\Lambda_0$ of $\lambda^\ast$ in $\Lambda$,
$\epsilon>0$, a (unique) $C^0$ map
\begin{equation}\label{e:Spli.2.1.1}
\psi:\Lambda_0\times B_{H^0_{\lambda^\ast}}(0,\epsilon)\to X^\pm_{\lambda^\ast}
\end{equation}
which is $C^1$ in the second variable and
satisfies $\psi(\lambda, 0)=0\;\forall\lambda\in \Lambda_0$ and
\begin{equation}\label{e:Spli.2.1.2}
 P^\pm_{\lambda^\ast}A_\lambda(z+ \psi(\lambda,z))=0\quad\forall (\lambda,z)\in \Lambda_0
 \times B_{H^0_{\lambda^\ast}}(0,\epsilon),
 \end{equation}
an open neighborhood $W$ of $0$ in $H$ and a homeomorphism
\begin{eqnarray}\label{e:Spli.2.1.3}
&&\Lambda_0\times B_{H^0_{\lambda^\ast}}(0,\epsilon)\times
\left(B_{H^+_{\lambda^\ast}}(0, \epsilon) + B_{H^-_{\lambda^\ast}}(0, \epsilon)\right)\to \Lambda_0\times W,\nonumber\\
&&\hspace{20mm}({\lambda}, z, u^++u^-)\mapsto ({\lambda},\Phi_{{\lambda}}(z, u^++u^-))
\end{eqnarray}
satisfying $\Phi_{{\lambda}}(0)=0$, such that for each $\lambda\in \Lambda_0$,
\begin{eqnarray}\label{e:Spli.2.2}
&&\mathcal{L}_{\lambda}\circ\Phi_{\lambda}(z, u^++ u^-)=\|u^+\|^2-\|u^-\|^2+ \mathcal{
L}_{{\lambda}}(z+ \psi({\lambda}, z))\\
&& \quad\quad \forall (z, u^+ + u^-)\in  B_{H^0_{\lambda^\ast}}(0,\epsilon)\times
\left(B_{H^+_{\lambda^\ast}}(0, \epsilon) + B_{H^-_{\lambda^\ast}}(0, \epsilon)\right).\nonumber
\end{eqnarray}
 Moreover, there also hold: {\bf (i)}
 $$
d_z\psi(\lambda,z)=-[P^\pm_{\lambda^\ast}\circ({B}_{\lambda}(z+\psi(\lambda,z))|_{X^\pm_{\lambda^\ast}})]^{-1}
\circ(P^\pm_{\lambda^\ast}\circ({B}_\lambda(z+\psi(\lambda,z))|_{H^0_{\lambda^\ast}})).
$$

\noindent{\bf (ii)} The functional
\begin{equation}\label{e:Spli.2.3}
\mathcal{L}_{\lambda}^\circ: B_{H^0_{\lambda^\ast}}(0,\epsilon)\to \mathbb{R},\;
z\mapsto\mathcal{L}_{\lambda}(z+ \psi({\lambda}, z))
\end{equation}
 is of class $C^{2}$, its first-order and second-order differentials  at $z\in
B_{H^0}(0, \epsilon)$ are given by
 \begin{eqnarray}\label{e:Spli.2.4}
&& d\mathcal{L}^\circ_\lambda(z)[\zeta]=\bigl(A_\lambda(z+ \psi(\lambda, z)), \zeta\bigr)_H\quad\forall \zeta\in H^0,\\
  &&d^2\mathcal{L}^\circ_\lambda(0)[z,z']=\left(P^0_{\lambda^\ast}\bigr[{B}_\lambda(0)-
 {B}_\lambda(0)(P^\pm_{\lambda^\ast}{B}_{\lambda}(0)|_{X^\pm_{\lambda^\ast}})^{-1}
 (P^\pm_{\lambda^\ast}{B}_\lambda(0))\bigr]z, z'\right)_H,\nonumber\\
&& \hspace{40mm} \forall z,z'\in H^0.\label{e:Spli.2.5}
 \end{eqnarray}

\noindent{\bf (iii)}  If a compact Lie group $G$  acts on $H$ orthogonally, which induces  $C^1$ isometric actions on $X$,
 both $U$ and $\mathcal{L}_\lambda$ are $G$-invariant (and hence $H^0_\lambda$, $H^\pm_\lambda$
are $G$-invariant subspaces), then for each $\lambda\in \Lambda$,
the above maps $\psi(\lambda, \cdot)$  and $\Phi_{\lambda}(\cdot,\cdot)$  are
  $G$-equivariant, and $\mathcal{L}^\circ_{\lambda}$ is $G$-invariant.
  If for some $p\in\mathbb{N}$,  $\Lambda$ is a $C^p$ manifold and
$\Lambda\times U^X\ni (\lambda,x)\mapsto A(\lambda, x):=A_\lambda(x)\in X$ is $C^p$, then so is $\psi$.
\end{theorem}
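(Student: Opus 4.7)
I would prove Theorem~\ref{th:A.5-} by transporting the non-parameterized splitting argument of \cite[Theorem~2.1]{Lu2} (and its refinement \cite[Theorem~2.12]{Lu7}) through a parameterized Lyapunov–Schmidt reduction, carrying every construction uniformly for $\lambda$ in a small neighborhood $\Lambda_0$ of $\lambda^\ast$. The proof would split into three phases: (1) construct $\psi$ by an implicit function theorem with parameters; (2) establish $C^2$-regularity of the reduced functional and the derivative formulas; (3) construct $\Phi_\lambda$ by a parameterized generalized Morse lemma on $H^\pm_{\lambda^\ast}$.

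\textbf{Phase 1 (reduction).} I would apply the implicit function theorem to
$$
F:\Lambda\times B_{H^0_{\lambda^\ast}}(0,\epsilon)\times B_{X^\pm_{\lambda^\ast}}(0,\epsilon)\to X^\pm_{\lambda^\ast},\qquad F(\lambda,z,u):=P^\pm_{\lambda^\ast}A_\lambda(z+u),
$$
at $(\lambda^\ast,0,0)$. Using (v) together with the inclusions coming from (C) and (D1) of Hypothesis~\ref{hyp:1.3}, the partial Fr\'echet derivative $D_uF(\lambda^\ast,0,0)=P^\pm_{\lambda^\ast}B_{\lambda^\ast}(0)|_{X^\pm_{\lambda^\ast}}$ is an isomorphism of $X^\pm_{\lambda^\ast}$, since $B_{\lambda^\ast}(0)$ is a bounded symmetric Fredholm operator whose restriction to $H^\pm_{\lambda^\ast}$ is invertible, and condition (ii) provides the positive lower bound needed to pass to $X^\pm_{\lambda^\ast}$. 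The joint continuity of $F$ in $(\lambda,z,u)$ follows from the hypothesis that $(\lambda,x)\mapsto A_\lambda(x)$ is continuous; the continuity of $D_uF=P^\pm_{\lambda^\ast}B_\lambda|_{X^\pm_{\lambda^\ast}}$ in $(\lambda,z,u)$ follows from the $C^1$ character of $A_\lambda$ together with (i), (iii) and (iv). The $C^0$/$C^1$ implicit function theorem then yields a (unique) continuous $\psi$, $C^1$ in $z$, with $\psi(\lambda,0)=0$ and the stated derivative formula in (i); in particular $\psi$ is $C^p$ whenever $A$ is $C^p$ in $(\lambda,x)$.

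\textbf{Phase 2 (reduced functional).} Differentiation of $\mathcal{L}_\lambda^\circ(z)=\mathcal{L}_\lambda(z+\psi(\lambda,z))$ via the chain rule and (\ref{e:Spli.2.1.2}) gives (\ref{e:Spli.2.4}); differentiating once more at $z=0$ and plugging in the formula from (i) with $\psi(\lambda,0)=0$ produces (\ref{e:Spli.2.5}). The $C^2$-regularity follows from these identities together with the continuity of $B_\lambda$ near $0$.

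\textbf{Phase 3 (normal form).} On the slice $\{z+\psi(\lambda,z)+v:v\in H^\pm_{\lambda^\ast}\}$ set
$$
g_\lambda(z,v):=\mathcal{L}_\lambda(z+\psi(\lambda,z)+v)-\mathcal{L}_\lambda^\circ(z).
$$
Then $g_\lambda(z,0)=0$ and $\partial_vg_\lambda(z,0)=0$ by (\ref{e:Spli.2.1.2}), and the second $v$-derivative is a uniformly non-degenerate quadratic form, split as a positive definite form on $H^+_{\lambda^\ast}$ plus a negative definite one on $H^-_{\lambda^\ast}$. I would apply the parameterized Morse–Palais lemma (in the spirit of Theorem~\ref{th:A.1}, now with extra parameter $z$) to $v\mapsto g_\lambda(z,v)$, treating $(\lambda,z)$ jointly as a continuous parameter, to obtain an origin-preserving family of homeomorphisms that conjugate $g_\lambda(z,\cdot)$ to $\|u^+\|^2-\|u^-\|^2$. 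Gluing with the shift $u\mapsto z+\psi(\lambda,z)+u^++u^-$ yields $\Phi_\lambda$ and the identity (\ref{e:Spli.2.2}). Equivariance in (iii) is preserved at every step by averaging the IFT solution and the Moser flow generator over $G$; higher regularity of $\psi$ when $A$ is $C^p$ is immediate from the $C^p$-implicit function theorem.

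\textbf{Main obstacle.} The hardest point is Phase 3, because neither $\mathcal{L}_\lambda$ nor $\nabla\mathcal{L}_\lambda$ is $C^2$ on $H$, so the classical Morse–Palais lemma does not apply directly and the author's non-parameterized argument must be run through as a family. The key is to check that the deformation vector field (the Moser-type generator used in \cite{Lu1,Lu2}) depends continuously on $(\lambda,z)$ and that its flow exists on a common neighborhood for all $\lambda\in\Lambda_0$; this is where the uniform lower bound $c_0$ in (ii) and the uniform continuity (iii)–(iv) and the joint continuity (i) are indispensable, as they produce uniform invertibility of the $v$-Hessian and uniform Lipschitz control of the defining field. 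Once this uniformity is established, the remaining topological and regularity claims follow by direct verification.
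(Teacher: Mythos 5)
Your Phases 1 and 2 mirror the paper's proof exactly: the author constructs $\psi$ by applying the implicit function theorem to $(\lambda,z,x)\mapsto P^\pm_{\lambda^\ast}A_\lambda(z+x)$, with the partial isomorphism supplied by Hypothesis~\ref{hyp:1.3} via (v), and then obtains the derivative formulas by differentiating the constraint. That part of your plan is right.

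The paper's Phase 3, however, is not a Moser-flow argument, and this is where your proposal diverges and has a genuine gap. After Step 1 the author does not conjugate $v\mapsto g_\lambda(z,v)$ by a flow. Instead, the proof is entirely estimate-driven: using (i)–(iv) one first derives a uniform remainder bound
$|(B_\lambda(x)u,v)_H-(B_{\lambda^\ast}(0)u,v)_H|\le \omega_\lambda(x)\|u\|\,\|v\|$
on $H^0_{\lambda^\ast}\oplus H^-_{\lambda^\ast}$ with $\omega_\lambda(x)\to 0$, then a uniform negativity estimate on $H^-_{\lambda^\ast}$, and finally a uniform positivity estimate
$(B_\lambda(x)u,u)_H\ge a_1\|u\|^2$ on $H^+_{\lambda^\ast}$. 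The last of these is the real work: it is proved by a compactness/contradiction argument exploiting the weak convergence of a unit sequence $(u_n)\subset SH^+_{\lambda^\ast}$ and the compactness of $Q_{\lambda^\ast}(0)$; conditions (i)–(iv) enter precisely to control $Q_{\lambda_n}(x_n)$ and $P_{\lambda_n}(x_n)$ along the sequence. From these one verifies two monotonicity/concavity inequalities for $D_2\mathbf F_\lambda$ (where $\mathbf F_\lambda(z,u)=\mathcal L_\lambda(z+\psi(\lambda,z)+u)-\mathcal L_\lambda(z+\psi(\lambda,z))$), and then invokes the abstract normal-form result \cite[Theorem~A.2]{Lu2} to produce $\Phi_\lambda$ directly. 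Your proposal instead leans on a parameterized deformation (Moser) flow, asserting that the needed uniform Lipschitz control of the generator is "where (i)--(iv) are indispensable." But neither $\nabla\mathcal L_\lambda$ nor the putative generator is continuously differentiable on $H$ under Hypothesis~\ref{hyp:1.3}, and you give no argument that a flow with a common domain exists for all $\lambda$ near $\lambda^\ast$; you flag this as the hard point but do not close it. So as written this step is a gap, not merely an alternative route.

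A second, smaller error: for the equivariance claim in (iii) you propose to average the implicit-function solution and the flow generator over $G$. Averaging a solution of a nonlinear equation does not in general produce another solution, so this argument does not work. The correct (and the paper's) argument is by \emph{uniqueness}: when $F$ is $G$-equivariant, both $\psi(\lambda,\cdot)$ and $g\mapsto g\,\psi(\lambda,g^{-1}\cdot)$ solve the same implicit-function problem, hence coincide, and similarly for $\Phi_\lambda$.

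To turn your sketch into a proof that matches the hypotheses, you should replace the Moser-flow step by the estimate-based route: establish the uniform positivity/negativity bounds by the compactness argument, verify the two inequalities for $D_2\mathbf F_\lambda$, and cite the abstract lemma \cite[Theorem~A.2]{Lu2}; and replace averaging by uniqueness in the equivariance argument.
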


\begin{proof}
Take $\eta>0$ so small that $B_{H^0_{\lambda^\ast}}(0,\eta)\oplus B_{X^\pm_{\lambda^\ast}}(0,\eta)\subset U^X$.
Since $P^\pm_{\lambda^\ast}\circ(B(0)|_{X^\pm_{\lambda^\ast}})$ is a Banach space isomorphism from $X^\pm_{\lambda^\ast}$ onto itself, applying
the implicit function theorem to the map
$$
\Lambda\times B_{H^0_{\lambda^\ast}}(0,\eta)\oplus B_{X^\pm_{\lambda^\ast}}(0,\eta)\to X^\pm_{\lambda^\ast},\;(\lambda,
z, x)\mapsto P^\pm_{\lambda^\ast}(A_\lambda(z+ x))
$$
near $(\lambda^\ast, 0)$ we can get conclusions (\ref{e:Spli.2.1.1})-(\ref{e:Spli.2.1.2}) and (i)-(ii).

By (v), $X^\star_{\lambda^\ast}=H^\star_{\lambda^\ast}$, $\star=0,-$, have finite dimensions.
Let $e_1,\cdots,e_m$ be an unit orthogonal basis of $H^0_{\lambda^\ast}\oplus H^-_{\lambda^\ast}$.
By the proof of \cite[Theorem~1.1]{Lu1} (or
the proof of \cite[Lemma~3.3]{Lu2}), we have
\begin{eqnarray}\label{e:Spli.2.5.1}
|(B_\lambda(x)u,v)_H-(B_{\lambda^\ast}(0)u,v)_H|\le \omega_\lambda(x)\|u\|\|v\|
\end{eqnarray}
for any $x\in U\cap X$, $u\in H^0_{\lambda^\ast}\oplus H^-_{\lambda^\ast}$ and $v\in H$, where
$$
\omega_\lambda(x)=\left(\sum^m_{i=1}\|P_\lambda(x)e_i-P_{\lambda^\ast}(0)e_i\|^2\right)^{1/2}+ \sqrt{m}\|Q_\lambda(x)-Q_{\lambda^\ast}(0)\|.
$$
Clearly, assumptions (i), (iii) and (iv) imply that $\omega_\lambda(x)\to 0$
as $x\in U\cap X$ approaches to $0$ in $H$ and $\lambda\in\Lambda$ converges to $\lambda^\ast$.
It is  clear that (\ref{e:Spli.2.5.1}) leads to
\begin{eqnarray}\label{e:Spli.2.5.2}
|(B_\lambda(x)u,v)_H|\le \omega_\lambda(x)\|u\|\|v\|,\quad\forall u\in H^+_{\lambda^\ast}, \;\forall
v\in H^0_{\lambda^\ast}\oplus H^-_{\lambda^\ast}
\end{eqnarray}
for any $x\in U\cap X$.
Moreover,  there exists $a_0>0$ such that $(B_{\lambda^\ast}(0)u, u)_H\ge 2a_0\|u\|^2\;\forall u\in H^+_{\lambda^\ast}$.
Take a neighborhood of $0\in H$, $V\subset U$,  and shrink $\Lambda_0\subset\Lambda$, such that
 $\omega_\lambda(x)<a_0$ for all $x\in V\cap X$ and $\lambda\in\Lambda_0$.
    It follows from this and (\ref{e:Spli.2.5.1}) that
\begin{eqnarray}\label{e:Spli.2.5.3}
(B_\lambda(x)v,v)_H\le -2a_0\|v\|^2+ \omega_\lambda(x)\|v\|^2\le -a_0\|v\|^2\quad\forall v\in H^-_{\lambda^\ast}
\end{eqnarray}
for all $x\in V\cap X$ and $\lambda\in\Lambda_0$. As in the proof of \cite[Lemma~3.4]{Lu2}(i), (see below),
by shrinking $V$ and $\Lambda_0$ we can get $a_1>0$ such that
for all $x\in V\cap X$ and $\lambda\in\Lambda_0$,
\begin{eqnarray}\label{e:Spli.2.5.4}
(B_\lambda(x)u,u)_H\ge a_1\|u\|^2\quad\forall u\in H^+_{\lambda^\ast}.
\end{eqnarray}
Since $\psi(\lambda,0)=0$, we can choose $\epsilon\in (0,\eta)$ such that
$z+\psi(\lambda,z)+ u^++u^-\in V$ for all $\lambda\in\Lambda_0$,
$z\in\bar{B}_{H^0_{\lambda^\ast}}(0,\epsilon)$ and $u^\star\in \bar{B}_{H^\star_{\lambda^\ast}}(0,\epsilon)$, $\star=+,-$.
For each $\lambda\in \Lambda_0$, we define a functional
\begin{eqnarray*}
{\bf F}_\lambda: B_{H^0_{\lambda^\ast}}(0,\epsilon)\oplus B_{X^\pm_{\lambda^\ast}}(0,\epsilon)\to \mathbb{R},
(z,u)\mapsto \mathcal{L}_{\lambda}(z+\psi({\lambda}, z)+u)- \mathcal{L}_{{\lambda}}(z+ \psi({\lambda}, z)).
\end{eqnarray*}
Then following the proof ideas of \cite[Lemma~3.5]{Lu2} and shrinking $\Lambda_0$ and $\epsilon>0$
(if necessary) we can obtain positive constants $\mathfrak{a}_1$ and $\mathfrak{a}_2$ such that
\begin{eqnarray*}
 &&(D_2{\bf F}_{{\lambda}}(z, u^+ + u^-_2)-D_2{\bf F}_{{\lambda}}(z, u^++ u^-_1))[u^-_2-u^-_1]\le
-\mathfrak{a}_1\|u^-_2-u^-_1\|^2,\\
&&D_2{\bf F}_{{\lambda}}(z, u^++u^-)[u^+-u^-]\ge  \mathfrak{a}_2(\|u^+\|^2+ \|u^-\|^2)
\end{eqnarray*}
for all $\lambda\in \Lambda_0$,
$z\in B_{H^0_{\lambda^\ast}}(0,\epsilon)$ and $u^+\in B_{X^+_{\lambda^\ast}}(0,\epsilon)$, $u^-\in B_{X^-_{\lambda^\ast}}(0,\epsilon)$.
Using \cite[Theorem~A.2]{Lu2} leads to the desired results.

\textsf{Finally, for completeness let us prove (\ref{e:Spli.2.5.4})  by  contradiction.}
 Then there exist
sequences $(x_n)\subset V\cap X$ with $\|x_n\|\to 0$, $(\lambda_n)\subset\Lambda$ with $\lambda_n\to\lambda^\ast$
and
$(u_n)\subset SH^+_{\lambda^\ast}$, such that
$(B_{\lambda_n}(x_n)u_n, u_n)_H<1/n$ for any $n=1,2,\cdots$.
Passing a subsequence, we can assume that
\begin{equation}\label{e:2.24}
(B_{\lambda_n}(x_n)u_n, u_n)_H\to\beta\le 0\;\hbox{as}\;n\to\infty,
\end{equation}
and that $u_n\rightharpoonup u_0$ in $H$. We claim: $u_0\ne
\theta$. In fact, by (ii) we have
\begin{eqnarray}\label{e:2.25}
(B_{\lambda_n}(x_n)u_n, u_n)_H&=&(P_{\lambda_n}(x_n)u_n, u_n)_H + (Q_{\lambda_n}(x_n)u_n, u_n)_H\nonumber\\
&\ge & c_0+ (Q_{\lambda_n}(x_n)u_n, u_n)_H\quad\forall n>n_0.
\end{eqnarray}
 Moreover, a direct computation gives
\begin{eqnarray}\label{e:2.26}
&&\!\!\!\!\!\quad |(Q_{\lambda_n}(x_n)u_n, u_n)_H-(Q_{\lambda^\ast}(0)u_0, u_0)_H|\\
&&\!\!\!\!\!=|((Q_{\lambda_n}(x_n)-Q_{\lambda^\ast}(0))u_n, u_n)_H+ (Q_{\lambda^\ast}(0)u_n, u_n)_H-(Q_{\lambda^\ast}(0)u_0, u_n)_H\nonumber\\
&&\hspace{70mm}+ (Q_{\lambda^\ast}(0)u_0, u_n-u_0)_H|\nonumber\\
&&\!\!\!\!\!\le \|Q_{\lambda_n}(x_n)-Q_{\lambda^\ast}(0)\|\cdot\|u_n\|^2+
\|Q_{\lambda^\ast}(0)u_n-Q_{\lambda^\ast}(0)u_0\|\cdot\|u_n\|\nonumber\\
&&\hspace{40mm}+
|(Q_{\lambda^\ast}(0)u_0, u_n-u_0)_H|\nonumber\\
&&\!\!\!\!\!\le \|Q_{\lambda_n}(x_n)-Q_{\lambda^\ast}(0)\|+ \|Q_{\lambda^\ast}(0)u_n-Q_{\lambda^\ast}(0)u_0\|+
|(Q_{\lambda^\ast}(0)u_0, u_n-u_0)_H|.\nonumber
\end{eqnarray}
Since $u_n\rightharpoonup u_0$ in $H$,
$\lim_{n\to\infty}|(Q_{\lambda^\ast}(0)u_0, u_n-u_0)_H|=0$. We have also
\begin{equation}\label{e:2.27}
\lim_{n\to\infty}\|Q_{\lambda^\ast}(0)u_n-Q_{\lambda^\ast}(0)u_0\|=0
\end{equation}
by the  compactness  of $Q_{\lambda^\ast}(0)$, and
\begin{equation}\label{e:2.28}
\lim_{n\to\infty}\|Q_{\lambda_n}(x_n)-Q_{\lambda^\ast}(0)\|\le
\lim_{n\to\infty}\|Q_{\lambda_n}(x_n)-Q_{\lambda_n}(0)\|+\lim_{n\to\infty}\|Q_{\lambda_n}(0)-Q_{\lambda^\ast}(0)\|=0
\end{equation}
 by the conditions (iii)-(iv).
Hence  (\ref{e:2.26})-(\ref{e:2.28}) give
\begin{equation}\label{e:2.29}
\lim_{n\to\infty}(Q_{\lambda_n}(x_n)u_n, u_n)_H=(Q_{\lambda^\ast}(0)u_0, u_0)_H.
\end{equation}
This and (\ref{e:2.24})-(\ref{e:2.25}) yield
$$
0\ge \beta=\lim_{n\to\infty}(B_{\lambda_n}(x_n)u_n, u_n)_H\ge c_0 +
(Q_{\lambda^\ast}(0)u_0,u_0)_H,
$$
which implies $u_0\ne\theta$. Note that $u_0$ also sits in $H^+_{\lambda^\ast}$.

As above, using (\ref{e:2.28}) we derive
\begin{eqnarray}\label{e:2.30}
&&|(Q_{\lambda_n}(x_n)u_0, u_n)_H- (Q_{\lambda^\ast}(0)u_0, u_0)_H|\\
&\le& |(Q_{\lambda_n}(x_n)u_0, u_n)_H- (Q_{\lambda^\ast}(0)u_0, u_n)_H|\notag\\&&+ |(Q_{\lambda^\ast}(0)u_0,
u_n)_H- (Q_{\lambda^\ast}(0)u_0,
u_0)_H|\nonumber\\
&\le&  \|Q_{\lambda_n}(x_n)-Q_{\lambda^\ast}(0)\|\cdot\|u_0\|+ |(Q_{\lambda^\ast}(0)u_0, u_n-u_0)_H
|\to 0.\nonumber
\end{eqnarray}
 Note that
\begin{eqnarray*}
&&(B_{\lambda_n}(x_n)(u_n-u_0), u_n-u_0)_H\\
&=&(P_{\lambda_n}(x_n)(u_n-u_0), u_n-u_0)_H + (Q_{\lambda_n}(x_n)(u_n-u_0), u_n-u_0)_H\\
&\ge& c_0\|u_n-u_0\|^2+ (Q_{\lambda_n}(x_n)(u_n-u_0), u_n-u_0)_H\\
&\ge& (Q_{\lambda_n}(x_n)u_n, u_n)_H-2(Q_{\lambda_n}(x_n)u_0, u_n)_H+ (Q_{\lambda^\ast}(0)u_0, u_0)_H.
\end{eqnarray*}
It follows from this and (\ref{e:2.29})-(\ref{e:2.30}) that
\begin{eqnarray}\label{e:2.31}
&&\liminf_{n\to\infty}(B_{\lambda_n}(x_n)(u_n-u_0),
u_n-u_0)_H\notag\\&&\ge\lim_{n\to\infty}(Q_{\lambda_n}(x_n)(u_n-u_0), u_n-u_0)_H = 0.
\end{eqnarray}
Note that $u_n\rightharpoonup u_0$ implies that $(P_{\lambda^\ast}(0)u_0,
u_n-u_0)_H\to 0$. By (D2) and (\ref{e:2.30}) we get
\begin{eqnarray*}
&&|(B_{\lambda_n}(x_n)u_0, u_n)_H-(B_{\lambda^\ast}(0)u_0, u_0)_H|\\
&=&|(P_{\lambda_n}(x_n)u_0, u_n)_H+ (Q_{\lambda_n}(x_n)u_0, u_n)_H\\&&- (P_{\lambda^\ast}(0)u_0, u_0)_H-
(Q_{\lambda^\ast}(0)u_0, u_0)_H|\\
&\le & |(P_{\lambda_n}(x_n)u_0, u_n)_H-(P_{\lambda^\ast}(0)u_0, u_0)_H|\\&&+ |(Q_{\lambda^\ast}(x_n)u_0,
u_n)_H-(Q_{\lambda^\ast}(0)u_0, u_0)_H| \\
&\le & |(P_{\lambda_n}(x_n)u_0, u_n)_H-(P_{\lambda^\ast}(0)u_0, u_n)_H|+|(P_{\lambda^\ast}(0)u_0,
u_n)_H-(P_{\lambda^\ast}(0)u_0, u_0)_H| \\
&&\quad + |(Q_{\lambda_n}(x_n)u_0, u_n)_H-(Q_{\lambda^\ast}(0)u_0, u_0)_H|
\\
&\le & \|P_{\lambda_n}(x_n)u_0- P_{\lambda^\ast}(0)u_0\| + |(P_{\lambda^\ast}(0)u_0, u_n-u_0)_H| \\
&&\quad + |(Q_{\lambda_n}(x_n)u_0, u_n)_H-(Q_{\lambda^\ast}(0)u_0, u_0)_H| \to 0.
\end{eqnarray*}
 Similarly, we have
$\lim_{n\to\infty}(B_{\lambda_n}(x_n)u_0, u_0)_H=(B_{\lambda^\ast}(0)u_0, u_0)_H$.
From these, (\ref{e:2.24}) and (\ref{e:2.31}) it follows that
\begin{eqnarray*}
0&\le& \liminf_{n\to\infty}(B_{\lambda_n}(x_n)(u_n-u_0), u_n-u_0)_H\\
&=& \liminf_{n\to\infty}[(B_{\lambda_n}(x_n)u_n,u_n)_H-2(B_{\lambda_n}(x_n)u_0, u_n)_H+
(B_{\lambda_n}(x_n)u_0,
u_0)_H]\\
&=&\lim_{n\to\infty}(B_{\lambda_n}(x_n)u_n,u_n)_H- (B_{\lambda^\ast}(0)u_0, u_0)_H
=\beta- (B_{\lambda^\ast}(0)u_0, u_0)_H.
\end{eqnarray*}
Namely, $(B_{\lambda^\ast}(0)u_0,u_0)_H\le\beta\le 0$. It contradicts to
the fact that $(B_{\lambda^\ast}(0)u, u)_H\ge 2a_0\|u\|^2\;\forall u\in H^+_{\lambda^\ast}$
 because $u_0\in H^+_{\lambda^\ast}\setminus\{0\}$.

 As usual, conclusions in (iii) can be easily obtained from the above proof.
 \end{proof}

\begin{remark}\label{rm:Spl.2.4}
{\rm
{\bf (i)} As in the proof of \cite[Claim~2.17]{Lu7} we may show:
if $0\in H$ is a nondegenerate critical point of $\mathcal{L}_\lambda$,
i.e., ${\rm Ker}(B_\lambda(0))=\{0\}$,
then $0\in H^0_{\lambda^\ast}$ is such a critical point of $\mathcal{L}^\circ_\lambda$ too.\\
{\bf (ii)}   Every critical point $z$ of $\mathcal{L}_{\lambda}^\circ$ in
$B_{H^0_{\lambda^\ast}}(0,\epsilon)$ yields a critical point $z+ \psi({\lambda}, z)$
of $\mathcal{L}_{\lambda}$, and  $z+ \psi({\lambda}, z)$ sits in $X$.
Moreover, if $z\to 0$ in $H^0_{\lambda^\ast}$, then $z+ \psi({\lambda}, z)\to 0$ in $X$.
Conversely, every critical point of $\mathcal{L}_{\lambda}|_X$ near $0\in X$
has the form $z+\psi({\lambda}, z)$, where $z\in B_{H^0_{\lambda^\ast}}(0,\epsilon)$ is a
critical point of $\mathcal{L}_{\lambda}^\circ$.
Clearly, if $0\in H$ is an isolated critical point of $\mathcal{L}_{\lambda}$,
then $0\in H^0_{\lambda^\ast}$ is also an isolated critical point of $\mathcal{L}_{\lambda}^\circ$.
Conversely, it might not be true, which is  different from \cite[Theorem~2.2]{Lu7}.}
\end{remark}

Let $\nu_\lambda$ and $\mu_\lambda$ denote the nullity and  Morse index
of the functional $\mathcal{L}_{\lambda}$ at $0$. They are finite numbers and are
equal to $\dim H_\lambda^0$  and $\dim H^-_\lambda$ (that is, the nullity and Morse index
of the quadratic form $({B}_\lambda u,u)$ on $H$), respectively.
 As in \cite[Corollary~2.6]{Lu2} we have

\begin{corollary}[Shifting]\label{cor:A.6}
Under the assumptions of Theorem~\ref{th:A.5-}, suppose
 for some $\lambda\in [\lambda^\ast-\delta, \lambda^\ast+\delta]$ that $0\in H$ is an isolated
 critical point of $\mathcal{L}_{{\lambda}}$ (thus $0\in H^0_{\lambda^\ast}$ is that of $\mathcal{L}_{{\lambda}}^\circ$). Then
\begin{eqnarray*}
C_q(\mathcal{L}_{{\lambda}}, 0;{\bf K})=C_{q-\mu_{\lambda^\ast}}(\mathcal{L}^\circ_{{\lambda}}, 0;{\bf K}),\quad\forall
q\in\mathbb{N}\cup\{0\}.
\end{eqnarray*}
\end{corollary}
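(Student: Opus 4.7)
The plan is to use Theorem~\ref{th:A.5-} to reduce the critical group computation for $\mathcal{L}_\lambda$ to that of the sum of $\mathcal{L}^\circ_\lambda$ and a standard nondegenerate quadratic form, and then to invoke a classical shifting argument. First, the origin-preserving homeomorphism $\Phi_\lambda$ provided by Theorem~\ref{th:A.5-} gives, by topological invariance of critical groups,
$$C_q(\mathcal{L}_\lambda, 0; {\bf K}) = C_q(\mathcal{L}_\lambda \circ \Phi_\lambda, 0; {\bf K}),$$
and by (\ref{e:Spli.2.2}) the right hand functional has the split form
$$\widetilde{\mathcal{L}}_\lambda(z, u^++ u^-) = \mathcal{L}^\circ_\lambda(z) + \|u^+\|^2 - \|u^-\|^2$$
on a product neighborhood of the origin in $H^0_{\lambda^\ast} \oplus H^+_{\lambda^\ast} \oplus H^-_{\lambda^\ast}$.

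Next, since $0\in H$ is an isolated critical point of $\mathcal{L}_\lambda$ by hypothesis, the origin is isolated for $\widetilde{\mathcal{L}}_\lambda$; moreover, because the quadratic part $Q(u^+,u^-) := \|u^+\|^2 - \|u^-\|^2$ has $0$ as its unique critical point on $H^\pm_{\lambda^\ast}$, the reduced functional $\mathcal{L}^\circ_\lambda$ must have $0\in H^0_{\lambda^\ast}$ as an isolated critical point as well (this is consistent with Remark~\ref{rm:Spl.2.4}(ii)). I would then apply the classical shifting theorem (see for instance \cite[Theorem~5.1.17]{Ch1} or \cite[Theorem~8.4]{MaWi}) to $\widetilde{\mathcal{L}}_\lambda = \mathcal{L}^\circ_\lambda + Q$: it asserts that for a continuous functional equal to the sum of a functional on the ``kernel'' variable and a nondegenerate (possibly indefinite) quadratic form of Morse index $\mu$ on the complementary variable, one has $C_q(f, 0;{\bf K}) = C_{q-\mu}(g, 0;{\bf K})$. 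Applied here with $\mu = \dim H^-_{\lambda^\ast} = \mu_{\lambda^\ast}$, this yields
$$C_q(\widetilde{\mathcal{L}}_\lambda, 0;{\bf K}) = C_{q-\mu_{\lambda^\ast}}(\mathcal{L}^\circ_\lambda, 0;{\bf K}),$$
which combined with the earlier homeomorphism invariance is precisely the claim.

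The main technical point to verify is the shifting step in the present $C^1$ and infinite-dimensional positive part setting, since the standard statements in \cite{Ch1,MaWi} are usually phrased for $C^2$ functionals with Fredholm Hessian. However, once the split form (\ref{e:Spli.2.2}) is in hand, the classical proof proceeds by excision and explicit deformations along the $u^\pm$ coordinates that use only the continuity of $\widetilde{\mathcal{L}}_\lambda$ and the (global) nondegeneracy of $Q$; it therefore transfers verbatim, exactly as in the proof of \cite[Corollary~2.6]{Lu2}, which we follow step by step. An alternative justification, if one prefers to avoid quoting a shifting theorem, would be to compute $C_j(Q, 0;{\bf K}) = \delta_{j, \mu_{\lambda^\ast}} {\bf K}$ directly via a deformation retract of $\{Q \le 0\} \cap B_{H^\pm_{\lambda^\ast}}(0,\epsilon)$ onto the unit sphere of $H^-_{\lambda^\ast}$, and then apply the cross-product isomorphism for the relative homology of a product pair to obtain a K\"unneth-type decomposition $C_q(\widetilde{\mathcal{L}}_\lambda, 0;{\bf K}) = \bigoplus_{i+j=q} C_i(\mathcal{L}^\circ_\lambda, 0;{\bf K}) \otimes C_j(Q, 0;{\bf K})$, from which the claim is immediate.
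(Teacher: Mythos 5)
Your proposal is correct and follows exactly the route the paper takes: the paper's ``proof'' of Corollary~\ref{cor:A.6} is simply the remark ``As in \cite[Corollary~2.6]{Lu2}'', and the argument in that reference is precisely the one you spell out --- use the homeomorphism $\Phi_\lambda$ of Theorem~\ref{th:A.5-} and homeomorphism invariance of critical groups to pass to the split form of (\ref{e:Spli.2.2}), then apply the standard shifting argument. Your observation that the shift step only uses continuity of the split functional and the global nondegeneracy of the quadratic part (so that the usual $C^2$/Fredholm hypotheses in \cite{Ch1,MaWi} are not actually needed once the split form is in hand) is exactly the right point to make; the deformation $(z, u^+, u^-)\mapsto (z,(1-t)u^+,u^-)$ kills the infinite-dimensional $H^+_{\lambda^\ast}$ factor and reduces to the finite-dimensional shift on $H^0_{\lambda^\ast}\oplus H^-_{\lambda^\ast}$. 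One small caution on the K\"unneth alternative you sketch at the end: the tensor formula $\bigoplus_{i+j=q}C_i\otimes C_j$ needs some care over an arbitrary abelian group ${\bf K}$; the cleaner statement is that $(B_{H^\pm_{\lambda^\ast}}\cap\{Q\le 0\},\; B_{H^\pm_{\lambda^\ast}}\cap\{Q\le 0\}\setminus\{0\})$ has the homotopy type of $(D^{\mu_{\lambda^\ast}},S^{\mu_{\lambda^\ast}-1})$, whose relative homology over $\mathbb{Z}$ is free and concentrated in degree $\mu_{\lambda^\ast}$, so the degree shift for arbitrary ${\bf K}$ follows with no Tor terms. Also, the deformation retract you mention is of the pair onto $(B_{H^-_{\lambda^\ast}},\;B_{H^-_{\lambda^\ast}}\setminus\{0\})$, not directly onto the unit sphere, but this is only a phrasing imprecision. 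Overall the argument is sound and essentially coincides with the paper's.
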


\begin{corollary}\label{cor:A.5}
Under Hypothesis~\ref{hyp:Bif.2.2.0} the conclusions of Theorem~\ref{th:A.5-}
hold with family $\{\mathcal{L}_\lambda:=\mathcal{L}-\lambda\widehat{\mathcal{L}}\,|\,
\lambda\in\Lambda=\mathbb{R}\}$.
\end{corollary}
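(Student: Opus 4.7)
\medskip

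The plan is to verify that the family $\{\mathcal{L}_\lambda:=\mathcal{L}-\lambda\widehat{\mathcal{L}}\}_{\lambda\in\mathbb{R}}$, together with the natural choices $A_\lambda:=A-\lambda\widehat{A}$, $B_\lambda:=B-\lambda\widehat{B}$, $P_\lambda:=P$ and $Q_\lambda:=Q-\lambda\widehat{B}$, satisfies all the hypotheses of Theorem~\ref{th:A.5-}, so that the conclusion follows immediately.

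First I would check the linear-structure assumptions. Since $\mathcal{L},\widehat{\mathcal{L}}\in C^1(U,\mathbb{R})$ with $\mathcal{L}'(0)=\widehat{\mathcal{L}}'(0)=0$ by (F1), each $\mathcal{L}_\lambda$ lies in $C^1(U,\mathbb{R})$ with $\mathcal{L}_\lambda'(0)=0$, and $\mathbb{R}\ni\lambda\mapsto \mathcal{L}_\lambda$ is continuous in any reasonable sense because the dependence is affine. From $A,\widehat{A}\in C^1(U^X,X)$ we get $A_\lambda\in C^1(U^X,X)$ and the joint continuity of $(\lambda,x)\mapsto A_\lambda(x)=A(x)-\lambda\widehat{A}(x)$ in $\mathbb{R}\times U^X$ is obvious from the linear-in-$\lambda$ form. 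The identities $D\mathcal{L}_\lambda(x)[u]=(A_\lambda(x),u)_H$ and $(DA_\lambda(x)[u],v)_H=(B_\lambda(x)u,v)_H$ for $x\in U\cap X$, $u,v\in X$ follow by linearity from (F2)--(F3) applied to each of $\mathcal{L}$ and $\widehat{\mathcal{L}}$.

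Next I would verify the four continuity/positivity conditions (i)--(iv) of Theorem~\ref{th:A.5-} for the chosen decomposition $B_\lambda=P_\lambda+Q_\lambda=P+(Q-\lambda\widehat{B})$. Since $P_\lambda\equiv P$ does not depend on $\lambda$, condition (i) reduces to (D2) for $P$, and condition (ii) reduces to (D4*), which holds uniformly in $\lambda$ precisely because $P_\lambda$ is independent of $\lambda$. For (iii), write $Q_\lambda(x)-Q_\lambda(0)=[Q(x)-Q(0)]-\lambda[\widehat{B}(x)-\widehat{B}(0)]$, and use (D3) for $Q$ together with the hypothesis ``$\widehat{B}(x)\to\widehat{B}(0)$ in $\mathscr{L}_s(H)$ as $x\to 0$ along $X\cap U$'' from Hypothesis~\ref{hyp:Bif.2.2.0}; restricting $\lambda$ to a compact neighborhood of $\lambda^\ast$ yields the required uniformity in $\lambda$. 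Condition (iv) is immediate: $\|Q_\lambda(0)-Q_{\lambda^\ast}(0)\|=|\lambda-\lambda^\ast|\cdot\|\widehat{B}(0)\|\to 0$ as $\lambda\to\lambda^\ast$ in $\mathbb{R}$, using that $\widehat{B}(0)\in\mathscr{L}_s(H)$.

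Finally, for condition (v) I would show that $(\mathcal{L}_{\lambda^\ast},H,X,U,A_{\lambda^\ast},B_{\lambda^\ast}=P+(Q-\lambda^\ast\widehat{B}))$ satisfies Hypothesis~\ref{hyp:1.3}. The structural parts (F1)--(F3) transfer by linearity from the corresponding properties for $\mathcal{L}$ and $\widehat{\mathcal{L}}$ listed in Hypothesis~\ref{hyp:Bif.2.2.0}; the fact that $Q-\lambda^\ast\widehat{B}$ remains compact is clear since both $Q(x)$ and $\widehat{B}(x)$ are compact for every $x\in U\cap X$. The only slightly delicate points are the conditions (C) and (D1), which, at $x=0$, read $\{u\in H\,|\,\mathfrak{B}_{\lambda^\ast}u\in X\}\subset X$ and $\{u\in H\,|\,\mathfrak{B}_{\lambda^\ast}u=\mu u,\mu\le 0\}\subset X$; both are explicitly postulated in Hypothesis~\ref{hyp:Bif.2.2.0}. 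The main obstacle (which is really only bookkeeping) will be making sure that (C) and (D1) are genuinely part of the hypothesis at $x=0$ for the operator $B_{\lambda^\ast}$ rather than the individual operators $B$ and $\widehat{B}$, and that the compactness/positive-definiteness split for $B_{\lambda^\ast}$ is obtained from $P$ and $Q-\lambda^\ast\widehat{B}$ rather than from some more clever decomposition; once this is verified, Theorem~\ref{th:A.5-} applies verbatim and yields all its conclusions for the affine family $\mathcal{L}_\lambda=\mathcal{L}-\lambda\widehat{\mathcal{L}}$.
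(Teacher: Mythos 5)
Your verification is valid for alternative (I) of Hypothesis~\ref{hyp:Bif.2.2.0}, where $\lambda^\ast\ne0$ and each $\widehat{B}(x)$ is assumed compact, so that $Q_\lambda(x)=Q(x)-\lambda\widehat{B}(x)$ does take values in compact operators. But Hypothesis~\ref{hyp:Bif.2.2.0} also admits alternative (II), where $\lambda^\ast=0$ and $\widehat{B}(x)$ is \emph{only} assumed to converge to $\widehat{B}(0)$ in $\mathscr{L}_s(H)$ as $x\to 0$ along $X\cap U$, without any compactness requirement. In that case your $Q_\lambda(x)=Q(x)-\lambda\widehat{B}(x)$ is generally \emph{not} compact for $\lambda\ne 0$, so the family fails the requirement in Theorem~\ref{th:A.5-} that each $Q_\lambda(x)$ be compact, and your single decomposition does not deliver the corollary in full generality.

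The correct move in case (II), which the paper uses implicitly (compare the ``Moreover'' part of the proof of Corollary~\ref{cor:Bi.2.2}), is to absorb $-\lambda\widehat{B}$ into the positive-definite part rather than the compact part: take $P_\lambda:=P-\lambda\widehat{B}$ and $Q_\lambda:=Q$. Then $Q_\lambda$ is compact, conditions (iii)--(iv) of Theorem~\ref{th:A.5-} become trivial since $Q_\lambda$ is $\lambda$-independent, and condition (i) follows from (D2) for $P$ together with boundedness of $\|\widehat{B}(x)\|$ near $0$. Condition (ii) holds after shrinking $\Lambda$ to a small interval about $\lambda^\ast=0$: by (D4*) there is $C_0'>0$ with $(P(x)u,u)_H\ge C_0'\|u\|^2$ for $x$ near $0$ in $X$, and setting $M:=\sup_{x}\|\widehat{B}(x)\|<\infty$ for $x$ near $0$ one gets $(P_\lambda(x)u,u)_H\ge(C_0'-|\lambda|M)\|u\|^2\ge\frac{C_0'}{2}\|u\|^2$ for $|\lambda|\le C_0'/(2M)$. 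Supplementing your argument with this alternative decomposition in case (II) closes the gap; everything else you wrote (the verification of (F1)--(F3), (C), (D1) at $\lambda^\ast$ and the transfer of (v)) is sound.
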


\begin{remark}\label{rm:Spl.2.4+}
{\rm If  Hypothesis~\ref{hyp:Bif.2.2.0} in Corollary~\ref{cor:A.5} is replaced by
Hypothesis~\ref{hyp:Bif.2.2.0+}, that is, we  only assume that $A: U^X\to X$  is
 G\^ateaux differentiable, and strictly Fr\'{e}chet differentiable at $0\in U^X$,
then we can still prove Corollary~\ref{cor:A.5} with weaker conclusions
that $\psi$ and $\mathcal{L}_{\lambda}^\circ$ are $C^{1-0}$ and $C^{2-0}$, respectively,
and that all $\psi(\lambda,\cdot)$ and $d\mathcal{L}_{\lambda}^\circ$ are strictly Fr\'echet differentiable
at $0\in H^0_{\lambda^\ast}$.}
\end{remark}

\begin{remark}\label{rm:Spl.2.5}
{\rm From the proofs of  \cite[Theorem 5.1.13]{Ch1} and \cite[Theorem 8.3]{MaWi} it is easily seen that
the classical splitting lemma for $C^2$ functionals has also the parameterized version as Theorem~\ref{th:A.5-}.
Let $U\subset H$ be as in Theorem~\ref{th:A.5-}, $\Lambda$ a  topological space,
and let $\mathcal{L}_\lambda\in C^2(U, \mathbb{R})$, $\lambda\in\Lambda$, be a  family of functionals
    satisfying $\mathcal{L}'_\lambda(0)=0\;\forall\lambda$, and be such that
  $\Lambda\times U\ni (\lambda,u)\mapsto\nabla\mathcal{L}_\lambda(u)\in H$ is  continuous.
 For some $\lambda^\ast\in\Lambda$, assume that $\mathcal{L}^{\prime\prime}_{\lambda^\ast}(0)$ is a Fredholm operator with nontrivial kernel.
 Then the conclusions of Theorem~\ref{th:A.5-} hold as long as we take $X=H$, $A_\lambda=\nabla \mathcal{L}_\lambda$ and
 $B_\lambda=\mathcal{L}^{\prime\prime}_{\lambda}$, and  replace the paragraph
 `` an open neighborhood $W$ of $0$ in $H$ and a homeomorphism ......
satisfying $\Phi_{{\lambda}}(0)=0$" by
 `` a $C^0$ map
\begin{eqnarray*}
\Lambda_0\times B_{H^0_{\lambda^\ast}}(0,\epsilon)\oplus
B_{H^+_{\lambda^\ast}}(0, \epsilon)\oplus B_{H^-_{\lambda^\ast}}(0, \epsilon)
\to  H,\quad ({\lambda}, u)\mapsto \Phi_{{\lambda}}(u)
\end{eqnarray*}
 such that for each $\lambda\in \Lambda_0$ the map
 $\Phi_{\lambda}$ is an  origin-preserving homeomorphism from
$B_{H^0_{\lambda^\ast}}(0,\epsilon)\oplus
B_{H^+_{\lambda^\ast}}(0, \epsilon)\oplus B_{H^-_{\lambda^\ast}}(0, \epsilon)$ onto
an open neighborhood $W_\lambda$ of $0$ in $H$".

If the kernel of $\mathcal{L}^{\prime\prime}_{\lambda^\ast}(0)$ is trivial,
we have a corresponding  parameterized Morse-Palais lemma.

Moreover, there also exist corresponding corollaries of Theorem~\ref{th:A.4},~\ref{th:A.5-} as above.

As noted in \cite[Remark~2.4]{Lu2}, these can directly follow from
 Theorem~\ref{th:A.4},~\ref{th:A.5-} and their corollaries if $\mathcal{L}^{\prime\prime}_{\lambda^\ast}(0)$
has also a finite dimensional negative definite space.}
\end{remark}

\section{Appendix:\quad
 Parameterized Bobylev-Burman splitting  lemmas}\label{app:B}\setcounter{equation}{0}

We here give parameterized versions of splitting  lemmas in \cite{BoBu}
in consistent notations with those of this paper.
Let $H$ be a Hilbert space with inner product $(\cdot,\cdot)_H$
and the induced norm $\|\cdot\|$,  $X$  a Banach space with
norm $\|\cdot\|_X$, such that
\begin{enumerate}
\item[(S)]   $X\subset H$ is dense in $H$ and
 the inclusion $X\hookrightarrow H$ is continuous.
\end{enumerate}

Suppose that a $C^2$ functional $\mathscr{L}:B_X(0, \delta)\to\mathbb{R}$
satisfies  for some constant $M>0$:
\begin{enumerate}
\item[(a)] $|d\mathscr{L}(x)[u]|\le M\|u\|,\;\forall x\in B_X(0, \delta)$, $\forall u\in X$.
\item[(b)] $|d^2\mathscr{L}(x)[u,v]|\le M\|u\|\cdot\|v\|,\;\forall x\in B_X(0, \delta)$, $\forall u,v\in X$.
\end{enumerate}

Then there is a map $B:B_X(0, \delta)\to \mathscr{L}_s(H)$ such that
\begin{eqnarray}\label{e:BB.-2}
d^2\mathscr{L}(x)[u,v]=(B(x)u,v)_H
\end{eqnarray}
for all $x\in B_X(0, \delta)$ and  $u,v\in X$.
 Suppose also:
 \begin{enumerate}
\item[(c)] There is a uniformly continuously differentiable map $A:B_X(0, \delta)\to X$ such that
$d\mathscr{L}(x)[u]=(A(x),u)_H$ for all $x\in B_X(0, \delta)$ and for all $u\in X$.
 (Hence for any $x\in B_X(0, \delta)$,
            (\ref{e:BB.-2}) implies that $B(x)(X)\subset X$
 and $B(x)y=dA(x)[y]\;\forall y\in X$. As an element in $\mathscr{L}(X)$ we write
$B(x)|_X=dA(x)$ without confusions.)
\item[(d)]  $B(\cdot)|_X: B_X(0, \delta)\to \mathscr{L}(X)$ is continuously differentiable,
           and also uniformly  differentiable in a neighborhood of the line segment
           $[0,1]u:=\{tu\,|\,0\le t\le 1\}$ for each $u\in B_X(0, \delta)$. (The latter is satisfied
           if $A$ is $C^3$!)
\item[(e)]  $B:B_X(0, \delta)\to \mathscr{L}_s(H)$ is uniformly continuous.
\end{enumerate}

A functional $\mathscr{L}$ satisfying the above conditions (a)-(e) is called {\bf $(B_X(0, \delta),
H)$-regular} in \cite{BoBu}.

\begin{remark}\label{rem:BB.1}
{\rm  The second condition in (d) did not appear in \cite{BoBu}. We consider that it is needed for the proof of
$C^1$-smoothness of the operator
$$
 B_X(0, \delta)\ni u\mapsto\mathcal{B}(u):=\int^1_0(1-t)A'(tu) dt=\int^1_0(1-t)B(tu)|_X dt\in \mathscr{L}(X),
 $$
which corresponds to the operator $A(\cdot):B\to S(E)$ in \cite[Lemma~1.2]{BoBu}. In fact,
(c) implies that  $B(\cdot)|_X=dA:B_X(0, \delta)\to\mathscr{L}(X)$ is
 uniformly continuous and hence that $\mathcal{B}: B_X(0, \delta)\to\mathscr{L}(X)$ is continuous.
(Actually, the latter claim may follow from the differentiability of the map $A:B_X(0, \delta)\to X$
 and the first condition in (d). Indeed, the former can still lead to $B(x)|_X=dA(x)$ for any
 $x\in B_X(0, \delta)$. Then the first condition in (d) implies that $A$ is $C^2$.
 Therefore for a fixed $u_0\in B_X(0, \delta)$ there are $M>0$ and a convex neighborhood $\mathscr{N}([0,1]u_0)$ of the compact
 subset $[0,1]u_0\subset B_X(0, \delta)$ such that $\|A''(x)\|_{\mathscr{L}(X,\mathscr{L}(X))}\le M$ for all $x\in\mathscr{N}([0,1]u_0)$.
 Take $\rho>0$ so small that $B_X(u_0, \rho)\subset B_X(0, \delta)$ and $[0,1]u\subset\mathscr{N}([0,1]u_0)$
 for all $u\in B_X(u_0, \rho)$. It follows from the mean value theorem that for $u_i\in B_X(u_0, \rho)$, $i=1,2$,
  \begin{eqnarray*}
  \|\mathcal{B}(u_2)-\mathcal{B}(u_1)\|_{\mathscr{L}(X)}\le
  \int^1_0(1-t)\|A'(tu_2)-A'(tu_1)\|_{\mathscr{L}(X)} dt\le M\|u_2-u_1\|_X.
  \end{eqnarray*}
 That is, $\mathcal{B}$ is locally Lipschitz.)

By the differential method with parameter integral we can only derive from the first assumption in (d) that
$\mathcal{B}$ has a G\^ateaux derivative at $u\in B_X(0, \delta)$,
$$
D\mathcal{B}(u)=\int^1_0(1-t)tA''(tu) dt\in \mathscr{L}(X,\mathscr{L}(X)).
$$
When $\dim X<\infty$, for any $0<\epsilon<\delta$, since the closure of $B_X(0, \delta-\epsilon)$
is a compact subset contained in $B_X(0, \delta)$,
$A''$ is uniformly continuous on $Cl(B_X(0, \delta-\epsilon))$,
it follows that $u\mapsto D\mathcal{B}(u)$ is continuous and so $\mathcal{B}$ is $C^1$.

If $\dim X=\infty$, without the second condition in (d) it is difficult for us to prove that
$D\mathcal{B}$ is continuous at $u$ except $u=0\in  B_X(0, \delta)$.
For  $u, u_0\in B_X(0, \delta)$, since
\begin{eqnarray*}
\|D\mathcal{B}(u)-D\mathcal{B}(u_0)\|_{\mathscr{L}(X,\mathscr{L}(X))}\le
\int^1_0(1-t)t\|A''(tu)-A''(tu_0)\|_{\mathscr{L}(X,\mathscr{L}(X))} dt
\end{eqnarray*}
and the second condition in (d) implies that $A''$ is uniformly  continuous in a neighborhood of the line segment
$[0,1]u_0$, we conclude that  $D\mathcal{B}(u)\to D\mathcal{B}(u_0)$ as $u\to u_0$.

When $A$ is $C^3$, $A'''$ is bounded in  a convex neighborhood of the line segment
$[0,1]u_0$. It follows from the mean value theorem that
$$
\sup_{t\in[0,1]}\|A''(tu)-A''(tu_0)\|_{\mathscr{L}(X,\mathscr{L}(X))}\to 0
\quad\hbox{as $u\to u_0$.}
$$
Hence $\mathcal{B}$ is $C^1$.

In summary,  \textsf{all assumptions of differentiability on $A$ and $B$ in (c)-(d) can be replaced by one of the following two conditions:}
\begin{enumerate}
 \item[(i)] $A$ is $C^3$.
 \item[(ii)]  $A$ is $C^2$ and
$A''$ is uniformly continuous in some neighborhood of the line segment
$[0,1]u:=\{tu\,|\,0\le t\le 1\}$ for each $u\in B_X(0, \delta)$.
\end{enumerate}

The above arguments suggest that  proofs as in
\cite[Theorem~6.5.4]{Ber}, \cite[Theorem~1.4]{Tr1}, \cite[page 436, Theorem~1]{DiHiTr} and
\cite[Theorem~1.1]{BoBu} are defective.

Finally, the condition (e) seems not to be needed, see proofs of Theorems~\ref{th:BB.3},~\ref{th:BB.4},~\ref{th:BB.5}.}
\end{remark}

In this paper, \textsf{by the spectrum of a linear operator on a real Banach space
we always mean one of its natural complex linear extension on the complexification of
the real Banach space} (\cite[page 14]{DaKr}). Similarly, if  others cannot be clearly explained in the real world,
we consider their complexification and then take invariant parts under complex adjoint on the complexification
spaces.

Let $\mathscr{S}(X):=\{L\in\mathscr{L}(X)\,|\,  (Lu,v)_H=(u,Lv)_H\;\forall u,v\in X\}$.
Recall the following Riesz lemma (cf. \cite[Lemma~1.3]{BoBu} and \cite[page 19]{DaKr}, \cite[page 54, Lemma~1]{Tr} and
 \cite[page 59, Propositions~2,3]{Tr}). 

\begin{lemma}\label{lem:BB.2}
 Under the assumption {\bf (S)}, let $D\in\mathscr{S}(X)$ and  its spectrum have a decomposition
 $$
 \sigma(D)=\{0\}\cup\sigma_+(D)\cup\sigma_-(D),
 $$
 where $\sigma_+(D)$ and $\sigma_-(D)$ are closed subsets of $\sigma(D)$ contained in
 the interiors of the left and right half planes, respectively.
  Then the space $X$ can be decomposed into a direct sum of
spaces $X_0$, $X_+$ and $X_-$, which are closed in $X$, invariant with respect to $D$, orthogonal in $H$, and
$\sigma(D_\ast)=\sigma_\ast(D)$ for $D_\ast=D|_{X_\ast}$, $\ast=+,-$.  The projections $P_0$, $P_+$ and $P_-$,
corresponding to this decomposition, are symmetric operators with respect to the inner product in $H$, and
satisfy
$$
(P_\ast)^2=P_\ast,\quad \ast=0,+,-,\quad P_0+P_++P_-=id_X,
$$
and any two of three projections have zero compositions,
and moreover $P_\ast$, $\ast=+,-,0$,  are expressible as a limit of power series in $D$.
In addition, square roots $S_-$ and $S_+$ to $-D_-$ and $D_+$
may be defined with the functional calculus, and are expressible in power
series in $D_-$ and $D_+$, and also symmetric with respect to the inner product in $H$.
\end{lemma}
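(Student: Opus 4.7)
The plan is to derive every assertion from the Riesz--Dunford holomorphic functional calculus for the bounded operator $D\in\mathscr{L}(X)$, using the symmetry condition $(Du,v)_H=(u,Dv)_H$ on $X\times X$ to upgrade standard projection statements to the additional symmetry/orthogonality properties claimed. First I would fix three pairwise disjoint closed Cauchy contours $\Gamma_0,\Gamma_+,\Gamma_-\subset\mathbb{C}$ (each a finite union of rectifiable Jordan curves) encircling, respectively, the three spectral components $\{0\}$, $\sigma_+(D)$, $\sigma_-(D)$, and avoiding the remainder of $\sigma(D)$; here the decomposition hypothesis together with compactness of $\sigma(D)$ makes this possible, and $\{0\},\sigma_+(D),\sigma_-(D)$ may be arranged to be mutually separated. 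Define
\begin{equation*}
P_\ast \;=\; \frac{1}{2\pi i}\int_{\Gamma_\ast}(\zeta I-D)^{-1}\,d\zeta \in\mathscr{L}(X),\qquad \ast\in\{0,+,-\}.
\end{equation*}

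Second, the standard Riesz projection calculus gives at once: each $P_\ast$ is a bounded idempotent, $P_0+P_++P_-=I$ (because $\Gamma_0\cup\Gamma_+\cup\Gamma_-$ is a contour enclosing all of $\sigma(D)$, on which the resolvent integrates to the identity), and $P_\ast P_\diamond=0$ whenever $\ast\ne\diamond$ (deform contours to disjoint ones and apply Cauchy's theorem to the resolvent identity). Setting $X_\ast:=P_\ast X$ yields closed $D$--invariant subspaces with $X=X_0\oplus X_+\oplus X_-$, and the spectral mapping/restriction theorem of the holomorphic functional calculus (e.g.\ Dunford--Schwartz, Ch.~VII) gives $\sigma(D|_{X_\ast})=\sigma_\ast(D)$. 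So the purely algebraic/topological part is essentially automatic.

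Third, the point that uses $D\in\mathscr{S}(X)$ is the $H$-symmetry of the $P_\ast$ and the $H$-orthogonality of the $X_\ast$. For any $\zeta\in\rho(D)$ (the resolvent set) and $u,v\in X$, the identity $(Du,w)_H=(u,Dw)_H$ extends to $((\zeta I-D)u,w)_H=(u,(\bar\zeta I-D)w)_H$, so after choosing each $\Gamma_\ast$ symmetric about the real axis (the spectrum of $D$, as the spectrum of an $H$-symmetric operator on a dense subspace, is real—this needs justification but follows from the $H$-symmetry), one obtains $((\zeta I-D)^{-1}u,w)_H=(u,(\zeta I-D)^{-1}w)_H$ for real $\zeta$, and by analytic continuation plus the symmetric parametrization of $\Gamma_\ast$ one deduces $(P_\ast u,w)_H=(u,P_\ast w)_H$. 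Then $u\in X_+$, $w\in X_-$ give $(u,w)_H=(P_+u,w)_H=(u,P_+w)_H=0$, and likewise for the other pairs, yielding pairwise $H$-orthogonality. The square roots $S_+,S_-$ of $D_+,-D_-$ come from the functional calculus with the holomorphic branches $\sqrt{\zeta}$ (on a neighborhood of $\sigma_+(D)$, which lies strictly in the open right half--plane) and $\sqrt{-\zeta}$ (on a neighborhood of $\sigma_-(D)$); the same symmetry argument applied to these holomorphic functions shows $S_\pm\in\mathscr{S}(X_\pm)$.

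The main obstacle, and the step I would spend real work on, is the claim that $P_\ast$ and the square roots are \emph{expressible as a limit of power series in $D$}. For $P_0$ this is the delicate case: one would approximate the indicator $\mathbf 1_{\Gamma_0}$ (holomorphic on a neighborhood of $\sigma(D)$ disjoint from $\Gamma_+\cup\Gamma_-$) uniformly on that neighborhood by polynomials in $\zeta$ via Runge's theorem, then push the approximation through the functional calculus; for $P_+,P_-$ and for $S_\pm$ one does the same with the corresponding holomorphic functions. The secondary subtlety is verifying that the spectrum of $D$ is real despite $D$ being only $H$-symmetric on the dense subspace $X\subset H$ (not self-adjoint as an operator on $H$); here I would argue that, because $D\in\mathscr{L}(X)$ is bounded and $H$-symmetric on $X$ with $X$ dense in $H$, $D$ extends by continuity if and only if it is $\|\cdot\|_H$-bounded; absent that, one instead works directly in $X$ and uses the $H$-symmetry to show that $\zeta I-D$ has bounded inverse on $X$ for all non-real $\zeta$ by a Lax--Milgram type estimate on the real and imaginary parts, which suffices to make every such $\zeta$ a resolvent point and hence to confine $\sigma(D)\subset\mathbb{R}$.
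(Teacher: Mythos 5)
The paper does not prove this lemma; it cites it from Bobylev--Burman, Dalec'ki\v{i}--Krein and Tromba. The Riesz--Dunford route you take is almost certainly the one those references use, and the purely algebraic portion of your argument (idempotence, commutation, $P_0+P_++P_-={\rm id}$, spectral restriction) is fine. There are, however, two genuine gaps around the spectral geometry of $D$.

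The first is the reality of $\sigma(D)$. You correctly flag that it needs justification, but the Lax--Milgram sketch you offer does not close it: the estimate $\|(\zeta I - D)u\|_H\ge|\mathrm{Im}\,\zeta|\,\|u\|_H$ lives in the $H$-norm, whereas $\sigma(D)$ is computed with respect to $(X,\|\cdot\|_X)$; moreover, even inside $H$ a lower bound gives injectivity but not surjectivity, and Lax--Milgram cannot be applied since $(X,\|\cdot\|_H)$ is not complete. In fact, for a general $D\in\mathscr{S}(X)$ one should not expect $\sigma(D)\subset\mathbb{R}$. The property your argument actually uses is weaker and is true for free: $\sigma(D)$ is reflection-symmetric about $\mathbb{R}$. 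This follows not from $H$-symmetry but from the fact that $D$ is a real operator: its complexification $D^c$ commutes with the conjugation $J(u+iv)=u-iv$, so $J\,R(\zeta)\,J=R(\bar\zeta)$ and $\rho(D^c)$ is conjugation-invariant. Combined with $H$-symmetry this gives, for $u,v\in X^c$ and $\zeta\in\rho(D^c)$, the exact relation $\bigl(R(\zeta)u,v\bigr)_{H^c}=\bigl(u,R(\bar\zeta)v\bigr)_{H^c}$ (set $w=R(\zeta)u$, $z=R(\bar\zeta)v$ and expand $(u,z)_{H^c}$ as you indicate). Choosing $\Gamma_\ast$ symmetric about $\mathbb{R}$ and substituting $\zeta\mapsto\bar\zeta$ in the contour integral then yields $P_\ast\in\mathscr{S}(X)$ directly; there is no analytic continuation involved, and the ``for real $\zeta$'' special case you start from is a detour. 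So this part is repairable, but the argument as written would not survive.

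The second gap is the power-series (Runge) step. Runge's theorem produces \emph{polynomial} approximants uniformly on a compact set $K$ exactly when $\mathbb{C}\setminus K$ is connected; otherwise one only gets rational approximation with poles in the complementary components. Connectedness of $\mathbb{C}\setminus\sigma(D)$ is automatic when $\sigma(D)\subset\mathbb{R}$, but as above that inclusion is not established and can fail in the abstract setting. You should either isolate $\sigma(D)\subset\mathbb{R}$ (or the connectedness of its complement) as an additional hypothesis, or verify it in the cases where the lemma is invoked (in the paper's applications $D$ is always the restriction to $X$ of a self-adjoint operator on $H$, for which an argument in the spirit of Lemma~\ref{lem:C.2} would give real spectrum). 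Without one of these, the ``limit of power series in $D$'' conclusion -- and therefore your cleanest route to $H$-symmetry (symmetry of $p_n(D)$ passing to the limit) -- is not secured.
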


 For the spectrum $\sigma(B(0)|_X)$ of the operator  $B(0)|_X\in\mathscr{L}(X)$ in (d) suppose:
\textsl{$\sigma(B(0)|_X)\setminus\{0\}$ is bounded away from the imaginary axis.}
By Lemma~\ref{lem:BB.2}, corresponding to the spectral sets $\{0\}$,
 $\sigma_+(B(0)|_X)$ and $\sigma_-(B(0)|_X)$
 we have a direct sum decomposition of Banach spaces,
 $X=X_0\oplus X_+\oplus X_-$.   Denote by  $P_0$, $P_+$ and $P_-$
  the projections corresponding to the decomposition.
 When $\sigma(B(0)|_X)$ does not intersect the imaginary axis,
 namely the operator $B(0)|_X\in\mathscr{L}(X)$  is  {\bf hyperbolic} (\cite{Uh0}),
  (in particular, $\dim X_0=0$),
  we say that the critical point $0$ is  {\bf nondegenerate}.
  (Such nondegenerate critical points are isolated by Morse lemma \cite[Theorem~1.1]{BoBu}.)
  Call $\nu:=\dim X_0$ and $\mu:=\dim X_-$
  the {\bf nullity} and the {\bf Morse index} of $0$, respectively.
(Using Lemma~\ref{lem:BB.2} we may prove
that $\dim X_-$ is the supremum of the dimensions of the vector subspaces
of $X$ on which $(B(0)u,u)_H$ is negative definite).
The following is a parametrized version of \cite[Theorem~1.1]{BoBu}.

\begin{theorem}\label{th:BB.3}
Let $\Lambda$ be a metric space, and
 $\mathscr{L}_\lambda:B_X(0, \delta)\to\mathbb{R}$, $\lambda\in\Lambda$,
 a family of $C^2$-functionals satisfying the conditions (a)-(b) and $d\mathscr{L}_\lambda(0)=0\;\forall\lambda$.
 Let $B_\lambda:B_X(0, \delta)\to \mathscr{L}_s(H)$ be defined by
\begin{eqnarray*}
d^2\mathscr{L}_\lambda(x)[u,v]=(B_\lambda(x)u,v)_H,\quad \forall x\in B_X(0, \delta),\;\forall u,v\in X.
\end{eqnarray*}
 Suppose:
  \begin{enumerate}
  \item[\rm (c')] For each $\lambda\in\Lambda$ there is a $C^2$ map $A_\lambda:B_X(0, \delta)\to X$  such that
$$
d\mathscr{L}_\lambda(x)[u]=(A_\lambda(x),u)_H,\quad\forall x\in B_X(0, \delta),
\quad\forall u\in X.
$$
(Therefore each $\mathscr{L}_\lambda$ is $C^3$.)

  \item[\rm (d')] For each point $(\lambda_0, u_0)\in\Lambda\times B_X(0, \delta)$ there exist
  neighborhoods $\mathscr{N}(\lambda_0)$ of $\lambda_0$ in $\Lambda$, and $\mathscr{N}([0,1]u_0)$ of
  $[0,1]u_0$ in $B_X(0, \delta)$ such that
  \begin{eqnarray*}
 &&\mathscr{N}(\lambda_0)\times\mathscr{N}([0,1]u_0)
 \ni (\lambda, u)\mapsto dA_\lambda(u)\in\mathscr{L}(X)\quad\hbox{and}\\ &&\mathscr{N}(\lambda_0)\times\mathscr{N}([0,1]u_0)\ni (\lambda, u)\mapsto d^2A_\lambda(u)\in\mathscr{L}(X, \mathscr{L}(X))
 \end{eqnarray*}
  are uniform continuous.
  \item[\rm (h)] For some $\lambda^\ast\in\Lambda$  the operator $B_{\lambda^\ast}(0)|_X\in\mathscr{L}(X)$
is hyperbolic (i.e., $0$ is a nondegenerate critical point of $\mathscr{L}_{\lambda^\ast}$).
    \end{enumerate}
 \noindent{Then} there exist a neighborhood $\Lambda_0$ of $\lambda^\ast$ in $\Lambda$,
  $\epsilon_0\in (0, \delta)$,  and  a family of  origin-preserving  $C^1$ maps
  $\varphi_\lambda:B_X(0,\epsilon_0)\to X$, $\lambda\in\Lambda_0$, such that
  both $(\lambda,u)\mapsto\varphi_\lambda(u)$ and $(\lambda,u)\mapsto\varphi'_\lambda(u)$ are of class $C^0$,
   and that each  $\varphi_\lambda$ is a $C^1$ diffeomorphism
   from $B_X(0,\epsilon_0)$ onto an open neighborhood of zero in $X$ containing $B_X(0,\epsilon_0/4)$
  and satisfies
  \begin{eqnarray}\label{e:BB.-1}
\mathscr{L}_\lambda\circ\varphi_\lambda(x)=\frac{1}{2}(B_{\lambda^\ast}(0)x, x)_H+ \mathscr{L}_\lambda(0),\quad\forall x\in B_X(0,\epsilon_0),\quad\forall\lambda\in\Lambda_0.
\end{eqnarray}
Moreover, if $\Lambda$ is a nonempty open subset of a Banach space $Z$,
then $(\lambda,u)\mapsto\varphi_\lambda(u)$ is of class $C^1$ provided that (d') is replaced by the following three conditions:
  \begin{enumerate}
  \item[\rm (f1)] $\Lambda\times B_X(0, \delta)\ni (\lambda, u)\mapsto A(\lambda, u):=A_\lambda(u)\in X$
  and
  $$
  \Lambda\times B_X(0, \delta)\ni (\lambda, u)\mapsto D_2A(\lambda,u)\in \mathscr{L}(X)
  $$
  are  $C^1$. Here $D_1$ and $D_2$ are differentials of $A$ with respect to $\lambda$ and $u$, respectively.
  \item[\rm (f2)] Either $\Lambda\times B_X(0, \delta)\ni (\lambda, u)\mapsto D^2_2A(\lambda,u)\in \mathscr{L}(X,  \mathscr{L}(X))$
  is $C^1$, or  for each point $(\lambda_0, u_0)\in\Lambda\times B_X(0, \delta)$ there exist
  neighborhoods $\mathscr{N}(\lambda_0)$ of $\lambda_0$ in $\Lambda$, and $\mathscr{N}([0,1]u_0)$ of
  $[0,1]u_0$ in $B_X(0, \delta)$ such that
  $$
  \mathscr{N}(\lambda_0)\times\mathscr{N}([0,1]u_0)\ni (\lambda, u)\mapsto D^2_2A(\lambda,u)\in \mathscr{L}(X,  \mathscr{L}(X))
  $$
   is uniform continuous.
  \item[\rm (f3)] Either $\Lambda\times B_X(0, \delta)\ni (\lambda, u)\mapsto D_1D_2A(\lambda,u)\in \mathscr{L}(Z,  \mathscr{L}(X))$
  is $C^1$, or  for each point $(\lambda_0, u_0)\in\Lambda\times B_X(0, \delta)$ there exist
  neighborhoods $\mathscr{N}(\lambda_0)$ of $\lambda_0$ in $\Lambda$, and $\mathscr{N}([0,1]u_0)$ of
  $[0,1]u_0$ in $B_X(0, \delta)$ such that
  $$
  \mathscr{N}(\lambda_0)\times\mathscr{N}([0,1]u_0)\ni (\lambda, u)\mapsto D_1D_2A(\lambda,u)\in \mathscr{L}(Z,  \mathscr{L}(X))
  $$
   is uniform continuous.
      \end{enumerate}
    (It is clear that (d') and these three conditions are satisfied if
    $$\Lambda\times B_X(0, \delta)\ni (\lambda, u)\mapsto A(\lambda, u)\in X$$
     is $C^3$.
    Remark~\ref{rm:BB4}(iii) also give other substitution conditions.)
   \end{theorem}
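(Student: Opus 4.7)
The plan is to adapt the proof of Bobylev--Burman \cite[Theorem~1.1]{BoBu} by carrying the parameter $\lambda$ through every step, reducing the statement to an implicit-function-theorem problem on a symmetric slice of $\mathscr{L}(X)$. First, since $d\mathscr{L}_\lambda(0)=0$, Taylor's formula with integral remainder together with (c') yields
\[
\mathscr{L}_\lambda(u)-\mathscr{L}_\lambda(0)=\int_0^1(1-t)(B_\lambda(tu)u,u)_H\,dt=(\mathcal{B}_\lambda(u)u,u)_H,
\]
where $\mathcal{B}_\lambda(u):=\int_0^1(1-t)\,B_\lambda(tu)|_X\,dt\in\mathscr{S}(X)$ and $\mathcal{B}_\lambda(0)=\tfrac12 B_\lambda(0)|_X$. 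The computation in Remark~\ref{rem:BB.1}, applied to the family $A_\lambda$, shows that (d') is precisely what is needed for $u\mapsto\mathcal{B}_\lambda(u)$ to be $C^1$ as a map $B_X(0,\delta)\to\mathscr{L}(X)$ and for $(\lambda,u)\mapsto\mathcal{B}_\lambda(u)$ to be jointly continuous; under (f1)--(f3) the joint map becomes $C^1$.

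Next I would seek the change of variables as $\varphi_\lambda(x)=V_\lambda(x)\,x$ with $V_\lambda(x)\in\mathscr{L}(X)$, $V_\lambda(0)=I$, and impose the stronger pointwise operator identity
\[
V_\lambda(x)^{\top}\,\mathcal{B}_\lambda\!\bigl(V_\lambda(x)x\bigr)\,V_\lambda(x)=\mathcal{B}_{\lambda^\ast}(0)\quad\text{in }\mathscr{S}(X),
\]
where $V^{\top}$ denotes the transpose with respect to $(\cdot,\cdot)_H$ on $X$. By hypothesis (h), $\mathcal{B}_{\lambda^\ast}(0)\in{\rm Iso}(X)$. Parameterize $V=I+\mathcal{B}_{\lambda^\ast}(0)^{-1}S$ with $S\in\mathscr{S}(X)$; this is a natural Banach-space slice through $V=I$ for which $V^{\top}=I+S\mathcal{B}_{\lambda^\ast}(0)^{-1}$ lies automatically in $\mathscr{L}(X)$. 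Set
\[
F(\lambda,x,S)=V^{\top}\,\mathcal{B}_\lambda(Vx)\,V-\mathcal{B}_{\lambda^\ast}(0)\in\mathscr{S}(X).
\]
Then $F(\lambda^\ast,0,0)=0$, and an elementary computation gives $\partial_S F(\lambda^\ast,0,0)[T]=T\mathcal{B}_{\lambda^\ast}(0)^{-1}\mathcal{B}_{\lambda^\ast}(0)+\mathcal{B}_{\lambda^\ast}(0)\mathcal{B}_{\lambda^\ast}(0)^{-1}T=2T$, so the partial derivative is a Banach-space automorphism of $\mathscr{S}(X)$.

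A version of the implicit function theorem (with continuous dependence on a metric-space parameter under (d'), and with $C^1$ dependence under (f1)--(f3)) then produces, on a neighborhood $\Lambda_0\times B_X(0,\epsilon_0)$ of $(\lambda^\ast,0)$, a unique solution $(\lambda,x)\mapsto S_\lambda(x)$ with $S_\lambda(0)=0$, $S_\lambda$ of class $C^1$ in $x$, and the claimed joint regularity in $(\lambda,x)$. Setting $\varphi_\lambda(x):=(I+\mathcal{B}_{\lambda^\ast}(0)^{-1}S_\lambda(x))x$ gives $\varphi_\lambda(0)=0$ and $\varphi_\lambda'(0)=I$. After shrinking $\epsilon_0$ uniformly in $\lambda\in\Lambda_0$ so that $\|\varphi'_\lambda(x)-I\|_{\mathscr{L}(X)}<\tfrac12$, the quantitative inverse function theorem makes each $\varphi_\lambda$ a $C^1$ diffeomorphism from $B_X(0,\epsilon_0)$ onto an open neighborhood of $0$ in $X$ containing $B_X(0,\epsilon_0/4)$, and (\ref{e:BB.-1}) holds by construction.

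The main technical obstacle will be the regularity bookkeeping in these steps. Under (d'), $\lambda$ only varies over a metric space, so I must invoke a version of the implicit function theorem that gives continuous dependence of $S_\lambda(x)$ on the parameter while still yielding joint continuity of $\varphi'_\lambda$ in $(\lambda,x)$; this forces me to check uniform Lipschitz estimates on $(\lambda,u)\mapsto\mathcal{B}_\lambda(u)$ and its $u$-derivative, using only the line-segment uniform continuity supplied by (d'). A delicate point inside Step~1 is precisely the computation of Remark~\ref{rem:BB.1}: the second assumption in (d') is exactly what makes $u\mapsto\mathcal{B}_\lambda(u)$ of class $C^1$, and without it the implicit function theorem in Step~3 would not apply. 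Finally, the choice of symmetric slice in Step~2 is essential, since on all of $\mathscr{L}(X)$ the naive linearization $W\mapsto W^{\top}\mathcal{B}_{\lambda^\ast}(0)+\mathcal{B}_{\lambda^\ast}(0)W$ is only surjective---not injective---onto $\mathscr{S}(X)$, so the implicit function theorem would not apply without first restricting to this slice.
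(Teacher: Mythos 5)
Your proposal is substantively correct, and it takes a genuinely different and in fact \emph{simpler} route than the paper. Both proofs reduce to solving an operator equation $F(\lambda,x,Y)=0$ for an operator $Y$ parameterizing the change of variables $\varphi_\lambda(x)=Yx$, but the slices differ in an essential way. The paper works on the symmetric slice $Y\in\mathscr{S}(X)$, so the linearization at $(\lambda^\ast,0,\mathrm{id}_X)$ is the Sylvester-type operator $Z\mapsto\mathcal{B}_{\lambda^\ast}(0)Z+Z\mathcal{B}_{\lambda^\ast}(0)$; its invertibility requires \cite[Lemma~1.4]{BoBu} together with the spectral condition $\sigma(\mathcal{B}_{\lambda^\ast}(0))\cap(-\sigma(\mathcal{B}_{\lambda^\ast}(0)))=\emptyset$, which hyperbolicity alone does not give (e.g.\ $\mathcal{B}=\mathrm{diag}(1,-1)$), and this forces the paper to insert a separate Step~2 with the scaling $R_{\lambda^\ast}=P_+^{\lambda^\ast}+MP_-^{\lambda^\ast}$ to reduce the general case to that one. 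Your twisted affine slice $V=I+\mathcal{B}_{\lambda^\ast}(0)^{-1}S$, $S\in\mathscr{S}(X)$ (equivalently, the slice $\{V:\mathcal{B}_{\lambda^\ast}(0)V\in\mathscr{S}(X)\}$), makes the linearization identically $2\,\mathrm{id}_{\mathscr{S}(X)}$ and thus trivially invertible, so the implicit function theorem applies directly under (h) alone; this eliminates both the Sylvester lemma and the entire scaling step. The IFT with a continuous parameter under (d'), the regularity of $\mathcal{B}_\lambda$ established as in Remark~\ref{rem:BB.1}, and Lemma~\ref{lem:Sim} for the uniform radius $\epsilon_0/4$ then carry through exactly as in the paper's Step~1.

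Two small slips, neither fatal: first, $S_\lambda(0)$ is \emph{not} zero for $\lambda\neq\lambda^\ast$ (at $x=0$ the constraint reads $V^\top\mathcal{B}_\lambda(0)V=\mathcal{B}_{\lambda^\ast}(0)$, which fails at $V=I$ unless $\lambda=\lambda^\ast$), so $\varphi'_\lambda(0)=I+\mathcal{B}_{\lambda^\ast}(0)^{-1}S_\lambda(0)\neq I$ in general; you only have $\varphi'_{\lambda^\ast}(0)=I$, and you must shrink $\Lambda_0$ as well as $\epsilon_0$, not just $\epsilon_0$. Second, to invoke Lemma~\ref{lem:Sim} with $\alpha=2$ you need $\|\varphi'_\lambda(x)-\varphi'_\lambda(0)\|\le 1/4$; the bound $\|\varphi'_\lambda(x)-I\|<1/2$ alone only gives $\|\varphi'_\lambda(x)-\varphi'_\lambda(0)\|<1$. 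Tightening the constant (say to $1/8$) fixes this. With these adjustments the proof is complete and, as a reduction, notably cleaner than the one in the paper.
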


Clearly, (\ref{e:BB.-1}) implies that  the nondegenerate critical point $0$ of $\mathscr{L}_{\lambda^\ast}$
is isolated.

Let $X=X^{\lambda^\ast}_+\oplus X_-^{\lambda^\ast}$ be a direct sum decomposition of Banach spaces
corresponding to the spectral sets  $\sigma_+(B_{\lambda^\ast}(0)|_X)$
 and $\sigma_-(B_{\lambda^\ast}(0)|_X)$, and $P_+^{\lambda^\ast}$ and $P_-^{\lambda^\ast}$
 denote the projections corresponding to the decomposition.
Let $S^-$ and $S^+$ be  square roots  to $-B_{\lambda^\ast}(0)|_{X_-}$ and $B_{\lambda^\ast}(0)|_{X_+}$,
respectively. Then
$$
\psi:X\to X,\;x\mapsto S_+P_+^{\lambda^\ast}x+ S_-P_-^{\lambda^\ast}x,
$$
is a Banach space isomorphism, and
\begin{eqnarray*}
&&(B_{\lambda^\ast}(0)x, x)_H=\|S_+P_+^{\lambda^\ast}x\|_H^2-\|S_-P_-^{\lambda^\ast}x\|_H^2\\&=&
\|P_+^{\lambda^\ast}S_+P_+^{\lambda^\ast}x\|_H^2-\|P_-^{\lambda^\ast}S_-P_-^{\lambda^\ast}x\|_H^2\\
&=&\|P_+^{\lambda^\ast}\psi(x)\|_H^2-\|P_-^{\lambda^\ast}\psi(x)\|_H^2
\end{eqnarray*}
Take $\epsilon_0'\in (0,\epsilon_0)$ such that $\psi^{-1}(B_X(0,\epsilon'_0))\subset B_X(0,\epsilon_0)$.
By (\ref{e:BB.-1})
\begin{eqnarray}\label{e:BB.-0.5}
\mathscr{L}_\lambda\circ(\varphi_\lambda\circ\psi^{-1})(x)&=&\|P_+^{\lambda^\ast}x\|_H^2-\|P_-^{\lambda^\ast}x\|_H^2+ \mathscr{L}_\lambda(0),\notag\\&&\quad\forall x\in B_X(0,\epsilon'_0),\quad\forall\lambda\in\Lambda_0.
\end{eqnarray}
If $\nu:=\min\{\epsilon_0, \inf\{\|x\|\,|\, \|\psi(x)\|_X=\epsilon_0'\}\}$ then $B_X(0,\nu)\subset\psi^{-1}(B(0,\epsilon_0'))$
and so
$$
B_X(0,\nu/4)\subset\varphi_\lambda(B_X(0,\nu))
\subset(\varphi_\lambda\circ\psi^{-1})(B(0,\epsilon_0'))
\quad\forall \lambda\in\Lambda_0
$$
by the final claim in the following Lemma~\ref{lem:Sim}.

 For the proofs of Theorems~\ref{th:BB.3}, \ref{th:BB.5} we need the following version of inverse function theorems,
  which may be derived from \cite[Lemma~A.3.2]{McSa12} directly.

\begin{lemma}\label{lem:Sim}
  Let $X$ and $Y$ be Banach spaces, $B_X(x_0, r)\subset X$
   an open ball with radius $r$ and center $x_0$,
   and $f\in C^1(B_X(x_0, r), Y)$. Suppose that $df(x_0)\in\mathscr{L}(X,Y)$ is invertible
   and that there exists a constant $\alpha>0$ such that
   $$
   \|[df(x_0)]^{-1}\|\le \alpha\quad\hbox{and}\quad \|df(x)-df(x_0)\|\le\frac{1}{2\alpha}\;\forall x\in B_X(x_0, r).
   $$
 Then $f(B_X(x_0, r))\subset Y$ is an open subset containing $B_{Y}(f(x_0), \frac{r}{2\alpha})$
 and $f$ is a $C^1$ diffeomorphism from  $B_X(x_0, r)$ to $f(B_X(x_0, r))$.
 Moreover, if $f$ is $C^k$ for some $k\in\N$ then so is $f^{-1}:f(B_X(x_0, r))\to B_X(x_0, r)$.
 {\rm (}Clearly, $r$ in the conclusions can be replaced by any $r_0\in (0, r]$.{\rm )}
 \end{lemma}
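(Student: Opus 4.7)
The plan is to reduce the statement to a standard contraction-mapping argument and then extract the quantitative inclusion. By translating both source and target I may assume without loss of generality that $x_0=0$ and $f(x_0)=0$. Write $L:=df(0)\in\mathscr{L}(X,Y)$, so that $\|L^{-1}\|\le\alpha$ and $\|df(x)-L\|\le 1/(2\alpha)$ for all $x\in B_X(0,r)$. Consider the auxiliary map $g(x):=x-L^{-1}f(x)$, whose derivative is $dg(x)=L^{-1}(L-df(x))$, giving $\|dg(x)\|\le 1/2$ throughout $B_X(0,r)$. By the mean value inequality $g$ is therefore $\tfrac12$-Lipschitz on $B_X(0,r)$.

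For the image inclusion I would fix $y\in B_Y(0,r/(2\alpha))$, choose $\rho\in(2\alpha\|y\|,r)$, and apply the Banach fixed point theorem to
\[
T_y:\overline{B_X(0,\rho)}\to X,\qquad T_y(x):=g(x)+L^{-1}y.
\]
The estimate $\|T_y(x)\|\le\|g(x)-g(0)\|+\|L^{-1}y\|\le\rho/2+\rho/2=\rho$ shows $T_y$ maps $\overline{B_X(0,\rho)}$ into itself, and the contraction constant $1/2$ is inherited from $g$, so a unique fixed point $x\in\overline{B_X(0,\rho)}$ exists with $f(x)=y$. Letting $\rho\uparrow r$ yields $B_Y(0,r/(2\alpha))\subset f(B_X(0,r))$. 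Injectivity of $f$ on $B_X(0,r)$ comes from the two-sided bound
\[
\|f(x_1)-f(x_2)\|\ge\|L(x_1-x_2)\|-\|f(x_1)-f(x_2)-L(x_1-x_2)\|\ge\frac{1}{2\alpha}\|x_1-x_2\|,
\]
where the second summand is controlled by the mean value inequality applied to $f-L\,\cdot$. In particular $f^{-1}:f(B_X(0,r))\to B_X(0,r)$ is $2\alpha$-Lipschitz.

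To see that $f(B_X(0,r))$ is open and that $f$ is a $C^1$ diffeomorphism, I would apply the same local argument at an arbitrary $x'\in B_X(0,r)$: since $\|df(x')-L\|\le 1/(2\alpha)$ and $\|L^{-1}\|\le\alpha$, a Neumann-series perturbation argument gives that $df(x')$ is invertible with $\|[df(x')]^{-1}\|\le 2\alpha$, and the hypotheses of the lemma continue to hold with $x'$ in place of $x_0$ and $\alpha$ replaced by $2\alpha$ on a small ball around $x'$ contained in $B_X(0,r)$. Thus $f$ maps an open neighbourhood of $x'$ onto an open neighbourhood of $f(x')$, proving openness. Differentiability of $f^{-1}$ at $y=f(x)$ with $df^{-1}(y)=[df(x)]^{-1}$ is then the standard calculation from the identity $f^{-1}(y+k)-f^{-1}(y)=x'-x$ combined with the Lipschitz bound on $f^{-1}$; continuity of $y\mapsto[df(f^{-1}(y))]^{-1}$ (as a composition of three continuous maps, inversion being continuous on the open set of invertible operators) delivers $f^{-1}\in C^1$.

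The $C^k$ assertion follows by bootstrapping: writing $df^{-1}(y)=\mathrm{inv}\circ df\circ f^{-1}(y)$, if $f^{-1}$ is known to be $C^{k-1}$ and $f$ is $C^k$, then the chain rule shows $df^{-1}$ is $C^{k-1}$, i.e.\ $f^{-1}$ is $C^k$. The only genuinely delicate point in the whole argument is the quantitative choice of $\rho$ in the fixed-point step, which must simultaneously produce the prescribed radius $r/(2\alpha)$ in the target and keep the iterates inside $B_X(0,r)$; once that is set up, everything else is routine. Since the statement is explicitly attributed to \cite[Lemma~A.3.2]{McSa12}, the proof here should really consist of verifying that the two hypotheses match the formulation used there, and the constants $\alpha$, $r$, $1/(2\alpha)$ recorded above are precisely those that make the translation automatic.
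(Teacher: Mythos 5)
The paper does not prove Lemma~\ref{lem:Sim}; it is stated as a quantitative inverse function theorem that ``may be derived from \cite[Lemma~A.3.2]{McSa12} directly,'' with no argument given. Your proposal supplies the missing proof by the standard contraction-mapping route, and it is correct: translating so that $x_0=0$, $f(0)=0$, introducing $g(x)=x-L^{-1}f(x)$ with $\|dg\|\le 1/2$, solving $f(x)=y$ by iterating $T_y=g+L^{-1}y$ inside $\bar B_X(0,\rho)$ for any $\rho\in(2\alpha\|y\|,r)$, deriving the two-sided bound $\|f(x_1)-f(x_2)\|\ge\frac{1}{2\alpha}\|x_1-x_2\|$ for injectivity and Lipschitz continuity of $f^{-1}$, and bootstrapping the $C^k$ assertion via $df^{-1}=\mathrm{inv}\circ df\circ f^{-1}$ are exactly the ingredients of the cited lemma. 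So your proof is a genuine expansion rather than a departure: it buys self-containedness at the cost of reproducing a well-known argument.

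One step deserves to be made explicit. In the openness argument you re-apply the lemma at an arbitrary $x'\in B_X(0,r)$ with $\alpha$ replaced by $2\alpha$. That requires $\|df(x)-df(x')\|\le\frac{1}{4\alpha}$ on some ball around $x'$, and this does \emph{not} follow from the global hypothesis $\|df(x)-L\|\le\frac{1}{2\alpha}$ alone (that only gives $\|df(x)-df(x')\|\le\frac{1}{\alpha}$); it is secured by the continuity of $df$, i.e., by $f\in C^1$. You gesture at this with ``on a small ball around $x'$,'' but you should say that this is precisely where $C^1$-ness enters. Alternatively you can avoid changing $\alpha$: keeping the same contraction $g(x)=x-L^{-1}(f(x)-y')$ and running the fixed-point step in $\bar B_X(x',\epsilon')$ with $\epsilon'\ge 2\alpha\|f(x')-y'\|$ and $\epsilon'<\epsilon:=r-\|x'\|$ shows $B_Y(f(x'),\epsilon/(2\alpha))\subset f(B_X(0,r))$ directly from the stated hypotheses, with no perturbed Neumann-series estimate needed.
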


For the sake of clarity we also give the proof of Theorem~\ref{th:BB.3}
though it is a slight modification of that of \cite[Theorem~1.1]{BoBu}.
In a note after the proof we shall show that (d') can be slightly weakened.

\begin{proof}[\it Proof of Theorem~\ref{th:BB.3}]
 Since $A_\lambda:B_X(0, \delta)\to X$ is $C^2$,
 for any $u\in B_X(0, \delta)$,
 \begin{eqnarray}\label{e:S.3.1}
 \mathcal{B}_\lambda(u):=\int^1_0(1-t)A'_\lambda(tu) dt=\int^1_0(1-t)B_\lambda(tu)|_X dt
 \end{eqnarray}
 is a well-defined operator
 $\mathscr{L}(X)$ and symmetric with respect to $(\cdot, \cdot)_H$.
 It was proved in \cite[Lemma~1.2]{BoBu} that
  $\mathscr{L}_\lambda(u)=(\mathcal{B}_\lambda(u)u,u)_H$ for all $u\in B_X(0,\delta)$.
By Lemma~\ref{lem:BB.9},
$\Lambda\times B_X(0,\delta)\ni (\lambda, u)\mapsto \mathcal{B}_\lambda(u)\in\mathscr{L}(X)$ is continuous,  $C^1$ in $u$, and
$\Lambda\times B_X(0,\delta)\ni (\lambda, u)\mapsto \mathcal{B}'_\lambda(u)\in\mathscr{L}(X)$ is also continuous.
If the conditions in the ``Moreover" part hold the map
$\Lambda\times B_X(0,\delta)\ni (\lambda, u)\mapsto \mathcal{B}_\lambda(u)\in\mathscr{L}(X)$ is  $C^1$.

  Since (c') and (d') imply that
  $$
  \Lambda\times B_X(0,\delta)\ni (\lambda, u)\mapsto B_{\lambda}(0)|_X=dA_\lambda(0)\in\mathscr{L}(X)
  $$
  is continuous,    and the spectrum $\sigma(B_{\lambda^\ast}(0)|_X)$
  does not intersect the imaginary axis,
  by shrinking $\Lambda$ we can assume that for some $r>0$,
\begin{eqnarray}\label{e:S.3.2}
\sigma(B_{\lambda}(0)|_X)\cap\{z=x+iy\,|\, |x|\le r\}=\emptyset,\quad\forall \lambda\in\Lambda,
\end{eqnarray}
and that for any $\lambda\in\Lambda$, $\sigma(B_{\lambda}(0)|_X)$ is equal to the union
$\sigma(B_{\lambda}(0)|_X)_+\cup\sigma(B_{\lambda}(0)|_X)_-$,  where
$$
\sigma(B_{\lambda}(0)|_X)_+=\{\mu\in \sigma(B_{\lambda}(0)|_X)\,|\, {\rm Re}\mu>0\},\quad$$
$$\sigma(B_{\lambda}(0)|_X)_-=\{\mu\in \sigma(B_{\lambda}(0)|_X)\,|\, {\rm Re}\mu<0\}.
$$
By Lemma~\ref{lem:BB.2} with $D=B_{\lambda}(0)|_X$, corresponding to the
spectral sets $\sigma(B_{\lambda}(0)|_X)_+$ and $\sigma(B_{\lambda}(0)|_X)_-$ we have a
decomposition of spaces, $X=X_+^{\lambda}\oplus X_-^{\lambda}$,
 and  projections $P_\ast^\lambda:X\to X_\ast^\lambda$ ($\ast=+,-$), which
 belong to $\mathscr{S}(X)$ and are continuous in $\lambda$ by shrinking $\Lambda$ (if necessary).

\vspace{4pt}\noindent
\noindent{\bf Step 1} ({\it The case of $\sigma(B_{\lambda^\ast}(0)|_X)\cap(-\sigma(B_{\lambda^\ast}(0)|_X))=\emptyset$}).
Consider the  map
$$
\Phi:\Lambda\times B_X(0,\delta)\times\mathscr{S}(X)\to \mathscr{S}(X),\;(\lambda,u,Y)\mapsto
Y\mathcal{B}_\lambda(Yu)Y- \mathcal{B}_{\lambda^\ast}(0).
$$
 By the conclusion in the first paragraph it is not hard to check that $\Phi$ is continuous, and $C^1$ in $(u,Y)$, and satisfies $D_Y\Phi(\lambda^\ast,0,id_X)=q_{\mathcal{B}_{\lambda^\ast}(0)}$,
where the operator
$$
q_{\mathcal{B}_{\lambda^\ast}(0)}:\mathscr{S}(X)\to\mathscr{S}(X),\; Z\mapsto \mathcal{B}_{\lambda^\ast}(0)Z+ Z\mathcal{B}_{\lambda^\ast}(0).
$$
Since $\mathcal{B}_{\lambda^\ast}(0)=\frac{1}{2}B_{\lambda^\ast}(0)|_X$,
by \cite[Lemma~1.4]{BoBu} the operator $q_{\mathcal{B}_{\lambda^\ast}(0)}$
has a linear bounded inverse. With the classical implicit function theorem
we obtain a neighborhood $\Lambda_0$ of $\lambda^\ast$ in $\Lambda$,
  $\epsilon\in (0, \delta)$ and  a continuous map $\psi$ from
$\Lambda_0\times B_X(0,\epsilon)$ to $\mathscr{S}(X)$, which is also $C^1$
 with respect to the second variable, such that $(\lambda,u)\mapsto\psi_u'(\lambda,u)$ is continuous and
 $$
 \psi(\lambda^\ast, 0)=id_X\quad\hbox{and}\quad
  \Phi(\lambda, u, \psi(\lambda,u))\equiv 0\quad\forall (\lambda,u)\in\Lambda_0\times B_X(0,\epsilon).
 $$
 (Clearly, $\Phi$ and so $\psi$ is $C^1$ if the conditions in the ``Moreover" part hold.)
 Shrinking $\Lambda_0$ and $\epsilon>0$ such that
 $\|\psi(\lambda,u)-\psi(\lambda^\ast, 0)\|<1/2$ for all $(\lambda, u)\in \Lambda_0\times B_X(0,\epsilon)$,
 we get that each operator $\psi(\lambda,u):X\to X$
has a linear bounded inverse.  For each $\lambda\in\Lambda_0$, define
$$
\varphi_\lambda: B_X(0,\epsilon)\to X,\;u\mapsto\psi(\lambda,u)u.
$$
Then $\varphi_\lambda(0)=0$ and $d\varphi_\lambda(0)=\psi(\lambda,0)$. Clearly, both $(\lambda,u)\mapsto\varphi_\lambda(u)$
 and $(\lambda,u)\mapsto\varphi'_\lambda(u)$ are  $C^0$.
 It is easily checked that
$\varphi_\lambda$ satisfies (\ref{e:BB.-1}) (see \cite{BoBu}).

Since $\varphi_\lambda'(u)[\xi]=(\psi'_u(\lambda,u)[\xi])u+\psi(\lambda,u)\xi$ for $\xi\in X$, we derive
\begin{eqnarray*}
\|[\varphi_\lambda'(0)]^{-1}\|_X= \|[\psi(\lambda,0)]^{-1}\|_X=\|[id_X+ \psi(\lambda,0)-\psi(\lambda^\ast, 0)]^{-1}\|_X\le 2\quad\forall\lambda\in\Lambda_0
\end{eqnarray*}
and
\begin{eqnarray*}
\|\varphi_\lambda'(u)-\varphi_\lambda'(0)\|_X&\le& \|\psi(\lambda,u)-\psi(\lambda,0)\|_X+
\|\psi'_u(\lambda,u)\|_{\in\mathscr{L}(X, \mathscr{L}(X))}\|u\|_X\\
&&\hspace{-1.5cm}\le \sup_{t\in [0,1]}\|\psi'_u(t\lambda,u)\|_{\in\mathscr{L}(X, \mathscr{L}(X))}\|u\|_X+ \|\psi'_u(\lambda,u)\|_{\in\mathscr{L}(X, \mathscr{L}(X))}\|u\|_X
\end{eqnarray*}
for all $(\lambda,u)\in\Lambda_0\times B_X(0,\epsilon)$, where the second inequality comes from the mean value theorem.
Because $\psi'_u(\cdot,\cdot)$ is continuous,
we can shrink the neighborhood $\Lambda_0$ of $\lambda^\ast$ and $\epsilon>0$ such that
\begin{eqnarray*}
\|\varphi_\lambda'(u)-\varphi_\lambda'(0)\|_X\le \frac{1}{4}\quad\hbox{for all $(\lambda,u)\in\Lambda_0\times B_X(0,\epsilon)$.}
\end{eqnarray*}
By Lemma~\ref{lem:Sim}, for each $\lambda\in\Lambda_0$,
$\varphi_\lambda(B_X(0, \epsilon))\subset X$ is an open subset containing $B_{X}(0, \frac{\epsilon}{4})$
 and $\varphi_\lambda$ is a $C^1$ diffeomorphism from  $B_X(0, \epsilon)$ to $\varphi_\lambda(B_X(0, \epsilon))$.
It also holds that $B_X(0, \epsilon_0/4)\subset\varphi_\lambda(B_X(0, \epsilon_0)))$ for any $\epsilon_0\in (0,\epsilon]$.

({\it Note}: Using the usual inverse mapping theorem we can only prove that $\varphi_\lambda$ restricts to a $C^1$ diffeomorphism
   from some $B_X(0,\epsilon_\lambda)\subset B_X(0,\epsilon)$ onto an open neighborhood of $0$ in $X$.)

\vspace{4pt}\noindent
\noindent{\bf Step 2} ({\it The case of $\sigma(B_{\lambda^\ast}(0)|_X)\cap(-\sigma(B_{\lambda^\ast}(0)|_X))\ne\emptyset$}).
 We can take $M>0$ such that
\begin{eqnarray}\label{e:S.3.2.1}
\sigma(\mathcal{B}_{\lambda^\ast}(0))_+\cap(-M^2\sigma(\mathcal{B}_{\lambda^\ast}(0))_-)=\emptyset.
\end{eqnarray}
Set $R_{\lambda^\ast} := P_+^{\lambda^\ast}+M P_-^{\lambda^\ast}$ and $\widehat{\mathscr{L}}_\lambda:=\mathscr{L}_\lambda\circ R_{\lambda^\ast}$.
By \cite[Lemma~1.2]{BoBu},
  $$
\widehat{\mathscr{L}}_\lambda(u)=(\widehat{\mathcal{B}}_\lambda(u)u,u)_H,\quad\forall u\in B_X(0,\delta'),
$$
where $\delta'>0$ is such that $R_{\lambda^\ast}(B_X(0,\delta'))\subset B_X(0,\delta)$ and
$\widehat{\mathcal{B}}_\lambda(u)=R_{\lambda^\ast} {\mathcal{B}}_\lambda(R_{\lambda^\ast}u)R_{\lambda^\ast}$.
Then $\widehat{\mathcal{B}}_{\lambda^\ast}(0)=R_{\lambda^\ast} {\mathcal{B}}_{\lambda^\ast}(0)R_{\lambda^\ast}$ and so
$$
\sigma(\widehat{\mathcal{B}}_{\lambda^\ast}(0))=\sigma(\mathcal{B}_{\lambda^\ast}(0))_+\cup(M^2\sigma(\mathcal{B}_{\lambda^\ast}(0))_-).
$$
It follows from (\ref{e:S.3.2.1})  that $\sigma(\widehat{\mathcal{B}}_{\lambda^\ast}(0))$ is disjoint with
$-\sigma(\widehat{\mathcal{B}}_{\lambda^\ast}(0))$. By the first step,
there exists a neighborhood $\Lambda_0$ of $\lambda^\ast$ in $\Lambda$,
  $\epsilon\in (0, \delta)$ and  a continuous map $\widehat\varphi$ from
$\Lambda_0\times B_X(0,\epsilon)$ to $X$,
such that   each  $\widehat\varphi_{\lambda}(\cdot):=\widehat\varphi(\lambda,\cdot)$ is an origin-preserving  $C^1$-diffeomorphism
 from $B_X(0,\epsilon)$  onto an open neighborhood of zero of $X$ containing $B_{X}(0, \frac{\epsilon}{4})$,
 $(\lambda,u)\mapsto\widehat\varphi'_\lambda(u)$ is  $C^0$, and  that
 $$
 \widehat{\mathscr{L}}_\lambda(\widehat\varphi(\lambda,u))=(\widehat{\mathcal{B}}_{\lambda^\ast}(0)u,u)_H+
 \widehat{\mathscr{L}}_\lambda(0)=\frac{1}{2}(R_{\lambda^\ast}\circ(B_{\lambda^\ast}(0)|_X)\circ R_{\lambda^\ast} u,u)_H+
 {\mathscr{L}}_\lambda(0)
 $$
for  $(\lambda,u)\in\Lambda_0\times B_X(0,\epsilon)$. Actually, we have
\begin{eqnarray*}
\widehat\varphi_{\lambda^\ast}'(0)=id_X,\quad \|[\widehat\varphi_\lambda'(0)]^{-1}\|_X\le 2\quad\hbox{and}\quad
\|\widehat\varphi_\lambda'(u)-\widehat\varphi_\lambda'(0)\|_X\le \frac{1}{4}
\end{eqnarray*}
for all $(\lambda,u)\in\Lambda_0\times B_X(0,\epsilon)$.
Take $\epsilon_0\in (0,\epsilon]$ such that $(R_{\lambda^\ast})^{-1}(B_X(0,\epsilon_0))\subset B_X(0,\epsilon)$.
 Set $\widetilde\varphi_\lambda(u):=R_{\lambda^\ast}\widehat\varphi(\lambda,(R_{\lambda^\ast})^{-1}u)$,
 where $(R_{\lambda^\ast})^{-1}=P_+^{\lambda^\ast}+M^{-1} P_-^{\lambda^\ast}$ is the inverse  of $R_{\lambda^\ast}$. Then
 \begin{eqnarray*}
 {\mathscr{L}}_\lambda(\widetilde\varphi_{\lambda}(u))&=&{\mathscr{L}}_\lambda(R_{\lambda^\ast}\widehat\varphi(\lambda,(R_{\lambda^\ast})^{-1}u)
 )= \widehat{\mathscr{L}}_\lambda(\widehat\varphi(\lambda,(R_{\lambda^\ast})^{-1}u) )\\
 &=& \frac{1}{2}(B_{\lambda^\ast}(0)|_X u,u)_H+ {\mathscr{L}}_\lambda(0),\quad\forall (\lambda,u)\in\Lambda_0\times B_X(0,\epsilon_0).
 \end{eqnarray*}
 Moreover, $\widetilde\varphi'_\lambda(u)=R_{\lambda^\ast}\circ\widehat\varphi'_\lambda(u)\circ R_{\lambda^\ast}^{-1}$. It follows that
 $\widetilde\varphi'_\lambda(0)=id_X$ and
 $$
 \|\widetilde\varphi'_\lambda(u)-\widetilde\varphi'_\lambda(0)\|\le
 \|R_{\lambda^\ast}\|\|\widehat\varphi'_\lambda(u)-\widehat\varphi'_\lambda(0)\|\|(R_{\lambda^\ast})^{-1}\|.
 $$
Since $(\lambda,u)\mapsto\widehat\varphi'_\lambda(u)$ is  $C^0$,
$\|\widehat\varphi'_\lambda(u)-\widehat\varphi'_\lambda(0)\|\to 0$ as $(\lambda,u)\to (\lambda^\ast,0)$.
Further shrinking $\Lambda_0$ and $\epsilon_0>0$ we may get that
$\|[\widetilde\varphi_\lambda'(0)]^{-1}\|_X\le 2$ and $\|\widetilde\varphi_\lambda'(u)-\widetilde\varphi_\lambda'(0)\|_X\le \frac{1}{4}$
for all $(\lambda,u)\in\Lambda_0\times B_X(0,\epsilon_0)$. By Lemma~\ref{lem:Sim},
$\widetilde\varphi_\lambda(B_X(0, \epsilon_0))$  contains $B_{X}(0, \frac{\epsilon_0}{4})$
for each $\lambda\in\Lambda_0$.

Summarizing Steps 1, 2 we get the conclusions in the first part.

\vspace{4pt}\noindent
 \noindent{\bf Step 3}. For the ``Moreover" part,  by the final part in Lemma~\ref{lem:BB.9} the map
$\Lambda\times B_X(0,\delta)\ni (\lambda, u)\mapsto \mathcal{B}_\lambda(u)\in\mathscr{L}(X)$ is $C^1$, so is
$(\lambda, u)\mapsto\widehat{\mathcal{B}}_\lambda(u)=R_{\lambda^\ast} {\mathcal{B}}_\lambda(R_{\lambda^\ast}u)R_{\lambda^\ast}$.
Then $\Phi$ and so $\psi$ in Step 1 is $C^1$.  The desired claim is easily seen from the above proof.
\end{proof}

\begin{remark}\label{rm:BB4}
{\rm
 The assumptions (f1)-(f2) and (f3) in Theorem~\ref{th:BB.3} are used to guarantee that the map
 $\Lambda\times B_X(0,\delta)\ni (\lambda, u)\mapsto \mathcal{B}_\lambda(u)\in\mathscr{L}(X)$
 defined by (\ref{e:S.3.1})  is  $C^1$.
When $\Lambda=\mathbb{R}$ and $A_\lambda=A-\lambda\widehat{A}$, where
 $A, \widehat{A}:B_X(0, \delta)\to X$ are $C^2$ maps corresponding with
 $C^3$ functionals $\mathscr{L}, \widehat{\mathscr{L}}:B_X(0, \delta)\to\mathbb{R}$
satisfying (a) and (b) above (\ref{e:BB.-2}),
 the assumptions (f1)-(f3) can be replaced by condition that  each of $A''$ and  $\widehat{A}''$
 is either $C^1$ or uniformly continuous in some neighborhood of the line segment
$[0,1]u:=\{tu\,|\,0\le t\le 1\}$ for each $u\in B_X(0, \delta)$.
     See arguments in Remark~\ref{rem:BB.1} and Lemma~\ref{lem:BB.9}.}
 \end{remark}

By slightly strengthening assumptions  of {\cite[Theorem~1.2]{BoBu}} we can prove:

\begin{theorem}\label{th:BB.4}
Let $\mathscr{L}:B_X(0, \delta)\to\mathbb{R}$ be  a $C^2$-functional satisfying
the conditions (a)-(b) above (\ref{e:BB.-2}) and $d\mathscr{L}(0)=0$, and let $B:B_X(0, \delta)\to \mathscr{L}_s(H)$ be defined by (\ref{e:BB.-2}).
 Suppose that the following two conditions are satisfied:
  \begin{enumerate}
  \item[\rm (i)] There is a $C^2$ map $A:B_X(0, \delta)\to X$ such that
$d\mathscr{L}(x)[u]=(A(x),u)_H$ for all $x\in B_X(0, \delta)$ and  $u\in X$, and that $A''$ is either $C^1$ or uniformly continuous
in some neighborhood of any line segment in $B_X(0, \delta)$.
  \item[\rm (ii)] $0\in\sigma(B(0)|_X)$ and  $\sigma(B(0)|_X)\setminus\{0\}$ is bounded away from the imaginary axis.
    \end{enumerate}
\noindent{Let} $X=X_0\oplus X_+\oplus X_-$ be a direct sum decomposition of Banach spaces,
 which corresponds to the spectral sets $\{0\}$,
  $\sigma_+(B(0)|_X)$ and $\sigma_-(B(0)|_X)$ as above. Denote by  $P_0$, $P_+$ and $P_-$ the
corresponding  projections as in Lemma~\ref{lem:BB.2}.
 Then there exists $\epsilon\in (0, \delta)$,  a $C^2$ map $h:B_X(0,\epsilon)\cap X_0\to X_+\oplus X_-$ with $h(0)=0$ (which is also $C^3$ if $A$ is $C^3$), and
 a $C^1$ origin-preserving  diffeomorphism $\varphi$ from
$B_X(0,\epsilon)$ onto an open neighborhood of $0$ in $X$  such
that
\begin{eqnarray}\label{e:BB.0}
\mathscr{L}\circ\varphi(x)=\frac{1}{2}(B(0)x, x)_H+ \mathscr{L}^\circ(P_0x),\quad\forall x\in B_X(0,\epsilon).
\end{eqnarray}
where the functional
$\mathscr{L}^\circ: B_{X_0}(0, \epsilon)\to \mathbb{R}$, defined by
$\mathscr{L}^\circ(z)=\mathscr{L}(z+ h(z))$,
 is of class $C^{2}$, and has the first-order  derivative at $z_0\in
B_{X_0}(0, \epsilon)$  given by
 \begin{eqnarray}\label{e:BB.0+}
d\mathscr{L}^\circ(z_0)[z]=\bigl(A(z_0+ h(z_0)), z\bigr)_H,\quad\forall z\in X_0,
\end{eqnarray}
and the second-order derivative at $0\in B_{X_0}(0, \epsilon)$, $d^2\mathscr{L}^\circ(0)=0$.
Moreover, after suitably shrinking $\epsilon>0$ the diffeomorphism  $\varphi$ can be chosen to satisfy
$$
\mathscr{L}\circ\varphi(x)=\|P_+x\|_H^2-\|P_-x\|_H^2+ \mathscr{L}(h(P_0x)+P_0x),\quad\forall x\in B_X(0,\epsilon).
$$
\end{theorem}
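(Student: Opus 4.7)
The plan is a two-stage reduction: a Lyapunov--Schmidt reduction onto the kernel direction $X_0$, followed by an application of the parameterized nondegenerate splitting Theorem~\ref{th:BB.3} to the restriction on $X_+\oplus X_-$, with parameter $z\in X_0$.

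\textbf{Stage 1 (Lyapunov--Schmidt).} Consider the $C^2$ map $G:B_X(0,\delta)\to X_+\oplus X_-$ defined by $G(u):=(id_X-P_0)A(u)$. By Lemma~\ref{lem:BB.2} the operator $B(0)|_X$ preserves the decomposition $X_0\oplus X_+\oplus X_-$, and since its restriction to $X_+\oplus X_-$ has spectrum $\sigma_+(B(0)|_X)\cup\sigma_-(B(0)|_X)$ bounded away from $0$, the partial derivative $D_yG(0)|_{X_+\oplus X_-}=B(0)|_{X_+\oplus X_-}$ is a Banach space isomorphism onto $X_+\oplus X_-$. The implicit function theorem applied to $G(z+y)=0$ yields $r>0$ and a unique $C^2$ map $h:B_{X_0}(0,r)\to X_+\oplus X_-$ with $h(0)=0$ and $(id_X-P_0)A(z+h(z))=0$ for all $z$; differentiating at $z=0$ and using $B(0)X_0=\{0\}$ gives $h'(0)=0$. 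Define $\mathscr{L}^\circ(z):=\mathscr{L}(z+h(z))$, which is $C^2$ (and $C^3$ if $A$ is). Its first derivative equals $(A(z+h(z)),\,v+h'(z)v)_H$; since $h'(z)v\in X_+\oplus X_-$, the $H$-symmetry of the projections from Lemma~\ref{lem:BB.2} together with the reduced equation $(P_++P_-)A(z+h(z))=0$ give $(A(z+h(z)),h'(z)v)_H=0$, producing (\ref{e:BB.0+}). Differentiating once more at $z=0$ and using $h'(0)=0$ with $B(0)v=0$ for $v\in X_0$ yields $d^2\mathscr{L}^\circ(0)=0$.

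\textbf{Stage 2 (Morse lemma on the transverse direction).} For each $z\in B_{X_0}(0,r)$, define $F_z(y):=\mathscr{L}(z+h(z)+y)-\mathscr{L}^\circ(z)$ on $B_{X_+\oplus X_-}(0,\rho)$. The reduced equation implies $dF_z(0)=0$, and $d^2F_0(0)$ is represented by $B(0)|_{X_+\oplus X_-}$, which is hyperbolic. Regarding $X_+\oplus X_-$ as a densely and continuously embedded subspace of its $H$-closure, the family $\{F_z\}$ meets the hypotheses of Theorem~\ref{th:BB.3} with $\Lambda=B_{X_0}(0,r)$ and $\lambda^\ast=0$, because the corresponding gradient map $y\mapsto (P_++P_-)A(z+h(z)+y)$ inherits its smoothness from $A\circ(\mathrm{id}+h)$ and the required uniform-continuity properties from hypothesis (i) and Remark~\ref{rm:BB4}. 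Applying Theorem~\ref{th:BB.3} yields $\epsilon\in(0,r)$ and a continuous family of origin-preserving $C^1$ diffeomorphisms $\chi_z$ of $B_{X_+\oplus X_-}(0,\epsilon)$ with $F_z(\chi_z(y))=\tfrac12(B(0)y,y)_H$. Now set
\[
\varphi(x):=P_0x+h(P_0x)+\chi_{P_0x}(P_+x+P_-x),\qquad x\in B_X(0,\epsilon).
\]
Using $B(0)P_0=0$ and $H$-symmetry of the projections, one checks $(B(0)(P_++P_-)x,P_0x)_H=0$, so $\tfrac12(B(0)x,x)_H=\tfrac12(B(0)(P_++P_-)x,P_++P_-)x)_H$, giving the splitting identity (\ref{e:BB.0}). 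Its differential at $0$ preserves $X_0\oplus(X_+\oplus X_-)$ and equals $id$ on $X_0$ and $\chi'_0(0)$ on $X_+\oplus X_-$, so Lemma~\ref{lem:Sim} insures that $\varphi$ is a $C^1$ diffeomorphism from $B_X(0,\epsilon)$ onto a neighborhood of $0$ in $X$, after possibly shrinking $\epsilon$. For the ``Moreover'' part, compose $\varphi$ with the linear change of variables $P_0+S_+P_++S_-P_-$ (with $S_\pm$ the square roots of $\pm B(0)|_{X_\pm}$ from Lemma~\ref{lem:BB.2}), exactly as in (\ref{e:BB.-0.5}) and Step~2 of the proof of Theorem~\ref{th:BB.3}, to replace $\tfrac12(B(0)x,x)_H$ by $\|P_+x\|_H^2-\|P_-x\|_H^2$; noting that $\mathscr{L}^\circ(P_0x)=\mathscr{L}(P_0x+h(P_0x))$ gives the stated normal form.

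\textbf{Main obstacle.} The essential technical point is verifying that the $z$-parameterized family $\{F_z\}$ satisfies the regularity hypotheses (d'), or (f1)--(f3) with their uniform-continuity clauses, demanded by Theorem~\ref{th:BB.3}. This amounts to controlling uniform continuity of $d^2A\circ(\mathrm{id}+h)$ and of its partial derivative in $z$ along line segments, and is exactly where the refined smoothness assumption on $A$ in hypothesis (i) -- either $C^3$ or uniform continuity of $A''$ in a neighborhood of each line segment -- is indispensable, in accordance with Remark~\ref{rem:BB.1} and Remark~\ref{rm:BB4}; any weaker assumption would break the continuity of the auxiliary operator $\mathcal{B}_z(y)=\int_0^1(1-t)\,dA(z+h(z)+ty)\,dt$ used implicitly in Theorem~\ref{th:BB.3}.
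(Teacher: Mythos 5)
Your proposal is correct and follows essentially the same route as the paper: Lyapunov--Schmidt reduction onto $X_0$ via the implicit function theorem to obtain $h$, then a parameterized Morse lemma on the transverse directions with parameter $z\in X_0$; the paper inlines this second stage (defining $\mathcal{B}(z,u)=\int_0^1(1-t)\mathbf{A}'_z(tu)\,dt$, verifying its $C^1$-regularity from hypothesis (i), and then deferring to the proof of Theorem~\ref{th:BB.5}), whereas you invoke Theorem~\ref{th:BB.3} as a black box with $\Lambda=B_{X_0}(0,r)$. One small point to tighten: in Stage~2 you write ``a continuous family of origin-preserving $C^1$ diffeomorphisms $\chi_z$'', but for $\varphi(x)=P_0x+h(P_0x)+\chi_{P_0x}(P_\pm x)$ to be $C^1$ you need $(z,y)\mapsto\chi_z(y)$ to be jointly $C^1$, which requires invoking the ``Moreover'' clause of Theorem~\ref{th:BB.3} with (f1)--(f3) --- precisely the point your ``Main obstacle'' paragraph explains but which the proof body should state explicitly before appealing to Lemma~\ref{lem:Sim}.
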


(\ref{e:BB.0}) and (\ref{e:BB.0+}) imply  that $0\in X_0$ is an isolated critical point of $\mathscr{L}^\circ$
if and only if $0\in X$ is such a critical point of of $\mathscr{L}$.

\begin{proof}[\it Proof of Theorem~\ref{th:BB.4}]
For conveniences let $P_\pm:=P_++P_-$, $X_\pm:=X_++X_-$ and $H_\pm$ be the closure of $X_\pm$ in $H$.
A standard implicit function theorem argument yields a
$\epsilon\in (0, \delta)$ and  a $C^2$ map $h:B_X(0,\epsilon)\cap X_0\to X_\pm$ with $h(0)=0$ (which is also $C^3$ if $A$ is $C^3$),
such that
\begin{eqnarray*}
(id_X-P_0)A(z+ {h}(z))=0,\quad\forall z\in  B_{X_0}(0,\epsilon).
\end{eqnarray*}
For each $z\in \Lambda_0\times B_{X_0}(0,\epsilon)$, define
\begin{eqnarray*}
&&{\bf L}_{z}:B_{X_\pm}(0,\delta/2)\to\mathbb{R},\;u\mapsto \mathscr{L}(z+ {h}(z)+u)- \mathscr{L}(z+ {h}(z)),\\
&&{\bf A}_{z}: B_{X_\pm}(0, \delta/2)\to X_\pm,\;u\mapsto P_\pm A(z+{h}(z)+u),\\
&&{\bf B}_{z}: B_{X_\pm}(0, \delta/2)\to \mathscr{L}_s(H_\pm),\;u\mapsto P_\pm\circ (B(z+{h}(z)+u)|_{H_\pm}).
\end{eqnarray*}
Then  $d{\bf L}_{z}(0)=0$, $d{\bf L}_{z}(x)[u]=({\bf A}_{z}(x),u)_H$ and $d^2{\bf L}_{z}(x)[u,v]=({\bf B}_{z}(x)u,v)_H$
for all $x\in B_{X_\pm}(0, \delta/2)$ and  $u,v\in X_\pm$.
By (ii), ${\bf B}_{0}(0)|_{X_\pm}=P_\pm\circ(B(0)|_{X_\pm})\in\mathscr{L}(X_\pm)$
is  hyperbolic and its spectrum is bounded away from the imaginary axis.
Since ${\bf A}_z$ is $C^2$, we may define $\mathcal{B}:B_{X_0}(0,\epsilon)\times B_{X_0}(0,\epsilon)\to X_\pm$ by
 $$
 \mathcal{B}(z, u):=\int^1_0(1-t){\bf A}'_{z}(tu) dt=\int^1_0(1-t)P_\pm A'(z+{h}(z)+tu)|_{X_\pm} dt.
 $$
 Then ${\bf L}_{z}(u)=(\mathcal{B}(z, u)u,u)_H$.
For any $(z, u), (z_0,u_0)\in B_{X_0}(0,\epsilon)\times B_{X_\pm}(0, \delta/2)$ we have
$$
\mathcal{B}(z, u)-\mathcal{B}(z_0, u_0)=\int^1_0(1-t)P_\pm[A'(z+{h}(z)+tu)|_{X_\pm}-A'(z_0+{h}(z_0)+tu_0)|_{X_\pm}] dt.
$$
By (i),  $A$ is $C^2$, $A''$ is bounded in a convex neighborhood of the line segment
  $z_0+h(z_0)+[0,1]u_0$. It follows from the mean value theorem that
$\mathcal{B}(z, u)\to\mathcal{B}(z_0, u_0)$ as $(z,u)\to(z_0,u_0)$.

As before, since $A$ is $C^2$,  by the differential method with parameter integral we can get
 G\^ateaux derivatives $D_u\mathcal{B}(z,u)$ and $D_z\mathcal{B}(z,u)$,
\begin{eqnarray*}
&&D_u\mathcal{B}(z,u)[u']=\int^1_0(1-t)tP_\pm A''(z+{h}(z)+tu)[u'] dt,\quad u'\in {X_\pm},\\
&&D_z\mathcal{B}(z,u)[z']=\int^1_0(1-t)P_\pm A''(z+{h}(z)+tu)[z'+h'(z)[z']] dt,\quad z'\in {X_0}.
\end{eqnarray*}
Let $C_0:=\|P_\pm\|_{\mathscr{L}(X, X_\pm)}$. For any $(z, u), (z_0,u_0)\in B_{X_0}(0,\epsilon)\times B_{X_\pm}(0, \delta/2)$ it easily follows that
\begin{eqnarray*}
&&\|D_u\mathcal{B}(z,u)-D_u\mathcal{B}(z_0,u_0)\|_{\mathscr{L}(X_\pm,\mathscr{L}(X_\pm))}\\
&\le&C_0\int^1_0\| A''(z+{h}(z)+tu)-  A''(z_0+{h}(z_0)+tu_0)\|_{\mathscr{L}(X,\mathscr{L}(X))}dt
\end{eqnarray*}
and
\begin{eqnarray*}
&&\|D_z\mathcal{B}(z,u)-D_z\mathcal{B}(z_0,u_0)\|_{\mathscr{L}(X_0,\mathscr{L}(X_\pm))}\\
&\le&C_0\int^1_0\| A''(z+{h}(z)+tu)-  A''(z_0+{h}(z_0)+tu_0)\|_{\mathscr{L}(X,\mathscr{L}(X))}dt\\
&+&C_0\int^1_0\| A''(z+{h}(z)+tu)\circ h'(z)\\&&-  A''(z_0+{h}(z_0)+tu_0)\circ h'(z_0)\|_{\mathscr{L}(X_0,\mathscr{L}(X))}dt.
\end{eqnarray*}
Observe that
\begin{eqnarray*}
&&\| A''(z+{h}(z)+tu)\circ h'(z)-  A''(z_0+{h}(z_0)+tu_0)\circ h'(z_0)\|_{\mathscr{L}(X_0,\mathscr{L}(X))}\\
&\le&\| A''(z+{h}(z)+tu)\circ h'(z)-  A''(z_0+{h}(z_0)+tu_0)\circ h'(z)\|_{\mathscr{L}(X_0,\mathscr{L}(X))}\\
&+&\| A''(z_0+{h}(z_0)+tu_0)\circ h'(z)-  A''(z_0+{h}(z_0)+tu_0)\circ h'(z_0)\|_{\mathscr{L}(X_0,\mathscr{L}(X))}\\
&\le&\| A''(z+{h}(z)+tu)-  A''(z_0+{h}(z_0)+tu_0)\|_{\mathscr{L}(X,\mathscr{L}(X))}\| h'(z)\|_{\mathscr{L}(X_0, X)}\\
&+&\| A''(z_0+{h}(z_0)+tu_0)\|_{\mathscr{L}(X,\mathscr{L}(X))}\|h'(z)- h'(z_0)\|_{\mathscr{L}(X_0, X)}.
\end{eqnarray*}
As above, by (i) we can derive from these inequalities that
 both $D_u\mathcal{B}$ and $D_z\mathcal{B}$
are continuous. The other arguments are standard, see the following proof of Theorem~\ref{th:BB.5}.
\end{proof}

\begin{remark}\label{rm:BB.4+}
{\rm {\bf (I)}.  (\ref{e:BB.0}) was proved in \cite[page 436, Theorem~1]{DiHiTr}
when $\mathscr{L}$ is of class $C^r$, $A:B_X(0, \delta)\to X$ is $C^{r-1}$, and
 the operator $B(0)|_X=dA(0)\in\mathscr{L}(X)$ restricts an isomorphism on $X_++X_-$.
Correspondingly, (\ref{e:BB.-1})  with $\Lambda=\{\lambda^\ast\}$ was proved in
\cite[page 425, Theorem~2]{DiHiTr} (\cite[Theorem~1.4]{Tr1} with $r=3$)
under similar conditions. As above we cannot prove that the $C^{r-2}$-smoothness of
the map $A$ in the proof of \cite[page 425, Theorem~2]{DiHiTr} if $r=3$. There exists the same problem in the proof
 \cite[page 436, Theorem~1]{DiHiTr} as well.

\noindent{\bf (II)}. When $\varphi$ is only required
to be a homeomorphism,  using a different method (thus not involving the above problem)
Ming Jiang \cite[Theorem~2.5]{JM} proved (\ref{e:BB.0})
if the conditions {\bf (i)}--{\bf (ii)}
are replaced by the following:
 \begin{enumerate}
\item[1)] $A:B_X(0, \delta)\to X$ is $C^1$,
\item[2)]  $B:B_X(0, \delta)\to \mathscr{L}_s(H)$ is continuous,
\item[3)] C2) in Hypothesis~\ref{hyp:1.3}, and either $0\notin\sigma(B(0))$ or
$0$ is an isolated point of $\sigma(B(0))$.
 \end{enumerate}}
\end{remark}

Here is a  parametric version of Theorem~\ref{th:BB.4}.

\begin{theorem}\label{th:BB.5}
 Suppose that the conditions (d') and (h) in Theorem~\ref{th:BB.3} are respectively replaced by the following:
  \begin{enumerate}
   \item[\rm (sd')] For each point $\lambda_0\in\Lambda$ and any line segment $\ell\subset B_X(0, \delta)$ there exist
  neighborhoods $\mathscr{N}(\lambda_0)$ of $\lambda_0$ in $\Lambda$,
  and $\mathscr{N}(\ell)$ of $\ell$ in $B_X(0, \delta)$
such that
$$
\mathscr{N}(\lambda_0)\times \mathscr{N}(\ell)\ni (\lambda, u)\mapsto A_\lambda(u)\in X
$$
is continuous and that
\begin{eqnarray*}
&&\mathscr{N}(\lambda_0)\times \mathscr{N}(\ell)\ni (\lambda, u)\mapsto dA_\lambda(u)\in\mathscr{L}(X)\quad\hbox{and}\\
 &&\mathscr{N}(\lambda_0)\times \mathscr{N}(\ell)\ni (\lambda, u)\mapsto d^2A_\lambda(u)\in\mathscr{L}(X, \mathscr{L}(X))
 \end{eqnarray*}
  are uniform continuous.
  \item[\rm (wh)] For some $\lambda^\ast\in\Lambda$,   $\sigma(B_{\lambda^\ast}(0)|_X)\setminus\{0\}$
is bounded away from the imaginary axis.
    \end{enumerate}
 \noindent{Let} $X=X_0^{\lambda^\ast}\oplus X_+^{\lambda^\ast}\oplus X_-^{\lambda^\ast}$ be a direct sum decomposition of Banach spaces,
 which corresponds to the spectral sets $\{0\}$, $\sigma_+(B_{\lambda^\ast}(0)|_X)$
 and $\sigma_-(B_{\lambda^\ast}(0)|_X)$, and let $P_0^{\lambda^\ast}$, $P_+^{\lambda^\ast}$ and $P_-^{\lambda^\ast}$ denote the corresponding  projections.
    Then  there exists  a neighborhood $\Lambda_0$ of $\lambda^\ast$ in $\Lambda$,
$\epsilon>0$, and
\begin{enumerate}
\item[{\bf (i)}] a (unique) $C^0$ map
\begin{eqnarray}\label{e:BB.1-}
\mathfrak{h}:\Lambda_0\times B_{X_0^{\lambda^\ast}}(0,\epsilon)\to X_+^{\lambda^\ast}\oplus X_-^{\lambda^\ast}
\end{eqnarray}
which is $C^2$ in the second variable and
satisfies $\mathfrak{h}(\lambda, 0)=0\;\forall\lambda\in \Lambda_0$ and
\begin{eqnarray}\label{e:BB.1}
(id_X-P_0^{\lambda^\ast})A_\lambda(z+ \mathfrak{h}(\lambda,z))=0,\quad\forall (\lambda,z)\in \Lambda_0\times B_{X_0^{\lambda^\ast}}(0,\epsilon),
\end{eqnarray}
\item[{\bf (ii)}] a $C^0$ map
\begin{eqnarray}\label{e:BB.2}
\Lambda_0\times B_X(0,\epsilon)
\to  X,\quad
({\lambda}, x)\mapsto \Phi_{{\lambda}}(x)
\end{eqnarray}
 such that for each $\lambda\in \Lambda_0$ the map
 $\Phi_\lambda$ is an origin-preserving  $C^1$-differeomor-\\phism
 from $B_X(0,\epsilon)$ onto an open neighborhood of zero of $X$ containing $B_X\\(0, \epsilon/4)$
  and  satisfies
\begin{eqnarray}\label{e:BB.3}
 \mathscr{L}_{\lambda}(\Phi_\lambda(x))=\frac{1}{2}(B_{\lambda^\ast}(0)P_\pm^{\lambda^\ast}x, P_\pm^{\lambda^\ast}x)_H+
\mathscr{L}_{\lambda}(P_0^{\lambda^\ast}x+ \mathfrak{h}(\lambda,P_0^{\lambda^\ast}x))
\end{eqnarray}
for any $x\in B_X(0,\epsilon)$; moreover $({\lambda}, x)\mapsto \Phi'_{{\lambda}}(x)$ is continuous.
\end{enumerate}
In addition, with {\bf notations} $X_\pm^{\lambda^\ast}:=X_+^{\lambda^\ast}\oplus X_-^{\lambda^\ast}$ and
$P_\pm^{\lambda^\ast}:=P_+^{\lambda^\ast}+P_-^{\lambda^\ast}$,
we also have:
\begin{enumerate}
\item[{\bf (iii)}] $(\lambda,z)\mapsto d_z\mathfrak{h}(\lambda,z)$ is continuous and
\begin{equation}\label{e:BB.3+}
d_z\mathfrak{h}(\lambda,z)=-[P_\pm^{\lambda^\ast}\circ({B}_{\lambda}(z+\mathfrak{h}(\lambda,z))|_{X_\pm^{\lambda^\ast}})]^{-1}
\circ(P_\pm^{\lambda^\ast}\circ({B}_\lambda(z+\mathfrak{h}(\lambda,z))|_{X_0^{\lambda^\ast}})).
\end{equation}
\item[{\bf (iv)}]  The functional
\begin{equation}\label{e:BB.4}
\mathscr{L}_{\lambda}^\circ: B_{X_0^{\lambda^\ast}}(0, \epsilon)\to \mathbb{R},\;
z\mapsto\mathscr{L}_{\lambda}(z+ \mathfrak{h}({\lambda}, z))
\end{equation}
 is of class $C^{2}$, and has the first-order  derivative at $z_0\in
B_{X_0^{\lambda^\ast}}(0, \epsilon)$  given by
 \begin{eqnarray}\label{e:BB.5}
d\mathscr{L}^\circ_\lambda(z_0)[z]=\bigl(A_\lambda(z_0+ \mathfrak{h}(\lambda, z_0)), z\bigr)_H,\quad\forall z\in X_0^{\lambda^\ast},
\end{eqnarray}
and the second-order derivative at $0\in
B_{X_0^{\lambda^\ast}}(0, \epsilon)$
 \begin{eqnarray}\label{e:BB.6}
  &&d^2\mathscr{L}^\circ_\lambda(0)[z,z']=\left(P_0^{\lambda^\ast}\bigr[{B}_\lambda(0)-
 {B}_\lambda(0)(P_\pm^{\lambda^\ast}{B}_{\lambda}
 (0)|_{X_\pm^{\lambda^\ast}})^{-1}(P_\pm^{\lambda^\ast}{B}_\lambda(0))\bigr]z, z'\right)_H,\nonumber\\
&& \hspace{40mm} \forall z,z'\in X_0^{\lambda^\ast}.
 \end{eqnarray}
\item[{\bf (v)}] The map $z\mapsto z+ \mathfrak{h}({\lambda}, z))$ induces an one-to-one correspondence
 between the critical points of  $\mathscr{L}_{\lambda}^\circ$ near $0\in X_0^{\lambda^\ast}$
and those of $\mathscr{L}_{\lambda}$ near $0\in X$.
\end{enumerate}

Let $G$ be a compact Lie group acting on $H$ orthogonally,
which induces a $C^1$ isometric action on $X$. Suppose that each $\mathscr{L}_\lambda$ is $G$-invariant and that
 $A_\lambda, B_\lambda$  are equivariant. Then for each $\lambda\in\Lambda_0$,
 both $\mathfrak{h}(\lambda,\cdot)$ and $\Phi_\lambda$ are  equivariant, and
 $\mathscr{L}_{\lambda}^\circ$ is $G$-invariant.

Moreover, if $\Lambda$ is a nonempty open subset of a Banach space $Z$,
then the maps in (\ref{e:BB.1-}) and (\ref{e:BB.2})
are of class $C^1$ provided that (sd') is replaced by the following three conditions:
  \begin{enumerate}
  \item[\rm (f1)] $\Lambda\times B_X(0, \delta)\ni (\lambda, u)\mapsto A(\lambda, u):=A_\lambda(u)\in X$
  and
  $$
  \Lambda\times B_X(0, \delta)\ni (\lambda, u)\mapsto D_2A(\lambda,u)\in \mathscr{L}(X)
  $$
  are  $C^1$. Here $D_1$ and $D_2$ are differentials of $A$ with respect to $\lambda$ and $u$, respectively.
  \item[\rm (sf2)] Either $\Lambda\times B_X(0, \delta)\ni (\lambda, u)\mapsto D^2_2A(\lambda,u)\in \mathscr{L}(X,  \mathscr{L}(X))$
 is $C^1$, or for each point $\lambda_0\in\Lambda$ and any line segment $\ell\subset B_X(0, \delta)$ there exist
  neighborhoods $\mathscr{N}(\lambda_0)$ of $\lambda_0$ in $\Lambda$,
  and $\mathscr{N}(\ell)$ of $\ell$ in $B_X(0, \delta)$
such that
$$
\mathscr{N}(\lambda_0)\times \mathscr{N}(\ell)\ni (\lambda, u)\mapsto D^2_2A(\lambda,u)\in \mathscr{L}(X,  \mathscr{L}(X))
$$
is uniform continuous.

\item[\rm (sf3)] Either $\Lambda\times B_X(0, \delta)\ni (\lambda, u)\mapsto D_1D_2A(\lambda,u)\in \mathscr{L}(Z,  \mathscr{L}(X))$
 is $C^1$, or for each point $\lambda_0\in\Lambda$ and any line segment $\ell\subset B_X(0, \delta)$ there exist
  neighborhoods $\mathscr{N}(\lambda_0)$ of $\lambda_0$ in $\Lambda$,
  and $\mathscr{N}(\ell)$ of $\ell$ in $B_X(0, \delta)$
such that
$$
\mathscr{N}(\lambda_0)\times \mathscr{N}(\ell)\ni (\lambda, u)\mapsto D_1D_2A(\lambda,u)\in \mathscr{L}(Z,  \mathscr{L}(X))
$$
is uniform continuous.
    \end{enumerate}
    Clearly, (sd') and these three assumptions hold if $\Lambda\times B_X(0, \delta)\ni (\lambda, u)\mapsto A(\lambda, u)\in X$ is $C^3$.
\end{theorem}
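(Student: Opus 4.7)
The plan is to reduce this degenerate parametric splitting problem to the nondegenerate (hyperbolic) parametric splitting Theorem~\ref{th:BB.3} via a Lyapunov--Schmidt reduction performed uniformly in the parameter $\lambda$. The spectral decomposition $X=X_0^{\lambda^\ast}\oplus X_+^{\lambda^\ast}\oplus X_-^{\lambda^\ast}$ comes from Lemma~\ref{lem:BB.2} applied to $B_{\lambda^\ast}(0)|_X$, the hypothesis (wh) guaranteeing that the spectral set $\sigma_+(B_{\lambda^\ast}(0)|_X)\cup\sigma_-(B_{\lambda^\ast}(0)|_X)$ is separated from the imaginary axis, hence that $P_\pm^{\lambda^\ast}\circ(B_{\lambda^\ast}(0)|_{X_\pm^{\lambda^\ast}})$ is a Banach space isomorphism on $X_\pm^{\lambda^\ast}$.

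First I would construct $\mathfrak{h}$ by applying the implicit function theorem to the map
\[
G:\Lambda\times B_{X_0^{\lambda^\ast}}(0,\eta)\times B_{X_\pm^{\lambda^\ast}}(0,\eta)\to X_\pm^{\lambda^\ast},\qquad
(\lambda,z,y)\mapsto P_\pm^{\lambda^\ast}A_\lambda(z+y)
\]
at $(\lambda^\ast,0,0)$. Condition (sd') gives the joint continuity of $G$ and of $D_yG(\lambda,z,y)=P_\pm^{\lambda^\ast}\circ(B_\lambda(z+y)|_{X_\pm^{\lambda^\ast}})$, whose value at $(\lambda^\ast,0,0)$ is the isomorphism mentioned above. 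The continuous (parametric) implicit function theorem then yields a unique $C^0$ map $\mathfrak{h}$ satisfying (\ref{e:BB.1}) on a set $\Lambda_0\times B_{X_0^{\lambda^\ast}}(0,\epsilon_1)$, with $C^2$-smoothness in $z$ obtained from the $C^2$-regularity of $u\mapsto A_\lambda(u)$ along segments (part of (sd')); implicit differentiation of (\ref{e:BB.1}) delivers (\ref{e:BB.3+}), and also shows that $(\lambda,z)\mapsto d_z\mathfrak{h}(\lambda,z)$ is continuous.

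Next I would introduce the reduced family parameterized by the augmented parameter $(\lambda,z)\in\Lambda_0\times B_{X_0^{\lambda^\ast}}(0,\epsilon_1)$,
\[
\mathscr{K}_{\lambda,z}:B_{X_\pm^{\lambda^\ast}}(0,\delta')\to\mathbb{R},\qquad y\mapsto\mathscr{L}_\lambda\bigl(z+\mathfrak{h}(\lambda,z)+y\bigr)-\mathscr{L}_\lambda\bigl(z+\mathfrak{h}(\lambda,z)\bigr),
\]
with associated potential $\widetilde A_{\lambda,z}(y)=P_\pm^{\lambda^\ast}A_\lambda(z+\mathfrak{h}(\lambda,z)+y)$ on $X_\pm^{\lambda^\ast}$. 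By (\ref{e:BB.1}), $0$ is a critical point of $\mathscr{K}_{\lambda,z}$; its Hessian there equals $P_\pm^{\lambda^\ast}\circ(B_\lambda(z+\mathfrak{h}(\lambda,z))|_{X_\pm^{\lambda^\ast}})$, which at $(\lambda^\ast,0)$ reduces to the above isomorphism and is in particular hyperbolic on $X_\pm^{\lambda^\ast}$. I would then apply Theorem~\ref{th:BB.3} to this augmented family (with metric parameter space $\Lambda_0\times B_{X_0^{\lambda^\ast}}(0,\epsilon_1)$), obtaining, after shrinking, a $C^0$ map $(\lambda,z,y)\mapsto\varphi_{\lambda,z}(y)$ of $C^1$-diffeomorphisms containing $B_{X_\pm^{\lambda^\ast}}(0,\epsilon/4)$ in their ranges and such that $\mathscr{K}_{\lambda,z}\circ\varphi_{\lambda,z}(y)=\tfrac{1}{2}(B_{\lambda^\ast}(0)y,y)_H$. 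Finally, I would define
\[
\Phi_\lambda(x):=P_0^{\lambda^\ast}x+\mathfrak{h}\bigl(\lambda,P_0^{\lambda^\ast}x\bigr)+\varphi_{\lambda,P_0^{\lambda^\ast}x}\bigl(P_\pm^{\lambda^\ast}x\bigr);
\]
chaining the two identities gives (\ref{e:BB.3}), (\ref{e:BB.4}) and (\ref{e:BB.5}) follow directly from (\ref{e:BB.1}) and the chain rule, and (\ref{e:BB.6}) is obtained by differentiating (\ref{e:BB.5}) once more at $z=0$ and substituting (\ref{e:BB.3+}). That $\Phi_\lambda$ is an origin-preserving $C^1$ diffeomorphism onto a neighborhood of $0$ containing $B_X(0,\epsilon/4)$ comes from Lemma~\ref{lem:Sim} after computing $\Phi_\lambda'(0)$ and controlling $\|\Phi_\lambda'(x)-\Phi_\lambda'(0)\|$. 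Equivariance is preserved at each stage because $P_0^{\lambda^\ast}$ and $P_\pm^{\lambda^\ast}$ commute with the $G$-action (they are limits of polynomials in the equivariant operator $B_{\lambda^\ast}(0)$), making $G$ uniqueness in the implicit function theorem and in Theorem~\ref{th:BB.3} deliver equivariant $\mathfrak{h}$ and $\varphi$. For the ``Moreover" part, (f1), (sf2), (sf3) are precisely tailored so that $G$ is $C^1$ jointly in $(\lambda,z,y)$ (upgrading $\mathfrak{h}$ to be jointly $C^1$ via the $C^1$ implicit function theorem) and so that the augmented family satisfies the $C^1$-refinement hypotheses (f1)--(f3) of Theorem~\ref{th:BB.3}, yielding joint $C^1$-smoothness of $\Phi_\lambda$.

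The main obstacle I anticipate is verifying that the regularity condition (d') of Theorem~\ref{th:BB.3} is inherited by the augmented family $\{\mathscr{K}_{\lambda,z}\}$ on $X_\pm^{\lambda^\ast}$: one must show that $(\lambda,z,y)\mapsto d^2(\widetilde A_{\lambda,z})(y)$ is uniformly continuous in neighborhoods of line segments in the $y$-variable, \emph{uniformly in the extended parameter} $(\lambda,z)$. Because $\mathfrak{h}(\lambda,z)$ shifts the base point of the segment and $\mathfrak{h}$ depends on $(\lambda,z)$ in only a $C^0$-jointly, $C^2$-in-$z$ fashion, this uniform control relies crucially on the fact that (sd') provides uniform continuity of $dA_\lambda$ and $d^2A_\lambda$ on entire neighborhoods of line segments in $u$, not merely pointwise continuity; a careful compactness-plus-uniform-continuity argument along the compact segment $[0,1]y$ translated by $z+\mathfrak{h}(\lambda,z)$ is needed, together with local boundedness of $d_z\mathfrak{h}$ from (\ref{e:BB.3+}).
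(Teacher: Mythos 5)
Your reduction plan is the right one at the structural level, and your anticipated obstacle about verifying the analogue of (d') for the augmented family (needing (sd') on arbitrary line segments because the base point $z+\mathfrak{h}(\lambda,z)$ has been translated) is genuine and is indeed why the paper strengthens (d') to (sd'). However, there is a deeper gap that your proposal does not notice: applying Theorem~\ref{th:BB.3} as a black box with the augmented metric parameter space $\Lambda_0\times B_{X_0^{\lambda^\ast}}(0,\epsilon_1)$ would deliver only a map $(\lambda,z,y)\mapsto\varphi_{\lambda,z}(y)$ that is $C^0$ in the parameter $(\lambda,z)$ and $C^1$ in $y$. When you assemble $\Phi_\lambda(x)=P_0^{\lambda^\ast}x+\mathfrak{h}(\lambda,P_0^{\lambda^\ast}x)+\varphi_{\lambda,P_0^{\lambda^\ast}x}(P_\pm^{\lambda^\ast}x)$, the variable $x$ splits linearly into $(z,y)=(P_0^{\lambda^\ast}x,P_\pm^{\lambda^\ast}x)$, so $\Phi_\lambda$ is $C^1$ in $x$ only if $\varphi_{\lambda,z}(y)$ is $C^1$ \emph{jointly} in $(z,y)$ — but Theorem~\ref{th:BB.3}'s conclusion gives no differentiability in the parameter, only continuity. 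So the assertion that each $\Phi_\lambda$ is a $C^1$ diffeomorphism of $B_X(0,\epsilon)$ would not follow.

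The paper resolves this by not invoking Theorem~\ref{th:BB.3} at all; it re-runs the underlying implicit-function-theorem mechanism directly in the augmented setting and establishes the stronger fact that the associated integral operator $\mathcal{B}(\lambda,z,u)=\int_0^1(1-t){\bf B}_{(\lambda,z)}(tu)|_{X_\pm^{\lambda^\ast}}\,dt$ is continuous, $C^1$ in each of $z$ and $u$, with $(\lambda,z,u)\mapsto D_z\mathcal{B}(\lambda,z,u)$ and $(\lambda,z,u)\mapsto D_u\mathcal{B}(\lambda,z,u)$ both continuous, hence jointly $C^1$ in $(z,u)$. The implicit function theorem then yields $\psi(\lambda,z,u)$ that is $C^1$ in $(z,u)$ and only $C^0$ in $\lambda$, which is exactly the hybrid regularity needed for $\Phi_\lambda$ to be a $C^1$ diffeomorphism in $x$ while only continuous in $\lambda$. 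To repair your proof you would need either to prove this hybrid refinement of Theorem~\ref{th:BB.3} (metric parameter $\lambda$, Banach parameter $z$, $C^1$ dependence on $z$ built into the conclusion), or to abandon the black-box strategy and re-derive the Morse--Palais deformation for the augmented family as the paper does, tracking the $z$-derivatives explicitly throughout.
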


\begin{proof}
\noindent{\bf Step 1}. By the first two conditions in (sd'), and (wh),
a standard implicit function theorem argument yields (i) and (iii).
As in Step 2 of the proof of \cite[Lemma~3.1]{Lu2} we can get (\ref{e:BB.5}).
Then it and (\ref{e:BB.3+}) lead to (\ref{e:BB.6}).

By shrinking $\Lambda_0$ and $\epsilon$ we can assume that
$z+ \mathfrak{h}(\lambda,z))\in B_X(0, \delta/2)$ for all $(\lambda,z)\in \Lambda_0\times B_{X_0^{\lambda^\ast}}(0,\epsilon)$.
Let  $H_\pm^{\lambda^\ast}$ be the closure of $X_\pm^{\lambda^\ast}$ in $H$.
For each $(\lambda,z)\in \Lambda_0\times B_{X_0^{\lambda^\ast}}(0,\epsilon)$, define maps
\begin{eqnarray*}
&&{\bf L}_{(\lambda,z)}:B_{X_\pm^{\lambda^\ast}}(0,\delta/2)\to\mathbb{R},\;u\mapsto \mathscr{L}_{\lambda}(z+ \mathfrak{h}(\lambda,z)+u)-
\mathscr{L}_{\lambda}(z+ \mathfrak{h}(\lambda,z)),\\
&&{\bf A}_{(\lambda,z)}: B_{X_\pm^{\lambda^\ast}}(0, \delta/2)\to X_\pm^{\lambda^\ast},\;u\mapsto P_\pm^{\lambda^\ast}A_\lambda(z+\mathfrak{h}(\lambda,z)+u),\\
&&{\bf B}_{(\lambda,z)}: B_{X_\pm^{\lambda^\ast}}(0, \delta/2)\to \mathscr{L}_s(H_\pm^{\lambda^\ast}),\;u\mapsto P_+^{\lambda^\ast}
\circ (B_\lambda(z+\mathfrak{h}(\lambda,z)+u)|_{H_\pm^{\lambda^\ast}}).
\end{eqnarray*}
Then $d{\bf L}_{(\lambda,z)}(0)=0$,
$d{\bf L}_{(\lambda,z)}(x)[u]=({\bf A}_{(\lambda,z)}(x),u)_H$ and $d^2{\bf L}_{(\lambda,z)}(x)[u,v]=({\bf B}_{(\lambda,z)}(x)u,v)_H$
for all $x\in B_{X_\pm^{\lambda^\ast}}(0, \delta/2)$ and for all $u,v\in X_\pm^{\lambda^\ast}$.
Moreover, the condition (wh) implies that
$$
{\bf B}_{(\lambda^\ast, 0)}(0)|_{X_\pm^{\lambda^\ast}}=P_\pm^{\lambda^\ast}\circ
(B_{\lambda^\ast}(0)|_{X_\pm^{\lambda^\ast}})\in\mathscr{L}(X_\pm^{\lambda^\ast})
$$
is  hyperbolic and its spectrum is bounded away from the imaginary axis.
Since ${\bf A}_{(\lambda,z)}$ is $C^2$, we can define a map
$\mathcal{B}:\Lambda_0\times B_{X_0^{\lambda^\ast}}(0,\epsilon)\times B_{X_\pm^{\lambda^\ast}}(0, \delta/2)\to\mathscr{L}(X_\pm^{\lambda^\ast})$ by
 \begin{eqnarray}\label{e:BB.6.0.0}
 \mathcal{B}(\lambda,z, u):=\int^1_0(1-t){\bf A}'_{(\lambda,z)}(tu) dt=\int^1_0(1-t){\bf B}_{(\lambda,z)}(tu)|_{X_\pm^{\lambda^\ast}} dt
 \end{eqnarray}
for any $(\lambda,z, u)\in \Lambda_0\times B_{X_0^{\lambda^\ast}}(0,\epsilon)\times B_{X_\pm^{\lambda^\ast}}(0, \delta/2)$.
Then ${\bf L}_{(\lambda,z)}(u)=(\mathcal{B}(\lambda,z, u)u,u)_H$.
As in Step 1 of the proof of Lemma~\ref{lem:BB.9}, we can use the first condition in (sd') to show that $\mathcal{B}$  is continuous.

 By the proof of Theorem~\ref{th:BB.4} it is easily seen that $\mathcal{B}(\lambda,z, u)$
 has G\^ateaux derivatives $D_u\mathcal{B}(\lambda, z,u)$ in $u$ and $D_z\mathcal{B}(\lambda, z,u)$ in $z$,
\begin{eqnarray*}
&&D_u\mathcal{B}(\lambda, z,u)[u']=\int^1_0(1-t)tP_\pm^{\lambda^\ast}A''_\lambda(z+\mathfrak{h}(\lambda,z)+tu)[u'] dt,\quad u'\in X_\pm^{\lambda^\ast},\\
&&D_z\mathcal{B}(\lambda, z,u)[z']\\&&=\int^1_0(1-t)P_\pm^{\lambda^\ast}A''_\lambda(z+\mathfrak{h}(\lambda,z)+tu)[z'+
D_z\mathfrak{h}(\lambda,z)[z']] dt,\quad z'\in X_0^{\lambda^\ast}.
\end{eqnarray*}

 Let $C:=\|P_\pm^{\lambda^\ast}\|_{\mathscr{L}(X, X_\pm^{\lambda^\ast})}$. For any $(\lambda, z, u), (\lambda_0, z_0,u_0)\in
 \Lambda_0\times B_{X_0^{\lambda^\ast}}(0,\epsilon)\times B_{X_\pm^{\lambda^\ast}}(0, \delta/2)$
  it easily follows from the first equation that
\begin{eqnarray*}
&&\|D_u\mathcal{B}(\lambda, z,u)-D_u\mathcal{B}(\lambda_0, z_0,u_0)\|_{\mathscr{L}(X_\pm^{\lambda^\ast},\mathscr{L}(X_\pm^{\lambda^\ast}))}\\
&\le&C\int^1_0\|A''_\lambda(z+\mathfrak{h}(\lambda,z)+tu) - A''_{\lambda_0}(z_0+\mathfrak{h}(\lambda_0, z_0)+tu_0) \|_{\mathscr{L}(X,\mathscr{L}(X))}dt.
\end{eqnarray*}
 As in Step 3 of the proof of Lemma~\ref{lem:BB.9}, we can use the second condition in (sd') to show that $D_u\mathcal{B}$  is continuous
 at $(\lambda_0, z_0,u_0)$.

 Similarly,  as in the proof of Theorem~\ref{th:BB.4} we derive
 \begin{eqnarray*}
&\|D_z\mathcal{B}(\lambda, z,u)-D_z\mathcal{B}(\lambda_0, z_0,u_0)\|_{\mathscr{L}(X_0^{\lambda^\ast},\mathscr{L}(X_\pm^{\lambda^\ast}))}\\
&\le C\int^1_0\|A''_\lambda(z+\mathfrak{h}(\lambda,z)+tu) - A''_{\lambda_0}(z_0+\mathfrak{h}(\lambda_0, z_0)+tu_0) \|_{\mathscr{L}(X,\mathscr{L}(X))}dt\\
&+C\int^1_0\|A''_\lambda(z+\mathfrak{h}(\lambda,z)+tu)\circ \mathfrak{h}'_z(\lambda,z)\\& -
A''_{\lambda_0}(z_0+\mathfrak{h}(\lambda_0, z_0)+tu_0)\circ \mathfrak{h}'_z(\lambda_0,z_0) \|_{\mathscr{L}(X,\mathscr{L}(X))}dt
\end{eqnarray*}
and
\begin{eqnarray*}
&&\|A''_\lambda(z+\mathfrak{h}(\lambda,z)+tu)\circ \mathfrak{h}'_z(\lambda,z)\\&& -
A''_{\lambda_0}(z_0+\mathfrak{h}(\lambda_0, z_0)+tu_0)\circ \mathfrak{h}'_z(\lambda_0,z_0) \|_{\mathscr{L}(X,\mathscr{L}(X))}\\
&\le&\| A''_\lambda(z+\mathfrak{h}(\lambda, z)+tu)\\&&-  A''_{\lambda_0}(z_0+\mathfrak{h}(\lambda_0, z_0)+tu_0)\|_{\mathscr{L}(X,\mathscr{L}(X))}\| \mathfrak{h}'_z(\lambda, z)\|_{\mathscr{L}(X_0, X)}\\
&+&\| A''_\lambda(z_0+\mathfrak{h}(\lambda_0, z_0)+tu_0)\|_{\mathscr{L}(X,\mathscr{L}(X))}\|\mathfrak{h}'_z(\lambda,
z)- \mathfrak{h}'_z(\lambda_0, z_0)\|_{\mathscr{L}(X_0, X)}.
\end{eqnarray*}
By (\ref{e:BB.3+}), $(\lambda,z)\mapsto \mathfrak{h}'_z(\lambda,z)$ is continuous.
   Using the second condition in (sd') again we deduce that
  $$
  \sup_{t\in [0,1]}\| A''_\lambda(z+\mathfrak{h}(\lambda, z)+tu)-  A''_{\lambda_0}(z_0+\mathfrak{h}(\lambda_0, z_0)+tu_0)\|_{\mathscr{L}(X,\mathscr{L}(X))}\to 0
$$
as $\lambda\to\lambda_0$ and $u\to u_0$, and that
$\| A''_\lambda(z+\mathfrak{h}(\lambda, z)+tu)\|_{\mathscr{L}(X,\mathscr{L}(X))}$
uniformly converges to \linebreak $\| A''_{\lambda_0}(z_0+\mathfrak{h}(\lambda_0, z_0)+tu_0)\|_{\mathscr{L}(X,\mathscr{L}(X))}$
with respect to $t\in [0,1]$. Hence  $D_z\mathcal{B}$  is continuous
 at $(\lambda_0, z_0,u_0)$.

 In summary, we have proved that $(\lambda, z, u)\mapsto\mathcal{B}(\lambda,z,u)$
  is continuous, $C^1$ in each of $z$ and $u$, and both
 $(\lambda, z, u)\mapsto\mathcal{B}'_z(\lambda,z,u)$
 and $(\lambda, z, u)\mapsto\mathcal{B}'_u(\lambda,z,u)$ are continuous.

\vspace{4pt}\noindent
 \noindent{\bf Step 2}. Since $\mathcal{B}(\lambda^\ast,0, 0)=\frac{1}{2}{\bf B}_{(\lambda^\ast, 0)}(0)|_{X_\pm^{\lambda^\ast}}$ we can take $M>0$ such that
\begin{eqnarray}\label{e:BB.6.0.1}
\sigma(\mathcal{B}({\lambda^\ast},0,0))_+\cap(-M^2\sigma(\mathcal{B}({\lambda^\ast},0,0))_-)
=\emptyset.
\end{eqnarray}
Set $R_{\lambda^\ast} := P_+^{\lambda^\ast}|_{X_\pm^{\lambda^\ast}}+M P_-^{\lambda^\ast}|_{X_\pm^{\lambda^\ast}}$ and
$\widehat{\bf L}_{(\lambda,z)}:={\bf L}_{(\lambda,z)}\circ R_{\lambda^\ast}$.
As before we have 
  \begin{eqnarray}\label{e:BB.6.0.2}
\widehat{\bf L}_{(\lambda,z)}(u)=(\widehat{\mathcal{B}}(\lambda,z,u)u,u)_H,\quad\forall u\in B_{X_\pm^{\lambda^\ast}}(0, \delta_1),
\end{eqnarray}
where $\delta_1>0$ is such that $R_{\lambda^\ast}\left(B_{X_\pm^{\lambda^\ast}}(0, \delta_1)\right)\subset B_{X_\pm^{\lambda^\ast}}(0, \delta/2)$, and
\begin{eqnarray}\label{e:BB.6.0.3}
\widehat{\mathcal{B}}(\lambda,z,u):=R_{\lambda^\ast} {\mathcal{B}}(\lambda,z, R_{\lambda^\ast}u)R_{\lambda^\ast}.
\end{eqnarray}
The latter implies
$\sigma(\widehat{\mathcal{B}}({\lambda^\ast},0,0))=\sigma(\mathcal{B}({\lambda^\ast},0,0))_+\cup(M^2\sigma(\mathcal{B}({\lambda^\ast},0,0))_-)$ and hence
\begin{eqnarray}\label{e:BB.6.0.4}
\sigma(\widehat{\mathcal{B}}({\lambda^\ast},0,0))\cap (-\sigma(\widehat{\mathcal{B}}({\lambda^\ast},0,0)))=\emptyset
\end{eqnarray}
by (\ref{e:BB.6.0.1}).
Consider the  map
$$
\Psi:\Lambda_0\times B_{X_0^{\lambda^\ast}}(0,\epsilon)\times B_{X_\pm^{\lambda^\ast}}(0, \delta_1)\times\mathscr{S}(X_\pm^{\lambda^\ast})\to \mathscr{S}(X_\pm^{\lambda^\ast})
$$
defined by $\Psi(\lambda,z, u,Y)=Y\widehat{\mathcal{B}}(\lambda, z, Yu)Y- \widehat{\mathcal{B}}(\lambda^\ast, 0, 0)$.
Then $\Psi$  is continuous, $C^1$ in $(z, u,Y)$, and
 $\Psi'_z(\lambda,z,u,Y)$, $\Psi'_u(\lambda,z,u,Y)$ and $\Psi'_Y(\lambda,z,u,Y)$ continuously depend on $(\lambda,z,u,Y)$.
Since
$$
D_Y\Psi(\lambda^\ast,0,0, id_X):\mathscr{S}(X_\pm^{\lambda^\ast})\to\mathscr{S}(X_\pm^{\lambda^\ast}),\; Z\mapsto \widehat{\mathcal{B}}({\lambda^\ast},0,0)Z+ Z\widehat{\mathcal{B}}({\lambda^\ast},0,0)
$$
is a Banach space isomorphism by (\ref{e:BB.6.0.4}) and \cite[Lemma~1.4]{BoBu},
as in the proof of Theorem~\ref{th:BB.3} using the implicit function theorem
we can, by shrinking the neighborhood $\Lambda_0$ of $\lambda^\ast$ and $\epsilon>0$, get
  $\delta_2\in (0, \delta_1)$ and  a continuous map
  $$
  \psi:\Lambda_0\times B_{X_0^{\lambda^\ast}}(0,\epsilon)\times B_{X_\pm^{\lambda^\ast}}(0, \delta_2)\to\mathscr{S}(X_\pm^{\lambda^\ast})
  $$
  such that\\
  $\bullet$  for all $(\lambda, z, u)\in \Lambda_0\times B_{X_0^{\lambda^\ast}}(0,\epsilon)\times B_{X_\pm^{\lambda^\ast}}(0, \delta_2)$,
 \begin{equation}\label{e:BB.6.0.4+}
 \left.\begin{array}{ll}
 &\psi(\lambda^\ast,0, 0)=id_{X_\pm^{\lambda^\ast}},\quad\Psi(\lambda, z, u, \psi(\lambda,z, u))\equiv 0\quad\hbox{and}\quad \\ 
& \|\psi(\lambda,z, u)-\psi(\lambda^\ast, 0, 0)\|<1/2,
\end{array}\right\}
 \end{equation}
$\bullet$ $\psi(\lambda,z,u)$ is $C^1$ in $(z,u)$,  both  $\psi'_z(\lambda, z,u)$ and $\psi'_u(\lambda,z,u)$  continuously depend on $(\lambda,z,u)$.
Clearly, (\ref{e:BB.6.0.4+}) implies that each operator $\psi(\lambda,z, u):X_\pm^{\lambda^\ast}\to X_\pm^{\lambda^\ast}$
has a linear bounded inverse.
 Now for any $(\lambda, z, u)\in \Lambda_0\times B_{X_0^{\lambda^\ast}}(0,\epsilon)\times B_{X_\pm^{\lambda^\ast}}(0, \delta_2)$
 it  holds that
 \begin{eqnarray}\label{e:BB.6.0.4++}
&& \psi(\lambda,z, u)\widehat{\mathcal{B}}(\lambda, z, \psi(\lambda,z, u)u)\psi(\lambda,z, u)\equiv \widehat{\mathcal{B}}(\lambda^\ast, 0, 0)\notag\\&&=
 \frac{1}{2}R_{\lambda^\ast}\left({\bf B}_{(\lambda^\ast, 0)}(0)|_{X_\pm^{\lambda^\ast}}\right) R_{\lambda^\ast}.
 \end{eqnarray}
 Moreover,  from (\ref{e:BB.6.0.2}) we derive
   \begin{eqnarray}\label{e:BB.6.0.5}
  && \mathscr{L}_{\lambda}(z+ \mathfrak{h}(\lambda,z)+R_{\lambda^\ast}u)-
\mathscr{L}_{\lambda}(z+ \mathfrak{h}(\lambda,z)),\nonumber\\
  &=&{\bf L}_{(\lambda,z)}(R_{\lambda^\ast}u)=(\widehat{\mathcal{B}}(\lambda,z,u)u,u)_H,\quad\forall u\in B_{X_\pm^{\lambda^\ast}}(0, \delta_1).
\end{eqnarray}
By further shrinking the neighborhood $\Lambda_0$ of $\lambda^\ast$ and $\epsilon>0$, we can get
  $\delta_3\in (0, \delta_2)$ such that
 $$
 \psi(\lambda,z, u)u\in B_{X_\pm^{\lambda^\ast}}(0, \delta_1),\quad
 \forall (\lambda, z, u)\in \Lambda_0\times B_{X_0^{\lambda^\ast}}(0,\epsilon)\times B_{X_\pm^{\lambda^\ast}}(0, \delta_3)
 $$
 because of (\ref{e:BB.6.0.4+}). Then (\ref{e:BB.6.0.4++}) and
 (\ref{e:BB.6.0.5}) lead to
  \begin{eqnarray}\label{e:BB.6.0.6}
  && \mathscr{L}_{\lambda}(z+ \mathfrak{h}(\lambda,z)+R_{\lambda^\ast}\psi(\lambda,z, u)u)-
\mathscr{L}_{\lambda}(z+ \mathfrak{h}(\lambda,z)),\nonumber\\
  &=&(\widehat{\mathcal{B}}(\lambda,z,\psi(\lambda,z, u)u)\psi(\lambda,z, u)u,\psi(\lambda,z, u)u)_H\nonumber\\
  &=&(\psi(\lambda,z, u)\widehat{\mathcal{B}}(\lambda,z,\psi(\lambda,z, u)u)\psi(\lambda,z, u)u,u)_H\nonumber\\
  &=&\frac{1}{2}(({\bf B}_{(\lambda^\ast, 0)}(0)|_{X_\pm^{\lambda^\ast}}) R_{\lambda^\ast}u, R_{\lambda^\ast}u)_H.
\end{eqnarray}
 Take $\delta_4\in (0, \delta_3)$ such that $R_{\lambda^\ast}\left(B_{X_\pm^{\lambda^\ast}}(0, \delta_4)\right)\subset B_{X_\pm^{\lambda^\ast}}(0, \delta_3)$. Then
 (\ref{e:BB.6.0.6}) yields
   \begin{eqnarray}\label{e:BB.6.0.7}
  && \mathscr{L}_{\lambda}\big(z+ \mathfrak{h}(\lambda,z)+ R_{\lambda^\ast}\psi(\lambda,z, (R_{\lambda^\ast})^{-1}u)(R_{\lambda^\ast})^{-1}u\big)-
\mathscr{L}_{\lambda}(z+ \mathfrak{h}(\lambda,z)),\nonumber\\
    &=&\frac{1}{2}(({\bf B}_{(\lambda^\ast, 0)}(0)|_{X_\pm^{\lambda^\ast}}) u, u)_H=\frac{1}{2}(B_{\lambda^\ast}(0)u, u)_H
\end{eqnarray}
 for all $(\lambda, z, u)\in \Lambda_0\times B_{X_0^{\lambda^\ast}}(0,\epsilon)\times B_{X_\pm^{\lambda^\ast}}(0, \delta_4)$,
where $(R_{\lambda^\ast})^{-1}= P_+^{\lambda^\ast}|_{X_\pm^{\lambda^\ast}}+M^{-1} P_-^{\lambda^\ast}|_{X_\pm^{\lambda^\ast}}$ is the inverse
of $R_{\lambda^\ast}$. 

Take $\delta_5=\min\{\epsilon,\delta_4\}$. Then for $x\in B_X(0,\delta_5)$ we have
$z=P_0^{\lambda^\ast}x\in B_{X_0^{\lambda^\ast}}(0,\epsilon)$ and $u=P_\pm^{\lambda^\ast} x\in B_{X_\pm^{\lambda^\ast}}(0, \delta_4)$.
For each $\lambda\in\Lambda_0$, define $\Phi_\lambda: B_X(0,\delta_5)\to  X$ by
\begin{eqnarray}\label{e:BB.6.2-}
\Phi_\lambda(x)=z+\mathfrak{h}(\lambda,z)+ R_{\lambda^\ast}\psi(\lambda,z, (R_{\lambda^\ast})^{-1}u)(R_{\lambda^\ast})^{-1}u,
\end{eqnarray}
 where $z=P_0^{\lambda^\ast}x$ and $u=P_\pm^{\lambda^\ast} x$. Then
  $\Phi_\lambda(0)=0$, $\Phi_\lambda$ is $C^1$,
 and both $(\lambda,x)\mapsto\Phi_\lambda(x)$ and $(\lambda,x)\mapsto\Phi'_\lambda(x)$ are continuous.
 For $z'\in X_0^{\lambda^\ast}$ and $y'\in X_\pm^{\lambda^\ast}$ a direct computation
leads to
 \begin{eqnarray}\label{e:BB.6.2}
 d\Phi_\lambda(0)[z'+y']&=&z'+ d_z\mathfrak{h}(\lambda,0)[z']+ R_{\lambda^\ast}\psi(\lambda,0, 0)(R_{\lambda^\ast})^{-1}y'
 \end{eqnarray}
 and so $d\Phi_{\lambda^\ast}(0)[z'+y']=z'+y'$ because $d_z\mathfrak{h}(\lambda^\ast, 0)=0$ by (\ref{e:BB.3+}).
 From these and (\ref{e:BB.6.0.4+}) we get
  \begin{eqnarray*}
 d\Phi_\lambda(0)[z'+y']-z'-y=d_z\mathfrak{h}(\lambda,0)[z']+ R_{\lambda^\ast}(\psi(\lambda,0, 0)-\psi(\lambda^\ast,0, 0))(R_{\lambda^\ast})^{-1}y'
 \end{eqnarray*}
 and therefore we may shrink $\Lambda_0$ so that $\|d\Phi_\lambda(0)-id_{X_\pm^{\lambda^\ast}}\|<\frac{1}{2}$ for all $\lambda\in\Lambda_0$.
 Then  we have
 $$
 \|[d\Phi_\lambda(0)]^{-1}\|\le 2\quad\forall\lambda\in\Lambda_0
 $$
 as before.
  Since $\psi'_z(\lambda, z,u)$ and $\psi'_u(\lambda,z,u)$  continuously depend on $(\lambda,z,u)$,
 shrinking $\Lambda_0$ and $\delta_5>0$ again, it follows from (\ref{e:BB.6.2-}) and (\ref{e:BB.6.2})  that
 $$
 \|d\Phi_\lambda(x)-d\Phi_{\lambda}(0)\|\le\frac{1}{4}\quad\forall (\lambda,x)\in\Lambda_0\times B_X(\theta,\delta_5).
 $$
 By Lemma~\ref{lem:Sim}, for each $\lambda\in\Lambda_0$,
$\Phi_\lambda(B_X(0, \delta_5))\subset X$ is an open neighborhood of zero in $X$  containing $B_{X}(0, \frac{\delta_5}{4})$
 and $\Phi_\lambda$ is a $C^1$ diffeomorphism from  $B_X(0, \delta_5)$ to $\Phi_\lambda(B_X(0, \delta_5))$.
  Moreover, (\ref{e:BB.6.0.7}) yields (\ref{e:BB.3}) with $\epsilon=\delta_5$.

 It is clear that  the positive number $\epsilon$ can be replaced by $\delta_5$ in all conclusions of Theorem~\ref{th:BB.5} which have been proved.

 Claims in (iv) and (v) easily follows as in \cite{Lu2, Lu7}.

  Under the assumptions of  the ``Moreover" part,  by the above arguments and the final part of proof of Lemma~\ref{lem:BB.9}
 it is not hard to check that the maps
$\mathcal{B}:\Lambda_0\times B_{X_0^{\lambda^\ast}}(0,\epsilon)\times B_{X_\pm^{\lambda^\ast}}(0, \delta/2)\to\mathscr{L}(X_\pm^{\lambda^\ast})$
and $\widehat{\mathcal{B}}$ in (\ref{e:BB.6.0.3}) are $C^1$.
Therefore the desired claims are easily seen from the above proof.
\end{proof}

\begin{remark}\label{rm:BB.5}
{\rm The assumptions (f1) and (sf2)-(sf3) in Theorem~\ref{th:BB.5} guarantee that the map
$\mathcal{B}$ in (\ref{e:BB.6.0.0}) (and hence $\widehat{\mathcal{B}}$ in (\ref{e:BB.6.0.3}))
is $C^1$. When $\Lambda=\mathbb{R}$ and $A_\lambda=A-\lambda\widehat{A}$, where
 $A, \widehat{A}:B_X(0, \delta)\to X$ are $C^2$ maps corresponding with
 $C^3$ functionals $\mathscr{L}, \widehat{\mathscr{L}}:B_X(0, \delta)\to\mathbb{R}$
satisfying (a) and (b) above (\ref{e:BB.-2}),
if each of $A''$ and  $\widehat{A}''$ is either $C^1$ or uniformly continuous in some neighborhood of
any segment in $B_X(0, \delta)$
    it is easily seen from the above proof that the corresponding maps $\mathcal{B}$ and
    $\widehat{\mathcal{B}}$ are also $C^1$.  Hence Theorem~\ref{th:BB.5} is still true in this situation.}
\end{remark}

Since $0\in X_0^{\lambda^\ast}$ is an isolated critical point of  $\mathscr{L}_{\lambda}^\circ$ if and only if
$0\in X$ is such a critical point of $\mathscr{L}_{\lambda}$,
as usual Theorem~\ref{th:BB.5} lead to:

\begin{corollary}[Shifting]\label{cor:BB.6}
Under the assumptions of Theorem~\ref{th:BB.5},
let $\nu_{\lambda^\ast}:=\dim X_0^{\lambda^\ast}<\infty$ and $\mu_{\lambda^\ast}:=\dim X_-^{\lambda^\ast}<\infty$.
For any Abel group ${\bf K}$, and for any given $\lambda\in\Lambda_0$, if $0\in X$ is an
isolated critical point of $\mathscr{L}$ then
$C_q(\mathscr{L}_\lambda, 0;{\bf K})\cong
C_{q-\mu_{\lambda^\ast}}(\mathscr{L}^{\circ}_\lambda, 0;{\bf K})$ for all
$q\in\mathbb{N}\cup\{0\}$.
\end{corollary}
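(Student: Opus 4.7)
My plan is to reduce the computation of $C_q(\mathscr{L}_\lambda, 0;\mathbf{K})$ to that of $C_q(\mathscr{L}^\circ_\lambda, 0;\mathbf{K})$ in two moves: first transfer through the homeomorphism $\Phi_\lambda$ furnished by Theorem~\ref{th:BB.5}, and then peel off the nondegenerate quadratic factors by a K\"unneth/product argument together with the standard critical-group computation for nondegenerate quadratic forms.

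First, Theorem~\ref{th:BB.5}(ii) gives an origin-preserving $C^1$-diffeomorphism $\Phi_\lambda$ of $B_X(0,\epsilon)$ onto an open neighborhood of $0$ in $X$, and since $\Phi_\lambda$ is in particular a homeomorphism on a neighborhood of $0$, the topological definition of critical groups yields $C_q(\mathscr{L}_\lambda,0;\mathbf{K})\cong C_q(\mathscr{L}_\lambda\circ\Phi_\lambda,0;\mathbf{K})$ for every $q$. Using the $B_{\lambda^\ast}(0)$-invariance and $H$-orthogonality of the summands in $X=X^{\lambda^\ast}_0\oplus X^{\lambda^\ast}_+\oplus X^{\lambda^\ast}_-$, identity (\ref{e:BB.3}) can be rewritten as
\[
\mathscr{L}_\lambda\circ\Phi_\lambda(x)=Q_+(P^{\lambda^\ast}_+x)+Q_-(P^{\lambda^\ast}_-x)+\mathscr{L}^\circ_\lambda(P^{\lambda^\ast}_0x),
\]
where $Q_\pm(y):=\tfrac12(B_{\lambda^\ast}(0)y,y)_H$ is positive (resp.\ negative) definite on $X^{\lambda^\ast}_\pm$ by Lemma~\ref{lem:BB.2}. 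Thus $\mathscr{L}_\lambda\circ\Phi_\lambda$ splits as a sum of three functionals on the corresponding factor spaces, each having $0$ as critical point.

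Next I would invoke the product (K\"unneth) formula for critical groups applied to this triple splitting. On $X^{\lambda^\ast}_+$, $Q_+$ has a strict local minimum at $0$, so $C_i(Q_+,0;\mathbf{K})=\delta_{i0}\mathbf{K}$ by (\ref{e:Bi.2.17}). On the finite-dimensional space $X^{\lambda^\ast}_-$ (with $\dim X^{\lambda^\ast}_-=\mu_{\lambda^\ast}<\infty$), $Q_-$ is a nondegenerate quadratic form with Morse index $\mu_{\lambda^\ast}$, hence $C_j(Q_-,0;\mathbf{K})=\delta_{j\mu_{\lambda^\ast}}\mathbf{K}$. Since the only nontrivial tensor factors are free $\mathbf{K}$-modules concentrated in a single degree, the K\"unneth formula gives no $\mathrm{Tor}$ contribution and collapses to
\[
C_q(\mathscr{L}_\lambda\circ\Phi_\lambda,0;\mathbf{K})\cong C_{q-\mu_{\lambda^\ast}}(\mathscr{L}^\circ_\lambda,0;\mathbf{K}),
\]
which combined with the first paragraph yields the claim. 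Note that $\nu_{\lambda^\ast}=\dim X^{\lambda^\ast}_0<\infty$ ensures that $0$ is an isolated critical point of $\mathscr{L}^\circ_\lambda$ in a finite-dimensional ball, so its critical groups are well-defined.

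The main obstacle is justifying the K\"unneth splitting on an infinite-dimensional factor, since $X^{\lambda^\ast}_+$ is in general not locally compact. To handle this I plan to proceed as in the proof of the classical shifting theorem in \cite[Theorem~I.5.4]{Ch} and \cite[Theorem~8.4]{MaWi}: pair the positive-definite $Q_+$ together with $\mathscr{L}^\circ_\lambda$ and observe that on $X^{\lambda^\ast}_0\oplus X^{\lambda^\ast}_+$ the functional $\mathscr{L}^\circ_\lambda(P^{\lambda^\ast}_0 x)+Q_+(P^{\lambda^\ast}_+x)$ has $0$ as a critical point whose local sublevel sets deformation retract onto the corresponding sublevel sets of $\mathscr{L}^\circ_\lambda$ (via the radial retraction in the $X^{\lambda^\ast}_+$ direction, using positive definiteness of $Q_+$). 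This reduces the problem to a direct sum of a functional on $X^{\lambda^\ast}_0$ and a nondegenerate quadratic form on the finite-dimensional space $X^{\lambda^\ast}_-$, where the classical product formula for critical groups applies without any K\"unneth subtleties. The only point requiring care is to check that the deformation retraction is compatible with sublevel sets of $\mathscr{L}_\lambda\circ\Phi_\lambda$, which follows from the pointwise additivity of the split form and the monotonicity of $Q_+$ along radial lines.
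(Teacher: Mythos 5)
Your plan is essentially the standard Gromoll--Meyer shifting argument, which is exactly what the paper invokes when it writes ``as usual Theorem~\ref{th:BB.5} leads to'' Corollary~\ref{cor:BB.6} without further elaboration, so your approach matches the intended one. The two moves — transfer through the homeomorphism $\Phi_\lambda$, then peel off the nondegenerate positive part by a radial sublevel-set retraction (valid because $Q_+$ is homogeneous of degree $2$ and strictly positive on $X_+^{\lambda^\ast}\setminus\{0\}$) and the finite-dimensional negative part by the classical product formula — are precisely how this is done in \cite[Theorem~I.5.4]{Ch} and \cite[Theorem~8.4]{MaWi}.

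One small inaccuracy: you write that $\nu_{\lambda^\ast}=\dim X_0^{\lambda^\ast}<\infty$ ``ensures that $0$ is an isolated critical point of $\mathscr{L}^\circ_\lambda$.'' Finite dimension of the kernel space alone does not give isolation. What you actually need here is the one-to-one correspondence between critical points of $\mathscr{L}^\circ_\lambda$ near $0\in X_0^{\lambda^\ast}$ and critical points of $\mathscr{L}_\lambda$ near $0\in X$ established in Theorem~\ref{th:BB.5}(v): since the corollary's hypothesis makes $0$ an isolated critical point of $\mathscr{L}_\lambda$, the correspondence transfers isolation to $0$ as a critical point of $\mathscr{L}^\circ_\lambda$, and this is what makes the left-hand critical groups computable via the right-hand ones. (This is also the exact point the paper highlights just before stating the corollary.) Finite dimensionality of $X_0^{\lambda^\ast}$ only guarantees the reduced critical-group computation lives in the finite-dimensional framework where the classical product/retraction arguments are unproblematic. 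With this correction your plan is complete.
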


\begin{claim}\label{cl:BB.6+}
In Theorem~\ref{th:BB.5}, if  for some $\lambda\in\Lambda_0\setminus\{\lambda^\ast\}$,
$0\in X$ is a nondegenerate critical point of $\mathscr{L}_\lambda$,
i.e.,  the operator $B_\lambda(0)|_X\in\mathscr{L}(X)$  is   hyperbolic (\cite{Uh0}),
then $0\in X_0^{\lambda^\ast}$ is such a critical point of $\mathscr{L}^\circ_\lambda$ too.
\end{claim}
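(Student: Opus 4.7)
The plan is to show, via a Schur-complement argument for the block decomposition of $B:=B_\lambda(0)|_X$, that the ($H$-self-adjoint) operator on $X_0^{\lambda^\ast}$ representing $d^2\mathscr{L}^\circ_\lambda(0)$ is invertible whenever $B$ is hyperbolic, and then to promote invertibility to hyperbolicity using self-adjointness.

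First I would use the $B_{\lambda^\ast}(0)|_X$-invariant splitting $X=X_0^{\lambda^\ast}\oplus X_\pm^{\lambda^\ast}$ from Lemma~\ref{lem:BB.2} to write
\[
B=\begin{pmatrix} B_{00} & B_{0\pm}\\ B_{\pm 0} & B_{\pm\pm}\end{pmatrix},\qquad B_{ij}:=P_i^{\lambda^\ast}B|_{X_j^{\lambda^\ast}},\ \ i,j\in\{0,\pm\}.
\]
By formula (\ref{e:BB.6}), the operator $\tilde B\in\mathscr{L}(X_0^{\lambda^\ast})$ representing $d^2\mathscr{L}^\circ_\lambda(0)$ through $(\cdot,\cdot)_H$ is precisely the Schur complement $\tilde B=B_{00}-B_{0\pm}B_{\pm\pm}^{-1}B_{\pm 0}$. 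Invertibility of $B_{\pm\pm}=P_\pm^{\lambda^\ast}B_\lambda(0)|_{X_\pm^{\lambda^\ast}}$ in $\mathscr{L}(X_\pm^{\lambda^\ast})$ is guaranteed (after possibly shrinking $\Lambda_0$) by the construction in the proof of Theorem~\ref{th:BB.5}, since this inverse enters explicitly in (\ref{e:BB.6}).

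Next I would invoke the factorization
\[
B=\begin{pmatrix} I & B_{0\pm}B_{\pm\pm}^{-1}\\ 0 & I\end{pmatrix}\begin{pmatrix}\tilde B & 0\\ 0 & B_{\pm\pm}\end{pmatrix}\begin{pmatrix}I & 0\\ B_{\pm\pm}^{-1}B_{\pm 0} & I\end{pmatrix}
\]
as an identity in $\mathscr{L}(X)$, whose outer factors are block-triangular with identity diagonals and hence Banach-space isomorphisms of $X$. It follows that invertibility of $B$ on $X$ is equivalent to joint invertibility of $\tilde B$ on $X_0^{\lambda^\ast}$ and $B_{\pm\pm}$ on $X_\pm^{\lambda^\ast}$. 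Since hyperbolicity of $B$ forces $0\notin\sigma(B)$, the operator $B$ is invertible and so $\tilde B$ is invertible as well.

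Finally, to upgrade invertibility into hyperbolicity, I would exploit self-adjointness in $H$: because $B\in\mathscr{L}_s(H)$ and the splitting of Lemma~\ref{lem:BB.2} is $H$-orthogonal, the projections $P_0^{\lambda^\ast}$ and $P_\pm^{\lambda^\ast}$ are orthogonal projections in $H$, the blocks $B_{00}$ and $B_{\pm\pm}$ are $H$-symmetric on their respective subspaces, and $B_{\pm 0}$ is the $H$-adjoint of $B_{0\pm}$; consequently $\tilde B$ is $H$-symmetric, its complex spectrum lies in $\R$, and invertibility upgrades to $\sigma(\tilde B)\cap i\R=\emptyset$. This is precisely hyperbolicity of $\tilde B$, i.e. the nondegeneracy of $0\in X_0^{\lambda^\ast}$ as a critical point of $\mathscr{L}^\circ_\lambda$. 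The main technical point to verify is that the Schur factorization is an equality in $\mathscr{L}(X)$ rather than merely algebraic, which reduces to the boundedness of each block (automatic, since the Riesz projections $P_0^{\lambda^\ast},P_\pm^{\lambda^\ast}$ are continuous and $B_{\pm\pm}^{-1}$ is bounded by the open-mapping theorem).
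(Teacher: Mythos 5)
Your proposal is correct, and it takes a genuinely different route from the paper's. The paper argues directly: it takes $z\in\ker d^2\mathscr{L}^\circ_\lambda(0)$, differentiates (\ref{e:BB.1}) at $z=0$ to see that $(id_X-P_0^{\lambda^\ast})B_\lambda(0)(z+d_z\mathfrak{h}(\lambda,0)z)=0$, combines this with the hypothesis $P_0^{\lambda^\ast}B_\lambda(0)(z+d_z\mathfrak{h}(\lambda,0)z)=0$ to deduce $B_\lambda(0)(z+d_z\mathfrak{h}(\lambda,0)z)=0$, then invokes invertibility of $B_\lambda(0)|_X$ and the direct sum $X_0^{\lambda^\ast}\oplus X_\pm^{\lambda^\ast}$ to force $z=0$. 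That is, the paper proves injectivity of $\tilde{B}$ on $X_0^{\lambda^\ast}$; nondegeneracy of $0$ then follows because $X_0^{\lambda^\ast}$ is finite dimensional in the intended applications. Your Schur complement factorization packages the same computation more conceptually: the outer block-triangular factors being automorphisms of $X$, invertibility of $B_\lambda(0)|_X$ and of $B_{\pm\pm}$ is equivalent to invertibility of $\tilde{B}$. Your route thus yields surjectivity for free rather than only injectivity, which is the more robust statement. Two remarks. First, you silently read formula (\ref{e:BB.6}) as if the inverted block were $P_\pm^{\lambda^\ast}B_\lambda(0)|_{X_\pm^{\lambda^\ast}}$; as printed the paper has $B_{\lambda^\ast}(0)$ there, which is inconsistent with (\ref{e:BB.3+}) evaluated at $z=0$ and would prevent $\tilde{B}$ from being a genuine Schur complement of $B_\lambda(0)|_X$. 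Your reading is the internally consistent one, and the paper's proof of the claim works under either reading, but your argument does rely on the corrected formula. Second, your final upgrade from invertibility to hyperbolicity via $H$-symmetry is valid in the regime of applications where $\dim X_0^{\lambda^\ast}<\infty$ (where an $H$-symmetric operator is a symmetric matrix with real spectrum); in general a bounded $H$-symmetric operator on a mere Banach subspace $X_0^{\lambda^\ast}\subset X$ need not have real spectrum, so it is worth flagging that one is in the finite-dimensional case or that $\tilde{B}$ extends to a self-adjoint operator on the Hilbert closure of $X_0^{\lambda^\ast}$ in $H$.
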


\begin{proof}
Let $z\in X_0^{\lambda^\ast}$ be such that  $d^2\mathscr{L}^\circ_\lambda(0)[z,z']=0\;\forall z'\in X_0^{\lambda^\ast}$.
By (\ref{e:BB.6}) we deduce
 \begin{eqnarray}\label{e:BB.7}
  P_0^{\lambda^\ast}\bigr[{B}_\lambda(0)-
 {B}_\lambda(0)(P_\pm^{\lambda^\ast}{B}_{\lambda^\ast}
 (0)|_{X_\pm^{\lambda^\ast}})^{-1}(P_\pm^{\lambda^\ast}{B}_\lambda(0))\bigr]z=0.
 \end{eqnarray}
Moreover, differentiating the equality in (\ref{e:BB.1}) at $z=0$ and using (\ref{e:BB.3+}) we get that
\begin{eqnarray*}
0&=&(id_X-P_0^{\lambda^\ast})B_\lambda(0)|_X(z+ d_z\mathfrak{h}(\lambda,0)z)\\
&=&(id_X-P_0^{\lambda^\ast})B_\lambda(0)|_X(z-[P_\pm^{\lambda^\ast}{B}_{\lambda^\ast}(0)|_{X_\pm^{\lambda^\ast}}]^{-1}
(P_\pm^{\lambda^\ast}{B}_\lambda(0))|_{X_0^{\lambda^\ast}}z)\\
&=&(id_X-P_0^{\lambda^\ast})[B_\lambda(0)-B_\lambda(0)(P_\pm^{\lambda^\ast}{B}_{\lambda^\ast}(0)|_{X_\pm^{\lambda^\ast}})^{-1}
(P_\pm^{\lambda^\ast}{B}_\lambda(0))|_{X_0^{\lambda^\ast}})]z\\
&=&[B_\lambda(0)-B_\lambda(0)(P_\pm^{\lambda^\ast}{B}_{\lambda^\ast}(0)|_{X_\pm^{\lambda^\ast}})^{-1}
(P_\pm^{\lambda^\ast}{B}_\lambda(0))|_{X_0^{\lambda^\ast}})]z\\
&-&P_0^{\lambda^\ast}[B_\lambda(0)-B_\lambda(0)(P_\pm^{\lambda^\ast}{B}_{\lambda^\ast}(0)|_{X_\pm^{\lambda^\ast}})^{-1}
(P_\pm^{\lambda^\ast}{B}_\lambda(0))|_{X_0^{\lambda^\ast}})]z\\
&=&[B_\lambda(0)-B_\lambda(0)(P_\pm^{\lambda^\ast}{B}_{\lambda^\ast}(0)|_{X_\pm^{\lambda^\ast}})^{-1}
(P_\pm^{\lambda^\ast}{B}_\lambda(0))|_{X_0^{\lambda^\ast}})]z,
\end{eqnarray*}
where the final equality comes from   (\ref{e:BB.7}). Since $B_\lambda(0)$ is invertible, we deduce
$$
z=(P_\pm^{\lambda^\ast}{B}_{\lambda^\ast}(0)|_{X_\pm^{\lambda^\ast}})^{-1}
(P_\pm^{\lambda^\ast}{B}_\lambda(0))|_{X_0^{\lambda^\ast}})z.
$$
Note that $z\in X_0^{\lambda^\ast}$ and that the right side belongs to $X_\pm^{\lambda^\ast}$.
Hence $z=0$.
\end{proof}

In general, it is not easy to judge whether $\sigma(B(0)|_X)\setminus\{0\}$ is bounded away from the imaginary axis.
However, it has the following equivalent version:
\begin{enumerate}
\item[(*)] $0$ is at most an isolated point of
$\sigma(B(0)|_X)$ and $B(0)|_X$ induces a hyperbolic  operator on the quotient space $X/X_0$,
where $X_0={\rm Ker}(B(0)|_X)$.
\end{enumerate}
In fact, it is clear if $0\notin\sigma(B(0)|_X)$. Assume that $\sigma_0:=\{0\}\subset\sigma(B(0)|_X)$.
Since $\sigma_1:=\sigma(B(0)|_X)\setminus\{0\}$ is bounded away from the imaginary axis, $0$ is  an isolated point of $\sigma(B(0)|_X)$.
By Lemma~\ref{lem:BB.2} there exists a direct sum decomposition of the Banach space $X$, $X=X_0+X_1$,
 where $X_0={\rm Ker}(B(0)|_X)$ and $X_1$ is an invariant subspace on which $B(0)|_X$ has the spectrum $\sigma_1$.
Note that the quotient space $X/X_0$ and the induced operator of $B(0)|_X$ on it can be identified with
$X_1$ and $B(0)|_{X_1}$, respectively. Hence (*) is satisfied. Conversely, since $0$ is  an isolated point of
$\sigma(B(0)|_X)$ we have a decomposition $X=X_0+X_1$ as above, and $B(0)|_{X_1}$ is a hyperbolic  operator on $X_1\equiv X/X_0$.
It follows that $\sigma(B(0)|_X)\setminus\{0\}$ is bounded away from the imaginary axis.

For a class of operators used in this paper the following lemma determines when $0$
is at most an isolated point of $\sigma(B(0))$.

\begin{lemma}\label{lem:BB.7}
 Let a self-adjoined operator $\mathfrak{B}\in\mathscr{L}_s(H)$ be a sum $\mathfrak{B} = \mathfrak{P} +
 \mathfrak{Q}$, where
$\mathfrak{P}\in\mathscr{L}(H)$ is invertible, and $\mathfrak{Q}\in\mathscr{L}(H)$ is compact. Let
 $0\in\sigma(\mathfrak{B})$. Then $0$ is an
isolated point of $\sigma(\mathfrak{B})$ and an eigenvalue of $\mathfrak{B}$ of the finite multiplicity. ({\cite[Lemma~2.2]{BoBu}})

 In addition, if $\mathfrak{P}$ is also positive definite, then every $\lambda<\inf\{(\mathfrak{P}u,u)_H\,|\,\|u\|_H=1\}$
 is either a regular value of $\mathfrak{B}$ or an isolated point of
$\sigma(\mathfrak{B})$, which is also an eigenvalue of finite multiplicity. (\cite[Proposition~B.2]{Lu2})
\end{lemma}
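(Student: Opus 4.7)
The plan is to exploit the fact that $\mathfrak{B}$ is a compact perturbation of an invertible operator, placing it in the framework of self-adjoint Fredholm operators of index zero. Isolatedness of $0$ in $\sigma(\mathfrak{B})$ will then follow from the elementary observation that the essential spectrum of a self-adjoint Fredholm operator cannot contain $0$.

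For the first assertion, I would first verify that $\mathfrak{B}=\mathfrak{P}+\mathfrak{Q}$ is Fredholm of index $0$: the invertible operator $\mathfrak{P}$ is Fredholm of index $0$, and the Fredholm index is stable under compact perturbations. Since $\mathfrak{B}$ is self-adjoint and $0\in\sigma(\mathfrak{B})$, the finite-dimensionality of $\ker\mathfrak{B}$ shows that $0$ is an eigenvalue of finite multiplicity. To see that $0$ is isolated in $\sigma(\mathfrak{B})$, I would argue that $0\notin\sigma_{\mathrm{ess}}(\mathfrak{B})$: for each real $\mu$ in a sufficiently small neighborhood of $0$, we have $\mu\notin\sigma(\mathfrak{P})$ (since $\mathfrak{P}$ is invertible and $\sigma(\mathfrak{P})$ is closed), so $\mathfrak{B}-\mu I=(\mathfrak{P}-\mu I)+\mathfrak{Q}$ is again Fredholm of index $0$. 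Combined with the spectral theorem for self-adjoint operators, this forces any spectral value of $\mathfrak{B}$ in such a neighborhood to be an isolated eigenvalue of finite multiplicity, and in particular $0$ is isolated.

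For the second assertion, set $c:=\inf\{(\mathfrak{P}u,u)_H:\|u\|_H=1\}$, which is strictly positive since $\mathfrak{P}$ is positive definite. For $\lambda<c$ the operator $\mathfrak{P}-\lambda I$ satisfies $((\mathfrak{P}-\lambda I)u,u)_H\ge(c-\lambda)\|u\|_H^2$, hence is positive definite and invertible. Applying the first part to $\mathfrak{B}-\lambda I=(\mathfrak{P}-\lambda I)+\mathfrak{Q}$, with $\mathfrak{P}-\lambda I$ playing the role of $\mathfrak{P}$, yields exactly the claim: either $\lambda\notin\sigma(\mathfrak{B})$, or $\lambda$ is an isolated eigenvalue of finite multiplicity of $\mathfrak{B}$.

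The main obstacle is merely to assemble the Fredholm and spectral facts cleanly; no deep new ideas are needed. The one point that deserves care is the passage from ``$\mathfrak{B}$ is Fredholm'' to ``$0$ is isolated'', which is why the argument is carried out through the Fredholm stability of $\mathfrak{B}-\mu I$ for $\mu$ in a neighborhood of $0$, rather than by trying to transfer the Riesz--Schauder conclusion from $I+\mathfrak{P}^{-1}\mathfrak{Q}$ to $\mathfrak{B}$ directly (an alternative route that would require relating the two spectra and loses the self-adjoint structure).
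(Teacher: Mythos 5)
The paper does not prove this lemma; it simply defers to \cite[Lemma~2.2]{BoBu} for the first assertion and to \cite[Proposition~B.2]{Lu2} for the second, so there is no in-text argument against which to compare yours. Your proof is correct and is the standard one: $\mathfrak{B}$ is Fredholm of index zero as a compact perturbation of the invertible $\mathfrak{P}$, hence $0\notin\sigma_{\mathrm{ess}}(\mathfrak{B})$; since $0\in\sigma(\mathfrak{B})$ and for a bounded self-adjoint operator $\sigma(\mathfrak{B})\setminus\sigma_{\mathrm{ess}}(\mathfrak{B})$ is precisely the discrete spectrum, $0$ is an isolated eigenvalue of finite multiplicity. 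The translation reduction for the second assertion works exactly as you say. Two small remarks on presentation. The sentence deducing that ``$0$ is an eigenvalue'' from ``finite-dimensionality of $\ker\mathfrak{B}$'' skips a step: one must note that $\ker\mathfrak{B}\ne\{0\}$, which holds because an index-zero Fredholm operator with trivial kernel is boundedly invertible, contradicting $0\in\sigma(\mathfrak{B})$ (this is in any case subsumed in your subsequent essential-spectrum argument, so it is a matter of ordering rather than a gap). And the detour through $\mathfrak{B}-\mu I$ for a whole neighborhood of $\mu$'s is unnecessary --- Fredholmness at $\mu=0$ alone already places $0$ outside $\sigma_{\mathrm{ess}}(\mathfrak{B})$ --- but it does no harm.
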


 As before let $H^0$, $H^+$ and $H^-$ be null, positive and negative definite spaces of $B(0)$,
and let $P^\ast:H\to H^\ast$ for $\ast=0,+,-$,  be the orthogonal projections.
Clearly, $X_0\subset H^0$. The following lemma shows how the Morse index $\mu$
is computed.

\begin{lemma}\label{lem:BB.8}
 If $X_0=H^0$ and
either $\dim H^-<\infty$ or $\dim H^+<\infty$, then
$X_\ast$ are dense subspaces of $H^\ast$ for $\ast=+,-$, moreover
$X_-=H^-$ (resp. $X_+=H^+$) as
 $\dim H^-<\infty$ (resp. $\dim H^+<\infty$).
(These imply $X_\ast=H^\ast\cap X$ and $P_\ast=P^\ast|_{X}$
for $\ast=0,+,-$, and hence
 $X_\pm=X\cap H^\pm=(P^++P^-)(X)$, where $H^\pm:=H^++H^-$.)
\end{lemma}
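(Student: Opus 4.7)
The plan is to promote the Banach-space projections $P_0, P_+, P_-$ from Lemma~\ref{lem:BB.2} to bounded orthogonal projections $\bar P_0, \bar P_+, \bar P_-$ on $H$, and then to identify them with the spectral projections $P^0, P^+, P^-$. Once this identification is established, all stated conclusions — density of $X_\ast$ in $H^\ast$, equality when the relevant side is finite-dimensional, and the formulas $X_\ast = H^\ast \cap X$, $P_\ast = P^\ast|_X$, $X_\pm = X \cap H^\pm$ — follow at once from continuity, density of $X$ in $H$, and direct-sum bookkeeping.

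The extension step is routine. Since each $P_\ast$ is symmetric with respect to $(\cdot,\cdot)_H$ and idempotent, the calculation $\|P_\ast u\|_H^2 = (P_\ast u, P_\ast u)_H = (u, P_\ast u)_H \leq \|u\|_H \|P_\ast u\|_H$ gives $\|P_\ast u\|_H \leq \|u\|_H$ on $X$, so $P_\ast$ extends uniquely to an orthogonal projection $\bar P_\ast$ on $H$; the orthogonality and summation relations pass to the $H$-closure, and the identity $B(0)P_\ast = P_\ast B(0)$ on $X$ extends to $B(0)\bar P_\ast = \bar P_\ast B(0)$ on $H$ by density of $X$ in $H$ and continuity of $B(0), \bar P_\ast \in \mathscr{L}(H)$. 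The hypothesis $X_0 = H^0$ then yields $\bar P_0(H) = \overline{X_0}^H = H^0$, so $\bar P_0 = P^0$, and consequently $\bar P_+(H) \oplus \bar P_-(H) = (H^0)^\perp = H^+ \oplus H^-$.

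The core of the argument is the identification $\bar P_- = P^-$, working under $\dim H^- < \infty$ (the case $\dim H^+ < \infty$ is symmetric). Using the square roots $S_\pm$ from Lemma~\ref{lem:BB.2}, one gets $(B(0)u,u)_H = -\|S_- u\|_H^2 \leq 0$ for $u \in X_-$ and $(B(0)u,u)_H = \|S_+ u\|_H^2 \geq 0$ for $u \in X_+$; by density these hold throughout $\bar P_-(H)$ and $\bar P_+(H)$ respectively. Since $B(0)$ is strictly positive on $H^+$ and strictly negative on $H^-$, any $v \in \bar P_-(H) \cap H^+$ satisfies both $(B(0)v,v)_H \leq 0$ and $(B(0)v,v)_H > 0$ whenever $v \neq 0$, forcing $\bar P_-(H) \cap H^+ = \{0\}$. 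Symmetrically, if $w \in H^- \cap \ker \bar P_-$ is decomposed as $w = w_0 + w_+$ with $w_0 \in H^0$ and $w_+ \in \bar P_+(H)$, then $(B(0)w,w)_H = (B(0)w_+, w_+)_H \geq 0$ while $(B(0)w,w)_H < 0$ if $w \neq 0$, so $H^- \cap \ker \bar P_- = \{0\}$. These two vanishing statements show that $P^-$ injects $\bar P_-(H)$ into $H^-$ and $\bar P_-$ injects $H^-$ into $\bar P_-(H)$; with $\dim H^- < \infty$ both are bijections and $\dim \bar P_-(H) = \dim H^-$. Finally, $B(0)|_{\bar P_-(H)}$ is a finite-dimensional self-adjoint operator with spectrum in $(-\infty, 0]$; for any eigenvector $v = v^+ + v^-$ with $B(0)v = \mu v$, $\mu \leq 0$, the $B(0)$-invariance of $H^\pm$ forces $B(0)v^+ = \mu v^+$, and since $B(0)$ has strictly positive spectrum on $H^+$, this forces $v^+ = 0$, so $v \in H^-$. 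Diagonalizing gives $\bar P_-(H) \subset H^-$, and equality follows by dimension; then $\bar P_+ = I - \bar P_0 - \bar P_- = P^+$ automatically.

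The main obstacle I anticipate is this last finite-dimensional spectral step: the sign inequality $B(0)\le 0$ on $\bar P_-(H)$ alone is not enough to force $v^+=0$ in an orthogonal decomposition $v=v^++v^-$ (the quadratic-form estimate only yields $\|v^+\|\lesssim\|v^-\|$ up to a constant), so one must exploit the $B(0)$-invariance of $\bar P_-(H)$ together with finite-dimensional diagonalization to pin down each eigenvector individually; absent the finite-dimensional hypothesis, no such spectral decomposition is available. Once $\bar P_\ast = P^\ast$ is in hand, the rest is mechanical: $X_\ast = P_\ast(X) = \bar P_\ast(X) = P^\ast(X)$ is the image of the dense set $X$ under a continuous surjection, hence dense in $H^\ast$; finite-dimensionality upgrades density to equality, giving $X_- = H^-$ (respectively $X_+ = H^+$); and the identities $X_\ast = H^\ast \cap X$, $P_\ast = P^\ast|_X$, and $X_\pm = X \cap H^\pm = (P^+ + P^-)(X)$ follow by applying the decomposition $u = P_0u + P_+u + P_-u$ to an arbitrary $u \in X$ and using $\bar P_\ast|_X = P_\ast = P^\ast|_X$.
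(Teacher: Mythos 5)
Your proof is correct but follows a genuinely different route from the paper's. The paper works directly with the $H$-closures $X_\pm^H := \overline{X_\pm}^H$: it observes that they are orthogonal, $B(0)$-invariant, strictly positive/negative definite, and sum to $H^\pm$; it then invokes the maximality of $\dim H^-$ among negative definite subspaces (applied to $X_{--}^H + H^{--} + (X_-^H\cap H^-)$, the sum of the two complements with the intersection) to force $X_-^H\subseteq H^-$, and finishes with a Pythagoras argument on approximating sequences to extract density of $X_-$ in $H^-$. You instead promote the idempotents $P_\ast$ to bounded orthogonal projections $\bar P_\ast$ on $H$ — which works because they are $H$-symmetric idempotents — identify $\bar P_0=P^0$ immediately from $X_0=H^0$, and then pin down $\bar P_-=P^-$ by first showing both $P^-|_{\bar P_-(H)}$ and $\bar P_-|_{H^-}$ are injective (so $\dim\bar P_-(H)=\dim H^-<\infty$), and then diagonalizing $B(0)|_{\bar P_-(H)}$ and using $B(0)$-invariance of $H^\pm$ and strict positivity on $H^+$ to see each eigenvector lies in $H^-$. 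The paper's route avoids the extension step; your route has the merit of making the role of $B(0)$-invariance and finite-dimensionality completely explicit — in particular, the negative-definiteness of the three-part sum in the paper's argument requires justification (the spaces $X_{--}^H$ and $H^{--}$ are not obviously mutually $H$-orthogonal or $B(0)$-orthogonal), whereas your injectivity-plus-spectral argument bypasses that claim entirely. The mechanical final deductions ($X_\ast=H^\ast\cap X$, $P_\ast=P^\ast|_X$, $X_\pm=X\cap H^\pm$) are handled the same way in both.
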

\begin{proof}
Since $X$ is dense in $H$, it is easily proved that $X_\pm:=X_+\oplus X_-$ is dense in
$H^\pm$. Let $X_+^H$ and $X_-^H$ be the closure of $X_+$ and $X_-$ in $H$, respectively.
They are orthogonal in $H$, invariant with respect to $B(0)$ and $H^\pm=X_+^H+ X_-^H$.
Since $B(0)$ is semi-positive (resp. semi-negative) on $X_+^H$ (resp. $X_-^H$)
by Lemma~\ref{lem:BB.2}, and $B(0)$ has no nontrivial kernel on $H^\pm$, we deduce that
$B(0)$ must be positive (resp. negative) on $X_+^H$ (resp. $X_-^H$).
Let $\dim H^-<\infty$. If $X^H_-\cap H^-\ne\{0\}$, it is an invariant subspace of $B(0)$.
Denote by $X^H_{--}$ and $H^{--}$ the orthogonal complements of $X^H_-\cap H^-$ in $X^H_-$ and $H^-$,
respectively. Then $X^H_{--}+ H^{--}+X^H_-\cap H^-$ is a negative definite subspace of $B(0)$.
But $\dim H^-$ is the maximal dimension of negative definite subspaces of $B(0)$.
Hence $X^H_{--}=\{0\}$,  and thus $X^H_{-}=X^H_-\cap H^-$. The latter means $X^H_{-}\subset H^-$
and so $X^H_{+}\supseteq H^+$. For any $x\in H^-$, since
$X_\pm=X_+\oplus X_-$ is dense in $H^\pm=H^++H^-$, we have a sequence $(x_n^++x_n^-)\subset X_+\oplus X_-$
such that $x_n^++x_n^-\to x$. Note that
$\|x_n^++x_n^--x_m^+-x_m^-\|^2=\|x_n^+-x_m^+\|^2+\|x_n^--x_m^-\|^2$.
We get $x_n^+\to x^+\in X^H_+$ and $x_n^-\to x^-\in X^H_-$. Hence $x=x^++x^-$.
But $(x,x^+)_H=0$. So $x^+=0$ and $x=x^-$. This shows that $X_-$ is dense in $H^-$
and thus $X_-=X_-^H=H^-$ (since $\dim H^-<\infty$). The latter implies $X_+^H=H^+$.
 Similarly, we can deal with the case that $\dim H^+<\infty$.
\end{proof}

\begin{lemma}\label{lem:BB.9}
Let $\Lambda$ be a metric space, and
 $\mathscr{L}_\lambda:B_X(0, \delta)\to\mathbb{R}$, $\lambda\in\Lambda$, be
 a family of $C^2$-functionals satisfying the conditions (a)-(b) and $d\mathscr{L}_\lambda(0)=0\;\forall\lambda$.
 Suppose that the corresponding operators $A_\lambda$
 satisfies the assumptions (c')-(d') in Theorem~\ref{th:BB.3}.
Then $\Lambda\times B_X(0,\delta)\ni (\lambda, u)\mapsto \mathcal{B}_\lambda(u)\in\mathscr{L}(X)$ is continuous,  $C^1$ in $u$, and
$\Lambda\times B_X(0,\delta)\ni (\lambda, u)\mapsto \mathcal{B}'_\lambda(u)\in\mathscr{L}(X, \mathscr{L}(X))$ is also continuous.
Moreover, if $\Lambda$ is a nonempty open subset of a Banach space $Z$ and the assumptions (c') and (f1)-(f3) in Theorem~\ref{th:BB.3}
are satisfied, then $\Lambda\times B_X(0,\delta)\ni (\lambda, u)\mapsto \mathcal{B}_\lambda(u)\in\mathscr{L}(X)$ is $C^1$.
In particular, for $C^2$ maps $A, \widehat{A}:B_X(0, \delta)\to X$,  if  each of $A''$ and  $\widehat{A}''$  is either $C^1$ or
  uniformly continuous in some neighborhood of the line segment $[0,1]u:=\{tu\,|\,0\le t\le 1\}$ for each $u\in B_X(0, \delta)$,
 then
$$
\mathbb{R}\times B_X(0,\delta)\to \mathscr{L}(X),\; (\lambda, u)\mapsto\mathcal{B}(\lambda,u)=\int^1_0(1-t)A'(tu)dt-\lambda
\int^1_0(1-t)\widehat{A}'(tu)dt
$$
is $C^1$.
 \end{lemma}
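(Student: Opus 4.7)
The plan is to prove the three regularity assertions about
$\mathcal{B}_\lambda(u)=\int_0^1(1-t)\,dA_\lambda(tu)\,dt$
by differentiating under the integral sign and extracting the required uniformity from hypothesis (d'). The key observation is that condition (d') is precisely designed to make $dA_\lambda$ and $d^2A_\lambda$ uniformly continuous on a tube $\mathscr{N}(\lambda_0)\times\mathscr{N}([0,1]u_0)$ around every line segment $[0,1]u_0$; once this is in place, all of the integrals in question can be estimated uniformly in $t\in[0,1]$.

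For continuity at $(\lambda_0,u_0)$, I would first shrink the $X$-ball around $u_0$ so that for $u$ in the ball and $t\in[0,1]$ the point $tu$ lies in $\mathscr{N}([0,1]u_0)$; convexity of a tubular neighborhood makes this immediate. Then
\[
\|\mathcal{B}_\lambda(u)-\mathcal{B}_{\lambda_0}(u_0)\|_{\mathscr{L}(X)}\le \tfrac{1}{2}\sup_{t\in[0,1]}\|dA_\lambda(tu)-dA_{\lambda_0}(tu_0)\|_{\mathscr{L}(X)},
\]
and uniform continuity of $(\lambda,v)\mapsto dA_\lambda(v)$ on the tube, together with $\|tu-tu_0\|_X\le\|u-u_0\|_X$ and $|\lambda-\lambda_0|\to 0$, forces the right-hand side to vanish. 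For the Fr\'echet derivative in $u$, the candidate is
\[
\mathcal{B}_\lambda'(u)[h]=\int_0^1(1-t)\,t\,d^2A_\lambda(tu)[h]\,dt.
\]
To justify this, write the remainder as
$\int_0^1(1-t)\bigl(dA_\lambda(tu+th)-dA_\lambda(tu)-t\,d^2A_\lambda(tu)[h]\bigr)dt$
and estimate the integrand pointwise in $t$ via the mean value theorem applied to $s\mapsto dA_\lambda(tu+sth)$, using uniform continuity of $d^2A_\lambda$ on a tube around $[0,1]u$ from (d'); this yields a bound of the form $\sup_{s,t\in[0,1]}\|d^2A_\lambda(tu+sth)-d^2A_\lambda(tu)\|\cdot\|h\|_X=o(\|h\|_X)$. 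Continuity of $(\lambda,u)\mapsto\mathcal{B}_\lambda'(u)$ then follows from the identical uniform-continuity estimate with $d^2A_\lambda$ in place of $dA_\lambda$.

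The main obstacle is purely a bookkeeping one, namely guaranteeing that as $(\lambda,u)$ varies in a small neighborhood of $(\lambda_0,u_0)$, the entire segment $\{tu\mid t\in[0,1]\}$ stays inside the tube $\mathscr{N}([0,1]u_0)$ on which (d') supplies uniform continuity; this is why (d') is formulated over tubular neighborhoods of segments rather than merely pointwise. Once chosen, the same tube serves all three continuity arguments.

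For the joint $C^1$ strengthening under (f1)--(f3), one notes that (f1) gives $C^1$-dependence of $D_2A(\lambda,u)$ on $(\lambda,u)$, so differentiation under the integral in $\lambda$ produces
\[
D_\lambda\mathcal{B}(\lambda,u)[\zeta]=\int_0^1(1-t)\,D_1D_2A(\lambda,tu)[\zeta]\,dt,
\]
while differentiation in $u$ yields the previous formula. Continuity of both partial derivatives on $\Lambda\times B_X(0,\delta)$ follows from (sf2) and (sf3) by the same tube argument, whence $\mathcal{B}$ is $C^1$ on the product. Finally, the ``in particular'' clause with $A_\lambda=A-\lambda\widehat A$ is immediate: the hypothesis that each of $A''$ and $\widehat A''$ is either $C^1$ or uniformly continuous near every line segment translates verbatim into (d') and (f1)--(f3) for the affine family $A_\lambda$ (since $D_1D_2A(\lambda,u)=-\widehat A'(u)$ is independent of $\lambda$ and automatically uniformly continuous on tubes because $\widehat A''$ is), so the general result applies.
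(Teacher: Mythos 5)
Your proposal is correct and essentially the same approach as the paper's proof: bound the integrals uniformly in $t\in[0,1]$ using the tube-shaped uniform-continuity domains supplied by (d'), and then handle the ``Moreover'' and ``In particular'' clauses by translating (f1)--(f3) and the hypotheses on $A''$, $\widehat{A}''$ into the same estimates. The only small variation is that the paper first proves G\^ateaux differentiability of $\mathcal{B}_\lambda$ by differentiating under the integral sign and then upgrades to Fr\'echet via continuity of the G\^ateaux derivative, whereas you verify Fr\'echet differentiability directly with a Taylor-remainder estimate; both routes use (d') identically and are equivalent.
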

\begin{proof}
 {\bf Step 1} ({\it Prove that $\Lambda\times B_X(0,\delta)\ni (\lambda, u)\mapsto \mathcal{B}_\lambda(u)\in\mathscr{L}(X)$
  is continuous}). Fix a point $(\lambda_0, u_0)\in \Lambda\times B_X(0,\delta)$. Then
  for any $(\lambda, u)\in \Lambda\times B_X(0,\delta)$ it holds that
  \begin{eqnarray}\label{e:BB.8}
  \|\mathcal{B}_\lambda(u)-\mathcal{B}_{\lambda_0}(u_0)\|_{\mathscr{L}(X)}\le
  \int^1_0(1-t)\|A'_\lambda(tu)-A'_{\lambda_0}(tu_0)\|_{\mathscr{L}(X)} dt.
  \end{eqnarray}
  By (d') there exist neighborhoods $\mathscr{N}(\lambda_0)$ of $\lambda_0$ in $\Lambda$, and $\mathscr{N}([0,1]u_0)$ of
  $[0,1]u_0$ in $B_X(0, \delta)$ such that
  $\mathscr{N}(\lambda_0)\times\mathscr{N}([0,1]u_0)\ni (\lambda, u)\mapsto A'_\lambda(x)\in\mathscr{L}(X)$
  is uniform continuous, that is, for any $\varepsilon>0$, we have $\nu>0$ such that
  $\|A'_{\lambda_1}(u_1)-A'_{\lambda_2}(u_2)\|_{\mathscr{L}(X)}<\varepsilon$
 provided that  $(\lambda_i, u_i)\in\mathscr{N}(\lambda_0)\times\mathscr{N}([0,1]u_0)$, $i=1,2$,
  satisfies $d(\lambda_1,\lambda_2)<\nu$ and $\|u_1-u_2\|_X<\nu$.
  Thus if $(\lambda, u)\in \Lambda\times B_X(\theta,\delta)$ satisfies $d(\lambda,\lambda_0)<\nu$ and $\|u-u_0\|_X<\nu$
  then $\|A'_\lambda(tu)-A'_{\lambda_0}(tu_0)\|_{\mathscr{L}(X)}<\varepsilon$
  for all $t\in [0,1]$. It follows that
  $$
  \|\mathcal{B}_\lambda(u)-\mathcal{B}_{\lambda_0}(u_0)\|_{\mathscr{L}(X)}\le
  \varepsilon\int^1_0(1-t) dt=\frac{1}{2}\varepsilon.
  $$

\noindent
   {\bf Step 2} ({\it Prove that $\mathcal{B}_\lambda:B_X(0,\delta)\to\mathscr{L}(X)$
  is G\^ateaux differentiable}).
For fixed $u\in B_X(0, \delta)$ and $h\in X$ we have $\rho>0$ such that $u+sh\in B_X(0, \delta)$
for all $s\in [-\rho,\rho]$. Then the function $[-\rho,\rho]\times [0,1]\ni (s,t)\mapsto
(1-t)A'_\lambda(t(u+sh))\in\mathscr{L}(X)$ is continuous and has a continuous partial derivative in $s$
$$
[-\rho,\rho]\times [0,1]\ni (s,t)\mapsto\frac{\partial}{\partial s}((1-t)A'_\lambda(t(u+sh)))$$ $$=(1-t)tA''_\lambda(t(u+sh))h\in
\mathscr{L}(X,\mathscr{L}(X)).
$$
By the differential method with parameter integral we get
$$
\frac{\partial}{\partial s}\mathcal{B}_\lambda(u+sh)=\int^1_0(1-t)t A''_\lambda(t(u+sh))h dt.
$$
In particular, $\mathcal{B}_\lambda$ has a G\^ateaux derivative $D\mathcal{B}_\lambda(u)\in \mathscr{L}(X,\mathscr{L}(X))$ at $u$ given by
$$
D\mathcal{B}_\lambda(u)[h]=\int^1_0(1-t)tA''_\lambda(tu)h dt=\left(\int^1_0(1-t)tA''_\lambda(tu) dt\right)h,\quad\forall h\in X.
$$

\noindent
 {\bf Step 3} ({\it Prove that $\Lambda\times B_X(0,\delta)\ni (\lambda,u)\mapsto \mathcal{B}'_\lambda(u)\in\mathscr{L}(X,\mathscr{L}(X))$
is continuous}). For a fixed point $(\lambda_0, u_0)\in \Lambda\times B_X(0,\delta)$,
 by (d') we have neighborhoods $\mathscr{N}(\lambda_0)$ of $\lambda_0$ in $\Lambda$, and $\mathscr{N}([0,1]u_0)$ of
  $[0,1]u_0$ in $B_X(0, \delta)$ such that
  $$
  \mathscr{N}(\lambda_0)\times\mathscr{N}([0,1]u_0)\ni (\lambda, u)\mapsto A''_\lambda(x)\in\mathscr{L}(X,\mathscr{L}(X))
  $$
  is uniform continuous. Then there exists $\nu>0$ such that when $(\lambda, u)\in \Lambda\times B_X(0,\delta)$ satisfies $d(\lambda,\lambda_0)<\nu$ and $\|u-u_0\|_X<\nu$
  the following holds:
  $$
  \|A''_\lambda(tu)-A''_{\lambda_0}(tu_0)\|_{\mathscr{L}(X,\mathscr{L}(X))}<\varepsilon,
  \quad\forall t\in [0,1].
  $$
   Hence for such $(\lambda,u)$ we deduce
 \begin{eqnarray}\label{e:BB.9}
\|D\mathcal{B}_\lambda(u)-D\mathcal{B}_{\lambda_0}(u_0)\|_{\mathscr{L}(X,\mathscr{L}(X))}&\le&
\int^1_0(1-t)t\|A''_\lambda(tu)-A''_{\lambda_0}(tu_0)\|_{\mathscr{L}(X,\mathscr{L}(X))} dt\nonumber\\
&\le&\frac{1}{6}\varepsilon.
\end{eqnarray}
That is, $\Lambda\times B_X(0,\delta)\ni (\lambda,u)\mapsto D\mathcal{B}_\lambda(u)\in\mathscr{L}(X,\mathscr{L}(X))$
is continuous. In particular, $B_X(0,\delta)\ni u\mapsto D\mathcal{B}_\lambda(u)\in\mathscr{L}(X,\mathscr{L}(X))$
is continuous, and so $\mathcal{B}_\lambda$ has a Fr\'echet derivative $\mathcal{B}'_\lambda(u)=D\mathcal{B}_\lambda(u)$ at $u$.
Then $\mathcal{B}_\lambda$ is $C^1$.

\vspace{4pt}\noindent
 {\bf Step 4} ({\it Prove the conclusion in the ``Moreover" part}). Write
  \begin{eqnarray*}
  \mathcal{B}(\lambda, u)=  \int^1_0(1-t)D_2A(\lambda, tu) dt.
  \end{eqnarray*}
 Since (f1) and (f2) imply (c') and (d'), $\mathcal{B}$ is continuous, $C^1$ in $u$, and
$$
\Lambda\times B_X(0,\delta)\ni (\lambda, u)\mapsto D_2\mathcal{B}(\lambda,u)\in\mathscr{L}(X, \mathscr{L}(X))
$$
is also continuous.
 Similarly, from the second condition in (f1) we may derive that
 $\mathcal{B}(\lambda, u)$ has a G\^ateaux derivative in $\lambda$ given by
 $$
  D_1\mathcal{B}(\lambda, u)=  \int^1_0(1-t)D_1D_2A(\lambda, tu) dt.
 $$
 As above using (f3) we get that  $\Lambda\times B_X(0, \delta)\ni (\lambda, u)\mapsto D_1\mathcal{B}(\lambda,u)\in \mathscr{L}(Z,  \mathscr{L}(X))$
 is continuous. Hence $\mathcal{B}$ is $C^1$.

 The final claim is easily seen from the above arguments.
\end{proof}

\noindent{\bf Acknowledgments}.
The author is deeply grateful to Professor Xiaochun Rong for his invitation. 
I also deeply thank the anonymous referees for useful remarks.

\renewcommand{\refname}{REFERENCES}

%
\medskip
\begin{tabular}{l}
 School of Mathematical Sciences, Beijing Normal University\\
 Laboratory of Mathematics and Complex Systems, Ministry of Education\\
 Beijing 100875, The People's Republic of China\\
 E-mail address: gclu@bnu.edu.cn\\
\end{tabular}

\end{document}